\documentclass{article}
\usepackage[%
journal=FoCM,
lang=british,
]{ems-journal}

\usepackage{mathrsfs}
\usepackage{longtable}

\numberwithin{equation}{section}
\theoremstyle{plain}
\newtheorem{theorem}{Theorem}[section]
\newtheorem{proposition}[theorem]{Proposition}
\newtheorem{lemma}[theorem]{Lemma}
\newtheorem{corollary}[theorem]{Corollary}
\theoremstyle{definition}
\newtheorem{definition}[theorem]{Definition}
\newtheorem{assumption}[theorem]{Assumption}
\newtheorem{remark}[theorem]{Remark}

\DeclareMathOperator{\dist}{dist}
\newcommand{\norm}[1]{\lVert#1\rVert}
\newcommand{\abs}[1]{\lvert#1\rvert}
\newcommand{\N}{\mathbb{N}}
\newcommand{\mat}[1]{\mathbf{#1}}
\newcommand{\vect}[1]{\mathbf{#1}}
\newcommand{\Lop}{\mathcal{L}}
\newcommand{\Iop}{\mathcal{I}}
\newcommand{\Pop}{\mathcal{P}}
\newcommand{\eps}{\varepsilon}

\begin{document}

\title{Numerical Stability of Extended-Domain Spectral Collocation: Floating-Point Regularization and an Equilibrium Law for the Frame Constants}
\titlemark{Numerical Stability of Extended-Domain Spectral Collocation}

\emsauthor{1}{
	\givenname{Po-Yi}
	\surname{Wu}
	\mrid{}
	\zblid{}
	\orcid{}}{P.-Y.~Wu}

\Emsaffil{1}{
	\department{}
	\organisation{Independent researcher}
	\rorid{}
	\address{}
	\zip{}
	\city{}
	\country{Taiwan}
	\affemail{r04527030@gmail.com}}

\classification[65N35, 65T40, 42C15, 31A15]{65N12}

\keywords{spectral collocation, Fourier extension, frames, numerical stability, Lebesgue constant, constrained equilibrium problem}

\begin{abstract}
Extended-domain spectral collocation expands the solution of a boundary value problem in the Laplacian eigenbasis of an enclosing box and collocates at as many nodes as modes. The square system is exponentially ill-conditioned, and the impossibility theorem for equispaced data rules out stable geometric convergence; yet such solvers converge geometrically to a few orders of magnitude above the unit roundoff. We explain why in one dimension. The collocation solution is the solution operator applied to the interpolant of the data in an operator-dependent test space whose Lebesgue constant must grow exponentially for geometric convergence, so exact solutions can amplify data errors exponentially. In floating-point arithmetic the error of any coefficient vector is bounded by its nodal residual, its distance to a competitor and two frame constants, a leakage and a restricted Marcinkiewicz--Zygmund constant, bounded uniformly in the number of modes at a fixed positive threshold; for a backward-stable solve, under a condition on the perturbed matrix, the bound is uniform in the conditioning for suitably analytic data and, with computed constants, within a factor of about ten of the error floor of a direct solver. For the Poisson operator the rate of the two constants along the tail of the spectrum is given by a constrained equilibrium problem of logarithmic potential theory. For an interval of length $L$ extended by $\delta$ at each end, the problems for $\delta$ and $L^2/(4\delta)$ are dual, the rate is linear for $\delta = L/2$, and it starts with slope minus the fraction of the extended domain occupied by the extension, so for $\delta < L/2$ the leakage constant obeys no power law with the exponent of the end of the spectrum. In experiments, convection--diffusion operators behave like the Poisson operator at moderate P\'eclet numbers.
\end{abstract}

\maketitle

\tableofcontents

\section{Introduction}

Spectral methods converge geometrically for analytic solutions on simple domains \cite{boyd2001chebyshev, canuto2007spectral, trefethen2000spectral}, and a long line of work extends them to complex geometries by embedding the physical domain in a larger, simple one: Fourier embedded domain methods, which extend the data smoothly and periodically to a rectangle \cite{boyd2005fourier, bueno2006fourier}; Fourier embedded and immersed boundary methods \cite{sabetghadam2009fourier, fang2011towards, stein2016immersed}; spectral collocation on an enclosing box \cite{lui2009spectral, wu2018development}; Fourier continuation along grid lines \cite{bruno2010high, lyon2010high}; and circular embedding \cite{gu2021efficient}. When the solution is expanded in the Laplacian eigenbasis of an enclosing box, the basis functions restricted to the physical domain form a redundant \emph{frame} rather than a basis. The simplest such method, the extended-domain collocation of \cite{wu2018development}, enforces the differential equation and the boundary conditions at exactly as many equispaced nodes as there are modes and solves the resulting square system directly, without oversampling and without regularization. Its matrix is exponentially ill-conditioned (Corollary~\ref{thm:poisson_illcond}, for the bulk collocation matrix of the Poisson operator, and Section~\ref{sec:num_greens}), and the impossibility theorem of Platte, Trefethen and Kuijlaars \cite{platte2011impossibility} shows that no method acting on equispaced samples can be both stable and geometrically convergent. Yet the method converges geometrically, to errors between about $10^{-13}$ and $10^{-10}$ on a square for the Poisson equation and for convection--diffusion equations with P\'eclet numbers, called Reynolds numbers there, up to $10^{5}$, and to about $10^{-12}$ for the Poisson equation on a disk with relocated boundary nodes \cite[Secs.~4 and 5]{wu2018development}. This paper explains why, and determines what limits the accuracy.

We analyse the one-dimensional prototype, collocation with $N$ modes in the Laplacian eigenbasis of an interval $\tilde I = (-\delta, L+\delta)$ that contains the physical domain $I = (0,L)$, for the Poisson operator and for constant-coefficient convection--diffusion. The explanation proceeds in four steps, each resting on results stated in Section~\ref{sec:results}.

\paragraph{1. The exact method is unstable.} The interpolatory collocation solution is $u_N = \Lop^{-1}\Pop_N f$, the continuous solution operator applied to the interpolant of the data in the operator-dependent test space $\Psi_N = \Lop(\Phi_N)$, where $\Phi_N$ is the trial space with the boundary conditions built in (Theorem~\ref{thm:representation}). If interpolation in the test space converges geometrically for all data analytic in a fixed neighbourhood of $\bar I$, the Lebesgue constant of this interpolation grows exponentially (Proposition~\ref{prop:impossibility}), and the exact solution amplifies errors in the data by up to that constant: computed exactly, the collocation solution of the one-dimensional test problem converges for exactly evaluated data but diverges beyond a threshold for data rounded to double precision (Remark~\ref{rem:divergence} and Section~\ref{sec:num_1d}). The exponential rates of the trial frame are explicit. In the Chebyshev variable the frame is a polynomial space on a subinterval of $[-1,1]$ sampled at nodes with the restricted arcsine density, and the growth rate $\kappa_\Lambda$ of its Lebesgue constant and the decay rate $\kappa_\sigma$ of its smallest singular value are differences of the logarithmic potential of the node measure, given in closed form by the Clausen function of the angle $\omega = \pi L/(L+2\delta)$ (Theorem~\ref{thm:rates}). For the Poisson operator the Lebesgue constant of the test space grows at the same rate $\kappa_\Lambda$, without any hypothesis on convergence (Corollary~\ref{cor:lebesgue_rate}).

\paragraph{2. Floating-point arithmetic regularizes.} The accuracy of the method is therefore a property of floating-point arithmetic. The $L^2$ error of an arbitrary coefficient vector, in particular of the output of an LU or QR factorization, is bounded by its nodal residual, its distance to any competitor, and two constants of the sampled frame at a threshold $t$: the leakage $\gamma_t$ of the singular subspace below $t$ into the solution, and the restricted Marcinkiewicz--Zygmund constant $\Gamma_t$ of the complementary subspace (Theorem~\ref{thm:two_constant}). A backward-stable solver with backward error $\eta$ thus attains the accuracy of a frame approximation regularized at the level of the backward error, provided that the perturbed matrix has no singular value far below $\eta$ (Corollary~\ref{cor:regimes_two_constant}), and for data analytic in a sufficiently large complex neighbourhood of the extended domain its error is bounded by a polynomial in $N$, uniformly in the conditioning of the system (Theorem~\ref{thm:competitor} and Corollary~\ref{cor:unconditional}). Evaluated with the exactly computed constants, the bound reproduces the error floor of an LU solve to within a factor of about ten (Section~\ref{sec:num_bound}).

\paragraph{3. An equilibrium law determines the two constants.} It remains to determine $\gamma_t$ and $\Gamma_t$. At a fixed threshold they are bounded uniformly in $N$ (Proposition~\ref{prop:apriori}). For the Poisson operator, at the end of the spectrum, $t = e^{-N\kappa_\sigma}$, they are at most $t^{a(\omega)}$ and $t^{a(\omega)-1}$ up to powers of $N$, with the exponent $a(\omega) = 1 - \kappa_\Lambda/\kappa_\sigma$ formed from the rates of Step~1 (Corollary~\ref{cor:exponent} and Proposition~\ref{prop:classical}), and we determine them along the whole tail of the spectrum: at the threshold $t = e^{-N\tau}$, $0 < \tau < \kappa_\sigma$,
\[
\max\{\gamma_t,\ t\Gamma_t\} = e^{Nr_\omega(\tau) + o(N)} \qquad (N \to \infty),
\]
where the \emph{tail law} $r_\omega(\tau)$ is given by a constrained equilibrium problem of logarithmic potential theory: an energy problem for probability measures bounded by the node density, extended to the whole extended interval, in the external field $-\tau$ on the physical interval (Theorem~\ref{thm:tail_law}). Its minimizer equals the node density on a layer at each end of the physical interval and leaves a gap next to it in the extension (Proposition~\ref{prop:equilibrium} and Corollary~\ref{cor:layer}). It describes both the extremal polynomials, which vanish at every node of the layer, and the optimal Lagrange interpolation sets, on subsets of the nodes and on virtual points of the extension, and the proof matches lower bounds from the former against upper bounds from the latter (Theorems~\ref{thm:tail_lower} and \ref{thm:dual} and Corollary~\ref{cor:sharpness}); the tail law is also the value of a two-level variant of Rakhmanov's extremal problem for polynomials bounded on a discrete set \cite{rakhmanov2007bounds} (Proposition~\ref{prop:extremal}).

\paragraph{4. The structure of the law decides the power law.} The equilibrium problems of the extensions $\delta$ and $L^2/(4\delta)$ are exactly dual (Theorem~\ref{thm:tail_duality}), so that the tail law is linear for the self-dual extension $\delta = L/2$, $r_{\pi/2}(\tau) = -\tau/2$, and $\max\{\gamma_t,\ t\Gamma_t\}$ behaves like $t^{1/2}$ at the exponential scale (Corollary~\ref{cor:self_dual}). For every extension the law starts with the slope $-2\delta/(L+2\delta)$, minus the fraction of the extended domain occupied by the extension; the proof builds near-optimal zero distributions from the explicit solution of a boundary-layer problem at the ends of the physical interval (Theorem~\ref{thm:initial_slope}). This fraction is smaller than the endpoint exponent $a(\omega)$ exactly when $\delta < L/2$ (Proposition~\ref{prop:critical}). Consequently, for every $\delta < L/2$ the leakage obeys no power law $\gamma_t \le Ct^{a(\omega)}$ with $C$ independent of $N$ and $t$, whereas for $\delta > L/2$ the tail law starts below the rate of that power law (Corollary~\ref{cor:critical}); for $\delta = L$ and $\delta = 2L$, certified interpolation configurations prove its bounds up to a power of $N$ along most of the tail (Corollary~\ref{cor:certified_instances}). The power law is the counterpart, for the sampled frame and in the solution metric, of a bound conjectured for the Fourier extension in \cite[Eqs.~(5.20) and (5.21)]{adcock2014numerical}.

Two further results complete the picture. One might expect convection, whose Green's function decays exponentially upstream, to localize the errors introduced by the frame. It does not appear to: the operator enters only through the test space, convection with coefficient $k$ shifts the phase of the $j$-th mode of the test space by at most $k(L+2\delta)/(j\pi)$, which is small for the high modes where the ill-conditioning resides (Lemma~\ref{lem:phase}), and in controlled experiments in one and two dimensions and on a disk the convergence rate does not depend on the operator and, at moderate P\'eclet numbers, the floor depends on it only weakly (Section~\ref{sec:numerics}). For the oversampled formulation with truncated singular value decomposition, an error bound of Adcock--Huybrechs type holds with a Marcinkiewicz--Zygmund constant of the test space as stability constant (Theorem~\ref{thm:regularized}); computed exactly, this constant grows exponentially for the square system and far more slowly under fourfold oversampling, which attains errors of a few tens of units of roundoff (Section~\ref{sec:num_solvers}).

\paragraph{Related work.} The approximation theory of frames explains the stability of the \emph{regularized least-squares} formulation. The Fourier extension problem is exponentially ill-conditioned \cite{huybrechs2010fourier, adcock2014numerical}, but with oversampling and truncated singular value decomposition the computed approximation satisfies an $\eps$-regularized best-approximation bound, its coefficients remain bounded, and the error converges to the level of the truncation parameter $\eps$ \cite{adcock2014numerical, adcock2019frames, adcock2020frames2}: the frame is ill-conditioned, but the numerical method is stable. Extensions to arbitrary domains and fast solvers followed \cite{matthysen2018function, coppe2020az}, and Adcock and Shadrin \cite{adcock2023fast} proved that polynomial frames on an extended interval give a well-conditioned approximation from equispaced samples whose error decays exponentially down to a prescribed tolerance, under linear oversampling. The two constants of Step~2 are the counterparts of those of \cite[Thm.~3.7]{adcock2020frames2} (Section~\ref{sec:res_fp}), here for an arbitrary solver, for collocation and in the solution metric; the square system lies outside the regularized theory, which requires truncation, and oversampling for accuracy at the level of the truncation parameter.

That rounding can stabilize an ill-conditioned computation was observed before this theory. For the collocation form of the Fourier extension, Huybrechs \cite{huybrechs2010fourier} found the computed solution accurate to machine precision on the interval although it does not resemble the exact least-squares solution, both with an explicit truncated singular value decomposition and with MATLAB's backslash. For the method of fundamental solutions, whose collocation matrices are also exponentially ill-conditioned, the ill-conditioning was found not to affect the quality of the solution \cite{kitagawa1988numerical, fairweather1998method, chen2006comments}, and Barnett and Betcke \cite{barnett2008stability} argued that the achievable accuracy is set by the unit roundoff times the norm of the coefficient vector. The effective condition number \cite{chan1988effectively, christiansen1994effective, li2008effective} accounts for the direction of the right-hand side, which is insignificant for Gaussian elimination and QR factorization, whose backward errors are general perturbations of the matrix \cite[Sec.~3]{li2008effective}. For a basis rather than a frame the ill-conditioning does limit the accuracy of a direct solve, as for flat radial basis functions \cite{fornberg2011stable} and Vandermonde matrices \cite{brubeck2021vandermonde}: a basis admits only one representation, and when its coefficients are large there is no bounded alternative for rounding to find. Closest to the present paper, Herremans and Huybrechs \cite{herremans2025function} prove that approximation in the numerical span of a rank-deficient basis behaves like approximation in exact arithmetic with a penalty proportional to the size of the coefficients times the unit roundoff, and that regularized solutions of backward-stable solvers attain this accuracy \cite[Thm.~20]{herremans2025function}; for an unregularized solve they bound the residual a posteriori \cite[Thm.~18]{herremans2025function}, and the error remains open because a rounding perturbation can make the smallest singular value of the perturbed matrix arbitrarily small \cite[Sec.~4.2.1 and Rem.~19]{herremans2025function}. They also show that uniform sampling is near-optimal in finite precision for the Fourier extension frame and random samples \cite[Sec.~7]{herremans2025function}; equispaced interpolation, the square case, is not covered. Corollary~\ref{cor:regimes_two_constant}(iii) supplies the square case under a hypothesis on the smallest singular value of the perturbed matrix.

On the potential-theoretic side, equilibrium problems with an upper constraint were introduced by Rakhmanov \cite{rakhmanov1996equilibrium} for polynomials of a discrete variable and extended to external fields by Dragnev and Saff \cite{dragnev1997constrained}, see also \cite{kuijlaars1999extremal}; Rakhmanov \cite{rakhmanov2007bounds} characterized by such a problem the size of polynomials bounded on a discrete set, and Geronimo and Liechty \cite{geronimo2020fourier} used the constrained equilibrium measure in the Riemann--Hilbert asymptotics of the least-squares Fourier extension from equispaced samples. Here a constrained equilibrium problem with an external field determines the stability constants of the sampled frame along the whole tail of its spectrum.

\paragraph{Scope.} The analysis of the two constants is one-dimensional; Section~\ref{sec:multi} extends the representation and the ill-conditioning to tensor-product domains, and two-dimensional boxes and a disk with relocated boundary nodes are treated numerically. The tail law and its consequences are proved for the Poisson operator and at the exponential scale: they determine the two constants at the thresholds $e^{-N\tau}$ up to factors $e^{o(N)}$, not at a fixed threshold $t > 0$ as $N \to \infty$, where the constants set the accuracy floor and are computed exactly (Section~\ref{sec:num_bound}). Whether, for $\delta > L/2$, the tail law lies below the rate of the power law along the whole tail, and not only at its start, is open (Section~\ref{sec:conclusion}).

Section~\ref{sec:setting} introduces the notation and states the main results in the order of the four steps, with the dependence between them. Sections~\ref{sec:structure}--\ref{sec:tail} prove them: Section~\ref{sec:structure} the exact structure, Section~\ref{sec:potential} the rates, Section~\ref{sec:fp} the floating-point bounds and Section~\ref{sec:tail} the tail law and its structure. Section~\ref{sec:regularized} treats the regularized oversampled formulation, Section~\ref{sec:multi} tensor-product domains and curved boundaries, and Section~\ref{sec:numerics} the controlled experiments; Section~\ref{sec:conclusion} concludes. The standard facts that the proofs use, about the Dirichlet problem, the Clausen function and regularized least squares, are collected with proofs in Appendix~\ref{app:standard}; the technical estimates behind Theorem~\ref{thm:rates}, Proposition~\ref{prop:apriori} and Theorems~\ref{thm:competitor}--\ref{thm:dual} are in Appendices~\ref{app:rates}--\ref{app:tail}, and Appendix~\ref{app:start} contains the boundary layer behind Theorem~\ref{thm:initial_slope}.

\section{Setting and Main Results}
\label{sec:setting}

We consider a linear second-order operator $\Lop$ on a physical domain $I = (0, L)$, with closure $\bar I = [0,L]$, discretized with basis functions defined on the larger domain $\tilde{I} = (-\delta, L+\delta)$, $\delta > 0$.
\subsection{Basis and Discretization}

\begin{definition}[Laplacian Eigenbasis on the Extended Domain]
The basis functions $\{w_j\}_{j\ge 1}$ are the normalized eigenfunctions of $-d^2/dx^2$ on $\tilde{I}$ with homogeneous Dirichlet conditions,
\begin{equation} \label{eq:basis_func}
w_j(x) = \sqrt{\frac{2}{\tilde L}} \sin\bigl(\mu_j (x+\delta)\bigr), \qquad \mu_j = \frac{j\pi}{\tilde L},\quad \lambda_j = \mu_j^2, \quad \tilde L := L+2\delta .
\end{equation}
\end{definition}

\begin{figure}[ht!]
    \centering
    \includegraphics[width=0.9\textwidth]{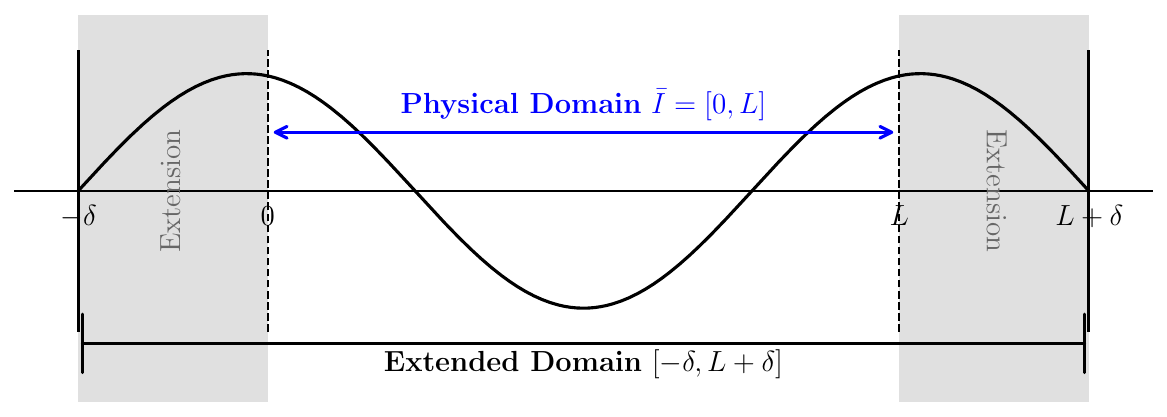}
    \caption{The extended-domain method in one dimension. The physical problem on $\bar I=[0,L]$ is embedded in $[-\delta, L+\delta]$; the basis functions $w_j$ vanish at $-\delta$ and $L+\delta$ but in general not at the physical endpoints.}
    \label{fig:domain_geometry}
\end{figure}

The geometry is shown in Figure~\ref{fig:domain_geometry}. The functions $\{w_j\}$ form an orthonormal basis of $L^{2}(\tilde{I})$, so their restrictions to $I$ form a tight frame of $L^2(I)$ with frame bound $1$, as do the restricted Fourier bases of \cite[Example~1]{adcock2019frames}: $\sum_j |\langle f, w_j\rangle_{L^2(I)}|^2 = \norm{f}_{L^2(I)}^2$ for $f \in L^2(I)$, by Parseval's identity for the extension of $f$ by zero. Since $\norm{w_j}_{L^2(I)}^2 = L/\tilde L + \sin(2\pi j\delta/\tilde L)/(\pi j)$ tends to $L/\tilde L$, the rescaled functions $(\tilde L/L)^{1/2} w_j$ are asymptotically normalized and form a tight frame with frame bound $\tilde L/L$; this ratio measures the redundancy of the frame \cite[Sec.~2.2]{huybrechs2010fourier} and grows with $\delta$.

\begin{definition}[Interpolatory Collocation]
\label{def:collocation}
The solution $u_N = \sum_{j=1}^N c_j w_j$ is sought in the space $V_N = \operatorname{span}\{w_1, \dots, w_N\}$. With the equispaced nodes $x_m = (m-1)h$, $h = L/(N-1)$, $m = 1,\dots,N$, and the equation $\Lop u = f$ with Dirichlet data $u(0)=g_0$, $u(L)=g_L$, the coefficients solve the square system $\mat{A}\vect{c}=\vect{f}$,
\begin{equation}\label{eq:square_system}
(\mat{A})_{mj} = \begin{cases} w_j(x_m), & m \in \{1, N\},\\ (\Lop w_j)(x_m), & 2 \le m \le N-1,\end{cases}\qquad
(\vect f)_m = \begin{cases} g_0,\ g_L, & m = 1,\ N,\\ f(x_m), & 2 \le m \le N-1. \end{cases}
\end{equation}
\end{definition}

For the two operators considered throughout,
\begin{align}
\text{Poisson } (\Lop_P = -d^2/dx^2):\qquad & \Lop w_j = \lambda_j w_j, \label{eq:phi_poisson}\\
\text{convection--diffusion } (\Lop_{CD} = -d^2/dx^2 + k\,d/dx):\qquad & \Lop w_j = \lambda_j w_j + k\,w_j'. \label{eq:phi_cd}
\end{align}
The coefficient $k$ is constant unless stated otherwise; it is evaluated pointwise at the nodes, so no approximation of $k$ is involved. Replacing two rows of the system by the boundary values is algebraically equivalent to basis recombination, i.e.\ to restricting the trial space to functions satisfying the boundary conditions; Definition~\ref{def:spaces} makes this explicit.

\subsection{Trial Space, Test Space, and the Constants of the Sampled Frame}
\label{sec:constants}

\begin{definition}[Trial Space with Boundary Conditions, Test Space]
\label{def:spaces}
$\Phi_N := \{v \in V_N : v(0) = v(L) = 0\}$ and $\Psi_N := \Lop(\Phi_N) = \{\Lop v : v \in \Phi_N\}$.
\end{definition}

\begin{assumption}[Unisolvence]
\label{ass:unisolvence}
Interpolation at the interior nodes $\{x_m\}_{m=2}^{N-1}$ is unisolvent in $\Psi_N$; equivalently (Lemma~\ref{lem:dimensions}), $\mat A$ in \eqref{eq:square_system} is nonsingular.
\end{assumption}

Every collocation method assumes that its square matrix is nonsingular. For the convection--diffusion operator this is a genuine hypothesis; for the Poisson operator Section~\ref{sec:structure} reduces it to two explicit nonvanishing conditions, which hold for all sufficiently large $N$ (Proposition~\ref{prop:unisolvence_poisson}).

Under Assumption~\ref{ass:unisolvence}, $\Pop_N: C(\bar I) \to \Psi_N$ denotes interpolation at the interior nodes: for $g \in C(\bar I)$, $\Pop_N g$ is the unique $\psi \in \Psi_N$ with $\psi(x_m) = g(x_m)$ for $2 \le m \le N-1$. Its Lagrange functions are $\ell_m^\Psi \in \Psi_N$, $2 \le m \le N-1$, with $\ell_m^\Psi(x_{m'}) = \delta_{mm'}$, so that $\Pop_N g = \sum_{m=2}^{N-1} g(x_m)\,\ell_m^\Psi$. The continuous solution operator of the homogeneous Dirichlet problem for $\Lop u = -u'' + k(x)u'$, $k \in C(\bar I)$, is $\Lop^{-1}$ of Lemma~\ref{lem:dirichlet_1d}(ii) in Appendix~\ref{app:standard}, with the Green's function $G \ge 0$ of \eqref{eq:green_general}; by Lemma~\ref{lem:dirichlet_1d}(iii),
\begin{equation}\label{eq:Linv}
\begin{aligned}
(\Lop^{-1} g)(x) &= \int_0^L G(x,y)\, g(y)\, dy, \\
\norm{\Lop^{-1}}_\infty &:= \sup_{g\neq 0}\frac{\norm{\Lop^{-1}g}_{L^\infty(I)}}{\norm{g}_{L^\infty(I)}} = \max_{x\in\bar I}\int_0^L G(x,y)\,dy.
\end{aligned}
\end{equation}

\begin{definition}[Lebesgue Constant]
\label{def:lebesgue}
Let $X$ be an $n$-dimensional space of continuous functions on $\bar I$ and $\{y_m\}_{m=1}^n \subset \bar I$ nodes at which interpolation in $X$ is unisolvent. The interpolation operator $\Iop_{X}: C(\bar I) \to X$ maps $g$ to the element of $X$ agreeing with $g$ at the nodes, and its Lebesgue constant is
\begin{equation}
\Lambda(X) = \sup_{g \ne 0} \frac{\norm{\Iop_{X} g}_{L^\infty(I)}}{\norm{g}_{L^\infty(I)}} = \max_{x \in \bar I} \sum_{m=1}^n \abs{\ell_m(x)},
\end{equation}
with the Lagrange functions $\ell_m \in X$, $\ell_m(y_{m'}) = \delta_{mm'}$. We write $\Lambda_N := \Lambda(V_N)$ for the trial space with all $N$ nodes, and $\Lambda_N^\Psi := \Lambda(\Psi_N)$ for the test space of Definition~\ref{def:spaces} with the interior nodes.
\end{definition}

For the floating-point analysis the trial space with boundary conditions is written in a basis $\{\varphi_j\}_{j=1}^{n}$ of $\Phi_N$, $n = N-2$. The choice used throughout is $\varphi_j := w_{j+2} - a_j w_1 - b_j w_2$, $1 \le j \le N-2$: each of the modes $w_3, \dots, w_N$ is corrected by the two lowest modes so that $\varphi_j(0) = \varphi_j(L) = 0$, which fixes $a_j$ and $b_j$ uniquely because the boundary functionals are linearly independent on $\operatorname{span}\{w_1,w_2\}$ (see the proof of Lemma~\ref{lem:dimensions}); explicitly, with $s_i := \sin(i\theta_0)$ and $\theta_0 := \pi\delta/\tilde L$, $a_j = s_{j+2}/s_1$, $b_j = 0$ for odd $j$ and $a_j = 0$, $b_j = s_{j+2}/s_2$ for even $j$. The $\varphi_j$ lie in $\Phi_N$, are linearly independent because $w_{j+2}$ occurs in $\varphi_j$ only, and are $N-2$ in number, hence a basis.

The functions $\psi_j := \Lop\varphi_j$ span $\Psi_N$, $\mat B_{mj} := \psi_j(x_m)$ for sampling nodes $x_1, \dots, x_M \in \bar I$ ($M \ge n$; for the interpolatory method $M = n$ and these are the interior nodes of the $N$-grid, relabelled), $\mat D = \operatorname{diag}(1/\norm{\mat B\vect e_j}_2)$ normalizes the columns, where $\vect e_j$ denotes the $j$-th unit vector, here of $\mathbb R^n$, so that $\mat B\vect e_j$ is the $j$-th column of $\mat B$, which we assume to be nonzero (for $M = n$ this holds under Assumption~\ref{ass:unisolvence}, which makes $\mat B$ nonsingular), and $\mat B_D := \mat B\mat D = \mat U\mathbf\Sigma\mat V^T$ is its thin singular value decomposition: $\mathbf\Sigma = \operatorname{diag}(\sigma_1, \dots, \sigma_n)$ with $\sigma_1 \ge \dots \ge \sigma_n \ge 0$, $\mat U \in \mathbb R^{M\times n}$ and $\mat V \in \mathbb R^{n\times n}$ have orthonormal columns, and the $i$-th columns $\vect u_i$ of $\mat U$ and $\vect v_i$ of $\mat V$, the left and right singular vectors, satisfy $\mat B_D\vect v_i = \sigma_i\vect u_i$. A coefficient vector $\vect y$ in the normalized basis defines the function $\Phi\mat D\vect y := \sum_j (\mat D\vect y)_j\varphi_j \in \Phi_N$.

The obstacle to bounding the error of the computed solution is the passage from the nodes to the interval, which for the interpolatory node set costs an exponentially large factor (Section~\ref{sec:num_solvers}). The following definition splits the coefficient space into the part that the nodes resolve and the part that they do not, and attaches a constant to each, as the regularized-frame theory does \cite{adcock2014numerical, adcock2020frames2}; the two-constant bound of Theorem~\ref{thm:two_constant} rests on it.

\begin{definition}[Leakage and Restricted Marcinkiewicz--Zygmund Constants]
\label{def:two_constants}
Denote by $\sigma_1 \ge \dots \ge \sigma_n$ the singular values of $\mat B_D$ and by $\vect v_1, \dots, \vect v_n$ its right singular vectors. For a threshold $t \ge 0$ let $\mathcal V_t := \operatorname{span}\{\vect v_i : \sigma_i \le t\}$ and $\mathcal V_t^{\perp} := \operatorname{span}\{\vect v_i : \sigma_i > t\}$, the spans of the singular vectors that the nodes see to size at most $t$ and to size above $t$, and define
\begin{equation}\label{eq:two_constants}
\gamma_t := \sup_{\vect w \in \mathcal V_t\setminus\{0\}} \frac{\norm{\Phi\mat D\vect w}_{L^2(I)}}{\norm{\vect w}_2}, \qquad
\Gamma_t := \sup_{\vect w \in \mathcal V_t^{\perp}\setminus\{0\}} \frac{\norm{\Phi\mat D\vect w}_{L^2(I)}}{\norm{\mat B_D\vect w}_2},
\end{equation}
with $\gamma_t := 0$ if $\mathcal V_t = \{0\}$ and $\Gamma_t := 0$ if $\mathcal V_t^\perp = \{0\}$.
\end{definition}

The leakage $\gamma_t$ measures how large a function of $\Phi_N$ can be on $I$ when its coefficient vector is a unit vector that the nodes see only to size $t$; $\Gamma_t$ is the discrete-to-continuous constant of the well-resolved subspace. Both are computable as square roots of extremal generalized eigenvalues of the Gram matrix $\mat D\bigl(\int_I \varphi_j\varphi_k\bigr)\mat D$ in the singular basis (Section~\ref{sec:num_bound}). For $t = 0$ and $\mat B$ of full column rank, $(M/L)^{1/2}\Gamma_0$ is the Marcinkiewicz--Zygmund constant $C_M$ of Definition~\ref{def:mz} in Section~\ref{sec:regularized} with the trial function $\Phi\mat D\vect w$ in place of the test function $\Lop\Phi\mat D\vect w$ in the numerator, and $\Gamma_0$ grows exponentially whenever the leakage of the last singular vector decays at a smaller exponential rate than $\sigma_n$ (Corollary~\ref{cor:regimes_two_constant}(i)), as it does in the computations of Section~\ref{sec:num_bound}; for the Poisson operator its rate is $\kappa_\Lambda$ (Corollary~\ref{cor:lebesgue_rate}).

For $M \ge n$ nodes, the data $\vect f = (f(x_m))_{m=1}^M$ and $\eps > 0$ define the $\eps$-regularized solution
\begin{equation}\label{eq:tsvd}
\vect c_\eps := \mat D\, \mat V \mathbf\Sigma_\eps^{\dagger}\mat U^T \vect f, \qquad (\mathbf\Sigma_\eps^\dagger)_{ii} = \begin{cases}1/\sigma_i, & \sigma_i > \eps,\\ 0, & \sigma_i \le \eps,\end{cases}
\qquad u_N^\eps := \sum_{j=1}^n (\vect c_\eps)_j \varphi_j \in \Phi_N .
\end{equation}
For $M = n$, the interior nodes of the $N$-grid and $\eps \to 0$ this is the interpolatory method.

\subsection{The Chebyshev Variable and the Node Potential}
\label{sec:variable}

The rates of Section~\ref{sec:results} are expressed in the variable in which the sine frame becomes a polynomial space.

\begin{lemma}[Polynomial Form of the Frame]
\label{lem:chebyshev_variable}
Let $\theta = \pi(x+\delta)/\tilde L$ and $z = \cos\theta$. Then $w_j(x) = \sqrt{2/\tilde L}\,\sin\theta\, U_{j-1}(z)$ with $U_k$ the Chebyshev polynomial of the second kind, so that $V_N = \sqrt{2/\tilde L}\,\sin\theta\cdot\mathbb P_{N-1}(z)$. The extended domain $\tilde I$ corresponds to $z \in (-1,1)$ and the physical domain $I$ to $z \in (-c_0, c_0)$, $c_0 = \cos\theta_0$, $\theta_0 = \pi\delta/\tilde L$; we write $\omega := \pi - 2\theta_0 = \pi L/\tilde L \in (0,\pi)$. For $v = \sum_j c_j w_j = \sqrt{2/\tilde L}\,\sin\theta\, q(z)$, $q = \sum_j c_j U_{j-1}$,
\begin{equation}\label{eq:chebyshev_norms}
\norm{v}_{L^2(\tilde I)}^2 = \sum_j c_j^2 = \frac{2}{\pi}\int_{-1}^{1} q(z)^2\sqrt{1-z^2}\,dz, \qquad
\norm{v}_{L^2(I)}^2 = \frac{2}{\pi}\int_{-c_0}^{c_0} q(z)^2\sqrt{1-z^2}\,dz .
\end{equation}
The equispaced nodes $x_m$ correspond to $\theta_m = \theta_0 + (m-1)\omega/(N-1)$ and $z_m = \cos\theta_m$; their normalized counting measures converge weakly to
\begin{equation}\label{eq:node_measure}
d\mu_\omega(z) = \frac{dz}{\omega\sqrt{1-z^2}}\quad\text{on } [-c_0, c_0],
\end{equation}
the arcsine density of $[-1,1]$ restricted to $[-c_0,c_0]$ and renormalized by the factor $\pi/\omega = \tilde L/L$.
\end{lemma}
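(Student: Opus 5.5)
The plan is to prove the four assertions of Lemma~\ref{lem:chebyshev_variable} in order, each by direct substitution. The only step that is not pure algebra is the weak convergence of the node measures.

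\emph{Polynomial form.} By \eqref{eq:basis_func}, $\mu_j(x+\delta) = j\theta$, so $w_j(x) = \sqrt{2/\tilde L}\,\sin(j\theta)$. The defining identity $\sin(j\theta) = \sin\theta\,U_{j-1}(\cos\theta)$ of the Chebyshev polynomials of the second kind then gives the stated formula for $w_j$. Since $U_0,\dots,U_{N-1}$ have exact degrees $0,\dots,N-1$, they form a basis of $\mathbb P_{N-1}$, and therefore $V_N = \sqrt{2/\tilde L}\,\sin\theta\cdot\mathbb P_{N-1}(z)$.

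\emph{Domains.} The map $x \mapsto \theta$ is increasing and sends $\tilde I$ onto $(0,\pi)$. Since $\cos$ is decreasing on $(0,\pi)$, $\tilde I$ corresponds to $z \in (-1,1)$. The endpoint $x = 0$ goes to $\theta_0 = \pi\delta/\tilde L$, and $x = L$ goes to $\pi(L+\delta)/\tilde L = \pi - \theta_0$. Hence $I$ corresponds to $z \in (-\cos\theta_0, \cos\theta_0) = (-c_0,c_0)$. We also have $\pi - 2\theta_0 = \pi(\tilde L - 2\delta)/\tilde L = \pi L/\tilde L$, which is $\omega$.

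\emph{Norms \eqref{eq:chebyshev_norms}.} The identity $\norm{v}_{L^2(\tilde I)}^2 = \sum_j c_j^2$ follows from orthonormality of the $w_j$. For the integrals, substitute $dx = (\tilde L/\pi)\,d\theta$ and then $z = \cos\theta$, so that $d\theta = dz/\sqrt{1-z^2}$ and $\sin\theta = \sqrt{1-z^2}$ on $(0,\pi)$. The integrand $v^2 = (2/\tilde L)\sin^2\theta\, q(z)^2$ becomes
\[
\frac{2}{\tilde L}\cdot\frac{\tilde L}{\pi}\,(1-z^2)\,q(z)^2\,\frac{dz}{\sqrt{1-z^2}} = \frac{2}{\pi}\,q(z)^2\sqrt{1-z^2}\,dz .
\]
Over $\tilde I$ this gives the first identity, and over $I$, with limits $\pm c_0$ and orientation reversed by $\cos$, it gives the second. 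As a consistency check, the first identity reduces to the orthonormality of the $U_k$ with respect to $(2/\pi)\sqrt{1-z^2}\,dz$.

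\emph{Node measure.} From $x_m = (m-1)L/(N-1)$ we get $\theta_m = \theta_0 + \pi(m-1)L/((N-1)\tilde L) = \theta_0 + (m-1)\omega/(N-1)$. The normalized counting measure of the $\theta_m$ is therefore the uniform Riemann-sum measure on $[\theta_0,\pi-\theta_0]$. This is the one step with a limit in it, and it is handled by Riemann sums: for every $g \in C([\theta_0,\pi-\theta_0])$,
\[
\frac1N\sum_m g(\theta_m) \to \frac1\omega\int_{\theta_0}^{\pi-\theta_0} g\,d\theta ,
\]
which is weak convergence to $d\theta/\omega$. Pushing forward under the continuous map $\theta\mapsto\cos\theta$ preserves weak convergence. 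It sends $d\theta/\omega$ to $dz/(\omega\sqrt{1-z^2})$ on $[-c_0,c_0]$, which is $d\mu_\omega$ in \eqref{eq:node_measure}. This density is $\pi/\omega$ times the arcsine density $dz/(\pi\sqrt{1-z^2})$, and $\pi/\omega = \tilde L/L$ as stated.
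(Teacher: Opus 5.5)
Your proposal is correct and follows the paper's proof. Like the paper, you use the identity $\sin(j\theta)=\sin\theta\,U_{j-1}(\cos\theta)$, the substitutions $dx=(\tilde L/\pi)\,d\theta$ and $\sin\theta\,d\theta=-dz$ for the norms, and Riemann sums in $\theta$ for the weak convergence of the node measures; the extra details you supply, namely that $U_0,\dots,U_{N-1}$ form a basis of $\mathbb P_{N-1}$ and that weak convergence is preserved by the push-forward under $\cos$, are exactly what the paper leaves implicit.
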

\begin{proof}
$\sin(j\theta) = \sin\theta\,U_{j-1}(\cos\theta)$; the first norm identity is the orthonormality of the $w_j$, and the integrals follow from $dx = (\tilde L/\pi)d\theta$ and $\sin\theta\,d\theta = -dz$, which also give $\int_{-1}^1 U_{j-1}U_{k-1}\sqrt{1-z^2}\,dz = (\pi/2)\delta_{jk}$; the nodes are equispaced in $\theta$, and the weak convergence of their counting measures is the convergence of Riemann sums for continuous functions of $\theta$.
\end{proof}

Thus interpolation in $V_N$ is weighted polynomial interpolation of degree $N-1$ on $[-c_0,c_0]$, with the coefficient norm of the larger interval $[-1,1]$, and the relevant object is the logarithmic potential of the node measure,
\begin{equation}\label{eq:potential}
U_\omega(z) := -\int \log|z-s|\,d\mu_\omega(s) = -\frac{1}{\omega}\int_{\theta_0}^{\pi-\theta_0}\log|z - \cos\varphi|\,d\varphi .
\end{equation}

The rates below are expressed through the Clausen function \cite[Eq.~(4.9)]{lewin1981polylogarithms}
\[
\mathrm{Cl}_2(x) := -\int_0^x\log|2\sin(t/2)|\,dt ;
\]
its elementary properties are collected in Lemma~\ref{lem:cl2} of Appendix~\ref{app:standard}, and its value $\mathrm{Cl}_2(\pi/2) = \sum_{k\ge0}(-1)^k(2k+1)^{-2} = 0.91596\ldots$ is Catalan's constant \cite[Eq.~(4.14)]{lewin1981polylogarithms}.

\paragraph{Analyticity regions.} For $\rho > 1$ let
\begin{equation}\label{eq:ellipse}
E_\rho := \Bigl\{\tfrac{c_0}{2}\bigl(w + w^{-1}\bigr) : \rho^{-1} < |w| < \rho\Bigr\}
\end{equation}
be the open Bernstein ellipse of $[-c_0,c_0]$ with parameter $\rho$: the open region bounded by the ellipse with foci $\pm c_0$ and semi-axes $\tfrac{c_0}{2}(\rho \pm \rho^{-1})$, which contains $[-c_0,c_0]$. Put
\begin{equation}\label{eq:rho_star}
\rho^\ast := \frac{1 + \sin\theta_0}{\cos\theta_0} = \tan\Bigl(\frac\pi4 + \frac{\theta_0}{2}\Bigr) > 1 .
\end{equation}
Since $\tfrac{c_0}{2}(\rho^\ast + 1/\rho^\ast) = 1$, the ellipse $E_{\rho^\ast}$ has real semi-axis one, so that $E_\rho \cap \mathbb R \subset (-1,1)$ for $\rho \le \rho^\ast$ with equality for $\rho = \rho^\ast$, and $\pm1 \notin E_\rho$. The principal branch of $\arccos$ is analytic on $\mathbb C\setminus\bigl((-\infty,-1]\cup[1,\infty)\bigr) \supset E_{\rho^\ast}$, and for $1 < \rho \le \rho^\ast$
\begin{equation}\label{eq:omega_rho}
\Omega_\rho := \bigl\{(\tilde L/\pi)\arccos z - \delta : z \in E_\rho\bigr\}
\end{equation}
is an open simply connected subset of the $x$-plane that contains $\bar I$ and is mapped conformally onto $E_\rho$ by $x \mapsto \cos\bigl(\pi(x+\delta)/\tilde L\bigr)$; its intersection with $\mathbb R$ is an open interval contained in $\tilde I$, equal to $\tilde I$ for $\rho = \rho^\ast$. Analyticity of the data on $\Omega_\rho$ is the hypothesis under which the test space contains approximations of the data with coefficient vectors of moderate norm (Theorem~\ref{thm:competitor}).

\subsection{Main Results}
\label{sec:results}

The results follow the four steps of the introduction. Step~1, the instability of the exact method, is Section~\ref{sec:res_exact}, with the rates of the frame in Section~\ref{sec:res_rates}; Step~2, the floating-point bound and its uniform form for analytic data, is Sections~\ref{sec:res_fp} and \ref{sec:res_analytic}; Steps~3 and~4, the equilibrium law of the two constants, its duality and its start, are Section~\ref{sec:res_tail}, and the explicit bounds that attain the law are Section~\ref{sec:res_bounds}. Section~\ref{sec:res_dependence} lists the ingredients of each proof. The results of Section~\ref{sec:res_exact} hold for $\Lop u = -u'' + k(x)u'$ with $k \in C(\bar I)$, and those of Section~\ref{sec:res_rates} concern the trial frame alone. From Section~\ref{sec:res_fp} on, $\Lop = \Lop_P$ or $\Lop = \Lop_{CD}$ with constant $k \ge 0$, and the collocation nodes are the interior nodes of the $N$-grid ($M = n$); Theorem~\ref{thm:two_constant} and Corollary~\ref{cor:regimes_two_constant}(ii) allow $M \ge n$ sampling nodes, and the results that need the Poisson operator or Assumption~\ref{ass:unisolvence} say so.

\subsubsection{Exact Arithmetic}
\label{sec:res_exact}

\begin{theorem}[Exact Representation of the Interpolatory Solution]
\label{thm:representation}
Let Assumption~\ref{ass:unisolvence} hold, let $f \in C(\bar I)$, and let $u_N \in V_N$ solve \eqref{eq:square_system} with $g_0 = g_L = 0$. Then $u_N \in \Phi_N$ and
\begin{equation}\label{eq:representation}
u_N = \Lop^{-1} \Pop_N f, \qquad\text{i.e.}\qquad u_N(x) = \int_0^L G(x,y)\,(\Pop_N f)(y)\,dy \quad (x \in \bar I).
\end{equation}
\end{theorem}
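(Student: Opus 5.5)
The plan is to read off the structure of the square system row by row and then use uniqueness of the continuous Dirichlet problem. Let $u_N \in V_N$ solve \eqref{eq:square_system} with $g_0 = g_L = 0$.

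\emph{Step 1.} The rows $m = 1$ and $m = N$ state $u_N(x_1) = u_N(0) = 0$ and $u_N(x_N) = u_N(L) = 0$. Hence $u_N \in \Phi_N$ by Definition~\ref{def:spaces}.

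\emph{Step 2.} Set $\psi := \Lop u_N$. Then $\psi \in \Lop(\Phi_N) = \Psi_N$, and the interior rows say $\psi(x_m) = f(x_m)$ for $2 \le m \le N-1$. Assumption~\ref{ass:unisolvence} makes interpolation at the interior nodes unisolvent in $\Psi_N$. Since $\Pop_N f$ is the unique element of $\Psi_N$ with these nodal values, we get $\Lop u_N = \Pop_N f$.

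\emph{Step 3.} The function $u_N$ is a finite combination of sines, so it is smooth on $\bar I$, and it vanishes at $0$ and $L$. It therefore solves the homogeneous Dirichlet problem $\Lop u = g$ with $g = \Pop_N f \in C(\bar I)$. This $g$ is continuous because $\Psi_N$ consists of combinations of $\lambda_j w_j$ and $k w_j'$, which are smooth. By Lemma~\ref{lem:dirichlet_1d}(ii) that problem has the unique solution $\Lop^{-1} g$. Concretely, $\Lop^{-1}g$ is defined in that lemma, and uniqueness follows from the maximum principle or the Green's function construction for $-u'' + k u'$. Hence $u_N = \Lop^{-1}\Pop_N f$, and the integral formula is \eqref{eq:Linv}.

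\emph{Main obstacle.} The only delicate point is that Lemma~\ref{lem:dirichlet_1d} must apply to $u_N$ in the class where uniqueness is asserted, namely $C^2(\bar I)$ with zero boundary values. I will check this directly: the difference $e := u_N - \Lop^{-1} g$ satisfies $-e'' + k e' = 0$ and $e(0) = e(L) = 0$. Writing $(e'\,E)' = 0$ with the integrating factor $E(x) = \exp\bigl(-\int_0^x k\bigr)$ gives $e' = C/E$. Then $e(L) = C\int_0^L E^{-1} = 0$ forces $C = 0$, so $e \equiv 0$. Everything else is bookkeeping of the rows of $\mat A$.
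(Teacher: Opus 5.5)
Your proof is correct and is the paper's argument: the boundary rows give $u_N \in \Phi_N$, the interior rows together with Assumption~\ref{ass:unisolvence} give $\Lop u_N = \Pop_N f$, and the uniqueness in Lemma~\ref{lem:dirichlet_1d}(ii) identifies $u_N$ with $\Lop^{-1}\Pop_N f$; your integrating-factor check only reproves that uniqueness. One small imprecision is that the theorem allows variable $k \in C(\bar I)$, so $\Pop_N f = -v'' + k v'$ is continuous rather than smooth, but continuity is all that Lemma~\ref{lem:dirichlet_1d}(ii) requires.
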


\begin{corollary}[Exact Error Identity and Lebesgue Bound]
\label{cor:error}
With $u = \Lop^{-1}f$,
\begin{equation}\label{eq:error_bound}
u - u_N = \Lop^{-1}(f - \Pop_N f), \qquad
\norm{u - u_N}_{L^\infty(I)} \le \norm{\Lop^{-1}}_\infty\,\bigl(1 + \Lambda_N^\Psi\bigr)\, \dist_\infty(f, \Psi_N),
\end{equation}
where $\dist_\infty(f,\Psi_N) := \inf_{\psi\in\Psi_N}\norm{f - \psi}_{L^\infty(I)}$.
\end{corollary}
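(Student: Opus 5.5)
The plan is to derive the identity directly from Theorem~\ref{thm:representation} and then obtain the bound by the classical Lebesgue argument, using that $\Pop_N$ is a linear projection onto $\Psi_N$.

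\emph{The identity.} Since $u = \Lop^{-1}f$ and, by Theorem~\ref{thm:representation}, $u_N = \Lop^{-1}\Pop_N f$, linearity of $\Lop^{-1}$ gives
\[
u - u_N = \Lop^{-1}(f - \Pop_N f).
\]
Here $\Lop^{-1}$ is the integral operator with kernel $G$ from \eqref{eq:Linv}, applied to the continuous function $f - \Pop_N f$. This uses that $\Psi_N \subset C(\bar I)$, which holds because the $w_j$ are smooth and $k\in C(\bar I)$.

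\emph{The bound.} Taking $L^\infty$ norms and using the definition of $\norm{\Lop^{-1}}_\infty$ in \eqref{eq:Linv} gives
\[
\norm{u-u_N}_{L^\infty(I)} \le \norm{\Lop^{-1}}_\infty \norm{f - \Pop_N f}_{L^\infty(I)}.
\]
For any $\psi \in \Psi_N$, unisolvence (Assumption~\ref{ass:unisolvence}) gives $\Pop_N\psi = \psi$, since $\psi$ itself interpolates its own nodal values in $\Psi_N$. By linearity of $\Pop_N$,
\[
f - \Pop_N f = (f-\psi) - \Pop_N(f-\psi).
\]
Hence
\[
\norm{f-\Pop_N f}_{L^\infty(I)} \le (1+\Lambda_N^\Psi)\norm{f-\psi}_{L^\infty(I)},
\]
because $\norm{\Pop_N g}_{L^\infty(I)} \le \Lambda_N^\Psi \norm{g}_{L^\infty(I)}$ by Definition~\ref{def:lebesgue}, applied with $X=\Psi_N$ and the interior nodes. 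Taking the infimum over $\psi$ gives \eqref{eq:error_bound}.

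There is only one point to check, and it is a consistency check rather than an obstacle. The sup-norm over $I$ and the maximum over $\bar I$ must agree, which holds by continuity. The Lebesgue constant must also be the operator norm of $\Pop_N$ on $C(\bar I)$ with the sup norm, which is exactly Definition~\ref{def:lebesgue}. Everything else is linearity.
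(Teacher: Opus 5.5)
Your proposal is correct and follows the paper's own proof: the identity comes from linearity of $\Lop^{-1}$ applied to Theorem~\ref{thm:representation}, and the bound is Lebesgue's lemma. That lemma rests on $\Pop_N\psi = \psi$ for $\psi \in \Psi_N$, the decomposition $f - \Pop_N f = (f-\psi) - \Pop_N(f-\psi)$, and the infimum over $\psi$ followed by the factor $\norm{\Lop^{-1}}_\infty$.
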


\begin{proposition}[Geometric Convergence Forces Exponential Instability]
\label{prop:impossibility}
Let $N_1 \ge 3$, let Assumption~\ref{ass:unisolvence} hold for every $N \ge N_1$, let $E \subset \mathbb C$ be a compact set containing $\bar I$ in its interior, and suppose that there are $C > 0$ and $\sigma > 1$ with $\norm{f - \Pop_N f}_{L^\infty(I)} \le C\sigma^{-N}\norm{f}_{L^\infty(E)}$ for all $N \ge N_1$ and all $f$ analytic in a neighbourhood of $E$, $\Pop_N$ acting on complex-valued functions by complex linearity. Then there are $c > 0$ and $\beta > 1$ with $\Lambda_N^\Psi \ge c\,\beta^N$ for all $N \ge N_1$.
\end{proposition}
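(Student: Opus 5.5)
The plan is to adapt the argument of Platte, Trefethen and Kuijlaars \cite{platte2011impossibility}. We argue by contradiction. If $\Lambda_N^\Psi$ grew subexponentially, a splitting argument would give geometric convergence of $\Pop_N$ also for functions analytic only in a thin neighbourhood of $\bar I$. A sequence of functions that vanish at all the nodes, and are therefore annihilated by $\Pop_N$, then contradicts this.

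\emph{Step 1 (splitting).} The only property of $\Pop_N$ used is that it is linear and depends on the node values alone. Let $g$ be continuous on $\bar I$ and let $p$ be a polynomial of degree $K$. Since $p$ is entire, the hypothesis applies to it, and
\[
\norm{g - \Pop_N g}_{L^\infty(I)} \le \bigl(1+\Lambda_N^\Psi\bigr)\norm{g - p}_{L^\infty(I)} + \norm{p - \Pop_N p}_{L^\infty(I)} \le \bigl(1+\Lambda_N^\Psi\bigr)\norm{g-p}_{L^\infty(I)} + C\sigma^{-N}\norm{p}_{L^\infty(E)} .
\]
Let $E_R$ be the Bernstein ellipse of $\bar I$ that contains $E$. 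Then $\norm{p}_{L^\infty(E)} \le R^K\norm{p}_{L^\infty(I)}$ by the Bernstein--Walsh lemma. We take $K = \lfloor\alpha N\rfloor$ with $\alpha>0$ so small that $R^\alpha < \sigma^{1/2}$. The second term is then at most $C'\sigma^{-N/2}\norm{p}_{L^\infty(I)}$.

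\emph{Step 2 (node-annihilated test functions).} We choose $g = g_N$ that vanishes at all the interior nodes, so that $\Pop_N g_N = 0$. The natural candidates are
\[
g_N(x) = \sin\bigl(\pi x/h\bigr)\, e^{-\beta_0 N (x - x_c)^2}, \qquad h = L/(N-1),
\]
with $x_c$ the midpoint of $I$, or the normalized node polynomial $\prod_m (x - x_m)$. Either candidate has $\norm{g_N}_{L^\infty(I)} \asymp 1$ up to a polynomial factor. It is analytic with controlled growth in a thin neighbourhood of $\bar I$, namely at most $e^{\eta N}$ on the Bernstein ellipse $E_\rho$ of $\bar I$ with $\rho = 1 + O(\eta)$. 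Hence
\[
\operatorname{dist}_\infty(g_N, \mathbb P_K) \le C e^{\eta N}\rho^{-\alpha N}.
\]
Inserting this and the bound $\norm{p}_{L^\infty(I)} \le 2\norm{g_N}_{L^\infty(I)}$ into Step~1 gives
\[
1 \lesssim \norm{g_N}_{L^\infty(I)} = \norm{g_N - \Pop_N g_N}_{L^\infty(I)} \le \bigl(1+\Lambda_N^\Psi\bigr)\, C e^{(\eta - \alpha\log\rho)N} + C'\sigma^{-N/2}.
\]
This forces $\Lambda_N^\Psi \ge c\,e^{(\alpha\log\rho - \eta)N}$ for large $N$, and the finitely many $N \ge N_1$ that remain are absorbed into $c$, since $\Lambda_N^\Psi \ge 1$.

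\emph{Main obstacle.} The crux is to find a family $g_N$ and parameters $\rho,\eta$ with $\alpha\log\rho > \eta$, where $\alpha$ is already fixed by $E$ and $\sigma$. For the pure alias $\sin(\pi x/h)$ the growth rate $\eta$ on $E_\rho$ is comparable to $\pi(\rho - 1)N/L\cdot\mathrm{const}$, which is of the same order as $\log\rho$. The naive choice therefore balances only if $\alpha$ is large, and this conflicts with Step~1. I would first try the Gaussian damping factor. It localizes $g_N$ to a window of width $O(N^{-1/2})$, on which the growth of the alias is governed by a smaller effective ellipse. Alternatively, following \cite{platte2011impossibility}, I would replace $\sin(\pi x/h)$ by an alias of a lower frequency carried by a coarser subset of nodes, with the data at the remaining nodes set to zero. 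The Lebesgue constant still controls this, because $\Lambda_N^\Psi$ bounds the interpolant of every bounded node data, not only samples of analytic functions.

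If neither choice closes the inequality, the fallback is a dimension count. Geometric convergence on the analytic functions in $E$ means that $\Psi_N$ essentially contains $\mathbb P_{\alpha' N}$ on $I$ up to $\sigma^{-N}$. I would then compare $\Pop_N$ with equispaced polynomial interpolation of degree $\alpha' N$ on a subset of the nodes, whose Lebesgue constant grows exponentially. This is where I expect the real work to lie.
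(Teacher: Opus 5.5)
There is a genuine gap: the proposal never produces the function $g_N$, and the candidates you suggest cannot work. Your Step~1 is correct. Reducing the claim to a $g_N$ that vanishes at the interior nodes, has $\norm{g_N}_{L^\infty(I)} = 1$, and lies within $e^{-cN}$ of $\mathbb P_{\lfloor\alpha N\rfloor}$ in $L^\infty(I)$ (for the small $\alpha$ fixed by $E$ and $\sigma$) is a legitimate dual form of the Platte--Trefethen--Kuijlaars argument.

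\emph{Why your candidates fail.} You need some $\rho > 1$ with $\eta(\rho) < \alpha\log\rho$, where $e^{N\eta(\rho)}$ bounds $g_N$ on $E_\rho$. Your blanket claim $\rho = 1 + O(\eta)$ for both candidates hides that the ratio $\eta/\log\rho$ must be made arbitrarily small. For the alias this is impossible. At the top $z = x_c + iy$ of $E_\rho$ one has $|\sin(\pi z/h)| \approx \tfrac12 e^{\pi|y|/h}$, so $\eta(\rho) \ge \tfrac\pi2\log\rho$. The Gaussian factor does not help, because its modulus there is $e^{\beta_0Ny^2} \ge 1$. More fundamentally, no function of unit size near the centre of $I$ can work. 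By Rakhmanov's bound \cite{rakhmanov2007bounds}, a polynomial of degree $\lfloor\alpha N\rfloor$, $\alpha < 1$, that is at most $\eps$ at the equispaced nodes is at most $C_\alpha\eps$ on a fixed central subinterval of $\bar I$. So if $g_N$ vanishes at the nodes and $|g_N| \ge c$ somewhere in that subinterval, then $\dist_\infty(g_N,\mathbb P_{\lfloor\alpha N\rfloor}) \ge c/(1 + C_\alpha)$ for all $N$. This rules out the damped alias and any alias carried by a coarser set of nodes. The dimension-count fallback does not transfer either: the Lagrange polynomials of interpolation at a subset of the nodes are not small at the remaining nodes, so they say nothing about $\Lambda_N^\Psi$.

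\emph{The missing idea and how to close the gap.} The size of $g_N$ must sit between the outermost nodes. Platte, Trefethen and Kuijlaars get this from the Coppersmith--Rivlin lower bound \cite{coppersmith1992growth}: some polynomial of degree $n$ has unit norm on the interval and is at most $a^{-n^2/N}$, $a > 1$, at the grid. In your framework, the normalized node polynomial, which you list but do not analyse, does the job.
\begin{itemize}
\item[] Set $q_N(x) := \prod_{m=2}^{N-1}(x - x_m)$, $g_N := q_N/\norm{q_N}_{L^\infty(I)}$, and let $U$ be the logarithmic potential of the normalized Lebesgue measure on $\bar I$. Then $\max_{\bar E_\rho}|g_N| = e^{N(\eta(\rho) + o(1))}$ with $\eta(\rho) = \max_{\partial E_\rho}(-U) - \max_{\bar I}(-U)$.
\item[] The function $-U$ is largest at the ends of $\bar I$, where it exceeds its central value by $\log 2$. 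At distance $d$ inside an end it has dropped by order $d\log(1/d)$, and it grows at most linearly, with bounded slope, in the imaginary direction.
\item[] Near an end, $E_\rho$ has height $O\bigl(\log\rho\,(\sqrt d + \log\rho)\bigr)$ and overshoots the end by $O((\log\rho)^2)$. Hence $\eta(\rho) = O\bigl((\log\rho)^2\log(1/\log\rho)\bigr) = o(\log\rho)$.
\item[] Choose $\rho$ with $\eta(\rho) < \tfrac12\alpha\log\rho$. Bernstein's estimate then gives $\dist_\infty(g_N,\mathbb P_{\lfloor\alpha N\rfloor}) \le e^{-N(\alpha\log\rho/2 - o(1))}$, and your Step~2 inequality closes as written.
\end{itemize}

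\emph{What the paper does instead.} It does not reprove the theorem. It sets $\Pop_N := 0$ for $N < N_1$ and notes that $\{\Pop_N\}$ is an approximation procedure on equispaced grids. Its condition number is $\Lambda_N^\Psi$, since $\Pop_N$ is linear with real Lagrange functions. It enlarges $E$ to a compact $E'$ with $E \subset \operatorname{int}E'$, so that the hypothesis becomes that of \cite[Thm.~3.1]{platte2011impossibility}, and cites that theorem. Finally it uses $\Lambda_N^\Psi \ge 1$ for the finitely many remaining $N$.
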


The operator and the frame interact only through Theorem~\ref{thm:representation}: the operator supplies the bounded map $\Lop^{-1}$ and the test space $\Psi_N$, the frame the interpolation operator $\Pop_N$. The proposition, whose convergence hypothesis is that of the impossibility theorem \cite{platte2011impossibility}, makes the exact method unstable: geometric convergence forces $\Lambda_N^\Psi$ to grow exponentially, and by Corollary~\ref{cor:error} the exact solution amplifies errors in the data by up to $\Lambda_N^\Psi$; computed exactly, the solution of rounded data is observed to diverge beyond a threshold in $N$ (Remark~\ref{rem:divergence}). These results are proved in Section~\ref{sec:structure}, which also represents the discrete Green's function of the method (Corollary~\ref{cor:greens} and Remark~\ref{rem:mass}). For the Poisson operator Assumption~\ref{ass:unisolvence} holds for all sufficiently large $N$, and $\Lambda_N^\Psi$ grows exponentially without any hypothesis on convergence, at the rate of the trial frame (Corollary~\ref{cor:lebesgue_rate}).

\begin{proposition}[Unisolvence for the Poisson Operator]
\label{prop:unisolvence_poisson}
Let $\Lop = \Lop_P$. There is $N_0$, depending only on $\delta$ and $L$, such that Assumption~\ref{ass:unisolvence} holds for every $N \ge N_0$.
\end{proposition}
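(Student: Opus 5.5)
The plan is to reduce nonsingularity of $\mat A$ to a $2\times 2$ determinant and then show that this determinant is nonzero for large $N$.

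\textbf{Reduction to a $2\times2$ system.} For $\Lop=\Lop_P$ we have $\Lop w_j=\lambda_j w_j$, so $\mat A$ has rows $1$ and $N$ from $\mat W:=(w_j(x_m))_{m,j}$ and interior rows from $\mat W\mat\Lambda$, where $\mat\Lambda=\operatorname{diag}(\lambda_j)$.

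First, $\mat W$ is nonsingular. By Lemma~\ref{lem:chebyshev_variable}, $w_j(x_m)=\sqrt{2/\tilde L}\,\sin\theta_m\,U_{j-1}(z_m)$ with $\sin\theta_m>0$ and the $z_m$ distinct. So $\mat W$ is a diagonal scaling of a nonsingular Chebyshev--Vandermonde matrix of degree $N-1$ at $N$ distinct points.

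Now suppose $\mat A\vect c=0$ and put $\vect d=\mat\Lambda\vect c$. The interior equations say that $\mat W\vect d$ vanishes off rows $1$ and $N$, so $\vect d=\alpha\mat W^{-1}\vect e_1+\beta\mat W^{-1}\vect e_N$. The coefficient vectors $\mat W^{-1}\vect e_1$ and $\mat W^{-1}\vect e_N$ are those of the Lagrange functions $\ell_1,\ell_N\in V_N$ for all $N$ nodes. The two boundary equations then become
\[
\begin{pmatrix}p&q\\ q'&p'\end{pmatrix}\begin{pmatrix}\alpha\\ \beta\end{pmatrix}=0,
\qquad p=(\tilde{\mathcal G}\ell_1)(0),\ \ q=(\tilde{\mathcal G}\ell_N)(0),\ \ q'=(\tilde{\mathcal G}\ell_1)(L),\ \ p'=(\tilde{\mathcal G}\ell_N)(L).
\]
Here $\tilde{\mathcal G}=\bigl(-d^2/dx^2\bigr)^{-1}$ on $\tilde I$ with Dirichlet conditions at $-\delta$ and $L+\delta$. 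This operator is exact on $V_N$ because it sends $w_j$ to $w_j/\lambda_j$.

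The reflection $x\mapsto L-x$ maps the node set, $\tilde I$ and $\tilde{\mathcal G}$ to themselves and swaps $\ell_1$ with $\ell_N$. Hence $p'=p$ and $q'=q$, and the determinant is $(p-q)(p+q)$. So Assumption~\ref{ass:unisolvence} is equivalent to the two explicit conditions
\[
\int_0^L g(y)\,(\ell_1\pm\ell_N)(y)\,dy\neq0,\qquad g(y):=\tilde G(0,y)=\frac{\delta\,(L+\delta-y)}{\tilde L}\quad(0\le y\le L).
\]
Terms from $y\in(-\delta,0)\cup(L,L+\delta)$ also enter, since $\ell_1$ lives on $\tilde I$. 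I would keep them with the explicit piecewise-linear kernel on all of $\tilde I$.

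\textbf{Asymptotics.} It remains to show $p\pm q\ne0$ for $N\ge N_0$. I would pass to the Chebyshev variable and write
\[
\ell_1=\frac{\sin\theta\,\pi_N(z)}{\sin\theta_1\,(z-z_1)\,\pi_N'(z_1)},\qquad \pi_N(z):=\prod_m(z-z_m).
\]
This makes $p$ an integral of the explicit weight $g(x(z))$, times $\pi_N(z)/(z-z_1)$, normalized by $\pi_N'(z_1)$, and $q$ the analogous integral with $z_N=-z_1$.

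I would aim to prove $|q|=o(|p|)$ together with a lower bound $|p|\ge e^{-o(N)}\,\bigl|1/(\pi_N'(z_1)\sin\theta_1)\bigr|\times(\text{leading integral})$.

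\begin{itemize}
\item For the integral, I would use the potential $U_\omega$ of \eqref{eq:potential}. By the weak convergence in Lemma~\ref{lem:chebyshev_variable}, $|\pi_N|^{1/N}\to e^{-U_\omega}$ off the nodes.
\item The integrand is largest where $U_\omega$ is smallest. On the extension $|z|>c_0$ the node polynomial has no zeros, so I would localize the contributions to $p$ and $q$ in the two extension intervals.
\item The factors $1/(z-z_1)$ and $1/(z+z_1)$ together with $g$ are what distinguish $p$ from $q$. For $p$ the extension piece near $z_1$ carries a nonvanishing sign, because $\pi_N$ has no zeros there and $g$ does not change sign there.
\end{itemize}

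\textbf{Main obstacle.} The hardest step is the lower bound on $|p\pm q|$. Both the integrals and the normalizations $\pi_N'(\pm z_1)$ are exponentially large or small, and cancellation in oscillatory integrals over $[-c_0,c_0]$ must be excluded. I would try first to show that the integral over the extension piece adjacent to the relevant endpoint dominates the integral over the physical interval by a factor $e^{cN}$, using the gap between $\max_{[-c_0,c_0]}$ and the values of $-U_\omega$ on the extension. If that factor fails to be exponential, the fallback is a direct saddle-point expansion of $\pi_N$ near $z=\pm c_0$ via the Euler--Maclaurin form of $\log|\pi_N|$.
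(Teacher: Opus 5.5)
Your reduction is sound and gives exactly the paper's criterion. The quantity $p\pm q=(\tilde{\mathcal G}(\ell_1\pm\ell_N))(0)$ is the $p_N^{\pm}$ of Lemma~\ref{lem:correction}(iii). Your determinant $(p-q)(p+q)$ is a linear-algebra substitute for the argument via $\Psi_N\cap K_N=\{0\}$ and the reflection $R$. (The condition you display, over $[0,L]$ only, misses the whole leading term: the part over $I$ is $O(N^C)$, whereas the part over $\tilde I\setminus\bar I$ has size $e^{N(\kappa_\sigma-\kappa_\Lambda)}$.)

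\textbf{The gap.} The gap is in the asymptotic step, which is the entire content of the proposition, and the target you set there is false. Since $z_N=-c_0$ and $z_{N+1-k}=-z_k$, the Lagrange polynomials satisfy the exact identity $(-1)^{N-1}(z+c_0)\,\ell_N(z)=(z-c_0)\,\ell_1(z)$. So $\ell_1$ and $\ell_N$ have the same exponential size on \emph{both} extension intervals. They differ by the factor $(z-c_0)/(z+c_0)$, which tends to $r:=(1-c_0)/(1+c_0)\in(0,1)$ at $z=1$, where the positive weight $(y+\delta)\ell_1^V(y)$ on $(-\delta,0)$ concentrates. The kernel $\tilde G(0,\cdot)$ has slopes $(L+\delta)/\tilde L$ and $\delta/\tilde L$ at the two ends. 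Folding $(L,L+\delta)$ onto $(-\delta,0)$ therefore gives $q/p\to\bigl(\epsilon r(L+\delta)+\delta\bigr)/\bigl((L+\delta)+\epsilon r\delta\bigr)$ with $\epsilon=(-1)^{N-1}$, which lies in $(0,1)$ for odd $N$. Consequences:
\begin{enumerate}
\item[(a)] $|q|=o(|p|)$ fails.
\item[(b)] The extension piece adjacent to the endpoint dominates the opposite one only by a constant factor, so a sign for that piece alone cannot exclude $p\pm q=0$.
\item[(c)] Your fallback saddle point near $z=\pm c_0$ is in the wrong place; the mass sits near $z=\pm1$.
\end{enumerate}

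\textbf{The missing idea.} Work with $\chi_N^{\pm}=\ell_1^V\pm\ell_N^V$ directly, not with $p$ and $q$ separately. By the identity above, on $(-\delta,0)$, where $z\in(c_0,1)$ and $\ell_1^V>0$,
\[
\chi_N^{\pm}=\ell_1^V\Bigl(1\pm(-1)^{N-1}\frac{z-c_0}{z+c_0}\Bigr)\ge\frac{2c_0}{1+c_0}\,\ell_1^V>0 .
\]
The symmetry $\chi_N^{\pm}(L-x)=\pm\chi_N^{\pm}(x)$ folds the integral over $(L,L+\delta)$ onto $(-\delta,0)$ with total kernel $\bigl((L+\delta)\pm\delta\bigr)(y+\delta)/\tilde L\ge L(y+\delta)/\tilde L>0$. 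So the two extension pieces add with no cancellation. Restricting to $\vartheta\in[1/N,2/N]$ and applying Lemma~\ref{lem:lagrange_polynomials}(b) bounds this from below by $e^{N(\kappa_\sigma-\kappa_\Lambda)-C\log N}$. Proposition~\ref{prop:lagrange_norms} bounds the integral over $I$ by $N^C$. Since $\kappa_\sigma>\kappa_\Lambda$, this gives $p\pm q>0$ for $N\ge N_0$, which is how the paper concludes.
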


The proposition is proved in Section~\ref{sec:tail}, from the two functions of $V_N$ that vanish at the interior nodes (Lemma~\ref{lem:correction}).

\subsubsection{Rates of the Trial Frame}
\label{sec:res_rates}

For the Poisson operator the exponential growth required by Proposition~\ref{prop:impossibility} has the explicit rate of the trial frame (Corollary~\ref{cor:lebesgue_rate}). The next results concern the trial frame $V_N$ sampled at all $N$ nodes, that is, its synthesis matrix $(\mat W_N)_{mj} := w_j(x_m)$, $1 \le m, j \le N$, and not the collocation matrix $\mat B_D$ of the test space; they give the exponential rates of the frame in closed form, and Proposition~\ref{prop:classical} and Section~\ref{sec:res_tail} carry them over to the test space. For the Poisson operator, $\mat A_P := \mat W_N\mathbf\Lambda$, $\mathbf\Lambda = \operatorname{diag}(\lambda_j)$, is the bulk collocation matrix, the equation enforced at every node without boundary rows. For a nonsingular matrix $\mat M$, $\sigma_{\max}(\mat M) = \norm{\mat M}_2$ and $\sigma_{\min}(\mat M) = 1/\norm{\mat M^{-1}}_2$ denote its largest and smallest singular values and $\kappa_2(\mat M) := \sigma_{\max}(\mat M)/\sigma_{\min}(\mat M)$ its spectral condition number; $\mat W_N$ is nonsingular (Lemma~\ref{lem:lebesgue_condition_link}).

\begin{theorem}[Growth Rates of the Lebesgue Constant and of the Inverse Smallest Singular Value]
\label{thm:rates}
Let $\Lambda_N = \Lambda(V_N)$ be the trial-space Lebesgue constant at the $N$ equispaced nodes. Then
\begin{equation}\label{eq:rates}
\begin{aligned}
\lim_{N\to\infty}\frac{\log\Lambda_N}{N} &= \kappa_\Lambda(\omega) := U_\omega(0) - U_\omega(c_0) = \frac{\mathrm{Cl}_2(\omega) - \tfrac12\mathrm{Cl}_2(2\omega)}{\omega}, \\
\lim_{N\to\infty}\frac{\log\bigl(1/\sigma_{\min}(\mat W_N)\bigr)}{N} &= \kappa_\sigma(\omega) := U_\omega(0) - U_\omega(1) = \frac{\mathrm{Cl}_2(\omega) + \mathrm{Cl}_2(\pi-\omega)}{\omega}.
\end{aligned}
\end{equation}
\end{theorem}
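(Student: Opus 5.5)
The plan is to reduce both limits to $N$-th root asymptotics of discrete node polynomials, using Lemma~\ref{lem:chebyshev_variable}. Write $\omega_N(z) := \prod_{k=1}^N (z - z_k)$. The Lagrange functions of $V_N$ at the nodes $x_m$ are
\[
\ell_m(x) = \frac{\sin\theta}{\sin\theta_m}\, L_m(z), \qquad
L_m(z) = \frac{\omega_N(z)}{(z - z_m)\,\omega_N'(z_m)},
\]
where $L_m$ is the polynomial Lagrange basis of degree $N-1$ at the $z_k$. Since $\sin\theta_m \ge \sin\theta_0 > 0$, the weight factors contribute only $O(1)$ and drop out at the exponential scale.

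\emph{Step 1: pointwise asymptotics.} The key step is
\[
\frac1N \log\abs{\omega_N(z)} \to -U_\omega(z), \qquad
\frac1N \log\abs{\omega_N'(z_m)} = -U_\omega(z_m) + o(1),
\]
uniformly in $m$. The first limit is uniform on compact subsets of $\mathbb C \setminus [-c_0,c_0]$. On $[-c_0,c_0]$ it holds as an upper bound everywhere, and as an equality at the midpoints between consecutive nodes.

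For the second limit I would write $z_m - z_k = -2\sin\frac{\theta_m+\theta_k}{2}\,\sin\frac{\theta_m-\theta_k}{2}$ with $\theta_m-\theta_k = (m-k)\omega/(N-1)$. The product then splits into two Riemann sums of $\log\abs{\sin}$ over equispaced arguments, and these have only integrable logarithmic singularities. The terms with $\abs{m-k}$ small are controlled by comparing $\prod_{j\le J}\sin(j\omega/(2N))$ with $J!\,(\omega/2N)^J$ via Stirling; they produce only factors $e^{o(N)}$. I expect this uniform control of the near-diagonal terms, including $m$ near the ends where $\theta_m+\theta_k$ stays away from $0$ and $2\pi$, to be the main technical obstacle. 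The explicit sine-product structure of the nodes should make it elementary.

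\emph{Step 2: shape of $U_\omega$ and the Lebesgue rate.} Next I would show that $U_\omega$ is even, decreasing on $[0,c_0]$, and decreasing on $[c_0,1]$ as well. The reason is that the full arcsine measure on $[-1,1]$ has constant potential, and $\mu_\omega$ equals $\pi/\omega$ times that measure minus its restriction to the two end pieces, so $U_\omega$ is largest at $0$. Outside the support, $U_\omega$ is harmonic and decreasing in $\abs{z}$.

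Step~1 then gives $\abs{L_m(z)} \le e^{N(U_\omega(z_m)-U_\omega(z))+o(N)}$, hence
\[
\Lambda_N \le N\, e^{N(U_\omega(0)-U_\omega(c_0))+o(N)}.
\]
For the matching lower bound I would take $m$ with $z_m$ nearest $0$ and $z$ the midpoint of the last two nodes near $c_0$. There the node spacing is of order $\sin\theta_0/N$, so the missing near-node factors are only polynomial in $N$. This yields the $\kappa_\Lambda$ limit.

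\emph{Step 3: the smallest singular value.} We have $1/\sigma_{\min}(\mat W_N) = \norm{\mat W_N^{-1}}_2$. The $m$-th column of $\mat W_N^{-1}$ is the coefficient vector of $\ell_m$, whose Euclidean norm is $\norm{\ell_m}_{L^2(\tilde I)}$ by \eqref{eq:chebyshev_norms}. So $\norm{\mat W_N^{-1}}_2$ equals $\max_m \norm{q_m}$ up to factors $N^{\pm1/2}$, where $q_m = L_m/\sin\theta_m$ and the norm is the $\sqrt{1-z^2}$-weighted $L^2$ norm on $[-1,1]$. By Nikolskii/Markov inequalities for $\mathbb P_{N-1}$ with the Chebyshev weight, this weighted $L^2$ norm and $\max_{[-1,1]}\abs{q_m}$ agree up to powers of $N$.

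Step~1 and the monotonicity of Step~2 then give the exponent $\max_m \max_{[-1,1]}\bigl(U_\omega(z_m) - U_\omega(z)\bigr) = U_\omega(0) - U_\omega(1)$. The maximum is attained at $z = \pm1$, off the support, where the convergence in Step~1 is genuinely uniform, so the lower bound is immediate.

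\emph{Step 4: closed forms.} Finally I would evaluate $U_\omega(\cos\alpha)$ for $\alpha = \pi/2,\ \theta_0,\ 0$ using
\[
\log\abs{\cos\alpha - \cos\varphi} = \log 2 + \log\Bigl\lvert\sin\frac{\varphi+\alpha}{2}\Bigr\rvert + \log\Bigl\lvert\sin\frac{\varphi-\alpha}{2}\Bigr\rvert
\]
and integrating over $\varphi \in [\theta_0, \pi-\theta_0]$ in terms of $\mathrm{Cl}_2$ (Lemma~\ref{lem:cl2}). The $\log 2$ terms cancel in the differences. Simplifying with $\theta_0 = (\pi-\omega)/2$ and the duplication and reflection identities of $\mathrm{Cl}_2$ should give the two stated expressions.
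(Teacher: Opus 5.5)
Your proposal is correct and follows the paper's proof essentially step for step. Both rest on Riemann-sum asymptotics, with the logarithmic singularity, for $\omega_N$ and $\omega_N'(z_m)$ (Lemmas~\ref{lem:riemann}--\ref{lem:lagrange_polynomials} and Proposition~\ref{prop:lagrange_norms}), the monotonicity of $U_\omega$, the sandwiches $\max_m\norm{\ell_m^V}_{L^\infty(I)}\le\Lambda_N\le N\max_m\norm{\ell_m^V}_{L^\infty(I)}$ and $\max_m\norm{\ell_m^V}_{L^2(\tilde I)}\le\norm{\mat W_N^{-1}}_2\le\sqrt N\max_m\norm{\ell_m^V}_{L^2(\tilde I)}$, and the Clausen evaluation of Lemma~\ref{lem:clausen}. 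Your deviations are harmless: a Nikolskii (Christoffel-function) inequality where the paper restricts the integral to $\theta\in[1/N,2/N]$, a potential-theoretic monotonicity argument where the paper differentiates the closed form, and Stirling where the paper bounds the near cells directly. One point needs care: for the upper bound on $|L_m(z)|$ with $z$ near $z_m$, you need the Riemann-sum upper bound for $\prod_{k\ne m}|z-z_k|$ rather than for $\omega_N$ itself, as in Case~2 of the proof of Lemma~\ref{lem:lagrange_polynomials}.
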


\begin{corollary}[Leakage Exponent at the End of the Spectrum]
\label{cor:exponent}
Let $x_1, \dots, x_N$ be the equispaced nodes of Definition~\ref{def:collocation}. For $v \in V_N\setminus\{0\}$ let $t(v) := \bigl(\sum_{m=1}^N v(x_m)^2\bigr)^{1/2}/\norm{v}_{L^2(\tilde I)}$ be its nodal size at these nodes and $g(v) := \norm{v}_{L^2(I)}/\norm{v}_{L^2(\tilde I)}$ its leakage, and let $\ell^\ast \in V_N$ be the Lagrange function of $V_N$ at these nodes for a node nearest to the centre of $I$: $\ell^\ast(x_m) = \delta_{mm^\ast}$ for $1 \le m \le N$, where $m^\ast$ minimizes $|x_m - L/2|$ (either of the two such indices when $N$ is even); it exists by Lemma~\ref{lem:lebesgue_condition_link}. Then $N^{-1}\log t(\ell^\ast) = -\kappa_\sigma(\omega) + O(\log N/N)$, the rate of $\sigma_{\min}(\mat W_N)$ in Theorem~\ref{thm:rates}, and
\begin{equation}\label{eq:exponent}
\lim_{N\to\infty}\frac{\log g(\ell^\ast)}{\log t(\ell^\ast)} = a(\omega) := 1 - \frac{\kappa_\Lambda(\omega)}{\kappa_\sigma(\omega)} = \frac{\tfrac12\mathrm{Cl}_2(2\omega) + \mathrm{Cl}_2(\pi-\omega)}{\mathrm{Cl}_2(\omega) + \mathrm{Cl}_2(\pi-\omega)} .
\end{equation}
\end{corollary}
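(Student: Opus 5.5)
The plan is to make $\ell^\ast$ explicit in the Chebyshev variable of Lemma~\ref{lem:chebyshev_variable} and to read off both norms from the discrete potential of the nodes. The nodal size is trivial: $\sum_m \ell^\ast(x_m)^2 = 1$, so $t(\ell^\ast) = 1/\norm{\ell^\ast}_{L^2(\tilde I)}$ and $g(\ell^\ast) = \norm{\ell^\ast}_{L^2(I)}\, t(\ell^\ast)$. Hence both claims reduce to
\[
\tfrac1N\log\norm{\ell^\ast}_{L^2(\tilde I)} = \kappa_\sigma + O(\log N/N), \qquad \tfrac1N\log\norm{\ell^\ast}_{L^2(I)} = \kappa_\Lambda + o(1).
\]
Given these, $\log g/\log t = (\kappa_\sigma-\kappa_\Lambda+o(1))/(\kappa_\sigma+o(1)) \to 1-\kappa_\Lambda/\kappa_\sigma = a(\omega)$. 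The Clausen form of $a(\omega)$ then follows by substituting the closed forms of Theorem~\ref{thm:rates}, since $\kappa_\sigma-\kappa_\Lambda = \bigl(\tfrac12\mathrm{Cl}_2(2\omega)+\mathrm{Cl}_2(\pi-\omega)\bigr)/\omega$.

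Writing $\ell^\ast = \sqrt{2/\tilde L}\,\sin\theta\, q(z)$ with $q \in \mathbb P_{N-1}$, the interpolation conditions give
\[
q(z) = \frac{1}{\sqrt{2/\tilde L}\,\sin\theta_{m^\ast}}\prod_{m\ne m^\ast}\frac{z-z_m}{z_{m^\ast}-z_m}.
\]
Here $z_{m^\ast} = O(1/N)$ and $\sin\theta_{m^\ast}\to 1$. I will use two estimates for the discrete potential $-\frac1N\sum_{m}\log|z-z_m|$, which are presumably the ones underlying Theorem~\ref{thm:rates} in Appendix~\ref{app:rates}.
\begin{itemize}
\item At points off the node set, and in particular uniformly on $[-1,-c_0-\eta]\cup[c_0+\eta,1]$ and at $z=\pm1$, it equals $U_\omega(z)+O(\log N/N)$. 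This is a Riemann-sum argument in $\theta$ for a log-integrable integrand with an explicit endpoint correction.
\item For the denominator $\prod_{m\ne m^\ast}|z_{m^\ast}-z_m|$, the same comparison gives $e^{-NU_\omega(0)+O(\log N)}$. The factor excluded because it vanishes is handled as in the classical equispaced case: near-neighbour products behave like factorials, which contribute only $O(\log N)$ after comparison with the integral.
\end{itemize}
Since $U_\omega$ is the potential of a measure on $[-c_0,c_0]$, it decreases in $|z|$ outside the support. So on $[-1,1]$ the quantity $\max|q|$ is $e^{N(U_\omega(0)-U_\omega(1))+O(\log N)}$, with the maximum attained at $z=\pm1$ up to polynomial factors.

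To pass from sup norms to the weighted $L^2$ norms in \eqref{eq:chebyshev_norms}, I use two inequalities. The upper bound is trivial. For the lower bound, Markov's inequality for $q$ of degree $N-1$ shows that $|q|\ge\frac12\max|q|$ on an interval of length $\gtrsim N^{-2}$ next to the maximizer. Near $z=\pm1$ the weight $\sqrt{1-z^2}$ is then $\gtrsim N^{-1}$, so the $L^2$ and sup norms agree up to $N^{O(1)}$. This proves the $\tilde I$ estimate, and with it $N^{-1}\log t(\ell^\ast) = -\kappa_\sigma+O(\log N/N)$.

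For the $L^2(I)$ norm the upper bound comes from $\max_{[-c_0,c_0]}|\ell^\ast| \le \Lambda_N$, together with Theorem~\ref{thm:rates}. The lower bound uses the fact that $|q|$ at points between nodes near $\pm c_0$ (for example, at midpoints of the last gaps) is $e^{N(U_\omega(0)-U_\omega(c_0))-o(N)}$. Combined with the same Markov argument on $[-c_0,c_0]$, this gives $\norm{\ell^\ast}_{L^2(I)} = e^{N\kappa_\Lambda+o(N)}$.

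The main obstacle is this lower bound at the physical endpoints. There the nodes accumulate with the density of $\mu_\omega$, which has a nonzero endpoint value in $z$. The discrete product at a point between the last nodes must therefore be compared with $e^{-NU_\omega(c_0)}$ without losing more than $e^{o(N)}$. I would first try to reuse the lower-bound half of the proof of $\kappa_\Lambda$ in Theorem~\ref{thm:rates}. That proof must already show that the Lebesgue function is that large near $\pm c_0$, and I would check that it is the central Lagrange function $\ell^\ast$ that realises this growth, not merely some Lagrange function. If not, I would redo the comparison directly. In the $\theta$ variable the nodes are exactly equispaced, so $\sum_m\log|\cos\theta-\cos\theta_m|$ at $\theta=\theta_0+\tfrac12\omega/(N-1)$ can be estimated by Euler--Maclaurin. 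The singular terms near $\theta_0$ form a product of the type $\prod_k(k-\tfrac12)$, which is explicitly $e^{O(\log N)}$ relative to its integral.
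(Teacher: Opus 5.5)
Your proposal is correct and follows the paper's route. The paper makes the same reduction to $t(\ell^\ast) = 1/\norm{\ell^\ast}_{L^2(\tilde I)}$ and reads off both norms from Proposition~\ref{prop:lagrange_norms} at $m = m^\ast$, with Lemma~\ref{lem:U_modulus} replacing $U_\omega(z_{m^\ast})$ by $U_\omega(0)$; the proof of that proposition is the discrete-potential argument you sketch, except that it obtains the $L^2$ lower bounds from pointwise lower bounds on whole intervals away from the nodes, $\theta\in[1/N,2/N]$ and $\theta\in[\theta_0+\Delta/4,\theta_0+3\Delta/4]$, rather than from Markov's inequality. The endpoint lower bound you flag as the main obstacle is no harder than the others: Lemma~\ref{lem:riemann}(a) applies at any angle at distance at least $\Delta/4$ from every node, and the singular end half-cells cost only $O(\Delta\log(1/\Delta))$, so you get $O(\log N/N)$ for $\norm{\ell^\ast}_{L^2(I)}$ as well, not just $o(1)$.
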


\begin{proposition}[Duality of the Exponent]
\label{prop:duality}
$a(\omega) + a(\pi - \omega) = 1$ for all $\omega \in (0,\pi)$; equivalently $\kappa_\Lambda(\omega)/\kappa_\sigma(\omega) = a(\pi-\omega)$.
\end{proposition}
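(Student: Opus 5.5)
The plan is to use the closed forms of Theorem~\ref{thm:rates} and Corollary~\ref{cor:exponent} directly. The two exponents turn out to have the same denominator, and their numerators add up to that denominator once the symmetries of the Clausen function are applied. No potential theory is needed beyond the formulas already stated.

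First, substitute $\pi-\omega$ for $\omega$ in \eqref{eq:exponent}:
\[
a(\pi-\omega) = \frac{\tfrac12\mathrm{Cl}_2(2\pi-2\omega) + \mathrm{Cl}_2(\omega)}{\mathrm{Cl}_2(\pi-\omega) + \mathrm{Cl}_2(\omega)} .
\]
Its denominator $D(\omega) := \mathrm{Cl}_2(\omega)+\mathrm{Cl}_2(\pi-\omega)$ is symmetric under $\omega\mapsto\pi-\omega$, so it equals the denominator of $a(\omega)$. Adding the two fractions gives the numerator
\[
\tfrac12\bigl(\mathrm{Cl}_2(2\omega)+\mathrm{Cl}_2(2\pi-2\omega)\bigr) + D(\omega).
\]

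Second, I invoke two elementary properties of $\mathrm{Cl}_2$, to be taken from Lemma~\ref{lem:cl2} in Appendix~\ref{app:standard}. Both also follow from the series $\mathrm{Cl}_2(x)=\sum_{k\ge1}\sin(kx)/k^2$.
\begin{itemize}
\item $\mathrm{Cl}_2$ is $2\pi$-periodic. For the integral definition this holds because $\int_0^{2\pi}\log|2\sin(t/2)|\,dt=0$.
\item $\mathrm{Cl}_2$ is odd, because the integrand is even.
\end{itemize}
Together these give $\mathrm{Cl}_2(2\pi-2\omega) = \mathrm{Cl}_2(-2\omega) = -\mathrm{Cl}_2(2\omega)$. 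Hence the bracket vanishes, the numerator equals $D(\omega)$, and $a(\omega)+a(\pi-\omega)=1$.

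The only point needing care is that the quotients are well defined, i.e.\ $D(\omega)\neq0$. For $\omega\in(0,\pi)$ both $\omega$ and $\pi-\omega$ lie in $(0,\pi)$, where $\mathrm{Cl}_2>0$: it increases from $0$ up to its maximum at $\pi/3$, then decreases to $\mathrm{Cl}_2(\pi)=0$. So $D>0$. Equivalently, $D(\omega)=\omega\,\kappa_\sigma(\omega)$ with $\kappa_\sigma>0$, since by Theorem~\ref{thm:rates} $\kappa_\sigma$ is a growth rate of $1/\sigma_{\min}$, and $U_\omega(0)>U_\omega(1)$ because the node measure lies closer to $0$ than to $1$.

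Finally, the equivalent form follows from the definition $a(\omega)=1-\kappa_\Lambda/\kappa_\sigma$ in \eqref{eq:exponent}. It gives $\kappa_\Lambda(\omega)/\kappa_\sigma(\omega) = 1-a(\omega)$, and by the identity just proved this equals $a(\pi-\omega)$.

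I expect no real obstacle. The step most worth checking is the sign and periodicity bookkeeping for $\mathrm{Cl}_2(2\pi-2\omega)$ when $2\omega>\pi$.
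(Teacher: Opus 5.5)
Your proof is correct and is the paper's argument: substitute $\pi-\omega$ into the closed form \eqref{eq:exponent}, note that the denominator $\mathrm{Cl}_2(\omega)+\mathrm{Cl}_2(\pi-\omega)$ is symmetric, and use $\mathrm{Cl}_2(2\pi-2\omega)=-\mathrm{Cl}_2(2\omega)$ (Lemma~\ref{lem:cl2}(i)) so that the two numerators add up to the common denominator. Your check that this denominator is positive, via $\mathrm{Cl}_2>0$ on $(0,\pi)$ as in the proof of Proposition~\ref{prop:critical}, is a detail the paper leaves implicit.
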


\begin{corollary}[Exponential Ill-Conditioning]
\label{thm:poisson_illcond}
For every $\alpha \in (0,\kappa_\sigma(\omega))$ there is a constant $C > 0$, independent of $N$, such that $\kappa_2(\mat{A}_P) \ge C\, N^{-3/2} e^{\alpha N}$ for all $N \ge 2$.
\end{corollary}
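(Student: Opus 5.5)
The bound follows by combining the rate of $\sigma_{\min}(\mat W_N)$ from Theorem~\ref{thm:rates} with elementary estimates for the diagonal factor $\mathbf\Lambda = \operatorname{diag}(\lambda_j)$, $\lambda_j = (j\pi/\tilde L)^2$. Since $\kappa_2(\mat A_P) = \sigma_{\max}(\mat A_P)/\sigma_{\min}(\mat A_P)$, I treat the two singular values separately: the largest from below, the smallest from above.

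\emph{Upper bound on $\sigma_{\min}(\mat A_P)$.} Since $\mat A_P^{-1} = \mathbf\Lambda^{-1}\mat W_N^{-1}$, we have $\mat W_N^{-1} = \mathbf\Lambda\mat A_P^{-1}$. Hence $\norm{\mat W_N^{-1}}_2 \le \lambda_N\norm{\mat A_P^{-1}}_2$, that is,
\[
\sigma_{\min}(\mat A_P) \le \lambda_N\,\sigma_{\min}(\mat W_N) = (N\pi/\tilde L)^2\,\sigma_{\min}(\mat W_N).
\]
Fix $\alpha < \alpha' < \kappa_\sigma(\omega)$. By Theorem~\ref{thm:rates} there is $N_1$ with $\sigma_{\min}(\mat W_N) \le e^{-\alpha' N}$ for $N \ge N_1$.

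\emph{Lower bound on $\sigma_{\max}(\mat A_P)$.} It suffices to test against a single column: $\sigma_{\max}(\mat A_P) \ge \norm{\mat A_P\vect e_N}_2 = \lambda_N\bigl(\sum_m w_N(x_m)^2\bigr)^{1/2}$. I expect this to be the only step needing care, because $w_N$ sampled at the nodes could in principle be small. In the angle variable, $w_N(x_m) = \sqrt{2/\tilde L}\,\sin(N\theta_m)$ with $\theta_m = \theta_0 + (m-1)\omega/(N-1)$, so the sum is a Riemann-type sum of $\sin^2$ over $N$ equispaced angles with step roughly $\omega/N$, where the frequency $N$ is comparable to the number of points.

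To avoid aliasing subtleties, I would simply bound by the largest entry, which requires only one node with $|\sin(N\theta_m)|$ bounded below. The increments of $N\theta_m$ in $m$ are $N\omega/(N-1) \in (\omega, 2\omega]$ modulo $\pi$, so consecutive phases cannot both lie within a small distance of $\pi\mathbb Z$ unless $\omega$ is near $\pi$; a uniform constant then depends on $\omega$ only. If that step is awkward, a fallback is to use the Frobenius norm instead: $\sigma_{\max} \ge \norm{\mat A_P}_F/\sqrt N$, and $\norm{\mat A_P}_F^2 \ge \sum_j \lambda_j^2 w_j(x_{m^\ast})^2$ at a central node, where a positive fraction of $j$ have $|\sin(j\theta_{m^\ast})| \ge 1/2$. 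This gives $\sigma_{\max} \gtrsim N^{-1/2}\cdot N^{2}\cdot N^{1/2}$, which is more than enough. In any case $\sigma_{\max}(\mat A_P) \ge c\,N^{2}N^{-3/2}$ with margin, and the stated factor $N^{-3/2}$ presumably comes from such a crude estimate, for instance a single nonzero entry of size $\gtrsim \lambda_1 \sim 1$ combined with the $\lambda_N$ loss.

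\emph{Conclusion.} Combining, for $N \ge N_1$ we get $\kappa_2(\mat A_P) \ge c\,N^{1/2}\cdot \lambda_N^{-1}e^{\alpha' N} \ge c' N^{-3/2}e^{\alpha' N} \ge c' N^{-3/2}e^{\alpha N}$. For the finitely many $2 \le N < N_1$, $\mat W_N$ is nonsingular by Lemma~\ref{lem:lebesgue_condition_link}, so $\kappa_2(\mat A_P)$ is finite and positive, and shrinking $C$ covers these cases.
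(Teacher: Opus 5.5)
Your proof is correct. The upper bound $\sigma_{\min}(\mat A_P) \le \lambda_N\sigma_{\min}(\mat W_N)$ is the same as the paper's, which phrases it through the test vector $\mathbf\Lambda^{-1}\vect v$ rather than $\mat W_N^{-1} = \mathbf\Lambda\mat A_P^{-1}$. Your use of the rate $\kappa_\sigma$ is also the paper's, which packages it as Lemma~\ref{lem:sigma_min}. The difference is the lower bound on $\sigma_{\max}(\mat A_P)$.

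The paper tests against the lowest column $\vect e_1$. Because $\pi(x+\delta)/\tilde L \in [\theta_0,\pi-\theta_0]$ on $\bar I$, every entry satisfies $w_1(x_m) \ge \sqrt{2/\tilde L}\sin\theta_0$. Hence $\sigma_{\max}(\mat A_P) \ge \lambda_1\sqrt{2N/\tilde L}\,\sin\theta_0$, with no aliasing question at all. The factor $N^{-3/2}$ is exactly $\sqrt N\,\lambda_1/\lambda_N$: it comes from all $N$ entries of the first column, not from a single entry as you guessed, which would give $N^{-2}$.

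You test against the highest column $\vect e_N$ (or a row, in the Frobenius fallback). This costs the small argument that $\dist(N\omega/(N-1),\pi\mathbb Z) \to \min\{\omega,\pi-\omega\} > 0$, so that one of any two consecutive phases $N\theta_m$ stays away from $\pi\mathbb Z$ for large $N$. The constant degenerates as $\omega \to 0$ as well as $\omega \to \pi$, not only near $\pi$ as you wrote, but this is harmless for fixed $\omega$.

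In return your route gives $\sigma_{\max}(\mat A_P) \ge c\lambda_N$. The $\lambda_N$ in the numerator then cancels the $\lambda_N$ lost in $\sigma_{\min}$, so $\kappa_2(\mat A_P) \ge c/\sigma_{\min}(\mat W_N) \ge c\,e^{\alpha N}$ with no power of $N$ at all. Your final step discards this by weakening to $\sigma_{\max} \ge cN^{1/2}$. That is valid but unnecessary. In either route the power of $N$ is cosmetic anyway, since your auxiliary $\alpha' \in (\alpha,\kappa_\sigma)$ absorbs any polynomial factor.
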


These results are proved in Section~\ref{sec:potential}, from a uniform estimate of the discrete potential of the nodes (Appendix~\ref{app:rates}). The closed forms give $a(\pi/2) = 1/2$ ($\delta = L/2$), $a(\pi/3) = 3/5$ ($\delta = L$) and $a(2\pi/3) = 2/5$ ($\delta = L/4$), with $a(\omega) \to 1$ as $\omega \to 0$ ($\delta \to \infty$) and $a(\omega) \to 0$ as $\omega \to \pi$ ($\delta \to 0$); for $\delta = L/2$ the two rates are $\kappa_\Lambda = 2\,\mathrm{Cl}_2(\pi/2)/\pi$ and $\kappa_\sigma = 4\,\mathrm{Cl}_2(\pi/2)/\pi$, multiples of Catalan's constant. In the variable $\theta$ the physical domain and the extension are arcs of lengths $\omega$ and $\pi-\omega$, and Proposition~\ref{prop:duality} exchanges their roles; $\delta = L/2$ is the self-dual case. The complement $1 - a(\omega) = \kappa_\Lambda/\kappa_\sigma$ is the counterpart of the exponent $a(\gamma;T)$ of \cite[Eq.~(5.23)]{adcock2014numerical} at $\gamma = 1$, the ratio of the same two rates for the equispaced Fourier extension. Section~\ref{sec:num_potential} compares the rates and the exponent with exact computations.

\subsubsection{Floating-Point Arithmetic}
\label{sec:res_fp}

By Section~\ref{sec:res_exact} the exact solution is unstable, and the accuracy of a computed solution is a property of floating-point arithmetic. A computed coefficient vector is not the exact interpolant, and the next theorem bounds the error of an arbitrary coefficient vector through the two constants of Definition~\ref{def:two_constants}.

\begin{theorem}[Two-Constant Bound]
\label{thm:two_constant}
Let $u = \Lop^{-1}f$, let $\hat{\vect y} \in \mathbb R^{n}$ be arbitrary, $\hat u_N := \Phi\mat D\hat{\vect y}$, and let $\hat\rho := \norm{\vect f - \mat B_D\hat{\vect y}}_2$ be its nodal residual, $\vect f = (f(x_m))_m$. Then for every $t \ge 0$ and every $\vect y \in \mathbb R^n$ with $u_{\vect y} := \Phi\mat D\vect y$ and $\vect d := \hat{\vect y} - \vect y$,
\begin{equation}\label{eq:two_constant_bound}
\norm{u - \hat u_N}_{L^2(I)} \le \norm{u - u_{\vect y}}_{L^2(I)} + \Gamma_t\bigl(\norm{\vect f - \mat B_D\vect y}_2 + \hat\rho + t\norm{\vect d}_2\bigr) + \gamma_t\norm{\vect d}_2 .
\end{equation}
\end{theorem}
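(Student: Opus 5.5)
The plan is to compare $\hat u_N$ with the competitor $u_{\vect y}$ and then split the coefficient difference $\vect d$ along the singular subspaces of $\mat B_D$. By the triangle inequality,
\[
\norm{u - \hat u_N}_{L^2(I)} \le \norm{u - u_{\vect y}}_{L^2(I)} + \norm{\Phi\mat D\vect d}_{L^2(I)},
\]
because $\hat u_N - u_{\vect y} = \Phi\mat D\vect d$ by the linearity of $\vect w \mapsto \Phi\mat D\vect w$. Everything then reduces to bounding $\norm{\Phi\mat D\vect d}_{L^2(I)}$ by $\Gamma_t(\norm{\vect f - \mat B_D\vect y}_2 + \hat\rho + t\norm{\vect d}_2) + \gamma_t\norm{\vect d}_2$.

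Decompose $\vect d = \vect d_0 + \vect d_1$, with $\vect d_0 \in \mathcal V_t$ and $\vect d_1 \in \mathcal V_t^\perp$ the orthogonal projections onto the spans of the right singular vectors with $\sigma_i \le t$ and $\sigma_i > t$. Since $\mat V$ has orthonormal columns, $\norm{\vect d_0}_2 \le \norm{\vect d}_2$. By linearity and the triangle inequality, $\norm{\Phi\mat D\vect d}_{L^2(I)} \le \norm{\Phi\mat D\vect d_0}_{L^2(I)} + \norm{\Phi\mat D\vect d_1}_{L^2(I)}$. Definition~\ref{def:two_constants} gives the first term directly: it is at most $\gamma_t\norm{\vect d_0}_2 \le \gamma_t\norm{\vect d}_2$, and the convention $\gamma_t = 0$ covers $\vect d_0 = 0$. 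The second term is at most $\Gamma_t\norm{\mat B_D\vect d_1}_2$, again by the definition, with the convention $\Gamma_t = 0$ when $\mathcal V_t^\perp = \{0\}$.

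It remains to bound the nodal size of the well-resolved part. Write $\mat B_D\vect d_1 = \mat B_D\vect d - \mat B_D\vect d_0$. First,
\[
\norm{\mat B_D\vect d}_2 = \norm{(\mat B_D\hat{\vect y} - \vect f) + (\vect f - \mat B_D\vect y)}_2 \le \hat\rho + \norm{\vect f - \mat B_D\vect y}_2 .
\]
Second, $\mat B_D\vect d_0 = \sum_{\sigma_i\le t}\sigma_i(\vect v_i^T\vect d)\vect u_i$. Because the $\vect u_i$ are orthonormal, this gives $\norm{\mat B_D\vect d_0}_2 \le t\norm{\vect d_0}_2 \le t\norm{\vect d}_2$. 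Combining the two estimates yields
\[
\norm{\mat B_D\vect d_1}_2 \le \norm{\vect f - \mat B_D\vect y}_2 + \hat\rho + t\norm{\vect d}_2,
\]
and inserting everything into the first inequality gives \eqref{eq:two_constant_bound}.

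There is no real obstacle; the only point needing care is the case distinction at the boundaries. For $t = 0$, or when no singular value lies on one side of the threshold, one of the two subspaces may be trivial, and the conventions of Definition~\ref{def:two_constants} handle this. When $\mat B_D$ is rank-deficient, zero singular values belong to $\mathcal V_t$ for every $t \ge 0$, and the bound $\norm{\mat B_D\vect d_0}_2 \le t\norm{\vect d_0}_2$ still holds. The argument uses no property of the operator, of the nodes, or of $M$ versus $n$. It also needs no exactness of $\hat{\vect y}$; this is why the theorem applies to an arbitrary computed vector.
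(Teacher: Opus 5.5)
Your proof is correct and is essentially the paper's proof: the triangle inequality against the competitor $u_{\vect y}$, the orthogonal splitting of $\vect d$ along $\mathcal V_t$ and $\mathcal V_t^\perp$, and the bound $\norm{\mat B_D\vect d_1}_2 \le \norm{\mat B_D\vect d}_2 + \norm{\mat B_D\vect d_0}_2$, where $\mat B_D\vect d$ is written as $(\mat B_D\hat{\vect y} - \vect f) + (\vect f - \mat B_D\vect y)$ and $\norm{\mat B_D\vect d_0}_2$ is at most $t\norm{\vect d}_2$. Your remarks on the degenerate cases (empty subspaces, zero singular values, $M > n$) are also correct and are handled by the conventions of Definition~\ref{def:two_constants}.
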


Theorem~\ref{thm:two_constant} makes no assumption on how $\hat{\vect y}$ was obtained and none on the frame; its content is in the sizes of $\gamma_t$, $\Gamma_t$, $\hat\rho$ and $\norm{\vect d}_2$, which the following corollary evaluates in three regimes.

\begin{corollary}[Exact Arithmetic, Regularized Solutions, and Backward-Stable Solvers]
\label{cor:regimes_two_constant}\leavevmode
\begin{enumerate}
    \item[(i)] Let $M = n$ and let Assumption~\ref{ass:unisolvence} hold. For the interpolatory solution $u_N$ of Theorem~\ref{thm:representation}, $\hat\rho = 0$, and with $t = 0$ the bound reads $\norm{u - u_N}_{L^2(I)} \le \norm{u - u_{\vect y}}_{L^2(I)} + \Gamma_0\norm{\vect f - \mat B_D\vect y}_2$, where $\Gamma_0 \ge \norm{\Phi\mat D\vect v_n}_{L^2(I)}/\sigma_n$.
    \item[(ii)] For the regularized solution \eqref{eq:tsvd} with threshold $\eps$, $\hat\rho \le \norm{\vect f - \mat B_D\vect y}_2 + \eps\norm{\vect y}_2$ and $\norm{\vect d}_2 \le 2\norm{\vect y}_2 + \eps^{-1}\norm{\vect f - \mat B_D\vect y}_2$, and with $t = \eps$
    \[
    \norm{u - u_N^\eps}_{L^2(I)} \le \norm{u - u_{\vect y}}_{L^2(I)} + (3\Gamma_\eps + 2\gamma_\eps\eps^{-1})\bigl(\norm{\vect f - \mat B_D\vect y}_2 + \eps\norm{\vect y}_2\bigr).
    \]
    \item[(iii)] Let $M = n$, and let $\hat{\vect y}$ be computed by a solver with backward error $\eta > 0$, i.e.\ suppose that there is a matrix $\mat E \in \mathbb R^{n\times n}$ with $(\mat B_D + \mat E)\hat{\vect y} = \vect f$ and $\norm{\mat E}_2 \le \eta$. Then $\hat\rho \le \eta\norm{\hat{\vect y}}_2$; if in addition this $\mat E$ satisfies $\sigma_{\min}(\mat B_D + \mat E) \ge \eta/K$ for some $K > 0$, then $\norm{\vect d}_2 \le K\bigl(\norm{\vect y}_2 + \eta^{-1}\norm{\vect f - \mat B_D\vect y}_2\bigr)$, and with $t = \eta$
    \begin{equation}\label{eq:lu_bound}
    \norm{u - \hat u_N}_{L^2(I)} \le \norm{u - u_{\vect y}}_{L^2(I)} + \bigl[(1 + 2K)\Gamma_\eta + K\gamma_\eta\eta^{-1}\bigr]\bigl(\norm{\vect f - \mat B_D\vect y}_2 + \eta\norm{\vect y}_2\bigr).
    \end{equation}
\end{enumerate}
\end{corollary}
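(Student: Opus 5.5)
All three parts are obtained by specializing Theorem~\ref{thm:two_constant}. In each case we fix the threshold $t$, bound the nodal residual $\hat\rho$ and the coefficient difference $\vect d = \hat{\vect y}-\vect y$ in terms of $\vect r := \vect f - \mat B_D\vect y$ and $\vect y$, and substitute these bounds into \eqref{eq:two_constant_bound}. Every step is elementary linear algebra with the singular value decomposition $\mat B_D = \mat U\mathbf\Sigma\mat V^T$. The only point needing care is the bookkeeping of constants in (ii) and (iii).

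\emph{(i)} Under Assumption~\ref{ass:unisolvence} with $M=n$, the matrix $\mat B_D$ is nonsingular, so $\sigma_n>0$. This has two consequences.
\begin{itemize}
\item The interpolatory coefficient vector satisfies $\mat B_D\hat{\vect y} = \vect f$ exactly, hence $\hat\rho = 0$.
\item At $t=0$ we have $\mathcal V_0=\{0\}$, so $\gamma_0=0$, and every term multiplied by $t$ vanishes.
\end{itemize}
Substituting into \eqref{eq:two_constant_bound} leaves $\norm{u-u_{\vect y}}_{L^2(I)}+\Gamma_0\norm{\vect r}_2$. For the lower bound on $\Gamma_0$, insert $\vect w=\vect v_n\in\mathcal V_0^\perp$ into \eqref{eq:two_constants} and use $\norm{\mat B_D\vect v_n}_2=\sigma_n$.

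\emph{(ii)} Let $\mathbf P$ be the diagonal projector onto the indices with $\sigma_i>\eps$. Then $\mathbf\Sigma_\eps^\dagger\mathbf\Sigma = \mathbf P$ and $\mathbf\Sigma\mathbf\Sigma_\eps^\dagger = \mathbf P$.
\begin{itemize}
\item \emph{Residual.} Write $\vect f = \mat B_D\vect y+\vect r$. Then
\[
\vect f-\mat B_D\hat{\vect y} = (\mat I-\mat U\mathbf P\mat U^T)\vect r + \mat U(\mat I-\mathbf P)\mathbf\Sigma\mat V^T\vect y .
\]
The first term has norm at most $\norm{\vect r}_2$. In the second term only singular values $\sigma_i\le\eps$ survive, so its norm is at most $\eps\norm{\vect y}_2$. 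This gives the stated bound on $\hat\rho$.
\item \emph{Coefficient difference.} Similarly,
\[
\vect d = -\mat V(\mat I-\mathbf P)\mat V^T\vect y + \mat V\mathbf\Sigma_\eps^\dagger\mat U^T\vect r ,
\]
so $\norm{\vect d}_2\le\norm{\vect y}_2+\eps^{-1}\norm{\vect r}_2$. This is already stronger than the stated bound with $2\norm{\vect y}_2$.
\end{itemize}
Now take $t=\eps$ and insert the stated bounds. The $\Gamma_\eps$ bracket is at most $3\bigl(\norm{\vect r}_2+\eps\norm{\vect y}_2\bigr)$. The leakage term satisfies
\[
\gamma_\eps\bigl(2\norm{\vect y}_2+\eps^{-1}\norm{\vect r}_2\bigr)\le 2\gamma_\eps\eps^{-1}\bigl(\norm{\vect r}_2+\eps\norm{\vect y}_2\bigr).
\]
Together these give the displayed inequality.

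\emph{(iii)} From $(\mat B_D+\mat E)\hat{\vect y}=\vect f$ we get $\vect f-\mat B_D\hat{\vect y}=\mat E\hat{\vect y}$, so $\hat\rho\le\eta\norm{\hat{\vect y}}_2$.

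Subtracting $(\mat B_D+\mat E)\vect y$ from both sides gives $(\mat B_D+\mat E)\vect d=\vect r-\mat E\vect y$. The hypothesis $\sigma_{\min}(\mat B_D+\mat E)\ge\eta/K$ then yields
\[
\norm{\vect d}_2\le (K/\eta)\bigl(\norm{\vect r}_2+\eta\norm{\vect y}_2\bigr).
\]
Next, $\norm{\hat{\vect y}}_2\le\norm{\vect y}_2+\norm{\vect d}_2$, so $\hat\rho\le\eta\norm{\vect y}_2+\eta\norm{\vect d}_2$.

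With $t=\eta$, the $\Gamma_\eta$ bracket becomes $\norm{\vect r}_2+\eta\norm{\vect y}_2+2\eta\norm{\vect d}_2$. By the bound on $\vect d$ this is at most $(1+2K)\bigl(\norm{\vect r}_2+\eta\norm{\vect y}_2\bigr)$. The leakage term $\gamma_\eta\norm{\vect d}_2$ contributes $K\gamma_\eta\eta^{-1}\bigl(\norm{\vect r}_2+\eta\norm{\vect y}_2\bigr)$. Adding the two contributions gives \eqref{eq:lu_bound}.

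No step is a genuine obstacle. The point to watch is that the perturbed-matrix hypothesis in (iii) is used only to control $\vect d$. The residual $\hat\rho$ must be bounded through $\hat{\vect y}=\vect y+\vect d$ rather than directly, and this is the origin of the factor $2K$.
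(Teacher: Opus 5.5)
Your proof is correct and follows the paper's argument in all three parts, with the same thresholds $t = 0$, $t = \eps$, $t = \eta$ and the same bookkeeping in (iii), where $\hat\rho$ is bounded through $\hat{\vect y} = \vect y + \vect d$. The only difference is in (ii): you carry out the singular-value computation of Lemma~\ref{lem:tsvd} inline and bound $\vect d$ directly. This gives $\norm{\vect d}_2 \le \norm{\vect y}_2 + \eps^{-1}\norm{\vect f - \mat B_D\vect y}_2$, whereas the paper bounds $\norm{\hat{\vect y}}_2$ and then uses the triangle inequality. Your sharper bound would even allow $\gamma_\eps\eps^{-1}$ in place of $2\gamma_\eps\eps^{-1}$ in the final estimate.
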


Theorem~\ref{thm:two_constant} and Corollary~\ref{cor:regimes_two_constant} are proved in Section~\ref{sec:fp}. The bound of (i) is an estimate of the Lebesgue type. For an exact solve of perturbed data, $\hat\rho$ is the size of the perturbation, and for $t > 0$ the bound contains the term $\gamma_t\norm{\vect d}_2$ with $\norm{\vect d}_2 \ge \norm{\hat{\vect y}}_2 - \norm{\vect y}_2$, which carries the growth of the coefficient vector beyond the threshold of Remark~\ref{rem:divergence}. Item (ii) is the counterpart of \cite[Thm.~3.7]{adcock2020frames2}, whose constants $\kappa^\eps_{M,N}$ and $\eps\lambda^\eps_{M,N}$, the norm of the regularized reconstruction map on the retained singular subspace and the norm of the part of a function that the truncation discards, play the roles of $\Gamma_\eps$ and $\gamma_\eps$ here; the constants $C_1$ and $C_2$ of \cite[Thm.~5.5]{adcock2014numerical} are the same two ratios. Item (iii) has the form of (ii) with $\eps$ replaced by $\eta$: a backward-stable direct solver, without any regularization, attains the accuracy of an $\eta$-regularized frame approximation, provided that $\sigma_{\min}(\mat B_D + \mat E) \ge \eta/K$ with a moderate $K$. What decides the accuracy is therefore the size of $\gamma_t$ and $\Gamma_t$ at $t = \eta$, and the next two propositions bound them.

\begin{proposition}[A Priori Bounds on the Two Constants]
\label{prop:apriori}
There are $N_0$ and $C_\Phi > 0$, depending only on $\delta$, $L$ and $k$, such that for all $N \ge N_0$ and all $\vect w \in \mathbb R^n$, $\norm{\Phi\mat D\vect w}_{L^2(\tilde I)} \le C_\Phi\norm{\vect w}_2$. Consequently $\gamma_t \le C_\Phi$ for every $t \ge 0$, $\Gamma_t \le C_\Phi/t$ for every $t > 0$, and under the hypotheses of Corollary~\ref{cor:regimes_two_constant}(iii)
\begin{equation}\label{eq:apriori_lu}
\norm{u - \hat u_N}_{L^2(I)} \le \norm{u - u_{\vect y}}_{L^2(I)} + (1 + 3K)\,C_\Phi\bigl(\norm{\vect y}_2 + \eta^{-1}\norm{\vect f - \mat B_D\vect y}_2\bigr).
\end{equation}
\end{proposition}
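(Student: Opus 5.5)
The plan is to compute $\norm{\Phi\mat D\vect w}_{L^2(\tilde I)}$ exactly using the orthonormality of the $w_j$ on $\tilde I$, which reduces the first claim to a lower bound on the column norms $\norm{\mat B\vect e_j}_2$. Write $d_j := \mat D_{jj} = 1/\norm{\mat B\vect e_j}_2$. Then
\[
\Phi\mat D\vect w = \sum_{j=1}^n d_j w_j^{\mathrm{c}}\, w_{j+2} - \Bigl(\sum_j a_j d_j w_j^{\mathrm{c}}\Bigr) w_1 - \Bigl(\sum_j b_j d_j w_j^{\mathrm{c}}\Bigr) w_2 ,
\]
where $w_j^{\mathrm{c}}$ is the $j$-th entry of $\vect w$. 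Parseval on $\tilde I$ then gives the square of the norm as $\sum_j d_j^2 (w_j^{\mathrm{c}})^2$ plus the squares of the two correction sums.

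Since $|a_j| \le 1/|s_1|$ and $|b_j| \le 1/|s_2|$ are bounded independently of $j$ (note $\theta_0 \in (0,\pi/2)$, so $s_1, s_2 \ne 0$), Cauchy--Schwarz bounds the two correction sums by $C(\sum_j d_j^2)^{1/2}\norm{\vect w}_2$. Hence
\[
\norm{\Phi\mat D\vect w}_{L^2(\tilde I)}^2 \le \Bigl(\max_j d_j^2 + C\sum_j d_j^2\Bigr)\norm{\vect w}_2^2 ,
\]
and it suffices to show that $\sum_j d_j^2$ is bounded uniformly in $N$. I will aim for the lower bound $\norm{\mat B\vect e_j}_2 \ge c\,(1+\mu_{j+2}^2)\sqrt N$ for $N \ge N_0$ and all $1 \le j \le n$. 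This gives $\sum_j d_j^2 \le C N^{-1}\sum_j j^{-4} = O(1/N)$, which is more than enough.

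To prove the column bound, write $\psi_j = \Lop\varphi_j$. For large $j$ its dominant part is $\lambda_{j+2}w_{j+2} + k\,w_{j+2}'$, a sinusoid of frequency $\mu_{j+2}$ and amplitude $\sqrt{2/\tilde L}\,(\lambda_{j+2}^2 + k^2\mu_{j+2}^2)^{1/2}$. The corrections $a_j\Lop w_1 + b_j\Lop w_2$ are $O(1)$ uniformly in $x$ and $j$.

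The squared sum over the interior nodes of a sinusoid $A\sin(\mu(x+\delta)+\phi)$ equals $\tfrac{A^2}{2}(n - S)$, with $S$ a geometric sum of $\cos(2\mu x_m + \cdot)$. One has $|S| \le 1/|\sin(\mu h)|$. Here $\mu_{j+2}h \le \omega N/(N-1)$, which stays bounded away from $\pi$ for large $N$ because $\omega < \pi$. It also satisfies $\mu_{j+2}h \ge c\,j/N$. So $|S| \le C N/j$, and this is at most $n/4$ once $j \ge j_0$ for a fixed $j_0$. For these $j$ the column norm is $\ge c\,\mu_{j+2}^2\sqrt N$, after absorbing the $O(\sqrt N)$ correction once $\mu_{j+2}$ exceeds a fixed constant.

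For the finitely many $j < j_0$, the functions $\psi_j$ are fixed, independent of $N$ (up to the ordering), and continuous. Convergence of Riemann sums gives $N^{-1}\norm{\mat B\vect e_j}_2^2 \to L^{-1}\norm{\psi_j}_{L^2(I)}^2$. This limit is positive: $\varphi_j \in \Phi_N$ is a nonzero analytic function, so it does not vanish on $I$. It satisfies homogeneous Dirichlet conditions at $0$ and $L$, so uniqueness for the Dirichlet problem (Lemma~\ref{lem:dirichlet_1d}) forces $\Lop\varphi_j \not\equiv 0$ on $I$.

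Choosing $N_0$ large enough for both regimes gives the uniform bound and $C_\Phi$, depending only on $\delta$, $L$ and $k$. The step I expect to be the main obstacle is exactly this uniform-in-$j$ lower bound on the column norms. The danger sits at the two ends of the range: near $j \approx N$, where aliasing would occur if $\mu h$ approached $\pi$ (excluded by $\omega<\pi$), and for small $j$, where the geometric-sum bound degenerates and the argument must switch to Riemann sums.

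The consequences are then immediate. Since $\norm{\cdot}_{L^2(I)} \le \norm{\cdot}_{L^2(\tilde I)}$, we get $\gamma_t \le C_\Phi$. For $\vect w \in \mathcal V_t^\perp$ we have $\norm{\mat B_D\vect w}_2 \ge t\norm{\vect w}_2$, because only singular values $\sigma_i > t$ occur, so $\Gamma_t \le C_\Phi/t$. Inserting $t = \eta$ into \eqref{eq:lu_bound} gives the coefficient $(1+2K)C_\Phi/\eta + K C_\Phi/\eta = (1+3K)C_\Phi/\eta$. Multiplying by $\norm{\vect f - \mat B_D\vect y}_2 + \eta\norm{\vect y}_2$ yields \eqref{eq:apriori_lu}.
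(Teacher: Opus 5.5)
Your proposal is correct and follows the paper's proof. Parseval on $\tilde I$ is the Gram matrix $\mat I + \vect a\vect a^T + \vect b\vect b^T$ of $\{\varphi_j\}$; the column norms are of order $\rho_{j+2}\sqrt N$ as in Lemma~\ref{lem:column_norms}, with finitely many small $j$ handled by Riemann sums; and the consequences follow by inserting $\gamma_\eta \le C_\Phi$ and $\Gamma_\eta \le C_\Phi/\eta$ into \eqref{eq:lu_bound}. One small improvement over the paper: your observation that the geometric-sum bound $1/\sin(\mu_{j+2}h) \lesssim N/j$ already lies below $n/4$ once $j$ exceeds a fixed multiple of $1/\omega$ makes the counting argument for the middle range in the proof of Lemma~\ref{lem:nodal_sines} unnecessary.
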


\begin{proposition}[Lebesgue-Type Bounds on the Two Constants]
\label{prop:classical}
Let Assumption~\ref{ass:unisolvence} hold, and let $\Lambda^\Psi_{N,2} := \max_m\norm{\ell_m^\Psi}_{L^2(I)} \le \sqrt L\,\Lambda_N^\Psi$.
\begin{enumerate}
    \item[(i)] For every $t \ge 0$,
    \begin{equation}\label{eq:classical}
    \gamma_t \le \frac{L^2}{\pi^2}\sqrt n\,\Lambda^\Psi_{N,2}\,t, \qquad \Gamma_t \le \frac{L^2}{\pi^2}\sqrt n\,\Lambda^\Psi_{N,2},
    \end{equation}
    and under the hypotheses of Corollary~\ref{cor:regimes_two_constant}(iii)
    \begin{equation}\label{eq:classical_lu}
    \norm{u - \hat u_N}_{L^2(I)} \le \norm{u - u_{\vect y}}_{L^2(I)} + (1 + 3K)\,\frac{L^2}{\pi^2}\sqrt n\,\Lambda^\Psi_{N,2}\bigl(\norm{\vect f - \mat B_D\vect y}_2 + \eta\norm{\vect y}_2\bigr).
    \end{equation}
\end{enumerate}
If $\Lop = \Lop_P$, there are $N_0$ and $C$, depending only on $\delta$ and $L$, such that for all $N \ge N_0$ moreover:
\begin{enumerate}
    \item[(ii)] $\norm{\ell_m^\Psi}_{L^\infty(I)} \le N^C\exp\bigl(N[U_\omega(z_m) - U_\omega(c_0)]\bigr)$ for $2 \le m \le N-1$, with $z_m = \cos\theta_m$ as in Lemma~\ref{lem:chebyshev_variable};
    \item[(iii)] $\Lambda_N^\Psi \le N^Ce^{N\kappa_\Lambda(\omega)}$ and $\Lambda^\Psi_{N,2} \le N^Ce^{N\kappa_\Lambda(\omega)}$;
    \item[(iv)] $\gamma_t \le N^Ce^{N\kappa_\Lambda(\omega)}\,t$ and $\Gamma_t \le N^Ce^{N\kappa_\Lambda(\omega)}$ for all $t \ge 0$; in particular, at $t = e^{-N\kappa_\sigma(\omega)}$,
    \begin{equation}\label{eq:endpoint_law}
    \gamma_t \le N^C\,t^{\,a(\omega)}, \qquad \Gamma_t \le N^C\,t^{\,a(\omega)-1}.
    \end{equation}
\end{enumerate}
\end{proposition}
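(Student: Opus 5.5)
The plan is to prove (i) by the interpolation identity in $\Psi_N$ and an $L^2$ bound for $\Lop^{-1}$, and (ii)--(iv) by writing $\ell_m^\Psi$ through the Lagrange functions of $V_N$ and applying the potential estimates behind Theorem~\ref{thm:rates}.

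\emph{Part (i).} Fix $\vect w \in \mathbb R^n$ and put $v := \Phi\mat D\vect w \in \Phi_N$ and $\psi := \Lop v \in \Psi_N$. By definition of $\mat B$, the nodal values of $\psi$ are $\vect b := \mat B_D\vect w$. Since interpolation in $\Psi_N$ is unisolvent (Assumption~\ref{ass:unisolvence}), it reproduces $\psi$, so $\psi = \sum_m b_m \ell_m^\Psi$. The triangle and Cauchy--Schwarz inequalities then give
\[
\norm{\psi}_{L^2(I)} \le \norm{\vect b}_1\Lambda^\Psi_{N,2} \le \sqrt n\,\Lambda^\Psi_{N,2}\norm{\vect b}_2 .
\]
Because $v(0)=v(L)=0$, we have $v = \Lop^{-1}\psi$. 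For $\Lop = -d^2/dx^2 + k\,d/dx$ with constant $k$, the energy identity $\langle \Lop v, v\rangle = \norm{v'}^2$ holds, since $\int_0^L v'v = 0$. Combined with the Poincar\'e inequality $\norm{v} \le (L/\pi)\norm{v'}$, it gives $\norm{v}_{L^2(I)} \le (L^2/\pi^2)\norm{\Lop v}_{L^2(I)}$. Hence
\[
\norm{\Phi\mat D\vect w}_{L^2(I)} \le \frac{L^2}{\pi^2}\sqrt n\,\Lambda^\Psi_{N,2}\,\norm{\mat B_D\vect w}_2 .
\]
On $\mathcal V_t^\perp$ this is the bound for $\Gamma_t$. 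On $\mathcal V_t$ we have $\norm{\mat B_D\vect w}_2 \le t\norm{\vect w}_2$, which yields the bound for $\gamma_t$. Inserting both into \eqref{eq:lu_bound} gives $(1+2K)\Gamma_\eta + K\gamma_\eta/\eta \le (1+3K)(L^2/\pi^2)\sqrt n\,\Lambda^\Psi_{N,2}$, which is \eqref{eq:classical_lu}. The inequality $\Lambda^\Psi_{N,2} \le \sqrt L\,\Lambda^\Psi_N$ is immediate from the definitions.

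\emph{Part (ii), Poisson.} Here $\Lop w_j = \lambda_j w_j$, so $\Psi_N \subset V_N$ has codimension two. It is cut out by two linear functionals: $\psi = \sum_j d_j w_j \in \Psi_N$ iff $\sum_j \lambda_j^{-1}d_j w_j(0) = \sum_j \lambda_j^{-1} d_j w_j(L) = 0$. Let $\ell_m$ denote the Lagrange functions of $V_N$ at all $N$ nodes. Then
\[
\ell_m^\Psi = \ell_m - \alpha_m\ell_1 - \beta_m\ell_N ,
\]
where $(\alpha_m,\beta_m)$ solves a $2\times2$ system whose matrix consists of the two functionals applied to $\ell_1$ and $\ell_N$. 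The nonvanishing conditions of Proposition~\ref{prop:unisolvence_poisson} and Lemma~\ref{lem:correction} are exactly the invertibility of this system.

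\begin{itemize}
\item[(a)] Bound $\ell_m$. By Lemma~\ref{lem:chebyshev_variable}, $\ell_m(x) = \sin\theta/\sin\theta_m \cdot \prod_{m'\ne m}(z-z_{m'})/(z_m-z_{m'})$. The uniform discrete-potential estimate of Appendix~\ref{app:rates} gives $|\ell_m| \le N^C e^{N[U_\omega(z_m)-U_\omega(z)]}$ on $[-c_0,c_0]$. Since $U_\omega$ is even and attains its minimum on $[-c_0,c_0]$ at $\pm c_0$, this is at most $N^C e^{N[U_\omega(z_m)-U_\omega(c_0)]}$. For $m\in\{1,N\}$ the exponent vanishes.
\item[(b)] Bound $|\alpha_m|$ and $|\beta_m|$ by $N^C e^{N[U_\omega(z_m)-U_\omega(c_0)]}$. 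The numerators are the functionals applied to $\ell_m$; these are weighted coefficient sums, controlled by $\norm{\ell_m}_{L^2(\tilde I)}$, i.e.\ by the size of the polynomial on $[-1,1]$. The denominators need lower bounds of the same exponential size as the upper bounds, so that the ratio leaves only polynomial factors.
\end{itemize}

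I expect (b) to be the main obstacle. The coefficient functionals see $\ell_m$ on all of $[-1,1]$, where it is larger, by up to $e^{N[U_\omega(c_0)-U_\omega(1)]}$, than on $[-c_0,c_0]$. The same excess must then appear in the determinant built from $\ell_1$ and $\ell_N$, so that it cancels in the ratio. I would try to evaluate the functionals in closed form via the two functions of Lemma~\ref{lem:correction} that vanish at all interior nodes, reducing the determinant to values of the node polynomial $\prod_{m'}(z-z_{m'})$ and its weighted coefficients, whose asymptotics follow from the same potential estimate.

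\emph{Parts (iii) and (iv).} For (iii), sum the bound in (ii) over $m$. The largest exponent is $\max_m U_\omega(z_m) \le U_\omega(0)$, reached at the node nearest the centre, so $\Lambda_N^\Psi \le N^{C+1}e^{N\kappa_\Lambda}$ by Theorem~\ref{thm:rates}. The bound for $\Lambda^\Psi_{N,2}$ then follows from $\Lambda^\Psi_{N,2} \le \sqrt L\,\Lambda_N^\Psi$. For (iv), substitute (iii) into \eqref{eq:classical}, absorbing $\sqrt n$ into $N^C$. At $t = e^{-N\kappa_\sigma}$ we have $e^{N\kappa_\Lambda} = t^{-\kappa_\Lambda/\kappa_\sigma}$. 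Therefore $e^{N\kappa_\Lambda}t = t^{a(\omega)}$ and $e^{N\kappa_\Lambda} = t^{a(\omega)-1}$, which is \eqref{eq:endpoint_law}.
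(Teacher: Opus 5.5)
Your part (i) is the paper's argument (energy identity plus Poincaré is Lemma~\ref{lem:dirichlet_box}(iii) with $d=1$), and your derivations of (iii) and (iv) from (ii) are also the paper's. Your decomposition $\ell_m^\Psi = \ell_m^V - \alpha_m\ell_1^V - \beta_m\ell_N^V$ is the right one, and you correctly diagnose what is needed. However, step (b) is left open, and it is the entire content of (ii).

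Unisolvence only says the $2\times2$ determinant is nonzero. You need $|\det| \ge e^{2N(\kappa_\sigma-\kappa_\Lambda)-C\log N}$, i.e.\ that the functionals applied to $\ell_1^V,\ell_N^V$ are as large as the crude upper bound $\norm{\ell_1^V}_{L^2(\tilde I)}\le e^{N(\kappa_\sigma-\kappa_\Lambda)+C\log N}$ allows. The route you sketch, asymptotics of weighted coefficient sums from the potential estimate, cannot deliver this. The estimates of Appendix~\ref{app:rates} bound $|\ell_1|$ pointwise: from above everywhere, and from below only away from the nodes. The functional $\psi\mapsto(\tilde\Lop_P^{-1}\psi)(0)=\sum_j\lambda_j^{-1}d_jw_j(0)$ is a signed sum, equivalently a weighted integral of a sign-changing function, so size bounds cannot exclude cancellation.

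Worse, since the functional at $L$ is the one at $0$ composed with $R$, and $R\ell_1^V=\ell_N^V$, your matrix is $\bigl(\begin{smallmatrix}a&b\\b&a\end{smallmatrix}\bigr)$. Its determinant $a^2-b^2$ is itself a difference of two quantities of the same exponential size.

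The paper closes this with a positivity argument (Lemma~\ref{lem:correction}).
\begin{itemize}
\item It diagonalizes with $R$. With $\chi_N^\pm=\ell_1^V\pm\ell_N^V$, $R\chi_N^\pm=\pm\chi_N^\pm$, the determinant factors as $p_N^+p_N^-$ with $p_N^\pm=\int_{\tilde I}\tilde G(0,y)\chi_N^\pm(y)\,dy$. The even and odd parts of $\ell_m^V$ are then corrected separately.
\item It shows $\chi_N^\pm>0$ on the extension $(-\delta,0)$. Every factor of $\ell_1(z)$ is positive for $z\in(c_0,1)$. The mirror identity $(-1)^{N-1}(z+c_0)\ell_N(z)=(z-c_0)\ell_1(z)$ then gives $\chi_N^\pm\ge\frac{2c_0}{1+c_0}\ell_1^V$ there.
\item Since $\tilde G(0,\cdot)\ge0$, the two extension integrals, folded together by symmetry, carry weights at least $(L/\tilde L)(y+\delta)$ and cannot cancel.
\item The integral over $I$ is at most $N^C$, because $\norm{\chi_N^\pm}_{L^\infty(I)}\le N^C$.
\item Restricting to $\vartheta\in[1/N,2/N]$, Lemma~\ref{lem:lagrange_polynomials}(b) and Lemma~\ref{lem:U_modulus} give $|\ell_1(\cos\vartheta)|\ge e^{N(\kappa_\sigma-\kappa_\Lambda)-C\log N}$. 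Hence $p_N^\pm\ge e^{N(\kappa_\sigma-\kappa_\Lambda)-C\log N}$.
\end{itemize}
With this lower bound, your numerator estimate $\norm{\ell_m^V}_{L^2(\tilde I)}\le e^{N[U_\omega(z_m)-U_\omega(1)]+C\log N}$ and $\norm{\chi_N^\pm}_{L^\infty(I)}\le N^C$ give (ii), because $\kappa_\sigma-\kappa_\Lambda=U_\omega(c_0)-U_\omega(1)$.
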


Proposition~\ref{prop:apriori} is proved in Appendix~\ref{app:apriori}, part (i) of Proposition~\ref{prop:classical} in Section~\ref{sec:fp}, and parts (ii)--(iv) in Section~\ref{sec:tail}, from the boundary correction that turns a Lagrange function of the trial frame into one of the test space. The two propositions are complementary. Estimate \eqref{eq:apriori_lu} is uniform in $N$: if some coefficient vector $\vect y$ of bounded norm reproduces the data to the accuracy of the backward error, the computed solution has bounded error, however ill-conditioned the system. At a fixed threshold $t$ the bounds $\gamma_t \le C_\Phi$ and $\Gamma_t \le C_\Phi/t$ are the only ones independent of $N$, whereas the Lebesgue-type bounds grow at the rate $\kappa_\Lambda$; at the end of the spectrum the latter carry the exponent of Corollary~\ref{cor:exponent}, with a power of $N$ in place of a constant (Section~\ref{sec:res_tail}).

\subsubsection{Analytic Data}
\label{sec:res_analytic}

The condition shared by Corollary~\ref{cor:regimes_two_constant}(iii) and Proposition~\ref{prop:apriori} for a small bound, a competitor $\vect y$ of moderate norm whose nodal residual is at the level of the backward error, is satisfied for sufficiently analytic data. Recall $\rho^\ast$ and $\Omega_\rho$ from \eqref{eq:rho_star}--\eqref{eq:omega_rho}.

\begin{theorem}[Competitors of Moderate Coefficient Norm in the Test Space]
\label{thm:competitor}
Let $1 < \rho \le \rho^\ast$, and let $f$ extend analytically to an open simply connected set $\mathcal D \supset \bar\Omega_\rho$. Then $u = \Lop^{-1}f$ extends analytically to $\mathcal D$, and there is a constant $C$, depending on $f$, $\rho$, $\delta$, $L$ and $k$, such that for every $N \ge 3$ and every integer ${2 \le p \le N-1}$ there is $v_p \in \Phi_N$ with
\begin{enumerate}
    \item[(i)] $\norm{f - \Lop v_p}_{L^\infty(I)} \le C p^{9/2}\rho^{-p}$ and $\norm{u - v_p}_{L^\infty(I)} \le C p^{1/2}\rho^{-p}$;
    \item[(ii)] $\norm{v_p''}_{L^2(\tilde I)} + k\norm{v_p'}_{L^2(\tilde I)} \le C p^{11/2}(\rho^\ast/\rho)^p$;
    \item[(iii)] for $N \ge N_0$, with $N_0$ and $C_B$ of Lemma~\ref{lem:column_norms}, the coefficient vector $\vect y_p$ with $v_p = \Phi\mat D\vect y_p$ satisfies $\norm{\vect y_p}_2 \le C_BC\sqrt N\,p^{11/2}(\rho^\ast/\rho)^p$.
\end{enumerate}
\end{theorem}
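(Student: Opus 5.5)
The plan is to construct $v_p$ by approximating the solution $u$ rather than the data $f$, so that $\Lop v_p - f = \Lop(v_p - u)$ is controlled by differentiating an analytic approximation error. First, $u$ is analytic on $\mathcal D$: the equation $-u''+ku' = f$ with constant $k\ge0$ (or $k=0$) is a linear ODE with analytic coefficients on the simply connected set $\mathcal D$. Its solutions continue analytically along paths in $\mathcal D$, and the real solution with $u(0)=u(L)=0$ from Lemma~\ref{lem:dirichlet_1d} is one of them. Explicitly, $u$ is given by two integrations of $e^{-kx}f$ against $e^{kx}$ along paths, plus a combination of $1$ and $e^{kx}$.

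Next I pass to the Chebyshev variable of Lemma~\ref{lem:chebyshev_variable}. The function $F(z) := u(x(z))/\sin\theta$ with $\sin\theta = \sqrt{1-z^2}$ is analytic on $E_\rho$ and a little beyond, because $\pm1\notin \bar E_\rho\subset$ image of $\mathcal D$ (here one uses $\bar\Omega_\rho\subset\mathcal D$ and slightly enlarges $\rho$ by compactness). I expand $F$ in Chebyshev polynomials $U_k$ of $[-c_0,c_0]$, i.e.\ in $U_k(z/c_0)$. Truncating at degree $p$ gives $q_p$ with $\|F-q_p\|_{L^\infty(E_{\rho'})}\lesssim (\rho'/\rho)^p$ on smaller ellipses, by the standard Bernstein estimate. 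Then $\tilde v_p := \sqrt{2/\tilde L}\sin\theta\, q_p(z)$ lies in $V_{p+1}\subset V_N$. Cauchy estimates on a thin neighbourhood of $\bar I$ (ellipse parameter $\rho_1$ close to $1$) bound $\|u-\tilde v_p\|$ and its first two derivatives by $C p^{a}\rho^{-p}$ on $\bar I$. Alternatively, a Markov/Bernstein-type inequality on $[-c_0,c_0]$ for derivatives of the tail sum gives the powers of $p$ explicitly, which is where the exponents $1/2$ and $9/2$ would come from. To land in $\Phi_N$ I subtract the boundary correction $\tilde v_p(0) r_0 + \tilde v_p(L) r_L$, where $r_0, r_L\in\operatorname{span}\{w_1,w_2\}$ interpolate the boundary functionals (they exist by the proof of Lemma~\ref{lem:dimensions}). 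Since $|\tilde v_p(0)|,|\tilde v_p(L)|\le Cp^{1/2}\rho^{-p}$, this costs nothing in (i), which then follows.

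For (ii), I bound the coefficients of $v_p$ in the $w_j$ basis. The $U_k$ expansion on $[-c_0,c_0]$ must be rewritten in $U_k$ on $[-1,1]$, i.e.\ its coefficient norm is the $L^2(\tilde I)$ norm by \eqref{eq:chebyshev_norms}. Since $q_p$ is a polynomial of degree $p$ bounded by $C$ on $E_{\rho}$-ish sets (its sup on $[-c_0,c_0]$ is $O(1)$), the Bernstein–Walsh inequality gives $|q_p(z)|\le C\sqrt p\,(\rho^\ast)^p$ on $[-1,1]$, because $[-1,1]\subset \bar E_{\rho^\ast}$. More precisely, I use the truncated series itself: the coefficients decay like $\rho^{-k}$, and $|U_k(z/c_0)|\le (k+1)(\rho^\ast)^k$ on $[-1,1]$, giving $\sum_k k\,(\rho^\ast/\rho)^k \lesssim p^{2}(\rho^\ast/\rho)^p$. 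Differentiation costs $\mu_j^2\lesssim p^2$ per second derivative in the $w_j$ basis, yielding $\|v_p''\|_{L^2(\tilde I)}+k\|v_p'\|\lesssim p^{a}(\rho^\ast/\rho)^p$. I will not fuss over the exact power $11/2$; crude bookkeeping should reach some fixed power, and I will then match the stated one.

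For (iii), I write $v_p = \sum_j c_j\varphi_j$. Then $\vect y_p = \mat D^{-1}\vect c$ with $(\mat D^{-1})_{jj} = \|\mat B\vect e_j\|_2$. Lemma~\ref{lem:column_norms} presumably bounds $\|\mat B\vect e_j\|_2\le C_B\sqrt N\,\lambda_{j+2}$ (or compares with the $H^2$ weight), so $\|\vect y_p\|_2\le C_B\sqrt N\,\|(\lambda_{j+2}c_j)_j\|_2$. The last quantity is comparable to $\|v_p''\|_{L^2(\tilde I)}$, because $v_p = \sum c_j w_{j+2} + (\ldots)w_1+(\ldots)w_2$ and the $w_1,w_2$ components carry only bounded eigenvalues. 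The main obstacle I expect is step two: the careful derivative bounds on $\bar I$ near the ellipse with the correct polynomial factors, and the justification that $\bar E_\rho$ avoids $\pm1$ while $F$ remains analytic there. Relating the $\sin\theta$ factor to analyticity near $z=\pm c_0$ is harmless, since $\sin\theta\neq0$ there.
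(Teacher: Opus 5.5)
Your construction is the right one, and it works for $\rho<\rho^\ast$: approximate $u$ rather than $f$, pass to $U(z)=u(x(z))/\sqrt{1-z^2}$, truncate its Chebyshev expansion on $[-c_0,c_0]$, correct the boundary values with $w_1,w_2$, and read off (iii) from the column norms. It breaks down at the endpoint $\rho=\rho^\ast$. The statement includes this case, and it is the one Corollary~\ref{cor:unconditional}(iii) needs.

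You assert that $\pm1\notin\bar E_\rho$ and that $U$ is analytic a little beyond $E_\rho$ after enlarging $\rho$. For $\rho=\rho^\ast$ all of this fails:
\begin{itemize}
\item $E_{\rho^\ast}$ has real semi-axis one, so $\pm1\in\partial E_{\rho^\ast}$.
\item $\Omega_\rho$ is not defined for $\rho>\rho^\ast$, because $\arccos$ branches at $\pm1$.
\item $U$ is in general unbounded on $E_{\rho^\ast}$; near $z=1$ it behaves like a multiple of $u(-\delta)(1-z)^{-1/2}$, and similarly at $-1$.
\end{itemize}
The Bernstein coefficient estimate then gives only $|b_\nu|\le C_{\rho'}\rho'^{-\nu}$ for each fixed $\rho'<\rho^\ast$, which loses an exponential factor. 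Statement (i) would hold only with $\rho'^{-p}$, and (ii) with $(\rho^\ast/\rho')^p$, which grows exponentially instead of polynomially in $p$.

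The missing idea is to let the ellipse depend on the index. For large $\nu$ take $\rho'=\rho^\ast(1-1/\nu)$. Write $1=\tfrac{c_0}{2}(\rho^\ast+1/\rho^\ast)$ and note that one of $|1\pm z|$ is at least one; then $|1-z^2|\ge c(\rho^\ast-\rho')=c\rho^\ast/\nu$ on $E_{\rho'}$, so $|U|\le C\nu^{1/2}$ there. Since $(1-1/\nu)^{-\nu}\le 4$, the coefficient estimate gives $|b_\nu|\le C\nu^{1/2}\rho^{\ast\,-\nu}$.

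This is the source of the $p^{1/2}$ in (i), not a Markov or Bernstein inequality; those only supply the factors $p^{2j}$ for $j$ derivatives. The same bound gives $\max_{[-1,1]}|q_p|\le Cp^{3/2}(\rho^\ast/\rho)^p$, since $|T_\nu(z/c_0)|\le\rho^{\ast\,\nu}$ on $[-1,1]$.

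With this coefficient bound the rest of your plan goes through. Your shortcut for (ii), $\|v''\|_{L^2(\tilde I)}\le\lambda_{p+1}\|v\|_{L^2(\tilde I)}$ for $v\in V_{p+1}$ together with $\|v\|_{L^2(\tilde I)}\le\sqrt{\tilde L}\,\max_{[-1,1]}|q_p|$, is legitimate. It gives $p^{7/2}(\rho^\ast/\rho)^p$, better than the paper's $p^{11/2}$, which comes from differentiating $q_p$ on $[-1,1]$.

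Two small corrections. First, drop the factor $\sqrt{2/\tilde L}$ in your $\tilde v_p$; as written it approximates $\sqrt{2/\tilde L}\,u$. Second, for $k>0$ the column norms scale like $\rho_{j+2}=\mu_{j+2}\sqrt{\mu_{j+2}^2+k^2}$, not $\lambda_{j+2}$. This is why (iii) rests on $\|v_p''\|+k\|v_p'\|$ and not on $\|v_p''\|$ alone.
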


\begin{corollary}[Bound Uniform in the Conditioning for Backward-Stable Solvers]
\label{cor:unconditional}
Assume the hypotheses of Theorem~\ref{thm:competitor} and of Corollary~\ref{cor:regimes_two_constant}(iii), and let $N \ge N_0$ of Lemma~\ref{lem:column_norms}; $C$ denotes constants depending on $f$, $\rho$, $\delta$, $L$ and $k$.
\begin{enumerate}
    \item[(i)] For every integer ${2 \le p \le N-1}$,
    \begin{equation}\label{eq:unconditional}
\norm{u - \hat u_N}_{L^2(I)} \le C\sqrt N\,p^{11/2}\Bigl\{\rho^{-p} + \bigl[(1+2K)\Gamma_\eta + K\gamma_\eta\eta^{-1}\bigr]\bigl(\rho^{-p} + \eta\,(\rho^\ast/\rho)^p\bigr)\Bigr\}.
\end{equation}
    \item[(ii)] With $a' := \log\rho/\log\rho^\ast \in (0,1]$, if $\eta \le \rho^{\ast\,-2}$ and $N \ge 2 + \log(1/\eta)/\log\rho^\ast$, then
    \begin{equation}\label{eq:unconditional_floor}
\norm{u - \hat u_N}_{L^2(I)} \le C\sqrt N\,\bigl(1 + \log(1/\eta)\bigr)^{11/2}\bigl(1 + (1+2K)\Gamma_\eta + K\gamma_\eta\eta^{-1}\bigr)\,\eta^{a'}.
\end{equation}
    \item[(iii)] If $\rho = \rho^\ast$, then under the conditions of (ii)
    \[
    \norm{u - \hat u_N}_{L^2(I)} \le C\sqrt N\,(1 + \log(1/\eta))^{11/2}\bigl(\eta + (1+3K)C_\Phi\bigr),
    \]
    a bound independent of the conditioning of the system.
\end{enumerate}
\end{corollary}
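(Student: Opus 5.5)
The plan is to substitute the competitor of Theorem~\ref{thm:competitor} into the backward-stable bound \eqref{eq:lu_bound} of Corollary~\ref{cor:regimes_two_constant}(iii), and then choose $p$ to balance the two geometric factors.

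For (i), fix $2\le p\le N-1$ and take $\vect y=\vect y_p$, so that $u_{\vect y}=v_p$. Since $L^2(I)$ norms are bounded by $\sqrt L$ times $L^\infty$ norms, Theorem~\ref{thm:competitor}(i) gives
\[
\norm{u-v_p}_{L^2(I)}\le Cp^{1/2}\rho^{-p}.
\]
The nodal residual is bounded through its entries: $(\vect f-\mat B_D\vect y_p)_m=(f-\Lop v_p)(x_m)$, because $\mat B_D\vect y_p$ samples $\Lop\Phi\mat D\vect y_p=\Lop v_p$. Hence
\[
\norm{\vect f-\mat B_D\vect y_p}_2\le\sqrt n\,\norm{f-\Lop v_p}_{L^\infty(I)}\le C\sqrt N p^{9/2}\rho^{-p}.
\]
Theorem~\ref{thm:competitor}(iii) gives $\eta\norm{\vect y_p}_2\le C\sqrt N p^{11/2}\eta(\rho^\ast/\rho)^p$. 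Substituting these three estimates into \eqref{eq:lu_bound} and absorbing $p^{1/2}, p^{9/2}\le p^{11/2}$ and $1\le\sqrt N$ gives \eqref{eq:unconditional}.

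For (ii), choose $p$ so that $\rho^{-p}\approx\eta(\rho^\ast/\rho)^p$, that is, $(\rho^\ast)^p\approx1/\eta$. Take $p:=\lceil\log(1/\eta)/\log\rho^\ast\rceil$.
\begin{itemize}
\item The condition $\eta\le\rho^{\ast\,-2}$ gives $p\ge2$.
\item The condition $N\ge2+\log(1/\eta)/\log\rho^\ast$ gives $p\le N-1$.
\item With $p\le1+\log(1/\eta)/\log\rho^\ast$ we get $p^{11/2}\le C(1+\log(1/\eta))^{11/2}$.
\end{itemize}
Since $p\ge\log(1/\eta)/\log\rho^\ast$, we have $\rho^{-p}\le\eta^{\log\rho/\log\rho^\ast}=\eta^{a'}$. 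For the other term, $(\rho^\ast)^p\le\rho^\ast/\eta$ gives
\[
\eta(\rho^\ast/\rho)^p\le\rho^\ast\rho^{-p}\le\rho^\ast\eta^{a'}.
\]
Collecting both terms in the bracket yields \eqref{eq:unconditional_floor}.

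For (iii), $\rho=\rho^\ast$ gives $a'=1$, and the competitor norm satisfies $\norm{\vect y_p}_2\le C\sqrt N p^{11/2}$. Here I would not use the $\Gamma_\eta,\gamma_\eta$ form; I would insert the same $\vect y_p$ into the $N$-uniform bound \eqref{eq:apriori_lu} of Proposition~\ref{prop:apriori}. With the same choice of $p$ the residual term is $\eta^{-1}\norm{\vect f-\mat B_D\vect y_p}_2\le C\sqrt N p^{9/2}$, since $\rho^{-p}\le\eta$. The first term $\norm{u-v_p}_{L^2(I)}$ is $\le C\eta$. Together these give the stated bound $C\sqrt N(1+\log(1/\eta))^{11/2}(\eta+(1+3K)C_\Phi)$. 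Proposition~\ref{prop:apriori} needs $N\ge N_0$; its $N_0$ can be merged with that of Lemma~\ref{lem:column_norms} by enlarging $C$ for finitely many $N$, or it is assumed directly.

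The only delicate point is the bookkeeping in the choice of $p$: the integer rounding must respect $2\le p\le N-1$, and the exponent manipulations $\rho^{-p}\le\eta^{a'}$ and $(\rho^\ast)^p\le\rho^\ast/\eta$ must go in the right directions. Everything else is direct substitution.
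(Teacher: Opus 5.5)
Your proposal is correct and follows the paper's proof: the same competitor $\vect y_p$ is inserted into \eqref{eq:lu_bound}, and the same choice $p=\lceil\log(1/\eta)/\log\rho^\ast\rceil$ is made, with the same exponent bookkeeping. For (iii) you substitute $\vect y_p$ into \eqref{eq:apriori_lu} rather than inserting the a priori bounds of Proposition~\ref{prop:apriori} into (ii), but \eqref{eq:apriori_lu} is just \eqref{eq:lu_bound} with those bounds substituted, so this is the same computation; the factor $p^{1/2}$ you drop from $\norm{u-v_p}_{L^2(I)}\le Cp^{1/2}\eta$ is absorbed by the prefactor.
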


Theorem~\ref{thm:competitor} and Corollary~\ref{cor:unconditional} are proved in Section~\ref{sec:competitor}. By (i) with $p = N-1$ the computed solution converges geometrically, at the rate $\rho$ of the analyticity of the data, until the second term takes over; the choice of $p$ behind (ii) balances the two terms and identifies the floor as $\eta^{a'}$ times the two constants, with $a' = 1$ when the data are analytic on the largest region $\Omega_{\rho^\ast}$, whose real trace is the whole extended domain. Item (iii), obtained from (ii) and Proposition~\ref{prop:apriori}, is the qualitative rescue: for such data the error of a backward-stable solve of the exponentially ill-conditioned square system is bounded by a polynomial in $N$, uniformly in the conditioning, whereas the exact solution of the same rounded data carries the rounding errors amplified by up to $\Gamma_0$ (Remark~\ref{rem:divergence}), which for the Poisson operator grows at the rate $\kappa_\Lambda$ (Corollary~\ref{cor:lebesgue_rate}).

\subsubsection{The Tail of the Spectrum}
\label{sec:res_tail}

By Corollary~\ref{cor:regimes_two_constant}(iii) the accuracy of a backward-stable solve is decided by $\gamma_t$ and $\Gamma_t$ at the level $t = \eta$ of the backward error. Corollary~\ref{cor:exponent} and Proposition~\ref{prop:classical}(iv) identify the exponent $a(\omega)$ at the end of the spectrum, and the exponents fitted to the computed tails are close to it (Sections~\ref{sec:num_bound} and \ref{sec:num_potential}). This subsection determines the exponential rate of the pair of constants along the whole tail, at the thresholds $t_N := e^{-N\tau}$ with $0 < \tau < \kappa_\sigma$, which lie between the uniform bounds of Proposition~\ref{prop:apriori} at $\tau = 0$ and the end of the spectrum at $\tau = \kappa_\sigma$. The lower and upper rates
\begin{equation}\label{eq:rates_tail}
\begin{aligned}
r^-(\tau) &:= \liminf_{N\to\infty}\frac1N\log\max\{\gamma_{t_N},\ t_N\Gamma_{t_N}\}, \\
r^+(\tau) &:= \limsup_{N\to\infty}\frac1N\log\max\{\gamma_{t_N},\ t_N\Gamma_{t_N}\}
\end{aligned}
\end{equation}
are well defined (Lemma~\ref{lem:monotone}). We say that the \emph{sharp power law} holds if there are $N_0$ and $C$ such that
\begin{equation}\label{eq:sharp_law}
\gamma_t \le C\,t^{\,a(\omega)}, \qquad \Gamma_t \le C\,t^{\,a(\omega)-1} \qquad\text{for all } N \ge N_0 \text{ and all } t > 0.
\end{equation}
Its first inequality implies the second with another constant (Lemma~\ref{lem:two_forms}), and it implies $r^+(\tau) \le -a(\omega)\tau$ for every $\tau \ge 0$, since at $t = t_N$ it gives $\max\{\gamma_{t_N},\ t_N\Gamma_{t_N}\} \le Ct_N^{\,a(\omega)} = Ce^{-Na(\omega)\tau}$: the rate lies on or below the \emph{chord} joining the rate $0$ at $\tau = 0$ to the rate $\kappa_\Lambda - \kappa_\sigma = -a(\omega)\kappa_\sigma$ at $\tau = \kappa_\sigma$, where Proposition~\ref{prop:classical}(iv) applies. The results of this subsection concern thresholds tied to $N$; at a fixed threshold $t > 0$, as $N \to \infty$, they add nothing to Proposition~\ref{prop:apriori}.

\paragraph{The equilibrium problem.} The rate is given by a constrained energy problem in the angle $\theta$ of Section~\ref{sec:variable}. Let $\Theta_I := [\theta_0, \pi-\theta_0]$ and $\Theta_E := [0,\pi]\setminus\Theta_I$ be the arcs of the physical interval and of the extension. We identify a finite positive Borel measure $\lambda$ on $[0,\pi]$ with its image under $\theta \mapsto \cos\theta$, so that the node measure \eqref{eq:node_measure} is $\mu_\omega = \omega^{-1}d\theta$ on $\Theta_I$, and call
\begin{equation}\label{eq:measure_envelope}
E_\lambda(\vartheta) := \int\log|\cos\vartheta - \cos\varphi|\,d\lambda(\varphi) \quad (\vartheta \in [0,\pi]), \qquad J_0(\lambda) := -\int E_\lambda\,d\lambda ,
\end{equation}
its envelope and its energy: the negative logarithmic potential and the logarithmic energy of the image of $\lambda$ \cite{saff1997logarithmic}. A monic polynomial of degree $N$ whose zeros are distributed like $N\lambda$ has, away from its zeros, size $e^{N(E_\lambda(\vartheta) + o(1))}$ at $\cos\vartheta$.

\begin{definition}[Equilibrium Problem of the Tail]
\label{def:equilibrium}
Let $\mathscr A_\omega$ be the set of Borel probability measures $\lambda$ on $[0,\pi]$ with $\lambda \le \omega^{-1}d\theta$. For $\tau \ge 0$ and $\lambda \in \mathscr A_\omega$ let $J_\tau(\lambda) := J_0(\lambda) - 2\tau\lambda(\Theta_I)$.
\end{definition}

The bound $\omega^{-1}d\theta$ is the node density, extended to the whole of $[0,\pi]$, and the term $-2\tau\lambda(\Theta_I)$ is the energy of the external field equal to $-\tau$ on the physical arc and to $0$ on the extension, which favours zeros on the physical arc, where the nodal values are small. For $\tau = 0$ the minimizer is the arcsine measure $\pi^{-1}d\theta$, the equilibrium measure of the extended interval, and for $\tau \ge \kappa_\sigma$ it is the node measure (proof of Proposition~\ref{prop:equilibrium}).

\begin{proposition}[The Equilibrium Measure of the Tail]
\label{prop:equilibrium}
For every $\tau \ge 0$ the functional $J_\tau$ has a unique minimizer $\lambda_\tau$ on $\mathscr A_\omega$. It is invariant under $\vartheta\mapsto\pi-\vartheta$, its envelope $E_\tau := E_{\lambda_\tau}$ is continuous on $[0,\pi]$, and with $E_\tau^\ast := \max_{[0,\pi]}E_\tau$:
\begin{enumerate}
    \item[(i)] $E_\tau^\ast - \tau \le E_\tau \le E_\tau^\ast$ on $[0,\pi]$;
    \item[(ii)] $E_\tau = E_\tau^\ast$ on the support of $\lambda_\tau|_{\Theta_E}$, and $E_\tau = E_\tau^\ast - \tau$ on the support of $(\omega^{-1}d\theta - \lambda_\tau)|_{\Theta_I}$, the set of points of the physical arc near which $\lambda_\tau$ does not coincide with the node density;
    \item[(iii)] if $\tau < \kappa_\sigma$, then $\lambda_\tau(\Theta_E) > 0$ and there is a unique $\theta_\tau \in [\theta_0,\pi/2)$ such that $\lambda_\tau = \omega^{-1}d\theta$ on $[\theta_0,\theta_\tau]\cup[\pi-\theta_\tau,\pi-\theta_0]$ and $E_\tau = E_\tau^\ast - \tau$ on $[\theta_\tau,\pi-\theta_\tau]$; moreover $\max_{\Theta_I}E_\tau = E_\tau(\theta_0)$;
    \item[(iv)] if $\tau \ge \kappa_\sigma$, then $\lambda_\tau = \mu_\omega$.
\end{enumerate}
\end{proposition}

Thus $\lambda_\tau$ describes the zeros of a polynomial that vanishes at every node of a \emph{layer} $[\theta_0,\theta_\tau]$ next to each end of the physical interval and whose other zeros are free: in the \emph{centre} $[\theta_\tau,\pi-\theta_\tau]$, where their density is at most that of the nodes and equal to it on no subinterval, and in the extension. For $0 < \tau < \kappa_\sigma$ the layer is nonempty, and the support in the extension ends at a positive distance from it (Corollary~\ref{cor:layer}). Normalized to have maximum one, its size $e^{N(E_\tau - E_\tau^\ast)}$ is one on the support in the extension and $e^{-N\tau}$ on the centre, and on the physical interval it is largest at the ends. The computed tail singular functions have this structure (Figure~\ref{fig:tail}a,b). Put
\begin{equation}\label{eq:tail_law}
r_\omega(\tau) := E_\tau(\theta_0) - E_\tau^\ast = \max_{\Theta_I}E_\tau - \max_{[0,\pi]}E_\tau \qquad (0 < \tau < \kappa_\sigma),
\end{equation}
where the two expressions agree by (iii).

\begin{theorem}[The Tail Law]
\label{thm:tail_law}
Let $\Lop = \Lop_P$. For every $\tau \in (0,\kappa_\sigma)$,
\begin{equation}\label{eq:tail_limit}
\lim_{N\to\infty}\frac1N\log\max\{\gamma_{t_N},\ t_N\Gamma_{t_N}\} = r_\omega(\tau), \qquad t_N = e^{-N\tau}.
\end{equation}
\end{theorem}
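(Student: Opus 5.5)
The proof splits into a lower bound $r^-(\tau)\ge r_\omega(\tau)$ and an upper bound $r^+(\tau)\le r_\omega(\tau)$. Both are reduced to polynomial extremal problems in the Chebyshev variable of Lemma~\ref{lem:chebyshev_variable}. For the Poisson operator, $\Lop\varphi_j$ is again a combination of the $w_i$ with weights $\lambda_i$, which are polynomial in $N$. The boundary recombination $\varphi_j = w_{j+2}-a_jw_1-b_jw_2$ changes only two lowest modes, with coefficients bounded by $1/s_1,1/s_2$. Consequently, up to factors $N^{O(1)}$, the pair $(\gamma_t,\Gamma_t)$ is comparable to the analogous pair for the weighted polynomial space $\sin\theta\cdot\mathbb P_{N-1}$, with three norms: the coefficient norm $\norm{\cdot}_{L^2(\tilde I)}$, the nodal norm at the interior nodes, and the leakage norm $\norm{\cdot}_{L^2(I)}$. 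The step $\Lop$ is handled by integrating twice: the Green's function representation of Theorem~\ref{thm:representation} loses at most powers of $N$ relative to the sup norm. I would first establish this comparison lemma. I would then show that $\max\{\gamma_t,t\Gamma_t\}$ equals, up to $N^{O(1)}$ and a change $t\mapsto tN^{\pm C}$ (harmless at the exponential scale by Lemma~\ref{lem:monotone}), the quantity
\[
\sup\bigl\{\norm{q}_{[-c_0,c_0]}:\ \norm{q}_{[-1,1]}\le1,\ \max_m|q(z_m)|\le t\bigr\},\qquad q\in\mathbb P_{N-1},
\]
a two-level variant of Rakhmanov's problem. The direction "$\le$" uses the SVD splitting $\vect w=\vect w_{\le t}+\vect w_{>t}$. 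The direction "$\ge$" uses that a polynomial with small nodal values and unit norm has coefficient vector mostly in $\mathcal V_{tN^C}$: its component on $\mathcal V^\perp$ has size at most $t\norm{\cdot}$, and otherwise $\Gamma$ captures it.

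\textbf{Lower bound.} Take a discretization of $\lambda_\tau$ from Proposition~\ref{prop:equilibrium}. Place zeros exactly at every node in the layers $[\theta_0,\theta_\tau]\cup[\pi-\theta_\tau,\pi-\theta_0]$. Place the remaining $\approx N\lambda_\tau$ zeros at quantiles of $\lambda_\tau$ in the centre and in the extension, interlaced with the nodes so that no new zero lies within $o(1/N)$ of a node. Standard discrete-potential estimates, namely the uniform estimate of Appendix~\ref{app:rates} for the layer nodes and the constraint $\lambda_\tau\le\omega^{-1}d\theta$ to control the product near the interlaced zeros, give the following. The size is at most $e^{N(E_\tau^\ast+o(1))}$ on $[-1,1]$ and at most $e^{N(E_\tau^\ast-\tau+o(1))}$ at every node: layer nodes give zero, and the centre follows from (ii)–(iii). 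Near $\cos\theta_0$, just outside the last layer node, the size is $e^{N(E_\tau(\theta_0)+o(1))}$. Normalizing gives $r^-\ge E_\tau(\theta_0)-E_\tau^\ast=r_\omega(\tau)$.

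\textbf{Upper bound.} For any $q$ in the extremal problem, a Bernstein–Walsh argument with the constraint is needed. I would use the dual of the energy problem: choose a subset $S$ of the nodes together with virtual points in the extension, with $|S|=N$, distributed like $\lambda_\tau$ as in the optimal Lagrange interpolation sets of Theorem~\ref{thm:dual}. Write $q$ by Lagrange interpolation on $S$. The nodal values are at most $t$, and the values at the virtual points are at most $\norm{q}_{[-1,1]}\le1$. The Lagrange functions $\ell_s$ on $S$ are bounded on $[-c_0,c_0]$ by $e^{N(E(\cdot)-E(s)+o(1))}$ with $E=E_\tau$. The variational inequalities (i)–(ii) then give $\norm{q}_{[-c_0,c_0]}\le e^{N(r_\omega(\tau)+o(1))}$. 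Points in the extension, where $E=E^\ast$, contribute $e^{N(E_\tau(\theta_0)-E^\ast)}$. Nodes in the centre, where $E=E^\ast-\tau$, contribute $t\,e^{N(E_\tau(\theta_0)-E^\ast+\tau)}$, which is the same.

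\textbf{Main obstacle.} The hardest step is making the upper bound rigorous. One needs discrete point sets realizing $\lambda_\tau$ whose Lagrange functions have the continuum rate uniformly, including at the boundary between the saturated layer and the free part, where $\lambda_\tau$ jumps and the $E$-inequalities are only marginal. I would first try Rakhmanov's technique of placing points at the $\lambda_\tau$-quantiles rounded to nodes in the constrained region, then bound the discrete potential error by $O(\log N)$ using the uniform estimates of Appendix~\ref{app:rates}. The lower bound and the comparison lemma should be routine by comparison.
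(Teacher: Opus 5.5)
\textbf{The gap: the comparison lemma does not see that $\Psi_N$ has codimension two in $V_N$.} You treat the comparison lemma as routine, but it is where the lower bound needs real input. The constants $\gamma_t$ and $\Gamma_t$ range over $\vect w\in\mathbb R^n$, that is, over trial functions $v\in\Phi_N$ and test functions $\psi=\Lop_Pv\in\Psi_N$. The space $\Psi_N$ has codimension two in $V_N=\sin\theta\cdot\mathbb P_{N-1}(z)$. That $\varphi_j$ differs from $w_{j+2}$ by bounded multiples of $w_1$ and $w_2$ only compares norms. It does not make a supremum over $V_N$ comparable to one over $\Psi_N$.

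In fact the comparison, as you state it uniformly in $t$, is false. The functions $\chi_N^\pm=\ell_1^V\pm\ell_N^V$ of Lemma~\ref{lem:correction} have the following properties:
\begin{itemize}
\item they vanish at every interior node and equal one at $x_1$;
\item they are bounded by $N^C$ on $I$;
\item their coefficient norm is $e^{N(\kappa_\sigma-\kappa_\Lambda)+O(\log N)}$.
\end{itemize}
So your two-level polynomial problem has value at least $e^{N(\kappa_\Lambda-\kappa_\sigma)-C\log N}$ for every $t\ge0$. On $\Psi_N$, by contrast, $\gamma_0=0$ and $\max\{\gamma_t,t\Gamma_t\}\le t\Gamma_0\le e^{N(\kappa_\Lambda-\tau)+C\log N}$. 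This is exponentially smaller once $\tau>\kappa_\sigma$.

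\textbf{What is missing for $\tau<\kappa_\sigma$.} There the two rates do coincide, but proving it takes two ingredients your sketch lacks. Your function $g_0=\sin\theta\,q(\cos\theta)$, built from $\lambda_\tau$, is in general not in $\Psi_N$, so it defines no coefficient vector. You must first split $g_0$ into its parts even and odd under $x\mapsto L-x$, then subtract $\alpha\chi_N^\pm$ from each; this keeps the interior nodal values.
\begin{itemize}
\item The first ingredient is $|\alpha|\le\norm{g_0}_{L^2(\tilde I)}e^{-N(\kappa_\sigma-\kappa_\Lambda)+C\log N}$. It rests on $p_N^\pm=(\tilde\Lop_P^{-1}\chi_N^\pm)(0)\ge e^{N(\kappa_\sigma-\kappa_\Lambda)-C\log N}$, which comes from the positivity of $\ell_1$ on the extension (Lemma~\ref{lem:correction}(iii)).
\item Even so, the correction can reach size $e^{N(E_\tau^\ast-(\kappa_\sigma-\kappa_\Lambda))+O(\log N)}$ on $I$. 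It spares the size $e^{N(E_\tau(\theta_0)+o(1))}$ near $\theta_0$, which carries the leakage, only if $r_\omega(\tau)>\kappa_\Lambda-\kappa_\sigma$ strictly. This strict inequality is the second ingredient. It is a minimum-principle argument whose equality case is excluded by $\lambda_\tau(\Theta_E)>0$ (Proposition~\ref{prop:tail_properties}).
\end{itemize}
This is why Theorem~\ref{thm:tail_lower} assumes $\ell(\mathcal C)>-(\kappa_\sigma-\kappa_\Lambda)$, and why the paper's proof picks $\eps<r_\omega(\tau)-(\kappa_\Lambda-\kappa_\sigma)$.

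\textbf{Two smaller points.} First, the Green's function bounds $\norm{v}_{L^2(I)}$ only from above. The lower bound needs an inverse estimate, such as Markov's inequality in $z$ or Lemma~\ref{lem:bump}, applied to the corrected $g=g_0-\alpha\chi_N^\pm$. Second, your upper bound follows the paper's route (Theorem~\ref{thm:dual} with Lemma~\ref{lem:discretize}). The obstacle you flag there disappears, because $\Phi(\mathcal S,\tau)\le r_\omega(\tau)+\eps$ for every $\eps>0$ suffices. Since the envelope depends continuously in the supremum norm on a bounded density (Lemma~\ref{lem:env}), piecewise-constant densities realized by residue classes of nodes and virtual lattices absorb the marginal inequalities at the edge of the layer.
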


We call $r_\omega$ the \emph{tail law}. It depends only on $\omega$, that is, on the ratio $\delta/L$, and on the level $\tau$, and it determines $\max\{\gamma_t, t\Gamma_t\}$ at every threshold of the tail up to a factor $e^{o(N)}$. The following properties concern the equilibrium problem alone and are proved without the frame, from the extremal problem of Proposition~\ref{prop:extremal} (Section~\ref{sec:tail_law_proof}).

\begin{proposition}[Properties of the Tail Law]
\label{prop:tail_properties}
The function $r_\omega$ is nonincreasing on $(0,\kappa_\sigma)$, satisfies $|r_\omega(\tau) - r_\omega(\tau')| \le |\tau - \tau'|$, and
\begin{equation}\label{eq:tail_law_bounds}
-\tau \le r_\omega(\tau) \le \min\{0,\ \kappa_\Lambda - \tau\}, \qquad r_\omega(\tau) > \kappa_\Lambda - \kappa_\sigma ;
\end{equation}
in particular $r_\omega(\tau) \to 0$ as $\tau \to 0$ and $r_\omega(\tau) \to \kappa_\Lambda - \kappa_\sigma = -a(\omega)\kappa_\sigma$ as $\tau \to \kappa_\sigma$.
\end{proposition}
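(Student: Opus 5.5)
The plan is to work through the variational characterization of $r_\omega$ that Proposition~\ref{prop:extremal} provides, the two-level Rakhmanov problem, and to read each property off it by elementary feasibility arguments. I expect that characterization to take the following form, with $\vect z$ the nodes $z_1,\dots,z_N$. Let $e_N(\tau)$ be the supremum of $\frac1N\log\norm{p}_{[-c_0,c_0]}$ over polynomials $p$ of degree at most $N-1$ satisfying:
\begin{itemize}
\item[] $|p|\le 1$ on $[-1,1]$, in the weighted sense of Lemma~\ref{lem:chebyshev_variable};
\item[] $|p|\le e^{-N\tau}$ on the nodes $\vect z$.
\end{itemize}
Then $r_\omega(\tau)=\lim_N e_N(\tau)$. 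The properties follow in four steps.

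\emph{Monotonicity, the Lipschitz bound and $-\tau\le r_\omega\le 0$.} If $\tau<\tau'$, the constraint at $\tau'$ is stronger, so $e_N$ is nonincreasing, and so is its limit. If $p$ is feasible at $\tau$, then $e^{-N(\tau'-\tau)}p$ is feasible at $\tau'$. Hence $e_N(\tau')\ge e_N(\tau)-(\tau'-\tau)$, which together with monotonicity gives the Lipschitz estimate. The constant $p\equiv e^{-N\tau}$, or its weighted analogue up to factors $e^{o(N)}$, is feasible and gives $r_\omega\ge-\tau$. The constraint $|p|\le1$ on $[-1,1]\supset[-c_0,c_0]$ gives $r_\omega\le0$. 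In the equilibrium language these are just Proposition~\ref{prop:equilibrium}(i) evaluated at $\theta_0$.

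\emph{The bound $r_\omega\le\kappa_\Lambda-\tau$.} A feasible $p$ of degree at most $N-1$ is its own Lagrange interpolant at the $N$ nodes. This gives $\norm{p}_{[-c_0,c_0]}\le\Lambda_N\max_m|p(z_m)|\le\Lambda_N e^{-N\tau}$. Theorem~\ref{thm:rates} then yields $r_\omega(\tau)\le\kappa_\Lambda-\tau$.

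\emph{Strict inequality and the limits.} Monotonicity and $r_\omega\ge-\tau$ give $r_\omega(\tau)\to0$ as $\tau\to0$. For the endpoint I would use Proposition~\ref{prop:equilibrium} directly:
\begin{itemize}
\item[] the map $\tau\mapsto\lambda_\tau$ is weak-$\ast$ continuous, by uniqueness of the minimizer and compactness of $\mathscr A_\omega$;
\item[] the envelopes converge uniformly, since the density bound makes the log kernel uniformly integrable;
\item[] at $\tau=\kappa_\sigma$ the minimizer is $\mu_\omega$ by (iv), whose envelope is $-U_\omega$.
\end{itemize}
So $r_\omega(\tau)\to -U_\omega(c_0)+U_\omega(1)=\kappa_\Lambda-\kappa_\sigma$, because the maximum of $-U_\omega$ on $[0,\pi]$ is attained at the endpoint $z=\pm1$. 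For the strict inequality at $\tau<\kappa_\sigma$, I would compare with $\mu_\omega$ itself: it is admissible, and $E_{\mu_\omega}(\theta_0)-\max E_{\mu_\omega}=\kappa_\Lambda-\kappa_\sigma$. Since $\lambda_\tau\neq\mu_\omega$ by (iii), and since $r_\omega(\tau)$ maximizes this gap over a class containing $\mu_\omega$, the maximum is strictly larger once the maximizer is shown to be unique.

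The main obstacle is exactly this last identification: that $r_\omega(\tau)$ is the optimum of a gap functional over a class containing $\mu_\omega$ with a unique optimizer. Near $\tau=\kappa_\sigma$ this is delicate, because $\mu_\omega$ is not itself feasible for the constraint at level $\tau$ in the discrete problem. I would first try a perturbation argument. Starting from $\mu_\omega$, move a small mass from the centre of $\Theta_I$ into $\Theta_E$ near $0$ and $\pi$. To first order this changes $\max_{[0,\pi]}E$ and $E(\theta_0)$ by different amounts whenever $\tau<\kappa_\sigma$, because the field $-\tau$ is then weaker than the potential gap $\kappa_\sigma$ that holds the node measure in place. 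The resulting strict increase of the gap should give $r_\omega(\tau)>\kappa_\Lambda-\kappa_\sigma$.
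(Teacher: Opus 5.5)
Your argument rests on a characterization that the paper does not contain. Proposition~\ref{prop:extremal} is not a limit of discrete problems over polynomials of degree $N-1$. It is a supremum over triples $(F,b,\nu)$, with $F\subset[-c_0,c_0]$ closed and $\nu\ge0$ of mass at most one, subject to \eqref{eq:reduced_constraints}, and $r_\omega$ itself is defined from $\lambda_\tau$ without any $N$. The identity $r_\omega=\lim_Ne_N$ is proved nowhere. You also cannot take it from Theorem~\ref{thm:tail_law}: that theorem concerns $\max\{\gamma_t,t\Gamma_t\}$, and its lower-bound half chooses $\eps<r_\omega(\tau)-(\kappa_\Lambda-\kappa_\sigma)$ using the very proposition you are proving.

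For monotonicity and the Lipschitz bound this is harmless. Your two feasibility arguments (admissibility at $\tau'$ implies admissibility at $\tau$, and lowering $b$ by $\tau'-\tau$) work verbatim for triples, and that is exactly the paper's proof. The bounds $-\tau\le r_\omega\le0$ are Proposition~\ref{prop:equilibrium}(i), as you note. The Lebesgue-constant step, however, does not transfer. Its continuous counterpart is the minimum principle, Lemma~\ref{lem:minimum} with $(\mu_1,\mu_2)=(\mu_\omega,\lambda_\tau)$ on the image $K'$ of $\operatorname{supp}(\omega^{-1}d\theta-\lambda_\tau)|_{\Theta_I}$, where $E_\tau=E_\tau^\ast-\tau$ and $U_\omega\le U_\omega(0)$; at $c_0$ it gives $r_\omega(\tau)\le U_\omega(0)-U_\omega(c_0)-\tau=\kappa_\Lambda-\tau$. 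Your endpoint limit via weak-$\ast$ continuity of $\tau\mapsto\lambda_\tau$ and the equicontinuity of Lemma~\ref{lem:env}(a) is correct. It is superfluous, though, once $\kappa_\Lambda-\kappa_\sigma<r_\omega(\tau)\le\kappa_\Lambda-\tau$ is known, since the limit then follows by squeezing.

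The strict inequality, which you flag yourself, is not proved. Uniqueness of the maximizer of \eqref{eq:reduced_problem} is established nowhere, and proving it is no easier than the claim. Your perturbation heuristic also confuses two functionals. That moving mass from the centre into $\Theta_E$ lowers $J_\tau$ when $\tau<\kappa_\sigma$ only reproves $\lambda_\tau\ne\mu_\omega$, which is Proposition~\ref{prop:equilibrium}(iii). It says nothing about the sign of the change of $E(\theta_0)-\max E$.

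The missing ingredient is the equality case of Lemma~\ref{lem:minimum}. Take $(\mu_1,\mu_2)=(\lambda_\tau,\mu_\omega)$ and $K=\operatorname{supp}\lambda_\tau|_{\Theta_E}\ne\emptyset$. Then $U^{\mu_\omega}-U^{\lambda_\tau}\ge U_\omega(1)+E_\tau^\ast$ on $K$, hence on $\mathbb C$, and at $c_0$ this reads $r_\omega(\tau)\ge\kappa_\Lambda-\kappa_\sigma$. If equality held, then $c_0\notin K$ (otherwise $r_\omega(\tau)=0$). The lemma then forces $U^{\mu_\omega}=U^{\lambda_\tau}$ off $K$, hence everywhere by continuity, and Lemma~\ref{lem:cheb} gives $\lambda_\tau=\mu_\omega$, contradicting $\lambda_\tau(\Theta_E)>0$.

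Your perturbation can also be completed, but only in the continuous problem. There $\nu=\mu_\omega$, $F=\emptyset$, $b=U_\omega(1)$ is admissible with value exactly $\kappa_\Lambda-\kappa_\sigma$, so your feasibility worry disappears. Remove mass $\eps$ of $\mu_\omega$ proportionally from a small interval $F=[-w,w]$, put $\eps/2$ at each of $\pm1$, and take $b$ to be the new minimum of $U^\nu$ on $[-1,1]$. Since $-U_\omega$ attains its maximum over $[-1,1]$ only at $\pm1$, with nonzero slope there, the maximum of $-U^\nu$ over $[-1,1]$ lies below that of $-U_\omega$ by $\tfrac{\eps}{2}\log(1/\eps)+O(\eps)$. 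Meanwhile $-U^\nu(c_0)$ moves by only $O(\eps)$, so the value becomes $\kappa_\Lambda-\kappa_\sigma+\tfrac{\eps}{2}\log(1/\eps)-O(\eps)$. The hypothesis $\tau<\kappa_\sigma$ enters only through the slack $\kappa_\sigma-\tau$ at the centre, which keeps $b-U^\nu\le-\tau$ on $F$ for small $\eps$. That sign computation is what your sketch leaves out.
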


The tail law also determines the growth of the amplification factor $\Gamma_0$ of data errors in the exact solution (Remark~\ref{rem:divergence}) and of the Lebesgue constant of the test space, for which Proposition~\ref{prop:classical}(iii) gives only an upper bound.

\begin{corollary}[Growth Rate of the Test-Space Lebesgue Constant]
\label{cor:lebesgue_rate}
Let $\Lop = \Lop_P$. Then
\[
\lim_{N\to\infty}\frac{\log\Gamma_0}{N} = \lim_{N\to\infty}\frac{\log\Lambda_N^\Psi}{N} = \kappa_\Lambda(\omega).
\]
\end{corollary}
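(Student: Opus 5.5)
Upper bounds come from Proposition~\ref{prop:classical}. Part (iii) gives $\Lambda_N^\Psi \le N^C e^{N\kappa_\Lambda}$, so $\limsup N^{-1}\log\Lambda_N^\Psi \le \kappa_\Lambda$. Part (iv) at $t=0$ gives $\Gamma_0 \le N^Ce^{N\kappa_\Lambda}$, hence $\limsup N^{-1}\log\Gamma_0 \le \kappa_\Lambda$. The substance is therefore the two matching lower bounds, and I would obtain both from the tail law.

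\emph{Lower bound for $\Gamma_0$.} First, $\Gamma_t$ is nonincreasing in $t$. Indeed $\mathcal V_t^\perp$ shrinks as $t$ grows, and the ratio in \eqref{eq:two_constants} does not depend on $t$. Hence $\Gamma_0 \ge \Gamma_{t_N}$ for every $t_N = e^{-N\tau}$ (for $t=0$ with $\mathcal V_0^\perp$ the whole space, since $\mat B_D$ is nonsingular by Proposition~\ref{prop:unisolvence_poisson}).

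Second, $\gamma_t \le t\,\Gamma_t$ up to a harmless factor is not available directly, so I would argue differently. Take $\vect w\in\mathcal V_{t}$; it lies in $\mathcal V_0^\perp=\mathbb R^n$, so
\[
\norm{\Phi\mat D\vect w}_{L^2(I)}\le \Gamma_0\norm{\mat B_D\vect w}_2\le \Gamma_0\, t\norm{\vect w}_2 .
\]
Thus $\gamma_t\le t\Gamma_0$. Together with $t\Gamma_t\le t\Gamma_0$ this gives $\max\{\gamma_{t_N},t_N\Gamma_{t_N}\}\le t_N\Gamma_0$.

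Third, by Theorem~\ref{thm:tail_law}, $N^{-1}\log\Gamma_0\ge r_\omega(\tau)+\tau+o(1)$ for every $\tau\in(0,\kappa_\sigma)$. Letting $\tau\to\kappa_\sigma$ and using Proposition~\ref{prop:tail_properties}, $r_\omega(\tau)+\tau\to\kappa_\Lambda-\kappa_\sigma+\kappa_\sigma=\kappa_\Lambda$. Since the liminf bound holds for each fixed $\tau$, we get $\liminf N^{-1}\log\Gamma_0\ge\kappa_\Lambda$.

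\emph{Transfer to $\Lambda_N^\Psi$.} This comes from Proposition~\ref{prop:classical}(i) at $t=0$, which gives $\Gamma_0\le (L^2/\pi^2)\sqrt n\,\Lambda^\Psi_{N,2}\le (L^{5/2}/\pi^2)\sqrt n\,\Lambda_N^\Psi$. Since the polynomial factor is $e^{o(N)}$, the liminf lower bound on $\Gamma_0$ passes to $\Lambda_N^\Psi$. Combined with the upper bounds, both limits exist and equal $\kappa_\Lambda(\omega)$.

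The main obstacle is justifying the limit interchange $\tau\to\kappa_\sigma$. This needs only that the tail law holds at each fixed $\tau<\kappa_\sigma$ and that its limit at $\kappa_\sigma$ is as stated in Proposition~\ref{prop:tail_properties}, both of which are assumed. Everything else is the monotonicity of $\Gamma_t$ and the elementary inequality $\gamma_t\le t\Gamma_0$, which should be checked against the conventions of Definition~\ref{def:two_constants} (the case $\mathcal V_t=\{0\}$ is trivial).
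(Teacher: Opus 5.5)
Your proof is correct and follows the paper's argument essentially step for step. The lower bound comes from $\max\{\gamma_t,\,t\Gamma_t\}\le t\Gamma_0$ together with Theorem~\ref{thm:tail_law} and the limit $\tau\to\kappa_\sigma$, and Proposition~\ref{prop:classical}(i) and (iii) supply the upper bound and the transfer to $\Lambda_N^\Psi$; citing part (iv) for $\Gamma_0$ instead is equivalent, since (iv) is (iii) inserted into (i).
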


\paragraph{Duality.} The tail laws of complementary angles determine each other.

\begin{theorem}[Duality of the Tail Law]
\label{thm:tail_duality}
For $\omega \in (0,\pi)$ and $0 < \bar\tau < 1$ let $\bar r_\omega(\bar\tau) := r_\omega(\bar\tau\kappa_\sigma(\omega))/\kappa_\sigma(\omega)$ be the tail law in units of $\kappa_\sigma(\omega)$. Then
\begin{equation}\label{eq:tail_duality}
\bar r_\omega(\bar\tau) + \bar r_{\pi-\omega}(\bar\tau) = -\bar\tau \qquad (0 < \bar\tau < 1).
\end{equation}
\end{theorem}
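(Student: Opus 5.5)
The plan is to prove \eqref{eq:tail_duality} at the level of the equilibrium problems of Definition~\ref{def:equilibrium}, without returning to the frame. I would construct an explicit bijection that sends the minimizer $\lambda_\tau$ for the angle $\omega$ to the minimizer $\lambda'_{\tau'}$ for the angle $\pi-\omega$, at the level $\tau' = \tau\,\omega/(\pi-\omega)$. The scaling is forced by the closed form of Theorem~\ref{thm:rates}, which gives $\omega\kappa_\sigma(\omega) = \mathrm{Cl}_2(\omega)+\mathrm{Cl}_2(\pi-\omega) = (\pi-\omega)\kappa_\sigma(\pi-\omega)$. Hence $\tau/\kappa_\sigma(\omega) = \tau'/\kappa_\sigma(\pi-\omega)$, so both problems sit at the same normalized level $\bar\tau$.

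The bijection would be built from two ingredients.

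\emph{Complementation in the constraint.} Put $\sigma := \omega^{-1}d\theta$ on $[0,\pi]$, of mass $\pi/\omega$. Because $\int_0^\pi\log|\cos\vartheta-\cos\varphi|\,d\varphi = -\pi\log 2$ for every $\vartheta$, the envelope $E_\sigma$ is constant. Therefore
\[
\lambda' := \tfrac{\omega}{\pi-\omega}(\sigma-\lambda)
\]
is a probability measure with $\lambda' \le (\pi-\omega)^{-1}d\theta$, and its envelope is an affine function of $E_\lambda$ with negative slope $-\omega/(\pi-\omega)$. This exchanges the two roles in Proposition~\ref{prop:equilibrium}(ii): the contact set of the upper constraint becomes the support, and the support becomes the contact set.

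\emph{Exchange of the arcs.} Complementation leaves the physical arc in the centre, whereas for the angle $\pi-\omega$ the physical arc must be the centre arc of length $\pi-\omega$. For measures symmetric under $\vartheta\mapsto\pi-\vartheta$, which suffices by Proposition~\ref{prop:equilibrium}, I would use
\[
\log|\cos\vartheta-\cos\varphi| + \log|\cos\vartheta+\cos\varphi| = \log|\cos2\vartheta-\cos2\varphi| - \log 2 .
\]
Folding by $\varphi = 2\vartheta$ then turns a symmetric problem on $[0,\pi]$ into one on $[0,\pi]$ in which the physical arc becomes $[2\theta_0,\pi]$, of length $\omega$, and the extension becomes $[0,2\theta_0]$. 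The reflection $\varphi\mapsto\pi-\varphi$ followed by unfolding produces the configuration of the angle $\pi-\omega$. The constraint density and the energy transform with explicit constants under this map, which I would track.

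Composing the two operations gives a map $T$ between the admissible sets $\mathscr A_\omega$ and $\mathscr A_{\pi-\omega}$. I would verify that $T(\lambda_\tau)$ satisfies the variational conditions (i)--(ii) of Proposition~\ref{prop:equilibrium} for the angle $\pi-\omega$ at the level $\tau'$. These conditions characterize the minimizer of a constrained problem with an external field, by the standard Dragnev--Saff argument, so uniqueness gives $T(\lambda_\tau) = \lambda'_{\tau'}$.

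Finally I would read off $r_{\pi-\omega}(\tau') = \max_{\Theta_I'}E' - \max E'$ from the affine relation between $E'$ and $E_\tau$. Dividing by $\kappa_\sigma(\pi-\omega)$ should yield $\bar r_{\pi-\omega}(\bar\tau) = -\bar\tau - \bar r_\omega(\bar\tau)$.

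I expect the main obstacle to be this last bookkeeping step, together with the correct order of the two operations. The negative slope turns the band $[E^\ast_\tau-\tau,E^\ast_\tau]$ upside down, so the new global maximum sits where the old envelope equals $E^\ast_\tau-\tau$, on the centre of the old physical arc. The new physical maximum is then controlled by the minimum of $E_\tau$ over the old extension, and that minimum must be related to $E_\tau(\theta_0)$. This is exactly where the fold is needed, because it identifies the endpoint $\theta_0$ of the layer with the edge of the gap in the extension (Corollary~\ref{cor:layer}).

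If this identification fails, I would instead prove the duality through the extremal problem of Proposition~\ref{prop:extremal}. There, complementation corresponds to dividing the nodal polynomial of the full (extended) node set by the extremal polynomial, and the same two maxima appear directly as suprema over the two arcs.
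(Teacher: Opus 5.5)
Your proposal is correct and is, in substance, the paper's proof. On symmetric densities, the composite of your fold, reflection and unfold is $\vartheta\mapsto|\vartheta-\pi/2|$. That is exactly the paper's rotation $(\mathcal Rf)(\varphi)=f(|\varphi-\pi/2|)$. Composing with your complementation gives the paper's map $T\lambda=c\,(\omega^{-1}-\mathcal Rf)\,d\theta$, $c=\omega/(\pi-\omega)$, with $E_{T\lambda}(\vartheta)=-c\bigl((\pi/\omega)\log 2+E_\lambda(|\vartheta-\pi/2|)\bigr)$. The two operations commute because $\mathcal R$ fixes constants, so the order you worried about is immaterial.

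The differences are only in verification:
\begin{itemize}
\item The paper obtains $E_{\mathcal Rf}$ and $J_0(\mathcal Rf)=J_0(f)$ from the cosine coefficients $\widehat{\mathcal Rf}_k=\cos(k\pi/2)\hat f_k$ and Lemma~\ref{lem:cheb}. You use $\cos^2\vartheta-\cos^2\varphi=\tfrac12(\cos2\vartheta-\cos2\varphi)$ instead.
\item The paper identifies $T\lambda_\tau=\lambda'_{c\tau}$ through the energy identity $J'_{c\tau}(T\lambda)=c^2J_\tau(\lambda)+(1-c^2)\log2-2c(1-c)\tau$ on symmetric measures, plus uniqueness and symmetry of the minimizer. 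You use Euler--Lagrange conditions instead.
\end{itemize}
Your route also works. Conditions (i)--(ii) with $w=E^\ast$ imply \eqref{eq:bathtub} almost everywhere, and the converse argument in the proof of Proposition~\ref{prop:equilibrium} then gives minimality over all of $\mathscr A_{\pi-\omega}$. The paper's route simply avoids checking supports.

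The step you flag as the main obstacle is not one, and you must not close it with Corollary~\ref{cor:layer}. That corollary is derived from Theorem~\ref{thm:initial_slope}, whose upper bound comes from the duality you are proving, so the argument would be circular. Use the first expression in \eqref{eq:tail_law} instead: $r_{\pi-\omega}(c\tau)=E_{T\lambda_\tau}(\theta_0')-\max E_{T\lambda_\tau}$, with $\theta_0'=\omega/2$ and $|\theta_0'-\pi/2|=\theta_0$. Then $E_{T\lambda_\tau}(\theta_0')=-c\bigl((\pi/\omega)\log2+E_\tau(\theta_0)\bigr)$. Also $\max E_{T\lambda_\tau}=-c\bigl((\pi/\omega)\log2+E_\tau^\ast-\tau\bigr)$, because $\min E_\tau=E_\tau^\ast-\tau$ by Proposition~\ref{prop:equilibrium}(i) and (iii). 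Subtracting gives $r_{\pi-\omega}(c\tau)=-c\bigl(\tau+r_\omega(\tau)\bigr)$ directly.

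The fact you were trying to supply is that the maximum over the new physical arc sits at $\theta_0'$, equivalently that the minimum of $E_\tau$ over the old extension is $E_\tau(\theta_0)$. This is Proposition~\ref{prop:equilibrium}(iii) for the dual angle, which is proved without duality. The fallback through Proposition~\ref{prop:extremal} is unnecessary.
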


The angles $\omega$ and $\pi - \omega$ belong to the extensions $\delta$ and $L^2/(4\delta)$, for instance $\delta = L$ and $\delta = L/4$. As $\bar\tau \to 1$, the bounds \eqref{eq:tail_law_bounds} turn \eqref{eq:tail_duality} into Proposition~\ref{prop:duality}, which it extends from the end of the spectrum to the whole tail; for the self-dual extension it determines the tail law.

\begin{corollary}[The Self-Dual Extension]
\label{cor:self_dual}
For $\delta = L/2$, $r_{\pi/2}(\tau) = -\tau/2$ for $0 < \tau < \kappa_\sigma$. Under the hypotheses of Theorem~\ref{thm:tail_law}, $\max\{\gamma_{t_N},\ t_N\Gamma_{t_N}\} = t_N^{\,1/2 + o(1)}$ as $N \to \infty$ for every $\tau \in (0,\kappa_\sigma)$, and $1/2 = a(\pi/2)$.
\end{corollary}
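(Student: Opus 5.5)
The plan is to read the corollary off the duality identity of Theorem~\ref{thm:tail_duality} at the self-dual angle and then feed the resulting formula into Theorem~\ref{thm:tail_law}.

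\emph{Step 1: the tail law.} For $\delta = L/2$ we have $\tilde L = 2L$, so $\omega = \pi L/\tilde L = \pi/2$ and $\pi - \omega = \omega$. The two laws in \eqref{eq:tail_duality} are therefore the same function, and the identity becomes $2\bar r_{\pi/2}(\bar\tau) = -\bar\tau$ for $0<\bar\tau<1$. Undoing the normalization with $\tau = \bar\tau\kappa_\sigma(\pi/2)$ gives $r_{\pi/2}(\tau) = \kappa_\sigma\,\bar r_{\pi/2}(\tau/\kappa_\sigma) = -\tau/2$ for $0<\tau<\kappa_\sigma$. No property of the minimizer beyond what Theorem~\ref{thm:tail_duality} encodes is needed.

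\emph{Step 2: the asymptotics.} Theorem~\ref{thm:tail_law} applies for $\Lop = \Lop_P$ and $\tau\in(0,\kappa_\sigma)$ and gives
\[
\tfrac1N\log\max\{\gamma_{t_N},\,t_N\Gamma_{t_N}\} = -\tfrac{\tau}{2} + o(1).
\]
Since $\log t_N = -N\tau$ with $\tau>0$ fixed, dividing by $\log t_N$ yields
\[
\frac{\log\max\{\gamma_{t_N},\,t_N\Gamma_{t_N}\}}{\log t_N} = \tfrac12 + o(1),
\]
which is the statement $\max\{\gamma_{t_N},\,t_N\Gamma_{t_N}\} = t_N^{1/2+o(1)}$.

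\emph{Step 3: the exponent.} The identity $a(\pi/2) = 1/2$ follows from Proposition~\ref{prop:duality} with $\omega = \pi/2$, which gives $2a(\pi/2) = 1$. Alternatively one can use the closed form \eqref{eq:exponent}: since $\mathrm{Cl}_2(\pi) = 0$, it reduces to $\mathrm{Cl}_2(\pi/2)/(2\,\mathrm{Cl}_2(\pi/2))$.

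There is no real obstacle here; all the content sits in Theorems~\ref{thm:tail_law} and \ref{thm:tail_duality}. The only points to check are that $\delta = L/2$ indeed gives $\omega = \pi/2$, and that the rescaling between $\bar r$ and $r$ uses the same $\kappa_\sigma$ on both sides, which holds trivially because the two angles coincide.
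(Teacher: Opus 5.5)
Your proposal is correct and follows the paper's own proof: set $\omega = \omega' = \pi/2$ in the duality identity of Theorem~\ref{thm:tail_duality} to get $r_{\pi/2}(\tau) = -\tau/2$, read off $t_N^{1/2+o(1)}$ from Theorem~\ref{thm:tail_law}, and get $a(\pi/2) = 1/2$ from the closed form via $\mathrm{Cl}_2(\pi) = 0$. Your alternative derivation of the exponent from Proposition~\ref{prop:duality} is equally valid.
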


\paragraph{The start of the tail.} For $\delta = L/2$ the tail law is the chord. For $\delta \ne L/2$ it is not: its initial slope, given by Theorem~\ref{thm:initial_slope} below, differs from the slope $-a(\omega)$ of the chord by the following proposition, and the sharp power law fails for every $\delta < L/2$ (Corollary~\ref{cor:critical}). In the computations of Section~\ref{sec:num_potential} the tail law is convex for $\delta = L$ and $\delta = 2L$, and hence, by Theorem~\ref{thm:tail_duality}, concave for their duals $\delta = L/4$ and $\delta = L/8$.

\begin{proposition}[Endpoint Exponent and Extension Fraction]
\label{prop:critical}
For $\omega \in (0,\pi)$ put $s(\omega) := 1 - \omega/\pi$. Then $a(\omega) - s(\omega)$ is positive for $\omega \in (\pi/2, \pi)$, zero for $\omega = \pi/2$ and negative for $\omega \in (0, \pi/2)$; in terms of the extension, $a(\omega) > 2\delta/(L + 2\delta)$ if and only if $\delta < L/2$.
\end{proposition}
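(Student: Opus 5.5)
The plan is to reduce the statement to an inequality between two values of the single function $\phi(x) := x\,\mathrm{Cl}_2(x)$, and then prove that inequality by a concavity argument on $[\pi/2,\pi]$.

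\textbf{Step 1: simplify the ratio.} By Theorem~\ref{thm:rates}, $a(\omega) = 1 - \kappa_\Lambda/\kappa_\sigma$, with
\[
\frac{\kappa_\Lambda}{\kappa_\sigma} = \frac{\mathrm{Cl}_2(\omega) - \tfrac12\mathrm{Cl}_2(2\omega)}{\mathrm{Cl}_2(\omega) + \mathrm{Cl}_2(\pi-\omega)} .
\]
The duplication formula $\mathrm{Cl}_2(2\theta) = 2\mathrm{Cl}_2(\theta) - 2\mathrm{Cl}_2(\pi-\theta)$ (Lemma~\ref{lem:cl2}, or the series $\mathrm{Cl}_2(x) = \sum_k \sin(kx)/k^2$) gives $\mathrm{Cl}_2(\omega) - \tfrac12\mathrm{Cl}_2(2\omega) = \mathrm{Cl}_2(\pi-\omega)$. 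Hence
\[
1 - a(\omega) = \frac{\mathrm{Cl}_2(\pi-\omega)}{\mathrm{Cl}_2(\omega) + \mathrm{Cl}_2(\pi-\omega)} .
\]
This also makes Proposition~\ref{prop:duality} transparent.

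\textbf{Step 2: reduce to $\psi$.} The Clausen function is positive on $(0,\pi)$, so the denominator is positive. Then $a(\omega) > s(\omega)$ is equivalent to $\pi\,\mathrm{Cl}_2(\pi-\omega) < \omega\bigl(\mathrm{Cl}_2(\omega) + \mathrm{Cl}_2(\pi-\omega)\bigr)$, that is, to $\psi(\omega) > 0$, where
\[
\psi(x) := \phi(x) - \phi(\pi - x) .
\]
The same equivalence holds with both inequalities reversed, and with both replaced by equalities. Since $\psi(\pi-x) = -\psi(x)$, we get $\psi(\pi/2) = 0$, and negativity on $(0,\pi/2)$ follows from positivity on $(\pi/2,\pi)$. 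The translation into $\delta$ is immediate: $\omega > \pi/2$ if and only if $\delta < L/2$, and $s(\omega) = 2\delta/(L+2\delta)$.

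\textbf{Step 3: positivity on $(\pi/2,\pi)$.} We have $\psi(\pi/2) = 0$, and $\psi(\pi) = \phi(\pi) - \phi(0) = 0$ because $\mathrm{Cl}_2(\pi) = 0$. It therefore suffices to show that $\psi$ is strictly concave on $[\pi/2,\pi]$. Using $\mathrm{Cl}_2'(x) = -\log(2\sin(x/2))$ and $\mathrm{Cl}_2''(x) = -\tfrac12\cot(x/2)$,
\[
\phi''(x) = -2\log\bigl(2\sin(x/2)\bigr) - \tfrac{x}{2}\cot(x/2) ,
\]
so $\psi''(x) = \phi''(x) - \phi''(\pi-x)$. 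Substituting $y = \pi - x \in (0,\pi/2)$, concavity amounts to
\[
-2\log\bigl(2\cos(y/2)\bigr) - \tfrac{\pi-y}{2}\tan(y/2) < -2\log\bigl(2\sin(y/2)\bigr) - \tfrac{y}{2}\cot(y/2) .
\]

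\textbf{Main obstacle.} The difficulty is proving this inequality. The shortcut "$\phi''$ is decreasing" fails: $\sin^2(x/2)\,\phi'''(x) = (x - 3\sin x)/4$, which changes sign near $x \approx 2.28$. My first attempt would handle it piecewise.
\begin{enumerate}
\item[(a)] On the range where $x < 3\sin x$ (in particular near $\pi/2$), $\phi''$ is decreasing, which gives the inequality between $x$ and $\pi - x$ there.
\item[(b)] For $y$ small, the right-hand side behaves like $-2\log y$ and dominates, since the left-hand side stays bounded.
\item[(c)] On the remaining compact middle range, I would write the difference as $2\log\cot(y/2) - \tfrac{y}{2}\cot(y/2) + \tfrac{\pi-y}{2}\tan(y/2)$ and bound it below by monotone pieces: the first and third terms are monotone in $y$, and the middle term can be bounded crudely.
\end{enumerate}
If this becomes delicate, the fallback is to prove $\psi > 0$ directly. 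One route is the sine series $\psi(x) = \sum_k \bigl[x\sin(kx) - (\pi-x)\sin(k(\pi-x))\bigr]/k^2$, from which the leading terms dominate on $(\pi/2,\pi)$. The other is a rigorous interval evaluation on a grid, using the Lipschitz bound on $\psi$ that follows from the formula for $\phi'$.
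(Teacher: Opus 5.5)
Your reduction and overall strategy are the paper's. With $x=\pi-\omega$, the paper's function $D(x)=(\pi-x)\mathrm{Cl}_2(\pi-x)-x\,\mathrm{Cl}_2(x)$ is your $\psi(\omega)$. It also vanishes at both ends, and the paper also proves strict concavity. It computes $D''(x)=F(x/2)$ with $F(u)=2\log\tan u+u\cot u-(\tfrac\pi2-u)\tan u$, and $F<0$ on $(0,\pi/4)$ is exactly your inequality $\phi''(\pi-y)<\phi''(y)$ with $u=y/2$.

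You diverge in the proof of that inequality, which you leave as a plan. The paper works with the paired difference throughout. It shows $F(\pi/4)=0$ and $F'(u)=G(2u)/(\sin^2u\cos^2u)$ with $G(v)=\tfrac32\sin v-\tfrac v2\cos v-\tfrac\pi4(1-\cos v)$. Then $G(0)=0$, $G'(\pi/2)=0$ and $G''<0$ give $G>0$, so $F$ increases to $0$. There is no case split and no numerical constant.

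Your route differentiates $\phi''$ alone, whose derivative changes sign at the root $x^*\approx 2.28$ of $x=3\sin x$, so it needs a split. It does close, and more simply than your (b)--(c) suggest, if you split at $x=2\pi/3$.
\begin{itemize}
\item[(1)] For $x\in(\pi/2,2\pi/3]$: $3\sin x-x$ is concave on $[0,\pi]$, vanishes at $0$, and is positive at $2\pi/3$ since $9\sqrt3>4\pi$. Hence $\phi'''<0$ on $(0,2\pi/3]$, and your step (a) applies to all these $x$.
\item[(2)] For $x\in(2\pi/3,\pi)$, i.e.\ $y\in(0,\pi/3)$: in your expression $2\log\cot(y/2)-\tfrac y2\cot(y/2)+\tfrac{\pi-y}{2}\tan(y/2)$, the first term exceeds $2\log\sqrt3=\log 3>1$ because $3>e$. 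The second is greater than $-1$ because $\tan t>t$. The third is positive. So the difference is positive.
\end{itemize}
This makes (b), (c) and both fallbacks unnecessary; the sine-series claim that "the leading terms dominate" would in any case need its own proof.

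As for what each route buys: the paper's argument is uniform on the whole interval and treats the degenerate point $y=\pi/2$ and the singular end $y\to0$ in one stroke, at the price of finding the auxiliary function $G$. Yours trades $G$ for a case split. It uses monotonicity of $\phi''$ near $y=\pi/2$, where the difference tends to zero and crude bounds fail, and crude bounds near $y=0$, where the logarithm dominates. With the split at $2\pi/3$ it is equally free of numerical evaluation.
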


\begin{theorem}[The Start of the Tail]
\label{thm:initial_slope}
For every $\omega \in (0,\pi)$ there are $C$ and $\tau_0 \in (0,\kappa_\sigma)$ such that
\begin{equation}\label{eq:initial_slope}
\bigl|r_\omega(\tau) + s(\omega)\,\tau\bigr| \le C\tau^{3/2} \qquad (0 < \tau \le \tau_0).
\end{equation}
\end{theorem}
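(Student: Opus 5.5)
The plan is to work entirely with the equilibrium problem of Definition~\ref{def:equilibrium}, using the characterization $r_\omega(\tau) = E_\tau(\theta_0) - E_\tau^\ast$ and the variational inequalities of Proposition~\ref{prop:equilibrium}. The point of departure is the perturbation at $\tau = 0$. There $\lambda_0 = \pi^{-1}d\theta$, and $E_0 \equiv -\log 2$ is constant on $[0,\pi]$. For small $\tau$ the external field $-\tau$ on $\Theta_I$ moves mass of order $\tau$ from the extension into the physical arc. To first order, the energy gain is $2\tau\lambda(\Theta_I)$, and the change of $E_\tau$ is the potential of a signed measure $\nu$ of total mass zero. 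I would compute $r_\omega$ to first order as the difference between the value of $E_\tau$ at the endpoint $\theta_0$ and its maximum. I expect this to equal $-\tau$ times the arcsine mass of the extension, which is $2\theta_0/\pi = 1 - \omega/\pi = s(\omega)$.

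\textbf{Heuristic for the slope.} Ignoring the upper constraint, the minimizer would be the balayage-type measure $\lambda_0 + \tau\nu$. Here $\nu$ solves the unconstrained problem in which the envelope equals a constant $c_1$ on $\Theta_E$ and $c_1 - 1$ on $\Theta_I$. This is the classical equilibrium of a two-valued field on $[-1,1]$ for the harmonic measure of $[-c_0,c_0]$. The relevant fact is that the harmonic measure of the extension $[-1,-c_0]\cup[c_0,1]$ in the arcsine sense is $s(\omega)$. So the difference between the values on $I$ and the maximum is, at first order, $-(1-\lambda_0(\Theta_I))\tau = -s\tau$, with $\lambda_0(\Theta_I) = \omega/\pi$. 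The constraint $\lambda \le \omega^{-1}d\theta$ is not active to first order, since $\pi^{-1} < \omega^{-1}$. The unconstrained density $\nu$ has inverse square-root singularities at $\pm c_0$, however, so it violates the constraint in a window of width $O(\tau^2)$ in $\theta$ at the ends of the physical arc. This window is the layer $[\theta_0,\theta_\tau]$ of Proposition~\ref{prop:equilibrium}(iii) and the gap of Corollary~\ref{cor:layer}.

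\textbf{Rigorous two-sided bounds.} I would prove matching bounds using the minimality of $\lambda_\tau$ and the extremal characterization of Proposition~\ref{prop:extremal}.

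For the lower bound $r_\omega(\tau) \ge -s\tau - C\tau^{3/2}$, I would build an admissible competitor measure. Take $\lambda_0$ plus $\tau$ times a regularized $\nu$, in which the singular part near $\pm c_0$ is replaced, on a window of width $\varepsilon$, by the explicit boundary-layer profile: the constraint is saturated on the layer and zero on a gap in the extension. I would then estimate the resulting error in the envelope. A mass of order $\tau\sqrt\varepsilon$ is displaced within distance $\varepsilon$, so it perturbs the potential away from the window by $O(\tau\sqrt\varepsilon)$. Feasibility forces $\varepsilon \sim \tau$ at the level of the scaled density, and this gives the $\tau^{3/2}$ error.

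For the upper bound, I would use the variational inequalities (i)--(ii) together with the fact that the mass $\lambda_\tau(\Theta_E)$ is $s - O(\tau)$. Comparing $E_\tau$ with $E_0$ through the principle of domination then bounds $E_\tau(\theta_0) - E_\tau^\ast$ from above by $-s\tau + C\tau^{3/2}$.

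\textbf{Main obstacle.} The hardest step is the boundary layer at $\theta_0$: solving the local constrained problem, a half-line problem with a saturated segment and a gap, explicitly enough to control its contribution to the envelope at the endpoint itself, where $r_\omega$ is evaluated. I would first rescale by $\tau^2$ around $\theta_0$. This reduces the local problem to a model equilibrium problem on $\mathbb R$ with constant constraint and a step field. I would solve it by a Joukowski-type conformal map and check that its far-field matches the inverse square-root singularity of $\nu$. The mismatch between the model and the true geometry is what produces the $O(\tau^{3/2})$ remainder.
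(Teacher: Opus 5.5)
Your lower-bound strategy, an admissible triple for Proposition~\ref{prop:extremal} built from a saturated layer and a gap, is the paper's. The upper bound, however, has no working mechanism. You propose to compare $\lambda_\tau$ with $\lambda_0$ by the principle of domination (Lemma~\ref{lem:minimum}). Because $E_{\lambda_0}\equiv-\log 2$ is constant, this comparison yields only $E_\tau^\ast-\tau\le E_\tau\le E_\tau^\ast$, i.e.\ $-\tau\le r_\omega(\tau)\le 0$. The value $E_\tau(\theta_0)$ is fixed inside a boundary layer of width of order $\tau$, and no statement about the mass $\lambda_\tau(\Theta_E)$ pins it down to first order.

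The mass statement $\lambda_\tau(\Theta_E)=s-O(\tau)$ is in any case not true. Outside the layer the first-order density correction behaves like $\tau/(\pi^2(\vartheta-\theta_0))$. The competitor $f$ of the lower bound therefore satisfies $J_\tau(f)\le\log 2-2\tau\omega/\pi-c\,\tau^2\log(1/\tau)$. Combined with $\log 2\le J_0(\lambda_\tau)$ and $J_\tau(\lambda_\tau)\le J_\tau(f)$, this gives $\lambda_\tau(\Theta_I)-\omega/\pi\ge c\,\tau\log(1/\tau)$.

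A direct upper bound would need a dual certificate: a probability measure whose part on $\Theta_I$ lies below $\mu_\omega$, used as in the proof of Corollary~\ref{cor:sharpness}, with its own boundary layer. The paper needs no second construction. By Theorem~\ref{thm:tail_duality}, $r_\omega(\tau)=-\tau-c^{-1}r_{\pi-\omega}(c\tau)$ with $c=\omega/(\pi-\omega)$. Since $s(\pi-\omega)=1-s(\omega)$, the lower bound $r_{\pi-\omega}(\sigma)\ge-s(\pi-\omega)\sigma-C'\sigma^{3/2}$ at the dual angle becomes $r_\omega(\tau)\le-s(\omega)\tau+C'c^{1/2}\tau^{3/2}$. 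This duality step is the missing idea.

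Your local analysis is also wrong, and this affects the leading term, not only the remainder.
\begin{itemize}
\item \emph{Singularity.} The outer correction solves $E_\nu=\omega/\pi$ on $\Theta_E$ and $E_\nu=-s$ on $\Theta_I$; its mean vanishes because $\nu$ has mass zero. A jump of the envelope corresponds to a $1/(\vartheta-\theta_0)$ singularity of the density (the Hilbert transform of a point mass), not to an inverse square root.
\item \emph{Scale.} The density therefore leaves $[0,1/\omega]$ on a window of width of order $\tau$, as your own count $\varepsilon\sim\tau$ already suggests. The inner variable is $x=(\vartheta-\theta_0)/\tau$, with gap $g\tau$ and layer $l\tau$. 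The inner density of Lemma~\ref{lem:inner} satisfies $m(x)-1/\pi\sim 1/(\pi^2x)$ as $|x|\to\infty$, which is what matches the outer solution. A rescaling by $\tau^2$ misses all of this.
\item \emph{Leading term.} Since $r_\omega$ is evaluated at $\theta_0$, where the density jumps from $0$ to $1/\omega$, the inner problem supplies the slope itself. Measured from $E_\tau^\ast$, the outer envelope is $-\tau$ on the physical interval. The slope $-s$ is the value $e(0)=-\tfrac{2}{\pi}\arctan(\tan\theta_0)$ of the inner profile. Your heuristic $-(1-\lambda_0(\Theta_I))\tau$ is the outer value on $\Theta_I$ measured from the mean $-\log 2$, not from the maximum, so it does not explain the slope.
\item \emph{Remainder.} The $\tau^{3/2}$ does not come from a square-root mass count. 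The inner profile, lifted by $f_0=\pi^{-1}(1-\tau\mathcal{N}e_\tau)$ (Lemma~\ref{lem:shape}), has density errors of size $O(\tau)$ from the difference between the exact kernel and its flat limit. Repairing them on a set $B$ of width $O(\tau)$ moves the envelope by at most $O(\tau)\,C_0|B|^{1/2}=O(\tau^{3/2})$ (Lemma~\ref{lem:env}(b)).
\end{itemize}
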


The tail law thus starts with the slope $-s(\omega) = -2\delta/(L+2\delta)$, minus the fraction of the extended domain occupied by the extension; the initial slopes of dual extensions add up to $-1$, in accordance with Theorem~\ref{thm:tail_duality}. The lower bound in \eqref{eq:initial_slope} is proved with admissible zero distributions for the extremal problem of Proposition~\ref{prop:extremal}, built from the explicit solution of a boundary-layer problem at the ends of the physical interval (Appendix~\ref{app:start}): on the scale $\tau$ and up to $O(\tau)$, their density vanishes on a gap of width $g\tau$ in the extension next to $\theta_0$ and equals the node density on a layer of width $l\tau$ in the physical interval, where
\begin{equation}\label{eq:layer_widths}
g = \frac{2\omega\sin^2\theta_0}{\pi\sin\omega}, \qquad l = \frac{2\omega\cos^2\theta_0}{\pi\sin\omega},
\end{equation}
and their envelope is maximal on the extension beyond the gap and smaller by $\tau$ on the physical interval beyond the layer. The upper bound follows by duality. The computed equilibrium measures have a gap and a layer of these widths (Section~\ref{sec:num_potential}); that the gap and the layer of $\lambda_\tau$, which exist by Corollary~\ref{cor:layer} below, have these widths is not proved. Together with Proposition~\ref{prop:critical}, the theorem decides on which side of the chord the tail law starts.

\begin{corollary}[The Critical Extension]
\label{cor:critical}
\leavevmode
\begin{enumerate}
    \item[(i)] If $\delta < L/2$, then $r_\omega(\tau) > -a(\omega)\tau$ for all sufficiently small $\tau > 0$, and for $\Lop = \Lop_P$ there are no $N_1$ and $C$ such that $\gamma_t \le Ct^{\,a(\omega)}$ for all $N \ge N_1$ and all $t > 0$; in particular the sharp power law \eqref{eq:sharp_law} fails.
    \item[(ii)] If $\delta > L/2$, then $r_\omega(\tau) < -a(\omega)\tau$ for all sufficiently small $\tau > 0$.
\end{enumerate}
\end{corollary}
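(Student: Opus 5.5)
The plan is to combine Theorem~\ref{thm:initial_slope} with Proposition~\ref{prop:critical} for the comparison of $r_\omega$ with the chord, and then to use Theorem~\ref{thm:tail_law} to turn the strict inequality in (i) into the failure of the power law.

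\emph{Comparison with the chord.} By Theorem~\ref{thm:initial_slope}, $r_\omega(\tau) = -s(\omega)\tau + O(\tau^{3/2})$ as $\tau\to0$. Hence
\[
r_\omega(\tau) + a(\omega)\tau = \bigl(a(\omega) - s(\omega)\bigr)\tau + O(\tau^{3/2}).
\]
If $\delta < L/2$, then $\omega = \pi L/(L+2\delta) > \pi/2$, and Proposition~\ref{prop:critical} gives $a(\omega) - s(\omega) > 0$. The linear term then dominates the error $C\tau^{3/2}$ once $\tau < \bigl((a-s)/C\bigr)^2$ and $\tau \le \tau_0$, so the right-hand side is positive. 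This proves the first claim of (i). If $\delta > L/2$, the same computation with $a - s < 0$ gives the strict inequality of (ii). So the chord comparison is a short asymptotic argument.

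\emph{Failure of the power law in (i).} Argue by contradiction. Suppose there are $N_1$ and $C$ with $\gamma_t \le Ct^{a(\omega)}$ for all $N\ge N_1$ and all $t>0$. By Lemma~\ref{lem:two_forms}, the first inequality of \eqref{eq:sharp_law} implies the second, $\Gamma_t \le C't^{a(\omega)-1}$, with another constant. I would check that this lemma's constant is uniform in $N \ge N_1$; it should be, since the argument relates $\Gamma_t$ to $\gamma$ at smaller thresholds through the singular value decomposition. Then
\[
\max\{\gamma_{t_N},\,t_N\Gamma_{t_N}\} \le \max\{C,C'\}\,e^{-Na(\omega)\tau}
\]
at $t_N = e^{-N\tau}$, so $r^+(\tau) \le -a(\omega)\tau$. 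This holds for every $\tau\in(0,\kappa_\sigma)$. But Theorem~\ref{thm:tail_law} gives $r^+(\tau) = r_\omega(\tau)$, and the first part gives $r_\omega(\tau) > -a(\omega)\tau$ for small $\tau$, which is a contradiction. The sharp power law contains this $\gamma$-bound, so it fails too.

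\emph{Main obstacle.} The only delicate point is whether Lemma~\ref{lem:two_forms} really delivers an $N$-independent constant. If it does not, I would avoid $\Gamma$ altogether. I would use the lower half of Theorem~\ref{thm:tail_law}, namely the lower bound construction of Theorem~\ref{thm:tail_lower}, which should give $\gamma_{t_N}$ itself of size $e^{N r_\omega(\tau) + o(N)}$ at a slightly shifted threshold. Combined with the Lipschitz continuity of $r_\omega$ from Proposition~\ref{prop:tail_properties}, this still contradicts $\gamma_t\le Ct^{a}$.
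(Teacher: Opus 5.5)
Your proposal is correct and is essentially the paper's own proof. You use Theorem~\ref{thm:initial_slope} with Proposition~\ref{prop:critical} for the comparison with the chord, then Lemma~\ref{lem:two_forms}(i) to pass from $\gamma_t \le Ct^{a}$ to $t\Gamma_t \le C_a t^{a}$, hence $r^+(\tau) \le -a(\omega)\tau$, which contradicts Theorem~\ref{thm:tail_law}. Your worry about uniformity is unfounded and the fallback is not needed: the lemma's constant is $C_a = 2^a(1-4^{a-1})^{-1/2}C$, independent of $N$, and its unisolvence hypothesis holds for all large $N$ by Proposition~\ref{prop:unisolvence_poisson}, which you should cite.
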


The theorem also shows that the equilibrium measure has a layer and a gap at every level of the tail.

\begin{corollary}[The Layer and the Gap]
\label{cor:layer}
For $0 < \tau < \kappa_\sigma$, $-\tau < r_\omega(\tau) < 0$. Consequently $\theta_\tau > \theta_0$ in Proposition~\ref{prop:equilibrium}(iii), and $\lambda_\tau$ vanishes on $(\theta_0 - \eps, \theta_0)$ for some $\eps > 0$.
\end{corollary}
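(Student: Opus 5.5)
The plan is to obtain the two strict inequalities from the start of the tail (Theorem~\ref{thm:initial_slope}), spread them to the whole tail with the monotonicity and Lipschitz bound of Proposition~\ref{prop:tail_properties}, and then read off the layer and the gap from the structure of $\lambda_\tau$ in Proposition~\ref{prop:equilibrium}.

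\emph{Strict inequalities for small $\tau$.} Since $\omega\in(0,\pi)$, the extension fraction satisfies $s(\omega)=1-\omega/\pi\in(0,1)$. By \eqref{eq:initial_slope}, $r_\omega(\tau)\le -s\tau+C\tau^{3/2}$ and $r_\omega(\tau)\ge -s\tau - C\tau^{3/2}$ for $0<\tau\le\tau_0$. After shrinking $\tau_0$ so that $C\tau_0^{1/2}<\min\{s,1-s\}$, we get $-\tau<r_\omega(\tau)<0$ for every $\tau\in(0,\tau_0]$.

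\emph{Extension to all of $(0,\kappa_\sigma)$.} Let $\tau\in(\tau_0,\kappa_\sigma)$ and fix $\tau'=\tau_0$. Because $r_\omega$ is nonincreasing, $r_\omega(\tau)\le r_\omega(\tau')<0$. The Lipschitz bound gives $r_\omega(\tau)\ge r_\omega(\tau')-(\tau-\tau')$, and $r_\omega(\tau')>-\tau'$ then yields $r_\omega(\tau)>-\tau$. This proves $-\tau<r_\omega(\tau)<0$ on the whole tail, and I see no real obstacle in this step.

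\emph{Layer.} By Proposition~\ref{prop:equilibrium}(iii) there is $\theta_\tau\in[\theta_0,\pi/2)$ with $E_\tau=E_\tau^\ast-\tau$ on $[\theta_\tau,\pi-\theta_\tau]$. Suppose $\theta_\tau=\theta_0$. Then $E_\tau(\theta_0)=E_\tau^\ast-\tau$, so by \eqref{eq:tail_law} we would have $r_\omega(\tau)=-\tau$. This contradicts the strict lower bound, hence $\theta_\tau>\theta_0$.

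\emph{Gap.} Let $S$ be the (closed) support of $\lambda_\tau|_{\Theta_E}$. By Proposition~\ref{prop:equilibrium}(ii), $E_\tau=E_\tau^\ast$ on $S$. Suppose $\theta_0\in S$, which is the case whenever points of $\operatorname{supp}\lambda_\tau\cap[0,\theta_0)$ accumulate at $\theta_0$. Since $E_\tau$ is continuous and equals $E^\ast_\tau$ on the closed set $S$, we would get $E_\tau(\theta_0)=E_\tau^\ast$, i.e.\ $r_\omega(\tau)=0$, contradicting the strict upper bound. Hence $\theta_0\notin S$. Because $S$ is closed, some interval $(\theta_0-\eps,\theta_0)$ does not meet $S$, which means $\lambda_\tau$ vanishes there. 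The same holds at $\pi-\theta_0$ by the symmetry of $\lambda_\tau$.

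The one delicate point is the passage to the closed support: one must use that $E_\tau=E_\tau^\ast$ on the closed support $S$ and then continuity of $E_\tau$ at $\theta_0$, not merely on the open set where $\lambda_\tau$ has positive density.
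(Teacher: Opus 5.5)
Your proof is correct and follows the paper's: the strict inequalities come from Theorem~\ref{thm:initial_slope} and are extended to the whole tail by the monotonicity and Lipschitz bound of Proposition~\ref{prop:tail_properties}, and the gap comes from $E_\tau(\theta_0) < E_\tau^\ast$ together with $E_\tau = E_\tau^\ast$ on the closed support of $\lambda_\tau|_{\Theta_E}$ (Proposition~\ref{prop:equilibrium}(ii)). The only difference is in the layer, where you read $\theta_\tau > \theta_0$ directly off $E_\tau = E_\tau^\ast - \tau$ on $[\theta_\tau,\pi-\theta_\tau]$ in Proposition~\ref{prop:equilibrium}(iii); the paper instead shows $\lambda_\tau = \omega^{-1}d\theta$ on $[\theta_0,\theta_0+\eps)$ via the support of $(\omega^{-1}d\theta - \lambda_\tau)|_{\Theta_I}$, and both arguments are valid.
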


\subsubsection{Explicit Bounds Along the Tail}
\label{sec:res_bounds}

Theorem~\ref{thm:tail_law} is proved by matching two families of explicit bounds, valid for all $N \ge N_0$, which also serve for the certified evaluations of Section~\ref{sec:num_certified}. The lower bounds come from polynomials whose zeros are described by the following configurations: a layer of zeros at the nodes next to the ends of the physical interval, free zeros of piecewise-constant density on the centre and on part of the extension, and none in between. Their envelope $E$ is the negative logarithmic potential of the zero distribution, so that $e^{NE}$ is the size of the polynomial; $\ell(\mathcal C) \le 0$ is the exponent of its size at the ends of the physical interval relative to its maximum, and $\tau(\mathcal C) \ge 0$ the exponent by which its maximum on the centre lies below its overall maximum.

\begin{definition}[Piecewise-Uniform Configuration]
\label{def:configuration}
Let $0 < \xi \le \theta_0/2$. A \emph{configuration} $\mathcal C$ consists of a layer parameter $\theta_s \in (\theta_0, \pi/2)$ and finitely many arcs $A_k = [\alpha_k, \beta_k]$, $1 \le k \le Q$, with densities $\rho_k > 0$, such that
\begin{enumerate}
    \item[(i)] $\xi \le \alpha_k < \beta_k \le \pi/2$, the arcs have pairwise disjoint interiors, and each arc lies either in $[\xi, \theta_0)$ (an \emph{extension arc}, so that $\beta_k < \theta_0$) or in $[\theta_s, \pi/2]$ (a \emph{centre arc});
    \item[(ii)] with $A_k^\ast := A_k \cup (\pi - A_k)$, of length $|A_k^\ast| = 2(\beta_k - \alpha_k)$ if $\beta_k < \pi/2$ and $|A_k^\ast| = \pi - 2\alpha_k$ if $\beta_k = \pi/2$, the total mass $m(\mathcal C) := 2(\theta_s - \theta_0)/\omega + \sum_{k=1}^Q\rho_k|A_k^\ast|$ is less than one.
\end{enumerate}
With $S := [\theta_0, \theta_s]$ (the \emph{layer}) and $S^\ast := S \cup (\pi - S)$, the \emph{envelope} of $\mathcal C$ is
\begin{equation}\label{eq:envelope}
E(\vartheta) := \frac1\omega\int_{S^\ast}\log|\cos\vartheta - \cos\varphi|\,d\varphi + \sum_{k=1}^Q\rho_k\int_{A_k^\ast}\log|\cos\vartheta - \cos\varphi|\,d\varphi, \qquad \vartheta \in [0,\pi],
\end{equation}
and we put
\begin{gather*}
E_{\max} := \max_{[0,\pi]}E, \qquad E_c := \max_{[\theta_s,\ \pi-\theta_s]}E, \\
\ell(\mathcal C) := E(\theta_0) - E_{\max} \le 0, \qquad \tau(\mathcal C) := E_{\max} - E_c \ge 0 .
\end{gather*}
\end{definition}

\begin{theorem}[Explicit Lower Bound on the Leakage Along the Tail]
\label{thm:tail_lower}
Let $\Lop = \Lop_P$, let $\mathcal C$ be a configuration with $\ell := \ell(\mathcal C)$, $\tau := \tau(\mathcal C)$, and assume $\ell > -(\kappa_\sigma - \kappa_\Lambda)$. Then there are $N_0$ and $C$, depending only on $\delta$, $L$ and $\mathcal C$, and, for every $N \ge N_0$, a vector $\vect w_N \in \mathbb R^n\setminus\{0\}$ such that for all $s \in [0,1]$
\begin{equation}\label{eq:tail_quotient}
\frac1N\log\frac{\norm{\Phi\mat D\vect w_N}_{L^2(I)}}{\norm{\mat B_D\vect w_N}_2^{\,s}\,\norm{\vect w_N}_2^{\,1-s}} \ \ge\ \ell + s\tau - C\,\frac{\log N}{N},
\end{equation}
where $\norm{\mat B_D\vect w_N}_2 > 0$ by Proposition~\ref{prop:unisolvence_poisson}.
\end{theorem}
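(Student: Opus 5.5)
We build $\vect w_N$ from an explicit polynomial that realizes the configuration $\mathcal C$, and then correct it to satisfy the boundary conditions.

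\emph{Step 1: the discrete polynomial.} We discretize $\mathcal C$ into a monic polynomial $q_N$ in $z = \cos\theta$ of degree at most $N-1$. Its zeros are placed at every node $z_m$ with $\theta_m \in S^\ast$. On each arc $A_k^\ast$ we add $\lfloor N\rho_k|A_k^\ast|\rfloor$ free zeros at the points $\cos\varphi$, with $\varphi$ the quantiles of the uniform density on the arc. Because $m(\mathcal C) < 1$, a deficit of order $N$ remains. We fill it with roughly $(1-m(\mathcal C))N$ additional zeros; a single multiple zero at $z=\pm1$, outside the sampled range, is one option. This changes the envelope only by a known additive term, which we absorb by treating it as an extra arc of length zero. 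Alternatively, we simply take a lower-degree polynomial, since $V_N$ contains all degrees $\le N-1$.

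The working tool is the uniform estimate of discrete potentials from Appendix~\ref{app:rates}, used as in Theorem~\ref{thm:rates}. It gives $\frac1N\log|q_N(\cos\vartheta)| = E(\vartheta) + O(\log N/N)$ uniformly away from the zeros. It also gives the corresponding bounds at the nodes, where the layer zeros sit exactly on nodes and the free zeros lie at distance $\gtrsim N^{-2}$ from them. From this we obtain the following three facts.

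(a) $\max_{z\in[-c_0,c_0]}|q_N|\sqrt{1-z^2}$ is attained near the ends of the physical interval, and $\norm{v}_{L^2(I)} \ge e^{N(E(\theta_0) - C\log N/N)}$ for $v = \sqrt{2/\tilde L}\sin\theta\,q_N$. For the lower bound we integrate over a window of width $N^{-2}$ next to $\theta_0$ that avoids the zeros.

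(b) The nodal values vanish on the layer and satisfy $|q_N(z_m)| \le N^C e^{N E_c}$ at the centre nodes. The extension contains no nodes.

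(c) $\norm{v}_{L^2(\tilde I)} \le N^C e^{NE_{\max}}$ by \eqref{eq:chebyshev_norms}.

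\emph{Step 2: passage to $\Phi_N$ and the test space.} For the Poisson operator, $\Lop w_j = \lambda_j w_j$, so we take $v$ with coefficients $c_j$ and set $\varphi := \Lop^{-1}$ applied at the coefficient level, namely coefficients $c_j/\lambda_j$. Dividing by $\lambda_j$ costs only polynomial factors, $N^{\pm2}$, in the coefficient norm. It does, however, alter the nodal and $L^2(I)$ sizes, so it is safer to reverse the order: choose the trial function so that its image under $\Lop$ is $v$. Concretely, we require $\Lop u = v$ at the level of $V_N$, with $u=\sum (c_j/\lambda_j)w_j$. This $u$ need not vanish at $0$ and $L$. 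We correct it by adding multiples of $w_1, w_2$, following the boundary correction of Lemma~\ref{lem:correction} used in Proposition~\ref{prop:classical}(ii). The correction has coefficients bounded by $|u(0)|+|u(L)|$, and these are at most $N^C e^{NE_{\max}}$ by (c).

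The assumption $\ell > -(\kappa_\sigma-\kappa_\Lambda)$ is what keeps this correction harmless. We must show that the contribution of the correction to the nodal values and to $\norm{\cdot}_{L^2(I)}$ does not swamp the main term $e^{NE(\theta_0)}$. The correction lies in the low-mode space and is therefore spread evenly in size. Comparing its effect with the Lagrange-function estimate of Proposition~\ref{prop:classical}(ii), we expect the hypothesis to be exactly the inequality that keeps $E(\theta_0)$ above the correction term. This is the step I expect to be the main obstacle. If a direct comparison fails, the fallback is to choose the deficit zeros so that $u(0)=u(L)=0$ holds automatically, by symmetry together with two free parameters, and then to control only the change in potential.

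\emph{Step 3: combination.} Let $\vect w_N := \mat D^{-1}$ applied to the coefficient vector of the corrected function. Lemma~\ref{lem:column_norms} controls $\mat D$ up to powers of $N$. Then $\norm{\vect w_N}_2 \le N^Ce^{NE_{\max}}$, and $\norm{\mat B_D\vect w_N}_2$ is the nodal norm of $\Lop$ of the trial function, which is at most $N^Ce^{NE_c} = N^C e^{N(E_{\max}-\tau)}$ by (b). The numerator is at least $N^{-C}e^{NE(\theta_0)}$ by (a) together with $\norm{\Lop^{-1}}$-type bounds on $I$. Taking logarithms gives $\ell + s\tau - C\log N/N$ for every $s\in[0,1]$, uniformly in $s$, because the bound is a convex combination of the two exponents. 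Finally, $\norm{\mat B_D\vect w_N}_2>0$ follows from unisolvence (Proposition~\ref{prop:unisolvence_poisson}), since the trial function is nonzero.
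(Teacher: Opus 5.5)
Your Step~2 has a genuine gap, at exactly the point you flag. You correct the trial function $u=\tilde\Lop_P^{-1}v$ by $aw_1+bw_2$. This adds $a\lambda_1w_1+b\lambda_2w_2$ to the test function, which changes its values at the interior nodes, the layer nodes included, by amounts of order $|a|+|b|$ (for instance $w_1\ge\sqrt{2/\tilde L}\sin\theta_0$ on $\bar I$).

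Now $|a|+|b|$ is of order $|u(0)|+|u(L)|$, and $u(0)=\int_{\tilde I}\tilde G(0,y)v(y)\,dy$ is controlled only by $N^Ce^{NE_{\max}}$, the size of $v$ on the extension. Nothing in your construction makes it smaller. After the correction, fact~(b) is therefore lost: the nodal values no longer vanish on the layer, $\norm{\mat B_D\vect w_N}_2$ is bounded only by $N^Ce^{NE_{\max}}$ rather than $N^Ce^{NE_c}$, and the gain $s\tau$ disappears. The correction can also swamp $e^{NE(\theta_0)}$ on $I$. No hypothesis on $\ell$ repairs a low-mode correction.

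What is needed (Lemma~\ref{lem:correction}) is to correct the \emph{test} function $g_0=\sin\theta\,P_N(\cos\theta)$ by an element of the two-dimensional space of functions of $V_N$ that vanish at every interior node, namely $\chi_N^\pm=\ell_1^V\pm\ell_N^V$. Then $g=g_0-\alpha\chi_N^\pm\in\Psi_N$ has exactly the nodal values of $g_0$. Moreover $\chi_N^\pm$ is at most $N^C$ on $I$, and since it is positive and exponentially large on the extension, $p_N^\pm=(\tilde\Lop_P^{-1}\chi_N^\pm)(0)\ge e^{N(\kappa_\sigma-\kappa_\Lambda)-C\log N}$. Hence $|\alpha\chi_N^\pm|\le e^{N(E_{\max}-(\kappa_\sigma-\kappa_\Lambda))+C\log N}$ on $I$. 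This lies below $e^{NE(\theta_0)}$ precisely because $\ell>-(\kappa_\sigma-\kappa_\Lambda)$, which is where the hypothesis comes from. The paper gives $P_N$ definite parity, $P_N(-z)=(-1)^{N-1}P_N(z)$, so that one of $\chi_N^\pm$ suffices. Your fallback of moving the deficit zeros to force $u(0)=0$ is not developed, and you give no control of the envelope under that constraint.

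The lower bound on the numerator also fails as written. The numerator is the trial function $\Phi\mat D\vect w_N$ on $I$, while your fact~(a) bounds the test function from below; $\norm{\Lop^{-1}}$-type bounds give $\norm{u}\le C\norm{\Lop u}$, the wrong direction. You need a reverse inequality, applied to the corrected $g$ and not to $g_0$.

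The paper takes the window $[\theta_0+\Delta/4,\theta_0+3\Delta/4]$ between the layer nodes $\theta_1,\theta_2$. There $P_N$ has no zeros, so, using again the smallness of $\alpha\chi_N^\pm$ on $I$, $g$ has constant sign and modulus at least $g_\ast\approx e^{NE(\theta_0)}$. Testing $-v''=g$ against a $C^2$ bump supported in the window (Lemma~\ref{lem:bump}) gives $\norm{v}_{L^2(I)}\ge g_\ast h_b^{5/2}/(32\pi^2)$. Markov's inequality for the polynomial factor on $[-c_0,c_0]$, which bounds $\norm{v''}_{L^\infty(I)}$ by $N^C\norm{v}_{L^\infty(I)}$, would serve equally well.

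Two smaller points. A multiple zero at $\pm1$ changes the envelope, hence $\ell(\mathcal C)$ and $\tau(\mathcal C)$, so only your alternative of keeping the degree below $N-1$ is admissible. And your claim that $|q_N|\sqrt{1-z^2}$ is largest near the ends of the physical interval is not justified for a general configuration; it is not needed, since only a lower bound near $\theta_0$ enters.
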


The upper bounds come from Lagrange interpolation: for the Poisson operator a function of $\Psi_N \subset V_N$ is $\sin\theta$ times a polynomial of degree $N-1$ in the Chebyshev variable, determined by its values at any $N$ distinct points of $(0,\pi)$, and for the functions that enter $\gamma_t$ and $\Gamma_t$ the values are small at the interior nodes and bounded on the whole extended interval. The interpolation sets are described by the following configurations, in which a node arc carries a subset of the nodes of prescribed density and a virtual arc a lattice of points that need not be nodes.

Interpolation at a subset of an equispaced grid that follows a prescribed distribution is a standard way of bounding a polynomial by its values on the grid, the mock-Chebyshev subsets being the best-known case \cite{boyd2009divergence}, see also \cite[Sec.~5.3]{adcock2019optimal}; the virtual points use in addition the bound on the extended interval. Adcock and Shadrin \cite[Thm.~2.4]{adcock2023fast} combine the same two bounds, by a Markov-type inequality and the bound of Coppersmith and Rivlin \cite{coppersmith1992growth}: a polynomial of degree $n$ bounded by one at $m + 1$ equispaced nodes of an interval and by $1/\eps$ on an extended interval is bounded on the interval by a numerical constant under linear oversampling, $m = c\,n\log(1/\eps)$ with $c$ depending on the extension. In the square system $m = n$, and at the levels $t = e^{-N\tau}$ the ratio of the two bounds is exponential in $N$, so that result does not apply and the bounds of Theorem~\ref{thm:dual} depend exponentially on $N$.

\begin{definition}[Interpolation Configuration]
\label{def:interp_configuration}
Let $0 < \xi \le \theta_0/2$. An \emph{interpolation configuration} $\mathcal S$ consists of finitely many pairwise disjoint arcs $A_k = [\alpha_k,\beta_k]$, $\alpha_k < \beta_k$, with densities $\rho_k > 0$, $1 \le k \le Q$, such that
\begin{enumerate}
    \item[(i)] the \emph{node arcs} $A_k$, $1 \le k \le Q_1$, with $Q_1 \ge 1$, lie in $[\theta_0, \pi-\theta_0]$, and each carries a modulus $q_k \in \mathbb N$ and a nonempty set of residues $R_k \subset \{0, 1, \dots, q_k-1\}$, with $\rho_k := |R_k|/(q_k\omega) \le 1/\omega$;
    \item[(ii)] the \emph{virtual arcs} $A_k$, $Q_1 < k \le Q$, with $Q > Q_1$, lie in $[\xi, \pi-\xi]$;
    \item[(iii)] the total mass $\sum_{k=1}^Q\rho_k(\beta_k - \alpha_k)$ equals one.
\end{enumerate}
Its \emph{envelope} is
\begin{equation}\label{eq:interp_envelope}
E(\vartheta) := \sum_{k=1}^Q\rho_k\int_{A_k}\log|\cos\vartheta - \cos\varphi|\,d\varphi, \qquad \vartheta \in [0,\pi],
\end{equation}
a continuous function (Lemma~\ref{lem:E_modulus}), and with $E_I := \max_{[\theta_0,\pi-\theta_0]}E$, $E_1$ the minimum of $E$ over the node arcs and $E_2$ its minimum over the virtual arcs, we put
\begin{gather*}
A_1(\mathcal S) := E_I - E_1 \ge 0, \qquad A_2(\mathcal S) := E_I - E_2, \\
\Phi(\mathcal S,\tau) := \max\{A_1(\mathcal S) - \tau,\ A_2(\mathcal S)\} \qquad (\tau \ge 0).
\end{gather*}
\end{definition}

\begin{theorem}[Upper Bound Along the Tail by Lagrange Interpolation]
\label{thm:dual}
Let $\Lop = \Lop_P$ and let $\mathcal S$ be an interpolation configuration. There are $N_0$ and $C$, depending only on $\delta$, $L$ and $\mathcal S$, such that for all $N \ge N_0$ and all $t \ge 0$
\begin{equation}\label{eq:dual_bound}
\max\{\gamma_t,\ t\Gamma_t\} \le N^C\bigl(t\,e^{NA_1(\mathcal S)} + e^{NA_2(\mathcal S)}\bigr);
\end{equation}
consequently, for every $\tau \ge 0$ and $t_N := e^{-N\tau}$,
\begin{equation}\label{eq:dual_rate}
\frac1N\log\max\{\gamma_{t_N},\ t_N\Gamma_{t_N}\} \le \Phi(\mathcal S,\tau) + C\,\frac{\log N}{N}.
\end{equation}
\end{theorem}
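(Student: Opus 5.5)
The plan is to bound, for every $\vect w \in \mathbb R^n$, the function $v := \Phi\mat D\vect w \in \Phi_N$ on $I$ by a Lagrange interpolation formula in the Chebyshev variable, through its image $\psi := \Lop_P v = -v''$. This yields one inequality
\[
\norm{\Phi\mat D\vect w}_{L^2(I)} \le N^C\bigl(e^{NA_1(\mathcal S)}\norm{\mat B_D\vect w}_2 + e^{NA_2(\mathcal S)}\norm{\vect w}_2\bigr) \qquad (\ast)
\]
from which both constants follow.

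\textbf{Step 1: reduce to $\psi$ and bound the sampled values.} Since $v(0)=v(L)=0$, we have $v = \Lop^{-1}\psi$ by Lemma~\ref{lem:dirichlet_1d}. Hence, by \eqref{eq:Linv}, $\norm{v}_{L^2(I)} \le \sqrt L\,\norm{\Lop^{-1}}_\infty\norm{\psi}_{L^\infty(I)}$, and it suffices to bound $\psi$ on $I$.

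For the Poisson operator, $\psi = \sum_j (\mat D\vect w)_j(\lambda_{j+2}w_{j+2} - a_j\lambda_1 w_1 - b_j\lambda_2 w_2) \in V_N$. By Lemma~\ref{lem:chebyshev_variable}, $\psi = \sqrt{2/\tilde L}\,\sin\theta\,q(\cos\theta)$ with $q \in \mathbb P_{N-1}$. Two kinds of values of $q$ are available:
\begin{itemize}
\item[--] At an interior node, $\psi(x_m) = (\mat B_D\vect w)_m$.
\item[--] At any point of $[\xi,\pi-\xi]$, $|q| \le (\sin\xi)^{-1}\sqrt{\tilde L/2}\,\sup_{\tilde I}|\psi|$. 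By the column-norm estimate of Lemma~\ref{lem:column_norms}, the entries of $\mat D$ are at most $N^{C}/\lambda_{j+2}$ (up to the bounded corrections $a_j,b_j$). This gives $\sup_{\tilde I}|\psi| \le N^C\norm{\vect w}_2$.
\end{itemize}
On $\Theta_I$ the factor $\sin\theta \ge \sin\theta_0$ is harmless.

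\textbf{Step 2: the interpolation set and the Lagrange estimate.} From $\mathcal S$ I build $N$ distinct points in $(0,\pi)$:
\begin{itemize}
\item[--] on each node arc $A_k$, the nodes $\theta_m \in A_k$ with $m \bmod q_k \in R_k$; there are $N\rho_k(\beta_k-\alpha_k)+O(1)$ of them;
\item[--] on each virtual arc, a lattice of step $1/(N\rho_k)$.
\end{itemize}
The count is then corrected to exactly $N$ by adding or removing $O(1)$ points, which costs only a factor $N^{O(1)}$. The arcs are disjoint, and $\xi>0$ keeps the points away from $0$ and $\pi$, so the points are distinct and $q$ is determined by its values there. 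Writing $z_i = \cos\vartheta_i$ and $\ell_i(z) = \prod_{j\ne i}(z-z_j)/(z_i-z_j)$, the key estimates are
\[
\frac1N\log\Bigl|\prod_{j}(z - z_j)\Bigr| \le E(\vartheta) + C\frac{\log N}{N}, \qquad
\frac1N\log\Bigl|\prod_{j\ne i}(z_i - z_j)\Bigr| \ge E(\vartheta_i) - C\frac{\log N}{N},
\]
uniformly in $\vartheta \in [0,\pi]$ and in $i$. These are discrete-potential estimates of the kind behind Theorem~\ref{thm:rates} (Appendix~\ref{app:rates}), now for residue-class subsets of the nodes and for lattices. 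Together they give $|\ell_i(\cos\vartheta)| \le N^C e^{N(E_I - E(\vartheta_i))}$ for $\vartheta \in \Theta_I$. Splitting the Lagrange sum into node points (where $E(\vartheta_i) \ge E_1$) and virtual points (where $E(\vartheta_i) \ge E_2$) yields $(\ast)$, after Cauchy--Schwarz on the nodal values.

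\textbf{Step 3: from $(\ast)$ to the two constants.}
\begin{itemize}
\item[--] For $\vect w \in \mathcal V_t$ we have $\norm{\mat B_D\vect w}_2 \le t\norm{\vect w}_2$, which gives the bound on $\gamma_t$ in \eqref{eq:dual_bound}.
\item[--] For $\vect w \in \mathcal V_t^\perp$ we have $\norm{\vect w}_2 \le t^{-1}\norm{\mat B_D\vect w}_2$, so $t\Gamma_t \le N^C(te^{NA_1}+e^{NA_2})$.
\end{itemize}
Finally, \eqref{eq:dual_rate} follows by inserting $t_N = e^{-N\tau}$ and using $\log(a+b) \le \log 2 + \max\{\log a,\log b\}$.

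\textbf{Main obstacle.} The hardest part is the lower bound on the denominator products $\prod_{j\ne i}|z_i-z_j|$ with only a $\log N$ loss. The difficulty is uniformity, in particular near arc endpoints, where node and virtual arcs may abut or the density jumps, and for residue-class subsets, whose nearest neighbours sit at distance $\asymp 1/N$. I would handle it as follows:
\begin{itemize}
\item[--] compare the discrete sum with the integral arc by arc, using monotonicity of $\log|\cos\vartheta_i-\cos\varphi|$ on intervals between the singularity and the endpoints;
\item[--] treat the $O(1)$ neighbours of $\vartheta_i$ within each period $q_k$ exactly, via factorial-type products, which produce only powers of $N$;
\item[--] use the continuity of $E$ (Lemma~\ref{lem:E_modulus}) to absorb the $O(1/N)$ displacement of the points.
\end{itemize}
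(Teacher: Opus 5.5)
Your proposal is correct and follows the paper's proof step for step. You reduce to $\psi = -v''$, write $\psi = \sin\theta\,p(\cos\theta)$, and interpolate $p$ at the residue-selected interior nodes (values bounded through $\mat B_D\vect w$) and on virtual lattices (values bounded by a column-norm estimate of $\sup_{\tilde I}|\psi|$). You then bound the Lagrange polynomials by discrete-potential estimates, as in Lemma~\ref{lem:lattice_lagrange}, and pass to $\gamma_t$ and $t\Gamma_t$ exactly as the paper does.

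The differences are cosmetic. The paper takes $N' \ge N$ points, padding the virtual lattices, instead of correcting the count to exactly $N$; this is legitimate since $\deg p \le N-1$. The paper also explicitly excludes the boundary nodes $\theta_1$ and $\theta_N$ from the node arcs, and you should too, since $\psi(0)$ and $\psi(L)$ are not entries of $\mat B_D\vect w$.
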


Like Theorem~\ref{thm:tail_lower}, Theorem~\ref{thm:dual} is proved for the Poisson operator only; the generalization of both to convection--diffusion is open (Section~\ref{sec:conclusion}). The quantity $A_1(\mathcal S)$ is the loss at the nodes, the size on the physical interval of the Lagrange polynomials of the node part of the set relative to their size at the nodes, and $A_2(\mathcal S)$ the loss at the virtual points. Both families of bounds are sharp.

\begin{corollary}[Sharpness of the Explicit Bounds]
\label{cor:sharpness}
For every $\tau \in (0,\kappa_\sigma)$,
\begin{equation}\label{eq:sharpness}
r_\omega(\tau) = \sup_{\mathcal C}\,\ell(\mathcal C) = \inf_{\mathcal S}\,\Phi(\mathcal S,\tau),
\end{equation}
the supremum being taken over the configurations $\mathcal C$ with $\tau(\mathcal C) \ge \tau$ and $\ell(\mathcal C) > -(\kappa_\sigma - \kappa_\Lambda)$ and the infimum over the interpolation configurations $\mathcal S$.
\end{corollary}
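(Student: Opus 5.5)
The plan is to sandwich: combine the lower bounds of Theorem~\ref{thm:tail_lower} and the upper bounds of Theorem~\ref{thm:dual} with the limit of Theorem~\ref{thm:tail_law}. Sharpness then reduces to showing that each family attains $r_\omega(\tau)$. The order is (a) $\sup_{\mathcal C}\ell(\mathcal C) \le r_\omega(\tau)$, (b) $r_\omega(\tau) \le \inf_{\mathcal S}\Phi(\mathcal S,\tau)$, and (c) the two reverse inequalities by approximating the equilibrium measure $\lambda_\tau$ of Proposition~\ref{prop:equilibrium}.

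For (a), fix a configuration $\mathcal C$ with $\tau(\mathcal C) \ge \tau$ and $\ell(\mathcal C) > -(\kappa_\sigma-\kappa_\Lambda)$, and let $\vect w_N$ be the vector of Theorem~\ref{thm:tail_lower}. Taking $s=1$ gives $\norm{\mat B_D\vect w_N}_2 \le e^{-N(\ell+\tau(\mathcal C))+o(N)}\norm{\Phi\mat D\vect w_N}$. Taking $s=0$ gives $\norm{\Phi\mat D\vect w_N} \ge e^{N\ell - o(N)}\norm{\vect w_N}$, so the Rayleigh-type ratio of $\vect w_N$ is at most $e^{-N\tau(\mathcal C)+o(N)} \le t_N e^{o(N)}$. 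I then split $\vect w_N$ into its components in $\mathcal V_{t'}$ and $\mathcal V_{t'}^\perp$ for a slightly larger threshold $t' = t_Ne^{\epsilon N}$. Combining $\norm{\Phi\mat D\vect w}\le \gamma_{t'}\norm{\vect w_{\mathcal V}} + \Gamma_{t'}\norm{\mat B_D\vect w}$ with the quotient bounds yields $\max\{\gamma_{t'},t'\Gamma_{t'}\} \ge e^{N(\ell-\epsilon)-o(N)}$. Theorem~\ref{thm:tail_law} at level $\tau-\epsilon$ and the Lipschitz property of Proposition~\ref{prop:tail_properties} then give $\ell \le r_\omega(\tau)+2\epsilon$, and I let $\epsilon\to0$. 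Part (b) is immediate from \eqref{eq:dual_rate} and Theorem~\ref{thm:tail_law}.

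For (c) there are two directions. For the lower family, I discretize $\lambda_{\tau'}$ with $\tau'$ slightly above $\tau$. By Proposition~\ref{prop:equilibrium}(iii) it equals the node density on the layer $[\theta_0,\theta_{\tau'}]$, and by Corollary~\ref{cor:layer} it has a gap next to $\theta_0$, so it has exactly the shape of Definition~\ref{def:configuration}. I replace its density on the centre and on the extension by step functions, with slightly reduced densities and mass just below one (required by (ii); the lost mass only lowers the potential uniformly and is made small). The envelope then converges uniformly to $E_{\tau'}$ by the continuity in Proposition~\ref{prop:equilibrium} and Lemma~\ref{lem:E_modulus}. Consequently $\ell(\mathcal C)\to r_\omega(\tau')$, and $\tau(\mathcal C)\to\tau'>\tau$ by (ii) of that proposition. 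The strict inequality $r_\omega > \kappa_\Lambda-\kappa_\sigma$ of Proposition~\ref{prop:tail_properties} secures the admissibility condition. Letting $\tau'\downarrow\tau$ and using the Lipschitz bound gives $\sup\ell \ge r_\omega(\tau)$.

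For the upper family, I construct interpolation configurations from $\lambda_\tau$ itself. The node arcs are the layer, with $q=1$ and all residues, plus the centre, where the density $\le 1/\omega$ is approximated from below by rational fractions $|R_k|/(q_k\omega)$. The virtual arcs are the extension support. The mass is completed to exactly one by small adjustments. On these supports, Proposition~\ref{prop:equilibrium}(ii) gives $E\approx E^\ast_\tau-\tau$ on the centre (node minimum) and $E\approx E^\ast_\tau$ on the extension. On the layer, (i) gives $E \ge E_\tau^\ast-\tau$, so $E_1\gtrsim E^\ast-\tau$ and $E_2\approx E^\ast$, while $E_I = E_\tau(\theta_0)$. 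Hence $A_1-\tau\lesssim r_\omega(\tau)$ and $A_2\approx r_\omega(\tau)$, so $\Phi(\mathcal S,\tau)\to r_\omega(\tau)$.

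The main obstacle is this last step. The minima $E_1,E_2$ are taken over the arcs of a discretized measure, where the envelope of the approximation must be controlled uniformly up to the arc endpoints. There the density jumps between $1/\omega$ and the free density, and the logarithmic singularity at the arcs makes perturbations delicate. I would first try to handle this with a uniform modulus of continuity of envelopes of densities bounded by $1/\omega$ (Lemma~\ref{lem:E_modulus}).
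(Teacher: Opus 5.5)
Your proposal is correct, but for the two easy inequalities it takes a different route from the paper. The paper does not return to the frame.

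\begin{itemize}
\item A configuration $\mathcal C$ with $\tau(\mathcal C)\ge\tau$ is turned into an admissible triple of the extremal problem \eqref{eq:reduced_problem}: $b=-E_{\max}$, $F$ the image of the centre, $\nu$ the configuration measure. Proposition~\ref{prop:extremal} then gives $\ell(\mathcal C)\le m_\omega(\tau)=r_\omega(\tau)$.
\item For an interpolation configuration $\mathcal S$ with measure $\sigma$ and any admissible triple $(F,b,\nu)$, the minimum principle (Lemma~\ref{lem:minimum}) applied to $U^\nu-U^\sigma$ gives $b-U^\nu\le\Phi(\mathcal S,\tau)$ on $[-c_0,c_0]$. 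Hence $r_\omega(\tau)\le\Phi(\mathcal S,\tau)$.
\end{itemize}

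You instead sandwich the frame quantity $\max\{\gamma_{t_N},t_N\Gamma_{t_N}\}$. The lower side is the splitting argument of Corollary~\ref{cor:either_or} applied to the vector of Theorem~\ref{thm:tail_lower}, the upper side is \eqref{eq:dual_rate}, and both inequalities are read off the limit in Theorem~\ref{thm:tail_law}. This is not circular, since the proof of Theorem~\ref{thm:tail_law} does not use the corollary.

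What each route buys:
\begin{itemize}
\item Your route is very short once Theorem~\ref{thm:tail_law} is available. It also explains why the supremum is restricted to $\ell(\mathcal C)>-(\kappa_\sigma-\kappa_\Lambda)$: that is the hypothesis of Theorem~\ref{thm:tail_lower}.
\item The paper's route stays inside the equilibrium problem. It needs no Poisson frame and no $N$, and it proves $\ell(\mathcal C)\le r_\omega(\tau)$ without that restriction. This is why the paper can present sharpness as a fact of potential theory alone.
\end{itemize}

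The two reverse inequalities are the same in both routes: discretize $\lambda_{\tau+\eps}$, respectively $\lambda_\tau$. That is exactly Lemma~\ref{lem:discretize}, so you can cite it rather than rebuild it.

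Three smaller points.
\begin{itemize}
\item The obstacle you flag, uniform control of envelopes up to arc endpoints, is removed by Lemma~\ref{lem:env}(c). It bounds the envelope of the cell-averaged density uniformly on all of $[0,\pi]$ by $8M\Omega(\eta)$, with $M$ the density bound; near-support estimates follow from (a).
\item Lemma~\ref{lem:E_modulus} is the wrong tool for this step. Its constant $M(\mathcal C)$ sums the densities over all arcs, so it blows up as the partition is refined.
\item In (a), the step to a ``Rayleigh-type ratio'' does not follow, because the $s=0$ estimate is a lower bound on $\norm{\Phi\mat D\vect w_N}_{L^2(I)}$. The shifted threshold $t'$ and the Lipschitz step are also unnecessary. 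Since $e^{-N\tau(\mathcal C)}\le t_N$, you can split at $t_N$ directly and get $\max\{\gamma_{t_N},t_N\Gamma_{t_N}\}\ge\tfrac12e^{N\ell-o(N)}$.
\end{itemize}
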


The configurations and the interpolation configurations that nearly attain the supremum and the infimum are discretizations of the same measure $\lambda_\tau$: its restriction to the layer gives the zeros of the configuration at the nodes and the rest of it the free zeros, while its restriction to the physical arc gives the nodes of the interpolation set and its restriction to the extension the virtual points. Propositions~\ref{prop:equilibrium} and \ref{prop:tail_properties}, Theorems~\ref{thm:tail_law}, \ref{thm:tail_duality}, \ref{thm:initial_slope}, \ref{thm:tail_lower} and \ref{thm:dual} and Corollaries~\ref{cor:lebesgue_rate}, \ref{cor:self_dual}, \ref{cor:critical}, \ref{cor:layer} and \ref{cor:sharpness} are proved in Section~\ref{sec:tail}, with technical lemmas in Appendices~\ref{app:tail} and \ref{app:start}, and Proposition~\ref{prop:critical} in Section~\ref{sec:potential}. Section~\ref{sec:tail} also states the non-asymptotic forms of the two bounds (Corollaries~\ref{cor:either_or}--\ref{cor:certified}), on which the certified results of Section~\ref{sec:num_certified} rest.

\subsubsection{Dependence Between the Results}
\label{sec:res_dependence}

Every theorem, proposition, lemma and corollary of Sections~\ref{sec:setting}--\ref{sec:multi} and of the appendices is proved without recourse to computation; the one statement that rests on a computation, the certified evaluation of explicit functions described in Appendix~\ref{app:tail}, is Corollary~\ref{cor:certified_instances} of Section~\ref{sec:num_certified}. Computed quantities are reported in Section~\ref{sec:numerics}. The main ingredients of the results are the following.
\begin{center}
\small
\setlength{\tabcolsep}{3pt}
\begin{tabular}{>{\raggedright\arraybackslash}p{0.38\textwidth}>{\raggedright\arraybackslash}p{0.56\textwidth}}
Result & Main ingredients \\ \hline
Theorem~\ref{thm:representation}, Corollaries~\ref{cor:error} and \ref{cor:greens} & Lemma~\ref{lem:dirichlet_1d}, Assumption~\ref{ass:unisolvence} \\
Proposition~\ref{prop:impossibility} & \cite[Thm.~3.1]{platte2011impossibility} \\
Proposition~\ref{prop:unisolvence_poisson} & Lemmas~\ref{lem:unisolvence} and \ref{lem:correction}, Proposition~\ref{prop:lagrange_norms} \\
Theorem~\ref{thm:rates}, Corollaries~\ref{cor:exponent} and \ref{thm:poisson_illcond} & Lemmas~\ref{lem:chebyshev_variable} and \ref{lem:clausen}, Appendix~\ref{app:rates} \\
Propositions~\ref{prop:duality} and \ref{prop:critical} & the closed form \eqref{eq:exponent}, Lemma~\ref{lem:cl2}; Proposition~\ref{prop:duality} for Proposition~\ref{prop:critical} \\
Theorem~\ref{thm:two_constant}, Corollary~\ref{cor:regimes_two_constant} & linear algebra; Lemma~\ref{lem:tsvd} for (ii) \\
Proposition~\ref{prop:apriori} & Appendix~\ref{app:apriori} \\
Proposition~\ref{prop:classical} & Assumption~\ref{ass:unisolvence}, Lemma~\ref{lem:dirichlet_box}(iii); for (ii)--(iv), Lemma~\ref{lem:correction} and Proposition~\ref{prop:lagrange_norms} \\
Theorem~\ref{thm:competitor}, Corollary~\ref{cor:unconditional} & the Chebyshev coefficient bound of \cite[Thm.~8.1]{trefethen2019approximation}, Lemmas~\ref{lem:cheb_ellipse} and \ref{lem:coefficient_norm}; Corollary~\ref{cor:regimes_two_constant}(iii) and Proposition~\ref{prop:apriori} \\
Theorem~\ref{thm:tail_lower} & Proposition~\ref{prop:unisolvence_poisson}, Lemmas~\ref{lem:unisolvence}, \ref{lem:construction} and \ref{lem:correction}, Appendices~\ref{app:rates}--\ref{app:tail} \\
Theorem~\ref{thm:dual} & Lemmas~\ref{lem:chebyshev_variable}, \ref{lem:dirichlet_box}(iii), \ref{lem:test_norm} and \ref{lem:lattice_lagrange} \\
Proposition~\ref{prop:equilibrium} & Lemmas~\ref{lem:cl2}, \ref{lem:clausen}, \ref{lem:cheb}, \ref{lem:minimum} and \ref{lem:env} \\
Proposition~\ref{prop:tail_properties}, Corollary~\ref{cor:sharpness} & Propositions~\ref{prop:equilibrium} and \ref{prop:extremal}, Lemmas~\ref{lem:clausen}, \ref{lem:minimum} and \ref{lem:env}; Lemma~\ref{lem:discretize} for Corollary~\ref{cor:sharpness} \\
Theorem~\ref{thm:tail_law} & Propositions~\ref{prop:equilibrium} and \ref{prop:tail_properties}, Theorems~\ref{thm:tail_lower} and \ref{thm:dual}, Lemmas~\ref{lem:monotone} and \ref{lem:discretize} \\
Corollary~\ref{cor:lebesgue_rate} & Theorem~\ref{thm:tail_law}, Propositions~\ref{prop:unisolvence_poisson}, \ref{prop:classical} and \ref{prop:tail_properties} \\
Proposition~\ref{prop:extremal} & Proposition~\ref{prop:equilibrium}, Lemma~\ref{lem:minimum} \\
Theorem~\ref{thm:tail_duality}, Corollary~\ref{cor:self_dual} & Proposition~\ref{prop:equilibrium}, Lemmas~\ref{lem:cheb} and \ref{lem:cl2}(iv), the closed form of $\kappa_\sigma$ in Theorem~\ref{thm:rates}; Theorem~\ref{thm:tail_law} for the statement on $\gamma_t$ and $\Gamma_t$ \\
Theorem~\ref{thm:initial_slope}, Corollaries~\ref{cor:critical} and \ref{cor:layer} & Propositions~\ref{prop:extremal} and \ref{prop:critical}, Theorem~\ref{thm:tail_duality}, Lemmas~\ref{lem:inner}, \ref{lem:shape} and \ref{lem:env}; Proposition~\ref{prop:tail_properties} for Corollary~\ref{cor:layer}, and Theorem~\ref{thm:tail_law}, Proposition~\ref{prop:unisolvence_poisson} and Lemma~\ref{lem:two_forms} for Corollary~\ref{cor:critical}(i) \\
Theorem~\ref{thm:regularized}, Corollary~\ref{cor:regimes} & Lemma~\ref{lem:tsvd} \\
Corollary~\ref{cor:certified_instances} & Corollaries~\ref{cor:no_power_law} and \ref{cor:certified}; certified evaluation (Appendix~\ref{app:tail}) \\
\end{tabular}
\end{center}

\section{Exact Structure of the Interpolatory Method: Proofs of Theorem~\ref{thm:representation} and Proposition~\ref{prop:impossibility}}
\label{sec:structure}

Throughout this section $\Lop u = -u'' + k(x)u'$ with $k \in C(\bar I)$; the operators \eqref{eq:phi_poisson}--\eqref{eq:phi_cd} are the cases $k \equiv 0$ and $k$ constant. All statements hold in exact arithmetic. The facts about the continuous Dirichlet problem that we use, the maximum principle, the Green's function \eqref{eq:green_general} and the inverse norms, are Lemma~\ref{lem:dirichlet_1d} of Appendix~\ref{app:standard}.

\begin{lemma}[Dimensions]
\label{lem:dimensions}
For $N \ge 3$, $\dim \Phi_N = \dim \Psi_N = N-2$, $\Lop: \Phi_N \to \Psi_N$ is a bijection, and Assumption~\ref{ass:unisolvence} holds if and only if the matrix $\mat A$ of \eqref{eq:square_system} is nonsingular.
\end{lemma}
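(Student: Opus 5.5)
I would first show $\dim\Phi_N = N-2$. The boundary functionals $v\mapsto v(0)$ and $v\mapsto v(L)$ are two linear functionals on the $N$-dimensional space $V_N$, so $\dim\Phi_N\ge N-2$. Equality holds exactly when the functionals are linearly independent on $V_N$. I would check this already on $\operatorname{span}\{w_1,w_2\}$ by computing the $2\times 2$ determinant
\[
\det\begin{pmatrix} w_1(0) & w_2(0)\\ w_1(L) & w_2(L)\end{pmatrix}.
\]
Up to the factor $2/\tilde L$, the entries are $s_1=\sin\theta_0$, $s_2=\sin 2\theta_0$ at $x=0$. At $x=L$ the angle is $\pi-\theta_0$, which gives $\sin(\pi-\theta_0)=s_1$ and $\sin(2\pi-2\theta_0)=-s_2$. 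The determinant is therefore $-2s_1s_2$. Since $0<\theta_0<\pi/2$, both $s_1$ and $s_2$ are nonzero, so the determinant does not vanish. This is the independence that the explicit basis $\varphi_j$ relies on.

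Next I would show that $\Lop:\Phi_N\to\Psi_N$ is a bijection. Surjectivity holds by the definition of $\Psi_N$. For injectivity, take $v\in\Phi_N$ with $\Lop v=0$. Then $v$ is a smooth solution of the homogeneous Dirichlet problem $-v''+kv'=0$ on $I$ with $v(0)=v(L)=0$. By uniqueness for the Dirichlet problem, from the maximum principle or the solution operator of Lemma~\ref{lem:dirichlet_1d}, we get $v=0$ on $\bar I$. Each $w_j$ is real-analytic, so $v$ is real-analytic on $\tilde I$ and vanishes on an interval; hence $v$ vanishes as an element of $V_N$. Consequently $\dim\Psi_N=N-2$.

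Finally I would prove the equivalence with the nonsingularity of $\mat A$. Suppose $\mat A\vect c=0$ and set $v=\sum_j c_jw_j$. The boundary rows give $v\in\Phi_N$, and the interior rows give $(\Lop v)(x_m)=0$ for $2\le m\le N-1$. If interpolation at the interior nodes is unisolvent in $\Psi_N$, then $\Lop v=0$, so $v=0$ by the bijection, and $\vect c=0$ because the $w_j$ are linearly independent. Conversely, suppose interpolation is not unisolvent. The space $\Psi_N$ has dimension $N-2$, equal to the number of interior nodes, so failure of unisolvence means there is a nonzero $\psi\in\Psi_N$ vanishing at all interior nodes. Write $\psi=\Lop v$ with $v\in\Phi_N$, $v\ne0$. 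Its coefficient vector is then a nonzero element of the kernel of $\mat A$.

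The only delicate point is injectivity in the second step. It needs the maximum principle and uniqueness for general continuous $k$, which is supplied by Lemma~\ref{lem:dirichlet_1d}, together with the passage from vanishing on $I$ to vanishing on $\tilde I$ via analyticity or linear independence of the restrictions. The remaining steps are elementary.
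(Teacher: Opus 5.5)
Your proof is correct and follows the paper's argument step by step: the $2\times 2$ determinant shows the two boundary functionals are independent on $\operatorname{span}\{w_1,w_2\}$, the maximum principle of Lemma~\ref{lem:dirichlet_1d}(i) gives injectivity of $\Lop$ on $\Phi_N$, and the kernel argument gives the equivalence with the nonsingularity of $\mat A$. The differences are cosmetic. You pass from $v=0$ on $\bar I$ to $v=0$ in $V_N$ by real-analyticity of the $w_j$, where the paper uses the polynomial factor of Lemma~\ref{lem:chebyshev_variable}, and you prove the converse by contraposition. Also, the matrix entries carry the factor $\sqrt{2/\tilde L}$; only the determinant carries $2/\tilde L$.
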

\begin{proof}
With $s_j = \sin(j\theta_0)$ as in Section~\ref{sec:constants}, one has $w_j(0) = \sqrt{2/\tilde L}\,s_j$ and $w_j(L) = \sqrt{2/\tilde L}\sin(j\pi - j\pi\delta/\tilde L) = (-1)^{j+1}\sqrt{2/\tilde L}\,s_j$. The matrix of the two boundary functionals $v \mapsto v(0)$, $v \mapsto v(L)$ restricted to $\operatorname{span}\{w_1,w_2\}$ is therefore $\sqrt{2/\tilde L}\bigl(\begin{smallmatrix} s_1 & s_2 \\ s_1 & -s_2\end{smallmatrix}\bigr)$, with determinant $-4s_1s_2/\tilde L \ne 0$ because $0 < \pi\delta/\tilde L < 2\pi\delta/\tilde L < \pi$. Hence the two functionals are linearly independent on $V_N$ and $\dim\Phi_N = N-2$. If $v \in \Phi_N$ and $\Lop v = 0$ then $v = 0$ on $\bar I$ by Lemma~\ref{lem:dirichlet_1d}(i), hence $v = 0$ in $V_N$, since its polynomial factor in Lemma~\ref{lem:chebyshev_variable} vanishes on $[-c_0,c_0]$; so $\Lop$ is injective on $\Phi_N$, and $\dim\Psi_N = \dim\Phi_N$. Finally, $\mat A\vect c = 0$ means that $v := \sum_jc_jw_j$ lies in $\Phi_N$ and that $\Lop v \in \Psi_N$ vanishes at the interior nodes. Under Assumption~\ref{ass:unisolvence} this forces $\Lop v = 0$, hence $v = 0$ and $\vect c = 0$. Conversely, if $\mat A$ is nonsingular and $\Lop v$, $v \in \Phi_N$, vanishes at the interior nodes, then the coefficient vector of $v$ lies in the kernel of $\mat A$, so $v = 0$; interpolation at the $N-2$ interior nodes is therefore injective on the $(N-2)$-dimensional space $\Psi_N$, hence unisolvent.
\end{proof}

\begin{lemma}[Unisolvence Criterion for the Poisson Operator]
\label{lem:unisolvence}
Let $\Lop = \Lop_P = -d^2/dx^2$, let $\tilde\Lop_P^{-1}: V_N \to V_N$ be its inverse on $V_N$, $\tilde\Lop_P^{-1}w_j = \lambda_j^{-1}w_j$, and let $\tilde G(x,y) = (x+\delta)(L+\delta-y)/\tilde L$ for $x \le y$, $\tilde G(x,y) = \tilde G(y,x)$ for $x > y$, be the Green's function of $\Lop_P$ with Dirichlet conditions on $\tilde I$, so that $(\tilde\Lop_P^{-1}\psi)(x) = \int_{\tilde I}\tilde G(x,y)\psi(y)\,dy$ for $\psi \in V_N$. In the notation of Section~\ref{sec:variable} ($\theta = \pi(x+\delta)/\tilde L$, $z = \cos\theta$, $z_m = \cos\theta_m$) put $w_N^\circ(z) := \prod_{m=2}^{N-1}(z - z_m)$, $\psi_0 := \sin\theta\; w_N^\circ(z)$ and $\psi_1 := \sin\theta\; z\,w_N^\circ(z)$. Then Assumption~\ref{ass:unisolvence} holds if and only if
\begin{equation}\label{eq:unisolvence_criterion}
\int_{-\delta}^{L+\delta}\tilde G(0,y)\,\psi_0(y)\,dy \ne 0 \quad\text{and}\quad \int_{-\delta}^{L+\delta}\tilde G(0,y)\,\psi_1(y)\,dy \ne 0 .
\end{equation}
\end{lemma}
\begin{proof}
Let $K_N := \{\psi \in V_N : \psi(x_m) = 0,\ 2 \le m \le N-1\}$. By Lemma~\ref{lem:chebyshev_variable}, $\psi \in V_N$ has the form $\sqrt{2/\tilde L}\sin\theta\,q(z)$ with $q \in \mathbb P_{N-1}$, and $\psi \in K_N$ if and only if $q$ vanishes at the $N-2$ distinct points $z_2, \dots, z_{N-1}$ (as $\sin\theta_m \ne 0$), i.e.\ $q = w_N^\circ\cdot(\alpha z + \beta)$; hence $K_N = \operatorname{span}\{\psi_0, \psi_1\}$. By Lemma~\ref{lem:dimensions}, $\dim\Psi_N = N-2$, so Assumption~\ref{ass:unisolvence} is equivalent to $\Psi_N \cap K_N = \{0\}$.

Let $(Rv)(x) := v(L-x)$. In the variable $\theta$ this is $\theta \mapsto \pi-\theta$, so $z \mapsto -z$, and $Rw_j = (-1)^{j+1}w_j$: $R$ maps $V_N$ and $\Phi_N$ onto themselves, commutes with $\Lop_P$ and $\tilde\Lop_P^{-1}$ on $V_N$, and hence maps $\Psi_N = \Lop_P(\Phi_N)$ onto itself; since $x_{N+1-m} = L - x_m$, it also maps $K_N$ onto itself. As $w_N^\circ(-z) = (-1)^N w_N^\circ(z)$, one has $R\psi_0 = (-1)^N\psi_0$ and $R\psi_1 = (-1)^{N+1}\psi_1$. If $\Psi_N \cap K_N$ contains $\psi = \alpha\psi_0 + \beta\psi_1 \ne 0$, then, being $R$-invariant, it contains $\tfrac12(\psi \pm R\psi)$, i.e.\ $\alpha\psi_0$ and $\beta\psi_1$, so it contains $\psi_0$ or $\psi_1$; the converse is trivial. Thus Assumption~\ref{ass:unisolvence} fails if and only if $\psi_0 \in \Psi_N$ or $\psi_1 \in \Psi_N$.

For $\psi \in V_N$ put $v := \tilde\Lop_P^{-1}\psi \in V_N$; then $\psi \in \Psi_N$ if and only if $v \in \Phi_N$, i.e.\ $v(0) = v(L) = 0$, because $\Lop_P: V_N \to V_N$ is a bijection. If $R\psi = \pm\psi$ then $Rv = \pm v$ and $v(L) = \pm v(0)$, so $\psi \in \Psi_N$ if and only if $v(0) = 0$. Finally $v(0) = \int_{\tilde I}\tilde G(0,y)\psi(y)\,dy$, because $\tilde G$ inverts $-d^2/dx^2$ with Dirichlet conditions on $\tilde I$ and the $w_j$ are its eigenfunctions with eigenvalues $\lambda_j$. Applying this to $\psi_0$ and $\psi_1$ gives \eqref{eq:unisolvence_criterion}.
\end{proof}

The two integrals in \eqref{eq:unisolvence_criterion} are real-analytic functions of $\delta \in (0,\infty)$ for fixed $N$ and $L$, so for the Poisson operator and fixed $N$ the assumption can fail at most on a discrete set of extension parameters unless one of the integrals vanishes identically, which we have not excluded; for fixed $\delta$, on the other hand, it holds for all sufficiently large $N$ (Proposition~\ref{prop:unisolvence_poisson}). For the convection--diffusion operator no such reduction is available, and the assumption is the nonsingularity of $\mat A$; in all one-dimensional multiple-precision computations of Section~\ref{sec:numerics} the matrix was found nonsingular.

\subsection{The Representation Theorem}

\begin{proof}[Proof of Theorem~\ref{thm:representation}]
The boundary rows give $u_N(0) = u_N(L) = 0$, so $u_N \in \Phi_N$ and $\Lop u_N \in \Psi_N$. The interior rows give $(\Lop u_N)(x_m) = f(x_m)$, $2 \le m \le N-1$; by unisolvence $\Lop u_N = \Pop_N f$. Since $u_N \in C^2(\bar I)$ satisfies $\Lop u_N = \Pop_N f$ on $I$ and $u_N(0) = u_N(L) = 0$, the uniqueness in Lemma~\ref{lem:dirichlet_1d}(ii) gives $u_N = \Lop^{-1}\Pop_N f$.
\end{proof}

The argument uses only that $u_N \in \Phi_N$ and that $\Lop u_N$ interpolates the interior data; it therefore applies verbatim to any vector of interior nodal data $\vect g \in \mathbb R^{N-2}$ in place of $(f(x_m))_m$, with $\Pop_N\vect g$ denoting the interpolant in $\Psi_N$ of these values.

Inhomogeneous data are reduced to this case by subtracting any $v_g \in V_N$ with $v_g(0) = g_0$, $v_g(L) = g_L$, which exists by the proof of Lemma~\ref{lem:dimensions}: $u_N = v_g + \Lop^{-1}\Pop_N(f - \Lop v_g)$.

\begin{proof}[Proof of Corollary~\ref{cor:error}]
The identity follows from \eqref{eq:representation}. For every $\psi \in \Psi_N$, $\Pop_N\psi = \psi$, so $f - \Pop_N f = (f - \psi) - \Pop_N(f - \psi)$ and $\norm{f - \Pop_N f}_{L^\infty(I)} \le (1 + \Lambda_N^\Psi)\norm{f - \psi}_{L^\infty(I)}$; take the infimum over $\psi$ and apply $\norm{\Lop^{-1}}_\infty$ (Lebesgue's lemma, cf.\ \cite{cheney1966}).
\end{proof}

\subsection{What the Operator Contributes}

The explicit Green's functions of the two operators and their inverse norms are recorded in Proposition~\ref{prop:continuous_greens} of Appendix~\ref{app:standard}: $G_P$ is a hat function, and for $k > 0$ the kernel $G_{CD}$ decays exponentially upstream and plateaus at the level $1/k$ downstream. Whether any of this reaches the discrete method is decided by the test space, whose structure is elementary.

\begin{lemma}[Modal Phase Structure of the Test Space]
\label{lem:phase}
For $v = \sum_{j=1}^N c_j w_j \in V_N$ and constant $k \ge 0$,
\begin{equation}\label{eq:phase}
\Lop_{CD} v = \sqrt{\tfrac{2}{\tilde L}} \sum_{j=1}^N c_j\, \mu_j \sqrt{\mu_j^2 + k^2}\; \sin\bigl(\mu_j(x+\delta) + \vartheta_j\bigr), \qquad \vartheta_j = \arctan\frac{k}{\mu_j} \in \Bigl[0, \frac{k \tilde L}{j\pi}\Bigr].
\end{equation}
\end{lemma}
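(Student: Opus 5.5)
The plan is a direct computation, term by term in the modal expansion, followed by combining sine and cosine into a single phase-shifted sine.

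First I will differentiate each mode. By \eqref{eq:basis_func}, $w_j = \sqrt{2/\tilde L}\,\sin(\mu_j(x+\delta))$. Hence
\[
w_j' = \sqrt{2/\tilde L}\,\mu_j\cos(\mu_j(x+\delta)),
\]
and \eqref{eq:phi_cd} gives
\[
\Lop_{CD}w_j = \sqrt{2/\tilde L}\,\bigl(\mu_j^2\sin(\mu_j(x+\delta)) + k\mu_j\cos(\mu_j(x+\delta))\bigr).
\]
I will factor out $\mu_j$ and write the bracket $\mu_j\sin\alpha + k\cos\alpha$, with $\alpha = \mu_j(x+\delta)$, in amplitude–phase form. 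Since $\mu_j > 0$ and $k \ge 0$, the point $(\mu_j,k)$ lies in the closed first quadrant. So there is $\vartheta_j = \arctan(k/\mu_j) \in [0,\pi/2)$ with
\[
\cos\vartheta_j = \mu_j/\sqrt{\mu_j^2+k^2}, \qquad \sin\vartheta_j = k/\sqrt{\mu_j^2+k^2}.
\]
The addition formula $\sin(\alpha+\vartheta_j) = \sin\alpha\cos\vartheta_j + \cos\alpha\sin\vartheta_j$ then yields
\[
\mu_j\sin\alpha + k\cos\alpha = \sqrt{\mu_j^2+k^2}\,\sin(\alpha+\vartheta_j).
\]
Linearity of $\Lop_{CD}$ and summation over $j$ with the coefficients $c_j$ then give \eqref{eq:phase}.

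For the bound on the phase, I will use $0 \le \arctan s \le s$ for $s \ge 0$, which follows from $\arctan' = 1/(1+s^2) \le 1$ and $\arctan 0 = 0$. With $s = k/\mu_j = k\tilde L/(j\pi)$, this gives $\vartheta_j \in [0, k\tilde L/(j\pi)]$.

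I do not expect a real obstacle. The only point needing care is the choice of branch: the phase must be the one with $\cos\vartheta_j > 0$, which holds because $\mu_j > 0$, so that the principal arctangent is the right choice and no sign flip of the amplitude is needed.
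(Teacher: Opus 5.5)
Your proposal is correct and follows essentially the same route as the paper: differentiate each mode, rewrite $\lambda_j\sin\phi + k\mu_j\cos\phi$ as $\mu_j\sqrt{\mu_j^2+k^2}\,\sin(\phi+\vartheta_j)$ with $\tan\vartheta_j = k/\mu_j$, and bound the phase by $\arctan s \le s$. Your remark on the branch, that $\cos\vartheta_j > 0$ because $\mu_j > 0$, is a small detail the paper leaves implicit.
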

\begin{proof}
$\lambda_j \sin\phi + k\mu_j\cos\phi = \mu_j\sqrt{\mu_j^2+k^2}\,\sin(\phi+\vartheta_j)$ with $\tan\vartheta_j = k/\mu_j$, and $\arctan t \le t$.
\end{proof}

\begin{remark}[Modal P\'eclet Number and Operator Independence]
\label{rem:peclet}
The ratio $k/\mu_j$ in \eqref{eq:phase} is a modal P\'eclet number. Convection shifts the phases of all modes of the test space $\Psi_N(k) := \Lop_{CD}(\Phi_N)$ relative to those of $\Psi_N(0) = \Lop_P(\Phi_N)$, by $\vartheta_j \le k\tilde L/(j\pi)$, substantially only those of the convection-dominated modes $j \lesssim k\tilde L/\pi$. The ill-conditioning of the frame is a property of its high modes: heuristically, the modes above the transition index $NL/\tilde L$ are those whose restrictions to $I$ are nearly linearly dependent (Section~\ref{sec:potential}). For $N \gg k\tilde L/\pi$ these modes are the same in $\Psi_N(k)$ and $\Psi_N(0)$ up to phase shifts $O(k\tilde L/N)$, so the interpolation operators of the two test spaces can be expected to have the same ill-conditioning and, by \eqref{eq:GN_entries}, the discrete Green's functions the same structure; the exact computations of Section~\ref{sec:num_1d} find nearly the same exponential rate of $\Lambda_N^\Psi$ for $k \le 50$. For $N \lesssim k\tilde L/\pi$ convection reshapes the whole test space; in the experiments of Section~\ref{sec:num_solvers} with $k$ up to $10^5$ the convergence rate is nevertheless unchanged, and only the floor of the unnormalized system rises, with the norm of the matrix. The physical upstream decay of $G_{CD}$ is a low-mode property, and the interpolation error $f - \Pop_N f$, the only quantity on which $\Lop^{-1}$ acts by Corollary~\ref{cor:error}, is concentrated in the high modes, on which $\Lop_{CD}^{-1}$ and $\Lop_P^{-1}$ act alike: the moduli of their symbols $1/(\mu_j^2 + ik\mu_j)$ and $1/\mu_j^2$ agree to relative accuracy $(k/\mu_j)^2/2$, and their phases differ by $\vartheta_j$.
\end{remark}

\begin{remark}[Variable Coefficients]
\label{rem:variable_k}
Heuristically, for variable $k$ the high modes of $\Psi_N$ are perturbations of those of $\Psi_N(0)$ of relative size $O(\norm{k}_\infty\tilde L/(j\pi))$, so that the conclusion of Remark~\ref{rem:peclet} can be expected to persist; the variable-coefficient experiment of Section~\ref{sec:num_2d} agrees with this.
\end{remark}
\subsection{Exact Arithmetic: Impossibility, Amplification of Data Errors, and the Discrete Green's Function}
\label{sec:exact_arith}
Corollary~\ref{cor:error} places the interpolatory method inside the framework of \cite{platte2011impossibility}: the collocation error is the interpolation error of $f$ in $\Psi_N$ at equispaced nodes, smoothed by a bounded operator.

\begin{proof}[Proof of Proposition~\ref{prop:impossibility}]
Put $\Pop_N := 0$ for $2 \le N < N_1$. For $N \ge N_1$, $\Pop_N f$ depends only on the values of $f$ at the interior nodes, hence only on its values on the equispaced $N$-point grid $x_1, \dots, x_N$ of $\bar I$; therefore $\{\Pop_N\}_{N\ge2}$ is an approximation procedure based on equispaced grids in the sense of \cite[Sec.~1]{platte2011impossibility}, transferred from $[-1,1]$ to $\bar I$ by the affine map. For $N \ge N_1$, since $\Pop_N$ is linear and its Lagrange functions are real, its condition number \cite[Eq.~(1.1)]{platte2011impossibility}, the supremum over complex nodal data of the ratio between the maximum norm of the output on $\bar I$ and the maximum norm of the data, is $\max_{x\in\bar I}\sum_m|\ell_m^\Psi(x)| = \Lambda_N^\Psi$. Let $E'$ be a compact set with $E \subset \operatorname{int}E'$. Every function continuous on $E'$ and analytic in its interior is analytic in a neighbourhood of $E$ and satisfies $\norm{f}_{L^\infty(E)} \le \norm{f}_{L^\infty(E')}$, so the hypothesis, together with $\norm{f}_{L^\infty(I)} \le \norm{f}_{L^\infty(E')}$ for $N < N_1$, gives $\norm{f - \Pop_N f}_{L^\infty(I)} \le C'\sigma^{-N}\norm{f}_{L^\infty(E')}$ for all such $f$ and all $N \ge 2$, with $C' := \max\{C, \sigma^{N_1}\}$; this is hypothesis (3.1) of \cite[Thm.~3.1]{platte2011impossibility} for the set $E'$. That theorem yields $\beta > 1$ and $N_0 \ge N_1$ with $\Lambda_N^\Psi \ge \beta^N$ for all $N \ge N_0$. Since $\Lambda_N^\Psi \ge 1$ for every $N \ge N_1$ (each Lagrange function equals one at its node), the claim holds for all $N \ge N_1$ with $c := \min\{1, \min_{N_1 \le N < N_0}\Lambda_N^\Psi\beta^{-N}\}$.
\end{proof}

Proposition~\ref{prop:impossibility} requires exponential growth of $\Lambda_N^\Psi$ whenever interpolation in $\Psi_N$ converges geometrically for all data analytic in a fixed region. For the Poisson operator $\Lambda_N^\Psi$ grows with the exponential rate $\kappa_\Lambda$ (Corollary~\ref{cor:lebesgue_rate}, proved in Section~\ref{sec:tail}); for convection--diffusion no lower bound is proved, and the computations of Section~\ref{sec:num_1d} show growth at a rate close to $\kappa_\Lambda$. The consequences for the method in exact arithmetic are the following, and they are verified in multiple-precision arithmetic in Section~\ref{sec:numerics}.

\begin{remark}[Amplification of Data Errors and Divergence Beyond a Threshold]
\label{rem:divergence}
The exact solution of the collocation system depends on $f$ only through its nodal values, which a computation knows only to finite precision. Let Assumption~\ref{ass:unisolvence} hold, and let $\tilde u_N$ be the exact solution of \eqref{eq:square_system} with $g_0 = g_L = 0$ and interior data $f(x_m) + e_m$, $2 \le m \le N-1$. Theorem~\ref{thm:representation}, applied to these nodal data as noted after its proof, gives $\tilde u_N = u_N + \Lop^{-1}\sum_{m=2}^{N-1}e_m\ell_m^\Psi$, and Corollary~\ref{cor:error} gives
\begin{equation}\label{eq:perturbed_data}
\norm{u - \tilde u_N}_{L^\infty(I)} \le \norm{\Lop^{-1}}_\infty\Bigl[\bigl(1 + \Lambda_N^\Psi\bigr)\dist_\infty(f,\Psi_N) + \Lambda_N^\Psi\max_{m}|e_m|\Bigr].
\end{equation}
The first term bounds the error for exact data and the second the effect of the data errors. In $L^2(I)$ the amplification factor is exactly the constant $\Gamma_0$ of Definition~\ref{def:two_constants} with $M = n$: for $\vect e := (e_m)_{m=2}^{N-1} \in \mathbb R^n$, $\Lop^{-1}\sum_m e_m\ell_m^\Psi = \Phi\mat D\mat B_D^{-1}\vect e$, so that $\sup_{\vect e\ne0}\norm{\Lop^{-1}\sum_m e_m\ell_m^\Psi}_{L^2(I)}/\norm{\vect e}_2 = \Gamma_0$, and for the Poisson operator $\Gamma_0$ grows with the exponential rate $\kappa_\Lambda$ (Corollary~\ref{cor:lebesgue_rate}). For data rounded to a fixed precision the second term eventually dominates the bound, and beyond a threshold in $N$ the exact solution is observed to diverge (Section~\ref{sec:num_1d}).
\end{remark}

The same representation gives the discrete Green's function of the method.

\begin{corollary}[Discrete Green's Function]
\label{cor:greens}
Let Assumption~\ref{ass:unisolvence} hold, and let $\mat{G}_N := \mat{W}_N \mat{A}^{-1}$, with the synthesis matrix $\mat W_N$ of Section~\ref{sec:res_rates}, so that $\vect u = \mat G_N \vect f$ maps nodal data to nodal values of $u_N$. For $1 \le i \le N$ and $2 \le m \le N-1$,
\begin{equation}\label{eq:GN_entries}
\begin{aligned}
(\mat{G}_N)_{im} &= \int_0^L G(x_i, y)\, \ell_m^\Psi(y)\, dy, \\
|(\mat{G}_N)_{im}| &\le \norm{G(x_i,\cdot)}_{L^\infty}\, \norm{\ell_m^\Psi}_{L^1(I)}, \qquad
\sum_{m} |(\mat{G}_N)_{im}| \le \norm{\Lop^{-1}}_\infty\, \Lambda_N^\Psi .
\end{aligned}
\end{equation}
\end{corollary}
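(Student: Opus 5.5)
The plan is to work from the representation of Theorem~\ref{thm:representation}, applied column by column. Fix a data vector $\vect f$ whose boundary entries vanish and whose interior entries are $\vect g = (g_m)_{m=2}^{N-1}$. Then $\vect c = \mat A^{-1}\vect f$ is the coefficient vector of the collocation solution $u_N$. By the remark following the proof of Theorem~\ref{thm:representation}, this solution is
\[
u_N = \Lop^{-1}\Pop_N\vect g = \sum_{m=2}^{N-1} g_m\,\Lop^{-1}\ell_m^\Psi .
\]
Since $\mat W_N\vect c$ is the vector of nodal values $(u_N(x_i))_{i=1}^N$, we get $(\mat G_N\vect f)_i = u_N(x_i)$. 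Choosing $\vect f = \vect e_m$ for an interior index $m$ picks out the $m$-th column. This gives
\[
(\mat G_N)_{im} = (\Lop^{-1}\ell_m^\Psi)(x_i) = \int_0^L G(x_i,y)\,\ell_m^\Psi(y)\,dy ,
\]
by the integral formula \eqref{eq:Linv}. The formula holds at every node, including the boundary nodes $i = 1$ and $i = N$. There both sides vanish, because $u_N \in \Phi_N$ and $G(0,\cdot) = G(L,\cdot) = 0$.

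For the entrywise bound I would apply Hölder's inequality to this integral. It gives $|(\mat G_N)_{im}| \le \sup_y|G(x_i,y)|\,\norm{\ell_m^\Psi}_{L^1(I)}$, where $G(x_i,\cdot)$ is bounded because the Green's function of Lemma~\ref{lem:dirichlet_1d} is continuous on $\bar I\times\bar I$.

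For the row sum I would use $G \ge 0$ to pull the absolute value inside:
\[
\sum_m|(\mat G_N)_{im}| \le \int_0^L G(x_i,y)\sum_m|\ell_m^\Psi(y)|\,dy \le \Lambda_N^\Psi\int_0^L G(x_i,y)\,dy \le \norm{\Lop^{-1}}_\infty\,\Lambda_N^\Psi .
\]
The last step uses the identity for $\norm{\Lop^{-1}}_\infty$ in \eqref{eq:Linv}. The sum runs over the interior columns only, which are the ones the statement concerns.

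No step here is a real obstacle; the argument is bookkeeping. The only point that needs care is checking that the column of $\mat A^{-1}$ for an interior index $m$ corresponds exactly to boundary data $g_0 = g_L = 0$ and the interior unit vector at $m$, so that the representation theorem applies with $\Pop_N$ acting on the data vector.
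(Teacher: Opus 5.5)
Your proposal is correct and follows the paper's proof: column $m$ of $\mat A^{-1}$ is the collocation solution for the interior unit datum, whose test-space interpolant is $\ell_m^\Psi$. Theorem~\ref{thm:representation}, together with $G \ge 0$ and $\sum_m|\ell_m^\Psi| \le \Lambda_N^\Psi$ on $\bar I$, then gives all three relations; your extra checks, the vanishing at the boundary nodes and the continuity of $G$ behind the supremum, fill in details the paper leaves implicit.
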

\begin{proof}[Proof of Corollary~\ref{cor:greens}]
Column $m$ of $\mat A^{-1}$ is the coefficient vector of the collocation solution for the data $\vect e_m$, whose test-space interpolant is $\ell_m^\Psi$; apply Theorem~\ref{thm:representation}, $G \ge 0$ and $\sum_m|\ell_m^\Psi| \le \Lambda_N^\Psi$.
\end{proof}

\begin{remark}[The Discrete Green's Function Does Not Resemble the Continuous Kernel]
\label{rem:mass}
By \eqref{eq:GN_entries}, row $i$ of $\mat G_N$ consists of the moments of the Lagrange functions $\ell_m^\Psi$ against the fixed kernel $G(x_i,\cdot)$. For a well-conditioned interpolation space $\norm{\ell_m^\Psi}_{L^1} \approx h$, and $\mat G_N$ inherits the continuous kernel, $(\mat G_N)_{im} \approx h\,G(x_i,x_m)$; here the masses are exponentially large, and the rows of $\mat G_N$ follow their profile for both operators (Section~\ref{sec:num_greens}). Neither the hat shape of $G_P$ nor the upstream decay of $G_{CD}$ is visible in $\mat G_N$: the apparent decay of a row away from the diagonal is the decay of the Lagrange-function mass towards the boundary nodes.
\end{remark}

\section{Potential Theory in the Chebyshev Variable: Proof of Theorem~\ref{thm:rates}}
\label{sec:potential}

The tool is the logarithmic potential theory of polynomial interpolation \cite[Chs.~12 and 13]{trefethen2019approximation}, which also explains the impossibility theorem \cite[Sec.~4]{platte2011impossibility}; for the general theory see \cite{saff1997logarithmic}. This section evaluates $U_\omega$ in closed form, states the estimate of the Lagrange functions of the trial frame on which the theorem rests (Proposition~\ref{prop:lagrange_norms}, proved in Appendix~\ref{app:rates}), and deduces Theorem~\ref{thm:rates}, Corollary~\ref{cor:exponent}, Propositions~\ref{prop:duality} and \ref{prop:critical} and the exponential ill-conditioning of Corollary~\ref{thm:poisson_illcond}.

\begin{lemma}[The Node Potential in Closed Form]
\label{lem:clausen}
For $z = \cos\theta$, $0 \le \theta \le \pi$,
\begin{equation}\label{eq:U_closed}
U_\omega(\cos\theta) = \log 2 + \frac{1}{\omega}\Bigl[\mathrm{Cl}_2(\theta-\theta_0) - \mathrm{Cl}_2(\theta+\theta_0) + \mathrm{Cl}_2(\pi-\theta-\theta_0) - \mathrm{Cl}_2(\pi-\theta+\theta_0)\Bigr].
\end{equation}
In particular
\begin{equation}\label{eq:U_values}
U_\omega(1) = \log 2 - \frac{\mathrm{Cl}_2(\pi-\omega)}{\omega}, \quad
U_\omega(c_0) = \log 2 + \frac{\mathrm{Cl}_2(2\omega)}{2\omega}, \quad
U_\omega(0) = \log 2 + \frac{\mathrm{Cl}_2(\omega)}{\omega},
\end{equation}
and $U_\omega(\cos\theta)$ is increasing on $[0,\pi/2]$ and invariant under $\theta \mapsto \pi-\theta$, so that $U_\omega$ is even in $z$ and decreasing in $|z|$: its maximum is $U_\omega(0)$, its minimum on the physical interval is $U_\omega(\pm c_0)$, and its minimum on $[-1,1]$ is $U_\omega(\pm 1)$.
\end{lemma}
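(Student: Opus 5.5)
The plan is to compute the potential directly in the angle variable from \eqref{eq:potential}, using the product formula $\cos\theta - \cos\varphi = -2\sin\frac{\theta+\varphi}{2}\sin\frac{\theta-\varphi}{2}$. This gives
\[
\log|\cos\theta-\cos\varphi| = \log 2 + \log\Bigl|2\sin\tfrac{\theta+\varphi}{2}\Bigr| + \log\Bigl|2\sin\tfrac{\theta-\varphi}{2}\Bigr| - \log 2 ,
\]
that is, $-\log 2$ plus the two Clausen integrands $\log|2\sin(u/2)|$ evaluated at $u = \theta\pm\varphi$. Integrating over $\varphi\in[\theta_0,\pi-\theta_0]$, an interval of length $\omega$, and dividing by $-\omega$ produces the constant $\log 2$. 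Substituting $u = \theta+\varphi$ and $u = \theta-\varphi$ (or $\varphi-\theta$) turns each integral into a difference of values of $\mathrm{Cl}_2$ at the endpoints.

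The endpoints are $\theta+\pi-\theta_0$, $\theta+\theta_0$, $\theta-\theta_0$ and $\theta-\pi+\theta_0$. I will rewrite them with the oddness and $2\pi$-periodicity of $\mathrm{Cl}_2$ (Lemma~\ref{lem:cl2}), which hold because the integrand $\log|2\sin(t/2)|$ is even and $2\pi$-periodic with zero mean over a period. In particular $\mathrm{Cl}_2(\theta+\pi-\theta_0) = -\mathrm{Cl}_2(\pi-\theta+\theta_0)$ and $\mathrm{Cl}_2(\theta-\pi+\theta_0) = -\mathrm{Cl}_2(\pi-\theta-\theta_0)$. Tracking signs then yields \eqref{eq:U_closed}. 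The absolute value in the second factor matters only through evenness, so the case $\theta$ inside or outside $[\theta_0,\pi-\theta_0]$ needs no separate treatment, since $\mathrm{Cl}_2$ is defined through $\log|\cdot|$.

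The special values \eqref{eq:U_values} follow by substitution, together with the duplication formula $\mathrm{Cl}_2(2x) = 2\mathrm{Cl}_2(x) - 2\mathrm{Cl}_2(\pi-x)$ and $\mathrm{Cl}_2(0)=\mathrm{Cl}_2(\pi)=0$. I would take the duplication formula from Lemma~\ref{lem:cl2} or derive it from $2\sin t = 4\sin(t/2)\cos(t/2)$.
\begin{itemize}
\item At $\theta=0$ the bracket becomes $-2\mathrm{Cl}_2(\theta_0) + \mathrm{Cl}_2(\pi-\theta_0) - \mathrm{Cl}_2(\pi+\theta_0)$. This equals $-2\mathrm{Cl}_2(\theta_0) + 2\mathrm{Cl}_2(\pi-\theta_0)$, which by duplication is $-\mathrm{Cl}_2(2\theta_0) = -\mathrm{Cl}_2(\pi-\omega)$, using $2\theta_0 = \pi-\omega$.
\item At $\theta = \theta_0$ the bracket is $-\mathrm{Cl}_2(2\theta_0) + \mathrm{Cl}_2(\pi-2\theta_0)$. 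Duplication at $x=\pi/2-\theta_0$ gives $\mathrm{Cl}_2(\omega)-\mathrm{Cl}_2(\pi-\omega) = \tfrac12\mathrm{Cl}_2(2\omega)$, matching $U_\omega(c_0)$.
\item At $\theta = \pi/2$ the bracket is $2\mathrm{Cl}_2(\pi/2-\theta_0) - 2\mathrm{Cl}_2(\pi/2+\theta_0)$. With $\pi/2+\theta_0 = \pi-\omega/2$ and duplication at $x=\omega/2$, this gives $\mathrm{Cl}_2(\omega)$.
\end{itemize}

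Symmetry under $\theta\mapsto\pi-\theta$ is immediate from \eqref{eq:potential}, since the node measure is invariant under $\varphi\mapsto\pi-\varphi$ and $\cos(\pi-\theta) = -\cos\theta$. Monotonicity on $[0,\pi/2]$ is the step needing care. I would differentiate \eqref{eq:U_closed} using $\mathrm{Cl}_2'(x) = -\log|2\sin(x/2)|$, which gives
\[
\omega\,\frac{d}{d\theta}U_\omega(\cos\theta) = \log\left|\frac{\sin\frac{\theta+\theta_0}{2}\,\sin\frac{\pi-\theta+\theta_0}{2}}{\sin\frac{\theta-\theta_0}{2}\,\sin\frac{\pi-\theta-\theta_0}{2}}\right| .
\]
Product-to-sum turns the numerator into $\tfrac12(\cos(\theta-\pi/2)+\sin\theta_0)$, and for $\theta\in[0,\pi/2]$ this equals $\tfrac12(\sin\theta+\sin\theta_0)$. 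The denominator in absolute value is $\tfrac12|\sin\theta-\sin\theta_0|$. For $\theta\in(0,\pi/2)$ the ratio $(\sin\theta+\sin\theta_0)/|\sin\theta-\sin\theta_0|$ exceeds $1$, so the derivative is positive except at the isolated singular point $\theta=\theta_0$, where $U_\omega$ remains continuous. This proves the increase.

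Since $z=\cos\theta$ is decreasing, $U_\omega$ is even and decreasing in $|z|$, and the stated locations of the maximum and minima follow.
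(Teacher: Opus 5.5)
Your route to \eqref{eq:U_closed} is correct and slightly more direct than the paper's. You integrate $-\log 2 + h(\theta+\varphi) + h(\theta-\varphi)$, with $h(t) = \log|2\sin(t/2)|$, over $[\theta_0,\pi-\theta_0]$ at once, and fold the endpoints $\theta+\pi-\theta_0$ and $\theta-\pi+\theta_0$ back with oddness and $2\pi$-periodicity. The paper instead first uses Lemma~\ref{lem:cl2}(iv) and $\varphi\mapsto\pi-\varphi$ to reduce to $\varphi\in(0,\theta_0)$, then integrates four terms; both rest on the same fact $\int_0^{2\pi}h = 0$. The special values and the symmetry are fine, up to two slips. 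First, your displayed decomposition reads $+\log 2 \dots -\log 2$, which cancels, instead of the single $-\log 2$ you state in words. Second, at $\theta=\theta_0$ the duplication formula must be applied at $x = \pi-2\theta_0 = \omega$, not at $x = \pi/2-\theta_0 = \omega/2$.

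The monotonicity step, however, is wrong as written. In \eqref{eq:U_closed} the terms $\mathrm{Cl}_2(\pi-\theta-\theta_0) - \mathrm{Cl}_2(\pi-\theta+\theta_0)$ have inner derivative $-1$, so with $\mathrm{Cl}_2' = -h$ they contribute $+h(\pi-\theta-\theta_0) - h(\pi-\theta+\theta_0)$. Thus $\sin\frac{\pi-\theta-\theta_0}{2} = \cos\frac{\theta+\theta_0}{2}$ belongs in the numerator and $\sin\frac{\pi-\theta+\theta_0}{2} = \cos\frac{\theta-\theta_0}{2}$ in the denominator, the opposite of your formula. With the correct placement the ratio is $\bigl(\sin\frac{\theta+\theta_0}{2}\cos\frac{\theta+\theta_0}{2}\bigr)/\bigl|\sin\frac{\theta-\theta_0}{2}\cos\frac{\theta-\theta_0}{2}\bigr| = \sin(\theta+\theta_0)/|\sin(\theta-\theta_0)|$, which is the paper's formula. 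Your ratio $(\sin\theta+\sin\theta_0)/|\sin\theta-\sin\theta_0|$ equals the correct one times $\cos^2\frac{\theta-\theta_0}{2}/\cos^2\frac{\theta+\theta_0}{2}$, a factor larger than $1$. So its exceeding $1$ says nothing about the true derivative; at $\theta_0=\pi/6$, $\theta=\pi/3$ the two ratios are $2+\sqrt3$ and $2$. The conclusion survives, but positivity has to be proved for the correct ratio. For instance, $\sin^2(\theta+\theta_0)-\sin^2(\theta-\theta_0) = \sin 2\theta\,\sin 2\theta_0 > 0$ for $\theta,\theta_0\in(0,\pi/2)$, and $\sin(\theta+\theta_0) > 0$ there. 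Alternatively, use the paper's comparison of $|\theta-\theta_0|$ with $\min\{\theta+\theta_0,\pi-\theta-\theta_0\}$. With that repair, the rest of your argument goes through: continuity at the singular point $\theta_0$ and the consequences for $z = \cos\theta$.
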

\begin{proof}
\emph{Step 1: reduction to $(0,\theta_0)$.} By Lemma~\ref{lem:cl2}(iv) and the substitution $\varphi \mapsto \pi - \varphi$, which maps $(\pi - \theta_0, \pi)$ onto $(0, \theta_0)$ and $\cos\varphi$ to $-\cos\varphi$,
\[
\begin{aligned}
\int_{\theta_0}^{\pi-\theta_0}\log|\cos\theta - \cos\varphi|\,d\varphi &= -\pi\log 2 \\
&\quad - \int_0^{\theta_0}\bigl(\log|\cos\theta - \cos\varphi| + \log|\cos\theta + \cos\varphi|\bigr)\,d\varphi .
\end{aligned}
\]

\emph{Step 2: evaluation.} By
\[
\cos\theta - \cos\varphi = -2\sin\tfrac{\theta+\varphi}{2}\,\sin\tfrac{\theta-\varphi}{2}, \qquad
\cos\theta + \cos\varphi = 2\sin\tfrac{\pi-\theta-\varphi}{2}\,\sin\tfrac{\pi-\theta+\varphi}{2},
\]
each of the two logarithms equals $-\log 2$ plus two terms of the form $\log|2\sin(t/2)|$. With $\mathrm{Cl}_2(b) - \mathrm{Cl}_2(a) = -\int_a^b\log|2\sin(t/2)|\,dt$, the evenness of the integrand, and the substitutions $t = \theta \pm \varphi$ and $t = \pi - \theta \pm \varphi$,
\begin{align*}
\int_0^{\theta_0}\log\bigl|2\sin\tfrac{\theta+\varphi}{2}\bigr|\,d\varphi &= \mathrm{Cl}_2(\theta) - \mathrm{Cl}_2(\theta+\theta_0), \\
\int_0^{\theta_0}\log\bigl|2\sin\tfrac{\theta-\varphi}{2}\bigr|\,d\varphi &= \mathrm{Cl}_2(\theta-\theta_0) - \mathrm{Cl}_2(\theta), \\
\int_0^{\theta_0}\log\bigl|2\sin\tfrac{\pi-\theta-\varphi}{2}\bigr|\,d\varphi &= \mathrm{Cl}_2(\pi-\theta-\theta_0) - \mathrm{Cl}_2(\pi-\theta), \\
\int_0^{\theta_0}\log\bigl|2\sin\tfrac{\pi-\theta+\varphi}{2}\bigr|\,d\varphi &= \mathrm{Cl}_2(\pi-\theta) - \mathrm{Cl}_2(\pi-\theta+\theta_0).
\end{align*}
The four right-hand sides add up to the bracket $[\cdots]$ of \eqref{eq:U_closed}, so the integral in Step~1 equals $-\pi\log 2 + 2\theta_0\log 2 - [\cdots] = -\omega\log 2 - [\cdots]$, and $U_\omega(\cos\theta) = -\omega^{-1}$ times this integral is \eqref{eq:U_closed}.

\emph{Step 3: special values.} At $\theta = 0$ the bracket is $-2\mathrm{Cl}_2(\theta_0) + \mathrm{Cl}_2(\pi-\theta_0) - \mathrm{Cl}_2(\pi+\theta_0) = 2\bigl[\mathrm{Cl}_2(\pi-\theta_0) - \mathrm{Cl}_2(\theta_0)\bigr] = -\mathrm{Cl}_2(2\theta_0)$ by Lemma~\ref{lem:cl2}(i) and (ii), and $2\theta_0 = \pi-\omega$. At $\theta = \theta_0$ it is $-\mathrm{Cl}_2(2\theta_0) + \mathrm{Cl}_2(\pi-2\theta_0) = \mathrm{Cl}_2(\omega) - \mathrm{Cl}_2(\pi-\omega) = \tfrac12\mathrm{Cl}_2(2\omega)$ by (i), $\mathrm{Cl}_2(0) = 0$ and (ii). At $\theta = \pi/2$ it is $2\bigl[\mathrm{Cl}_2(\omega/2) - \mathrm{Cl}_2(\pi-\omega/2)\bigr] = \mathrm{Cl}_2(\omega)$ by (ii). This proves \eqref{eq:U_values}.

\emph{Step 4: monotonicity.} Differentiating \eqref{eq:U_closed} with $\mathrm{Cl}_2'(x) = -\log|2\sin(x/2)|$ and combining the four logarithms with $2\sin a\cos a = \sin 2a$ gives, for $0 < \theta < \pi/2$, $\theta \ne \theta_0$,
\[
\frac{d}{d\theta}U_\omega(\cos\theta) = \frac{1}{\omega}\log\frac{\sin(\theta+\theta_0)}{|\sin(\theta-\theta_0)|} .
\]
For such $\theta$, both $|\theta - \theta_0|$ and $\min\{\theta+\theta_0, \pi-\theta-\theta_0\}$ lie in $[0,\pi/2]$, and the second exceeds the first because $\theta, \theta_0 > 0$ and $2\max\{\theta,\theta_0\} < \pi$; as $\sin$ is increasing on $[0,\pi/2]$ and $\sin(\theta+\theta_0) = \sin(\pi-\theta-\theta_0)$, the derivative is positive. The function $\theta \mapsto U_\omega(\cos\theta)$ is continuous (the derivative has an integrable singularity at $\theta_0$) and hence increasing on $[0,\pi/2]$, and \eqref{eq:U_closed} is invariant under $\theta \mapsto \pi-\theta$; this proves the remaining claims.
\end{proof}

\begin{lemma}[Unisolvence in the Trial Frame]
\label{lem:lebesgue_condition_link}
Interpolation at the $N$ nodes is unisolvent in $V_N$; equivalently, the synthesis matrix $\mat W_N$ of Section~\ref{sec:res_rates} is nonsingular.
\end{lemma}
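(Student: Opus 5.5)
By Lemma~\ref{lem:chebyshev_variable} the question reduces to polynomial interpolation, and the plan is to show that $\mat W_N$ has trivial kernel. Suppose $\mat W_N\vect c = 0$ and put $v := \sum_{j=1}^N c_jw_j$. By the lemma, $v(x) = \sqrt{2/\tilde L}\,\sin\theta\,q(z)$ with $q = \sum_j c_jU_{j-1} \in \mathbb P_{N-1}$, where $\theta = \pi(x+\delta)/\tilde L$ and $z = \cos\theta$. The condition $v(x_m) = 0$ for $1 \le m \le N$ then reads $\sin\theta_m\,q(z_m) = 0$.

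Two elementary facts about the nodes do the work.

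First, $\sin\theta_m \ne 0$. The angles are $\theta_m = \theta_0 + (m-1)\omega/(N-1)$, and they lie in $[\theta_0, \pi-\theta_0] \subset (0,\pi)$ because $\theta_0 = \pi\delta/\tilde L \in (0,\pi/2)$.

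Second, the points $z_m = \cos\theta_m$ are distinct. The $\theta_m$ are distinct points of $(0,\pi)$, and $\cos$ is strictly decreasing there.

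It follows that $q(z_m) = 0$ at $N$ distinct points. Since $q$ has degree at most $N-1$, $q \equiv 0$.

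Because the $U_{j-1}$, $1 \le j \le N$, are linearly independent, with exact degrees $j-1$ (or by their orthogonality in \eqref{eq:chebyshev_norms}), all $c_j$ vanish. Hence $\mat W_N$ is nonsingular. Unisolvence of interpolation in $V_N$ at the $N$ nodes is the same statement: $\dim V_N = N$ and the evaluation map $V_N \to \mathbb R^N$ is represented by $\mat W_N$.

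There is no real obstacle here. The only point to check is that the endpoints $x_1 = 0$ and $x_N = L$ correspond to $\theta_0$ and $\pi-\theta_0$, which lie strictly inside $(0,\pi)$. This is exactly where $\delta > 0$ is used: without the extension, $\sin\theta$ would vanish at the end nodes.
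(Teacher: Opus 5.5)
Your proposal is correct and follows the paper's proof: both use the polynomial form $V_N = \sqrt{2/\tilde L}\,\sin\theta\cdot\mathbb P_{N-1}(z)$ of Lemma~\ref{lem:chebyshev_variable}, together with the facts that the $z_m$ are distinct and that $\sin\theta_m \ne 0$. You simply spell out the details the paper leaves implicit, namely $\theta_m \in [\theta_0,\pi-\theta_0] \subset (0,\pi)$ and the linear independence of the $U_{j-1}$.
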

\begin{proof}
By Lemma~\ref{lem:chebyshev_variable}, $V_N = \sqrt{2/\tilde L}\,\sin\theta\cdot\mathbb P_{N-1}(z)$ with $\theta = \pi(x+\delta)/\tilde L$, $z = \cos\theta$, and the nodes have distinct values $z_m$ and $\sin\theta_m \ne 0$; interpolation at the $N$ nodes is therefore unisolvent in $V_N$, and $\mat W_N$ is nonsingular.
\end{proof}

The proof of Theorem~\ref{thm:rates} rests on the following estimate, in which $\ell_m^V \in V_N$ denotes the Lagrange function of $V_N$ at the node $x_m$, $1 \le m \le N$, which exists by Lemma~\ref{lem:lebesgue_condition_link}, and the notation $z_m = \cos\theta_m$, $c_0 = \cos\theta_0$ is that of Section~\ref{sec:variable}, and $\Delta := \omega/(N-1)$ is the spacing of the angles $\theta_m$. It is proved in Appendix~\ref{app:rates}.

\begin{proposition}[Norms of the Lagrange Functions of $V_N$]
\label{prop:lagrange_norms}
For $N \ge N_0$, with $N_0$ depending only on $\theta_0$, and uniformly in $1 \le m \le N$, with implied constants depending only on $\delta$ and $L$,
\begin{equation}\label{eq:lagrange_norms}
\begin{aligned}
N^{-1}\log\norm{\ell_m^V}_{L^2(\tilde I)} &= U_\omega(z_m) - U_\omega(1) + O(\log N/N), \\
N^{-1}\log\norm{\ell_m^V}_{L^2(I)} &= U_\omega(z_m) - U_\omega(c_0) + O(\log N/N), \\
N^{-1}\log\norm{\ell_m^V}_{L^\infty(I)} &= U_\omega(z_m) - U_\omega(c_0) + O(\log N/N).
\end{aligned}
\end{equation}
\end{proposition}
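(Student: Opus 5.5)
The plan is to write the Lagrange functions explicitly in the Chebyshev variable and reduce all three estimates to two-sided bounds on the discrete node potential. By Lemma~\ref{lem:chebyshev_variable}, $\ell_m^V = (\sin\theta/\sin\theta_m)\,p_m(z)$ with
\[
p_m(z) = \prod_{k\ne m}\frac{z - z_k}{z_m - z_k},
\]
and the three norms follow from \eqref{eq:chebyshev_norms}. The weight $\sin\theta/\sin\theta_m$ lies between $N^{-1}$-type and constant bounds on $\Theta_I$ (there $\sin\theta_m \ge \sin\theta_0$), so it only contributes $O(\log N/N)$. It therefore suffices to show, uniformly in $m$,
\[
\tfrac1N\log\max_{K}|p_m| = U_\omega(z_m) - \min_K U_\omega + O(\log N/N),
\]
for $K = [-1,1]$ and $K = [-c_0,c_0]$. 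By Lemma~\ref{lem:clausen} the minima are $U_\omega(1)$ and $U_\omega(c_0)$. The passage from $L^\infty$ to $L^2$ on $K$ costs only polynomial factors, by a Markov/Nikolskii inequality for polynomials of degree $N-1$ with the weight $\sqrt{1-z^2}$.

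The core is a uniform estimate of the discrete potential
\[
U_N(z) := -\frac1N\sum_{k=1}^N\log|z - z_k| .
\]
Write $\log|p_m(z)| = -N U_N(z) + \log|z - z_m|^{-1}\cdot(-1) + N U_N^{(m)}(z_m)$, where $U_N^{(m)}$ omits the $m$-th node. Since the nodes are $\cos$ of an equispaced angle grid, $U_N$ is a Riemann sum of the integral \eqref{eq:potential}, and I will prove two estimates. First, for $z$ at distance $\gtrsim 1/N^2$ from every node, and in particular for all $z \in [-1,1]\setminus[-c_0,c_0]$, $U_N(z) = U_\omega(z) + O(\log N/N)$. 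Second, at a node,
\[
\frac1N\sum_{k\ne m}\log|z_m - z_k| = -U_\omega(z_m) + O(\log N/N).
\]
Both follow by splitting off the $O(1)$ nearest nodes and comparing the rest with the integral via the monotonicity of $\log|\cos\vartheta - \cos\varphi|$ on each side of the singularity. The singularity is integrable and each excluded cell contributes $O(\log N/N)$. At the endpoints $z_1, z_N$ the map $\theta \mapsto \cos\theta$ is nondegenerate because $\theta_0 > 0$, which is why $N_0$ depends only on $\theta_0$. The monotonicity in $\theta$ of the factors, rather than in $z$, handles the cosine change of variables.

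For the upper bound, $\max_K|p_m| \le N\max_K e^{-N(U_N(z) - U_N^{(m)}(z_m))}$. Away from the nodes this is direct from the estimates above. Between consecutive nodes I bound $|\prod_k (z - z_k)|$ by the maximum over the cell, which adds a factor $e^{O(\log N)}$ by a standard local comparison with the nearest-node product. For the lower bound I evaluate $p_m$ at a point of $K$ achieving $\min U_\omega$: at $z = 1$, or at a point near $c_0$ at distance $\sim 1/N$ from the nodes, for instance the midpoint between $z_{N-1}$ and $z_N$ or just outside $c_0$. There the first estimate holds and $U_\omega$ is continuous, being Hölder by the closed form \eqref{eq:U_closed}.

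The main obstacle is the uniformity of the discrete-potential estimate near nodes and at the endpoints $\pm c_0$, where the node density has a jump. I will first try the explicit trigonometric product, $\prod_k(\cos\vartheta - \cos\theta_k) = \prod_k 2\sin\frac{\theta_k+\vartheta}{2}\sin\frac{\theta_k-\vartheta}{2}$. Each factor is then a sine evaluated on an equispaced grid, so that sums of $\log|2\sin(\cdot/2)|$ over arithmetic progressions compare with $\mathrm{Cl}_2$ differences with an explicit Euler–Maclaurin error of order $\log N/N$. This gives the $O(\log N/N)$ uniformly in $m$ and $\vartheta$.
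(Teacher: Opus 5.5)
Your outline follows the paper's proof in Appendix~\ref{app:rates}. The paper writes the Lagrange functions as $(\sin\theta/\sin\theta_m)\,\ell_m(\cos\theta)$ and compares the discrete potential, with and without the nearest node, with $U_\omega$ by Riemann sums that handle the logarithmic singularity cell by cell (Lemmas~\ref{lem:riemann} and \ref{lem:discrete_potential}). It then takes upper bounds from pointwise maxima and lower bounds at points away from the nodes near $z = 1$ and $z = c_0$ (Lemma~\ref{lem:lagrange_polynomials}). Your Nikolskii step and your monotone Riemann-sum comparison are harmless variants. The paper instead restricts the integrals to $\theta \in [1/N,2/N]$ and $[\theta_0+\Delta/4,\theta_0+3\Delta/4]$, where the pointwise lower bound holds, and uses the midpoint rule.

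One step, as written, does not give the stated error term. At the evaluation point near $c_0$ you pass from $U_\omega(\cos\vartheta)$ to $U_\omega(c_0)$ by H\"older continuity. With an exponent $\alpha<1$ this yields only $O(N^{-\alpha})$, which would degrade the second and third lines of \eqref{eq:lagrange_norms} to $O(N^{-\alpha})$. You need the log-Lipschitz modulus \eqref{eq:U_modulus}. It does follow from the closed form, because $\frac{d}{d\theta}U_\omega(\cos\theta) = \omega^{-1}\log\bigl(\sin(\theta+\theta_0)/|\sin(\theta-\theta_0)|\bigr)$ has only a logarithmic singularity at $\theta_0$ (Lemma~\ref{lem:U_modulus}). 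At distance $\sim\Delta$ from $\theta_0$ this gives exactly $O(\log N/N)$.

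A smaller point: the two-sided estimate $U_N = U_\omega + O(\log N/N)$ does not hold on all of $[-1,1]\setminus[-c_0,c_0]$. Points there can be arbitrarily close to the node $z_1 = c_0$. Only the one-sided bound $U_N \ge U_N^{(1)} - N^{-1}\log 2 \ge U_\omega - C\log N/N$ holds there. That is all your upper bound uses, and your lower bound is taken at $z=1$, so nothing breaks.
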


\begin{proof}[Proof of Theorem~\ref{thm:rates}]
Let $m^\ast$ be an index with $|\theta_{m^\ast} - \pi/2| \le \Delta/2$, so that $|U_\omega(z_{m^\ast}) - U_\omega(0)| \le C\Delta\log(1/\Delta)$ by \eqref{eq:U_modulus}; since $U_\omega(z_m) \le U_\omega(0)$ for every $m$ by Lemma~\ref{lem:clausen}, Proposition~\ref{prop:lagrange_norms} gives
\begin{gather*}
N^{-1}\log\max_m\norm{\ell_m^V}_{L^\infty(I)} = U_\omega(0) - U_\omega(c_0) + O(\log N/N), \\
N^{-1}\log\max_m\norm{\ell_m^V}_{L^2(\tilde I)} = U_\omega(0) - U_\omega(1) + O(\log N/N).
\end{gather*}
For the first limit in \eqref{eq:rates}, $\max_m\norm{\ell_m^V}_{L^\infty(I)} \le \Lambda_N \le N\max_m\norm{\ell_m^V}_{L^\infty(I)}$ by the definition of the Lebesgue constant. For the second, $\mat W_N$ maps coefficient vectors to nodal values, so the $m$-th column of $\mat W_N^{-1}$ is the coefficient vector of $\ell_m^V$, whose Euclidean norm is $\norm{\ell_m^V}_{L^2(\tilde I)}$ by \eqref{eq:chebyshev_norms}; hence $\max_m\norm{\ell_m^V}_{L^2(\tilde I)} \le \norm{\mat W_N^{-1}}_2 \le \norm{\mat W_N^{-1}}_F \le \sqrt N\max_m\norm{\ell_m^V}_{L^2(\tilde I)}$, and $\sigma_{\min}(\mat W_N) = 1/\norm{\mat W_N^{-1}}_2$. The closed forms in \eqref{eq:rates} are \eqref{eq:U_values}.
\end{proof}

\begin{proof}[Proof of Corollary~\ref{cor:exponent}]
Here $\ell^\ast = \ell^V_{m^\ast}$ with $m^\ast$ as in the proof of Theorem~\ref{thm:rates}: the condition $|\theta_{m^\ast} - \pi/2| \le \Delta/2$, which is $|x_{m^\ast} - L/2| \le h/2$, holds for the index $m^\ast$ of the statement. Proposition~\ref{prop:lagrange_norms} and \eqref{eq:U_modulus} give $N^{-1}\log\norm{\ell^\ast}_{L^2(\tilde I)} = U_\omega(0) - U_\omega(1) + O(\log N/N) = \kappa_\sigma + O(\log N/N)$ and $N^{-1}\log\bigl(\norm{\ell^\ast}_{L^2(I)}/\norm{\ell^\ast}_{L^2(\tilde I)}\bigr) = U_\omega(1) - U_\omega(c_0) + O(\log N/N) = -(\kappa_\sigma - \kappa_\Lambda) + O(\log N/N)$. The ratio of the two logarithms therefore tends to $(\kappa_\sigma - \kappa_\Lambda)/\kappa_\sigma = a(\omega)$. Since $\ell^\ast$ has the nodal data $\vect e_{m^\ast}$, $t(\ell^\ast) = 1/\norm{\ell^\ast}_{L^2(\tilde I)}$, and the first statement of the corollary is the first of these rates. The values and limits of $a(\omega)$ quoted in Section~\ref{sec:res_rates} follow from Lemma~\ref{lem:cl2}: $\mathrm{Cl}_2(\pi) = 0$ gives $a(\pi/2) = 1/2$; $\mathrm{Cl}_2(\pi/3) = \tfrac32\mathrm{Cl}_2(2\pi/3)$ gives $a(\pi/3) = 3/5$ and $a(2\pi/3) = 2/5$; and (iii) gives the limits: as $\omega \to 0$ both numerator and denominator are $\omega(1 - \log\omega) + O(\omega)$, and as $\omega \to \pi$ the numerator is $(\pi-\omega)\log 2 + O((\pi-\omega)^3)$ while the denominator is $(\pi-\omega)(1 + \log 2 - \log(\pi-\omega)) + O((\pi-\omega)^3)$.
\end{proof}

For comparison, equispaced nodes on the full interval $[-1,1]$ have a potential drop of $\log 2$ between the centre and the endpoints \cite[Ch.~13]{trefethen2019approximation}, which is the classical growth rate $2^N$ of the Lebesgue constant of equispaced polynomial interpolation \cite[Thm.~2]{trefethen1991two}; the frame replaces the endpoints of the node interval by those of the larger interval and produces two rates instead of one.

\begin{proof}[Proof of Proposition~\ref{prop:duality}]
Replace $\omega$ by $\pi-\omega$ in \eqref{eq:exponent} and use $\mathrm{Cl}_2(2\pi - 2\omega) = -\mathrm{Cl}_2(2\omega)$ (Lemma~\ref{lem:cl2}(i)); the two numerators add up to the common denominator.
\end{proof}

\begin{proof}[Proof of Proposition~\ref{prop:critical}]
Since $s(\omega) + s(\pi-\omega) = 1$ and $a(\omega) + a(\pi-\omega) = 1$ (Proposition~\ref{prop:duality}), the function $a - s$ is odd about $\pi/2$ and vanishes there, so it suffices to prove $a(\omega) > s(\omega)$ for $\omega \in (\pi/2,\pi)$. Put $x := \pi - \omega \in (0,\pi/2)$. By Lemma~\ref{lem:cl2}(i), $\mathrm{Cl}_2(2\omega) = \mathrm{Cl}_2(2\pi - 2x) = -\mathrm{Cl}_2(2x)$, so \eqref{eq:exponent} reads
\[
a(\omega) = \frac{\mathrm{Cl}_2(x) - \tfrac12\mathrm{Cl}_2(2x)}{\mathrm{Cl}_2(x) + \mathrm{Cl}_2(\pi - x)}, \qquad s(\omega) = \frac{x}{\pi} .
\]
The Clausen function is positive on $(0,\pi)$: its derivative $-\log(2\sin(t/2))$ is positive on $(0,\pi/3)$ and negative on $(\pi/3,\pi)$, and $\mathrm{Cl}_2(0) = \mathrm{Cl}_2(\pi) = 0$. Hence the denominator is positive, and $a(\omega) > s(\omega)$ is equivalent to $\pi\bigl[\mathrm{Cl}_2(x) - \tfrac12\mathrm{Cl}_2(2x)\bigr] > x\bigl[\mathrm{Cl}_2(x) + \mathrm{Cl}_2(\pi-x)\bigr]$. By the duplication formula of Lemma~\ref{lem:cl2}(ii), $\tfrac12\mathrm{Cl}_2(2x) = \mathrm{Cl}_2(x) - \mathrm{Cl}_2(\pi - x)$, so the left-hand side equals $\pi\,\mathrm{Cl}_2(\pi - x)$, and the inequality becomes
\[
D(x) := (\pi - x)\,\mathrm{Cl}_2(\pi - x) - x\,\mathrm{Cl}_2(x) > 0, \qquad 0 < x < \pi/2 .
\]
The function $D$ is continuous on $[0,\pi/2]$ with $D(0) = \pi\,\mathrm{Cl}_2(\pi) = 0$ and $D(\pi/2) = 0$. On $(0,\pi/2)$ it is twice differentiable, and with $\mathrm{Cl}_2'(y) = -\log(2\sin(y/2))$ one finds $D'(x) = -\mathrm{Cl}_2(\pi-x) - \mathrm{Cl}_2(x) + (\pi - x)\log(2\cos(x/2)) + x\log(2\sin(x/2))$ and
\begin{gather*}
D''(x) = 2\log\tan\frac x2 + \frac12\Bigl[x\cot\frac x2 - (\pi - x)\tan\frac x2\Bigr] = F(x/2), \\
F(u) := 2\log\tan u + u\cot u - \Bigl(\frac\pi2 - u\Bigr)\tan u .
\end{gather*}
Here $F(\pi/4) = 0$ and, for $0 < u < \pi/4$,
\begin{gather*}
F'(u) = \frac{3}{\sin u\cos u} - \frac{u}{\sin^2u} - \frac{\pi/2 - u}{\cos^2u} = \frac{G(2u)}{\sin^2u\cos^2u}, \\
G(v) := \frac32\sin v - \frac v2\cos v - \frac\pi4(1 - \cos v),
\end{gather*}
because $3\sin u\cos u = \tfrac32\sin 2u$ and $-u\cos^2u - (\pi/2 - u)\sin^2u = -u\cos 2u - \tfrac\pi4(1 - \cos 2u)$. Now $G(0) = 0$, $G'(v) = \cos v + (v/2 - \pi/4)\sin v$ with $G'(\pi/2) = 0$, and $G''(v) = -\tfrac12\sin v + (v/2 - \pi/4)\cos v < 0$ on $(0,\pi/2)$; hence $G' > 0$ and $G > 0$ on $(0,\pi/2)$, so $F' > 0$ on $(0,\pi/4)$, $F < F(\pi/4) = 0$ there, and $D'' < 0$ on $(0,\pi/2)$. A continuous function that is strictly concave on $(0,\pi/2)$ and vanishes at both ends of $[0,\pi/2]$ is positive inside, which proves $D > 0$ and the proposition.
\end{proof}

\subsection{Exponential Ill-Conditioning}

Theorem~\ref{thm:rates} determines the baseline ill-conditioning of the method, through the factorization $\mat{A}_P = \mat{W}_N \mathbf{\Lambda}$ of the bulk collocation matrix of the Poisson operator. The functions $\{w_j\}_{j\le N}$ restricted to $I$ are odd Fourier modes of the doubled extended domain restricted to a subinterval, and the singular values of such restricted Fourier systems decay exponentially beyond a transition index proportional to $NL/\tilde L$, at a rate that increases with the redundancy $\tilde L/L$ \cite{slepian1978prolate, huybrechs2010fourier, adcock2014numerical}; for the present frame the smallest singular value decays at the exponential rate $\kappa_\sigma$ in $N$ (Theorem~\ref{thm:rates}).

\begin{lemma}[Exponential Decay of the Smallest Singular Value]
\label{lem:sigma_min}
Let $\kappa_\sigma(\omega) > 0$ be the constant of Theorem~\ref{thm:rates}, $\omega = \pi L/\tilde L$. For every $\alpha \in (0, \kappa_\sigma(\omega))$ there is $C_1 = C_1(\alpha, \delta, L) > 0$ such that $\sigma_{\min}(\mat{W}_N) \le C_1 e^{-\alpha N}$ for all $N \ge 2$.
\end{lemma}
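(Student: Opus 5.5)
This is a direct consequence of the second limit in Theorem~\ref{thm:rates}, plus a finite-range argument for small $N$. Fix $\alpha \in (0,\kappa_\sigma(\omega))$ and choose $\alpha' := (\alpha + \kappa_\sigma)/2$, so that $\alpha < \alpha' < \kappa_\sigma$.

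By Theorem~\ref{thm:rates}, $N^{-1}\log(1/\sigma_{\min}(\mat W_N)) \to \kappa_\sigma$. Hence there is $N_1$, depending on $\alpha$, $\delta$ and $L$, such that $N^{-1}\log(1/\sigma_{\min}(\mat W_N)) \ge \alpha'$ for all $N \ge N_1$. For these $N$ this gives $\sigma_{\min}(\mat W_N) \le e^{-\alpha' N} \le e^{-\alpha N}$. If one prefers a non-asymptotic route, the same bound follows from the proof of Theorem~\ref{thm:rates}: there $1/\sigma_{\min}(\mat W_N) = \norm{\mat W_N^{-1}}_2 \ge \max_m \norm{\ell^V_m}_{L^2(\tilde I)}$. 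Proposition~\ref{prop:lagrange_norms} at the central index $m^\ast$ then gives $\norm{\ell^V_{m^\ast}}_{L^2(\tilde I)} \ge N^{-C} e^{N\kappa_\sigma}$ for $N \ge N_0$. Since $N^{-C}e^{N(\kappa_\sigma - \alpha)} \ge 1$ for $N$ large, this again yields $\sigma_{\min}(\mat W_N) \le e^{-\alpha N}$ beyond some explicit $N_1$.

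For the finitely many $N$ with $2 \le N < N_1$, I bound $\sigma_{\min}(\mat W_N)$ crudely. It is at most the norm of any column, hence at most $\norm{\mat W_N}_2 \le \norm{\mat W_N}_F \le N\sqrt{2/\tilde L}$, because $|w_j| \le \sqrt{2/\tilde L}$. Put
\[
C_1 := \max\Bigl\{1,\ \max_{2 \le N < N_1} N\sqrt{2/\tilde L}\,e^{\alpha N}\Bigr\}.
\]
Then $\sigma_{\min}(\mat W_N) \le C_1 e^{-\alpha N}$ for all $N \ge 2$, and $C_1$ depends only on $\alpha$, $\delta$ and $L$ through $N_1$ and $\tilde L$.

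There is no real obstacle here. The only point needing care is that $N_1$, and hence $C_1$, depends only on $\alpha$, $\delta$ and $L$. This holds because the limit in Theorem~\ref{thm:rates}, with the uniform error $O(\log N/N)$ of Proposition~\ref{prop:lagrange_norms}, has implied constants depending only on $\delta$ and $L$.
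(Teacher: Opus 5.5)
Your proposal is correct and follows the paper's proof: both take $N_1$ from the second limit in Theorem~\ref{thm:rates} and absorb the finitely many $N < N_1$ into $C_1$. The only difference is cosmetic: the paper uses the actual values $\sigma_{\min}(\mat W_N)e^{\alpha N}$ for $N < N_1$, where you use the Frobenius-norm bound, and neither the auxiliary $\alpha'$ nor the crude bound is needed.
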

\begin{proof}
By Theorem~\ref{thm:rates}, $N^{-1}\log\bigl(1/\sigma_{\min}(\mat W_N)\bigr) \to \kappa_\sigma(\omega) > \alpha$, so there is $N_1$ with $\sigma_{\min}(\mat W_N) \le e^{-\alpha N}$ for all $N \ge N_1$. Take $C_1 := \max\bigl\{1, \max_{2 \le N < N_1}\sigma_{\min}(\mat W_N)e^{\alpha N}\bigr\}$.
\end{proof}

\begin{proof}[Proof of Corollary~\ref{thm:poisson_illcond}]
The matrix $\mat A_P = \mat W_N\mathbf\Lambda$ is nonsingular, since $\mat W_N$ is (Lemma~\ref{lem:lebesgue_condition_link}) and $\lambda_j > 0$, and $\sigma_{\min}(\mat{A}_P) \le \lambda_N\sigma_{\min}(\mat{W}_N)$ with $\lambda_N = (N\pi/\tilde L)^2 = \norm{\mathbf\Lambda}_2$: for a unit vector $\vect v$ with $\norm{\mat W_N\vect v}_2 = \sigma_{\min}(\mat W_N)$, the vector $\vect x := \mathbf\Lambda^{-1}\vect v$ satisfies $\norm{\mat A_P\vect x}_2 = \sigma_{\min}(\mat W_N)$ and $\norm{\vect x}_2 \ge 1/\lambda_N$, so $\sigma_{\min}(\mat A_P) \le \norm{\mat A_P\vect x}_2/\norm{\vect x}_2 \le \lambda_N\sigma_{\min}(\mat W_N)$. Since $\theta_0 = \pi\delta/\tilde L \in (0,\pi/2)$ and $\pi(x+\delta)/\tilde L \in [\theta_0, \pi-\theta_0]$ for $x \in \bar I$, $w_1(x) = \sqrt{2/\tilde L}\sin\bigl(\pi(x+\delta)/\tilde L\bigr) \ge \sqrt{2/\tilde L}\sin\theta_0$ on $\bar I$, so $\sigma_{\max}(\mat{A}_P) \ge \norm{\mat{A}_P \vect e_1}_2 = \lambda_1 \norm{\mat{W}_N\vect e_1}_2 \ge \lambda_1\sqrt{2N/\tilde L}\,\sin\theta_0$. With Lemma~\ref{lem:sigma_min},
$\kappa_2(\mat{A}_P) \ge \lambda_1\sqrt{2/\tilde L}\,\sin\theta_0\,\sqrt N\,e^{\alpha N}/(C_1\lambda_N)$, and $\lambda_1/\lambda_N = N^{-2}$; the claim holds with $C := \sqrt{2/\tilde L}\,\sin\theta_0/C_1$.
\end{proof}

We do not prove the corresponding bound for the convection--diffusion matrix; in the computations of Section~\ref{sec:num_1d} the test-space Lebesgue constants of the two operators grow at nearly the same rate.

\section{Floating-Point Arithmetic: Proof of Theorem~\ref{thm:two_constant}}
\label{sec:fp}

This section provides the exact identity that governs the computed solution and proves Theorem~\ref{thm:two_constant}, Corollary~\ref{cor:regimes_two_constant}, Proposition~\ref{prop:classical}(i), and Theorem~\ref{thm:competitor} with Corollary~\ref{cor:unconditional}; Proposition~\ref{prop:apriori} is proved in Appendix~\ref{app:apriori}. The notation is that of Section~\ref{sec:constants}.

\subsection{Backward-Stable Solvers}

\begin{proposition}[Computed Solution as Exact Collocation of Perturbed Data]
\label{prop:backward}
Let Assumption~\ref{ass:unisolvence} hold, let $\mat B$ be the square interior collocation matrix of the recombined basis ($M = n$), let $\hat{\vect c} \in \mathbb R^n$ satisfy $(\mat B + \mat E)\hat{\vect c} = \vect f$ for some perturbation $\mat E$, $\vect f = (f(x_m))_{m=2}^{N-1}$, and put $\hat u_N := \sum_j \hat c_j\varphi_j \in \Phi_N$. Then $\hat u_N$ is the exact interpolatory collocation solution for the nodal data $\vect f - \mat E\hat{\vect c}$:
\begin{equation}\label{eq:backward}
\hat u_N = \Lop^{-1}\Pop_N\bigl(\vect f - \mat E\hat{\vect c}\bigr), \qquad u - \hat u_N = \Lop^{-1}\bigl(f - \Pop_N(\vect f - \mat E\hat{\vect c})\bigr).
\end{equation}
\end{proposition}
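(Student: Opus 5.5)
The plan is to rewrite the perturbed system so that the perturbation appears only on the right-hand side, and then invoke the remark after the proof of Theorem~\ref{thm:representation}, which extends that theorem to arbitrary interior nodal data. From $(\mat B + \mat E)\hat{\vect c} = \vect f$ we obtain $\mat B\hat{\vect c} = \vect f - \mat E\hat{\vect c} =: \vect g \in \mathbb R^n$. By definition of $\mat B$, the $m$-th entry of $\mat B\hat{\vect c}$ is $\sum_j \hat c_j\psi_j(x_m) = (\Lop\hat u_N)(x_m)$, since $\psi_j = \Lop\varphi_j$ and $\Lop$ is linear. So $\Lop\hat u_N \in \Psi_N$ takes the values $\vect g$ at the interior nodes.

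Next we use Assumption~\ref{ass:unisolvence}: interpolation in $\Psi_N$ at the interior nodes is unisolvent. Hence $\Lop\hat u_N$ is the unique element of $\Psi_N$ with these nodal values, $\Lop\hat u_N = \Pop_N\vect g$, where $\Pop_N\vect g = \sum_m g_m\ell_m^\Psi$ is the interpolant of the nodal vector, as in the remark after Theorem~\ref{thm:representation}. Since $\hat u_N \in \Phi_N$, it satisfies $\hat u_N(0) = \hat u_N(L) = 0$ and is smooth on $\bar I$. The uniqueness part of Lemma~\ref{lem:dirichlet_1d}(ii) then gives $\hat u_N = \Lop^{-1}\Pop_N\vect g$. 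This is the first identity in \eqref{eq:backward}, and it says precisely that $\hat u_N$ is the exact interpolatory solution for the data $\vect g$.

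For the second identity, subtract the first from $u = \Lop^{-1}f$ and use the linearity of $\Lop^{-1}$. There is a notational point here: $\Pop_N(\vect f - \mat E\hat{\vect c})$ is interpreted as the interpolant of nodal data, which the paper's convention already allows.

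No step is a real obstacle; the argument is bookkeeping. The only point needing care is the identification of the rows of $\mat B$ with the interior nodes, in the same order as the entries of $\vect f$. I also need to check that the basis $\varphi_j$ of $\Phi_N$ is used consistently, without the column scaling $\mat D$, so that $\hat u_N = \sum_j\hat c_j\varphi_j$ matches the coefficient vector $\hat{\vect c}$.
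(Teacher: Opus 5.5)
Your proof is correct and follows the paper's argument. The paper also reads off $(\mat B\hat{\vect c})_m = (\Lop\hat u_N)(x_m) = f(x_m) - (\mat E\hat{\vect c})_m$, notes that $\hat u_N \in \Phi_N$, and cites Theorem~\ref{thm:representation} for arbitrary interior nodal data; that theorem's proof is the unisolvence-plus-uniqueness step via Lemma~\ref{lem:dirichlet_1d}(ii) that you write out inline, and the second identity then follows from $u = \Lop^{-1}f$, as you say.
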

\begin{proof}
For $2 \le m \le N-1$, $(\mat B\hat{\vect c})_m = (\Lop\hat u_N)(x_m) = f(x_m) - (\mat E\hat{\vect c})_m$, so $\Lop\hat u_N \in \Psi_N$ interpolates the data $\vect f - \mat E\hat{\vect c}$, and $\hat u_N \in \Phi_N$ because the $\varphi_j$ vanish at $0$ and $L$. Theorem~\ref{thm:representation}, applied to these data as noted after its proof, gives the first identity, and $u = \Lop^{-1}f$ the second.
\end{proof}

If the full square system \eqref{eq:square_system} is solved instead, the perturbation of the two boundary rows adds to $\hat u_N$ the collocation solution with zero interior data and boundary values $-(\mat E\hat{\vect c})_1$, $-(\mat E\hat{\vect c})_N$; that function satisfies the differential equation only at the nodes, so Lemma~\ref{lem:dirichlet_1d} does not control it. The floating-point theory of this section is therefore stated for the recombined system, in which the boundary conditions hold exactly by construction; the experiments of Sections~\ref{sec:num_solvers} and \ref{sec:num_bound} use this system, and Section~\ref{sec:num_solvers} also reports the full system for comparison.

Both direct solvers used in this paper produce a perturbation $\mat E$ as in Proposition~\ref{prop:backward}. Let $\eps_{\mathrm{mach}}$ denote the unit roundoff ($2^{-53}$ in IEEE double precision) and $c$ the small integer constant of \cite[Eq.~(3.8)]{higham2002accuracy}. If the nonsingular system $\mat B\vect c = \vect f$, $\vect f \ne 0$, is solved by Householder QR factorization and $c\,n^2\eps_{\mathrm{mach}} \le 1/4$, then $(\mat B + \mat E_0)\hat{\vect c} = \vect f + \Delta\vect f$ with $\norm{\mat E_0}_F \le \tilde\gamma_{n^2}\norm{\mat B}_F$ and $\norm{\Delta\vect f}_2 \le \tilde\gamma_{n^2}\norm{\vect f}_2$, where $\tilde\gamma_{n^2} = c\,n^2\eps_{\mathrm{mach}}/(1 - c\,n^2\eps_{\mathrm{mach}}) \le 1/3$ \cite[Thm.~19.5]{higham2002accuracy}. Then $\hat{\vect c} \ne 0$ and $(1 - \tilde\gamma_{n^2})\norm{\vect f}_2 \le (1 + \tilde\gamma_{n^2})\norm{\mat B}_F\norm{\hat{\vect c}}_2$, so that $\mat E := \mat E_0 - \Delta\vect f\,\hat{\vect c}^T/\norm{\hat{\vect c}}_2^2$ satisfies $(\mat B + \mat E)\hat{\vect c} = \vect f$ and $\norm{\mat E}_2 \le 2\tilde\gamma_{n^2}\norm{\mat B}_F/(1 - \tilde\gamma_{n^2}) \le 4c\,n^2\eps_{\mathrm{mach}}\norm{\mat B}_F$. If it is solved by Gaussian elimination with partial pivoting, with computed factors $\hat{\mat L}$ and $\hat{\mat U}$ of $\mat P\mat B$ for a permutation matrix $\mat P$, and $3n\eps_{\mathrm{mach}} \le 1/2$, then $(\mat B + \mat E)\hat{\vect c} = \vect f$ with $|\mat E| \le 6n\eps_{\mathrm{mach}}\mat P^T|\hat{\mat L}||\hat{\mat U}|$ entrywise \cite[Sec.~9.3 and Thm.~9.4]{higham2002accuracy}, and, since $|\hat{\mat L}| \le 1$ entrywise and the entries of $\hat{\mat U}$ are bounded by $\max_{i,j}|(\mat B)_{ij}| \le \norm{\mat B}_\infty$ times the growth factor of the elimination, $\norm{\mat E}_\infty$ is at most $6n^3\eps_{\mathrm{mach}}\norm{\mat B}_\infty$ times that growth factor. For the normalized system $\mat B_D\hat{\vect y} = \vect f$ the same bounds hold with $\mat B_D$ in place of $\mat B$; since the columns of $\mat B_D$ have unit norm, Householder QR then has backward error $\norm{\mat E}_2 \le 4c\,n^{5/2}\eps_{\mathrm{mach}}$, however ill-conditioned $\mat B_D$ is. By Proposition~\ref{prop:backward} the computed solution is the exact interpolatory solution of nearby data; it differs from $u_N = \Lop^{-1}\Pop_N\vect f$ by $\Lop^{-1}\Pop_N(\mat E\hat{\vect c})$, which Lebesgue's bound controls only by $\norm{\Lop^{-1}}_\infty\Lambda_N^\Psi\norm{\mat E\hat{\vect c}}_\infty$, and $\Lambda_N^\Psi$ grows exponentially (Corollary~\ref{cor:lebesgue_rate} and Section~\ref{sec:num_1d}). Yet $\hat{\vect c}$ remains bounded and $\hat u_N$ is accurate to a level that decreases with $\delta$, for all $N$ computed (Sections~\ref{sec:num_1d} and \ref{sec:num_solvers}); the next subsection proves a bound that accounts for this.

\subsection{The Two-Constant Bound}
\label{sec:two_constant}

The splitting of the coefficient space in Definition~\ref{def:two_constants}, the argument of \cite[Thm.~3.7]{adcock2020frames2}, is the whole proof of the two-constant bound.

\begin{proof}[Proof of Theorem~\ref{thm:two_constant}]
$\hat u_N - u_{\vect y} = \Phi\mat D\vect d$. Split $\vect d = \vect d_t + \vect d_t^{\perp}$ with $\vect d_t \in \mathcal V_t$, $\vect d_t^\perp \in \mathcal V_t^\perp$. By definition, $\norm{\Phi\mat D\vect d_t}_{L^2(I)} \le \gamma_t\norm{\vect d_t}_2 \le \gamma_t\norm{\vect d}_2$ and $\norm{\Phi\mat D\vect d_t^\perp}_{L^2(I)} \le \Gamma_t\norm{\mat B_D\vect d_t^\perp}_2 \le \Gamma_t(\norm{\mat B_D\vect d}_2 + \norm{\mat B_D\vect d_t}_2)$, where $\norm{\mat B_D\vect d_t}_2 \le t\norm{\vect d_t}_2 \le t\norm{\vect d}_2$ because $\vect d_t$ lies in the span of singular vectors with $\sigma_i \le t$. Finally $\mat B_D\vect d = (\mat B_D\hat{\vect y} - \vect f) + (\vect f - \mat B_D\vect y)$, so $\norm{\mat B_D\vect d}_2 \le \hat\rho + \norm{\vect f - \mat B_D\vect y}_2$. Adding $\norm{u - u_{\vect y}}_{L^2(I)}$ completes the proof.
\end{proof}

\begin{proof}[Proof of Corollary~\ref{cor:regimes_two_constant}]
For (i), $u_N = \Phi\mat D\hat{\vect y}$ with $\mat B_D\hat{\vect y} = \vect f$ by Theorem~\ref{thm:representation} and the definition of $\mat B$, so $\hat\rho = 0$, and $\mathcal V_0 = \{0\}$ because $\mat B_D$ is nonsingular (Assumption~\ref{ass:unisolvence}); with $t = 0$, \eqref{eq:two_constant_bound} reduces to the stated bound, and $\vect v_n \in \mathcal V_0^\perp$ with $\norm{\mat B_D\vect v_n}_2 = \sigma_n$ gives the lower bound on $\Gamma_0$. For (ii), $u_N^\eps = \Phi\mat D\hat{\vect y}$ with $\hat{\vect y} := \mat D^{-1}\vect c_\eps$, so $\hat\rho = \norm{\vect f - \mat B\vect c_\eps}_2$; write $r := \norm{\vect f - \mat B_D\vect y}_2$; Lemma~\ref{lem:tsvd} with $\vect z = \mat D\vect y$ gives $\hat\rho \le r + \eps\norm{\vect y}_2$ and $\norm{\hat{\vect y}}_2 = \norm{\mat D^{-1}\vect c_\eps}_2 \le \norm{\vect y}_2 + \eps^{-1}r$, hence $\norm{\vect d}_2 \le 2\norm{\vect y}_2 + \eps^{-1}r$; substituting into \eqref{eq:two_constant_bound} with $t = \eps$ gives $\Gamma_\eps(r + r + \eps\norm{\vect y}_2 + 2\eps\norm{\vect y}_2 + r) + \gamma_\eps(2\norm{\vect y}_2 + \eps^{-1}r) \le (3\Gamma_\eps + 2\gamma_\eps\eps^{-1})(r + \eps\norm{\vect y}_2)$. For (iii), $(\mat B_D + \mat E)\vect d = (\vect f - \mat B_D\vect y) - \mat E\vect y$ gives $\norm{\vect d}_2 \le (K/\eta)(r + \eta\norm{\vect y}_2)$, and $\hat\rho = \norm{\mat E\hat{\vect y}}_2 \le \eta(\norm{\vect y}_2 + \norm{\vect d}_2)$; substituting into \eqref{eq:two_constant_bound} with $t = \eta$ gives $\Gamma_\eta(r + \eta\norm{\vect y}_2 + 2\eta\norm{\vect d}_2) + \gamma_\eta\norm{\vect d}_2 \le [(1 + 2K)\Gamma_\eta + K\gamma_\eta\eta^{-1}](r + \eta\norm{\vect y}_2)$.
\end{proof}

The only hypothesis of Corollary~\ref{cor:regimes_two_constant}(iii) beyond backward stability is the lower bound on $\sigma_{\min}(\mat B_D + \mat E)$, which fails only if the rounding perturbation happens to leave $\mat B_D + \mat E$ almost exactly singular, the worst case noted in \cite[Rem.~19]{herremans2025function}. For a Gaussian perturbation $\mat E$ of variance $\varsigma^2$ in place of the rounding perturbation, the probability that $\sigma_{\min}(\mat B_D + \mat E) < \eta/K$ is at most $2.35\sqrt n\,\eta/(K\varsigma)$ by \cite[Thm.~3.3]{sankar2006smoothed}; rounding errors are not Gaussian, so this is only an analogy, whereas the following certification is exact: for Gaussian elimination with partial pivoting and $2n\eps_{\mathrm{mach}} \le 1$, the proof of \cite[Thm.~9.4]{higham2002accuracy} gives $\mat B_D + \mat E = \mat P^T(\hat{\mat L} + \Delta\mat L)(\hat{\mat U} + \Delta\mat U)$ with $|\Delta\mat L| \le 2n\eps_{\mathrm{mach}}|\hat{\mat L}|$ and $|\Delta\mat U| \le 2n\eps_{\mathrm{mach}}|\hat{\mat U}|$, so that\[\norm{(\mat B_D + \mat E)^{-1}}_2 \le \frac{\norm{\hat{\mat L}^{-1}}_2\,\norm{\hat{\mat U}^{-1}}_2}{\bigl(1 - 2n\eps_{\mathrm{mach}}\norm{|\hat{\mat L}^{-1}||\hat{\mat L}|}_2\bigr)\bigl(1 - 2n\eps_{\mathrm{mach}}\norm{|\hat{\mat U}^{-1}||\hat{\mat U}|}_2\bigr)}\]whenever both factors in the denominator are positive, a bound computable from the computed factors. Alternatively, $\norm{\vect d}_2 \le \norm{\hat{\vect y}}_2 + \norm{\vect y}_2$ can simply be read off from the computed vector, in which case Theorem~\ref{thm:two_constant}, with the residual $\hat\rho$ evaluated with a rigorous bound on its rounding error, is an a posteriori error bound with no hypothesis on the solver.

\begin{proof}[Proof of Proposition~\ref{prop:classical}(i)]
For $\vect w \in \mathbb R^n$ put $v := \Phi\mat D\vect w \in \Phi_N$ and $\psi := \Lop v \in \Psi_N$, whose values at the interior nodes are the entries of $\mat B_D\vect w$. By Assumption~\ref{ass:unisolvence}, $\psi = \sum_m(\mat B_D\vect w)_m\ell_m^\Psi$, so $\norm{\psi}_{L^2(I)} \le \sum_m|(\mat B_D\vect w)_m|\,\norm{\ell_m^\Psi}_{L^2(I)} \le \sqrt n\,\norm{\mat B_D\vect w}_2\,\Lambda^\Psi_{N,2}$ by the Cauchy--Schwarz inequality. Since $v \in C^2(\bar I)$, $v(0) = v(L) = 0$ and $\Lop v = \psi$, Lemma~\ref{lem:dirichlet_box}(iii) with $d = 1$ gives $\norm{v}_{L^2(I)} \le (L/\pi)^2\norm{\psi}_{L^2(I)}$. For $\vect w \in \mathcal V_t$ one has $\norm{\mat B_D\vect w}_2 \le t\norm{\vect w}_2$, which gives the bound on $\gamma_t$; the bound on $\Gamma_t$ is the same inequality without this step, and \eqref{eq:classical_lu} follows from \eqref{eq:lu_bound} as in Proposition~\ref{prop:apriori}. Finally $\norm{\ell_m^\Psi}_{L^2(I)} \le \sqrt L\norm{\ell_m^\Psi}_{L^\infty(I)} \le \sqrt L\,\Lambda_N^\Psi$.
\end{proof}

Proposition~\ref{prop:classical} is the bound obtained without the singular-value splitting: roundoff amplified by a Lebesgue-type constant of the test space. Section~\ref{sec:num_bound} compares its factor $(L/\pi)^2\sqrt n\,\Lambda^\Psi_{N,2}$ with the computed factor $\Gamma_\eta + \gamma_\eta\eta^{-1}$.

\subsection{Competitors of Moderate Coefficient Norm: Proof of Theorem~\ref{thm:competitor}}
\label{sec:competitor}

The competitor is built from the exact solution $u$ rather than from the data: an element of $\Phi_N$ that approximates $u$ together with two derivatives on $I$ has an image under $\Lop$ that approximates $f$ on $I$, and the boundary conditions are corrected by the two lowest modes. The construction takes place in the Chebyshev variable, where $\Phi_N \subset V_N = \sqrt{2/\tilde L}\,\sin\theta\cdot\mathbb P_{N-1}(z)$, and the size of the competitor on the extended interval is controlled by the growth of Chebyshev polynomials of $[-c_0,c_0]$ on the larger interval $[-1,1]$, which is governed by the ellipse $E_{\rho^\ast}$ through $\pm1$. Throughout, $T_\nu$ denotes the Chebyshev polynomial of the first kind, $T_\nu(\tfrac12(s + s^{-1})) = \tfrac12(s^\nu + s^{-\nu})$, and $\mathcal E_R := \{\tfrac12(s + s^{-1}) : 1 \le |s| \le R\}$, $R \ge 1$, is the closed Bernstein ellipse of $[-1,1]$ with parameter $R$, so that $\mathcal E_1 = [-1,1]$ and $E_\rho = c_0\,\mathrm{int}\,\mathcal E_\rho$.

\begin{lemma}[Chebyshev Polynomials on Bernstein Ellipses]
\label{lem:cheb_ellipse}
For $R \ge 1$, $\nu \ge 1$ and $w \in \mathcal E_R$,
\begin{equation}\label{eq:cheb_ellipse}
|T_\nu(w)| \le R^\nu, \qquad |T_\nu'(w)| \le 4e\,\nu^2R^{\nu-1}, \qquad |T_\nu''(w)| \le 32e\,\nu^4R^{\nu-2}.
\end{equation}
\end{lemma}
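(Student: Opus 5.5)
The plan is to parametrize $w = \tfrac12(s+s^{-1})$ with $1 \le |s| \le R$ and use $T_\nu(w) = \tfrac12(s^\nu + s^{-\nu})$. The first bound is then immediate: $|T_\nu(w)| \le \tfrac12(|s|^\nu + |s|^{-\nu}) \le R^\nu$ because $|s|^{-1} \le 1 \le R$.

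\textbf{Derivatives: Cauchy estimates on a larger ellipse.} For the derivatives I would avoid the explicit formula $T_\nu' = \nu U_{\nu-1}$, whose denominator $s - s^{-1}$ vanishes near the foci $\pm1$. Instead I would use Cauchy's integral formula on a larger ellipse. Fix $w \in \mathcal E_R$ and set $R' := R(1 + 1/\nu^2)$. The two ellipses are separated: the boundary of $\mathcal E_{R'}$ lies at distance at least $d := \min\{\tfrac12(R' - R),\ \tfrac12(R' - R)(1 - 1/(RR'))\}$ from $\mathcal E_R$. The real semi-axis grows by $\tfrac12(R'-R)(1 - 1/(RR'))$ and the imaginary semi-axis by $\tfrac12(R'-R)(1 + 1/(RR'))$. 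For confocal ellipses, the distance between boundaries is attained along the major axis, but this is delicate near $R = 1$, where $1 - 1/(RR') \approx 2/\nu^2$.

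\textbf{Estimate.} Bounding the distance below by a multiple of $(R'-R)/R'$ times a geometric factor, I obtain $\mathrm{dist} \gtrsim R/\nu^4$ in the worst case $R = 1$. This would give only $|T_\nu'| \lesssim \nu^4$, which is too weak.

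\textbf{Main obstacle and how I would handle it.} The obstacle is the region near the foci, where the ellipses are thin. Two routes seem workable.

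\emph{Route via Markov's inequality.} On $[-1,1]$, Markov's inequality gives $|T_\nu'| \le \nu^2$. In general I would apply Bernstein's inequality for polynomials on the ellipse $\mathcal E_R$ itself: a polynomial $p$ of degree $\nu$ bounded by $M$ on $\mathcal E_R$ satisfies $|p'| \le \nu^2 M\cdot C/R$-type bounds. This follows from the Markov inequality on the ellipse, or from mapping to the annulus.

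\emph{Route via the annulus, which I would try first.} Write $g(s) := T_\nu(\tfrac12(s+s^{-1}))$. This is a Laurent polynomial with $|g| \le \tfrac12(r^\nu + r^{-\nu})$ on $|s| = r$, and $dw/ds = \tfrac12(1 - s^{-2})$. Away from $s = \pm1$, that is when $|s^2 - 1| \ge 1/\nu$, I would get $T_\nu' = g'(s)/w'(s)$ with $|g'| \le \nu R^\nu$ by Bernstein's inequality for Laurent polynomials on the annulus. This gives $|T_\nu'| \le 2\nu R^\nu\cdot 2\nu \le 4\nu^2R^\nu$, and dividing by $R \ge 1$ costs at most the factor $e$ once we enlarge to radius $R(1+1/\nu)$. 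Near $s = \pm1$, where $w$ lies within $O(1/\nu^2)$ of $\pm1$, I would instead apply the Cauchy estimate on a disc of radius $1/\nu^2$ around $w$. That disc lies in $\mathcal E_{R(1 + c/\nu)}$, where $|T_\nu| \le eR^\nu$, so the Cauchy bound is $e\nu^2R^\nu$.

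\textbf{Second derivative.} The bound for $T_\nu''$ follows by the same two-region argument. One can apply the first-derivative estimate to $T_\nu'$ viewed as a polynomial of degree $\nu - 1$ bounded by $4e\nu^2R^{\nu-1}$ on a slightly larger ellipse, or use a second-order Cauchy estimate with the factor $2!/r^2$. Tracking the constants gives $32e\,\nu^4R^{\nu-2}$. The factors $R^{-1}$ and $R^{-2}$ need care only in the far region, since $R \ge 1$ and the enlargement factor $(1+1/\nu)^\nu \le e$.
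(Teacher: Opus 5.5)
Your first bound is fine, but the derivative bounds are not established. The obstacle you describe is an artifact of the enlargement $R' = R(1+1/\nu^2)$. The direct Cauchy estimate you abandon works with $R' := R(1+1/\nu)$, and this is how the paper argues. Take $w = \tfrac12(s_1+s_1^{-1})$ with $1 \le |s_1| \le R$ and $\zeta = \tfrac12(s_2+s_2^{-1})$ with $|s_2| = R'$. The identity $\zeta - w = \tfrac12(s_2-s_1)\bigl(1 - 1/(s_1s_2)\bigr)$ gives $|\zeta - w| \ge \tfrac12(R'-R)(1-1/R') \ge \tfrac12\cdot\tfrac R\nu\cdot\tfrac1{\nu+1} \ge R/(4\nu^2)$. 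Hence the disc of radius $R/(4\nu^2)$ about $w$ lies in $\mathcal E_{R'}$, where $|T_\nu| \le R'^\nu \le eR^\nu$ by your first step. Cauchy's estimate then gives $|T_\nu^{(j)}(w)| \le j!\,eR^\nu(4\nu^2/R)^j$, which is $4e\nu^2R^{\nu-1}$ for $j=1$ and $32e\nu^4R^{\nu-2}$ for $j=2$. The thinness near the foci is harmless: it only costs the factor $1 - 1/R' \ge 1/(\nu+1)$. Together with $R'-R = R/\nu$ this sets the scale $R/\nu^2$, while $(1+1/\nu)^\nu \le e$ keeps the growth of $T_\nu$ bounded. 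The radius being proportional to $R$ is what produces the factors $R^{-1}$ and $R^{-2}$, so no splitting into regions is needed.

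The two-region argument you propose instead does not give the stated bounds as written.

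In the far region the power of $R$ is lost. In fact $|g'(s)| \le \tfrac\nu2(|s|^{\nu-1}+|s|^{-\nu-1}) \le \nu R^{\nu-1}$, not $\nu R^\nu$. Also $1/|w'(s)| = 2|s|^2/|s^2-1| \le 2(1 + 1/|s^2-1|) \le 2(\nu+1)$, not $2\nu$. Your $4\nu^2R^\nu$ exceeds $4e\nu^2R^{\nu-1}$ once $R > e$. The remedy that dividing by $R$ ``costs at most the factor $e$ once we enlarge'' is not an argument, since enlarging the ellipse can only increase a bound.

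The near-region bound $e\nu^2R^\nu$ has the same defect. There $|s|^2 < 1 + 1/\nu$, and you must enlarge $\mathcal E_{|s|}$, not $\mathcal E_R$, to obtain a bound free of $R$.

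Both repairs are possible for $T_\nu'$, but the plan for $T_\nu''$ remains unsupported:
\begin{itemize}
\item In the far region the split needs $T_\nu''(w) = \bigl(g''(s) - T_\nu'(w)\,w''(s)\bigr)/w'(s)^2$, which you do not carry out.
\item Applying your first-derivative estimate to $T_\nu'$ would need a version for arbitrary polynomials, since yours uses the explicit form $g(s) = \tfrac12(s^\nu+s^{-\nu})$.
\item Iterating over a further enlarged ellipse yields a constant with $e^2$ in place of $e$.
\end{itemize}
The constant $32e = 2!\,e\,4^2$ comes directly from the second-order Cauchy estimate on the disc of radius $R/(4\nu^2)$.
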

\begin{proof}
For $R_1 \ge 1$ and $w = \tfrac12(s + s^{-1})$ with $1 \le |s| \le R_1$, $|T_\nu(w)| = \tfrac12|s^\nu + s^{-\nu}| \le R_1^\nu$; with $R_1 = R$ this is the first bound. Put $R' := R(1 + 1/\nu)$. For $w = \tfrac12(s_1 + s_1^{-1}) \in \mathcal E_R$ and $\zeta = \tfrac12(s_2 + s_2^{-1})$ with $|s_2| = R'$ on the boundary of $\mathcal E_{R'}$, the identity $\zeta - w = \tfrac12(s_2 - s_1)(1 - 1/(s_1s_2))$ gives $|\zeta - w| \ge \tfrac12(R' - R)(1 - 1/R') \ge \tfrac12\,\frac R\nu\,\frac{1}{\nu+1} \ge \frac{R}{4\nu^2} =: d$, because $|s_2 - s_1| \ge |s_2| - |s_1| \ge R' - R$, $|1 - 1/(s_1s_2)| \ge 1 - 1/(|s_1||s_2|) \ge 1 - 1/R'$ and $1 - 1/R' \ge 1 - 1/(1 + 1/\nu) = 1/(\nu+1)$. Hence the open disc of radius $d$ about $w$ lies in $\mathcal E_{R'}$, on which $|T_\nu| \le R'^\nu$ by the first step with $R_1 = R'$, and Cauchy's estimate gives $|T_\nu^{(j)}(w)| \le j!\,R'^\nu d^{-j} = j!\,(1 + 1/\nu)^\nu R^\nu(4\nu^2/R)^j \le e\,j!\,(4\nu^2)^jR^{\nu-j}$ for $j = 1, 2$.
\end{proof}

\begin{proof}[Proof of Theorem~\ref{thm:competitor}]
\emph{Step 1: analytic continuation of $u$.} Since $-u'' + ku' = f$ on $I$, $(e^{-kx}u')' = -e^{-kx}f$, so $u'(x) = e^{kx}\bigl(u'(0) - \int_0^x e^{-kt}f(t)\,dt\bigr)$ and $u(x) = \int_0^x u'(t)\,dt$ for $x \in \bar I$. Both integrals, taken along paths in $\mathcal D$, define analytic functions on the simply connected set $\mathcal D$, which extend $u$ analytically to $\mathcal D$. In particular $u$ is bounded on the compact set $\bar\Omega_\rho \subset \mathcal D$, and so are $u'$ and $u''$ on $\bar I$.

\emph{Step 2: the function $U$ and its Chebyshev coefficients.} Let $x(z) := (\tilde L/\pi)\arccos z - \delta$, analytic on $\mathbb C\setminus\bigl((-\infty,-1]\cup[1,\infty)\bigr) \supset E_{\rho^\ast}$ and mapping $E_\rho$ onto $\Omega_\rho$ and $[-c_0,c_0]$ onto $\bar I$, and put $U(z) := u(x(z))/\sqrt{1 - z^2}$ with the branch of the square root that is positive on $(-1,1)$; for $z = \cos\theta \in [-c_0,c_0]$, $\sqrt{1-z^2} = \sin\theta \ge \sin\theta_0$, so $U$ is analytic on a neighbourhood of $[-c_0,c_0]$, and $U(\pm c_0) = 0$ because $u(0) = u(L) = 0$. Let $U(c_0w) = \sum_{\nu\ge0}b_\nu T_\nu(w)$, $w \in [-1,1]$, be the Chebyshev series of the function $w \mapsto U(c_0w)$, which is analytic on a neighbourhood of $[-1,1]$; the series converges absolutely and uniformly \cite[Thm.~3.1]{trefethen2019approximation}. We claim that
\begin{equation}\label{eq:bk_bound}
|b_\nu| \le C\,(\nu+1)^{1/2}\rho^{-\nu}, \qquad \nu \ge 0 .
\end{equation}
If $\rho < \rho^\ast$, then $\pm1 \notin \bar E_\rho$, so $U$ is analytic on a neighbourhood of $\bar E_\rho$ and bounded on $E_\rho$; the function $w \mapsto U(c_0w)$ is then analytic and bounded by $\norm{U}_{L^\infty(E_\rho)}$ in the open Bernstein ellipse of $[-1,1]$ with parameter $\rho$, and \cite[Thm.~8.1]{trefethen2019approximation} gives $|b_\nu| \le 2\norm{U}_{L^\infty(E_\rho)}\rho^{-\nu}$. If $\rho = \rho^\ast$, then $U$ is analytic on $E_{\rho^\ast}$, and for $1 < \rho' < \rho^\ast$ we bound it on $E_{\rho'}$. For $z = \tfrac{c_0}{2}(s + s^{-1}) \in E_{\rho'}$, $1 \le |s| < \rho'$, and with $1 = \tfrac{c_0}{2}(\rho^\ast + 1/\rho^\ast)$, the identity of the previous proof gives $|1 - z| = \tfrac{c_0}{2}|\rho^\ast - s|\,|1 - 1/(s\rho^\ast)| \ge \tfrac{c_0}{2}(\rho^\ast - \rho')(1 - 1/\rho^\ast) =: c_1(\rho^\ast - \rho')$, and likewise $|1 + z| \ge c_1(\rho^\ast - \rho')$; since $|1 - z| + |1 + z| \ge 2$, one of the two factors is at least one, so $|1 - z^2| \ge c_1(\rho^\ast - \rho')$ and $|U(z)| \le \norm{u}_{L^\infty(\bar\Omega_{\rho^\ast})}\bigl(c_1(\rho^\ast - \rho')\bigr)^{-1/2}$ on $E_{\rho'}$. Let $\nu_0 := \lceil 2\rho^\ast/(\rho^\ast - 1)\rceil$, which is at least $3$. For $\nu \ge \nu_0$ put $\rho' := \rho^\ast(1 - 1/\nu)$, which satisfies $\rho' \ge (1 + \rho^\ast)/2 > 1$, $\rho^\ast - \rho' = \rho^\ast/\nu$ and $\rho'^{-\nu} = \rho^{\ast\,-\nu}(1 - 1/\nu)^{-\nu} \le 4\rho^{\ast\,-\nu}$; the function $w \mapsto U(c_0w)$ is analytic and bounded by $\norm{u}_{L^\infty(\bar\Omega_{\rho^\ast})}(c_1\rho^\ast/\nu)^{-1/2}$ in the open Bernstein ellipse of $[-1,1]$ with parameter $\rho'$, so \cite[Thm.~8.1]{trefethen2019approximation} gives $|b_\nu| \le 8\norm{u}_{L^\infty(\bar\Omega_{\rho^\ast})}(c_1\rho^\ast)^{-1/2}\nu^{1/2}\rho^{\ast\,-\nu}$. For the finitely many $\nu < \nu_0$ the same theorem with $\rho' := (1 + \rho^\ast)/2$ bounds $|b_\nu|$ by a constant. In both cases \eqref{eq:bk_bound} holds. By \eqref{eq:bk_bound} and Lemma~\ref{lem:cheb_ellipse} with $R = 1$, the series $\sum_\nu b_\nu T_\nu(w)$ and its first two termwise derivatives converge uniformly on $[-1,1]$, so the expansion may be differentiated twice term by term there.

\emph{Step 3: the competitor.} Let $q_p(z) := \sum_{\nu=0}^pb_\nu T_\nu(z/c_0) \in \mathbb P_p$ and $\tau_p := U - q_p$ on $[-c_0,c_0]$, and define $v_p^\circ(x) := \sin\theta\,q_p(\cos\theta)$, $\theta = \pi(x+\delta)/\tilde L$, which lies in $V_{p+1} \subset V_N$ by Lemma~\ref{lem:chebyshev_variable}. On $\bar I$, $u - v_p^\circ = \sin\theta\,\tau_p(\cos\theta)$. Since $v_p^\circ(0) = \sin\theta_0\,q_p(c_0) = -\sin\theta_0\,\tau_p(c_0)$ and likewise at $x = L$, and since the boundary functionals are linearly independent on $\operatorname{span}\{w_1,w_2\}$ (proof of Lemma~\ref{lem:dimensions}), there are $\alpha_p, \beta_p$ with $|\alpha_p| + |\beta_p| \le C\bigl(|\tau_p(c_0)| + |\tau_p(-c_0)|\bigr)$ such that $v_p := v_p^\circ - \alpha_pw_1 - \beta_pw_2$ vanishes at $0$ and $L$; then $v_p \in \Phi_{p+1} \subset \Phi_N$.

\emph{Step 4: the residual on $I$.} For $j = 0, 1, 2$, Lemma~\ref{lem:cheb_ellipse} with $R = 1$ and \eqref{eq:bk_bound} give, on $[-c_0,c_0]$,
\[
\bigl|\tau_p^{(j)}(z)\bigr| \le \sum_{\nu>p}|b_\nu|\,c_0^{-j}\,32e\,(\nu+1)^{2j} \le C\sum_{\nu>p}(\nu+1)^{2j+1/2}\rho^{-\nu} \le Cp^{2j+1/2}\rho^{-p},
\]
where the last step uses $\nu + 1 \le (p+1)(1+i)$ for $\nu = p + i$, so that $(\nu+1)^{2j+1/2} \le (p+1)^{2j+1/2}(1+i)^{2j+1/2}$, and $\sum_{i\ge1}(1+i)^{2j+1/2}\rho^{-i} < \infty$. Since $u - v_p^\circ = \sin\theta\,\tau_p(\cos\theta)$ on $\bar I$ and $dx = (\tilde L/\pi)\,d\theta$, differentiation gives $(u - v_p^\circ)' = (\pi/\tilde L)\bigl[z\tau_p - (1 - z^2)\tau_p'\bigr]$ and $(u - v_p^\circ)'' = -(\pi/\tilde L)^2\sin\theta\bigl[\tau_p + 3z\tau_p' - (1 - z^2)\tau_p''\bigr]$ at $z = \cos\theta$, so $\norm{u - v_p^\circ}_{L^\infty(I)} \le Cp^{1/2}\rho^{-p}$, $\norm{(u - v_p^\circ)'}_{L^\infty(I)} \le Cp^{5/2}\rho^{-p}$ and $\norm{(u - v_p^\circ)''}_{L^\infty(I)} \le Cp^{9/2}\rho^{-p}$. Since $u - v_p = (u - v_p^\circ) + \alpha_pw_1 + \beta_pw_2$ with $|\alpha_p| + |\beta_p| \le Cp^{1/2}\rho^{-p}$ and the $w_i$ are fixed smooth functions, $\norm{u - v_p}_{L^\infty(I)} \le Cp^{1/2}\rho^{-p}$ and $\norm{f - \Lop v_p}_{L^\infty(I)} = \norm{\Lop(u - v_p)}_{L^\infty(I)} \le \norm{(u-v_p)''}_{L^\infty(I)} + k\norm{(u-v_p)'}_{L^\infty(I)} \le Cp^{9/2}\rho^{-p}$, which is (i).

\emph{Step 5: the size on the extended interval.} For $z \in [-1,1]$ the point $w = z/c_0$ lies in $[-1/c_0, 1/c_0] \subset \mathcal E_{\rho^\ast}$, because $\tfrac12(\rho^\ast + 1/\rho^\ast) = 1/c_0$. Lemma~\ref{lem:cheb_ellipse} with $R = \rho^\ast$ and \eqref{eq:bk_bound} give, on $[-1,1]$ and for $j = 0,1,2$,
\[
\begin{aligned}
\bigl|q_p^{(j)}(z)\bigr| &\le \sum_{\nu=0}^p|b_\nu|\,c_0^{-j}\,32e\,(\nu+1)^{2j}\rho^{\ast\,\nu} \\
&\le C\sum_{\nu=0}^p(\nu+1)^{2j+1/2}(\rho^\ast/\rho)^\nu \le Cp^{2j+3/2}(\rho^\ast/\rho)^p ,
\end{aligned}
\]
since $\rho^\ast/\rho \ge 1$. With $r(z) := zq_p(z) - (1 - z^2)q_p'(z)$ one has, as in Step~4, $(v_p^\circ)' = (\pi/\tilde L)\,r(\cos\theta)$ and $(v_p^\circ)'' = -(\pi/\tilde L)^2\sin\theta\,r'(\cos\theta)$ on $\tilde I$, where $r' = q_p + 3zq_p' - (1 - z^2)q_p''$; hence $\norm{(v_p^\circ)'}_{L^\infty(\tilde I)} \le Cp^{7/2}(\rho^\ast/\rho)^p$ and $\norm{(v_p^\circ)''}_{L^\infty(\tilde I)} \le Cp^{11/2}(\rho^\ast/\rho)^p$. The correction $\alpha_pw_1 + \beta_pw_2$ contributes at most $Cp^{1/2}\rho^{-p}$ to both norms, and $\norm{\cdot}_{L^2(\tilde I)} \le \sqrt{\tilde L}\,\norm{\cdot}_{L^\infty(\tilde I)}$; this proves (ii), and (iii) follows from Lemma~\ref{lem:coefficient_norm}.
\end{proof}

\begin{proof}[Proof of Corollary~\ref{cor:unconditional}]
Apply Corollary~\ref{cor:regimes_two_constant}(iii) with the competitor $\vect y := \vect y_p$ of Theorem~\ref{thm:competitor}, for which $u_{\vect y} = v_p$. Then $\norm{u - u_{\vect y}}_{L^2(I)} \le \sqrt L\,\norm{u - v_p}_{L^\infty(I)} \le C\sqrt L\,p^{1/2}\rho^{-p}$, $\norm{\vect f - \mat B_D\vect y}_2 = \bigl\|\bigl(f(x_m) - (\Lop v_p)(x_m)\bigr)_m\bigr\|_2$ by the definition of $\mat B$, which is at most $\sqrt n\,\norm{f - \Lop v_p}_{L^\infty(I)} \le C\sqrt N\,p^{9/2}\rho^{-p}$, and $\eta\norm{\vect y}_2 \le C_BC\sqrt N\,p^{11/2}\eta(\rho^\ast/\rho)^p$ by Theorem~\ref{thm:competitor}(iii); inserting these into \eqref{eq:lu_bound} gives \eqref{eq:unconditional}. For \eqref{eq:unconditional_floor} take $p := \lceil\log(1/\eta)/\log\rho^\ast\rceil$, which satisfies ${2 \le p \le N-1}$ under the stated conditions on $\eta$ and $N$, $\rho^{\ast\,p} \ge 1/\eta$ and $\rho^{\ast\,p} \le \rho^\ast/\eta$. Then $\rho^{-p} \le \rho^{-\log(1/\eta)/\log\rho^\ast} = \eta^{a'}$ and $\eta(\rho^\ast/\rho)^p = \eta\rho^{\ast\,p}\rho^{-p} \le \rho^\ast\eta^{a'}$, while $p^{11/2} \le \bigl(1 + \log(1/\eta)/\log\rho^\ast\bigr)^{11/2}$; \eqref{eq:unconditional} becomes \eqref{eq:unconditional_floor} after the constants are absorbed into $C$. The replacement of the two constants by their a priori bounds is Proposition~\ref{prop:apriori}, and for $a' = 1$ the product $\eta^{-1}\eta^{a'}$ equals one.
\end{proof}

\section{Proofs for the Tail of the Spectrum}
\label{sec:tail}

This section proves Propositions~\ref{prop:equilibrium}, \ref{prop:unisolvence_poisson} and \ref{prop:tail_properties}, Theorems~\ref{thm:tail_lower}, \ref{thm:dual}, \ref{thm:tail_law}, \ref{thm:tail_duality} and \ref{thm:initial_slope}, parts (ii)--(iv) of Proposition~\ref{prop:classical} and Corollaries~\ref{cor:lebesgue_rate}, \ref{cor:self_dual}, \ref{cor:critical}, \ref{cor:layer} and \ref{cor:sharpness}, and it states and proves the non-asymptotic Corollaries~\ref{cor:either_or}, \ref{cor:no_power_law} and \ref{cor:certified} and the characterization of the tail law by an extremal problem over zero distributions (Proposition~\ref{prop:extremal}). Throughout, the collocation nodes are the interior nodes of the $N$-grid ($M = n$) and $\Lop = \Lop_P$; Lemmas~\ref{lem:two_forms} and \ref{lem:monotone} and their proofs hold for either operator. The notation of Appendices~\ref{app:rates} and \ref{app:apriori} is in force: $\theta_m$, $z_m$, $\Delta = \omega/(N-1)$, the Lagrange polynomials $\ell_m$ and the Lagrange functions $\ell_m^V$ of $V_N$, and the reflection $(Rv)(x) = v(L-x)$, the inverse $\tilde\Lop_P^{-1}$ on $V_N$ and the Green's function $\tilde G$ of Lemma~\ref{lem:unisolvence}, and the two elementary facts stated at the beginning of Appendix~\ref{app:rates}. The constant $C$ depends on $\theta_0$, $L$ and on the configuration of Definition~\ref{def:configuration} or \ref{def:interp_configuration} in question, and $N_0$ is enlarged finitely many times. We begin with the equilibrium problem, which involves neither $N$ nor the nodes.

\subsection{The Equilibrium Problem: Proof of Proposition~\ref{prop:equilibrium}}
\label{sec:equilibrium_proof}

In this subsection all measures on $[0,\pi]$ have bounded densities with respect to $d\theta$. For $f \in L^2(0,\pi)$ we write $\hat f_k := \int_0^\pi f(\varphi)\cos k\varphi\,d\varphi$, $k \ge 0$, and $E_f := E_{f\,d\theta}$, $J_0(f) := J_0(f\,d\theta)$, defined by \eqref{eq:measure_envelope} also for signed $f$. With the even and $2\pi$-periodic function $h(t) := \log|2\sin(t/2)|$,
\begin{equation}\label{eq:kernel_split}
\log|\cos\vartheta - \cos\varphi| = -\log 2 + h(\vartheta+\varphi) + h(\vartheta-\varphi),
\end{equation}
as in the proof of Lemma~\ref{lem:cl2}(iv). The potential of a finite positive Borel measure $\mu$ with compact support in $\mathbb R$ is $U^\mu(z) := -\int\log|z-s|\,d\mu(s)$, $z \in \mathbb C$; for $\lambda = f\,d\theta$ we write $U^\lambda$ for the potential of its image, so that $U^\lambda(\cos\vartheta) = -E_f(\vartheta)$.

\begin{lemma}[Chebyshev Expansion of the Envelope]
\label{lem:cheb}
For $f \in L^2(0,\pi)$ and every $\vartheta \in [0,\pi]$,
\begin{equation}\label{eq:cheb}
E_f(\vartheta) = -\hat f_0\log 2 - \sum_{k\ge1}\frac2k\,\hat f_k\cos k\vartheta, \qquad J_0(f) = \hat f_0^{\,2}\log 2 + \sum_{k\ge1}\frac2k\,\hat f_k^{\,2},
\end{equation}
the first series converging absolutely and uniformly. Consequently $E_f$ is continuous, $f$ is determined by $E_f$, and $J_0(f) > 0$ if $\hat f_0 = 0$ and $f \ne 0$.
\end{lemma}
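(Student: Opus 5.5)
The plan is to expand the kernel through the splitting \eqref{eq:kernel_split} and the classical Fourier series $h(t) = \log|2\sin(t/2)| = -\sum_{k\ge1}\cos(kt)/k$, which converges in $L^2(-\pi,\pi)$ and pointwise for $t \notin 2\pi\mathbb Z$. With $\cos k(\vartheta+\varphi) + \cos k(\vartheta-\varphi) = 2\cos k\vartheta\cos k\varphi$ this gives, formally,
\[
\log|\cos\vartheta - \cos\varphi| = -\log 2 - \sum_{k\ge1}\frac2k\cos k\vartheta\,\cos k\varphi .
\]
Integrating against $f(\varphi)\,d\varphi$ and exchanging sum and integral yields the first identity in \eqref{eq:cheb}. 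To justify the exchange at a fixed $\vartheta$, I would note that the function $\varphi \mapsto h(\vartheta\pm\varphi)$ lies in $L^2(0,\pi)$. Its cosine expansion converges in $L^2$, and pairing with $f \in L^2$ is continuous. This gives the identity for every $\vartheta$, including $\vartheta = 0$ and $\vartheta = \pi$, where the logarithmic singularity is still square integrable.

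For absolute and uniform convergence, I would apply Cauchy--Schwarz: $\sum_k (2/k)|\hat f_k| \le 2\bigl(\sum_k k^{-2}\bigr)^{1/2}\bigl(\sum_k \hat f_k^2\bigr)^{1/2}$. Bessel's inequality for the orthogonal system $\{\cos k\varphi\}$ on $(0,\pi)$ gives $\sum_k \hat f_k^2 \le (\pi/2)\norm{f}_{L^2}^2$. This Weierstrass majorant, independent of $\vartheta$, makes $E_f$ a uniform limit of continuous functions, hence continuous. For the energy, I would integrate $-E_f$ against $f$. The uniform convergence permits termwise integration, and $\int_0^\pi f\cos k\vartheta\,d\vartheta = \hat f_k$ gives $J_0(f) = \hat f_0^2\log 2 + \sum_k (2/k)\hat f_k^2$.

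The consequences then follow directly. The cosine coefficients of $E_f$ are $-\pi\hat f_0\log 2$ and $-(\pi/k)\hat f_k$ for $k \ge 1$, so $E_f$ determines every $\hat f_k$. Completeness of $\{\cos k\varphi\}_{k\ge0}$ in $L^2(0,\pi)$ then determines $f$. If $\hat f_0 = 0$ and $f \ne 0$, then by completeness some $\hat f_k \ne 0$ with $k \ge 1$, so $J_0(f) > 0$.

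The only delicate step is the pointwise justification at each fixed $\vartheta$. I will handle it by the $L^2$ pairing argument above rather than by pointwise convergence of the kernel series, which fails on the diagonal $\varphi = \pm\vartheta$.
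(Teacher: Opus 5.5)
Your proof is correct and follows the paper's argument. The paper also combines \eqref{eq:kernel_split} with the Fourier series $h(t) = -\sum_{k\ge1}k^{-1}\cos kt$, which it derives by integration by parts rather than quoting. It obtains the first identity in \eqref{eq:cheb} from Parseval's identity for $g = \log|\cos\vartheta - \cos\cdot\,| \in L^2(0,\pi)$, which is exactly your $L^2$-pairing justification. It then uses Bessel's inequality with Cauchy--Schwarz, termwise integration against $f$, and completeness of $\{\cos k\varphi\}_{k\ge0}$, just as you do.

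One point of wording should be tightened. The translates $h(\vartheta\pm\cdot)$ each carry sine components on $(0,\pi)$, so only their sum has the $L^2$-convergent expansion $-\sum_{k\ge1}(2/k)\cos k\vartheta\cos k\varphi$. The paper makes this explicit by computing $\hat g_k = \int_{-\pi}^{\pi}h(\vartheta+\varphi)\cos k\varphi\,d\varphi = -(\pi/k)\cos k\vartheta$.
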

\begin{proof}
For $k \ge 1$, integration by parts with $h'(t) = \tfrac12\cot(t/2)$ gives
\[
\int_0^\pi h(t)\cos kt\,dt = -\frac1{2k}\int_0^\pi\cot(t/2)\sin kt\,dt = -\frac{\pi}{2k} :
\]
the boundary terms vanish because $h(t)\sin kt \to 0$ as $t \to 0$ and $\sin k\pi = 0$, and the last integral equals $\pi$ by the identity $\cot(t/2)\sin kt = 1 + \cos kt + 2\sum_{j=1}^{k-1}\cos jt$. Since also $\int_0^\pi h = 0$ by Lemma~\ref{lem:cl2}(i), the even function $h \in L^2(-\pi,\pi)$ has the Fourier series $-\sum_{k\ge1}k^{-1}\cos kt$. For fixed $\vartheta$ let $g := \log|\cos\vartheta - \cos\cdot| \in L^2(0,\pi)$. By \eqref{eq:kernel_split}, the substitution $\varphi \mapsto -\varphi$ in the term $h(\vartheta - \varphi)$ and the periodicity of $h$, $\hat g_k = \int_{-\pi}^{\pi}h(\vartheta+\varphi)\cos k\varphi\,d\varphi = \int_{-\pi}^{\pi}h(t)\cos k(t-\vartheta)\,dt = -(\pi/k)\cos k\vartheta$ for $k \ge 1$, and $\hat g_0 = -\pi\log 2$ by Lemma~\ref{lem:cl2}(iv). Parseval's identity $\int_0^\pi gf = \pi^{-1}\hat g_0\hat f_0 + 2\pi^{-1}\sum_{k\ge1}\hat g_k\hat f_k$ gives the first identity of \eqref{eq:cheb}. By Bessel's inequality $\sum_{k\ge1}\hat f_k^{\,2} \le (\pi/2)\norm{f}_{L^2}^2$, so $\sum_k k^{-1}|\hat f_k| \le (\pi^3/12)^{1/2}\norm{f}_{L^2}$; the series therefore converges absolutely and uniformly, and integrating it termwise against $-f$ gives the second identity. Being uniformly convergent, the series is the cosine series of $E_f$, whose coefficients $-\pi\hat f_0\log 2$ and $-(\pi/k)\hat f_k$ determine $f$ by the completeness of $\{\cos k\varphi\}_{k\ge0}$ in $L^2(0,\pi)$; the same completeness gives $J_0(f) > 0$ when $\hat f_0 = 0 \ne f$.
\end{proof}

\begin{lemma}[A Minimum Principle for Potentials]
\label{lem:minimum}
Let $\mu_1$ and $\mu_2$ be finite positive Borel measures with compact supports in $\mathbb R$ such that $\mu_2(\mathbb R) \le \mu_1(\mathbb R)$ and $U^{\mu_1}$ is continuous on $\mathbb C$, and let $K \subset \mathbb R$ be a nonempty compact set such that the positive part of $\mu_1 - \mu_2$ is carried by $K$. If $c \in \mathbb R$ and $u := U^{\mu_2} - U^{\mu_1} \ge c$ on $K$, then $u \ge c$ on $\mathbb C$. If moreover $u(z_0) = c$ at some $z_0 \in \mathbb C \setminus K$, then $c = 0$, $\mu_1(\mathbb R) = \mu_2(\mathbb R)$ and $u = 0$ on $\mathbb C\setminus K$.
\end{lemma}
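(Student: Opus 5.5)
The approach is the classical minimum principle for logarithmic potentials, applied to the difference $u = U^{\mu_2} - U^{\mu_1}$ on the unbounded domain $\Omega := \overline{\mathbb C}\setminus K$. Write $\nu := \mu_1 - \mu_2 = \nu^+ - \nu^-$ (Jordan decomposition), so that $u = U^{\nu^+} - U^{\nu^-}$ up to the cancelling common part. Off the support of $\nu^+$, and in particular on $\mathbb C\setminus K$, the function $U^{\nu^+}$ is harmonic. Meanwhile $U^{\nu^-}$ is superharmonic, so $u = U^{\mu_2} - U^{\mu_1}$ is superharmonic on $\mathbb C\setminus K$. One could note that $U^{\mu_2}$ is superharmonic and $-U^{\mu_1}$ is subharmonic, but it is cleaner to use $u = U^{\nu^+}$-part harmonic plus $-U^{\nu^-}$-part. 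Concretely, $u = -(U^{\nu^+} - U^{\nu^-})$ with a sign chosen so that $u = U^{\nu^-} - U^{\nu^+}$. Since $U^{\nu^-}$ is superharmonic everywhere and $U^{\nu^+}$ is harmonic off $K$, $u$ is superharmonic on $\mathbb C\setminus K$ and lower semicontinuous there. At infinity, $u(z) = (\mu_1(\mathbb R) - \mu_2(\mathbb R))\log|z| + o(1)$, which tends to $+\infty$ if the masses differ and to $0$ if they are equal. In either case $\liminf_{z\to\infty} u \ge 0$.

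\emph{Step 1 (boundary behaviour).} The main obstacle is the boundary values on $K$, since $u$ need not be continuous up to $K$ ($U^{\mu_2}$ is only lower semicontinuous). The needed inequality, however, goes the right way. For $\zeta \in K$, lower semicontinuity of $U^{\mu_2}$ and continuity of $U^{\mu_1}$ (assumed) give $\liminf_{z\to\zeta} u(z) \ge u(\zeta) \ge c$. If $c \le 0$, then together with the behaviour at infinity the minimum principle for superharmonic functions on $\Omega$ (each component, with $\infty$ a boundary point, or via the extended-superharmonic formulation on $\overline{\mathbb C}\setminus K$) gives $u \ge \min(c,0) = c$ on $\mathbb C\setminus K$. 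If $c > 0$, the behaviour at infinity only yields $u \ge 0$ there. To get $u \ge c$ I would argue directly: $\mu_2 \le \mu_1$ in mass and $u \ge c > 0$ on $K$. Integrating $u$ against the equilibrium measure of $K$ if $K$ has positive capacity, or using Fubini against $\nu^+$, should force a contradiction unless the masses differ. This case is not needed in the applications, where $c \le 0$ typically, so I would check it last. Note that the fact $u \ge c$ on $K$ itself is the hypothesis, so no polar-set issue arises.

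\emph{Step 2 (equality case).} If $u(z_0) = c$ at some $z_0 \in \mathbb C\setminus K$, then $u$ attains its infimum over $\Omega$ at an interior point, so by the strong minimum principle $u \equiv c$ on the component of $\Omega$ containing $z_0$. Since $K \subset \mathbb R$, the set $\Omega$ is connected (it is the complement of a compact subset of the line, containing $\infty$). Hence $u \equiv c$ on $\mathbb C\setminus K$. Letting $z\to\infty$ and using the asymptotics, the $\log|z|$ coefficient must vanish, so $\mu_1(\mathbb R) = \mu_2(\mathbb R)$, and then $u \to 0$ forces $c = 0$.
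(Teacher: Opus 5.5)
\textbf{There is a genuine gap: the case of equal masses with $c > 0$.} Your Step~1 works when $\mu_1(\mathbb R) > \mu_2(\mathbb R)$, provided you use that $u \to +\infty$ at $\infty$ rather than only $\liminf u \ge 0$; the weak minimum principle then gives $u \ge c$ for every real $c$. It also works when the masses are equal and $c \le 0$. Your Step~2 is correct once Step~1 is.

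In the remaining case, equal masses and $c > 0$, the weak minimum principle on $\mathbb C\setminus K$ gives only $u \ge \min\{c,0\} = 0$. The integration argument you sketch is not carried out. To make it work you would need Frostman's theorem for the equilibrium measure of $K$, the fact that $(\mu_1-\mu_2)^+ \le \mu_1$ charges no polar set because $U^{\mu_1}$ is continuous, and a separate treatment of polar $K$.

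Nor can this case be set aside as unused. The lemma is stated for every real $c$, and the paper applies it with constants whose sign is not established beforehand. Examples are $c = b + E_\tau^\ast$ in the proof of Proposition~\ref{prop:extremal}, where $\nu(\mathbb R) = 1$ is allowed, and $c = U_\omega(1) + E_\tau^\ast$ in the proof of Proposition~\ref{prop:tail_properties}. In those applications the content of the $c > 0$ branch is precisely that this situation cannot arise.

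\textbf{The missing idea is the strong minimum principle at the point at infinity, combined with lower semicontinuity on $K$.}
\begin{itemize}
\item With equal masses, $u(z) = H(1/z)$ for large $|z|$, where $H$ is harmonic near $0$ with $H(0) = 0$. So $u$ is superharmonic on the connected open set $\overline{\mathbb C}\setminus K$ of the Riemann sphere, with $u(\infty) = 0$.
\item If $c > 0$, your weak principle gives $u \ge 0$ there. The minimum $0$ is therefore attained at the interior point $\infty$, and the strong minimum principle gives $u \equiv 0$ on $\mathbb C\setminus K$.
\item Every $\zeta \in K$ is a limit of the points $\zeta + i/n \in \mathbb C\setminus K$. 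Lower semicontinuity of $u$ (which you already established) gives $u(\zeta) \le \liminf_{z\to\zeta}u(z) = 0 < c$, contradicting $u \ge c$ on $K$.
\end{itemize}
Hence equal masses and $c > 0$ are incompatible with the hypothesis, and the statement holds vacuously in that case.

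This is essentially the paper's argument. It shows that $\inf_{\mathbb C}u$ is attained, using $u \to +\infty$ or the harmonic function $H$ at $0$. It then applies the strong minimum principle on $\mathbb C\setminus K$ and closes the equal-mass case with the same density and lower-semicontinuity step.
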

\begin{proof}
The potential of a finite positive measure with compact support is superharmonic on $\mathbb C$ and harmonic outside its support \cite[Thm.~3.1.2]{ransford1995potential}. Since $\mu_2 - \mu_1 = (\mu_2 - \mu_1)^+ - (\mu_1 - \mu_2)^+$ and the second measure is carried by $K$, the function $u$ is superharmonic on $D := \mathbb C\setminus K$, which is connected because $K \subset \mathbb R$, and $u$ is lower semicontinuous on $\mathbb C$ with values in $(-\infty,\infty]$. Let $R_0$ exceed the moduli of the points of both supports. For $|z| > R_0$, $u(z) = (\mu_1(\mathbb R) - \mu_2(\mathbb R))\log|z| + H(1/z)$ with $H(\zeta) := \mathrm{Re}\int\log(1 - s\zeta)\,d(\mu_1-\mu_2)(s)$, a harmonic function on $|\zeta| < 1/R_0$ with $H(0) = 0$.

Suppose that $m := \inf_{\mathbb C}u < c$. The infimum of $u$ over a closed disc is attained, since $u$ is lower semicontinuous. If $\mu_1(\mathbb R) > \mu_2(\mathbb R)$, then $u \to +\infty$ as $z \to \infty$, and $m$ is attained. If the masses are equal and $m$ is not attained, then $m = \lim_{z\to\infty}u(z) = 0 = H(0)$, so $H \ge H(0)$ near $0$; by the maximum principle for the subharmonic function $-H$ \cite[Thm.~2.3.1]{ransford1995potential}, $H$ vanishes near $0$, and $m$ is attained after all. In either case $m$ is attained at a point of $D$, since $u \ge c > m$ on $K$, and the maximum principle for $-u$ on $D$ shows that $u = m$ on $D$. This is impossible if $\mu_1(\mathbb R) > \mu_2(\mathbb R)$; if the masses are equal, it gives $m = \lim_{z\to\infty}u(z) = 0$, and, $D$ being dense in $\mathbb C$, the lower semicontinuity of $u$ gives $u \le 0 < c$ on $K$, a contradiction. Hence $u \ge c$ on $\mathbb C$. If $u(z_0) = c$ with $z_0 \in D$, then $u$ attains its infimum over $D$ at $z_0$, so $u = c$ on $D$; as above this forces $\mu_1(\mathbb R) = \mu_2(\mathbb R)$ and $c = 0$.
\end{proof}

\begin{proof}[Proof of Proposition~\ref{prop:equilibrium}]
\emph{Existence, uniqueness and symmetry.} We identify $\mathscr A_\omega$ with the set of $f \in L^\infty(0,\pi)$ with $0 \le f \le 1/\omega$ and $\hat f_0 = 1$, a convex set that is compact in the weak-$\ast$ topology of $L^\infty(0,\pi) = L^1(0,\pi)^\ast$. This topology is metrizable on bounded sets, $L^1(0,\pi)$ being separable. If $f_n \to f$ in it, then $\hat f_{n,k} \to \hat f_k$ for every $k$ and $\int_{\Theta_I}f_n \to \int_{\Theta_I}f$, and since $\sum_{k>K}k^{-1}\hat f_{n,k}^{\,2} \le (\pi/(2K))\norm{f_n}_{L^2}^2 \le \pi^2/(2K\omega^2)$ by Bessel's inequality, Lemma~\ref{lem:cheb} gives $J_0(f_n) \to J_0(f)$. So $J_\tau$ is weak-$\ast$ continuous on the compact set $\mathscr A_\omega$ and attains its minimum. For $\lambda \ne \lambda'$ in $\mathscr A_\omega$ the bilinearity of $J_0$ gives $J_\tau(\tfrac12(\lambda + \lambda')) = \tfrac12J_\tau(\lambda) + \tfrac12J_\tau(\lambda') - \tfrac14J_0(\lambda - \lambda')$, and $J_0(\lambda - \lambda') > 0$ by Lemma~\ref{lem:cheb}; hence the minimizer $\lambda_\tau$ is unique. The reflection $\vartheta \mapsto \pi - \vartheta$ leaves $\mathscr A_\omega$, $\Theta_I$ and the kernel $\log|\cos\vartheta - \cos\varphi|$ invariant, hence $J_\tau$, and therefore fixes $\lambda_\tau$. The envelope $E := E_\tau$ is continuous by Lemma~\ref{lem:cheb}. Below $\lambda := \lambda_\tau = f\,d\theta$.

\emph{Equilibrium conditions.} For $\mu \in \mathscr A_\omega$ and $0 < \eps \le 1$, the bilinearity of $J_0$ and the symmetry of the kernel give $J_\tau(\lambda + \eps(\mu - \lambda)) = J_\tau(\lambda) + 2\eps\int G\,d(\mu-\lambda) + \eps^2J_0(\mu - \lambda)$ with $G := -E - \tau\mathbf 1_{\Theta_I}$, so the minimality of $\lambda$ forces $\int G\,d\mu \ge \int G\,d\lambda$ for all $\mu \in \mathscr A_\omega$. We claim that there is $w \in \mathbb R$ with
\begin{equation}\label{eq:bathtub}
G \le -w \ \text{ a.e.\ on } \{f > 0\}, \qquad G \ge -w \ \text{ a.e.\ on } \{f < 1/\omega\}.
\end{equation}
Both sets have positive measure, because $\hat f_0 = 1 < \pi/\omega$. If the essential supremum of $G$ on the first set exceeded the essential infimum of $G$ on the second, there would be $b < b'$ such that $A := \{f < 1/\omega,\ G < b\}$ and $B := \{f > 0,\ G > b'\}$ have positive measure, and for small $\eta > 0$ so would $A_\eta := A \cap \{f < 1/\omega - \eta\}$ and $B_\eta := B \cap \{f > \eta\}$. With $\eps := \eta\min\{|A_\eta|, |B_\eta|\}$, the measure $\mu := \lambda + \eps\,(\mathbf 1_{A_\eta}/|A_\eta| - \mathbf 1_{B_\eta}/|B_\eta|)\,d\theta$ lies in $\mathscr A_\omega$ and satisfies $\int G\,d\mu - \int G\,d\lambda \le \eps(b - b') < 0$, a contradiction; so \eqref{eq:bathtub} holds with $-w$ the essential supremum of $G$ on $\{f > 0\}$. Conversely, if some $\lambda' = f'\,d\theta \in \mathscr A_\omega$ satisfies \eqref{eq:bathtub} with its own function $G'$ and some $w'$, then for every $\mu \in \mathscr A_\omega$ the function $(G' + w')(d\mu/d\theta - f')$ is nonnegative a.e., and $J_\tau(\mu) - J_\tau(\lambda') = 2\int(G' + w')\,d(\mu - \lambda') + J_0(\mu - \lambda') \ge 0$, so $\lambda' = \lambda_\tau$. Every point of the support of $\lambda|_{\Theta_E}$ is a limit of points of $\Theta_E \cap \{f > 0\}$ at which $G \le -w$, i.e.\ $E \ge w$, and similarly for the other three sets below; since $E$ is continuous, \eqref{eq:bathtub} gives
\begin{equation}\label{eq:pointwise}
\begin{aligned}
E &\ge w \ \text{ on } \operatorname{supp}\lambda|_{\Theta_E}, & E &\le w \ \text{ on } \operatorname{supp}(\omega^{-1}d\theta - \lambda)|_{\Theta_E}, \\
E + \tau &\ge w \ \text{ on } \operatorname{supp}\lambda|_{\Theta_I}, & E + \tau &\le w \ \text{ on } \operatorname{supp}(\omega^{-1}d\theta - \lambda)|_{\Theta_I}.
\end{aligned}
\end{equation}

\emph{Two global bounds.} Let $\lambda_0 := \pi^{-1}d\theta$, the arcsine measure, whose envelope is $-\log 2$ by Lemma~\ref{lem:cl2}(iv); it belongs to $\mathscr A_\omega$ and minimizes $J_0$ there, because $J_0(\lambda) = \hat f_0^{\,2}\log 2 + \sum_{k\ge1}(2/k)\hat f_k^{\,2} \ge \log 2 = J_0(\lambda_0)$ for $\lambda = f\,d\theta \in \mathscr A_\omega$ by Lemma~\ref{lem:cheb}. Suppose first $\lambda \ne \lambda_0$, and let $S^\pm$ be the supports of $(f - \pi^{-1})^\pm d\theta$, both nonempty since $\lambda$ and $\lambda_0$ have the same mass. Every neighbourhood of a point of $S^-$ meets $\{f < 1/\pi\} \subset \{f < 1/\omega\}$ in a set of positive measure, so the point lies in the support of $(\omega^{-1}d\theta - \lambda)|_{\Theta_E}$ or of $(\omega^{-1}d\theta - \lambda)|_{\Theta_I}$, and $E \le w$ on $S^-$ by \eqref{eq:pointwise}; likewise every point of $S^+$ lies in the support of $\lambda|_{\Theta_E}$ or of $\lambda|_{\Theta_I}$, and $E \ge w - \tau$ on $S^+$. The potentials $U^\lambda$ and $U^{\lambda_0}$ are continuous on $\mathbb C$ by Lemma~\ref{lem:env}(d), and $U^{\lambda_0} = \log 2$ on $[-1,1]$. Lemma~\ref{lem:minimum} with $(\mu_1,\mu_2,K) = (\lambda_0,\lambda,\cos S^-)$ and $u = U^\lambda - U^{\lambda_0} = -E - \log 2 \ge -w - \log 2$ on $S^-$ gives $E \le w$ on $[0,\pi]$, and with $(\mu_1,\mu_2,K) = (\lambda,\lambda_0,\cos S^+)$ and $u = U^{\lambda_0} - U^\lambda = E + \log 2 \ge w - \tau + \log 2$ on $S^+$ it gives $E \ge w - \tau$ on $[0,\pi]$. If $\lambda = \lambda_0$, then $E = -\log 2$, and \eqref{eq:pointwise}, in which all four supports are the closures of $\Theta_E$ and $\Theta_I$, forces $w = -\log 2$ and $\tau = 0$. In both cases
\begin{equation}\label{eq:two_sided}
w - \tau \le E \le w \qquad\text{on } [0,\pi].
\end{equation}
If $\lambda(\Theta_E) > 0$, then $E = w$ on the nonempty support of $\lambda|_{\Theta_E}$ by \eqref{eq:pointwise} and \eqref{eq:two_sided}, so $w = E_\tau^\ast$; then \eqref{eq:two_sided} is (i), and (ii) follows from \eqref{eq:pointwise} and (i).

\emph{Part (iv), and $\lambda_\tau(\Theta_E) > 0$ for $\tau < \kappa_\sigma$.} By Lemma~\ref{lem:clausen}, $E_{\mu_\omega}(\vartheta) = -U_\omega(\cos\vartheta)$ attains its maximum $-U_\omega(1)$ over $[0,\pi]$ at $\vartheta \in \{0,\pi\}$ and its minimum $-U_\omega(0)$ over $\Theta_I$ at $\pi/2$. For $\tau \ge \kappa_\sigma = U_\omega(0) - U_\omega(1)$ the node measure satisfies \eqref{eq:bathtub} with $w := -U_\omega(1)$: its density is $1/\omega$ on $\Theta_I$, where $G = U_\omega(\cos\vartheta) - \tau \le U_\omega(0) - \tau \le U_\omega(1)$, and $0$ on $\Theta_E$, where $G = U_\omega(\cos\vartheta) \ge U_\omega(1)$. Hence $\lambda_\tau = \mu_\omega$, and (i) and (ii) hold with $E_\tau^\ast = -U_\omega(1)$, since $E_{\mu_\omega} \ge -U_\omega(0) \ge E_\tau^\ast - \tau$ and $(\omega^{-1}d\theta - \mu_\omega)|_{\Theta_I} = 0$. Conversely, if $\lambda_\tau(\Theta_E) = 0$, then $\lambda_\tau = \mu_\omega$, the only element of $\mathscr A_\omega$ carried by $\Theta_I$, and \eqref{eq:pointwise} gives $-U_\omega(1) = \max_{\bar\Theta_E}E \le w \le \min_{\Theta_I}E + \tau = -U_\omega(0) + \tau$, that is $\tau \ge \kappa_\sigma$. So $\lambda_\tau(\Theta_E) > 0$ for $\tau < \kappa_\sigma$, which completes (i) and (ii).

\emph{The layer.} Let $\tau < \kappa_\sigma$. If $A \subset (0,\pi)$ is an open interval with $f = 1/\omega$ a.e.\ on $A$, then $\nu := \omega^{-1}d\theta - \lambda$ is a positive measure of mass $\pi/\omega - 1 > 0$ with $\nu(A) = 0$, and $E = E_{\omega^{-1}d\theta} - E_\nu = -(\pi/\omega)\log 2 - E_\nu$ by Lemma~\ref{lem:cl2}(iv); for $z$ in the open interval $\cos A$, $\frac{d^2}{dz^2}\int\log|z - \cos\varphi|\,d\nu(\varphi) = -\int(z - \cos\varphi)^{-2}\,d\nu(\varphi) < 0$, so $z \mapsto E(\arccos z)$ is strictly convex on $\cos A$. Let $O$ be the union of the relatively open subsets of $\Theta_I$ on which $f = 1/\omega$ a.e.; by the Lindel\"of property it is a countable union of such sets, so that $f = 1/\omega$ a.e.\ on $O$. Every point of $\Theta_I \setminus O$ lies in the support of $(\omega^{-1}d\theta - \lambda)|_{\Theta_I}$, where $E = E_\tau^\ast - \tau$ by (ii). A component of $O$ that contains neither $\theta_0$ nor $\pi - \theta_0$ is therefore an open interval $(\alpha,\beta)$ with $E(\alpha) = E(\beta) = E_\tau^\ast - \tau$, and strict convexity gives $E < E_\tau^\ast - \tau$ inside, contrary to (i). A component that contains both $\theta_0$ and $\pi - \theta_0$ is $\Theta_I$, which would give $\lambda = \mu_\omega$. By the symmetry of $\lambda$, $O = [\theta_0,\theta_\tau) \cup (\pi - \theta_\tau, \pi - \theta_0]$ for some $\theta_\tau \in [\theta_0,\pi/2]$, with $O = \emptyset$ if $\theta_\tau = \theta_0$, and $\theta_\tau < \pi/2$ since otherwise $f = 1/\omega$ a.e.\ on $\Theta_I$. Then $\lambda = \omega^{-1}d\theta$ on $[\theta_0,\theta_\tau]\cup[\pi-\theta_\tau,\pi-\theta_0]$, and $[\theta_\tau, \pi - \theta_\tau] \subset \Theta_I\setminus O$, where $E = E_\tau^\ast - \tau$. Moreover, $z \mapsto E(\arccos z)$ is continuous on $[\cos\theta_\tau, c_0]$ and convex inside, so $\max_{[\theta_0,\theta_\tau]}E = \max\{E(\theta_0), E(\theta_\tau)\} = E(\theta_0)$, because $E(\theta_\tau) = E_\tau^\ast - \tau \le E(\theta_0)$ by (i); by symmetry $\max_{\Theta_I}E = E(\theta_0)$. Finally, $\theta_\tau$ is unique: if $\theta' < \theta''$ both had the properties in (iii), then $f = 1/\omega$ a.e.\ and $E = E_\tau^\ast - \tau$ on $(\theta',\theta'')$, contrary to the strict convexity of $z \mapsto E(\arccos z)$ on $\cos(\theta',\theta'')$ shown above. This completes (iii).
\end{proof}

\subsection{Configurations and Lattice Products}

We now turn to the configurations of Definition~\ref{def:configuration}. Their envelope \eqref{eq:envelope} is $E_\nu$ for the measure $\nu$ of density $1/\omega$ on $S^\ast$ and $\rho_k$ on $A_k^\ast$: on the layer $\nu$ coincides with the node measure, and the arcs carry the free zeros. The envelope has a closed form: for $0 \le \alpha < \beta \le \pi$ and $\vartheta \in [0,\pi]$,
\begin{equation}\label{eq:arc_integral}
\begin{aligned}
\int_\alpha^\beta\log|\cos\vartheta - \cos\varphi|\,d\varphi = -(\beta-\alpha)\log 2 &+ \mathrm{Cl}_2(\vartheta+\alpha) - \mathrm{Cl}_2(\vartheta+\beta) \\ &+ \mathrm{Cl}_2(\vartheta-\beta) - \mathrm{Cl}_2(\vartheta-\alpha),
\end{aligned}
\end{equation}
which follows from $\log|\cos\vartheta - \cos\varphi| = -\log 2 + \log|2\sin\tfrac12(\vartheta+\varphi)| + \log|2\sin\tfrac12(\vartheta-\varphi)|$ and $\mathrm{Cl}_2'(x) = -\log|2\sin(x/2)|$ by the substitutions $u = \vartheta \pm \varphi$, exactly as in Step~2 of the proof of Lemma~\ref{lem:clausen}. The maxima in Definition~\ref{def:configuration} exist because $E$ is continuous (Lemma~\ref{lem:E_modulus}).

\begin{lemma}[Lattice Products]
\label{lem:construction}
Let $\mathcal C$ be a configuration. For $N \ge N_0$ let $Z_N$ be the finite multiset of points of $[0,\pi]$, counted with multiplicity, consisting of
\begin{enumerate}
    \item[(i)] the layer nodes, i.e.\ the nodes $\theta_m \in S^\ast$;
    \item[(ii)] for each arc $A_k$ with $\beta_k < \pi/2$, the lattice $\varphi^{(k)}_i := \alpha_k + (i-1)d_k$, $1 \le i \le K_k$, together with its mirror image $\pi - \varphi^{(k)}_i$, where $K_k$ is the smallest even integer with $K_k - 1 \ge \rho_k(\beta_k - \alpha_k)N$ and $d_k := (\beta_k-\alpha_k)/(K_k-1)$; for an arc with $\beta_k = \pi/2$, the lattice $\varphi^{(k)}_i := \alpha_k + (i-1)d_k$, $1 \le i \le K_k$, on $A_k^\ast = [\alpha_k, \pi-\alpha_k]$, where $K_k$ is the smallest even integer with $K_k - 1 \ge \rho_k(\pi-2\alpha_k)N$ and $d_k := (\pi - 2\alpha_k)/(K_k-1)$;
    \item[(iii)] the point $\pi/2$ if $N$ is even.
\end{enumerate}
(A point that belongs to two of these families, for instance a node at $\theta_s$ that is also the first point of a centre lattice, or the common endpoint of two adjacent arcs, is counted once for each family.) Then $Z_N$ is invariant under $\zeta \mapsto \pi - \zeta$ with multiplicities, its number of points $|Z_N|$ counted with multiplicity satisfies $|Z_N| \equiv N-1 \pmod 2$ and $|Z_N| \le N-1$, and the polynomial $P_N(z) := \prod_{\zeta\in Z_N}(z - \cos\zeta)$, the product being taken over the multiset, satisfies $P_N(-z) = (-1)^{N-1}P_N(z)$ and, with $E$ the envelope of $\mathcal C$,
\begin{enumerate}
    \item[(a)] $N^{-1}\log|P_N(\cos\vartheta)| \le E(\vartheta) + C\log N/N$ for every $\vartheta \in [0,\pi]$;
    \item[(b)] $N^{-1}\log|P_N(\cos\vartheta)| \ge E(\vartheta) - C\log N/N$ whenever $\vartheta \in [0,\pi]$ has distance at least $\Delta/4$ from every layer node, at least $d_k/4$ from every point of the $k$-th lattice for each $k$, and at least $1/N$ from $\pi/2$.
\end{enumerate}
\end{lemma}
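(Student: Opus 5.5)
The plan is to treat the combinatorial claims first and the potential estimates (a), (b) afterwards, arc by arc.

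\emph{Symmetry, parity and count.} The map $\zeta \mapsto \pi - \zeta$ sends $\theta_m$ to $\theta_{N+1-m}$ and maps $S^\ast$ onto itself, so the layer nodes form a symmetric set. For an arc with $\beta_k < \pi/2$ the mirror image is added explicitly. For an arc with $\beta_k = \pi/2$ the lattice on $[\alpha_k, \pi-\alpha_k]$ is symmetric because it is equispaced with both endpoints included. Since $K_k$ is even, this lattice does not contain $\pi/2$, and it splits into $K_k/2$ mirror pairs. Layer nodes never equal $\pi/2$, since $\theta_s < \pi/2$. Every family in (i) and (ii) therefore has even cardinality, and only (iii) adds the single point $\pi/2$ when $N$ is even; this gives $|Z_N| \equiv N-1 \pmod 2$. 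For the upper bound on $|Z_N|$ I will count the families separately:
\begin{itemize}
\item[] the layer nodes number at most $2\bigl((\theta_s-\theta_0)/\Delta + 1\bigr) = 2(\theta_s-\theta_0)(N-1)/\omega + 2$;
\item[] each lattice has at most $\rho_k|A_k^\ast|N + 4$ points (at most $\rho_k(\beta_k-\alpha_k)N + 3$ on each half for a non-centre arc).
\end{itemize}
Hence $|Z_N| \le m(\mathcal C)N + O(1) < N-1$ for $N \ge N_0$, because $m(\mathcal C) < 1$. Because the zero set of $P_N$ is invariant under $z \mapsto -z$ (with multiplicities) and $\cos\frac{\pi}{2} = 0$, we get $P_N(-z) = (-1)^{|Z_N|}P_N(z) = (-1)^{N-1}P_N(z)$.

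\emph{Upper bound (a).} Write $N^{-1}\log|P_N(\cos\vartheta)|$ as a sum of contributions, one from each family, and compare each with the corresponding term of the envelope \eqref{eq:envelope}. For an equispaced family $\{\varphi_i\}$ with spacing $d$ on an arc $A$, I will compare $d\sum_i \log|\cos\vartheta - \cos\varphi_i|$ with $\int_A \log|\cos\vartheta - \cos\varphi|\,d\varphi$ cell by cell, using the two elementary facts stated at the beginning of Appendix~\ref{app:rates}. After the factorization \eqref{eq:kernel_split} the kernel is a sum of functions $h(\vartheta\pm\varphi)$, each monotone on both sides of its logarithmic singularity. On every cell $[\varphi_i, \varphi_{i+1}]$ not containing the singularity, the value at the endpoint nearer the singularity is dominated by the cell average up to a telescoping error. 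The one or two cells near the singularity contribute at most $O(d\log(1/d))$ upward. The total upward error is then $O(\log N)$ after multiplication by $1/d \asymp N$.

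The densities also have to be matched: the lattices have density $1/(Nd_k) = \rho_k + O(1/N)$, the layer nodes have density $(N-1)/(N\omega)$ rather than $1/\omega$, and the endpoint effects amount to $O(1)$ points. Each such point changes the sum by at most $\log 2$ upward, since $|\cos\vartheta - \cos\varphi| \le 2$, and the density mismatch contributes $O(1)$ times a bounded integral. The extra factor $|\cos\vartheta|$ from (iii) is at most one. Together these give (a).

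\emph{Lower bound (b), the expected main obstacle.} Here the singular terms must be controlled from below, and this is where the distance hypotheses enter. For each family I will separate the $O(1)$ points nearest to $\vartheta$ and to $\pi-\vartheta$ (the singularity of $h(\vartheta+\varphi)$ matters only near $\vartheta\in\{0,\pi\}$ and is treated the same way). By hypothesis these points lie at distance at least $d/4$ or $\Delta/4$ from $\vartheta$. Then $|\cos\vartheta - \cos\varphi| \ge c\,|\vartheta-\varphi|\,(\vartheta+\varphi)$, which is at least $cN^{-2}$ near the endpoints, so each separated term is at least $-C\log N$. For the remaining points, the same cell-by-cell monotonicity comparison, run in the opposite direction, gives
\[
\sum_{\text{far }i} \log|\cos\vartheta - \cos\varphi_i| \ \ge\ d^{-1}\int_A \log|\cos\vartheta - \cos\varphi|\,d\varphi - C\log N,
\]
using that the integral over the excluded cells is only $O(d\log N)$. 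The point $\pi/2$ contributes $\log|\cos\vartheta| \ge \log(c/N)$, since $\vartheta$ is at distance at least $1/N$ from $\pi/2$. Finally, the density corrections enter with the bound $|\log|\cos\vartheta - \cos\varphi|| \le C\log N$, which holds at the relevant points by the same separation.

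The delicate point is to keep the separated-point losses uniform in $\vartheta$ near $0$ and $\pi$, where the cosine map degenerates quadratically. I will handle it in the $z$ variable: there the lattices remain separated by $\gtrsim N^{-2}$, so each nearby factor costs only $O(\log N)$.
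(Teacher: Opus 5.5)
Your proof is correct, and its architecture is the paper's. You do the parity and symmetry count first. You then split $\log|P_N(\cos\vartheta)|$ into the contributions of the layer nodes and of the lattices and compare each with its term of the envelope, matching the densities through $d_k^{-1}=\rho_kN+O(1)$ and $1/\Delta=(N-1)/\omega$. The factor $|\cos\vartheta|$ of the point $\pi/2$ is bounded by one for (a) and by $2/(\pi N)$ for (b). The points within a quarter spacing of $\vartheta$ are bounded by $\log 2$ for (a) and through the separation hypothesis for (b).

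You differ from the paper only in the Riemann-sum step.
\begin{itemize}
\item The paper quotes the two-sided estimate of Lemma~\ref{lem:riemann_arc}. That estimate is proved as Lemma~\ref{lem:riemann}: the midpoint rule with a bound on $h''$ on the cells away from $\vartheta$, plus explicit integrals of the logarithm over the few cells near $\vartheta$.
\item You prove one-sided bounds directly from the fact that $h(\vartheta-\varphi)$ increases with $|\vartheta-\varphi|$. For (a) you attach each point to the cell on its far side, and for (b) to the cell on its near side.
\end{itemize}
Your route is more elementary and matches the one-sided statements: (a) needs no separation at all, and (b) needs only $g_\vartheta\le\log 2$ on the excluded cells. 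The paper's two-sided lemma is also used for the node lattice (Lemma~\ref{lem:discrete_potential}, through Lemma~\ref{lem:riemann}) and for interpolation sets (Lemma~\ref{lem:lattice_lagrange}), so quoting it here costs the paper nothing.

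Three small corrections, none affecting the argument:
\begin{itemize}
\item The ``delicate point'' near $\vartheta\in\{0,\pi\}$ does not arise. Every point of $Z_N$ lies in $[\xi,\pi-\xi]$, so $\sin\frac12(\vartheta+\varphi)\ge\sin(\xi/2)$ and $h(\vartheta+\varphi)$ has no singularity on the relevant range. Its turning point at $\varphi=\pi-\vartheta$ is a bounded maximum and costs only $O(d)$. A point at distance at least $d_k/4$ from $\vartheta$ therefore gives $|\cos\vartheta-\cos\varphi|\ge c/N$, not merely $cN^{-2}$.
\item A mirrored lattice has fewer than $\rho_k|A_k^\ast|N+6$ points, not $+4$. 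Also, the mirrored arcs include the centre arcs with $\beta_k<\pi/2$, not only the extension arcs.
\item In the layer the nodes cover $[\theta_0,\theta_q]$ with $\theta_q\le\theta_s<\theta_q+\Delta$. The missing piece $[\theta_q,\theta_s]$ is an integral, not an $O(1)$ count of points. For (a) it needs $\int_{\theta_q}^{\theta_s}|g_\vartheta|\le C\Delta\log(1/\Delta)$, which the paper takes from the end half-cell estimate in the proof of Lemma~\ref{lem:riemann}.
\end{itemize}
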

\begin{proof}
\emph{Symmetry and counting.} The layer nodes come in pairs $\theta_m \leftrightarrow \pi - \theta_m = \theta_{N+1-m}$ with $\theta_m < \pi/2$, because $\theta_s < \pi/2$; each lattice in (ii) is symmetric by construction and, having an even number of points, does not contain $\pi/2$. Hence the multiset $Z_N$ without the point of (iii) is symmetric with multiplicities and has an even number of points, so $|Z_N| \equiv N - 1 \pmod 2$, and $P_N(-z) = \prod_{\zeta\in Z_N}(-1)\bigl(z - \cos(\pi-\zeta)\bigr) = (-1)^{|Z_N|}P_N(z)$ because $\cos(\pi - \zeta) = -\cos\zeta$ and $\zeta \mapsto \pi - \zeta$ permutes the multiset. The layer contains at most $2(\theta_s - \theta_0)N/\omega + 2$ nodes, and the smallest even integer with $K_k - 1 \ge x$ is less than $x + 3$, so $|Z_N| \le m(\mathcal C)N + 6Q + 3$, which is at most $N-1$ once $N(1 - m(\mathcal C)) \ge 6Q + 4$.

\emph{Potential estimates.} We have $\log|P_N(\cos\vartheta)| = \sum_{\zeta\in Z_N}g_\vartheta(\zeta)$. The layer nodes in $S$ are $\theta_1, \dots, \theta_q$ with $\theta_q \le \theta_s < \theta_q + \Delta$, and $q \ge 2$ once $\Delta \le \theta_s - \theta_0$. If $\vartheta$ is at distance at least $\Delta/4$ from them, Lemma~\ref{lem:riemann_arc} with $[\alpha,\beta] = [\theta_0, \theta_q] \subset [\xi, \pi - \xi]$, $K = q$ and $d = \Delta \le \xi/4 = \min\{\xi,1\}/4$ (which holds for $N \ge N_0$, as $\xi < 1$) gives
\[
\sum_{m=1}^q g_\vartheta(\theta_m) = \frac1\Delta\int_{\theta_0}^{\theta_q}g_\vartheta + O(\log N) = \frac N\omega\int_{\theta_0}^{\theta_s}g_\vartheta + O(\log N),
\]
since $1/\Delta = (N-1)/\omega$, $|\int_{\theta_0}^{\theta_s}g_\vartheta| \le C$, and $|\int_{\theta_q}^{\theta_s}g_\vartheta| \le C\Delta\log(1/\Delta)$ by the estimate of the end half-cells in the proof of Lemma~\ref{lem:riemann}, $[\theta_q, \theta_s]$ being an interval of length at most $\Delta$ inside $[\xi/2, \pi - \xi/2]$. The mirror layer is treated in the same way. For the $k$-th lattice on an arc $[\alpha,\beta]$ (one of $A_k$, $\pi - A_k$, or $A_k^\ast$), Lemma~\ref{lem:riemann_arc} with $d = d_k \le 1/(\rho_kN) \le \min\{\xi,1\}/4$ (for $N \ge N_0$) gives $\sum_ig_\vartheta(\varphi_i^{(k)}) = d_k^{-1}\int_\alpha^\beta g_\vartheta + O(\log N)$ when $\vartheta$ is at distance at least $d_k/4$ from its points, and $d_k^{-1} = (K_k-1)/(\beta - \alpha) \in [\rho_kN, \rho_kN + 2/(\beta-\alpha))$ together with $|\int_\alpha^\beta g_\vartheta| \le C$ gives $d_k^{-1}\int_\alpha^\beta g_\vartheta = \rho_kN\int_\alpha^\beta g_\vartheta + O(1)$. Adding the finitely many lattices and, for even $N$, the term $g_\vartheta(\pi/2) = \log|\cos\vartheta|$, we obtain
\[
\log|P_N(\cos\vartheta)| = NE(\vartheta) + O(\log N) + \begin{cases}\log|\cos\vartheta|, & N \text{ even},\\ 0, & N \text{ odd},\end{cases}
\]
under the hypotheses of (b); there $|\cos\vartheta| = |\sin(\vartheta - \pi/2)| \ge 2/(\pi N)$ by the first elementary fact, since $1/N \le |\vartheta - \pi/2| \le \pi/2$, which proves (b). For (a), each lattice contains at most one point within a quarter spacing of $\vartheta$, its points being one spacing apart; omitting these points, whose factors satisfy $g_\vartheta(\zeta) \le \log 2$, and using case (b) of Lemma~\ref{lem:riemann_arc} for the lattices concerned and $\log|\cos\vartheta| \le 0$ gives the upper bound.
\end{proof}

\subsection{From Polynomials to Coefficient Vectors}

\begin{lemma}[Boundary Correction]
\label{lem:correction}
Let $\chi_N^{\pm} := \ell_1^V \pm \ell_N^V \in V_N$. For $N \ge N_0$ and both signs:
\begin{enumerate}
    \item[(i)] $\chi_N^{\pm}$ vanishes at the interior nodes and $R\chi_N^{\pm} = \pm \chi_N^{\pm}$;
    \item[(ii)] $\norm{\chi_N^{\pm}}_{L^\infty(I)} \le N^C$ and $\norm{\chi_N^{\pm}}_{L^2(\tilde I)} \le \exp\bigl(N(\kappa_\sigma - \kappa_\Lambda) + C\log N\bigr)$;
    \item[(iii)] $p_N^{\pm} := (\tilde\Lop_P^{-1}\chi_N^{\pm})(0) = \int_{\tilde I}\tilde G(0,y)\,\chi_N^{\pm}(y)\,dy \ge \exp\bigl(N(\kappa_\sigma-\kappa_\Lambda) - C\log N\bigr)$.
\end{enumerate}
Consequently, for every $g_0 \in V_N$ with $Rg_0 = \pm g_0$, the function $g := g_0 - \alpha \chi_N^{\pm}$ with $\alpha := (\tilde\Lop_P^{-1}g_0)(0)/p_N^{\pm}$, the sign being that of $g_0$, belongs to $\Psi_N$, coincides with $g_0$ at the interior nodes, and
\begin{equation}\label{eq:alpha_bound}
|\alpha| \le C\norm{g_0}_{L^2(\tilde I)}\exp\bigl(-N(\kappa_\sigma - \kappa_\Lambda) + C\log N\bigr).
\end{equation}
\end{lemma}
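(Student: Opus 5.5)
The plan is to read everything off the polynomial form of Lemma~\ref{lem:chebyshev_variable}: $\ell_1^V = \sin\theta\,q_1(z)/\sin\theta_1$ with $q_1(z) = \prod_{m\ne1}(z-z_m)/\prod_{m\ne1}(z_1-z_m)$, and similarly for $\ell_N^V$.

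\emph{Part (i).} Both Lagrange functions vanish at the interior nodes, so their sum and difference do too. The reflection $R$ maps $V_N$ onto itself and sends the node $x_m$ to $x_{N+1-m}$, so $R\ell_1^V$ is a Lagrange function at $x_N$. By unisolvence in $V_N$ (Lemma~\ref{lem:lebesgue_condition_link}) this gives $R\ell_1^V = \ell_N^V$, and hence $R\chi_N^\pm = \pm\chi_N^\pm$.

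\emph{Part (ii).} I apply Proposition~\ref{prop:lagrange_norms} at $m = 1$, $z_1 = c_0$; the case $m = N$ follows by the reflection. The $L^\infty(I)$ norm has exponent $U_\omega(c_0) - U_\omega(c_0) = 0$, which gives $N^C$. The $L^2(\tilde I)$ norm has exponent $U_\omega(c_0) - U_\omega(1) = (U_\omega(0) - U_\omega(1)) - (U_\omega(0) - U_\omega(c_0)) = \kappa_\sigma - \kappa_\Lambda$ by Theorem~\ref{thm:rates}. The triangle inequality then handles $\chi_N^\pm$.

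\emph{Part (iii), the main step.} The lower bound on $p_N^\pm$ needs a sign argument on the extension, because $\tilde G(0,\cdot) \ge 0$ does not by itself prevent cancellation.

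\begin{itemize}
\item On the right extension $z \in (c_0,1)$ every factor $z - z_m$ is positive, and $\prod_{m\ne1}(z_1 - z_m) > 0$, so $\ell_1^V > 0$ there.
\item For $\ell_N^V$ on the same range, its denominator $\prod_{m\ne N}(z_N - z_m)$ is symmetric in magnitude to that of $\ell_1^V$. Hence
\[
|\ell_N^V/\ell_1^V| = |z - z_1|/|z - z_N| = (z - c_0)/(z + c_0) \le (1 - c_0)/(1 + c_0) =: r < 1 .
\]
Therefore $\chi_N^\pm \ge (1 - r)\ell_1^V > 0$ on the right extension.
\item By $R$-symmetry, $\pm\chi_N^\pm \ge (1 - r)\ell_N^V > 0$ on the left extension. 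The sign convention must be tracked: for $\chi_N^-$ the left part is negative. I will check this against $\tilde G(0,\cdot)$, which is not symmetric.
\end{itemize}

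If the sign causes trouble for $\chi_N^-$, the fallback is to use only the side where $\chi_N^\pm$ is positive. That side carries the exponentially dominant mass, and $\tilde G(0,y)$ is bounded below by a constant times the distance to the far endpoint.

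To finish (iii), I split the integral into the physical interval and the extension. On $I$ it is at most $C\,\norm{\chi_N^\pm}_{L^\infty(I)} \le N^C$ by (ii). On the extension I pick an arc of $\theta$ of length about $1/N$ at distance about $1/N$ from $\theta = 0$. There the discrete-potential lower bound of Appendix~\ref{app:rates} (the pointwise counterpart of Proposition~\ref{prop:lagrange_norms}) gives $\ell_1^V \ge \exp\bigl(N(U_\omega(c_0) - U_\omega(1)) - C\log N\bigr)$, since the potential is continuous near $z = 1$ with a $\log$-modulus. On the same arc $\tilde G(0,y)\sin\theta \ge N^{-C}$. Together these give the exponential lower bound, which beats the $N^C$ from the physical part.

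\emph{Consequence.} Since $\alpha$ is defined by $(\tilde\Lop_P^{-1}g)(0) = 0$, and $g$ has parity $\pm$, the argument of Lemma~\ref{lem:unisolvence} gives $(\tilde\Lop_P^{-1}g)(L) = 0$ as well. Hence $\tilde\Lop_P^{-1}g \in \Phi_N$ and $g \in \Psi_N$. By (i), $g$ equals $g_0$ at the interior nodes. Finally, Cauchy--Schwarz gives $|(\tilde\Lop_P^{-1}g_0)(0)| \le \norm{\tilde G(0,\cdot)}_{L^2(\tilde I)}\,\norm{g_0}_{L^2(\tilde I)}$, and dividing by the bound in (iii) yields \eqref{eq:alpha_bound}.
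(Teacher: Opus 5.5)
Your parts (i) and (ii), the sign bound $\chi_N^{\pm} \ge \frac{2c_0}{1+c_0}\,\ell_1^V > 0$ on $(-\delta,0)$ from the ratio $(z-c_0)/(z+c_0)$, and the consequence all match the paper. The choice of the arc $\vartheta \in [1/N,2/N]$ with the Lagrange lower bound and the modulus of $U_\omega$ also matches.

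\textbf{The gap.} It is in (iii) for $\chi_N^-$, which is exactly the step you postpone. By (i), $\chi_N^-(L-y') = -\chi_N^-(y')$. So on $(L,L+\delta)$ the function $\chi_N^-$ is negative with \emph{the same} absolute profile as on $(-\delta,0)$. The positive side therefore does not carry dominant mass: both extensions carry exactly the same amount. Your fallback of keeping only the positive side is not a valid step in a lower bound, because it discards a negative contribution of the same exponential size. (For $\chi_N^+$ the discarding is harmless, since there both sides are positive.)

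\textbf{The missing step.} You need a pointwise comparison of the Green's weights after reflecting the far extension onto the near one. With $y = L - y'$, $y' \in (-\delta,0)$, the formula for $\tilde G$ in Lemma~\ref{lem:unisolvence} gives $\tilde G(0,y') = (L+\delta)(y'+\delta)/\tilde L$ and $\tilde G(0,L-y') = \delta(y'+\delta)/\tilde L$. Hence the two extension integrals combine to
\[
\int_{-\delta}^0\frac{(L+\delta)\pm\delta}{\tilde L}\,(y'+\delta)\,\chi_N^{\pm}(y')\,dy' \ \ge\ \frac{L}{\tilde L}\int_{-\delta}^0(y'+\delta)\,\chi_N^{\pm}(y')\,dy' ,
\]
because $\chi_N^{\pm} > 0$ on $(-\delta,0)$ and $(L+\delta) - \delta = L > 0$. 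In words, the near extension is positive and carries the larger weight $L+\delta$ versus $\delta$. Only after this step may you restrict to the arc near $\vartheta = 0$ and insert the lower bound on $\ell_1^V$. The rest of your argument then goes through unchanged.
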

\begin{proof}
(i) $\ell_1^V$ and $\ell_N^V$ vanish at the interior nodes by definition. By the proof of Lemma~\ref{lem:unisolvence}, $R$ maps $V_N$ onto itself and permutes the nodes, $x_{N+1-m} = L - x_m$, so $R\ell_1^V$ is the Lagrange function of $V_N$ at $x_N$: $R\ell_1^V = \ell_N^V$, $R\ell_N^V = \ell_1^V$, and $R\chi_N^{\pm} = \ell_N^V \pm \ell_1^V = \pm \chi_N^{\pm}$.

(ii) By Proposition~\ref{prop:lagrange_norms} with $m = 1$ and $z_1 = c_0$, $\norm{\ell_1^V}_{L^\infty(I)} \le \exp(C\log N)$ and $\norm{\ell_1^V}_{L^2(\tilde I)} \le \exp\bigl(N[U_\omega(c_0) - U_\omega(1)] + C\log N\bigr)$, and $U_\omega(c_0) - U_\omega(1) = \kappa_\sigma - \kappa_\Lambda$ by \eqref{eq:rates}; the same bounds hold for $\ell_N^V = R\ell_1^V$, and the triangle inequality gives (ii).

(iii) \emph{Sign structure.} By the proof of Proposition~\ref{prop:lagrange_norms}, $\ell_1^V(x) = (\sin\vartheta/\sin\theta_0)\,\ell_1(\cos\vartheta)$ and $\ell_N^V(x) = (\sin\vartheta/\sin\theta_0)\,\ell_N(\cos\vartheta)$ with $\vartheta = \pi(x+\delta)/\tilde L$, since $\sin\theta_N = \sin\theta_0$. The two Lagrange polynomials are related by
\begin{equation}\label{eq:lagrange_mirror}
(-1)^{N-1}(z + c_0)\,\ell_N(z) = (z - c_0)\,\ell_1(z):
\end{equation}
indeed $\ell_N(z)/\ell_1(z) = \frac{z - z_1}{z - z_N}\prod_{k\ge2}(z_1 - z_k)\big/\prod_{k\le N-1}(z_N - z_k)$, and since $z_N = -c_0$ and $z_{N+1-k} = -z_k$, $\prod_{k\le N-1}(z_N - z_k) = (-1)^{N-1}\prod_{k\le N-1}(c_0 + z_k) = (-1)^{N-1}\prod_{k\ge2}(c_0 - z_k)$, so that $\ell_N(z) = (-1)^{N-1}\frac{z - c_0}{z + c_0}\,\ell_1(z)$. For $x \in (-\delta, 0)$, i.e.\ $\vartheta \in (0,\theta_0)$ and $z = \cos\vartheta \in (c_0, 1)$, every factor of $\ell_1(z) = \prod_{k\ge2}(z - z_k)/(z_1 - z_k)$ is positive, so $\ell_1^V(x) > 0$, and $0 < (z - c_0)/(z + c_0) < 1$; by \eqref{eq:lagrange_mirror},
\[
\chi_N^{\pm} = \ell_1^V\Bigl(1 \pm (-1)^{N-1}\frac{z - c_0}{z + c_0}\Bigr) \ge \frac{2c_0}{1 + c_0}\,\ell_1^V > 0 \quad\text{on } (-\delta, 0),
\]
because $1 + (z-c_0)/(z+c_0) = 2z/(z+c_0) \ge 1$ and $1 - (z-c_0)/(z+c_0) = 2c_0/(z+c_0) \ge 2c_0/(1+c_0)$ for $z \in (c_0,1)$; and $\chi_N^{\pm}(L - x) = \pm \chi_N^{\pm}(x)$ by (i).

\emph{Lower bound.} Split $p_N^{\pm}$ into the integrals over $(-\delta,0)$, $(L, L+\delta)$ and $I$. By (ii) and $0 \le \tilde G(0,y) \le \delta(L+\delta)/\tilde L$, the integral over $I$ is at most $N^C$ in modulus. In the integral over $(L,L+\delta)$ substitute $y = L - y'$ and use the symmetry of $\chi_N^{\pm}$; with $\tilde G(0,y') = (y'+\delta)(L+\delta)/\tilde L$ and $\tilde G(0, L-y') = \delta(y'+\delta)/\tilde L$ for $y' \in (-\delta,0)$, which follow from the formula for $\tilde G$ in Lemma~\ref{lem:unisolvence}, the two extension integrals add up to
\[
\int_{-\delta}^0\frac{(L+\delta) \pm \delta}{\tilde L}\,(y'+\delta)\,\chi_N^{\pm}(y')\,dy' \ \ge\ \frac{L}{\tilde L}\int_{-\delta}^0(y'+\delta)\,\chi_N^{\pm}(y')\,dy' ,
\]
since $\chi_N^{\pm} > 0$ there and $(L+\delta) \pm \delta \ge L$. Restrict the last integral to the $y'$ with $\vartheta = \pi(y'+\delta)/\tilde L \in [1/N, 2/N]$, an interval of length $\tilde L/(\pi N)$ on which $y' + \delta \ge \tilde L/(\pi N)$. For $N \ge N_0$ such $\vartheta$ is at distance at least $\Delta/4$ from every node, so Lemma~\ref{lem:lagrange_polynomials}(b) and Lemma~\ref{lem:U_modulus} give $|\ell_1(\cos\vartheta)| \ge \exp\bigl(N[U_\omega(c_0) - U_\omega(\cos\vartheta)] - C\log N\bigr) \ge \exp\bigl(N[U_\omega(c_0) - U_\omega(1)] - C\log N\bigr)$, and $\sin\vartheta \ge 2/(\pi N)$; hence $\chi_N^{\pm} \ge c_0\,\ell_1^V \ge \exp\bigl(N(\kappa_\sigma-\kappa_\Lambda) - C\log N\bigr)$ there. Collecting the factors, $p_N^{\pm} \ge \exp\bigl(N(\kappa_\sigma-\kappa_\Lambda) - C\log N\bigr) - N^C$, which gives (iii) for $N \ge N_0$ because $\kappa_\sigma > \kappa_\Lambda$.

\emph{Consequence.} By the proof of Lemma~\ref{lem:unisolvence}, $\psi \in V_N$ lies in $\Psi_N$ if and only if $v := \tilde\Lop_P^{-1}\psi$ vanishes at $0$ and at $L$, and $v(0) = \int_{\tilde I}\tilde G(0,y)\psi(y)\,dy$. Since $R$ commutes with $\tilde\Lop_P^{-1}$ on $V_N$ and $Rg = \pm g$, one has $v(L) = \pm v(0)$ for $v = \tilde\Lop_P^{-1}g$, and $v(0) = (\tilde\Lop_P^{-1}g_0)(0) - \alpha p_N^{\pm} = 0$ by the choice of $\alpha$; so $g \in \Psi_N$. The nodal values are those of $g_0$ by (i), and \eqref{eq:alpha_bound} follows from $|(\tilde\Lop_P^{-1}g_0)(0)| \le \norm{\tilde G(0,\cdot)}_{L^2(\tilde I)}\norm{g_0}_{L^2(\tilde I)}$ and (iii).
\end{proof}

\begin{proof}[Proof of Proposition~\ref{prop:unisolvence_poisson}]
Let $N \ge N_0$ of Lemma~\ref{lem:correction}. The functions $\chi_N^{\pm}$ vanish at the interior nodes and are linearly independent, since $\chi_N^+ + \chi_N^- = 2\ell_1^V$ and $\chi_N^+ - \chi_N^- = 2\ell_N^V$; so they span the two-dimensional space $K_N$ of the functions of $V_N$ that vanish at the interior nodes (proof of Lemma~\ref{lem:unisolvence}). Let $\psi = a\chi_N^+ + b\chi_N^- \in \Psi_N$. As $R$ maps $\Psi_N$ onto itself, $a\chi_N^+ = \frac12(\psi + R\psi)$ and $b\chi_N^- = \frac12(\psi - R\psi)$ lie in $\Psi_N = \Lop_P(\Phi_N)$, so $\tilde\Lop_P^{-1}$ maps them into $\Phi_N$, whose elements vanish at $0$: $ap_N^+ = bp_N^- = 0$. By Lemma~\ref{lem:correction}(iii), $a = b = 0$. Hence $\Psi_N \cap K_N = \{0\}$, which is Assumption~\ref{ass:unisolvence} by the proof of Lemma~\ref{lem:unisolvence}.
\end{proof}

\begin{proof}[Proof of Proposition~\ref{prop:classical}(ii)--(iv)]
Fix $2 \le m \le N-1$ and put $g_0 := \ell_m^V$ and $g_0^{\pm} := \tfrac12(g_0 \pm Rg_0) = \tfrac12(\ell_m^V \pm \ell_{N+1-m}^V)$, so that $g_0 = g_0^+ + g_0^-$, $Rg_0^{\pm} = \pm g_0^{\pm}$, and $\norm{g_0^{\pm}}_{L^2(\tilde I)} \le \norm{g_0}_{L^2(\tilde I)}$ because $R$ is an isometry of $L^2(\tilde I)$. Lemma~\ref{lem:correction}, applied to $g_0^+$ with the sign $+$ and to $g_0^-$ with the sign $-$, gives $\alpha^{\pm}$ such that $g := g_0 - \alpha^+\chi_N^+ - \alpha^-\chi_N^- \in \Psi_N$ coincides with $g_0$ at the interior nodes, where $g_0(x_k) = \delta_{mk}$; by Assumption~\ref{ass:unisolvence}, $g = \ell_m^\Psi$. By Proposition~\ref{prop:lagrange_norms}, $\norm{\ell_m^V}_{L^\infty(I)} \le \exp\bigl(N[U_\omega(z_m) - U_\omega(c_0)] + C\log N\bigr)$ and $\norm{g_0}_{L^2(\tilde I)} \le \exp\bigl(N[U_\omega(z_m) - U_\omega(1)] + C\log N\bigr)$, so \eqref{eq:alpha_bound} and Lemma~\ref{lem:correction}(ii) give
\begin{align*}
|\alpha^{\pm}|\,\norm{\chi_N^{\pm}}_{L^\infty(I)} &\le N^C\exp\bigl(N[U_\omega(z_m) - U_\omega(1)] - N(\kappa_\sigma - \kappa_\Lambda)\bigr) \\
&= N^C\exp\bigl(N[U_\omega(z_m) - U_\omega(c_0)]\bigr),
\end{align*}
since $\kappa_\sigma - \kappa_\Lambda = U_\omega(c_0) - U_\omega(1)$ by \eqref{eq:rates}. The triangle inequality proves (ii). For (iii), $\Lambda_N^\Psi = \max_{x\in\bar I}\sum_m|\ell_m^\Psi(x)| \le (N-2)\max_m\norm{\ell_m^\Psi}_{L^\infty(I)}$, and $U_\omega(z_m) \le U_\omega(0)$ for every $m$ by Lemma~\ref{lem:clausen}, with $U_\omega(0) - U_\omega(c_0) = \kappa_\Lambda$; the bound on $\Lambda^\Psi_{N,2}$ follows from $\Lambda^\Psi_{N,2} \le \sqrt L\,\Lambda_N^\Psi$. For (iv), insert (iii) into \eqref{eq:classical}; at $t = e^{-N\kappa_\sigma}$, $e^{N\kappa_\Lambda}t = e^{-N(\kappa_\sigma-\kappa_\Lambda)} = t^{a(\omega)}$ and $e^{N\kappa_\Lambda} = t^{-\kappa_\Lambda/\kappa_\sigma} = t^{a(\omega)-1}$ by the definition $a(\omega) = 1 - \kappa_\Lambda/\kappa_\sigma$.
\end{proof}

\subsection{Proof of Theorem~\ref{thm:tail_lower} and of Corollaries~\ref{cor:either_or} and \ref{cor:no_power_law}}

\begin{proof}[Proof of Theorem~\ref{thm:tail_lower}]
Let $Z_N$ and $P_N$ be as in Lemma~\ref{lem:construction} and $g_0 := \sin\theta\,P_N(\cos\theta)$, a function of $V_N$ by Lemma~\ref{lem:chebyshev_variable} since $|Z_N| \le N-1$. Under $R$, $\theta \mapsto \pi - \theta$, $\sin\theta$ is invariant and $z \mapsto -z$, so $Rg_0 = (-1)^{N-1}g_0$ by the symmetry of $P_N$. Lemma~\ref{lem:correction}, with the sign $\pm = (-1)^{N-1}$ of $g_0$, gives $g := g_0 - \alpha\chi_N^{\pm} \in \Psi_N$; put $v := \tilde\Lop_P^{-1}g \in \Phi_N$ and let $\vect w_N$ be its coefficient vector, $v = \Phi\mat D\vect w_N$, which exists and is unique because $\{\varphi_j\}$ is a basis of $\Phi_N$ (Lemma~\ref{lem:dimensions}). Since $(\mat B\mat D\vect w_N)_m = (\Lop_Pv)(x_m) = g(x_m)$, the vector $\mat B_D\vect w_N$ consists of the values of $g$, i.e.\ of $g_0$, at the interior nodes.

\emph{Nodal values.} At the layer nodes $g_0$ vanishes. Every other interior node satisfies $\theta_m \in (\theta_s, \pi - \theta_s)$, so Lemma~\ref{lem:construction}(a) gives $|g_0(x_m)| \le |P_N(\cos\theta_m)| \le \exp(NE_c + C\log N)$, and $\norm{\mat B_D\vect w_N}_2 \le \exp(NE_c + C\log N)$.

\emph{Coefficient norm.} By Lemma~\ref{lem:construction}(a), $|g_0| \le \exp(NE_{\max} + C\log N)$ on $\tilde I$, hence $\norm{g_0}_{L^2(\tilde I)} \le \exp(NE_{\max} + C\log N)$. By Lemma~\ref{lem:correction}(ii) and \eqref{eq:alpha_bound}, $|\alpha|\norm{\chi_N^{\pm}}_{L^2(\tilde I)} \le N^C\norm{g_0}_{L^2(\tilde I)}$, so Lemma~\ref{lem:coefficient_norm} gives $\norm{\vect w_N}_2 \le C_B\sqrt N\bigl(\norm{g_0}_{L^2(\tilde I)} + |\alpha|\norm{\chi_N^{\pm}}_{L^2(\tilde I)}\bigr) \le \exp(NE_{\max} + C\log N)$.

\emph{Size on $I$.} Let $[x_a,x_b]$ be the image of $[\theta_0 + \Delta/4, \theta_0 + 3\Delta/4]$ under $x = \tilde L\theta/\pi - \delta$, an interval of length $h_b = \tilde L\Delta/(2\pi)$ inside $I$, between the layer nodes $\theta_1 = \theta_0$ and $\theta_2 = \theta_0 + \Delta$ (both lie in $S$ for $N \ge N_0$). Its points are at distance at least $\Delta/4$ from every node, at distance at least $\theta_0 - \max_k\beta_k > 0$ (maximum over the extension arcs, if there are any) from every extension lattice, and at distance at least $\theta_s - \theta_0 - \Delta$ from every centre lattice and from $\pi/2$; for $N \ge N_0$ these distances exceed the respective quarter spacings and $1/N$, so Lemma~\ref{lem:construction}(b) applies, and with \eqref{eq:E_modulus} for $|\vartheta - \theta_0| \le 3\Delta/4$ it gives $|P_N(\cos\vartheta)| \ge \exp\bigl(NE(\vartheta) - C\log N\bigr) \ge \exp\bigl(NE(\theta_0) - C\log N\bigr)$ on $[x_a,x_b]$. No point of $Z_N$ lies in $(\theta_1,\theta_2)$, so $P_N(\cos\theta)$ has constant sign on $[x_a, x_b]$, and so has $g_0$, with $|g_0| \ge \sin\theta_0\exp\bigl(NE(\theta_0) - C\log N\bigr) =: 2g_\ast$. On $I$ the correction satisfies $|\alpha\chi_N^{\pm}| \le \exp\bigl(N(E_{\max} - (\kappa_\sigma - \kappa_\Lambda)) + C\log N\bigr)$ by Lemma~\ref{lem:correction}(ii) and \eqref{eq:alpha_bound}, which is at most $g_\ast$ for $N \ge N_0$ because $E(\theta_0) - E_{\max} = \ell > -(\kappa_\sigma - \kappa_\Lambda)$. Hence $g = g_0 - \alpha\chi_N^{\pm}$ has the sign of $g_0$ and $|g| \ge g_\ast$ on $[x_a,x_b]$. As $v \in \Phi_N$ is smooth with $-v'' = \Lop_Pv = g$ on $\tilde I$, Lemma~\ref{lem:bump} gives $\norm{v}_{L^2(I)} \ge g_\ast h_b^{5/2}/(32\pi^2) \ge \exp\bigl(NE(\theta_0) - C\log N\bigr)$; in particular $v \ne 0$ and $\vect w_N \ne 0$.

\emph{Conclusion.} For $s \in [0,1]$ the three estimates give
\[
\begin{aligned}
\log\frac{\norm{\Phi\mat D\vect w_N}_{L^2(I)}}{\norm{\mat B_D\vect w_N}_2^{\,s}\norm{\vect w_N}_2^{\,1-s}} &\ge N\bigl[E(\theta_0) - sE_c - (1-s)E_{\max}\bigr] - C\log N \\
&= N(\ell + s\tau) - C\log N,
\end{aligned}
\]
since $E(\theta_0) - sE_c - (1-s)E_{\max} = (E(\theta_0) - E_{\max}) + s(E_{\max} - E_c)$.
\end{proof}

\begin{lemma}[Two Forms of a Sharp Power Law]
\label{lem:two_forms}
Let Assumption~\ref{ass:unisolvence} hold and let $b \in (0,1)$.
\begin{enumerate}
    \item[(i)] If $\gamma_t \le Ct^{\,b}$ for all $t > 0$, then $\Gamma_t \le C_bt^{\,b-1}$ for all $t > 0$, with $C_b := 2^b(1 - 4^{b-1})^{-1/2}C$.
    \item[(ii)] If $\gamma_t \le Ct^{\,b}$ and $\Gamma_t \le C't^{\,b-1}$ for all $t > 0$, then
    \[
    \norm{\Phi\mat D\vect w}_{L^2(I)} \le (C + C')\norm{\mat B_D\vect w}_2^{\,b}\norm{\vect w}_2^{\,1-b} \qquad (\vect w \in \mathbb R^n);
    \]
    conversely this inequality with a constant $C''$ in place of $C + C'$ implies $\gamma_t \le C''t^{\,b}$ and $\Gamma_t \le C''t^{\,b-1}$ for all $t > 0$.
\end{enumerate}
\end{lemma}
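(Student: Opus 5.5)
Both parts follow from the singular-value splitting of Definition~\ref{def:two_constants}. The only facts needed are these: the images $\mat B_D\vect v_i = \sigma_i\vect u_i$ are orthogonal; $\norm{\mat B_D\vect w}_2 \le t\norm{\vect w}_2$ on $\mathcal V_t$; and $\norm{\mat B_D\vect w}_2 > t\norm{\vect w}_2$ on $\mathcal V_t^\perp\setminus\{0\}$. Assumption~\ref{ass:unisolvence} makes $\mat B_D$ nonsingular, so $\mat B_D\vect w = 0$ only for $\vect w = 0$, and $\sigma_1$ is finite.

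For (i), I would fix $t > 0$ and $\vect w \in \mathcal V_t^\perp$, and decompose $\vect w$ dyadically along the spectrum. Let $\vect w_j$ be the projection of $\vect w$ onto the span of the $\vect v_i$ with $\sigma_i \in (2^{j-1}t, 2^jt]$, for $j = 1, \dots, J$, where $2^Jt \ge \sigma_1$. Then $\vect w = \sum_j\vect w_j$, and $\vect w_j \in \mathcal V_{2^jt}$, so $\norm{\Phi\mat D\vect w_j}_{L^2(I)} \le \gamma_{2^jt}\norm{\vect w_j}_2 \le C(2^jt)^b\norm{\vect w_j}_2$. On the same shell the singular values exceed $2^{j-1}t$, which gives $\norm{\vect w_j}_2 \le \norm{\mat B_D\vect w_j}_2/(2^{j-1}t)$. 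Combining,
\[
\norm{\Phi\mat D\vect w_j}_{L^2(I)} \le 2^bC\,t^{b-1}\,2^{j(b-1)}\norm{\mat B_D\vect w_j}_2 .
\]
Summing over $j$ and applying the Cauchy--Schwarz inequality then bounds $\norm{\Phi\mat D\vect w}_{L^2(I)}$ by $2^bCt^{b-1}\bigl(\sum_{j\ge1}4^{j(b-1)}\bigr)^{1/2}\bigl(\sum_j\norm{\mat B_D\vect w_j}_2^2\bigr)^{1/2}$. The last factor equals $\norm{\mat B_D\vect w}_2$ by the orthogonality of the $\mat B_D\vect w_j$. The geometric series equals $4^{b-1}/(1-4^{b-1}) \le (1-4^{b-1})^{-1}$, which yields the constant $C_b$. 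The only delicate point is to choose the shells so that each one contributes the factor $2^{j(b-1)}$ with exponent $b - 1 < 0$. This is what makes the series converge, and it is where $b < 1$ enters.

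For (ii), I would take $\vect w \ne 0$ and choose the threshold adapted to $\vect w$, namely $t := \norm{\mat B_D\vect w}_2/\norm{\vect w}_2 > 0$. I split $\vect w = \vect w_t + \vect w_t^\perp$ with $\vect w_t \in \mathcal V_t$ and $\vect w_t^\perp \in \mathcal V_t^\perp$. The first part satisfies $\norm{\Phi\mat D\vect w_t}_{L^2(I)} \le Ct^b\norm{\vect w}_2 = C\norm{\mat B_D\vect w}_2^b\norm{\vect w}_2^{1-b}$. For the second part, orthogonality gives $\norm{\mat B_D\vect w_t^\perp}_2 \le \norm{\mat B_D\vect w}_2$, so $\norm{\Phi\mat D\vect w_t^\perp}_{L^2(I)} \le C't^{b-1}\norm{\mat B_D\vect w}_2 = C'\norm{\mat B_D\vect w}_2^b\norm{\vect w}_2^{1-b}$. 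The triangle inequality then gives the stated bound with constant $C + C'$.

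For the converse, assume the interpolation inequality with constant $C''$. If $\vect w \in \mathcal V_t$, then inserting $\norm{\mat B_D\vect w}_2 \le t\norm{\vect w}_2$ gives $\gamma_t \le C''t^b$. If $\vect w \in \mathcal V_t^\perp$, then inserting $\norm{\vect w}_2 \le \norm{\mat B_D\vect w}_2/t$ into the factor $\norm{\vect w}_2^{1-b}$ gives $\Gamma_t \le C''t^{b-1}$. Both steps are routine; the main work of the lemma is the dyadic summation in (i).
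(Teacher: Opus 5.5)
This is essentially the paper's proof: the same dyadic shells in (i) (your index $j$ is its $k+1$), combined through the orthogonality of the $\mat B_D\vect w_j$ and the Cauchy--Schwarz inequality, and in (ii) the same adapted threshold $t = \norm{\mat B_D\vect w}_2/\norm{\vect w}_2$ and the same converse. There is one arithmetic slip in (i): $C(2^jt)^b/(2^{j-1}t) = 2C\,t^{b-1}2^{j(b-1)}$, not $2^bC\,t^{b-1}2^{j(b-1)}$, and $C_b$ then follows from the exact value $4^{b-1}/(1-4^{b-1})$ of the geometric series, not from your upper bound $(1-4^{b-1})^{-1}$; as written, the two errors cancel.
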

\begin{proof}
(i) Let $t > 0$ and $\vect w \in \mathcal V_t^\perp\setminus\{0\}$, and for $k \ge 0$ let $\vect w_k$ be the orthogonal projection of $\vect w$ onto the span of the $\vect v_i$ with $t2^k < \sigma_i \le t2^{k+1}$, so that $\vect w = \sum_k\vect w_k$, a finite sum. Since $\vect w_k \in \mathcal V_{t2^{k+1}}$, $\norm{\Phi\mat D\vect w_k}_{L^2(I)} \le C(t2^{k+1})^b\norm{\vect w_k}_2$, and $\norm{\mat B_D\vect w_k}_2 \ge t2^k\norm{\vect w_k}_2$; the vectors $\mat B_D\vect w_k$ are pairwise orthogonal, as they lie in the spans of disjoint sets of left singular vectors. By the Cauchy--Schwarz inequality and $\sum_{k\ge0}4^{k(b-1)} = (1 - 4^{b-1})^{-1}$,
\[
\norm{\Phi\mat D\vect w}_{L^2(I)} \le \sum_kC2^b(t2^k)^{b-1}\norm{\mat B_D\vect w_k}_2 \le C_bt^{b-1}\norm{\mat B_D\vect w}_2 .
\]
(ii) Let $\vect w \ne 0$; then $\mat B_D\vect w \ne 0$ because $\mat B_D$ is nonsingular under Assumption~\ref{ass:unisolvence}. Split $\vect w = \vect w_t + \vect w_t^\perp$ as in the proof of Theorem~\ref{thm:two_constant}. Since $\mat B_D\vect w_t$ and $\mat B_D\vect w_t^\perp$ are orthogonal, $\norm{\mat B_D\vect w_t^\perp}_2 \le \norm{\mat B_D\vect w}_2$, so $\norm{\Phi\mat D\vect w}_{L^2(I)} \le \gamma_t\norm{\vect w_t}_2 + \Gamma_t\norm{\mat B_D\vect w_t^\perp}_2 \le Ct^b\norm{\vect w}_2 + C't^{b-1}\norm{\mat B_D\vect w}_2$, and $t := \norm{\mat B_D\vect w}_2/\norm{\vect w}_2$ gives the first claim. The converse is immediate from the definitions \eqref{eq:two_constants}, since $\norm{\mat B_D\vect w}_2 \le t\norm{\vect w}_2$ on $\mathcal V_t$ and $\norm{\vect w}_2 \le \norm{\mat B_D\vect w}_2/t$ on $\mathcal V_t^\perp$.
\end{proof}

At the level of the two constants, Theorem~\ref{thm:tail_lower} has the following non-asymptotic form.

\begin{corollary}[Lower Bound at the Nodal Level]
\label{cor:either_or}
Let $\Lop = \Lop_P$ and let $\mathcal C$ be a configuration with $\ell := \ell(\mathcal C) > -(\kappa_\sigma - \kappa_\Lambda)$ and $\tau := \tau(\mathcal C)$. Then, with $N_0$ and $C$ as in Theorem~\ref{thm:tail_lower}, for $N \ge N_0$ and $t_N := e^{-N\tau}$,
\begin{equation}\label{eq:either_or}
\max\{\gamma_{t_N},\ t_N\Gamma_{t_N}\} \ \ge\ \tfrac12\exp\bigl(N\ell - C\log N\bigr).
\end{equation}
\end{corollary}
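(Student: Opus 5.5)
The plan is to feed the vector $\vect w_N$ of Theorem~\ref{thm:tail_lower} into Definition~\ref{def:two_constants}, splitting it at the threshold $t_N$. Let $\vect w := \vect w_N$, normalized so that $\norm{\vect w}_2 = 1$. Theorem~\ref{thm:tail_lower} with $s = 0$ gives $\norm{\Phi\mat D\vect w}_{L^2(I)} \ge \exp(N\ell - C\log N)$. With $s = 1$ and $s = 0$ together it also gives
\[
\norm{\mat B_D\vect w}_2 \le \norm{\Phi\mat D\vect w}_{L^2(I)}\exp(-N(\ell+\tau) + C\log N).
\]
Directly, the construction in the proof of Theorem~\ref{thm:tail_lower} yields $\norm{\mat B_D\vect w}_2 \le \exp(N(E_c - E_{\max}) + C\log N) = t_N N^C$ after normalization. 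Strictly this is the bound with $\norm{\vect w_N}_2$ only bounded above, not comparable from below. The cleaner route is to use the quotient with $s$ directly rather than the separate bounds.

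Next, split $\vect w = \vect w_t + \vect w_t^\perp$ with $t = t_N$, as in the proof of Theorem~\ref{thm:two_constant}. Then
\[
\norm{\Phi\mat D\vect w}_{L^2(I)} \le \gamma_t\norm{\vect w_t}_2 + \Gamma_t\norm{\mat B_D\vect w_t^\perp}_2 \le \gamma_t\norm{\vect w}_2 + \Gamma_t\norm{\mat B_D\vect w}_2 .
\]
The last step uses the orthogonality of $\mat B_D\vect w_t$ and $\mat B_D\vect w_t^\perp$, as in Lemma~\ref{lem:two_forms}(ii). Writing $\norm{\mat B_D\vect w}_2 = t\cdot(\norm{\mat B_D\vect w}_2/t)$ gives
\[
\norm{\Phi\mat D\vect w}_{L^2(I)} \le \max\{\gamma_t,\ t\Gamma_t\}\bigl(\norm{\vect w}_2 + t^{-1}\norm{\mat B_D\vect w}_2\bigr).
\]

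Set $x := t^{-1}\norm{\mat B_D\vect w}_2/\norm{\vect w}_2$ and consider two cases. If $x \le 1$, the right-hand factor is at most $2\norm{\vect w}_2$. Then \eqref{eq:tail_quotient} with $s = 0$ gives $\max \ge \tfrac12\exp(N\ell - C\log N)$. If $x > 1$, the factor is at most $2t^{-1}\norm{\mat B_D\vect w}_2$. Then \eqref{eq:tail_quotient} with $s = 1$ gives
\[
\norm{\Phi\mat D\vect w}_{L^2(I)} \ge \norm{\mat B_D\vect w}_2\, e^{N(\ell+\tau) - C\log N} = t^{-1}\norm{\mat B_D\vect w}_2\, e^{N\ell - C\log N},
\]
since $t = e^{-N\tau}$. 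Again $\max \ge \tfrac12\exp(N\ell - C\log N)$. This is where the ``either--or'' name comes from.

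The main point to check is that the constants match. The factor $\tfrac12$ comes exactly from bounding $\norm{\vect w}_2 + t^{-1}\norm{\mat B_D\vect w}_2$ by twice the larger of the two terms. The $C$ and $N_0$ are those of Theorem~\ref{thm:tail_lower}, which holds uniformly for all $s\in[0,1]$, so no new constants enter. The argument also uses $\mat B_D\vect w \ne 0$ (Proposition~\ref{prop:unisolvence_poisson}) and $\vect w \ne 0$, both supplied by the theorem.
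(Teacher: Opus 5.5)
Your proof is correct and is essentially the paper's. You split $\vect w_N$ at $t_N$, bound $\norm{\Phi\mat D\vect w}_{L^2(I)}$ by $\gamma_t\norm{\vect w}_2 + \Gamma_t\norm{\mat B_D\vect w}_2$, and compare with the cases $s=0$ and $s=1$ of \eqref{eq:tail_quotient}. The only difference is bookkeeping: the paper adds the two inequalities and takes the larger term, where you split into cases, and both give the same factor $\tfrac12$.

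The aside in your first paragraph, bounding $\norm{\mat B_D\vect w}_2$ by $t_NN^C$ directly from the construction, should be dropped. It would need a lower bound on $\norm{\vect w_N}_2$ that is not available, and the argument does not use it.
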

\begin{proof}[Proof of Corollary~\ref{cor:either_or}]
Let $\vect w := \vect w_N$ be the vector of Theorem~\ref{thm:tail_lower} and $t := t_N$. The cases $s = 0$ and $s = 1$ of \eqref{eq:tail_quotient} read $\norm{\vect w}_2 \le e^{-N\ell + C\log N}\norm{\Phi\mat D\vect w}_{L^2(I)}$ and $\norm{\mat B_D\vect w}_2 \le e^{-N(\ell+\tau) + C\log N}\norm{\Phi\mat D\vect w}_{L^2(I)}$. Split $\vect w = \vect w_t + \vect w_t^\perp$ with $\vect w_t \in \mathcal V_t$ and $\vect w_t^\perp \in \mathcal V_t^\perp$. Then $\norm{\vect w_t}_2 \le \norm{\vect w}_2$, and $\norm{\mat B_D\vect w_t^\perp}_2 \le \norm{\mat B_D\vect w}_2$ because $\mat B_D\vect w_t$ and $\mat B_D\vect w_t^\perp$ lie in the spans of the left singular vectors with $\sigma_i \le t$ and with $\sigma_i > t$, which are orthogonal. By \eqref{eq:two_constants},
\[
\norm{\Phi\mat D\vect w}_{L^2(I)} \le \gamma_t\norm{\vect w}_2 + \Gamma_t\norm{\mat B_D\vect w}_2 \le \bigl(\gamma_te^{-N\ell} + \Gamma_te^{-N(\ell+\tau)}\bigr)e^{C\log N}\norm{\Phi\mat D\vect w}_{L^2(I)} .
\]
Dividing by $\norm{\Phi\mat D\vect w}_{L^2(I)}$, which is positive because $\vect w \ne 0$, the $\varphi_j$ are linearly independent and a nonzero element of $V_N$, a trigonometric polynomial, does not vanish identically on $I$, and using $e^{-N\tau} = t$ gives $\gamma_te^{-N\ell} + t\Gamma_te^{-N\ell} \ge e^{-C\log N}$; the larger of the two terms is at least half the sum.
\end{proof}

\begin{corollary}[Configurations Refute the Sharp Power Law]
\label{cor:no_power_law}
Let $\Lop = \Lop_P$ and let $\mathcal C$ be a configuration with $\ell(\mathcal C) > -(\kappa_\sigma - \kappa_\Lambda)$ and $\ell(\mathcal C) + a(\omega)\tau(\mathcal C) > 0$. Then there are no $N_1$ and $C$ such that $\gamma_t \le Ct^{\,a(\omega)}$ for all $N \ge N_1$ and all $t > 0$; in particular the sharp power law \eqref{eq:sharp_law} fails.
\end{corollary}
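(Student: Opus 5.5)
The proof is by contradiction from Corollary~\ref{cor:either_or}. Suppose there were $N_1$ and $C$ with $\gamma_t \le Ct^{\,a(\omega)}$ for all $N \ge N_1$ and all $t > 0$. Since $a(\omega) \in (0,1)$ and Assumption~\ref{ass:unisolvence} holds for $N \ge N_0$ (Proposition~\ref{prop:unisolvence_poisson}), Lemma~\ref{lem:two_forms}(i) with $b = a(\omega)$ gives $\Gamma_t \le C_b t^{\,a(\omega)-1}$ for the same $N$ and all $t > 0$. Multiplying by $t$, we obtain
\[
\max\{\gamma_t,\ t\Gamma_t\} \le \max\{C, C_b\}\,t^{\,a(\omega)} \qquad (t > 0,\ N \ge \max\{N_0,N_1\}).
\]

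Next we evaluate this at the level of the configuration. Put $\ell := \ell(\mathcal C)$, $\tau := \tau(\mathcal C)$ and $t_N := e^{-N\tau}$. The upper bound then reads $\max\{\gamma_{t_N}, t_N\Gamma_{t_N}\} \le C'e^{-Na(\omega)\tau}$. The hypothesis $\ell > -(\kappa_\sigma - \kappa_\Lambda)$ is exactly what Corollary~\ref{cor:either_or} needs, and it gives, for $N$ large,
\[
\max\{\gamma_{t_N},\ t_N\Gamma_{t_N}\} \ge \tfrac12 e^{N\ell - C\log N}.
\]
Combining the two bounds yields $\tfrac12 e^{N(\ell + a(\omega)\tau) - C\log N} \le C'$ for all large $N$.

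Since $\ell + a(\omega)\tau > 0$ by hypothesis, the left-hand side tends to infinity, which is a contradiction. The case $\tau(\mathcal C) = 0$ needs no separate treatment, because then $t_N = 1$ and the argument is unchanged. The failure of \eqref{eq:sharp_law} follows at once, since its first inequality is the excluded bound.

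The only point requiring care is the passage from the bound on $\gamma_t$ alone to a bound on $t\Gamma_t$. Lemma~\ref{lem:two_forms}(i) handles this, but it needs the bound on $\gamma_t$ for all $t>0$ at each fixed $N$, which the hypothesis supplies. Everything else is bookkeeping of the $N$-thresholds.
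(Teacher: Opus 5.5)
Your argument is correct and is essentially the paper's. Both assume the bound on $\gamma_t$, upgrade it via Proposition~\ref{prop:unisolvence_poisson} and Lemma~\ref{lem:two_forms}(i) to $t\Gamma_t \le C_a t^{a(\omega)}$, and then contradict the exponential lower bound from Theorem~\ref{thm:tail_lower} using $\ell(\mathcal C) + a(\omega)\tau(\mathcal C) > 0$. The only difference is in how the lower bound is applied: you test at $t_N = e^{-N\tau(\mathcal C)}$ through Corollary~\ref{cor:either_or} (the cases $s = 0, 1$ of \eqref{eq:tail_quotient}), whereas the paper passes to the interpolation inequality of Lemma~\ref{lem:two_forms}(ii) and uses \eqref{eq:tail_quotient} with $s = a(\omega)$.
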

\begin{proof}[Proof of Corollary~\ref{cor:no_power_law}]
Write $\ell := \ell(\mathcal C)$, $\tau := \tau(\mathcal C)$ and $a := a(\omega) \in (0,1)$, and let $N_0$ and $C_0$ be the constants of Theorem~\ref{thm:tail_lower}, so that for every $N \ge N_0$ there is a vector $\vect w_N \ne 0$ with \eqref{eq:tail_quotient}. Suppose that $\gamma_t \le Ct^{\,a}$ for all $N \ge N_1$ and all $t > 0$. For $N \ge \max\{N_0, N_1\}$, Lemma~\ref{lem:two_forms}, whose hypothesis holds by Proposition~\ref{prop:unisolvence_poisson}, gives $\Gamma_t \le C_at^{\,a-1}$ for all $t > 0$ and then $\norm{\Phi\mat D\vect w}_{L^2(I)} \le (C + C_a)\norm{\mat B_D\vect w}_2^{\,a}\norm{\vect w}_2^{\,1-a}$ for all $\vect w$, whereas \eqref{eq:tail_quotient} with $s = a$ shows that the quotient $\norm{\Phi\mat D\vect w_N}_{L^2(I)}/\bigl(\norm{\mat B_D\vect w_N}_2^{\,a}\norm{\vect w_N}_2^{\,1-a}\bigr)$ is at least $\exp\bigl(N(\ell + a\tau) - C_0\log N\bigr)$, which exceeds $C + C_a$ for all large $N$ because $\ell + a\tau > 0$.
\end{proof}

\subsection{The Upper Bound: Proof of Theorem~\ref{thm:dual}}
\label{sec:dual}

In this subsection $\Lop = \Lop_P$. Let $\mathcal S$ be an interpolation configuration with envelope $E$, and let $S_N$ be the set of $N' \ge N$ distinct points of Lemma~\ref{lem:lattice_lagrange}, the interior nodes of the node arcs selected by the residues together with lattices on the virtual arcs, with the Lagrange polynomials $\ell_s$, $s \in S_N$, of degree $N'-1$ in $z$, which satisfy \eqref{eq:lattice_lagrange} for $N \ge N_0$.

\begin{proof}[Proof of Theorem~\ref{thm:dual}]
\emph{Reduction.} We show that for $N \ge N_0$, every $t \ge 0$ and every $\vect w \in \mathbb R^n$ with $\norm{\vect w}_2 \le 1$ and $\norm{\mat B_D\vect w}_2 \le t$,
\begin{equation}\label{eq:dual_core}
\norm{\Phi\mat D\vect w}_{L^2(I)} \le N^C\bigl(t\,e^{NA_1(\mathcal S)} + e^{NA_2(\mathcal S)}\bigr).
\end{equation}
This gives \eqref{eq:dual_bound}: a unit vector $\vect w \in \mathcal V_t$ satisfies $\norm{\mat B_D\vect w}_2 \le t$, so $\gamma_t$ is at most the right-hand side; and for $\vect w \in \mathcal V_t^\perp\setminus\{0\}$ and $t > 0$ one has $\norm{\mat B_D\vect w}_2 \ge t\norm{\vect w}_2 > 0$, as in the proof of Proposition~\ref{prop:apriori}, so $\vect w' := t\vect w/\norm{\mat B_D\vect w}_2$ satisfies $\norm{\vect w'}_2 \le 1$ and $\norm{\mat B_D\vect w'}_2 = t$, and \eqref{eq:dual_core} for $\vect w'$ reads $t\norm{\Phi\mat D\vect w}_{L^2(I)}/\norm{\mat B_D\vect w}_2 \le N^C(te^{NA_1(\mathcal S)} + e^{NA_2(\mathcal S)})$, whence the bound on $t\Gamma_t$; for $t = 0$ the term $t\Gamma_t$ vanishes. Estimate \eqref{eq:dual_rate} follows from \eqref{eq:dual_bound} with $t = e^{-N\tau}$, since then $te^{NA_1(\mathcal S)} + e^{NA_2(\mathcal S)} \le 2e^{N\Phi(\mathcal S,\tau)}$.

\emph{Norms.} Let $\vect w$ be as in \eqref{eq:dual_core}, $v := \Phi\mat D\vect w \in \Phi_N$ and $\psi := \Lop v \in \Psi_N \subset V_N$. Since $v \in C^2(\bar I)$ vanishes at $0$ and $L$ and $\Lop v = \psi$, Lemma~\ref{lem:dirichlet_box}(iii) with $d = 1$ gives $\norm{v}_{L^2(I)} \le (L/\pi)^2\norm{\psi}_{L^2(I)} \le (L/\pi)^2\sqrt L\,\norm{\psi}_{L^\infty(I)}$. The values of $\psi$ at the interior nodes are the entries of $\mat B_D\vect w$, so $|\psi(x_m)| \le t$ for $2 \le m \le N-1$. By Lemma~\ref{lem:test_norm}, $\norm{\psi}_{L^2(\tilde I)} \le C_\Psi\norm{\vect w}_2/\sqrt N \le C_\Psi/\sqrt N$, and since $\psi = \sum_{j=1}^Nc_jw_j$ with $\sum_jc_j^2 = \norm{\psi}^2_{L^2(\tilde I)}$ and $|w_j| \le \sqrt{2/\tilde L}$, the Cauchy--Schwarz inequality gives $|\psi(x)| \le \sqrt{2N/\tilde L}\,\norm{\psi}_{L^2(\tilde I)} \le C$ for all $x \in \tilde I$.

\emph{Interpolation.} By Lemma~\ref{lem:chebyshev_variable}, $\psi(x) = \sin\theta\,p(\cos\theta)$ with $p \in \mathbb P_{N-1}$ and $\theta = \pi(x+\delta)/\tilde L$. At a node $s = \theta_m$ of $S_N$, which is an interior node, $|p(\cos s)| = |\psi(x_m)|/\sin\theta_m \le t/\sin\theta_0$; at a point $s$ of a virtual lattice, $s \in [\xi,\pi-\xi]$ and $|p(\cos s)| = |\psi(x)|/\sin s \le C/\sin\xi$. Since $S_N$ consists of $N' \ge N$ distinct points and $\deg p \le N-1 \le N'-1$, Lagrange interpolation reproduces $p$: $p = \sum_{s\in S_N}p(\cos s)\,\ell_s$. For $\vartheta \in [\theta_0,\pi-\theta_0]$ we have $E(\vartheta) \le E_I$, and $E(s) \ge E_1$ for the nodes of $S_N$ and $E(s) \ge E_2$ for the virtual points, because the former lie in the node arcs and the latter in the virtual arcs; so \eqref{eq:lattice_lagrange} gives $|\ell_s(\cos\vartheta)| \le N^Ce^{NA_1(\mathcal S)}$ for the nodes and $|\ell_s(\cos\vartheta)| \le N^Ce^{NA_2(\mathcal S)}$ for the virtual points. Summing the $N' \le N + C$ terms,
\[
|\psi(x)| \le |p(\cos\vartheta)| \le N^C\Bigl(\frac{t}{\sin\theta_0}\,e^{NA_1(\mathcal S)} + \frac{C}{\sin\xi}\,e^{NA_2(\mathcal S)}\Bigr), \qquad x \in \bar I,\ \vartheta = \pi(x+\delta)/\tilde L,
\]
and the bound on $\norm{v}_{L^2(I)}$ of the previous step gives \eqref{eq:dual_core}.
\end{proof}

\begin{corollary}[Certified Levels of the Sharp Exponent]
\label{cor:certified}
Under the hypotheses of Theorem~\ref{thm:dual} let $a := a(\omega)$, which lies in $(0,1)$, and suppose that $A_1(\mathcal S)/(1-a) \le -A_2(\mathcal S)/a$. Then for every $\tau$ with
\begin{equation}\label{eq:certified_interval}
\frac{A_1(\mathcal S)}{1-a} \le \tau \le \frac{-A_2(\mathcal S)}{a},
\end{equation}
every $N \ge N_0$ and $t = e^{-N\tau}$, one has $\gamma_t \le 2N^Ct^{\,a}$ and $\Gamma_t \le 2N^Ct^{\,a-1}$, with $N_0$ and $C$ from Theorem~\ref{thm:dual}.
\end{corollary}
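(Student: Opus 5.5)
The plan is to apply \eqref{eq:dual_bound} of Theorem~\ref{thm:dual} at $t = e^{-N\tau}$ and to show that each of the two terms on its right-hand side is at most $t^{a}$. Write $A_1 := A_1(\mathcal S)$ and $A_2 := A_2(\mathcal S)$. The interval \eqref{eq:certified_interval} is nonempty by hypothesis. Fix $\tau$ in it and $N \ge N_0$. Theorem~\ref{thm:dual} gives
\[
\max\{\gamma_t,\ t\Gamma_t\} \le N^C\bigl(t\,e^{NA_1} + e^{NA_2}\bigr) = N^C\bigl(e^{N(A_1-\tau)} + e^{NA_2}\bigr).
\]
Since $t^a = e^{-Na\tau}$, we need two inequalities. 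The first is $A_1 - \tau \le -a\tau$, which is the same as $(1-a)\tau \ge A_1$; this is the left inequality of \eqref{eq:certified_interval}, because $1-a > 0$. The second is $A_2 \le -a\tau$, which is the same as $\tau \le -A_2/a$; this is the right inequality, because $a > 0$. Together they give $\max\{\gamma_t,\ t\Gamma_t\} \le 2N^C t^{a}$.

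This yields both claims at once. The bound $\gamma_t \le 2N^Ct^{a}$ is immediate. Dividing $t\Gamma_t \le 2N^Ct^{a}$ by $t > 0$ gives $\Gamma_t \le 2N^Ct^{a-1}$.

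The fact that $a(\omega) \in (0,1)$ comes from the closed form \eqref{eq:exponent} together with the positivity of $\mathrm{Cl}_2$ on $(0,\pi)$ shown in the proof of Proposition~\ref{prop:critical}. Alternatively, it follows from $0 < \kappa_\Lambda < \kappa_\sigma$, which holds because $U_\omega(0) > U_\omega(c_0) > U_\omega(1)$ by Lemma~\ref{lem:clausen}.

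No step is a real obstacle. The only care needed is with the directions of the inequalities when dividing by $1-a$ and by $a$, and with keeping the same $N_0$ and $C$ as in Theorem~\ref{thm:dual}, which the argument does since it uses only \eqref{eq:dual_bound}.
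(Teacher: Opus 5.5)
Your proof is correct and is the paper's argument. Both check $A_1(\mathcal S)-\tau\le -a\tau$ and $A_2(\mathcal S)\le -a\tau$ on the interval, so that $\Phi(\mathcal S,\tau)\le -a\tau$, then apply \eqref{eq:dual_bound} at $t=e^{-N\tau}$, and both obtain $a\in(0,1)$ from $U_\omega(1)<U_\omega(c_0)<U_\omega(0)$ (Lemma~\ref{lem:clausen}). Your other route to $a\in(0,1)$ is incomplete as stated: it also needs the duplication formula of Lemma~\ref{lem:cl2}(ii), which turns the numerator of \eqref{eq:exponent} into $\mathrm{Cl}_2(\omega)$, because positivity of $\mathrm{Cl}_2$ on $(0,\pi)$ alone does not control $\mathrm{Cl}_2(2\omega)$ when $\omega>\pi/2$.
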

\begin{proof}[Proof of Corollary~\ref{cor:certified}]
By Lemma~\ref{lem:clausen}, $U_\omega(1) < U_\omega(c_0) < U_\omega(0)$, so $0 < \kappa_\Lambda < \kappa_\sigma$ and $a \in (0,1)$. For $\tau$ in \eqref{eq:certified_interval}, $A_1(\mathcal S) - \tau \le -a\tau$ because $\tau \ge A_1(\mathcal S)/(1-a)$, and $A_2(\mathcal S) \le -a\tau$ because $\tau \le -A_2(\mathcal S)/a$; hence $\Phi(\mathcal S,\tau) \le -a\tau$, and \eqref{eq:dual_bound} with $t = e^{-N\tau}$ gives $\max\{\gamma_t, t\Gamma_t\} \le 2N^Ce^{-Na\tau} = 2N^Ct^{\,a}$.
\end{proof}

\subsection{The Tail Law: Proofs of Theorem~\ref{thm:tail_law}, Proposition~\ref{prop:tail_properties} and Corollaries~\ref{cor:lebesgue_rate} and \ref{cor:sharpness}}
\label{sec:tail_law_proof}

The tail law is the value of an extremal problem over zero distributions, the continuous form of the supremum in \eqref{eq:sharpness}; its properties and the sharpness of the explicit bounds follow from this problem without the frame.

\begin{proposition}[An Extremal Problem for the Tail]
\label{prop:extremal}
For $0 < \tau < \kappa_\sigma$ let
\begin{equation}\label{eq:reduced_problem}
m_\omega(\tau) := \sup\Bigl\{\max_{[-c_0,c_0]}\bigl(b - U^{\nu}\bigr)\Bigr\},
\end{equation}
the supremum being over all closed sets $F \subset [-c_0,c_0]$, constants $b \in \mathbb R$ and compactly supported positive measures $\nu$ on $\mathbb R$ with $\nu(\mathbb R) \le 1$ such that
\begin{equation}\label{eq:reduced_constraints}
b - U^\nu \le 0 \ \text{ on } [-1,1], \qquad b - U^\nu \le -\tau \ \text{ on } F, \qquad \nu \ge \mu_\omega \ \text{ on } [-c_0,c_0]\setminus F .
\end{equation}
Then $m_\omega(\tau) = r_\omega(\tau)$.
\end{proposition}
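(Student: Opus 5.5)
We prove the two inequalities $m_\omega(\tau) \ge r_\omega(\tau)$ and $m_\omega(\tau) \le r_\omega(\tau)$ separately. Throughout we work in the variable $z = \cos\vartheta$ and use $U^\lambda(\cos\vartheta) = -E_\lambda(\vartheta)$.

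\emph{Lower bound.} We test \eqref{eq:reduced_problem} with the equilibrium measure $\lambda_\tau$ of Proposition~\ref{prop:equilibrium}. Take $\nu := \lambda_\tau$, which has mass one, and put $b := -E_\tau^\ast$, so that $b - U^\nu = E_\tau - E_\tau^\ast$. Take $F := [-\cos\theta_\tau, \cos\theta_\tau]$, the image of the centre $[\theta_\tau, \pi-\theta_\tau]$. The three constraints then hold:
\begin{itemize}
\item[(a)] $E_\tau \le E_\tau^\ast$ on $[0,\pi]$, so $b - U^\nu \le 0$ on $[-1,1]$;
\item[(b)] $E_\tau = E_\tau^\ast - \tau$ on the centre by Proposition~\ref{prop:equilibrium}(iii), so $b - U^\nu \le -\tau$ on $F$;
\item[(c)] $\lambda_\tau = \omega^{-1}d\theta$ on the layers, so $\nu \ge \mu_\omega$ on $[-c_0,c_0]\setminus F$.
\end{itemize}
By Proposition~\ref{prop:equilibrium}(iii), the maximum of $E_\tau - E_\tau^\ast$ over $\Theta_I$ is attained at $\theta_0$ and equals $r_\omega(\tau)$ by \eqref{eq:tail_law}. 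Hence $m_\omega(\tau) \ge r_\omega(\tau)$.

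\emph{Upper bound.} Let $(F, b, \nu)$ be admissible, and put $u := U^\nu - U^{\lambda_\tau} + (E_\tau^\ast + b)$. We want $u \ge E_\tau^\ast - \max_{\Theta_I}E_\tau = -r_\omega(\tau)$ on $[-c_0,c_0]$. Then $b - U^\nu \le E_\tau - E_\tau^\ast + r_\omega(\tau) - (E_\tau - \max_{\Theta_I} E_\tau) \le r_\omega(\tau)$ on $[-c_0,c_0]$, which gives $\max(b-U^\nu) \le r_\omega(\tau)$. To get this bound, apply Lemma~\ref{lem:minimum} with $\mu_1 = \nu$ and $\mu_2 = \lambda_\tau$. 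The masses satisfy $\nu(\mathbb R) \le 1 = \lambda_\tau(\mathbb R)$, so the roles must be checked: with $\mu_1 = \lambda_\tau$ and $\mu_2 = \nu$ the mass condition $\mu_2 \le \mu_1$ holds. The function is then $U^\nu - U^{\lambda_\tau} = -(b - U^\nu) + (E_\tau - E_\tau^\ast) + (E_\tau^\ast + b)$, up to the constant. The positive part of $\lambda_\tau - \nu$ is carried by
\[
K := \operatorname{supp}\lambda_\tau|_{\Theta_E} \,\cup\, F \,\cup\, \bigl(\text{centre} \setminus \text{layers}\bigr).
\]
The layers are excluded because there $\nu \ge \mu_\omega = \lambda_\tau$. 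We bound the function on each part of $K$:
\begin{itemize}
\item[(a)] On $\operatorname{supp}\lambda_\tau|_{\Theta_E}$, $E_\tau = E_\tau^\ast$ and $b - U^\nu \le 0$, so $U^\nu - U^{\lambda_\tau} \ge -(b + E_\tau^\ast)\cdot(-1)$, i.e.\ the needed lower bound by the constant $-b - E_\tau^\ast$.
\item[(b)] On $F$, $b - U^\nu \le -\tau \le E_\tau - E_\tau^\ast$ by Proposition~\ref{prop:equilibrium}(i), giving the same lower bound.
\item[(c)] On the centre, $E_\tau - E_\tau^\ast = -\tau$. The remaining points of $K$ there lie outside $F$, where $\nu \ge \mu_\omega \ge \lambda_\tau$, so they carry no positive part and do not enter.
\end{itemize}
With $c := -b - E_\tau^\ast$, this gives $U^\nu - U^{\lambda_\tau} \ge c$ on $K$, and Lemma~\ref{lem:minimum} extends it to $\mathbb C$. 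On $[-c_0,c_0]$ this reads $b - U^\nu \le E_\tau - E_\tau^\ast \le r_\omega(\tau)$, the last step by \eqref{eq:tail_law}. This proves $m_\omega(\tau) \le r_\omega(\tau)$.

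\emph{Main obstacle.} The delicate step is the upper bound. One must verify that $U^\nu$ may be discontinuous while Lemma~\ref{lem:minimum} requires only $U^{\mu_1} = U^{\lambda_\tau}$ to be continuous, which holds by Lemma~\ref{lem:env}(d). One must also check carefully that the positive part of $\lambda_\tau - \nu$ is carried by a compact set on which both constraints are active; in particular the points of the physical arc outside $F$, where $\lambda_\tau \le \omega^{-1}d\theta \le \nu$ by (c). If $K$ has to be closed up, we take closures and use the continuity of $E_\tau$.
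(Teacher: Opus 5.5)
Your proof is correct and follows the paper's argument. For $m_\omega \ge r_\omega$ you use the same admissible triple $(\cos[\theta_\tau,\pi-\theta_\tau],\,-E_\tau^\ast,\,\lambda_\tau)$. For $m_\omega \le r_\omega$ you apply Lemma~\ref{lem:minimum} with $(\mu_1,\mu_2)=(\lambda_\tau,\nu)$. Using all of $F$ instead of the support of $\lambda_\tau$ on $F$ is harmless, since $E_\tau \ge E_\tau^\ast-\tau$ everywhere.

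When you write it up, fix the following points:
\begin{itemize}
\item The constant is $c=b+E_\tau^\ast$, which is what your item~(a) actually derives and what your final line uses, not $-b-E_\tau^\ast$. The same sign slip appears in your preliminary definition of $u$.
\item Take $K=\operatorname{supp}\lambda_\tau|_{\Theta_E}\cup F$ outright, rather than including a third piece that ``does not enter''. The lemma needs $U^\nu-U^{\lambda_\tau}\ge c$ on all of $K$, and this need not hold at points of the centre outside $F$.
\item State that $K\neq\emptyset$. Since $F$ may be empty, this comes from $\lambda_\tau(\Theta_E)>0$, by Proposition~\ref{prop:equilibrium}(iii).
\end{itemize}
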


Here $\nu$ stands for the limiting distribution of the zeros of a polynomial of degree $N$, the mass $1 - \nu(\mathbb R)$ escaping to infinity, and $e^{N(b - U^\nu)}$ for its size, normalized to be at most one on the extended interval and at most $e^{-N\tau}$ on the free set $F$; on the saturated set $[-c_0,c_0]\setminus F$ the polynomial has a zero at every node. Problem \eqref{eq:reduced_problem} is a two-level variant of the extremal problem by which Rakhmanov \cite[Sec.~4.2]{rakhmanov2007bounds} describes the polynomials of degree $n$ bounded by one at $N > n$ given points: there the size of a polynomial at a point is maximized relative to its maximum on the set where its zero distribution lies below the distribution of the points, over zero distributions not exceeding that distribution, and the maximizer is the equilibrium measure with this upper constraint \cite[Thm.~3]{rakhmanov1996equilibrium}, see also \cite{dragnev1997constrained, kuijlaars1999extremal}. Here the polynomial is in addition normalized on the whole extended interval, which carries no nodes outside $[-c_0,c_0]$, and required to be smaller by the factor $e^{-N\tau}$ on the free set, and the maximizer is again an equilibrium measure with an upper constraint, now in the external field of Definition~\ref{def:equilibrium}. For the least-squares Fourier extension from equispaced samples, the constrained equilibrium measure is saturated next to the ends of the physical interval and free in its centre \cite{geronimo2020fourier}, the structure of $\lambda_\tau$ in Proposition~\ref{prop:equilibrium}(iii).

\begin{proof}[Proof of Proposition~\ref{prop:extremal}]
Let $\lambda := \lambda_\tau$, identified with its image on $[-1,1]$, and $\theta_\tau$ as in Proposition~\ref{prop:equilibrium}(iii). The triple $F := \cos([\theta_\tau, \pi - \theta_\tau])$, $b := -E_\tau^\ast$, $\nu := \lambda$ is admissible: $b - U^\nu = E_\tau - E_\tau^\ast$ is at most $0$ on $[-1,1]$ and equals $-\tau$ on $F$ by Proposition~\ref{prop:equilibrium}(i) and (iii), and $\nu = \mu_\omega$ on $[-c_0,c_0]\setminus F$, the image of the layer. Its value is $\max_{\Theta_I}E_\tau - E_\tau^\ast = r_\omega(\tau)$, so $m_\omega(\tau) \ge r_\omega(\tau)$. Conversely, let $(F, b, \nu)$ be admissible, and let $K$ be the union of the supports of $\lambda$ restricted to $[-1,1]\setminus[-c_0,c_0]$ and to $F$, a compact set that is nonempty because $\lambda_\tau(\Theta_E) > 0$. Since $\nu \ge \mu_\omega \ge \lambda$ on $[-c_0,c_0]\setminus F$, the positive part of $\lambda - \nu$ is carried by $K$. On the first support $-U^\lambda = E_\tau^\ast$ by Proposition~\ref{prop:equilibrium}(ii) and $U^\nu \ge b$; on the second $U^\nu \ge b + \tau$ and $-U^\lambda \ge E_\tau^\ast - \tau$ by Proposition~\ref{prop:equilibrium}(i). So $U^\nu - U^\lambda \ge b + E_\tau^\ast$ on $K$, and Lemma~\ref{lem:minimum} with $(\mu_1,\mu_2) = (\lambda,\nu)$, applicable by Lemma~\ref{lem:env}(d), extends this to $\mathbb C$. On $[-c_0,c_0]$ it gives $b - U^\nu \le -U^\lambda - E_\tau^\ast \le r_\omega(\tau)$ by Proposition~\ref{prop:equilibrium}(iii), so $m_\omega(\tau) \le r_\omega(\tau)$.
\end{proof}

\begin{proof}[Proof of Proposition~\ref{prop:tail_properties}]
\emph{Monotonicity and the Lipschitz bound.} Let $0 < \tau \le \tau' < \kappa_\sigma$. A triple that is admissible in \eqref{eq:reduced_problem} at the level $\tau'$ is admissible at the level $\tau$; and if $(F, b, \nu)$ is admissible at the level $\tau$, then $(F, b - (\tau' - \tau), \nu)$ is admissible at the level $\tau'$, with a value smaller by $\tau' - \tau$. Hence $m_\omega(\tau') \le m_\omega(\tau) \le m_\omega(\tau') + \tau' - \tau$, and Proposition~\ref{prop:extremal} gives the same inequalities for $r_\omega$.

\emph{The bounds.} Proposition~\ref{prop:equilibrium}(i) gives $-\tau \le r_\omega(\tau) \le 0$. Below, $\lambda := \lambda_\tau$ and $\mu_\omega$ are identified with their images on $[-1,1]$; both potentials are continuous by Lemma~\ref{lem:env}(d), and $U^{\mu_\omega} = U_\omega$. For the upper bound $\kappa_\Lambda - \tau$ let $K'$ be the image of the support of $(\omega^{-1}d\theta - \lambda)|_{\Theta_I}$; it is nonempty because $\lambda(\Theta_I) < 1$ by Proposition~\ref{prop:equilibrium}(iii), and it carries the positive part of $\mu_\omega - \lambda$, the image of $(\omega^{-1}d\theta - \lambda)|_{\Theta_I}$. On $K'$, $E_\tau = E_\tau^\ast - \tau$ by Proposition~\ref{prop:equilibrium}(ii) and $U_\omega \le U_\omega(0)$ by Lemma~\ref{lem:clausen}, so that $u := U^\lambda - U_\omega = -E_\tau - U_\omega \ge \tau - E_\tau^\ast - U_\omega(0)$. Lemma~\ref{lem:minimum} with $(\mu_1,\mu_2) = (\mu_\omega,\lambda)$, whose masses are equal, extends this to $\mathbb C$; at $c_0$ it reads $r_\omega(\tau) = E_\tau(\theta_0) - E_\tau^\ast \le U_\omega(0) - U_\omega(c_0) - \tau = \kappa_\Lambda - \tau$ by \eqref{eq:rates}.

For the strict lower bound let $K$ be the support of $\lambda|_{\Theta_E}$, nonempty by Proposition~\ref{prop:equilibrium}(iii), and apply Lemma~\ref{lem:minimum} with $(\mu_1,\mu_2) = (\lambda,\mu_\omega)$: the masses are equal, and the positive part of $\lambda - \mu_\omega$ is carried by $K$ because $\lambda \le \mu_\omega$ on $\Theta_I$. On $K$, $u := U^{\mu_\omega} - U^\lambda = U_\omega + E_\tau \ge U_\omega(1) + E_\tau^\ast$ by Proposition~\ref{prop:equilibrium}(ii) and Lemma~\ref{lem:clausen}. Hence $u \ge U_\omega(1) + E_\tau^\ast$ on $\mathbb C$, and at $c_0$ this reads $r_\omega(\tau) = E_\tau(\theta_0) - E_\tau^\ast \ge U_\omega(1) - U_\omega(c_0) = \kappa_\Lambda - \kappa_\sigma$ by \eqref{eq:rates}. Suppose equality held. Then $c_0 \notin K$, since otherwise $E_\tau(\theta_0) = E_\tau^\ast$ and $r_\omega(\tau) = 0 > \kappa_\Lambda - \kappa_\sigma$; so $u(c_0) = U_\omega(1) + E_\tau^\ast$ with $c_0 \notin K$, and Lemma~\ref{lem:minimum} gives $u = 0$ on $\mathbb C\setminus K$, hence on $\mathbb C$ by continuity. Then $E_\tau = E_{\mu_\omega}$, and $\lambda_\tau = \mu_\omega$ by Lemma~\ref{lem:cheb}, which contradicts $\lambda_\tau(\Theta_E) > 0$. The limits follow from \eqref{eq:tail_law_bounds}.
\end{proof}

\begin{lemma}[Monotonicity of the Rates]
\label{lem:monotone}
Put $m_N(t) := \max\{\gamma_t,\ t\Gamma_t\}$ for $t > 0$. Then $m_N(t) > 0$, so that $r^\pm(\tau) \in [-\infty,\infty]$ are well defined for $\tau \ge 0$, and for $0 \le \tau \le \tau'$
\begin{equation}\label{eq:monotone}
r^\pm(\tau') \le r^\pm(\tau) \le r^\pm(\tau') + \tau' - \tau .
\end{equation}
\end{lemma}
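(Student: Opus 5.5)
The plan is to show positivity of $m_N(t)$ first and then the two-sided monotonicity inequality. Both come from the monotonicity in $t$ of $\gamma_t$ and of $t\Gamma_t$.

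\emph{Positivity.} If $\mathcal V_t \ne \{0\}$, pick a unit $\vect w \in \mathcal V_t$. Then $\Phi\mat D\vect w$ is a nonzero element of $\Phi_N \subset V_N$: the $\varphi_j$ are linearly independent and $\mat D$ is invertible. A nonzero trigonometric polynomial of this form cannot vanish identically on $I$, as in the proof of Corollary~\ref{cor:either_or}, so $\gamma_t > 0$. Otherwise $\mathcal V_t^\perp = \mathbb R^n \ne \{0\}$, and the same argument applied to any $\vect w \in \mathcal V_t^\perp$ gives $\Gamma_t > 0$. This step is only needed for $n \ge 1$, i.e. $N \ge 3$. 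Hence $m_N(t) > 0$, $\log m_N(t_N)$ is finite, and the $\liminf$ and $\limsup$ of $N^{-1}\log m_N(t_N)$ lie in $[-\infty,\infty]$.

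\emph{Monotonicity in $t$.} For $t \le t'$ we have $\mathcal V_t \subset \mathcal V_{t'}$, so $\gamma_t \le \gamma_{t'}$. In the other direction $\mathcal V_{t'}^\perp \subset \mathcal V_t^\perp$, so $\Gamma_{t'} \le \Gamma_t$. The step that needs a small argument is comparing $t\Gamma_t$ with $t'\Gamma_{t'}$.

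\begin{itemize}
\item \emph{Claim:} for $t \le t'$,
\[
m_N(t) \le m_N(t') \le (t'/t)\,m_N(t).
\]
\item \emph{Bound on $t\Gamma_t$.} Take $\vect w \in \mathcal V_t^\perp$ and split it as $\vect w = \vect w_1 + \vect w_2$, with $\vect w_1$ in the span of the $\vect v_i$ with $t < \sigma_i \le t'$ and $\vect w_2 \in \mathcal V_{t'}^\perp$. Since $\mat B_D\vect w_1 \perp \mat B_D\vect w_2$, both $\norm{\mat B_D\vect w_k}_2 \le \norm{\mat B_D\vect w}_2$.
\begin{itemize}
\item For $\vect w_1$: $\norm{\Phi\mat D\vect w_1}_{L^2(I)} \le \gamma_{t'}\norm{\vect w_1}_2 \le \gamma_{t'}\norm{\mat B_D\vect w_1}_2/t$.
\item For $\vect w_2$: $\norm{\Phi\mat D\vect w_2}_{L^2(I)} \le \Gamma_{t'}\norm{\mat B_D\vect w_2}_2$.
\end{itemize}
Together these give $t\Gamma_t \le \gamma_{t'} + t\Gamma_{t'} \le 2m_N(t')$. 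That costs a factor $2$, which disappears at the exponential scale.
\item \emph{Bound on $t'\Gamma_{t'}$ and $\gamma_{t'}$.} We have $t'\Gamma_{t'} \le (t'/t)\,t\Gamma_t$. For $\gamma_{t'}$, take a unit vector in $\mathcal V_{t'}$ and split it into its parts in $\mathcal V_t$ and in the span of the $\vect v_i$ with $t < \sigma_i \le t'$. The second part has $\norm{\mat B_D\cdot}_2 \le t'$, so
\[
\gamma_{t'} \le \gamma_t + \Gamma_t t' \le \gamma_t + (t'/t)\,t\Gamma_t \le 2(t'/t)\,m_N(t).
\]
\item \emph{Result.} Up to factors $2$,
\[
\tfrac12 m_N(t) \le m_N(t') \le 2(t'/t)\,m_N(t).
\]
\end{itemize}

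\emph{Conclusion.} With $t = e^{-N\tau'}$ and $t' = e^{-N\tau}$ (so $t \le t'$ when $\tau \le \tau'$), the ratio is $t'/t = e^{N(\tau'-\tau)}$. Taking $N^{-1}\log$ and passing to $\liminf$ and $\limsup$ gives \eqref{eq:monotone}, since $N^{-1}\log 2 \to 0$. This holds including the infinite values, with the conventions on $\pm\infty$.

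The only delicate point is the cross term $\gamma_{t'}$ versus $t\Gamma_t$, handled by the spectral splitting above. The argument uses no property of the operator, so it holds for either operator, as stated.
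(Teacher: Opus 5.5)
Your proof is correct. It establishes the same two-sided factor-$2$ comparison $\tfrac12 m_N(t) \le m_N(t') \le 2(t'/t)\,m_N(t)$ for $0 < t \le t'$ that the paper relies on, but it is organized differently.

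You compare the two thresholds directly. You split coefficient vectors into the three spectral bands $\sigma_i \le t$, $t < \sigma_i \le t'$ and $\sigma_i > t'$, and bound each of $\gamma_t$, $t\Gamma_t$, $\gamma_{t'}$ and $t'\Gamma_{t'}$ in turn.

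The paper instead introduces the auxiliary quantity $Q_N(t) := \sup\{\norm{\Phi\mat D\vect w}_{L^2(I)} : \norm{\vect w}_2 \le 1,\ \norm{\mat B_D\vect w}_2 \le t\}$. It shows once that $Q_N(t)/2 \le m_N(t) \le Q_N(t)$, using the reduction step of Theorem~\ref{thm:dual} and the splitting in the proof of Corollary~\ref{cor:either_or}. It then reads off two facts from the inclusion of admissible sets under $\vect w \mapsto (t/t')\vect w$: $Q_N$ is nondecreasing, and $Q_N(t) \ge (t/t')Q_N(t')$.

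The paper's route puts all the singular-value bookkeeping into the single equivalence $m_N \asymp Q_N$, which it needs elsewhere anyway, so the monotonicity itself becomes trivial. Your route is self-contained and avoids the auxiliary supremum, at the cost of four separate band estimates. In both, the factor $2$ is invisible at the exponential scale.

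One presentational slip: your bulleted Claim states $m_N(t) \le m_N(t') \le (t'/t)\,m_N(t)$ without the factors $2$, and your argument does not establish that. For example, $t\Gamma_t \le \gamma_{t'} + t\Gamma_{t'}$ only yields $2m_N(t')$. What you actually prove is the version with factors $2$ in your Result line, and that is all the conclusion needs.
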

\begin{proof}
Since $\Phi\mat D\vect w \ne 0$ in $L^2(I)$ for $\vect w \ne 0$ (proof of Corollary~\ref{cor:either_or}), $\gamma_t > 0$ if $\mathcal V_t \ne \{0\}$, and otherwise $\mathcal V_t^\perp = \mathbb R^n$ and $t\Gamma_t > 0$. For $t > 0$ let $Q_N(t) := \sup\{\norm{\Phi\mat D\vect w}_{L^2(I)} : \norm{\vect w}_2 \le 1,\ \norm{\mat B_D\vect w}_2 \le t\}$. The reduction step in the proof of Theorem~\ref{thm:dual} shows $m_N(t) \le Q_N(t)$, and for $\vect w$ as in the definition of $Q_N(t)$ the splitting in the proof of Corollary~\ref{cor:either_or} gives $\norm{\Phi\mat D\vect w}_{L^2(I)} \le \gamma_t + t\Gamma_t \le 2m_N(t)$; so $Q_N(t)/2 \le m_N(t) \le Q_N(t)$. The function $Q_N$ is nondecreasing, and $Q_N(t) \ge (t/t')Q_N(t')$ for $0 < t \le t'$, because $\vect w \mapsto (t/t')\vect w$ maps the admissible set of $Q_N(t')$ into that of $Q_N(t)$. With $t = e^{-N\tau'}$ and $t' = e^{-N\tau}$ this gives $m_N(e^{-N\tau'}) \le 2m_N(e^{-N\tau})$ and $m_N(e^{-N\tau'}) \ge \tfrac12e^{-N(\tau'-\tau)}m_N(e^{-N\tau})$, and \eqref{eq:monotone} follows.
\end{proof}

\begin{lemma}[Discretization of the Equilibrium Measure]
\label{lem:discretize}
Let $0 < \tau < \kappa_\sigma$ and $\eps > 0$. There are a configuration $\mathcal C$ with $\tau(\mathcal C) \ge \tau - \eps$ and $\ell(\mathcal C) \ge r_\omega(\tau) - \eps$, and an interpolation configuration $\mathcal S$ with $\Phi(\mathcal S,\tau) \le r_\omega(\tau) + \eps$.
\end{lemma}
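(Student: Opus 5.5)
We discretize the equilibrium measure $\lambda := \lambda_\tau$ of Proposition~\ref{prop:equilibrium} in two ways, once as a configuration and once as an interpolation configuration, after a small perturbation that creates the strict inequalities needed. By symmetry we only work on $[0,\pi/2]$. The continuity moduli of envelopes of bounded densities, from Lemma~\ref{lem:E_modulus} and Lemma~\ref{lem:env}, turn $L^\infty$-small or mass-small changes of the density into uniformly small changes of the envelope. Every approximation below will be measured in that way.

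\emph{The configuration $\mathcal C$.} The layer is $S = [\theta_0,\theta_s]$ with $\theta_s$ slightly larger than $\theta_\tau$. We take $\theta_s = \theta_\tau + \eta$ if $\theta_\tau = \theta_0$; by Corollary~\ref{cor:layer} $\theta_\tau > \theta_0$ anyway. On $\Theta_E \cap [\xi,\theta_0)$ and on the centre $[\theta_s,\pi/2]$ we replace the density $f$ of $\lambda$ by a step function $\rho$ on finitely many arcs, with $\rho \le f$ up to a small error. We then multiply the free part by $1-\eta$, so that the total mass $m(\mathcal C)$ is $<1$, and we drop the mass near $0$, below $\xi$, which is small when $\xi$ is small. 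Each change alters $E$ by $o(1)$ uniformly as the parameters tend to $0$.

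Hence $E_{\max} \le E_\tau^\ast + o(1)$ and $E(\theta_0) \ge E_\tau(\theta_0) - o(1)$, which gives $\ell(\mathcal C) \ge r_\omega(\tau) - \eps$. On the centre, $E \le E_\tau^\ast - \tau + o(1)$ by Proposition~\ref{prop:equilibrium}(iii). Also $E_{\max} \ge E_\tau^\ast - o(1)$, since $E_\tau = E_\tau^\ast$ on the support of $\lambda|_{\Theta_E}$, which is nonempty. Together these give $\tau(\mathcal C) \ge \tau - \eps$. The layer was shifted by $\eta$ into the centre, where $\lambda \le 1/\omega$, and saturating it raises the mass there; this too is an $O(\eta\log(1/\eta))$ perturbation. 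The condition $\ell(\mathcal C) > -(\kappa_\sigma-\kappa_\Lambda)$ follows from the strict bound in Proposition~\ref{prop:tail_properties} for small $\eps$.

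\emph{The interpolation configuration $\mathcal S$.} The node arcs approximate $\lambda|_{\Theta_I}$. The density there is $1/\omega$ on the layer, with $q_k = 1$ and $R_k = \{0\}$. On the centre, where $f < 1/\omega$ a.e., we use rational densities $|R_k|/(q_k\omega)$ from step approximations. The virtual arcs approximate $\lambda|_{\Theta_E}$ restricted to $[\xi,\pi-\xi]$, and the total mass is adjusted to exactly one by a small rescaling. Then $E = E_\tau + o(1)$ uniformly, so:
\begin{itemize}
\item $E_I \le E_\tau(\theta_0) + o(1)$;
\item on the node arcs, $E \ge E_\tau^\ast - \tau - o(1)$ by Proposition~\ref{prop:equilibrium}(i), so $A_1 - \tau \le r_\omega(\tau) + o(1)$;
\item on the virtual arcs, which lie in an $o(1)$-neighbourhood of the support of $\lambda|_{\Theta_E}$ where $E_\tau = E_\tau^\ast$, we get $E \ge E_\tau^\ast - o(1)$, so $A_2 \le r_\omega(\tau) + o(1)$.
\end{itemize}
Hence $\Phi(\mathcal S,\tau) \le r_\omega(\tau) + \eps$.

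\emph{Main obstacle.} The hardest step is to control the minima $E_1$ and $E_2$ over arcs, rather than over supports. Step approximations put virtual arcs where $f$ is small but positive, and their endpoints may stick out of the support. We will handle this by choosing arcs only on sets where $f > \eta$, discarding the remaining small mass, and using continuity of $E_\tau$ to bound $E$ on a slightly enlarged set. A second difficulty is that the total mass in Definition~\ref{def:interp_configuration} must be exactly one while the node densities must be rational; we will meet this by absorbing the discrepancy into the density of one virtual arc.
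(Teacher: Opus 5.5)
Your proposal is correct and is essentially the paper's argument. You discretize $\lambda_\tau$ on a fine symmetric partition, control all envelope changes by Lemma~\ref{lem:env}, bound $E_1$, $E_2$ and $E_I$ through parts (i), (ii) and (iii) of Proposition~\ref{prop:equilibrium}, rationalize the node densities and move the mass deficit onto the virtual arcs. The only differences are minor: the paper takes $\theta_s$ just \emph{inside} the saturated region, beyond the compact set where $E_\tau+\tau\ge E_\tau^\ast+\eps/4$, so that the layer coincides with $\lambda_\tau$, whereas your $\theta_s$ just past $\theta_\tau$ adds up to $2\eta/\omega$ of layer mass, so the free part must be scaled by something like $1-\eta(1+2/\omega)$ rather than $1-\eta$ to keep $m(\mathcal C)<1$; the arcs of $\mathcal S$ must be shrunk slightly to be pairwise disjoint; and your ``main obstacle'' disappears, since a cell of length at most $\eta$ in $\overline{\Theta_E}$ with positive $\lambda_\tau$-mass meets the support of $\lambda_\tau|_{\Theta_E}$, so no restriction to $\{f>\eta\}$ is needed.
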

\begin{proof}
Let $\lambda := \lambda_\tau = f\,d\theta$, $E := E_\tau$, $E^\ast := E_\tau^\ast$, and $\theta_\tau$ as in Proposition~\ref{prop:equilibrium}(iii). For $0 < \eta \le \theta_0/2$ with $\eta < \min\{1/4, \omega/2\}$ let $\mathcal P$ be a partition of $[0,\pi]$ into arcs with pairwise disjoint interiors and lengths in $[\eta/2,\eta]$, invariant under $\vartheta \mapsto \pi - \vartheta$, of which $\theta_0$, $\pi/2$ and $\pi - \theta_0$ are endpoints; it exists because $[0,\theta_0]$ and $[\theta_0,\pi/2]$ have lengths at least $\eta$. For $A \in \mathcal P$ put $f_A := \lambda(A)/|A| \le 1/\omega$, and let $\bar\lambda$ have density $f_A$ on each $A \in \mathcal P$. By Lemma~\ref{lem:env}, $|E_{\bar\lambda} - E| \le 8\omega^{-1}\Omega(\eta)$ on $[0,\pi]$, $|E(\vartheta) - E(\vartheta')| \le \omega^{-1}\Omega(\eta)$ for $|\vartheta - \vartheta'| \le \eta$, and $|E_g| \le MC_0|B|^{1/2}$ on $[0,\pi]$ for every $g$ with $|g| \le M$ that vanishes outside a Borel set $B$. Below, $\eps_1, \eps_2, \dots$ denote quantities that depend only on $\eta$, $\omega$ and $\tau$ and tend to $0$ as $\eta \to 0$.

\emph{The configuration.} The set $K := \{\vartheta \in [\theta_0,\pi/2] : E(\vartheta) + \tau \ge E^\ast + \eps/4\}$ is compact and, as $E + \tau = E^\ast$ on $[\theta_\tau,\pi/2]$, contained in $[\theta_0,\theta_\tau)$. If $\theta_\tau > \theta_0$, let $\eta$ be so small that some endpoint $\theta_s$ of $\mathcal P$ satisfies $\max(K \cup\{\theta_0\}) < \theta_s < \theta_\tau$; if $\theta_\tau = \theta_0$, let $\theta_s$ be the endpoint of $\mathcal P$ next to $\theta_0$ on its right. Let $\xi \le \eta$ be the smallest positive endpoint and $\theta_0 - \zeta$, $\zeta \le \eta$, the endpoint next to $\theta_0$ on its left. The configuration $\mathcal C$ has the layer parameter $\theta_s$, the extension arcs $A \in \mathcal P$ with $A \subset [\xi,\theta_0-\zeta]$ and $f_A > 0$, the centre arcs $A \in \mathcal P$ with $A \subset [\theta_s,\pi/2]$ and $f_A > 0$, and on these arcs the densities $(1 - \eta')f_A$ with $\eta' := \eta(1 + 2/\omega)$; for $\eta' < 1$ it satisfies Definition~\ref{def:configuration}, its total mass being at most $\max\{\lambda(S^\ast) + (1-\eta')(1 - \lambda(S^\ast)),\ 2\eta/\omega + 1 - \eta'\} < 1$. Its envelope is $E_\nu$ for the measure $\nu$ of density $1/\omega$ on $S^\ast$ and $(1-\eta')f_A$ on the arcs. The density of $\nu - \bar\lambda$ vanishes on the layer if $\theta_s < \theta_\tau$, where $f = 1/\omega$, is bounded by $1/\omega$ on the layer if $\theta_\tau = \theta_0$, where $|S^\ast| \le 2\eta$, and on $[0,\xi) \cup (\theta_0 - \zeta,\theta_0)$ and its mirror image, of measure at most $4\eta$, and is bounded by $\eta'/\omega$ elsewhere. Hence $|E_\nu - E| \le \eps_1 := 8\omega^{-1}\Omega(\eta) + \omega^{-1}C_0\bigl((6\eta)^{1/2} + \eta'\sqrt\pi\bigr)$ on $[0,\pi]$. Since $E^\ast = \max E$, $E \le E^\ast - \tau + \eps/4$ on $[\theta_s,\pi-\theta_s]$ and $E(\theta_0) - E^\ast = r_\omega(\tau)$, this gives $\tau(\mathcal C) \ge \tau - \eps/4 - 2\eps_1$ and $\ell(\mathcal C) \ge r_\omega(\tau) - 2\eps_1$, and $2\eps_1 \le 3\eps/4$ for small $\eta$.

\emph{The interpolation configuration.} Let $q \in \mathbb N$ with $q \ge 1/\eta$, and shrink every arc of $\mathcal P$ by $\eta^2$ at both ends. For each $A \in \mathcal P$ with $A \subset \Theta_I$ let $r_A := \lfloor(1-\eta)q\omega f_A\rfloor \in \{0,\dots,q-1\}$; the shrunk arcs with $r_A \ge 1$ are the node arcs, with modulus $q$, residues $\{0,\dots,r_A - 1\}$ and density $\rho_A = r_A/(q\omega) \in [(1-\eta)f_A - \eta/\omega,\ (1-\eta)f_A]$. There are node arcs for small $\eta$: by Lemma~\ref{lem:cheb}, $J_0(\lambda) \ge \log 2 = J_0(\pi^{-1}d\theta)$, and $J_\tau(\lambda) \le J_\tau(\pi^{-1}d\theta)$ then gives $\lambda(\Theta_I) \ge \omega/\pi$, so $f_A \ge 1/\pi$ for some $A \subset \Theta_I$, and $r_A \ge 1$ for this $A$ once $(1 - \eta)\omega \ge \pi\eta$. The shrunk arcs of the $A \in \mathcal P$ with $A \subset \overline{\Theta_E}\cap[\xi,\pi-\xi]$ and $f_A > 0$ are the virtual arcs, with density $(1 + \varkappa)f_A$, where $\varkappa \ge 0$ makes the total mass one. Since $\lambda(\Theta_E) > 0$ by Proposition~\ref{prop:equilibrium}(iii), virtual arcs exist for small $\eta$ and $\varkappa \le \eps_2$, because the mass removed by the floors, which lower the density by at most $2\eta/\omega$ on $\Theta_I$, by the shrinking, which removes at most $4\pi\eta$ of length, and with the arcs in $[0,\xi)\cup(\pi-\xi,\pi]$ is at most $2\eta + (4\pi + 2)\eta/\omega$, while the mass of $\lambda$ on the virtual arcs tends to $\lambda(\Theta_E)$. The arcs are pairwise disjoint, so $\mathcal S$ is an interpolation configuration, and as before its envelope satisfies $|E_{\mathcal S} - E| \le \eps_3$ on $[0,\pi]$. By Proposition~\ref{prop:equilibrium}(i), $E \ge E^\ast - \tau$ everywhere, so the minimum of $E_{\mathcal S}$ over the node arcs is at least $E^\ast - \tau - \eps_3$. Every virtual arc lies in some $A \in \mathcal P$ with $A \subset \overline{\Theta_E}$ and $\lambda(A) > 0$, hence within $\eta$ of the support of $\lambda|_{\Theta_E}$, where $E = E^\ast$ by Proposition~\ref{prop:equilibrium}(ii); so the minimum of $E_{\mathcal S}$ over the virtual arcs is at least $E^\ast - \omega^{-1}\Omega(\eta) - \eps_3$. Finally $\max_{\Theta_I}E_{\mathcal S} \le E(\theta_0) + \eps_3$ by Proposition~\ref{prop:equilibrium}(iii). Therefore $A_1(\mathcal S) - \tau \le r_\omega(\tau) + 2\eps_3$ and $A_2(\mathcal S) \le r_\omega(\tau) + 2\eps_3 + \omega^{-1}\Omega(\eta)$, so that $\Phi(\mathcal S,\tau) \le r_\omega(\tau) + \eps$ for small $\eta$.
\end{proof}

\begin{proof}[Proof of Theorem~\ref{thm:tail_law}]
Fix $\tau \in (0,\kappa_\sigma)$, let $m_N$ be as in Lemma~\ref{lem:monotone}, and let $\eps > 0$.

\emph{Upper bound.} For the interpolation configuration $\mathcal S$ of Lemma~\ref{lem:discretize}, \eqref{eq:dual_rate} gives $N^{-1}\log m_N(t_N) \le \Phi(\mathcal S,\tau) + C\log N/N$ for $N \ge N_0$, so $r^+(\tau) \le \Phi(\mathcal S,\tau) \le r_\omega(\tau) + \eps$.

\emph{Lower bound.} Let $0 < \eps < \min\{\tau,\ r_\omega(\tau) - (\kappa_\Lambda - \kappa_\sigma)\}$, which is possible by Proposition~\ref{prop:tail_properties}, and let $\mathcal C$ be the configuration of Lemma~\ref{lem:discretize}; then $\ell(\mathcal C) \ge r_\omega(\tau) - \eps > -(\kappa_\sigma - \kappa_\Lambda)$. By Corollary~\ref{cor:either_or}, $m_N(e^{-N\tau(\mathcal C)}) \ge \tfrac12\exp(N\ell(\mathcal C) - C\log N)$ for $N \ge N_0$, so $r^-(\tau(\mathcal C)) \ge \ell(\mathcal C)$. As $\tau - \eps \le \tau(\mathcal C)$, Lemma~\ref{lem:monotone} gives $r^-(\tau) \ge r^-(\tau - \eps) - \eps \ge r^-(\tau(\mathcal C)) - \eps \ge r_\omega(\tau) - 2\eps$.

Since $\eps$ is arbitrary, $r^-(\tau) \ge r_\omega(\tau) \ge r^+(\tau)$, which proves \eqref{eq:tail_limit}.
\end{proof}

\begin{proof}[Proof of Corollary~\ref{cor:lebesgue_rate}]
Let $N \ge N_0$ of Proposition~\ref{prop:unisolvence_poisson}, so that $\mat B_D$ is nonsingular and $\mathcal V_0 = \{0\}$. For $t > 0$ and $\vect w \in \mathcal V_t$, $\norm{\Phi\mat D\vect w}_{L^2(I)} \le \Gamma_0\norm{\mat B_D\vect w}_2 \le t\Gamma_0\norm{\vect w}_2$, and for $\vect w \in \mathcal V_t^\perp$, $\norm{\Phi\mat D\vect w}_{L^2(I)} \le \Gamma_0\norm{\mat B_D\vect w}_2$; hence $\max\{\gamma_t, t\Gamma_t\} \le t\Gamma_0$. For $\tau \in (0,\kappa_\sigma)$ and $t = t_N = e^{-N\tau}$, Theorem~\ref{thm:tail_law} therefore gives $\liminf_{N\to\infty}N^{-1}\log\Gamma_0 \ge r_\omega(\tau) + \tau$, and letting $\tau \to \kappa_\sigma$, Proposition~\ref{prop:tail_properties} gives $\liminf_{N\to\infty}N^{-1}\log\Gamma_0 \ge \kappa_\Lambda$. Conversely, Proposition~\ref{prop:classical}(i) with $t = 0$ and $\Lambda^\Psi_{N,2} \le \sqrt L\,\Lambda_N^\Psi$ give $\Gamma_0 \le (L^2/\pi^2)\sqrt{nL}\,\Lambda_N^\Psi$, and Proposition~\ref{prop:classical}(iii) gives $\Lambda_N^\Psi \le N^Ce^{N\kappa_\Lambda}$. Hence
\[
\kappa_\Lambda \le \liminf_{N\to\infty}\frac{\log\Gamma_0}{N} \le \liminf_{N\to\infty}\frac{\log\Lambda_N^\Psi}{N} \le \limsup_{N\to\infty}\frac{\log\Lambda_N^\Psi}{N} \le \kappa_\Lambda ,
\]
and $\limsup_{N\to\infty}N^{-1}\log\Gamma_0 \le \limsup_{N\to\infty}N^{-1}\log\Lambda_N^\Psi = \kappa_\Lambda$.
\end{proof}

\begin{proof}[Proof of Corollary~\ref{cor:sharpness}]
Let $\tau \in (0,\kappa_\sigma)$ and $\eps > 0$ with $\tau + \eps < \kappa_\sigma$.

\emph{Configurations.} A configuration $\mathcal C$ with $\tau(\mathcal C) \ge \tau$ gives a triple that is admissible in \eqref{eq:reduced_problem} at the level $\tau$: $\nu$ the image of the measure of density $1/\omega$ on $S^\ast$ and $\rho_k$ on $A_k^\ast$, of mass $m(\mathcal C) < 1$, $F$ the image of $[\theta_s, \pi - \theta_s]$, and $b := -E_{\max}$. Indeed $b - U^\nu = E - E_{\max}$ is at most $0$ on $[-1,1]$ and at most $-\tau(\mathcal C) \le -\tau$ on $F$, and $\nu = \mu_\omega$ on $[-c_0,c_0]\setminus F$, the image of the layer. The value of the triple is at least $E(\theta_0) - E_{\max} = \ell(\mathcal C)$, so $\ell(\mathcal C) \le r_\omega(\tau)$ by Proposition~\ref{prop:extremal}. Conversely, Lemma~\ref{lem:discretize} at the level $\tau + \eps$ gives a configuration with $\tau(\mathcal C) \ge \tau$ and $\ell(\mathcal C) \ge r_\omega(\tau + \eps) - \eps \ge r_\omega(\tau) - 2\eps$, which exceeds $-(\kappa_\sigma - \kappa_\Lambda)$ for small $\eps$, by Proposition~\ref{prop:tail_properties}.

\emph{Interpolation configurations.} Let $\mathcal S$ be an interpolation configuration with envelope $E$, let $\sigma$ be the image of the measure of density $\rho_k$ on $A_k$, $1 \le k \le Q$, a probability measure with $-U^\sigma(\cos\vartheta) = E(\vartheta)$, and let $(F, b, \nu)$ be admissible at the level $\tau$. Let $K$ be the union of the images of the virtual arcs and of the intersections of $F$ with the images of the node arcs, a nonempty compact set. The node arcs lie in $\Theta_I$ and carry densities at most $1/\omega$, so $\sigma \le \mu_\omega \le \nu$ on their images outside $F$, and the positive part of $\sigma - \nu$ is carried by $K$. On the images of the virtual arcs, $U^\nu \ge b$ and $-U^\sigma \ge E_2$; on the images of the node arcs in $F$, $U^\nu \ge b + \tau$ and $-U^\sigma \ge E_1$; hence $U^\nu - U^\sigma \ge b + \min\{E_2,\ E_1 + \tau\}$ on $K$, and Lemma~\ref{lem:minimum} with $(\mu_1,\mu_2) = (\sigma,\nu)$, applicable by Lemma~\ref{lem:env}(d), extends this to $\mathbb C$. On $[-c_0,c_0]$ it gives
\[
b - U^\nu \le E - \min\{E_2,\ E_1 + \tau\} \le E_I - \min\{E_2,\ E_1 + \tau\} = \Phi(\mathcal S,\tau),
\]
so that $r_\omega(\tau) = m_\omega(\tau) \le \Phi(\mathcal S,\tau)$ by Proposition~\ref{prop:extremal}. Conversely, Lemma~\ref{lem:discretize} gives $\mathcal S$ with $\Phi(\mathcal S,\tau) \le r_\omega(\tau) + \eps$.
\end{proof}

\subsection{Duality: Proof of Theorem~\ref{thm:tail_duality} and Corollary~\ref{cor:self_dual}}

Let $\omega' := \pi - \omega$ and $c := \omega/\omega'$. The arcs of $\omega'$ are $\Theta_I' := [\theta_0',\pi-\theta_0']$, with $\theta_0' = \omega/2 = \pi/2 - \theta_0$, and $\Theta_E' := [0,\pi]\setminus\Theta_I'$, and Definition~\ref{def:equilibrium} and Proposition~\ref{prop:equilibrium} with $\omega'$ in place of $\omega$ define $\mathscr A_{\omega'}$, $J'_\tau$ and the minimizers $\lambda'_\tau$. For a density $f$ on $[0,\pi]$ that is symmetric about $\pi/2$ let $(\mathcal Rf)(\varphi) := f(|\varphi - \pi/2|)$; if $[0,\pi]$ is viewed as half of the unit circle, $\mathcal R$ is the rotation by $\pi/2$. It maps symmetric densities to symmetric densities and fixes constants, and $(\mathcal Rf)(\varphi)$ is a value of $f$ on $\Theta_I$ if $\varphi \in \overline{\Theta_E'}$ and on $\Theta_E$ if $\varphi$ lies in the interior of $\Theta_I'$. For symmetric $f$, $\hat f_k = 0$ for odd $k$, and the substitution $\varphi = \pi/2 \pm \psi$ gives $\widehat{\mathcal Rf}_k = \cos(k\pi/2)\,\hat f_k$ for all $k \ge 0$; by Lemma~\ref{lem:cheb}, therefore, $J_0(\mathcal Rf) = J_0(f)$ and $E_{\mathcal Rf}(\vartheta) = E_f(|\vartheta - \pi/2|)$.

\begin{proof}[Proof of Theorem~\ref{thm:tail_duality}]
For symmetric $\lambda = f\,d\theta \in \mathscr A_\omega$ put $T\lambda := c\,(\omega^{-1} - \mathcal Rf)\,d\theta$. Its density lies in $[0,c/\omega] = [0,1/\omega']$, its mass is $c(\pi/\omega - 1) = 1$, and it is symmetric, so $T$ maps the symmetric elements of $\mathscr A_\omega$ onto those of $\mathscr A_{\omega'}$, the inverse being $\mu \mapsto \mathcal R(\omega^{-1} - c^{-1}\,d\mu/d\theta)\,d\theta$. The density of $T\lambda$ has the cosine coefficients $1$ and $-c\cos(k\pi/2)\hat f_k$, $k \ge 1$, so Lemma~\ref{lem:cheb} gives $J_0(T\lambda) = \log 2 + c^2(J_0(\lambda) - \log 2)$; moreover $(T\lambda)(\Theta_I') = c(\omega'/\omega - \lambda(\Theta_E)) = 1 - c + c\lambda(\Theta_I)$. Hence $J'_{c\tau}(T\lambda) = c^2J_\tau(\lambda) + (1 - c^2)\log 2 - 2c(1-c)\tau$ for symmetric $\lambda$, so $T\lambda_\tau$ minimizes $J'_{c\tau}$ over the symmetric elements of $\mathscr A_{\omega'}$; as the minimizer $\lambda'_{c\tau}$ of $J'_{c\tau}$ over $\mathscr A_{\omega'}$ is symmetric and unique (Proposition~\ref{prop:equilibrium}), $T\lambda_\tau = \lambda'_{c\tau}$. By Lemma~\ref{lem:cl2}(iv), $E_{T\lambda}(\vartheta) = -c\bigl((\pi/\omega)\log 2 + E_\lambda(|\vartheta - \pi/2|)\bigr)$; with $\lambda = \lambda_\tau$ this gives $\max_{[0,\pi]}E_{T\lambda} = -c\bigl((\pi/\omega)\log 2 + \min_{[0,\pi]}E_\tau\bigr)$ and, as $|\theta_0' - \pi/2| = \theta_0$, $E_{T\lambda}(\theta_0') = -c\bigl((\pi/\omega)\log 2 + E_\tau(\theta_0)\bigr)$. Let $0 < \tau < \kappa_\sigma(\omega)$. Then $\min E_\tau = E_\tau^\ast - \tau$ by Proposition~\ref{prop:equilibrium}(i) and (iii), and $c\tau \in (0,\kappa_\sigma(\omega'))$ because $\omega\kappa_\sigma(\omega) = \mathrm{Cl}_2(\omega) + \mathrm{Cl}_2(\pi-\omega)$ is symmetric in $\omega$ and $\pi-\omega$. Therefore $r_{\omega'}(c\tau) = E_{T\lambda}(\theta_0') - \max E_{T\lambda} = c\bigl(\min E_\tau - E_\tau(\theta_0)\bigr) = -c\bigl(\tau + r_\omega(\tau)\bigr)$. Dividing by $c\kappa_\sigma(\omega) = \kappa_\sigma(\omega')$ gives \eqref{eq:tail_duality} at $\bar\tau = \tau/\kappa_\sigma(\omega)$.
\end{proof}

\begin{proof}[Proof of Corollary~\ref{cor:self_dual}]
For $\omega = \pi/2$, $\omega' = \omega$, and \eqref{eq:tail_duality} reads $2\bar r_{\pi/2}(\bar\tau) = -\bar\tau$, i.e.\ $r_{\pi/2}(\tau) = -\tau/2$. The second statement is Theorem~\ref{thm:tail_law}, and $a(\pi/2) = 1/2$ (Section~\ref{sec:res_rates}).
\end{proof}

\subsection{The Start of the Tail: Proof of Theorem~\ref{thm:initial_slope} and Corollaries~\ref{cor:critical} and \ref{cor:layer}}
\label{sec:initial_slope_proof}

The construction is outlined after Theorem~\ref{thm:initial_slope}. We use the notation of Appendix~\ref{app:start}, write $s := s(\omega) = 2\theta_0/\pi$, and let $C_1$, $C_2$ and $C'$ denote positive constants that depend only on $\omega$.

\begin{proof}[Proof of Theorem~\ref{thm:initial_slope}]
\emph{The profile.} Let $r_0 := \min\{\theta_0,\omega\}/4$, choose $x_\ast < -g$ with $m \ge 1/(2\pi)$ on $(-\infty,x_\ast]$, which is possible by Lemma~\ref{lem:inner}(iii), and let $0 < \tau \le \tau_1$, where $\tau_1 \in (0,\min\{1,\kappa_\sigma\})$ satisfies $2\tau_1(l - x_\ast) \le r_0$. Define on $[0,\pi]$
\[
e_\tau(\vartheta) := \frac\omega\pi + 1 + e\Bigl(\frac{\vartheta - \theta_0}{\tau}\Bigr) + e\Bigl(\frac{\pi - \theta_0 - \vartheta}{\tau}\Bigr).
\]
By Lemma~\ref{lem:inner}(i), $e_\tau$ is absolutely continuous, $e_\tau' \in L^2(0,\pi)$, and $e_\tau'$ is H\"older continuous near every point of $(0,\pi)\setminus\{\theta_0,\pi-\theta_0\}$. Since $-1 \le e \le 0$, $e = 0$ on $(-\infty,-g]$ and $e = -1$ on $[l,\infty)$, and since $(\pi - \theta_0 - \vartheta)/\tau \ge \omega/(2\tau) \ge l$ for $\vartheta \le \pi/2$, we have $e_\tau(\pi - \vartheta) = e_\tau(\vartheta)$,
\begin{equation}\label{eq:profile}
e_\tau \le \frac\omega\pi \ \text{ on } [0,\pi], \qquad e_\tau = \frac\omega\pi - 1 \ \text{ on } F_\tau := [\theta_0 + l\tau,\ \pi - \theta_0 - l\tau], \qquad e_\tau(\theta_0) = \frac\omega\pi - s,
\end{equation}
and $e_\tau(\theta_0 + \tau x) = \omega/\pi + e(x)$ for $|\tau x| \le r_0$.

\emph{The density.} Let $f_0 := \pi^{-1}(1 - \tau\mathcal Ne_\tau)$ with $\mathcal N$ as in Lemma~\ref{lem:shape}. By Lemma~\ref{lem:shape}(i), $\int_0^\pi f_0 = 1$ and $E_{f_0} = -\log 2 + \tau(e_\tau - \bar e_\tau)$ on $[0,\pi]$, with $\bar e_\tau := \pi^{-1}\int_0^\pi e_\tau$. Let $A$ be the set of $\vartheta \in [0,\pi]$ with $\dist(\vartheta,\{\theta_0,\pi-\theta_0\}) > r_0$. We claim that $f_0(\pi - \vartheta) = f_0(\vartheta)$ and that, with $C_1 := \pi^{-2}\bigl(1 + \cot(r_0/4) + \cot(\omega/4) + 2\cot(\theta_0/4)\bigr)$, for a.e.\ $\vartheta \in [0,\pi]$
\begin{equation}\label{eq:f0_inner}
\begin{aligned}
\Bigl|f_0(\vartheta) - m\Bigl(\frac{\vartheta - \theta_0}{\tau}\Bigr)\Bigr| &\le C_1\tau \quad \text{if } |\vartheta - \theta_0| \le r_0, \\
\Bigl|f_0(\vartheta) - \frac1\pi\Bigr| &\le C_1\tau \quad \text{if } \vartheta \in A .
\end{aligned}
\end{equation}
The symmetry holds since the cosine coefficients of $e_\tau$ vanish for odd $k$. By Lemma~\ref{lem:shape}(ii) and the identity
\[
\frac{2\sin\varphi}{\cos\varphi - \cos\vartheta} = \cot\frac{\vartheta-\varphi}2 - \cot\frac{\vartheta+\varphi}2 ,
\]
for almost every $\vartheta$
\[
\tau\mathcal Ne_\tau(\vartheta) = \frac\tau{2\pi}\,\mathrm{p.v.}\!\int_0^\pi e_\tau'(\varphi)\Bigl(\cot\frac{\vartheta-\varphi}2 - \cot\frac{\vartheta+\varphi}2\Bigr)d\varphi ,
\]
where $e_\tau' = e_1 + e_2$ with $e_1(\varphi) := \tau^{-1}e'((\varphi - \theta_0)/\tau)$, supported in $V := [\theta_0 - g\tau, \theta_0 + l\tau]$, and $e_2(\varphi) := -e_1(\pi - \varphi)$, supported in $\pi - V$; by Lemma~\ref{lem:inner}(i), $\int|e_1| = \int|e_2| = 1$. For $\varphi \in V\cup(\pi - V)$ and $\vartheta \in [0,\pi]$ we have $\theta_0/2 \le \vartheta + \varphi \le 2\pi - \theta_0/2$, so $|\cot\frac{\vartheta+\varphi}2| \le \cot(\theta_0/4)$, and $|\cot\frac{\vartheta-\varphi}2| \le \cot(\rho/2)$ whenever $|\vartheta - \varphi| \ge \rho$, $0 < \rho \le \pi$. If $\vartheta \in A$, then $|\vartheta - \varphi| \ge r_0/2$ on $V \cup (\pi - V)$, since $(g + l)\tau \le r_0/2$, and $|\tau\mathcal Ne_\tau(\vartheta)| \le \pi^{-1}\tau(\cot(r_0/4) + \cot(\theta_0/4))$, which gives the second estimate. Let $|\vartheta - \theta_0| \le r_0$ and $u := (\vartheta - \theta_0)/\tau$. On $\pi - V$, $|\vartheta - \varphi| \ge \omega - r_0 - l\tau \ge \omega/2$, so the contribution of $e_2$ is at most $(2\pi)^{-1}\tau(\cot(\omega/4) + \cot(\theta_0/4))$, and that of $e_1$ with the second cotangent is at most $(2\pi)^{-1}\tau\cot(\theta_0/4)$. In the remaining term we write $\frac12\cot\frac x2 = \frac1x + k(x)$, where $k$ is odd and decreasing on $(-\pi,\pi)$, so that $|k| \le 1/\pi$ there, and substitute $\varphi = \theta_0 + \tau y$; as $|\vartheta - \varphi| \le r_0 + (g+l)\tau < \pi$ for $\varphi \in V$, Lemma~\ref{lem:inner}(ii) gives
\[
\begin{aligned}
\frac\tau{2\pi}\,\mathrm{p.v.}\!\int e_1(\varphi)\cot\frac{\vartheta-\varphi}2\,d\varphi &= \frac1\pi\,\mathrm{p.v.}\!\int_{\mathbb R}\frac{e'(y)}{u - y}\,dy + \frac\tau\pi\int e_1(\varphi)\,k(\vartheta-\varphi)\,d\varphi \\
&= 1 - \pi m(u) + R_1,
\end{aligned}
\]
with $|R_1| \le \tau/\pi^2$. Hence $|\tau\mathcal Ne_\tau(\vartheta) - 1 + \pi m(u)| \le \pi C_1\tau$, which is the first estimate.

\emph{The correction.} Let $Z_1 := [\theta_0 + x_\ast\tau,\ \theta_0 + l\tau]$, $Z := Z_1 \cup (\pi - Z_1)$ and $Y := [\theta_0, \theta_0 + l\tau] \cup [\pi - \theta_0 - l\tau, \pi - \theta_0] \subset Z$, so that $A \cap Z = \emptyset$ and $|A| = \pi - 4r_0 \ge \pi/2$. By \eqref{eq:f0_inner}, the symmetry of $f_0$ and Lemma~\ref{lem:inner}(iii), a.e.\ $f_0 \ge -C_1\tau$ on $Z$, $f_0 \ge 1/\omega - C_1\tau$ on $Y$, where $m = 1/\omega$, and $f_0 \ge 1/(2\pi) - C_1\tau$ on $[0,\pi]\setminus Z$, where $m \ge 1/(2\pi)$ in the first case of \eqref{eq:f0_inner}. Put
\[
f := f_0 + C_1\tau\,\mathbf 1_Z - \frac{C_1\tau|Z|}{|A|}\,\mathbf 1_A .
\]
Then $\int_0^\pi f = 1$, and for $\tau \le \tau_2 := \min\{\tau_1, 1/(4\pi C_1)\}$, since $|Z| \le |A|$, $f \ge 0$ a.e.\ on $[0,\pi]$ and $f \ge 1/\omega$ a.e.\ on $Y$. As $|Z| = 2(l - x_\ast)\tau$, $\tau \le 1$ and $|A| \ge \pi/2$, Lemma~\ref{lem:env}(b) gives $|E_f - E_{f_0}| \le C_1\tau C_0|Z|^{1/2} + C_1\tau|Z||A|^{-1/2}C_0 \le C_2\tau^{3/2}$ on $[0,\pi]$, with $C_2 := C_0C_1\bigl((2(l - x_\ast))^{1/2} + 2(2/\pi)^{1/2}(l - x_\ast)\bigr)$.

\emph{The admissible triple.} Let $\nu$ be the image of $f\,d\theta$ under $\theta \mapsto \cos\theta$, a positive measure on $[-1,1]$ of mass one, $F := \cos(F_\tau) \subset [-c_0,c_0]$ and $b_\tau := \log 2 + \tau\bar e_\tau - \tau\omega/\pi - C_2\tau^{3/2}$. Then
\[
b_\tau - U^\nu(\cos\vartheta) = b_\tau + E_f(\vartheta) = \tau\Bigl(e_\tau(\vartheta) - \frac\omega\pi\Bigr) + (E_f - E_{f_0})(\vartheta) - C_2\tau^{3/2},
\]
which by \eqref{eq:profile} is at most $0$ on $[0,\pi]$ and at most $-\tau$ on $F_\tau$; and $[-c_0,c_0]\setminus F$ is the image of $Y$ without its inner endpoints, where $f \ge 1/\omega$, so that $\nu \ge \mu_\omega$ there. Thus $(F, b_\tau, \nu)$ satisfies \eqref{eq:reduced_constraints}, and Proposition~\ref{prop:extremal} gives
\[
r_\omega(\tau) \ge b_\tau + E_f(\theta_0) \ge \tau\Bigl(e_\tau(\theta_0) - \frac\omega\pi\Bigr) - 2C_2\tau^{3/2} = -s\tau - 2C_2\tau^{3/2} \qquad (0 < \tau \le \tau_2).
\]

\emph{The upper bound.} Let $\omega' := \pi - \omega$ and $c := \omega/\omega'$. The lower bound for $\omega'$ gives $C'$ and $\tau' > 0$ with $r_{\omega'}(\sigma) \ge -s(\omega')\sigma - C'\sigma^{3/2}$ for $0 < \sigma \le \tau'$. Since $c\kappa_\sigma(\omega) = \kappa_\sigma(\omega')$, Theorem~\ref{thm:tail_duality}, applied at $\bar\tau = \tau/\kappa_\sigma(\omega)$, reads $r_\omega(\tau) = -\tau - c^{-1}r_{\omega'}(c\tau)$ for $0 < \tau < \kappa_\sigma(\omega)$, and $s(\omega') = \omega/\pi = 1 - s$; so for $0 < \tau \le \tau'/c$
\[
r_\omega(\tau) \le -\tau + s(\omega')\tau + C'c^{1/2}\tau^{3/2} = -s\tau + C'c^{1/2}\tau^{3/2}.
\]
Both bounds hold for $0 < \tau \le \tau_0 := \min\{\tau_2, \tau'/c\}$, which proves \eqref{eq:initial_slope} with the constant $\max\{2C_2, c^{1/2}C'\}$.
\end{proof}

\begin{proof}[Proof of Corollary~\ref{cor:critical}]
Let $a := a(\omega)$, $s := s(\omega)$, and $C$ and $\tau_0$ as in Theorem~\ref{thm:initial_slope}; recall that $\omega/\pi = L/(L + 2\delta)$. (i) As $\delta < L/2$, $\omega > \pi/2$, and $a > s$ by Proposition~\ref{prop:critical}; so $r_\omega(\tau) \ge -s\tau - C\tau^{3/2} > -a\tau$ for $0 < \tau \le \tau_0$ with $C\tau^{1/2} < a - s$. Suppose that $\gamma_t \le C_\ast t^{\,a}$ for all $N \ge N_1$ and all $t > 0$. By Proposition~\ref{prop:unisolvence_poisson} and Lemma~\ref{lem:two_forms}(i), $t\Gamma_t \le C_at^{\,a}$ with $C_a := 2^a(1 - 4^{a-1})^{-1/2}C_\ast$ for all sufficiently large $N$, so that $r^+(\tau) \le -a\tau$ for every $\tau > 0$, whereas $r^+(\tau) = r_\omega(\tau) > -a\tau$ for small $\tau > 0$ by Theorem~\ref{thm:tail_law}. (ii) As $\delta > L/2$, $\omega < \pi/2$ and $a < s$, so that $r_\omega(\tau) \le -s\tau + C\tau^{3/2} < -a\tau$ for $0 < \tau \le \tau_0$ with $C\tau^{1/2} < s - a$.
\end{proof}

\begin{proof}[Proof of Corollary~\ref{cor:layer}]
Let $s := s(\omega) \in (0,1)$, and $C$ and $\tau_0$ as in Theorem~\ref{thm:initial_slope}. For $0 < \tau \le \tau_0$ with $C\tau^{1/2} < \min\{s, 1-s\}$, \eqref{eq:initial_slope} gives $-\tau < r_\omega(\tau) < 0$. By Proposition~\ref{prop:tail_properties}, $r_\omega$ is nonincreasing and $\tau \mapsto r_\omega(\tau) + \tau$ is nondecreasing, so both strict inequalities hold for all $\tau \in (0,\kappa_\sigma)$. Fix such $\tau$ and let $E := E_\tau$, $E^\ast := E_\tau^\ast$. Since $E(\theta_0) = E^\ast + r_\omega(\tau) < E^\ast$ and $E$ is continuous, $E < E^\ast$ on $(\theta_0 - \eps, \theta_0]$ for some $\eps > 0$, so this interval contains no point of the support of $\lambda_\tau|_{\Theta_E}$, on which $E = E^\ast$ by Proposition~\ref{prop:equilibrium}(ii); that is, $\lambda_\tau$ vanishes on $(\theta_0 - \eps, \theta_0)$. Since $E(\theta_0) > E^\ast - \tau$, likewise $E > E^\ast - \tau$ on $[\theta_0, \theta_0 + \eps)$ after decreasing $\eps$, so this interval contains no point of the support of $(\omega^{-1}d\theta - \lambda_\tau)|_{\Theta_I}$, on which $E = E^\ast - \tau$; hence $\lambda_\tau = \omega^{-1}d\theta$ on $[\theta_0, \theta_0 + \eps)$, this relatively open subset of $\Theta_I$ lies in the set $O$ of the proof of Proposition~\ref{prop:equilibrium}(iii), and $\theta_\tau \ge \theta_0 + \eps$.
\end{proof}

\section{Regularized Oversampled Collocation}
\label{sec:regularized}

For comparison with the interpolatory method, we treat the oversampled formulation with truncated singular value decomposition \eqref{eq:tsvd}, with $M \ge n$ nodes. It is controlled by the following constant of the test space.

\begin{definition}[Marcinkiewicz--Zygmund Constant of the Test Space]
\label{def:mz}
Let $\mat B$ have full column rank at the nodes $x_1,\dots,x_M$, and put
\begin{equation}\label{eq:mz}
C_M := \sup_{\psi \in \Psi_N\setminus\{0\}} \frac{\norm{\psi}_{L^2(I)}}{(L/M)^{1/2}\,\bigl(\sum_{m=1}^M \psi(x_m)^2\bigr)^{1/2}} .
\end{equation}
\end{definition}

A bound on $C_M$ independent of $N$ is the lower half of a Marcinkiewicz--Zygmund inequality for $\Psi_N$ at the nodes, as in \cite[Eq.~(1.5)]{adcock2020frames2}; $C_M$ is computable as the square root of a generalized eigenvalue (Section~\ref{sec:num_solvers}).

\begin{theorem}[Error of Regularized Oversampled Collocation]
\label{thm:regularized}
Let $\mat B$ have full column rank, let $u = \Lop^{-1}f$ and let $u_N^\eps$ be defined by \eqref{eq:tsvd}. Then for every $v = \sum_{j=1}^n z_j\varphi_j \in \Phi_N$, with coefficient vector $\vect z = (z_1, \dots, z_n)^T$,
\begin{equation}\label{eq:regularized_bound}
\begin{aligned}
\norm{u - u_N^\eps}_{L^2(I)} \le \norm{\Lop^{-1}}_{2\to 2}\Bigl[&(1 + 2C_M)\sqrt L\, \norm{f - \Lop v}_{L^\infty(I)} \\
&+ C_M (L/M)^{1/2}\,\eps\,\norm{\mat D^{-1}\vect z}_2\Bigr],
\end{aligned}
\end{equation}
where $\norm{\Lop^{-1}}_{2\to 2}$ is the norm of $\Lop^{-1}$ on $L^2(I)$ and $\mat D^{-1}\vect z$ is the coefficient vector of $v$ in the normalized basis, $v = \Phi\mat D(\mat D^{-1}\vect z)$.
\end{theorem}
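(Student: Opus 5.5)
The plan is to work in the test space and transfer to the solution through $\Lop^{-1}$. Since $u_N^\eps \in \Phi_N$ and $v \in \Phi_N$ both satisfy the homogeneous boundary conditions, $u - u_N^\eps = \Lop^{-1}(f - \Lop u_N^\eps)$. Hence $\norm{u - u_N^\eps}_{L^2(I)} \le \norm{\Lop^{-1}}_{2\to2}\norm{f - \Lop u_N^\eps}_{L^2(I)}$, and everything reduces to bounding the residual in $L^2(I)$. We split it as $f - \Lop u_N^\eps = (f - \Lop v) + \Lop(v - u_N^\eps)$. The first term is at most $\sqrt L\,\norm{f - \Lop v}_{L^\infty(I)}$. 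The second lies in $\Psi_N$, so Definition~\ref{def:mz} bounds it by $C_M (L/M)^{1/2}$ times the Euclidean norm of its nodal values. Those values are $\mat B(\vect z - \vect c_\eps) = \mat B_D(\vect y - \hat{\vect y})$, with $\vect y := \mat D^{-1}\vect z$ and $\hat{\vect y} := \mat D^{-1}\vect c_\eps$.

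Next, bound $\norm{\mat B_D(\vect y - \hat{\vect y})}_2 \le \norm{\mat B_D\vect y - \vect f}_2 + \norm{\vect f - \mat B_D\hat{\vect y}}_2$. Lemma~\ref{lem:tsvd} with $\vect z = \mat D\vect y$, used exactly as in the proof of Corollary~\ref{cor:regimes_two_constant}(ii), gives $\norm{\vect f - \mat B_D\hat{\vect y}}_2 \le r + \eps\norm{\vect y}_2$, where $r := \norm{\vect f - \mat B_D\vect y}_2$. So the nodal norm is at most $2r + \eps\norm{\vect y}_2$. Finally, $r$ is the Euclidean norm of the nodal values of $f - \Lop v$, so $r \le \sqrt M\,\norm{f - \Lop v}_{L^\infty(I)}$. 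Multiplying by $(L/M)^{1/2}$ turns this into $\sqrt L\,\norm{f - \Lop v}_{L^\infty(I)}$.

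Collecting the terms gives
\[
\norm{f - \Lop u_N^\eps}_{L^2(I)} \le \sqrt L\,\norm{f-\Lop v}_\infty + C_M\bigl(2\sqrt L\,\norm{f-\Lop v}_\infty + (L/M)^{1/2}\eps\norm{\mat D^{-1}\vect z}_2\bigr),
\]
which is \eqref{eq:regularized_bound}. One point needs checking: a nonzero $\psi \in \Psi_N$ must not vanish at every node. This follows from full column rank of $\mat B$ together with the bijection $\Lop\colon\Phi_N\to\Psi_N$ of Lemma~\ref{lem:dimensions}, so $C_M$ is finite and the Marcinkiewicz--Zygmund step is legitimate.

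The main obstacle is the residual estimate for the truncated SVD, $\norm{\vect f - \mat B_D\hat{\vect y}}_2 \le r + \eps\norm{\vect y}_2$. It comes from splitting $\vect f - \mat B_D\hat{\vect y}$ into the component orthogonal to the retained left singular vectors and observing that the discarded singular values are at most $\eps$. This is supplied by Lemma~\ref{lem:tsvd}, so the remaining work is bookkeeping of the normalization by $\mat D$.
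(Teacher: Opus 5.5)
Your proposal is correct and follows the paper's proof essentially line for line. Both split $f - \Lop u_N^\eps = (f - \Lop v) + \Lop(v - u_N^\eps)$, bound the test-space part by $C_M(L/M)^{1/2}$ times its nodal norm, and control that nodal norm by $2\norm{\vect f - \mat B\vect z}_2 + \eps\norm{\mat D^{-1}\vect z}_2$ via Lemma~\ref{lem:tsvd}. Your explicit check that $C_M$ is finite, from full column rank and the bijection of Lemma~\ref{lem:dimensions}, is a small point the paper leaves implicit.
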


\begin{corollary}[Interpolatory and Oversampled Regimes]
\label{cor:regimes}
\leavevmode
\begin{enumerate}
    \item[(i)] Let Assumption~\ref{ass:unisolvence} hold, let $M = n$ with the interior nodes of the $N$-grid, and let $0 < \eps < \sigma_n$. Then $u_N^\eps = u_N$, the interpolatory solution of Theorem~\ref{thm:representation}, and
    \begin{equation}\label{eq:interp_l2}
    \norm{u - u_N}_{L^2(I)} \le \norm{\Lop^{-1}}_{2\to2}(1+2C_{n})\sqrt L\,\dist_\infty(f,\Psi_N),
    \end{equation}
    the $L^2$ analogue of \eqref{eq:error_bound}.
    \item[(ii)] Let $M \ge n$, and let the nodes be such that $\mat B$ has full column rank and $C_M \le C_\ast$ for all $N$. Then \eqref{eq:regularized_bound} holds with $C_\ast$ in place of $C_M$, uniformly in $N$, for every $\eps > 0$ and every $v = \sum_{j=1}^n z_j\varphi_j \in \Phi_N$, with coefficient vector $\vect z = (z_1, \dots, z_n)^T$, and the coefficient vector $\vect c_\eps$ of $u_N^\eps$ in \eqref{eq:tsvd} satisfies 
\[
\norm{\mat D^{-1}\vect c_\eps}_2 \le \norm{\mat D^{-1}\vect z}_2 + \eps^{-1}\sqrt M\,\norm{f - \Lop v}_{L^\infty(I)}.
\]
\end{enumerate}
\end{corollary}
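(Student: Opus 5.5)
For part (i) I would first identify $u_N^\eps$ with the interpolatory solution, and then read \eqref{eq:interp_l2} off Theorem~\ref{thm:regularized} by letting $\eps \to 0$.

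For $M = n$, Assumption~\ref{ass:unisolvence} makes $\mat B$, and hence $\mat B_D = \mat B\mat D$, nonsingular, so $\sigma_n > 0$. In the thin SVD, $\mat U$ is then square with orthonormal columns, hence orthogonal. For $0 < \eps < \sigma_n$ every singular value exceeds $\eps$, so $\mathbf\Sigma_\eps^\dagger = \mathbf\Sigma^{-1}$ and
\[
\vect c_\eps = \mat D\mat V\mathbf\Sigma^{-1}\mat U^T\vect f = \mat D\mat B_D^{-1}\vect f = \mat B^{-1}\vect f .
\]
Thus $u_N^\eps = \sum_j(\vect c_\eps)_j\varphi_j$ lies in $\Phi_N$, and $\Lop u_N^\eps \in \Psi_N$ takes the values $f(x_m)$ at the interior nodes. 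By the argument of the proof of Theorem~\ref{thm:representation} (equivalently, Proposition~\ref{prop:backward} with $\mat E = 0$), $u_N^\eps = \Lop^{-1}\Pop_N f = u_N$. In particular $u_N^\eps$ does not depend on $\eps \in (0,\sigma_n)$.

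Next I would apply \eqref{eq:regularized_bound} with $M = n$ to an arbitrary $v \in \Phi_N$. Its left-hand side is $\norm{u - u_N}_{L^2(I)}$, independent of $\eps$. The last term is $C_n(L/n)^{1/2}\eps\norm{\mat D^{-1}\vect z}_2$, with $v$ fixed, so it tends to $0$ as $\eps \downarrow 0$. This leaves
\[
\norm{u - u_N}_{L^2(I)} \le \norm{\Lop^{-1}}_{2\to2}(1 + 2C_n)\sqrt L\,\norm{f - \Lop v}_{L^\infty(I)} .
\]
Since $\Lop$ maps $\Phi_N$ onto $\Psi_N$ (Lemma~\ref{lem:dimensions}), $\Lop v$ ranges over all of $\Psi_N$. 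Taking the infimum over $v$ gives $\dist_\infty(f,\Psi_N)$, which is \eqref{eq:interp_l2}.

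For part (ii), the right-hand side of \eqref{eq:regularized_bound} is nondecreasing in $C_M$, so replacing $C_M$ by $C_\ast$ preserves it for every $N$, $\eps$ and $v$. For the coefficient bound I would use Lemma~\ref{lem:tsvd}, exactly as in the proof of Corollary~\ref{cor:regimes_two_constant}(ii), with the normalized competitor $\vect y := \mat D^{-1}\vect z$. That lemma gives
\[
\norm{\mat D^{-1}\vect c_\eps}_2 \le \norm{\vect y}_2 + \eps^{-1}\norm{\vect f - \mat B_D\vect y}_2 .
\]
Here $\mat B_D\vect y = \mat B\vect z$ is the vector of nodal values $(\Lop v)(x_m)$, so $\norm{\vect f - \mat B_D\vect y}_2 \le \sqrt M\,\norm{f - \Lop v}_{L^\infty(I)}$, which is the stated bound.

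I do not expect a real obstacle. The one point that needs care is checking that Lemma~\ref{lem:tsvd} is stated in the normalized coefficients $\mat D^{-1}\vect c_\eps$ with the scaled residual $\mat B_D\vect y$, so that no extra factor of $\mat D$ appears. A second check is that, for $M = n$, the recombined system coincides with the interpolatory method of Theorem~\ref{thm:representation}, which the remark after \eqref{eq:tsvd} already records.
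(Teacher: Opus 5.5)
Your proposal is correct and follows the paper's proof essentially step by step. For (i) you identify $\vect c_\eps = \mat B^{-1}\vect f$ as the interpolatory coefficients, apply Theorem~\ref{thm:regularized} with $C_M = C_n$, let $\eps \to 0$ for fixed $v$, and take the infimum using $\Psi_N = \Lop(\Phi_N)$. For (ii) you use monotonicity in $C_M$ together with the second inequality of Lemma~\ref{lem:tsvd}, bounding the nodal residual by $\sqrt M\,\norm{f - \Lop v}_{L^\infty(I)}$.

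The only cosmetic difference is how you identify $u_N^\eps$ with $u_N$. You pass through the representation $\Lop^{-1}\Pop_N f$, whereas the paper notes directly that $u_N \in \Phi_N$ has the coefficient vector solving $\mat B\vect c = \vect f$.
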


Theorem~\ref{thm:regularized} is the collocation analogue of the regularized-frame bounds of \cite{adcock2019frames, adcock2020frames2}: the PDE error is bounded by the $\eps$-regularized best approximation error of the data in the test space, transferred through $\Lop^{-1}$ and through $C_M$. The content of the theorem therefore depends entirely on $C_M$, which Section~\ref{sec:num_solvers} computes for the interpolatory and the oversampled node sets.

The two properties of truncated singular value decomposition that the proofs use, the control of the nodal residual and of the coefficients, are the discrete counterparts of \cite[Thms.~13 and 14]{adcock2019frames} and are proved as Lemma~\ref{lem:tsvd} in Appendix~\ref{app:standard}.

\begin{proof}[Proof of Theorem~\ref{thm:regularized}]
Put $\psi = \Lop v \in \Psi_N$ and $r = f - \psi$. Since $u_N^\eps \in \Phi_N$, $u - u_N^\eps = \Lop^{-1}(f - \Lop u_N^\eps)$ and $f - \Lop u_N^\eps = r + (\psi - \Lop u_N^\eps)$ with $\psi - \Lop u_N^\eps \in \Psi_N$. The nodal values of the latter are $\mat B\vect z - \mat B\vect c_\eps = (\mat B\vect z - \vect f) + (\vect f - \mat B\vect c_\eps)$, so by Lemma~\ref{lem:tsvd}
\begin{align*}
\norm{\psi - \Lop u_N^\eps}_{L^2(I)} &\le C_M (L/M)^{1/2}\bigl(2\norm{\vect f - \mat B\vect z}_2 + \eps\norm{\mat D^{-1}\vect z}_2\bigr) \\
&\le C_M\bigl(2\sqrt L\norm{r}_{L^\infty(I)} + (L/M)^{1/2}\eps\norm{\mat D^{-1}\vect z}_2\bigr),
\end{align*}
using $\norm{\vect f - \mat B\vect z}_2 = \norm{(r(x_m))_m}_2 \le \sqrt M\norm{r}_\infty$. Add $\norm{r}_{L^2(I)} \le \sqrt L\norm{r}_\infty$ and apply $\Lop^{-1}$.
\end{proof}

\begin{proof}[Proof of Corollary~\ref{cor:regimes}]
(i) For $M = n$ the matrix $\mat B$ is square and nonsingular by Assumption~\ref{ass:unisolvence}, so $\sigma_n > 0$, and for $\eps < \sigma_n$ no singular value is truncated in \eqref{eq:tsvd}: $\vect c_\eps = \mat D\mat V\mathbf\Sigma^{-1}\mat U^T\vect f = \mat D(\mat B\mat D)^{-1}\vect f = \mat B^{-1}\vect f$. This is the coefficient vector of the interpolatory solution $u_N = \Lop^{-1}\Pop_Nf$ of Theorem~\ref{thm:representation} in the basis $\{\varphi_j\}$, because $u_N \in \Phi_N$ and the interior collocation equations for $u_N = \sum_jc_j\varphi_j$ read $\mat B\vect c = \vect f$. Hence $u_N^\eps = u_N$ for every $\eps \in (0,\sigma_n)$, and \eqref{eq:regularized_bound} holds for every such $\eps$ with $C_M = C_n$; letting $\eps \to 0$ for fixed $v$ removes the second term, and $\inf_{v\in\Phi_N}\norm{f - \Lop v}_{L^\infty(I)} = \dist_\infty(f,\Psi_N)$ because $\Psi_N = \Lop(\Phi_N)$. (ii) The first claim is \eqref{eq:regularized_bound} with $C_M \le C_\ast$, and the bound on the coefficients is the second inequality of Lemma~\ref{lem:tsvd} with $\norm{\vect f - \mat B\vect z}_2 = \bigl\lVert\bigl(f(x_m) - (\Lop v)(x_m)\bigr)_m\bigr\rVert_2 \le \sqrt M\norm{f - \Lop v}_{L^\infty(I)}$.
\end{proof}

\section{Higher Dimensions and Curved Boundaries}
\label{sec:multi}

Let $R_0 = (0, L_1) \times \dots \times (0, L_d)$ be embedded in $R = \prod_k (-\delta_k, L_k+\delta_k)$, with the tensor-product basis $w_{\mathbf j}(\mathbf x) = \prod_k w_{j_k}(x_k)$, $1 \le j_k \le N$, where $w_{j_k}$ is the function \eqref{eq:basis_func} of the interval $(-\delta_k, L_k+\delta_k)$; these are the eigenfunctions of $-\Delta$ on $R$ with eigenvalues $\lambda_{\mathbf j} = \sum_k \lambda_{j_k}$, and they span the trial space $V_{N,d}$. The nodes form the tensor-product grid of the $N$ equispaced nodes of the intervals $[0,L_k]$, $N^d$ in number; the interior nodes are those in $R_0$, the boundary nodes those on $\partial R_0$. The synthesis matrix $\mat W_d$ of $V_{N,d}$ at the nodes is the Kronecker product of the one-dimensional synthesis matrices, which are nonsingular by Lemma~\ref{lem:lebesgue_condition_link}, and $\mat A$ denotes the matrix of the interpolatory collocation system, which imposes the boundary values at the boundary nodes and the equation at the interior nodes. For $\delta_k = \delta$ and $L_k = L$, $\mat W_d = \mat W_N\otimes\dots\otimes\mat W_N$, and the bulk Poisson matrix, the equation enforced at every node, factors as $\mat A_P^{(d)} = \mat W_d\mathbf\Lambda_d$ with $\mathbf\Lambda_d := \operatorname{diag}(\lambda_{\mathbf j})$.

\begin{theorem}[Compounding of the Frame Instability]\label{thm:d_dim_instability}
With $\delta_k = \delta$, $L_k = L$, for every $\alpha \in (0,\kappa_\sigma(\omega))$ there is $C>0$ independent of $N$ such that $\kappa_2(\mat A_P^{(d)}) \ge C N^{-3/2} e^{\alpha d N}$ for all $N \ge 2$.
\end{theorem}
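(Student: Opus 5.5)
The plan mirrors the proof of Corollary~\ref{thm:poisson_illcond}, with the Kronecker structure supplying the compounding. I bound $\sigma_{\min}$ from above and $\sigma_{\max}$ from below separately.

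For the smallest singular value, the singular values of a Kronecker product are the products of the factors' singular values. Hence $\sigma_{\min}(\mat W_d) = \sigma_{\min}(\mat W_N)^d$. By Lemma~\ref{lem:sigma_min}, this is at most $C_1^d e^{-\alpha dN}$ for all $N \ge 2$. Now take a unit vector $\vect v$ with $\norm{\mat W_d\vect v}_2 = \sigma_{\min}(\mat W_d)$, namely the tensor product of the one-dimensional minimizing singular vectors, and put $\vect x := \mathbf\Lambda_d^{-1}\vect v$. The largest entry of $\mathbf\Lambda_d$ is $\lambda_{\max} = d\lambda_N = d(N\pi/\tilde L)^2$, so $\norm{\vect x}_2 \ge 1/(d\lambda_N)$. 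The same argument as in one dimension then gives
\[
\sigma_{\min}(\mat A_P^{(d)}) \le d\lambda_N\,C_1^d e^{-\alpha dN}.
\]
Nonsingularity of $\mat A_P^{(d)}$ follows because $\mat W_d$ is nonsingular and all $\lambda_{\mathbf j} > 0$.

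For the largest singular value, consider the column of the multi-index $\mathbf j = (1,\dots,1)$. It equals $\lambda_{(1,\dots,1)} = d\lambda_1$ times the tensor product of the vectors $(w_1(x_m))_m$. Each factor has Euclidean norm at least $\sqrt{2N/\tilde L}\sin\theta_0$, as shown in the proof of Corollary~\ref{thm:poisson_illcond}. Therefore
\[
\sigma_{\max}(\mat A_P^{(d)}) \ge d\lambda_1\bigl(\sqrt{2/\tilde L}\sin\theta_0\bigr)^d N^{d/2}.
\]

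Combining the two bounds gives
\[
\kappa_2(\mat A_P^{(d)}) \ge \frac{\lambda_1}{\lambda_N}\,\bigl(\sqrt{2/\tilde L}\sin\theta_0/C_1\bigr)^d N^{d/2} e^{\alpha dN} = c^d N^{d/2-2}e^{\alpha dN},
\]
using $\lambda_1/\lambda_N = N^{-2}$. Since $d \ge 1$, we have $N^{d/2-2} \ge N^{-3/2}$, so the claim holds with $C := c^d$, where $c := \sqrt{2/\tilde L}\sin\theta_0/C_1$. This constant is independent of $N$.

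There is no genuine obstacle here. The only points needing care are the exact Kronecker identity for $\sigma_{\min}$ (the singular value decomposition of a Kronecker product is the Kronecker product of the factors' decompositions), and the observation that the diagonal scaling $\mathbf\Lambda_d$ does not commute with the Kronecker structure. The second point is harmless, because only the bounds $\norm{\mathbf\Lambda_d}_2 = d\lambda_N$ and the single entry $\lambda_{(1,\dots,1)}$ are used.
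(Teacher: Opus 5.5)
Your proposal is correct and follows the paper's proof essentially step for step. Both use the Kronecker identity $\sigma_{\min}(\mat W_d) = \sigma_{\min}(\mat W_N)^d$ together with Lemma~\ref{lem:sigma_min}, the bound $\sigma_{\min}(\mat A_P^{(d)}) \le d\lambda_N\,\sigma_{\min}(\mat W_d)$, the column of the multi-index $(1,\dots,1)$ to bound $\sigma_{\max}$ from below, and finally the inequality $N^{d/2-2} \ge N^{-3/2}$.
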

\begin{proof}
The singular values of a Kronecker product are the products of the singular values of the factors, so $\sigma_{\min}(\mat W_d) = \sigma_{\min}(\mat W_N)^d \le C_1^de^{-\alpha dN}$ by Lemma~\ref{lem:sigma_min}, and $\sigma_{\min}(\mat A_P^{(d)}) \le \sigma_{\min}(\mat W_d)\max_{\mathbf j}\lambda_{\mathbf j} = d\lambda_N\sigma_{\min}(\mat W_N)^d$. On the other hand 
\[
\sigma_{\max}(\mat A_P^{(d)}) \ge \norm{\mat A_P^{(d)}\vect e_{\mathbf 1}}_2 = d\lambda_1\norm{\mat W_N\vect e_1}_2^d \ge d\lambda_1(2N\sin^2\theta_0/\tilde L)^{d/2},
\] 
as in the proof of Corollary~\ref{thm:poisson_illcond}. Dividing, $\kappa_2(\mat A_P^{(d)}) \ge N^{-2}(2\sin^2\theta_0/\tilde L)^{d/2}N^{d/2}e^{\alpha dN}/C_1^d$, and $N^{d/2-2} \ge N^{-3/2}$ for $d \ge 1$.
\end{proof}

\begin{theorem}[Exact Representation in $d$ Dimensions]
\label{thm:representation_d}
Let $\Lop = -\Delta + \mathbf k(\mathbf x)\cdot\nabla$ with $\mathbf k \in C(\bar R_0)^d$, let $f \in C(\bar R_0)$, and let $u \in C^2(R_0)\cap C(\bar R_0)$ satisfy $\Lop u = f$ in $R_0$ and $u = 0$ on $\partial R_0$. Let $\Phi_{N,d} := \{v \in V_{N,d}: v|_{\partial R_0} = 0\}$ and $\Psi_{N,d} := \Lop(\Phi_{N,d})$, and suppose that interpolation at the interior nodes is unisolvent in $\Psi_{N,d}$, with interpolation operator $\Pop_{N,d}$ and Lebesgue constant $\Lambda(\Psi_{N,d})$. Then the interpolatory solution $u_N \in V_{N,d}$ with homogeneous Dirichlet data at the boundary nodes is the unique function in $C^2(R_0)\cap C(\bar R_0)$ with $\Lop u_N = \Pop_{N,d}f$ in $R_0$ and $u_N = 0$ on $\partial R_0$, and with $\alpha := \max_{\bar R_0}|k_1| + 1$,
\begin{equation}\label{eq:error_bound_d}
\begin{aligned}
\norm{u - u_N}_{L^\infty(R_0)} &\le (e^{\alpha L_1} - 1)\bigl(1 + \Lambda(\Psi_{N,d})\bigr)\dist_\infty(f, \Psi_{N,d}), \\
\sum_m|(\mat G_N)_{im}| &\le (e^{\alpha L_1} - 1)\Lambda(\Psi_{N,d}),
\end{aligned}
\end{equation}
where $\mat G_N = \mat W_d\mat A^{-1}$ maps nodal data to nodal values, $\mathbf x_i$ is any interior node and the sum runs over the interior nodes $\mathbf x_m$. If moreover $\mathbf k$ is constant and $u \in C^2(\bar R_0)$, then Theorem~\ref{thm:regularized} holds with $R_0$ in place of $I$, the products $\varphi_{j_1}(x_1)\cdots\varphi_{j_d}(x_d)$ of the one-dimensional bases of Section~\ref{sec:constants} in place of the $\varphi_j$, $|R_0|$ in place of $L$ in the definition \eqref{eq:mz} of $C_M$ and in \eqref{eq:regularized_bound}, and $\norm{\Lop^{-1}}_{2\to2}$ replaced by $(\sum_i\pi^2/L_i^2)^{-1}$.
\end{theorem}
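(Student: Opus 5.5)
The plan is to mirror the one-dimensional argument of Theorem~\ref{thm:representation} and Corollary~\ref{cor:error}, replacing Lemma~\ref{lem:dirichlet_1d} by a maximum principle for $\Lop = -\Delta + \mathbf k\cdot\nabla$ on the box $R_0$.

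\emph{Representation.} The boundary rows of $\mat A$ impose $u_N = 0$ only at the boundary nodes. To conclude that $u_N \in \Phi_{N,d}$ I will use the tensor structure. On a face $x_1 = 0$, the restriction of $u_N$ is a function of the $(d-1)$-dimensional trial space; it vanishes at the whole tensor grid of that face. By Lemma~\ref{lem:lebesgue_condition_link}, applied factor by factor, the Kronecker synthesis matrix is nonsingular, so the restriction vanishes identically on the face. Hence $u_N \in \Phi_{N,d}$ and $\Lop u_N \in \Psi_{N,d}$. The interior rows say that $\Lop u_N$ agrees with $f$ at the interior nodes, so unisolvence gives $\Lop u_N = \Pop_{N,d} f$. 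Uniqueness in $C^2(R_0)\cap C(\bar R_0)$ follows from the weak maximum principle for $\Lop$, which has no zeroth-order term and bounded drift.

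\emph{Error bound.} The key step is a quantitative estimate $\norm{w}_{L^\infty(R_0)} \le (e^{\alpha L_1}-1)\norm{\Lop w}_{L^\infty(R_0)}$ for $w$ vanishing on $\partial R_0$. I will use the barrier $\phi(\mathbf x) = e^{\alpha L_1} - e^{\alpha x_1}$. It satisfies $0 \le \phi \le e^{\alpha L_1}-1$ on $\bar R_0$ and
\[
\Lop\phi = (\alpha^2 - k_1\alpha)e^{\alpha x_1} \ge \alpha e^{\alpha x_1} \ge 1,
\]
since $\alpha - k_1 \ge 1$ and $\alpha \ge 1$. Comparing $\pm w$ with $\norm{\Lop w}_\infty\,\phi$ by the maximum principle gives the estimate. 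I then apply it to $w = u - u_N$, for which $\Lop w = f - \Pop_{N,d}f$. The Lebesgue-lemma step of Corollary~\ref{cor:error} gives $\norm{f - \Pop_{N,d}f}_\infty \le (1+\Lambda)\dist_\infty(f,\Psi_{N,d})$, which is the first bound.

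\emph{Discrete Green's function.} For the bound on $\mat G_N$, column $m$ of $\mat A^{-1}$ (interior $m$) gives the solution $v_m$ with $\Lop v_m = \ell_m^\Psi$ and zero boundary values, as in Corollary~\ref{cor:greens}. The row sum at $\mathbf x_i$ is $\sum_m |v_m(\mathbf x_i)| = \sum_m s_m v_m(\mathbf x_i)$ for signs $s_m$. The function $\sum_m s_m v_m$ has image $\sum_m s_m\ell_m^\Psi$ under $\Lop$, of sup norm at most $\Lambda(\Psi_{N,d})$, so the barrier estimate bounds the row sum by $(e^{\alpha L_1}-1)\Lambda(\Psi_{N,d})$.

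\emph{Regularized statement.} This part repeats the proof of Theorem~\ref{thm:regularized} verbatim; that proof used only $\Lop(\Phi_N) = \Psi_N$ and Lemma~\ref{lem:tsvd}. The one new input is the $L^2$ bound $\norm{v}_{L^2} \le (\sum_i\pi^2/L_i^2)^{-1}\norm{\Lop v}_{L^2}$ for constant $\mathbf k$. It comes from Lemma~\ref{lem:dirichlet_box}(iii): for $v\in H^1_0 \cap C^2(\bar R_0)$, the drift term $\int (\mathbf k\cdot\nabla v)v$ integrates to zero, so $\langle \Lop v, v\rangle \ge \lambda_{\min}\norm{v}^2$ with $\lambda_{\min} = \sum_i \pi^2/L_i^2$.

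The main obstacle I expect is the first step, showing that vanishing at the boundary nodes forces vanishing on the whole faces. This needs the face restriction to lie in the correct tensor trial space, which holds because each factor $w_{j_k}$ is fixed on the face, together with nonsingularity of the face synthesis matrix. A secondary care point is applying the maximum principle to $u_N \in C^2(\bar R_0)$ versus $u$ that is only $C^2(R_0)\cap C(\bar R_0)$.
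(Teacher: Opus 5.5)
Your argument follows the paper's proof step for step: the face argument via the Kronecker synthesis matrices, the barrier $e^{\alpha L_1} - e^{\alpha x_1}$ (which is exactly the proof of Lemma~\ref{lem:dirichlet_box}(ii)), the sign-vector trick for the rows of $\mat G_N$, and Lemma~\ref{lem:dirichlet_box}(iii) for the $L^2$ statement.

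There is, however, one step you take for granted that the paper proves first: that $\mat A$ is nonsingular. In one dimension this came from Lemma~\ref{lem:dimensions}. Here the only hypothesis is unisolvence in $\Psi_{N,d}$, yet the conclusion refers to ``the interpolatory solution'' and to $\mat G_N = \mat W_d\mat A^{-1}$. Your representation step starts by assuming $u_N$ exists, and your Green's-function step uses $\mat A^{-1}$.

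You already have everything needed to close this. Suppose $\mat A\vect c = 0$, and put $v := \sum_{\mathbf j}c_{\mathbf j}w_{\mathbf j}$.
\begin{itemize}
\item $v$ vanishes at the boundary nodes, so $v \in \Phi_{N,d}$ by your face argument.
\item $\Lop v \in \Psi_{N,d}$ vanishes at the interior nodes, so $\Lop v = 0$ by unisolvence.
\item The maximum principle gives $v = 0$ on $\bar R_0$, hence at all $N^d$ nodes.
\item The nonsingularity of $\mat W_d$ (a Kronecker product of matrices that are nonsingular by Lemma~\ref{lem:lebesgue_condition_link}) then gives $\vect c = 0$.
\end{itemize}
Since $\mat A$ is square, it is nonsingular.

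For the last claim, the paper also checks that the products $\varphi_{j_1}\cdots\varphi_{j_d}$ form a basis of $\Phi_{N,d}$. The face argument shows that $\Phi_{N,d}$ is cut out by the $N^d-(N-2)^d$ boundary-node evaluations. These are independent because $\mat W_d$ is nonsingular, so $\dim\Phi_{N,d} = (N-2)^d$. This is what makes ``in place of the $\varphi_j$'' mean the same as in one dimension.

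For the inequality itself you only need that each product vanishes on $\partial R_0$. Then $u_N^\eps \in \Phi_{N,d}$, and $u - u_N^\eps \in C^2(\bar R_0)$ is admissible in Lemma~\ref{lem:dirichlet_box}(iii). You should state this explicitly, since your summary ``that proof used only $\Lop(\Phi_N)=\Psi_N$ and Lemma~\ref{lem:tsvd}'' omits it.
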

\begin{proof}
Let $v \in V_{N,d}$ vanish at the boundary nodes. On each face of $R_0$ the trace of $v$ lies in the tensor product of the one-dimensional spaces $V_N$ of the other $d-1$ directions and vanishes at the $N^{d-1}$ nodes of the face, at which interpolation in that space is unisolvent, its synthesis matrix being a Kronecker product of matrices that are nonsingular by Lemma~\ref{lem:lebesgue_condition_link}; hence $v = 0$ on $\partial R_0$, i.e.\ $v \in \Phi_{N,d}$. If moreover $\Lop v$ vanishes at the interior nodes, then $\Lop v = 0$ by the unisolvence, $v = 0$ on $\bar R_0$ by Lemma~\ref{lem:dirichlet_box}(ii), and the coefficients of $v$ vanish because $\mat W_d$ is nonsingular; so $\mat A$ is nonsingular, and $u_N$ exists and lies in $\Phi_{N,d}$. The interior collocation equations then give $\Lop u_N = \Pop_{N,d}f$, exactly as in the proof of Theorem~\ref{thm:representation}, and uniqueness is Lemma~\ref{lem:dirichlet_box}(ii). The function $u - u_N \in C^2(R_0)\cap C(\bar R_0)$ vanishes on $\partial R_0$ and satisfies $\Lop(u - u_N) = f - \Pop_{N,d}f$, so Lemma~\ref{lem:dirichlet_box}(ii) gives $\norm{u - u_N}_\infty \le (e^{\alpha L_1}-1)\norm{f - \Pop_{N,d}f}_\infty$, and $\norm{f - \Pop_{N,d}f}_\infty \le (1 + \Lambda(\Psi_{N,d}))\dist_\infty(f,\Psi_{N,d})$ as in Corollary~\ref{cor:error}. For the second bound, $\sum_m|(\mat G_N)_{im}| = \sum_m s_m(\mat G_N)_{im}$ with $s_m := \operatorname{sign}(\mat G_N)_{im}$ is the value at $\mathbf x_i$ of the collocation solution for the interior data $(s_m)_m$, whose interpolant $\sum_m s_m\ell_m^\Psi$ in $\Psi_{N,d}$ has supremum norm at most $\Lambda(\Psi_{N,d})$; apply Lemma~\ref{lem:dirichlet_box}(ii) to that solution. For the last claim, the same face argument shows that $v \in V_{N,d}$ lies in $\Phi_{N,d}$ if and only if it vanishes at the $N^d - (N-2)^d$ boundary nodes, whose evaluation functionals are linearly independent on $V_{N,d}$ because $\mat W_d$ is nonsingular; so $\dim\Phi_{N,d} = (N-2)^d$, and the $(N-2)^d$ linearly independent products of the one-dimensional bases, which vanish on $\partial R_0$, form a basis of $\Phi_{N,d}$. Moreover $\Lop^{-1}$ enters the proof of Theorem~\ref{thm:regularized} only through the identity $u - u_N^\eps = \Lop^{-1}(f - \Lop u_N^\eps)$, which here expresses uniqueness, and through its $L^2$ norm, which Lemma~\ref{lem:dirichlet_box}(iii) bounds; the remaining steps are unchanged.
\end{proof}

Heuristically (Remark~\ref{rem:peclet}), Lemma~\ref{lem:phase} applied direction by direction for constant $\mathbf k$ suggests that the test spaces of $-\Delta$ and of $-\Delta + \mathbf k\cdot\nabla$ have nearly the same high modes.

\begin{remark}[Curved Boundaries by Node Relocation]
\label{rem:relocation}
For a domain $\Omega \subset R_0$ with smooth boundary $\Gamma$, the extended-domain solver of \cite{wu2018development} keeps the tensor-product basis and the tensor-product grid, moves the grid nodes adjacent to $\Gamma$ onto $\Gamma$, imposes the boundary data at the moved nodes, and enforces the differential equation at all remaining nodes of $R_0$, inside and outside $\Omega$. For every grid edge that $\Gamma$ crosses, the endpoint nearer to $\Gamma$ is moved, in \cite{wu2018development} to a nearby point of a fine parametrization of $\Gamma$ and in the computations of Section~\ref{sec:num_disk} to the crossing point itself, in both cases by at most half a grid spacing. This requires the forcing to be defined on all of $R_0$; for manufactured solutions it is, and for general data a smooth extension of $f$ to $R_0$ must be constructed first, for instance a periodic $C^\infty$ extension to a rectangle \cite{boyd2005fourier, bueno2006fourier} or a $C^k$ extension obtained by solving a polyharmonic problem outside $\Omega$ \cite[Sec.~2.1]{stein2016immersed}. The continuous problem behind this formulation is uniquely solvable in $\Omega$ but not outside, and Theorem~\ref{thm:representation_d} does not cover it: the boundary data are imposed only at the relocated nodes, not on all of $\Gamma$, and the problem outside $\Omega$ carries no boundary condition. Node relocation changes the sampling set of the frame, not the frame; Section~\ref{sec:num_disk} tests its effect.
\end{remark}

\section{Numerical Experiments}
\label{sec:numerics}

All comparisons between operators are \emph{controlled}: within each experiment the domain, the extension parameter $\delta$ and the exact solution are fixed while the convection coefficient varies, $k = 0$ being the Poisson operator. Quantities meaningful only in exact arithmetic (Lebesgue constants, Marcinkiewicz--Zygmund constants, Lagrange-function masses, the exact discrete Green's function, singular spectra, exact solves) are computed in multiple-precision arithmetic with at least $48$ digits, the precision increasing with $N$ where the smallest singular values require it; everything else in IEEE double precision, with LU factorization with partial pivoting unless stated otherwise. Errors are maximum-norm errors on a fine grid unless stated otherwise. The floors of double-precision computations depend on the order of the floating-point operations, for instance on the number of threads of the linear-algebra library, by up to an order of magnitude.

\begin{remark}[A Pitfall]
\label{rem:pitfall}
The convergence rate and the accuracy floor of a Fourier extension depend on the extension ratio $\delta/L$ and on the extendability of the solution, so a comparison of operators on different domains or with different exact solutions cannot isolate the operator. A more insidious error is a solution that lies in the trial space: on $(0,1)$ the function $\sin(2\pi x)$ is a multiple of $w_{2(1+2\delta)}$ whenever $2\delta \in \N$, and $\sin(\pi x)$ one of $w_{1+2\delta}$ whenever $\delta\in\N$, so both are reproduced to machine precision by any operator as soon as $N$ reaches the mode index, regardless of conditioning. All solutions used below lie in no $V_N$.
\end{remark}

\subsection{Controlled One-Dimensional Comparison}
\label{sec:num_1d}

We solve $-u'' + ku' = f$ on $I=(0,1)$ with $u(x) = e^x\sin(\pi x)$, for $k \in \{0, 10, 50\}$ and $\delta \in \{0.5, 1, 2\}$, with the interpolatory system \eqref{eq:square_system} (Figure~\ref{fig:controlled_1d}). For each $\delta$ the errors for the three operators coincide in the geometric phase, whose rate is set by $\delta$; the median floors decrease as $\delta$ grows and rise with $k$ by at most about an order of magnitude, and the coefficient vectors stay of moderate size. The exact test-space Lebesgue constants grow at nearly the same exponential rate for the three operators, $0.58$ per mode for $\delta = 0.5$ and $0.64$ to $0.65$ for $\delta = 1$, against $\kappa_\Lambda = 0.583$ and $0.646$, the rate of Corollary~\ref{cor:lebesgue_rate} for the Poisson operator; the convection--diffusion constants are larger at equal $N$, and their fitted rates slightly smaller. Solved exactly with the double-precision data that the floating-point solve receives, the system attains its minimum error near $N^\ast \approx 40$ and then diverges, with coefficients of size $10^{8}$ to $10^{10}$ at $N = 64$, whereas with exactly evaluated data it converges geometrically, to below $10^{-16}$ at $N = 64$ with coefficients of size $1.4$; the threshold is the same for $k = 0$ and $k = 50$ (panel d). Both exact solves follow the bound \eqref{eq:perturbed_data} of Remark~\ref{rem:divergence}. With exactly evaluated data only its first term is present, and for the Poisson operator Proposition~\ref{prop:classical}(iii) and Theorem~\ref{thm:competitor}(i), with $p = N-1$ and $\rho = \rho^\ast = 1 + \sqrt2$ since $u$ is entire, bound it by $CN^{C'}e^{-N(\log\rho^\ast - \kappa_\Lambda)}$ with $C$ and $C'$ independent of $N$; it decays because $\log\rho^\ast = 0.8814 > \kappa_\Lambda = 0.5831$. With rounded data the second term, the rounding errors amplified by $\Lambda_N^\Psi$, takes over beyond $N^\ast$.

\begin{figure}[ht!]
    \centering
    \includegraphics[width=\textwidth]{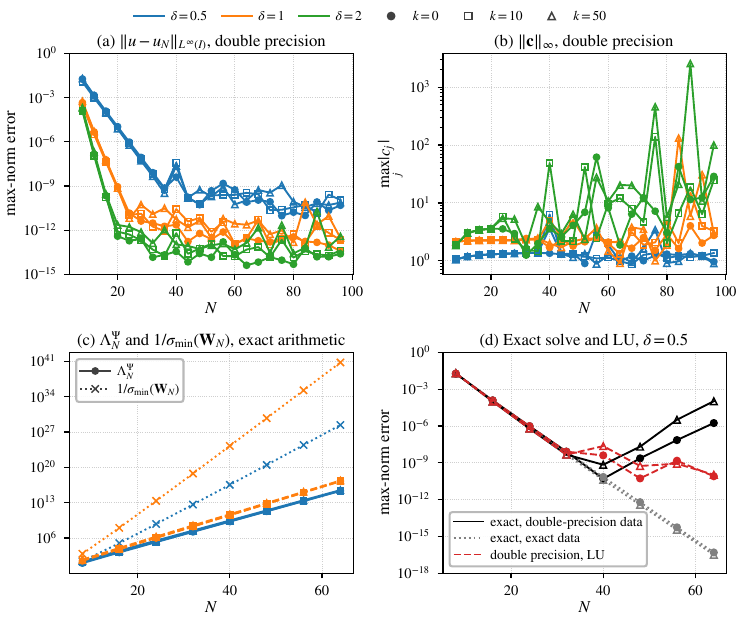}
    \caption{Controlled one-dimensional comparison on $I = (0,1)$ with $u(x) = e^x\sin(\pi x)$; filled symbols Poisson ($k=0$), open symbols convection--diffusion ($k=10$ squares, $k=50$ triangles), colours distinguish $\delta$. (a) Maximum-norm error of the interpolatory system in double precision. (b) Maximum coefficient magnitude. (c) Exact test-space Lebesgue constants $\Lambda_N^\Psi$ for $\delta = 0.5$ (solid) and $\delta = 1$ (dashed), and $1/\sigma_{\min}(\mat W_N)$ (dotted, crosses). (d) Error of the exact solve with the double-precision data (black, solid) and with exactly evaluated data (grey, dotted), and of the double-precision LU solve (red, dashed), for $\delta = 0.5$ and $k = 0$ (filled) and $k = 50$ (open).}
    \label{fig:controlled_1d}
\end{figure}

\subsection{The Exact Discrete Green's Function}
\label{sec:num_greens}

Figure~\ref{fig:greens_square} shows the exact discrete Green's function $\mat G_N = \mat W_N\mat A^{-1}$ for $N = 48$, $\delta = 0.5$, for the Poisson operator and for $k = 20$. (At this resolution $\kappa_2(\mat A)\approx 1.2\times10^{21}$ and the double-precision $\mat G_N$ is not reliable; in the well-conditioned regime, e.g.\ $\delta = 0.05$, $N = 96$, $\kappa_2(\mat A) \approx 4\times10^{8}$, the double-precision and exact $\mat G_N$ agree to $10^{-8}$.) In row $i = 25$ (panel c) its entries exceed $h\,G(x_i,x_m)$ by up to seven (Poisson) and nine ($k = 20$) orders of magnitude, and the Poisson and convection--diffusion rows nearly coincide, although the continuous kernels differ by up to four orders of magnitude for $x_m > x_i$; the rows follow the profile of the $L^1$ masses of the Lagrange functions, which rise from about $10h$ next to the boundary to $2.4\times10^{8}$ at the centre, as \eqref{eq:GN_entries} predicts.

\begin{figure}[ht!]
    \centering
    \includegraphics[width=\textwidth]{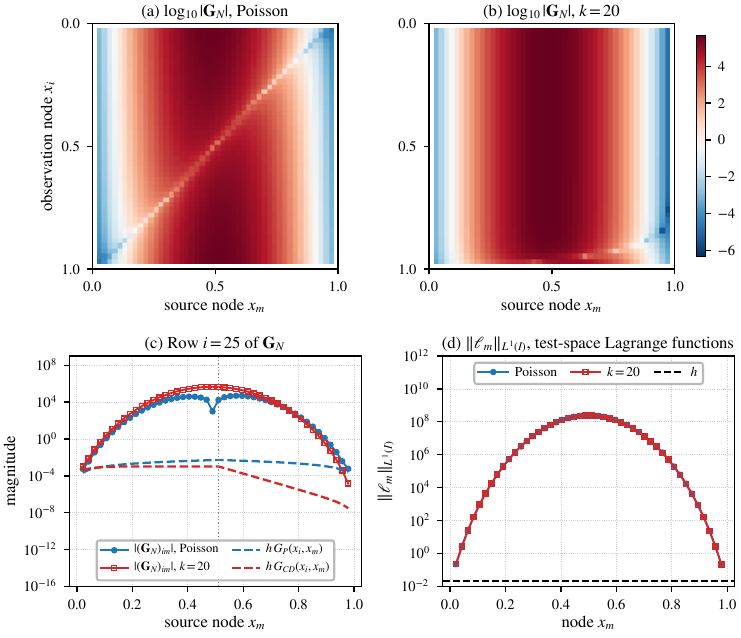}
    \caption{Exact discrete Green's function of the interpolatory system, $N = 48$, $\delta = 0.5$. (a, b) $\log_{10}|\mat G_N|$ for Poisson and for $k = 20$ (interior nodes). (c) Row $i = 25$ against the source node, compared with the continuous kernels $h\,G_P(x_i,x_m)$ and $h\,G_{CD}(x_i,x_m)$. (d) $L^1$ masses of the test-space Lagrange functions, with $h$ for reference.}
    \label{fig:greens_square}
\end{figure}

\subsection{Solvers, Oversampling, Marcinkiewicz--Zygmund Constants, and Large P\'eclet Numbers}
\label{sec:num_solvers}

Figure~\ref{fig:solvers} examines the floating-point mechanism for the problem of Section~\ref{sec:num_1d} with $k = 0$, computed in seven ways: the square system in the recombined basis with normalized columns, $\mat B_D\vect y = \vect f$, solved by LU and by Householder QR; the full square system \eqref{eq:square_system}, with its two boundary rows, solved by LU; the recombined system solved by the truncated singular value decomposition \eqref{eq:tsvd} with $\eps = 10^{-16}\sigma_{\max}$, with raw and with normalized columns; and the regularized oversampled method \eqref{eq:tsvd} with $M = 2N$ and $M = 4N$ equispaced nodes and normalized columns.

The three unregularized solves agree to within a factor of about $20$ for all $N$. Truncation of the raw system is worse than LU by factors of $6$ to $60$ for $\delta = 0.5$ and $N \ge 48$, whereas truncation of the normalized system matches the unregularized solves. The normalization is not incidental: the columns of $\mat B$ scale as $\lambda_j = (j\pi/\tilde L)^2$, and Lemma~\ref{lem:tsvd} and Definition~\ref{def:two_constants} are stated in the norm of the normalized frame. Oversampling by a factor of two or four converges faster, at about $0.8$ to $0.9$ per mode against $0.6$ for LU ($\delta = 0.5$, $16 \le N \le 32$), and lowers the floor to between $2.5\times10^{-15}$ and $7\times10^{-15}$, reached for $N \ge 48$ ($\delta = 0.5$) and $N \ge 24$ ($\delta = 2$), for $\delta = 0.5$ more than three orders of magnitude below the floor of the interpolatory method.

The exact Marcinkiewicz--Zygmund constants of Definition~\ref{def:mz} (panel c) explain the difference. For the interpolatory node set $C_{N-2}$ grows at the rate of the Lebesgue constant, as $e^{0.57N}$ for $\delta = 0.5$ and $e^{0.66N}$ for $\delta = 2$ (fits over $16 \le N \le 48$). Oversampling lowers the rate but does not remove the growth: for $M = 2N$ the constant reaches $30$ and $233$ at $N = 48$, and for $M = 4N$ it stays below $2$ ($\delta = 0.5$) and $3$ ($\delta = 2$) for $N \le 48$ but is $3.5$ and $10$ at $N = 64$. As for polynomials sampled at equispaced points \cite{coppersmith1992growth, rakhmanov2007bounds}, no fixed oversampling factor can be expected to bound $C_M$, since in the Chebyshev variable the nodes have bounded density at the ends $\pm c_0$ of the physical interval; in general the oversampling needed depends on the frame and on the sampling points \cite[Sec.~1]{adcock2020frames2}. Over the range of Figure~\ref{fig:solvers}c the fourfold-oversampled method satisfies the hypothesis of Corollary~\ref{cor:regimes}(ii) with a moderate constant, and the interpolatory method does not.

For $k$ up to $10^{5}$ (panel d, $\delta = 2$, full square system solved by LU) the interpolatory method converges at the same rate for all $k$, and its floor rises from $10^{-14}$ to $10^{-10}$ roughly in proportion to $\eps_{\mathrm{mach}}\norm{\mat A}$, since $\norm{\mat A}$ scales with $k\mu_N$; the regularized oversampled method with normalized columns keeps its floor at $3\times10^{-15}$ to $7\times10^{-15}$, because column normalization removes the growth of the matrix norm with $k$. In this range convection neither destabilizes nor stabilizes the method.

\begin{figure}[ht!]
    \centering
    \includegraphics[width=\textwidth]{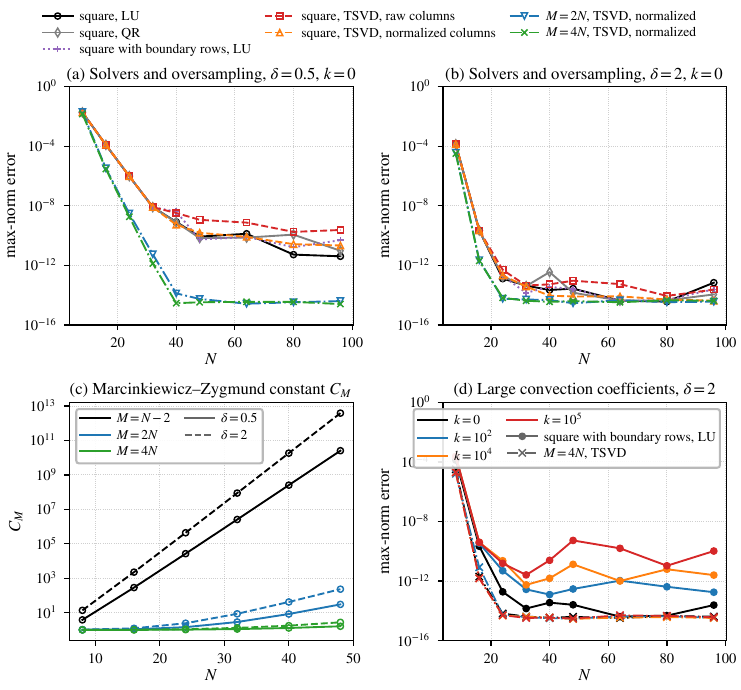}
    \caption{Floating-point behaviour, $u(x) = e^x\sin(\pi x)$ on $(0,1)$. (a, b) Error of the square system in the recombined normalized form solved by LU and by QR, of the full square system with boundary rows solved by LU, of the recombined system solved by truncated singular value decomposition (TSVD) with raw and with normalized columns, and of the regularized oversampled method with $M = 2N$ and $M = 4N$, for $\delta = 0.5$ and $\delta = 2$, $k = 0$. (c) Exact Marcinkiewicz--Zygmund constants $C_M$ of the test space for the interpolatory node set and for $M = 2N$, $4N$. (d) Full square system solved by LU (solid) and regularized $M=4N$ (dash-dotted) for $k$ up to $10^{5}$, $\delta = 2$.}
    \label{fig:solvers}
\end{figure}

\subsection{Evaluation of the Two-Constant Bound}
\label{sec:num_bound}

Figure~\ref{fig:floating_point} evaluates Theorem~\ref{thm:two_constant} for the interpolatory system in its recombined normalized form $\mat B_D\hat{\vect y} = \vect f$, solved by LU in double precision, with $\delta = 0.5$ and $N = 24$ to $96$, in $70$- to $144$-digit arithmetic. For the right singular vectors $\vect v_i$ of $\mat B_D$ in the tail of the spectrum, $\sigma_i < 10^{-2}$, the $L^2(I)$ norms $g_i$ of the trial function $\Phi\mat D\vect v_i$, in the metric of the solution, and of the test function $\Lop\Phi\mat D\vect v_i$, in that of the data, follow power laws
\begin{equation}\label{eq:leakage_law}
g_i \approx c\,\sigma_i^{\,a}, \qquad\text{and in the trial metric}\quad \gamma_t \approx c\,t^{a},
\end{equation}
down to $\sigma_i = 10^{-44}$, with fitted exponents between $0.46$ and $0.49$ in the trial metric and between $0.44$ and $0.48$ in the test metric, increasing with $N$, near $a(\pi/2) = 1/2$; the trial-metric constant $c$ is smaller than the test-metric one by three to four orders of magnitude, increasing with $N$ (panel a). At $t = 10^{-15}$ and $N \ge 48$ neither $\gamma_t$ nor $\Gamma_t$ grows with $N$ (panel b), and the a priori bounds $\gamma_t \le C_\Phi$ and $\Gamma_t \le C_\Phi/t$ of Proposition~\ref{prop:apriori} exceed them by many orders of magnitude. The tail is the same for the convection--diffusion operator: for $k = 10$ and $k = 50$ at $N = 48$, $64$ and $96$ the fitted exponents differ from the Poisson ones by at most $0.004$, the dimensions of $\mathcal V_t$ at $t = 10^{-15}$, $10^{-20}$, $10^{-25}$ and $10^{-30}$ by at most one, and, where both subspaces are nonempty, the values of $\gamma_t$ and $\Gamma_t$ by at most factors of three and five.

These values bear on two conjectures made for the truncated singular value decomposition of the Fourier extension. The boundedness of $\Gamma_t$ at fixed $t$ over the range computed is the counterpart, in the solution metric, of the boundedness of the constant $C_1$ conjectured in \cite[Eq.~(5.18)]{adcock2014numerical}, and substituted into \eqref{eq:lu_bound} the power law \eqref{eq:leakage_law} makes the term $\gamma_\eta\eta^{-1}\cdot\eta\norm{\vect y}_2 \approx c\,\eta^{a}\norm{\vect y}_2$ the floor, the $\eps^{1-a(\gamma;T)}$ accuracy conjectured in \cite[Eq.~(5.22)]{adcock2014numerical}, with their exponent $a(\gamma;T)$ at $\gamma = 1$ in the role of $1 - a$, here for a direct solver and for the error of the solution of the boundary value problem. The small trial-metric constant $c$ reflects the smoothing of the high modes of the test space by $\Lop^{-1}$, which is why the collocation solution is far more accurate than the residual $f - \Lop\hat u_N$ suggests.

Minimized over $t$, with the fourfold-oversampled regularized solution as competitor $\vect y$, the bound \eqref{eq:two_constant_bound} exceeds the actual $L^2(I)$ error of the LU solution by factors $45$ and $14$ at $N = 24$ and $32$, before the floor is reached, and by $4.4$, $6.9$, $8.5$ and $7.7$ at the floor, $N = 48, 64, 80, 96$, and by $5.5$ and $11$ for $\delta = 1$ and $\delta = 2$ at $N = 48$, with fitted exponents $0.59$ and $0.68$ in the trial metric against $a = 0.6$ and $0.685$; it reproduces the flat floor (panel c). With $\hat\rho = 0$ and the exact, exponentially large $\norm{\vect d}_2$ it diverges with the exact solution of the same data beyond $N^\ast$, but overestimates its error by factors growing from $6$ at $N = 48$ to $10^{12}$ at $N = 96$, since the term $\gamma_t\norm{\vect d}_2$ ignores the direction of $\vect d$. Both bounds are evaluated for the data rounded to double precision, which the LU solve receives, that is, for the continuous problem with a right-hand side $\tilde f$ that takes the rounded values at the nodes and satisfies $\norm{f - \tilde f}_{L^\infty(I)} \le 9.4\times10^{-15}$; its solution differs from $u$ by less than $2\times10^{-15}$ in $L^2(I)$. By contrast, the Lebesgue-type factor $(L/\pi)^2\sqrt n\,\Lambda^\Psi_{N,2}$ of Proposition~\ref{prop:classical}, computed exactly, grows at the rate of the Lebesgue constant from $1.2\times10^{3}$ at $N = 24$ to $9.2\times10^{20}$ at $N = 96$, whereas the computed factor $\Gamma_\eta + \gamma_\eta\eta^{-1}$, which multiplies the same quantity in \eqref{eq:lu_bound} up to constants depending on $K$, stays between $3$ and $1.2\times10^{3}$ at the thresholds of panel c. The splitting of the singular values removes the exponential growth; at $N = 96$ the two factors differ by eighteen orders of magnitude.

The competitor of Theorem~\ref{thm:competitor} is available in closed form for this problem, since $u$ is entire and $\rho = \rho^\ast = 1 + \sqrt2$ for $\delta = 0.5$: its residual $\norm{f - \Lop v_p}_{L^\infty(I)}$ decays from $1.8\times10^{-3}$ at $p = 20$ to $3.4\times10^{-17}$ at $p = 60$, at the rate $\rho^\ast$ up to a polynomial factor, while $\norm{v_p''}_{L^2(\tilde I)}$ grows only from $47$ to $176$ between $p = 10$ and $p = 80$; for $\delta = 2$, where $\rho^\ast = 6.31$, the residual is $2.1\times10^{-19}$ at $p = 30$.

\begin{figure}[ht!]
    \centering
    \includegraphics[width=\textwidth]{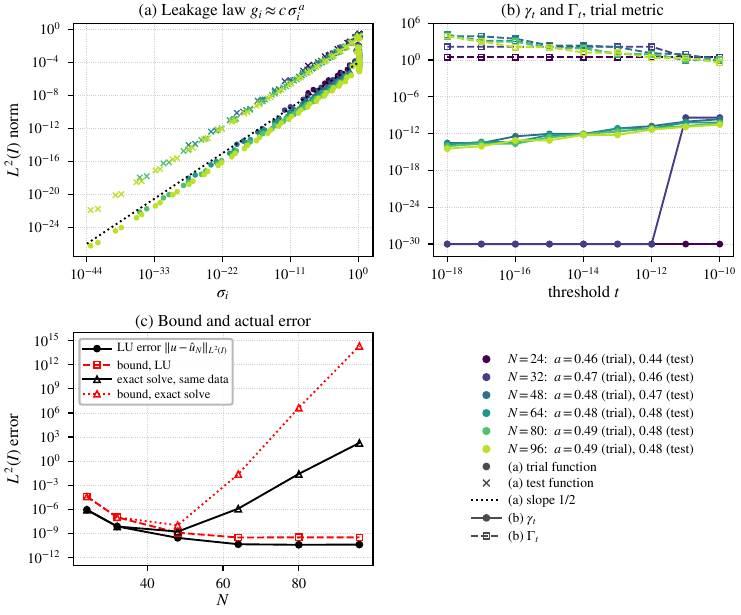}
    \caption{The two-constant bound for the interpolatory system, $\delta = 0.5$. (a) $L^2(I)$ norms of the trial functions (circles) and test functions (crosses) of the right singular vectors of $\mat B_D$ against the singular values, with fitted exponents $a$; the dotted line has slope $1/2$. (b) Leakage $\gamma_t$ (solid) and restricted Marcinkiewicz--Zygmund constant $\Gamma_t$ (dashed) in the trial metric against the threshold $t$; a value of $10^{-30}$ marks an empty subspace $\mathcal V_t$ (no singular value below $t$), which is the case for $N = 24$ at all $t$ shown and for $N = 32$ below $t = 10^{-11}$. (c) Bound \eqref{eq:two_constant_bound} against the actual $L^2(I)$ error, for the LU solution and for the exact solution of the same double-precision data.}
    \label{fig:floating_point}
\end{figure}

\subsection{Potential-Theoretic Rates}
\label{sec:num_potential}

Figure~\ref{fig:potential} tests Theorem~\ref{thm:rates} and Corollary~\ref{cor:exponent}. The local growth rates $\Delta\log\Lambda_N/\Delta N$ and $\Delta\log(1/\sigma_{\min}(\mat W_N))/\Delta N$ of the exactly computed trial-space quantities approach $\kappa_\Lambda$ and $\kappa_\sigma$ from below (panel a): for $\delta = 0.5$ they reach $0.567$ and $1.153$ at $N \approx 112$, against $2\,\mathrm{Cl}_2(\pi/2)/\pi = 0.5831$ and $4\,\mathrm{Cl}_2(\pi/2)/\pi = 1.1662$, and for $\delta = 1$ they reach $0.629$ and $1.602$, against $0.6461$ and $1.6153$. The approach from below is consistent with an algebraic prefactor in the asymptotics, as for the classical rate $\log 2$ of equispaced polynomial interpolation \cite[Eq.~(4.8)]{trefethen1991two}. As the extension grows, $\kappa_\sigma$ grows without bound while $\kappa_\Lambda$ saturates (panel b), which is why the exponent $a(\omega)$ increases with $\delta$. The exponents fitted to the tails of the exactly computed spectra lie within $0.02$ of $a(\omega)$ wherever the tail contains more than thirty directions (panel c); for $\delta = 0.5$ they approach it from below as $N$ grows (Section~\ref{sec:num_bound}). The ratio of Corollary~\ref{cor:exponent} itself converges slowly, at the rate $\log N/N$ of Proposition~\ref{prop:lagrange_norms}: for the central Lagrange function of the trial frame it decreases from $0.880$ at $N = 16$ to $0.553$ at $N = 128$ for $\delta = 0.5$, towards $1/2$.

\begin{figure}[ht!]
    \centering
    \includegraphics[width=\textwidth]{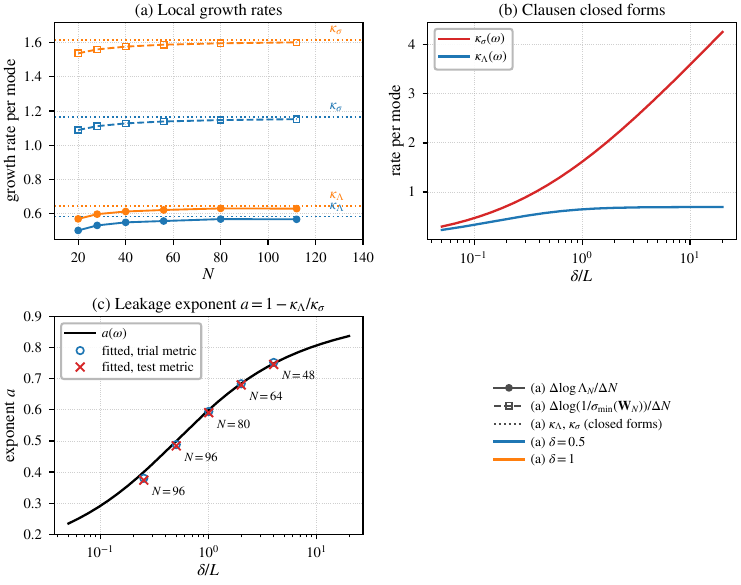}
    \caption{Potential theory of the equispaced sine frame. (a) Local growth rates of $\Lambda_N$ (circles) and of $1/\sigma_{\min}(\mat W_N)$ (squares) for $\delta = 0.5$ and $\delta = 1$, with the predicted constants $\kappa_\Lambda$ and $\kappa_\sigma$ of Theorem~\ref{thm:rates} as dotted lines. (b) $\kappa_\Lambda(\omega)$ and $\kappa_\sigma(\omega)$ against $\delta/L$. (c) The leakage exponent $a(\omega) = 1 - \kappa_\Lambda/\kappa_\sigma$ of Corollary~\ref{cor:exponent} (line) and the exponents fitted to the tails of the exact spectra in the trial (circles) and test (crosses) metrics, labelled by $N$.}
    \label{fig:potential}
\end{figure}

\paragraph{The tail singular functions.} Figure~\ref{fig:tail}a,b shows the zeros and the nodal values of tail singular functions of the test space for $N = 48$, $\delta = 0.5$. All their zeros are real, as the multiple-precision roots of the polynomial factors show. In an outer layer of the physical interval, whose width grows along the tail, they lie within a tenth of a spacing of the nodes; in the centre they form a lattice sparser than the nodes, and the function is flat at the nodal level, at nodes and between them alike; the remaining zeros, which the figure does not show, lie in the extension, fewer along the tail, and keep a gap next to the ends of the physical interval. This is the structure that $\lambda_\tau$ describes (Proposition~\ref{prop:equilibrium}, Corollary~\ref{cor:layer}).

\paragraph{The tail law.} We solve the equilibrium problem of Definition~\ref{def:equilibrium} numerically. Restricted to densities that are constant on $130$ to $150$ cells of $[0,\pi/2]$, mirrored and graded geometrically towards $\theta_0$ on the scale of $\tau$, the functional $J_\tau$ is a convex quadratic function of the cell densities, whose coefficients are double integrals of the kernel in closed form through $\mathrm{Cl}_2$ and the Clausen function $\mathrm{Cl}_3$ with $\mathrm{Cl}_3' = -\mathrm{Cl}_2$; it is minimized under the bounds of Definition~\ref{def:equilibrium} and the constraint on the mass. From the computed measure we build the configuration and the interpolation configuration of Lemma~\ref{lem:discretize}, with the saturated cells as the layer, and evaluate $\ell(\mathcal C)$ and $\Phi(\mathcal S,\tau)$ on a grid of $2\times10^4$ points. By Corollary~\ref{cor:sharpness} and the Lipschitz bound of Proposition~\ref{prop:tail_properties}, the two values bracket $r_\omega(\tau)$ up to $|\tau(\mathcal C) - \tau| \le 7\times10^{-4}$ and to the grid; these computations are not certified. In Table~\ref{tab:dual} the width of each bracket measures the accuracy of the computation, and each row is read against the theory as follows.

For $\delta = L/2$ the bracket contains the chord at all three levels and the layer and the gap have equal widths; the computed $E_\tau(\theta_0)$, not listed, equals $-\log 2$ to five digits, as the proof of Theorem~\ref{thm:tail_duality} with $\omega = \omega' = \pi/2$ requires. Divided by $\kappa_\sigma(\omega)$, the brackets of the dual pairs $(L, L/4)$ and $(2L, L/8)$ satisfy \eqref{eq:tail_duality} to within their widths. For $\delta = L$ and $\delta = 2L$ the whole bracket lies below the chord, and for their duals above it. The mass of the nodes increases with the level towards one, the mass of the node measure $\mu_\omega$, which is the minimizer for $\tau \ge \kappa_\sigma$ (Proposition~\ref{prop:equilibrium}(iv)). On the physical interval the envelope is largest at $\theta_0$, and the saturated set is a single layer at each end, as Proposition~\ref{prop:equilibrium}(iii) states.

\begin{table}[ht!]
\centering\small
\caption{The tail law from the equilibrium problem: the lower bound $\ell(\mathcal C)$ and the upper bound $\Phi(\mathcal S,\tau)$ of the configuration and the interpolation configuration built from the computed equilibrium measure $\lambda_\tau$ (Lemma~\ref{lem:discretize}), which bracket $r_\omega(\tau)$; the chord $-a(\omega)\tau$; the mass $\lambda_\tau(\Theta_I)$ of the nodes; and the widths of the layer, $\theta_\tau - \theta_0$, and of the gap of the support in the extension next to $\theta_0$, in units of $\tau$. The pairs $\delta = L$, $L/4$ and $\delta = 2L$, $L/8$ are dual (Theorem~\ref{thm:tail_duality}). Computed, not certified.}
\label{tab:dual}
\setlength{\tabcolsep}{4pt}
\begin{tabular}{cccccccc}
$\delta/L$ & $\tau/\kappa_\sigma$ & $\ell(\mathcal C)$ & $\Phi(\mathcal S,\tau)$ & $-a\tau$ & $\lambda_\tau(\Theta_I)$ & layer$/\tau$ & gap$/\tau$ \\ \hline
$1/2$ & $0.2$ & $-0.11718$ & $-0.11546$ & $-0.11662$ & $0.681$ & $0.47$ & $0.47$ \\
$1/2$ & $0.5$ & $-0.29178$ & $-0.29120$ & $-0.29156$ & $0.843$ & $0.51$ & $0.51$ \\
$1/2$ & $0.8$ & $-0.46654$ & $-0.46645$ & $-0.46650$ & $0.953$ & $0.56$ & $0.56$ \\
$1$ & $0.2$ & $-0.21027$ & $-0.20841$ & $-0.19384$ & $0.574$ & $0.19$ & $0.52$ \\
$1$ & $0.5$ & $-0.50822$ & $-0.50667$ & $-0.48460$ & $0.789$ & $0.22$ & $0.52$ \\
$1$ & $0.8$ & $-0.79031$ & $-0.78992$ & $-0.77536$ & $0.936$ & $0.25$ & $0.55$ \\
$1/4$ & $0.2$ & $-0.05697$ & $-0.05607$ & $-0.06461$ & $0.787$ & $1.11$ & $0.39$ \\
$1/4$ & $0.5$ & $-0.15003$ & $-0.14973$ & $-0.16153$ & $0.894$ & $1.04$ & $0.45$ \\
$1/4$ & $0.8$ & $-0.25117$ & $-0.25075$ & $-0.25845$ & $0.968$ & $1.10$ & $0.50$ \\
$2$ & $0.5$ & $-0.78836$ & $-0.78678$ & $-0.73510$ & $0.739$ & $0.09$ & $0.49$ \\
$1/8$ & $0.5$ & $-0.07143$ & $-0.07127$ & $-0.08457$ & $0.935$ & $1.97$ & $0.36$ \\
\end{tabular}
\end{table}

\paragraph{The local exponent.} Figure~\ref{fig:tail}c shows the local slope $-dr_\omega/d\tau$, computed from the extremal problem \eqref{eq:reduced_problem} of Proposition~\ref{prop:extremal}, and the local exponents of the exact spectra in the test metric, the slopes $\Delta\log g_i/\Delta\log\sigma_i$ with $g_i$ as in \eqref{eq:leakage_law}. For a free set that is an interval, $F = [-c,c]$, and a fixed evaluation point, \eqref{eq:reduced_problem} is a linear program in $(\nu, b)$, which we discretize with piecewise-constant densities on $[-1.6,1.6]$, exact cell potentials and constraints at cell centres and edges, maximizing over $c$; at $\tau = \kappa_\sigma$ it reproduces $\kappa_\Lambda - \kappa_\sigma$ to within $0.007$ for $\delta = 0.5$, $1$ and $2$. For $\delta = L/2$ the local slope stays within $0.007$ of $1/2$ for $\tau/\kappa_\sigma \le 0.9$, as Corollary~\ref{cor:self_dual} requires, and the exact local exponents lie between $0.48$ and $0.50$ away from the first and last few ($N = 96$). For $\delta = L$ and $\delta = 2L$ the tail law is convex, its local slope decreasing along the tail through the endpoint exponent $a(\omega)$; the exact spectra show the same variation up to two hundredths below the curve, approaching it from below as $N$ grows. For $\delta = L/4$ it is concave, the local slope increasing through $a(\omega)$, in the exact spectra as well, and the slopes for $\delta = L$ and $\delta = L/4$ add up to one to within $0.014$ for $\tau/\kappa_\sigma \le 0.9$, as Theorem~\ref{thm:tail_duality} requires. The chord of $r_\omega$ over $[0,\kappa_\sigma]$ has slope $-a(\omega)$, which is why a single exponent fits the tail to two digits for $\delta \ge L/2$; but the exponent is a property of the endpoint and of the tail as a whole, not of each level, except in the self-dual case.

\paragraph{The start of the tail.} At $\tau/\kappa_\sigma = 0.02$ and $0.05$, with the cells next to $\theta_0$ scaled with $\tau$, the brackets give $r_\omega(\tau)/\tau$ in $[-0.342, -0.329]$, $[-0.669, -0.658]$ and $[-0.802, -0.786]$ for $\delta = L/4$, $L$ and $2L$, against $-s(\omega) = -1/3$, $-2/3$ and $-4/5$ in Theorem~\ref{thm:initial_slope}. The ratios of the widths of the gap and of the layer of $\lambda_\tau$, $0.33$, $3.05$ and $9.3$--$9.6$, match $g/l = \tan^2\theta_0 = 1/3$, $3$ and $9.47$ in \eqref{eq:layer_widths}, and their sums are within $8\%$ of $g + l = 2\omega/(\pi\sin\omega)$, the cells resolving the widths to a few per cent. These values are computed, not certified; the certified statements are collected in Section~\ref{sec:num_certified}.

\begin{figure}[ht!]
    \centering
    \includegraphics[width=\textwidth]{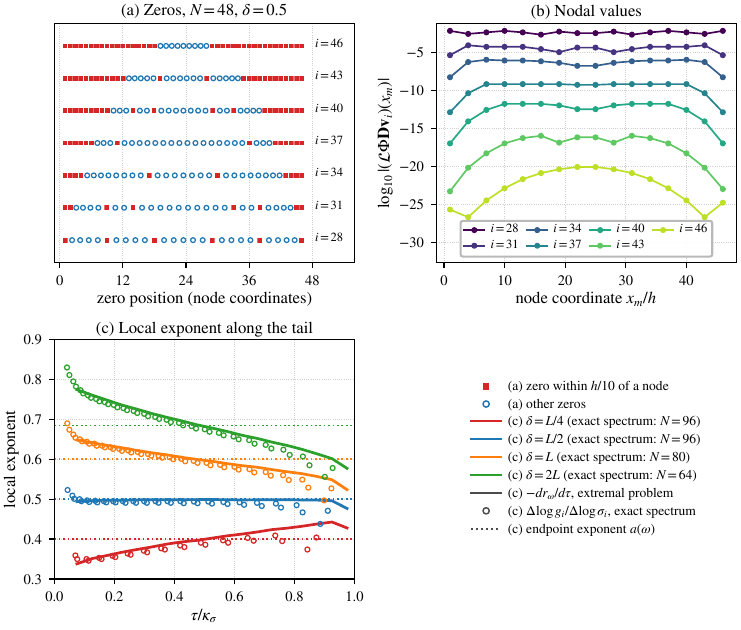}
    \caption{The tail of the spectrum. (a) Real zeros in the physical interval of tail singular functions of the test space ($N = 48$, $\delta = 0.5$), in node coordinates; red squares are zeros within a tenth of a spacing of a node, open circles the other zeros. (b) Their nodal values $\log_{10}|(\Lop\Phi\mat D\vect v_i)(x_m)|$ along the physical interval. (c) Local exponent along the tail: the slope $-dr_\omega/d\tau$ of the tail law, computed from the extremal problem \eqref{eq:reduced_problem}, for four extension ratios (lines), the local exponents $\Delta\log g_i/\Delta\log\sigma_i$ of the exact spectra in the test metric (circles, five-point fits), and the endpoint exponents $a(\omega)$ of Corollary~\ref{cor:exponent} (dotted).}
    \label{fig:tail}
\end{figure}

\subsection{Certified Configurations on Both Sides of the Critical Extension}
\label{sec:num_certified}

The two constructions of Section~\ref{sec:tail} reduce the sharp power law at a given level to the evaluation of explicit functions of Clausen type: a configuration $\mathcal C$ with $\ell(\mathcal C) > -(\kappa_\sigma - \kappa_\Lambda)$ and $\ell(\mathcal C) + a\tau(\mathcal C) > 0$ shows that the law fails (Corollary~\ref{cor:no_power_law}), and an interpolation configuration $\mathcal S$ with $A_1(\mathcal S)/(1-a) \le -A_2(\mathcal S)/a$ proves its bounds up to a power of $N$ at the levels of the interval \eqref{eq:certified_interval} (Corollary~\ref{cor:certified}). The configurations below were found by the linear programs of Appendix~\ref{app:tail} and evaluated with certified error bounds, from grids and the modulus of continuity of the envelope, as described in Appendix~\ref{app:tail}.

\begin{corollary}[Certified Configurations on Both Sides of the Critical Extension]
\label{cor:certified_instances}
Let the nodes be the interior nodes of the $N$-grid ($M = n$), and let $\Lop = \Lop_P$.
\begin{enumerate}
    \item[(i)] For $\delta = L/4$ the configuration $\mathcal C$ of Table~\ref{tab:config_quarter} satisfies $\ell(\mathcal C) + a(\omega)\tau(\mathcal C) \ge 0.0084$, so there are no $N_1$ and $C$ with $\gamma_t \le Ct^{\,a(\omega)}$ for all $N \ge N_1$ and all $t > 0$; in particular the sharp power law \eqref{eq:sharp_law} fails.
    \item[(ii)] There are $N_0$ and $C$, depending only on $L$, such that for all $N \ge N_0$ and $t = e^{-N\tau}$: for $\delta = L$ ($\omega = \pi/3$, $a = 3/5$) and every $\tau \in [0.097, 0.913]\,\kappa_\sigma(\pi/3)$, $\gamma_t \le 2N^Ct^{\,3/5}$ and $\Gamma_t \le 2N^Ct^{-2/5}$; for $\delta = 2L$ ($\omega = \pi/5$) and every $\tau \in [0.080, 0.960]\,\kappa_\sigma(\pi/5)$, $\gamma_t \le 2N^Ct^{\,a(\pi/5)}$ and $\Gamma_t \le 2N^Ct^{\,a(\pi/5)-1}$.
\end{enumerate}
\end{corollary}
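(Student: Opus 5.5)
The plan is to reduce both parts to the two non-asymptotic corollaries already proved, Corollary~\ref{cor:no_power_law} for (i) and Corollary~\ref{cor:certified} for (ii). What remains is then a finite number of verified inequalities about explicit envelopes of Clausen type. No new analysis of the frame is needed: once the numbers $\ell(\mathcal C)$, $\tau(\mathcal C)$, $A_1(\mathcal S)$ and $A_2(\mathcal S)$ are bounded rigorously, the conclusions follow verbatim from those corollaries. The constants $N_0$ and $C$ of Theorems~\ref{thm:tail_lower} and \ref{thm:dual} depend only on $\delta$, $L$ and the configuration, hence only on $L$ once $\delta$ and the configuration are fixed.

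For (i), take $\delta = L/4$, so that $\omega = 2\pi/3$ and $a = 2/5$, and take the configuration $\mathcal C$ of Table~\ref{tab:config_quarter}. First check that it is admissible in the sense of Definition~\ref{def:configuration}: the arcs lie in the allowed ranges and the total mass is $m(\mathcal C) < 1$. These are exact rational checks. Next evaluate its envelope \eqref{eq:envelope} in closed form through \eqref{eq:arc_integral}. This requires certified enclosures of $E(\theta_0)$, of $E_{\max}$ over $[0,\pi]$, and of $E_c$ over $[\theta_s,\pi-\theta_s]$. For the maxima, evaluate $E$ on a fine grid with interval arithmetic for $\mathrm{Cl}_2$, and add the modulus of continuity of Lemma~\ref{lem:E_modulus}, which bounds the variation of $E$ between grid points by a term of order $\eta\log(1/\eta)$. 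This gives a lower bound on $\ell(\mathcal C) + a\tau(\mathcal C)$, which should be at least $0.0084$, together with a verification that $\ell(\mathcal C) > -(\kappa_\sigma - \kappa_\Lambda)$, whose right-hand side is explicit by Theorem~\ref{thm:rates}. Corollary~\ref{cor:no_power_law} then gives the claim, and Lemma~\ref{lem:two_forms} gives the failure of \eqref{eq:sharp_law}.

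For (ii), use the interpolation configurations listed for $\delta = L$ and $\delta = 2L$. Check the conditions of Definition~\ref{def:interp_configuration}. The node arcs lie in $\Theta_I$ with rational densities $|R_k|/(q_k\omega) \le 1/\omega$, the virtual arcs lie in $[\xi,\pi-\xi]$, and the masses sum to one exactly. Then obtain certified enclosures of $E_I$ (a maximum over $\Theta_I$), of $E_1$ (a minimum over the node arcs) and of $E_2$ (a minimum over the virtual arcs). Use the same grid-plus-modulus method as in (i). Near the arcs, where the potential's $\log$ singularity is integrable, the modulus bound remains valid but is the weakest. These enclosures give an upper bound on $A_1/(1-a)$ and a lower bound on $-A_2/a$, expressed in units of $\kappa_\sigma$, whose closed form is in Theorem~\ref{thm:rates}. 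These bounds must enclose the stated intervals $[0.097, 0.913]$ and $[0.080, 0.960]$. If one configuration does not cover the whole interval, use several and take the union of their intervals \eqref{eq:certified_interval}. The bounds on $\gamma_t$ and $\Gamma_t$ then follow from Corollary~\ref{cor:certified}, taking the largest $N_0$ and $C$ over the finitely many configurations.

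The main obstacle is the rigorous evaluation of the extrema. The margins are small: $0.0084$ in (i), and the interval endpoints in (ii) are presumably near the best the configurations allow. The grid must therefore be fine enough that the modulus-of-continuity error, which is only of order $\eta\log(1/\eta)$, stays below these margins. Minima over arcs where $E$ is nearly flat, and the maximum $E_{\max}$, which is attained at points near the edges of the gap, are the delicate places. A possible remedy is a locally refined grid near $\theta_0$ and near the arc endpoints, together with the derivative formula for $\mathrm{Cl}_2$, used to bound the variation of $E$ away from the arcs.
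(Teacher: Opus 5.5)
Your proposal is correct and follows the paper's proof: part (i) is Corollary~\ref{cor:no_power_law} applied to the configuration of Table~\ref{tab:config_quarter}, once $\ell(\mathcal C)$, $\tau(\mathcal C)$ and the condition $\ell(\mathcal C) > -(\kappa_\sigma-\kappa_\Lambda)$ have been certified by evaluating the Clausen closed form on a grid and padding with the modulus of continuity \eqref{eq:E_modulus}; part (ii) is Corollary~\ref{cor:certified} applied to finitely many interpolation configurations whose certified intervals $[\bar A_1/(1-a), -\bar A_2/a]$ cover the two ranges, with the largest of the finitely many constants. Two small corrections: the failure of \eqref{eq:sharp_law} follows at once from the failure of its first inequality, so Lemma~\ref{lem:two_forms} is needed only inside Corollary~\ref{cor:no_power_law}; and the total mass one required by Definition~\ref{def:interp_configuration} cannot be checked on the listed decimals but must be enforced by rescaling the virtual densities by the exact factor $(1-m_1)/m_2$, whose effect on $E$ is absorbed into the padding.
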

\begin{proof}
(i) The configuration satisfies Definition~\ref{def:configuration} with $\xi = 1/50$, and the certified bounds on $\ell$ and $\tau$, together with the hypothesis $\ell > -(\kappa_\sigma - \kappa_\Lambda)$, are established in Appendix~\ref{app:tail}; the claim is Corollary~\ref{cor:no_power_law}. (ii) The interpolation configurations of Tables~\ref{tab:certified_L} and \ref{tab:certified_2L} satisfy Definition~\ref{def:interp_configuration} with $\xi = 1/50$. For each of them the certified values $\bar A_1 \ge A_1(\mathcal S)$ and $\bar A_2 \ge A_2(\mathcal S)$ of Appendix~\ref{app:tail} satisfy $\bar A_1/(1-a) \le -\bar A_2/a$, so that the interval $[\bar A_1/(1-a), -\bar A_2/a]$ is contained in \eqref{eq:certified_interval}, and these intervals cover the two ranges of levels; Corollary~\ref{cor:certified} applies to each of them, and the constants are the largest of the finitely many constants of Theorem~\ref{thm:dual}.
\end{proof}

Part (i) is contained in Corollary~\ref{cor:critical}(i), which covers every $\delta < L/2$; the configuration is an explicit witness at the level $\tau(\mathcal C)$, and it does not use the equilibrium problem. By Corollary~\ref{cor:sharpness} and Theorem~\ref{thm:tail_duality}, the certificates of (ii) show that the tail laws of $\delta = L$ and $\delta = 2L$ lie strictly below the chord inside the certified intervals, and those of their duals $\delta = L/4$ and $\delta = L/8$ strictly above it; Corollary~\ref{cor:critical} shows this only at the start of the tail. For $\delta = L/2$ the tail law is the chord (Corollary~\ref{cor:self_dual}). Whether the sharp power law holds for $\delta = L$ and $\delta = 2L$, with a constant independent of $N$ at all levels, is not decided (Section~\ref{sec:conclusion}).

\subsection{Controlled Two-Dimensional Comparison on the Box}
\label{sec:num_2d}

On $R_0 = (0,1)^2$ with $u(x,y) = e^{x+y/2}\sin\bigl(\pi(x+y)/2\bigr)\cos y$ we compare the Poisson operator, $\mathbf k = (10,10)$ and $\mathbf k = (5\cos\pi x, 5\cos\pi y)$ for $\delta \in \{0.5, 1, 2\}$, with the interpolatory tensor-product system. In Figure~\ref{fig:controlled_2d} the three histories coincide in the geometric phase, whose rate depends on $\delta$; for $\delta = 1$ and $\delta = 2$ the floors of the three operators lie within a factor of about $25$ of one another, and saturation sets in at about the same $N$ as in one dimension (Figure~\ref{fig:controlled_1d}a).

\begin{figure}[ht!]
    \centering
    \includegraphics[width=\textwidth]{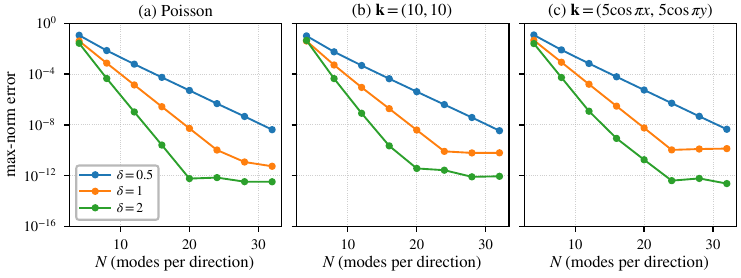}
    \caption{Controlled two-dimensional comparison on $(0,1)^2$: maximum-norm error of the interpolatory system for (a) Poisson, (b) $\mathbf k = (10,10)$, (c) $\mathbf k = (5\cos\pi x, 5\cos\pi y)$.}
    \label{fig:controlled_2d}
\end{figure}

\subsection{A Disk with Relocated Boundary Nodes}
\label{sec:num_disk}

Finally we apply the relocation procedure of Remark~\ref{rem:relocation} to the disk $\Omega = \{|\mathbf x| < 0.9\}$ in $R_0 = (-1,1)^2$, with the same exact solution as in Section~\ref{sec:num_2d}, for the Poisson operator and for $\mathbf k = (10,10)$. Figure~\ref{fig:disk}a shows the relocated grid for $N = 24$: $60$ nodes are moved onto the circle, by at most $0.0397$ against $h/2 = 0.0435$. Panels (b, c) show the maximum-norm error inside $\Omega$. Convergence is geometric on average but not monotone, because the relocated node set changes with $N$; for $\delta = 1$ and $\delta = 2$ the smallest errors of the two operators lie within a factor of five of one another, so that again the two operators behave alike. In these computations the relocation of the sampling set preserves the floating-point behaviour of the frame, which is what makes the extended-domain method usable on curved domains.

\begin{figure}[ht!]
    \centering
    \includegraphics[width=\textwidth]{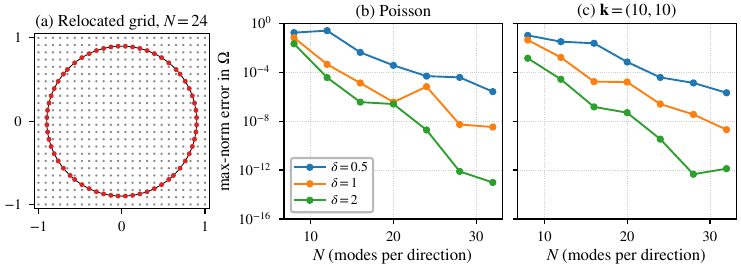}
    \caption{Disk $\Omega=\{|\mathbf x|<0.9\}$ embedded in $(-1,1)^2$. (a) Relocated grid, $N = 24$: PDE nodes (grey) and boundary nodes moved onto the circle (red). (b, c) Maximum-norm error in $\Omega$ for Poisson and for $\mathbf k = (10,10)$.}
    \label{fig:disk}
\end{figure}

\section{Conclusion}
\label{sec:conclusion}

Interpolatory extended-domain spectral collocation solves a square system built from a frame without regularization, a configuration that the impossibility theorem for equispaced data declares unstable and that the theory of regularized frame approximation does not cover. One identity explains its behaviour: the collocation solution is the continuous solution operator applied to the interpolant of the data in the test space $\Psi_N = \Lop(\Phi_N)$. The exact solution therefore amplifies errors in the data by up to the Lebesgue constant of this space, which must grow exponentially for geometric convergence and, for the Poisson operator, grows at the explicit rate of the trial frame; a backward-stable floating-point solve, if the perturbed matrix has no singular value far below the backward error, attains the accuracy of a frame approximation regularized at the level of the backward error, with an error controlled by two constants of the sampled frame. For the Poisson operator these constants follow, along the whole tail of the spectrum and at the exponential scale, a law given by a constrained equilibrium problem of logarithmic potential theory, which satisfies an exact duality between the extensions $\delta$ and $L^2/(4\delta)$, is linear in the level for the self-dual extension $\delta = L/2$, and starts with slope minus the fraction of the extended domain occupied by the extension. In all computations the differential operator plays no essential role in the stability of the method, as Lemma~\ref{lem:phase} suggests: convection shifts the phases of the test-space modes by amounts that are small for the high modes, where the ill-conditioning of the frame lives.

The main hypotheses of the theorems are the nonsingularity of the collocation matrix, which for the Poisson operator holds for all sufficiently large $N$, the Poisson operator for the Lebesgue-type rates of the test space and for the tail law, analyticity of the data for the competitor, and, for the bounds on a computed solution, backward stability of the solver with a lower bound on the smallest singular value of the perturbed matrix. What a sharp a priori theory would still have to supply is the size of the two constants at the fixed roundoff threshold, which sets the accuracy floor: the tail law determines them at thresholds that decrease exponentially with $N$, not at a fixed threshold $t > 0$ as $N \to \infty$; they are computed exactly (Section~\ref{sec:num_bound}), and the a priori bounds, although uniform in $N$, lie far above them.

The practical consequences are that the extension ratio, the analyticity of the solution and the unit roundoff, not the operator, determine the accuracy of the interpolatory method, the operator entering at large P\'eclet numbers only through the norm of the matrix; that oversampling with a normalized truncated singular value decomposition is the formulation of choice when accuracy beyond the interpolatory floor is required; and that comparisons between operators must hold the domain, the extension and the solution fixed.

The tail law leaves two questions open. The first is the side of the chord. By Corollary~\ref{cor:critical} the tail law starts above the chord for $\delta < L/2$, so that the sharp power law $\gamma_t \le Ct^{a(\omega)}$, $\Gamma_t \le Ct^{a(\omega)-1}$, with $C$ independent of $N$ and $t$, fails, and below it for $\delta > L/2$. We conjecture that for $\delta > L/2$ the tail law lies below the chord along the whole tail, as the computations show and as certified interpolation configurations prove for $\delta = L$ and $\delta = 2L$ along most of the tail, and equivalently, by Theorem~\ref{thm:tail_duality}, that for $\delta < L/2$ it lies above the chord. Even then the sharp power law for $\delta > L/2$ would be established only at the exponential scale; a constant independent of $N$ requires control of the factors $e^{o(N)}$ as well. The second question is the structure of the equilibrium measure at the start of the tail: the proof of Theorem~\ref{thm:initial_slope} uses a gap of width $g\tau$ and a layer of width $l\tau$ at each end of the physical interval, and the computed measures $\lambda_\tau$ have these widths, but that $\lambda_\tau$ has them, beyond the existence of a layer and a gap (Corollary~\ref{cor:layer}), is not proved, and the order $\tau^{3/2}$ of the remainder in \eqref{eq:initial_slope} is not known to be sharp.

The tail law and the bounds along the tail are proved for the Poisson operator only. For convection--diffusion the computed tail is the same (Section~\ref{sec:num_bound}), but a proof is open. For a trial function $v = \sin\theta\,q(\cos\theta)$ the test function is $\Lop v = c^2\sin\theta\,r'(\cos\theta) + kc\,r(\cos\theta)$, with $r(z) = zq(z) - (1-z^2)q'(z)$ of degree $N$ and $c = \pi/\tilde L$. For $k = 0$ it is $\sin\theta$ times a polynomial of degree $N-1$, the structure that the explicit polynomials of Theorem~\ref{thm:tail_lower} and the Lagrange interpolation of Theorem~\ref{thm:dual} use; for $k > 0$ it is a trigonometric polynomial of degree $N$ without odd symmetry in $\theta$. Interpolation on the whole circle then needs about twice as many points, half of them where only a bound of order one is known, and small nodal values do not pass to the Poisson test function $-v''$, which differs from $\Lop v$ by $kv'$, an integral of $-v''$ over the whole extended interval. A sharp bound would have to use that the sine and the cosine part come from the same polynomial $r$.

The analysis of the two constants has not been carried over to the tensor-product domains of Section~\ref{sec:multi} either, and the same change of variables should give the rates of other frames of Laplacian eigenfunctions.

\appendix

\section{Standard Facts}
\label{app:standard}

This appendix collects the standard facts used in the paper, with proofs: the one-dimensional Dirichlet problem (Lemma~\ref{lem:dirichlet_1d}), the explicit Green's functions of the two model operators (Proposition~\ref{prop:continuous_greens}), the Dirichlet problem on a box (Lemma~\ref{lem:dirichlet_box}), the Clausen function (Lemma~\ref{lem:cl2}) and regularized least squares (Lemma~\ref{lem:tsvd}).

\begin{lemma}[The One-Dimensional Dirichlet Problem]
\label{lem:dirichlet_1d}
Let $k \in C(\bar I)$, $K(x) := \int_0^x k(s)\,ds$, $E(x) := \int_0^x e^{K(s)}\,ds$, and
\begin{equation}\label{eq:green_general}
G(x,y) := \frac{e^{-K(y)}\,E(\min\{x,y\})\,\bigl(E(L) - E(\max\{x,y\})\bigr)}{E(L)}, \qquad x, y \in \bar I .
\end{equation}
\begin{enumerate}
\item[(i)] (Maximum principle.) If $v \in C^2(I)\cap C(\bar I)$ and $\Lop v \le 0$ on $I$, then $\max_{\bar I} v = \max\{v(0), v(L)\}$. Consequently, if $\Lop v = 0$ on $I$ then $\min\{v(0),v(L)\} \le v \le \max\{v(0),v(L)\}$ on $\bar I$, and if in addition $v(0) = v(L) = 0$ then $v \equiv 0$.
\item[(ii)] (Solution formula.) For $g \in C(\bar I)$ the function $v := \int_0^L G(\cdot,y)\,g(y)\,dy$ belongs to $C^2(\bar I)$ and is the unique element of $C^2(I)\cap C(\bar I)$ with $\Lop v = g$ on $I$ and $v(0) = v(L) = 0$. We write $v = \Lop^{-1}g$.
\item[(iii)] $G \ge 0$ is continuous on $\bar I\times\bar I$,
\[
\norm{\Lop^{-1}}_\infty = \sup_{g\ne0}\frac{\norm{\Lop^{-1}g}_{L^\infty(I)}}{\norm{g}_{L^\infty(I)}} = \max_{x\in\bar I}\int_0^L G(x,y)\,dy = \max_{\bar I}\bigl(\Lop^{-1}\mathbf 1\bigr),
\]
and the norm $\norm{\Lop^{-1}}_{2\to2}$ of $\Lop^{-1}$ on $L^2(I)$ is at most $\norm{G}_{L^2(I\times I)}$.
\end{enumerate}
\end{lemma}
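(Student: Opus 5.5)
The plan is to rewrite the operator in divergence form and verify each claim from the explicit kernel. With $K' = k$ one has $\Lop v = -v'' + kv' = -e^{K}\bigl(e^{-K}v'\bigr)'$. On each of the triangles $y<x$ and $y>x$ the kernel $G(x,y)$ is a product of continuous, nonnegative factors: $E$ is increasing with $E(0)=0$, so $E(\min) \ge 0$ and $E(L)-E(\max)\ge 0$. The two formulas agree on the diagonal, so $G$ is continuous on $\bar I\times\bar I$ and $G\ge0$. This settles the first part of (iii).

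For (i) I would argue by contradiction, which avoids a strictness perturbation. Suppose $v$ attains an interior maximum $M > \max\{v(0),v(L)\}$ at $x_0$. On a maximal interval around $x_0$ where $v > \max\{v(0),v(L)\}$, set $w := e^{-K}v'$. Then $w' = -e^{-K}\Lop v \ge 0$, so $w$ is nondecreasing. Since $v'(x_0)=0$, we get $v'\ge 0$ to the right of $x_0$, so $v \ge M$ on the right part of the interval. Continuity pushes this up to the right endpoint, where $v$ equals $\max\{v(0),v(L)\}<M$, a contradiction.

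The consequences follow directly. Applying the maximum statement to $v$ and to $-v$ gives the two-sided bound when $\Lop v = 0$. With zero boundary values this forces $v\equiv0$.

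For (ii) I would split the integral at $x$:
\[
v(x)=\frac{E(L)-E(x)}{E(L)}\int_0^x e^{-K}E\,g\,dy+\frac{E(x)}{E(L)}\int_x^L e^{-K}(E(L)-E)\,g\,dy .
\]
Differentiating once, the boundary terms cancel and leave
\[
v' = \frac{e^{K}}{E(L)}\Bigl[-\int_0^x e^{-K}Eg + \int_x^L e^{-K}(E(L)-E)g\Bigr].
\]
This is $C^1$, and differentiating $e^{-K}v'$ once more gives
\[
(e^{-K}v')' = -\frac{1}{E(L)}\bigl[E(x)+E(L)-E(x)\bigr]e^{-K}g = -e^{-K}g ,
\]
that is, $\Lop v = g$ and $v\in C^2(\bar I)$. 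The boundary values $v(0)=v(L)=0$ are visible from the factors $E(0)=0$ and $E(L)-E(L)=0$. Uniqueness follows from (i) applied to the difference of two solutions.

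For the remaining part of (iii), the sup-norm identity uses $G\ge0$. This gives $|\Lop^{-1}g|\le \norm{g}_\infty\int G(x,y)\,dy$, with equality for $g=\mathbf 1$, and $\int G(x,\cdot)=\Lop^{-1}\mathbf 1(x)$. The $L^2$ bound is Cauchy–Schwarz for a Hilbert–Schmidt kernel. The only mildly delicate step is the maximum principle, since (i) assumes regularity only on $I$. I handle it with the monotonicity of $e^{-K}v'$ on open subintervals as above, which requires no regularity up to the boundary.
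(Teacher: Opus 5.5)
Your proof is correct, and for (ii) and (iii) it is the paper's argument: the same splitting of the integral at $x$, the same cancellation of the terms from the variable limits, the identity $(e^{-K}v')' = -e^{-K}g$, uniqueness from (i), and the same sup-norm and Cauchy--Schwarz estimates.

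The difference is in the maximum principle (i). The paper uses the classical perturbation $v_\eps := v + \eps e^{\alpha x}$ with $\alpha = \norm{k}_{L^\infty(I)} + 1$. This makes $\Lop v_\eps < 0$ strictly, so an interior maximum of $v_\eps$ would give $\Lop v_\eps(x_0) = -v_\eps''(x_0) \ge 0$, a contradiction; the claim then follows as $\eps \to 0$. You instead use the divergence form $\Lop v = -e^{K}(e^{-K}v')'$. The quantity $e^{-K}v'$ is nondecreasing and vanishes at an interior maximum, so $v$ is nondecreasing to its right and cannot return to the boundary value.

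Your route is more elementary, and it rests (i) on the same integrating factor that drives (ii). The restriction to a maximal interval where $v > \max\{v(0),v(L)\}$ is not needed: $e^{-K}v'$ is nondecreasing on all of $I$, so the argument runs directly on $[x_0, L)$.

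The paper's route buys dimension independence. The identical perturbation argument, with $e^{\alpha x_1}$, proves the maximum principle for $-\Delta + \mathbf k\cdot\nabla$ in Lemma~\ref{lem:dirichlet_box}(i). There the operator is no longer the derivative of a single scalar quantity, and your monotonicity argument has no analogue.
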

\begin{proof}
(i) Let $\alpha := \norm{k}_{L^\infty(I)} + 1$ and $v_\eps := v + \eps e^{\alpha x}$ for $\eps > 0$. Then $\Lop v_\eps = \Lop v + \eps\alpha\bigl(k(x) - \alpha\bigr)e^{\alpha x} \le -\eps\alpha e^{\alpha x} < 0$ on $I$. If $v_\eps$ attained its maximum over $\bar I$ at a point $x_0 \in I$, then $v_\eps'(x_0) = 0$ and $v_\eps''(x_0) \le 0$, so $\Lop v_\eps(x_0) = -v_\eps''(x_0) \ge 0$, a contradiction. Hence $\max_{\bar I}v \le \max_{\bar I}v_\eps = \max\{v_\eps(0), v_\eps(L)\} \le \max\{v(0),v(L)\} + \eps e^{\alpha L}$, and $\eps \to 0$ gives the first claim. If $\Lop v = 0$, the first claim applies to $v$ and to $-v$; if moreover $v(0) = v(L) = 0$, both bounds give $v \equiv 0$.

(ii) Uniqueness: the difference of two solutions satisfies (i) with zero boundary values. Existence: $E$ is strictly increasing with $E(0) = 0$, and $\Lop v = g$ is equivalent to $(e^{-K}v')' = -e^{-K}g$ because $K' = k$. Splitting the integral at $y = x$,
\[
v(x) = \frac{E(L) - E(x)}{E(L)}\int_0^x e^{-K(y)}E(y)\,g(y)\,dy + \frac{E(x)}{E(L)}\int_x^L e^{-K(y)}\bigl(E(L)-E(y)\bigr)g(y)\,dy .
\]
Differentiating, the two terms produced by the variable limits cancel, and with $E' = e^{K}$ one obtains $e^{-K(x)}v'(x) = E(L)^{-1}\bigl[-\int_0^x e^{-K}Eg + \int_x^L e^{-K}(E(L)-E)g\bigr] = c - \int_0^x e^{-K}g$, where $c := E(L)^{-1}\int_0^L e^{-K}(E(L) - E)g$. Differentiating once more gives $(e^{-K}v')' = -e^{-K}g$, so $\Lop v = g$; $v(0) = v(L) = 0$ because $E(0) = 0$ and $E(L) - E(L) = 0$; and $v' = e^{K}(c - \int_0^x e^{-K}g) \in C^1(\bar I)$, so $v \in C^2(\bar I)$.

(iii) $G \ge 0$ and continuous because $E$ is continuous and increasing with $E(0) = 0$. For $g \in C(\bar I)$, $|\Lop^{-1}g(x)| \le \norm{g}_\infty\int_0^L G(x,y)\,dy$ with equality for $g \equiv 1$, and $\int_0^L G(x,y)\,dy = (\Lop^{-1}\mathbf 1)(x)$ by (ii). The $L^2$ bound is the Cauchy--Schwarz inequality applied to the integral kernel.
\end{proof}

\begin{proposition}[Continuous Green's Functions and Inverse Norms]
\label{prop:continuous_greens}
On $I=(0,L)$ with homogeneous Dirichlet conditions,
\begin{align}
G_P(x,y) &= \begin{cases} x(L-y)/L, & x \le y, \\ y(L-x)/L, & x > y, \end{cases} & \norm{\Lop_P^{-1}}_\infty &= \frac{L^2}{8}, \label{eq:GP}\\
G_{CD}(x,y) &= \begin{cases} a(y)\,(e^{kx}-1), & x \le y, \\ b(y)\,(e^{kx} - e^{kL}), & x > y, \end{cases}
& \norm{\Lop_{CD}^{-1}}_\infty &\le \frac{L}{k}\quad (k>0), \label{eq:GCD}
\end{align}
with $b(y) = -(1-e^{-ky})/\bigl(k(e^{kL}-1)\bigr)$ and $a(y) = b(y) + e^{-ky}/k$. For $k>0$ and $x \le y$, $0 \le G_{CD}(x,y) \le k^{-1} e^{-k(y-x)}$, whereas for $x > y$, $G_{CD}(x,y) = k^{-1}(1-e^{-ky})(1 - e^{-k(L-x)})/(1 - e^{-kL})$.
\end{proposition}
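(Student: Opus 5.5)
The statement is Proposition~\ref{prop:continuous_greens}, and I would derive both kernels from the general formula \eqref{eq:green_general} of Lemma~\ref{lem:dirichlet_1d}, specialised to $k \equiv 0$ and to constant $k > 0$.

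\emph{Poisson case.} For $k \equiv 0$ one has $K = 0$ and $E(x) = x$. Substituting into \eqref{eq:green_general} gives $G_P(x,y) = \min\{x,y\}\,(L - \max\{x,y\})/L$, which is the stated hat function. For the norm I use Lemma~\ref{lem:dirichlet_1d}(iii): $\norm{\Lop_P^{-1}}_\infty = \max_{\bar I}\Lop_P^{-1}\mathbf 1$. Here $\Lop_P^{-1}\mathbf 1 = x(L-x)/2$, since this function satisfies $-v'' = 1$ and vanishes at $0$ and $L$. Its maximum is attained at $x = L/2$ and equals $L^2/8$.

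\emph{Convection--diffusion case, $k > 0$.} Now $K(x) = kx$ and $E(x) = (e^{kx} - 1)/k$. For $x \le y$, formula \eqref{eq:green_general} gives
\[
G_{CD}(x,y) = e^{-ky}\,\frac{(e^{kx}-1)(e^{kL}-e^{ky})}{k(e^{kL}-1)}.
\]
It remains to check that $e^{-ky}(e^{kL}-e^{ky})/\bigl(k(e^{kL}-1)\bigr)$ equals $a(y) = b(y) + e^{-ky}/k$. Writing $e^{-ky}e^{kL} - 1 = e^{-ky}(e^{kL}-1) + e^{-ky} - 1$ reduces this to the definition of $b$. The case $x > y$ is the same algebra with the roles of $\min$ and $\max$ exchanged: the factor $(e^{kL}-e^{kx})$ combines with $e^{-ky}(e^{ky}-1)$ into $b(y)(e^{kx}-e^{kL})$.

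\emph{The two stated forms.}
\begin{itemize}
\item For $x \le y$: $G_{CD} = k^{-1}e^{-ky}(e^{kx}-1)(e^{kL}-e^{ky})/(e^{kL}-1)$. Bound $e^{kx}-1 \le e^{kx}$ and $(e^{kL}-e^{ky})/(e^{kL}-1) \le 1$, the latter valid because $e^{ky} \ge 1$. This yields $0 \le G_{CD}(x,y) \le k^{-1}e^{-k(y-x)}$.
\item For $x > y$: multiply the numerator and denominator of $b(y)(e^{kx}-e^{kL})$ by $e^{-kL}$. This gives $k^{-1}(1-e^{-ky})(1-e^{-k(L-x)})/(1-e^{-kL})$.
\end{itemize}

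\emph{The inverse-norm bound.} I again use $\max_{\bar I}\Lop_{CD}^{-1}\mathbf 1$. The function $v(x) = x/k - L(e^{kx}-1)/\bigl(k(e^{kL}-1)\bigr)$ solves $-v'' + kv' = 1$ with $v(0) = v(L) = 0$. The second term is nonnegative, so $v \le x/k \le L/k$.

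The only step needing care is this bookkeeping: matching $a(y)$, and keeping track of signs in $b(y)$, which is negative while $e^{kx} - e^{kL} < 0$, so that $G_{CD} \ge 0$ as required. Everything else is direct substitution.
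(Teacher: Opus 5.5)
Your proposal is correct and follows the paper's proof: specialize \eqref{eq:green_general} with $K(x) = kx$, carry out the same algebra to identify $a(y)$ and $b(y)$, and read off the inverse norms as $\max_{\bar I}\Lop^{-1}\mathbf 1$ via Lemma~\ref{lem:dirichlet_1d}(iii). The only difference is cosmetic. For $x \le y$ you bound $e^{kx}-1 \le e^{kx}$ and $(e^{kL}-e^{ky})/(e^{kL}-1) \le 1$ separately, which is slightly more direct than the paper's expanded inequality $e^{kx} + e^{k(L-y)} \ge 1 + e^{-k(y-x)}$.
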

\begin{proof}
Specialize \eqref{eq:green_general} to $K(x) = kx$: for $k = 0$, $E(x) = x$ and \eqref{eq:green_general} is \eqref{eq:GP}; for $k > 0$, $E(x) = (e^{kx}-1)/k$, and for $x \le y$ one obtains $G_{CD}(x,y) = (e^{kx}-1)(e^{k(L-y)}-1)/(k(e^{kL}-1)) = a(y)(e^{kx}-1)$, while for $x > y$, $G_{CD}(x,y) = (1-e^{-ky})(e^{kL}-e^{kx})/(k(e^{kL}-1)) = b(y)(e^{kx}-e^{kL})$, which is the stated closed form. The bound for $x \le y$ follows from $(e^{kx}-1)(e^{k(L-y)}-1) \le (e^{kL}-1)e^{-k(y-x)}$, which after expansion reduces to $e^{kx} + e^{k(L-y)} \ge 1 + e^{-k(y-x)}$ and holds because $e^{kx} \ge 1$, $e^{k(L-y)} \ge 1$ and $e^{-k(y-x)} \le 1$. By Lemma~\ref{lem:dirichlet_1d}(iii) the inverse norms are $\max_{\bar I}\Lop^{-1}\mathbf 1$; solving $\Lop v = 1$ with zero boundary values gives $\Lop_P^{-1}\mathbf 1 = x(L-x)/2$, whose maximum is $L^2/8$, and $\Lop_{CD}^{-1}\mathbf 1 = x/k - L(e^{kx}-1)/(k(e^{kL}-1)) \le x/k \le L/k$.
\end{proof}

\begin{lemma}[The Dirichlet Problem on a Box]
\label{lem:dirichlet_box}
Let $U \subset \mathbb R^d$ be a bounded open set contained in the slab $\{0 \le x_1 \le \ell\}$, let $\Lop = -\Delta + \mathbf k\cdot\nabla$ with a bounded function $\mathbf k: U \to \mathbb R^d$, and put $\alpha := \sup_U|k_1| + 1$.
\begin{enumerate}
\item[(i)] If $v \in C^2(U)\cap C(\bar U)$ and $\Lop v \le 0$ in $U$, then $\max_{\bar U}v = \max_{\partial U}v$.
\item[(ii)] If $v \in C^2(U)\cap C(\bar U)$, $\Lop v = g$ in $U$ with $g$ bounded, and $v = 0$ on $\partial U$, then $\sup_U|v| \le (e^{\alpha\ell} - 1)\sup_U|g|$. In particular the Dirichlet problem for $\Lop$ has at most one solution in $C^2(U)\cap C(\bar U)$.
\item[(iii)] If $U = R_0 = \prod_{i=1}^d(0,L_i)$, $\mathbf k$ is constant, $v \in C^2(\bar R_0)$, $\Lop v = g$ and $v = 0$ on $\partial R_0$, then $\norm{v}_{L^2(R_0)} \le \bigl(\sum_{i=1}^d\pi^2/L_i^2\bigr)^{-1}\norm{g}_{L^2(R_0)}$.
\end{enumerate}
\end{lemma}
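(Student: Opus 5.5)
For (i), I would copy the one-dimensional argument of Lemma~\ref{lem:dirichlet_1d}(i). Put $v_\eps := v + \eps e^{\alpha x_1}$ for $\eps > 0$. Then
\[
\Lop v_\eps = \Lop v + \eps\alpha\bigl(k_1 - \alpha\bigr)e^{\alpha x_1} \le -\eps\alpha e^{\alpha x_1} < 0
\]
in $U$, because $k_1 - \alpha \le -1$. Suppose $v_\eps$ attained its maximum over the compact set $\bar U$ at an interior point. There $\nabla v_\eps = 0$ and $\Delta v_\eps \le 0$, so $\Lop v_\eps \ge 0$, which is a contradiction. Hence $\max_{\bar U}v \le \max_{\partial U}v + \eps e^{\alpha\ell}$, and letting $\eps \to 0$ gives (i).

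For (ii), I would use a barrier that is nonnegative on $\partial U$. Let $M := \sup_U|g|$ and $w := M(e^{\alpha\ell} - e^{\alpha x_1})$, which satisfies $w \ge 0$ on $\bar U$ because $x_1 \le \ell$. Compute
\[
\Lop w = M e^{\alpha x_1}(\alpha^2 - k_1\alpha) \ge M\alpha e^{\alpha x_1}(\alpha - |k_1|) \ge M\alpha \ge M,
\]
where the last two steps use $e^{\alpha x_1} \ge 1$ (since $x_1 \ge 0$) and $\alpha \ge 1$. Then $\Lop(\pm v - w) \le 0$ in $U$ and $\pm v - w \le 0$ on $\partial U$, so (i) gives $|v| \le w \le M(e^{\alpha\ell} - 1)$ on $U$. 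Uniqueness follows by taking $g = 0$.

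For (iii), I would use an energy argument and expect no real obstacle beyond justifying the integrations by parts. Since $v \in C^2(\bar R_0)$ vanishes on $\partial R_0$, multiplying $\Lop v = g$ by $v$ and integrating gives $\int|\nabla v|^2 + \int(\mathbf k\cdot\nabla v)v = \int gv$. For constant $\mathbf k$ the convection term is $\tfrac12\int\mathbf k\cdot\nabla(v^2) = 0$ by the divergence theorem with $v = 0$ on the boundary. Apply the one-dimensional Poincaré (Wirtinger) inequality $\int_0^{L_i}|\partial_i v|^2 \ge (\pi/L_i)^2\int_0^{L_i}v^2$ on each line, which holds because $v$ vanishes at both ends. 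Integrating over the other variables and summing over $i$ gives $\int|\nabla v|^2 \ge \lambda\norm{v}^2_{L^2}$ with $\lambda := \sum_i\pi^2/L_i^2$. Hence $\lambda\norm{v}^2 \le \norm{g}\norm{v}$ by Cauchy--Schwarz, which is the claim. The only delicate point is the boundary term, which vanishes because $v \in C^2(\bar R_0)$ has zero trace. The Wirtinger inequality itself follows from the sine expansion of each one-dimensional slice.
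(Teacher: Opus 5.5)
Your parts (i) and (ii) are the paper's argument. Both use the perturbation $v + \eps e^{\alpha x_1}$, with $\Lop(e^{\alpha x_1}) = \alpha(k_1 - \alpha)e^{\alpha x_1} \le -\alpha e^{\alpha x_1}$, and both compare $\pm v$ with the barrier $e^{\alpha\ell} - e^{\alpha x_1}$.

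In (iii) the energy identity and the vanishing of the convection term are also as in the paper. You prove the Poincar\'e inequality $\int_{R_0}|\nabla v|^2 \ge \bigl(\sum_i\pi^2/L_i^2\bigr)\norm{v}_{L^2(R_0)}^2$ differently, though.

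\begin{itemize}
\item The paper expands $v$ in the $d$-dimensional orthonormal sine basis $e_{\mathbf j}$ of $R_0$. It then uses Green's second identity to get $\int|\nabla v|^2 = \sum_{\mathbf j}\lambda_{\mathbf j}|\langle v,e_{\mathbf j}\rangle|^2$.
\item You apply the one-dimensional Wirtinger inequality on each coordinate line, integrate by Fubini, and sum over the directions.
\end{itemize}

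Your route is more elementary: it needs only the one-dimensional sine series, not the completeness of the tensor-product system in $L^2(R_0)$. It also loses nothing. Each direction bounds the whole of $\norm{v}^2$, so the constants add up exactly to the first Dirichlet eigenvalue of the box. The paper's route gives an exact spectral identity rather than an inequality, and it stays within the Laplacian-eigenfunction framework used throughout the paper. Both arguments are correct.
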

\begin{proof}
(i) Put $v_\eps := v + \eps e^{\alpha x_1}$, $\eps > 0$. Then $\Lop v_\eps = \Lop v + \eps\alpha(k_1 - \alpha)e^{\alpha x_1} < 0$ in $U$. At an interior maximum of $v_\eps$ the gradient vanishes and the Hessian is negative semidefinite, so $\Lop v_\eps = -\Delta v_\eps \ge 0$ there, a contradiction; hence $\max_{\bar U}v \le \max_{\bar U}v_\eps = \max_{\partial U}v_\eps \le \max_{\partial U}v + \eps e^{\alpha\ell}$, and $\eps \to 0$ gives (i).

(ii) Let $\phi(\mathbf x) := e^{\alpha\ell} - e^{\alpha x_1}$, so that $\phi \ge 0$ on $\bar U$ and $\Lop\phi = \alpha(\alpha - k_1)e^{\alpha x_1} \ge \alpha \ge 1$ in $U$. With $\bar g := \sup_U|g|$, the functions $w_\pm := \bar g\phi \pm v$ satisfy $\Lop w_\pm \ge \bar g \pm g \ge 0$ in $U$ and $w_\pm \ge 0$ on $\partial U$, so (i) applied to $-w_\pm$ gives $w_\pm \ge 0$, i.e.\ $|v| \le \bar g\phi \le (e^{\alpha\ell}-1)\bar g$. Uniqueness is the case $g = 0$.

(iii) Multiplying $\Lop v = g$ by $v$ and integrating by parts, using $v = 0$ on $\partial R_0$, gives $\int_{R_0}|\nabla v|^2 + \int_{R_0}(\mathbf k\cdot\nabla v)v = \int_{R_0}gv$, and $\int_{R_0}(\mathbf k\cdot\nabla v)v = \tfrac12\int_{R_0}\mathbf k\cdot\nabla(v^2) = 0$ for constant $\mathbf k$. The functions $e_{\mathbf j}(\mathbf x) = \prod_i\sqrt{2/L_i}\sin(j_i\pi x_i/L_i)$ form an orthonormal basis of $L^2(R_0)$ of eigenfunctions of $-\Delta$ with eigenvalues $\lambda_{\mathbf j} = \sum_i j_i^2\pi^2/L_i^2 \ge \lambda^\ast := \sum_i\pi^2/L_i^2$; by Green's second identity, which has no boundary terms because $v$ and $e_{\mathbf j}$ vanish on $\partial R_0$, the coefficients of $-\Delta v$ are $\lambda_{\mathbf j}$ times those of $v$, so $\int_{R_0}|\nabla v|^2 = -\int_{R_0}v\Delta v = \sum_{\mathbf j}\lambda_{\mathbf j}|\langle v, e_{\mathbf j}\rangle|^2 \ge \lambda^\ast\norm{v}^2_{L^2(R_0)}$. Hence $\lambda^\ast\norm{v}^2 \le \norm{g}\norm{v}$.
\end{proof}

\begin{lemma}[Elementary Properties of the Clausen Function]
\label{lem:cl2}
The Clausen function $\mathrm{Cl}_2$ of Section~\ref{sec:variable} is odd and $2\pi$-periodic, and:
\begin{enumerate}
\item[(i)] $\mathrm{Cl}_2(\pi) = 0$ and $\mathrm{Cl}_2(2\pi - x) = -\mathrm{Cl}_2(x)$;
\item[(ii)] $\mathrm{Cl}_2(2x) = 2\mathrm{Cl}_2(x) - 2\mathrm{Cl}_2(\pi - x)$ for all $x$; in particular $\mathrm{Cl}_2(\pi/3) = \tfrac32\mathrm{Cl}_2(2\pi/3)$;
\item[(iii)] $\mathrm{Cl}_2(x) = x(1 - \log x) + O(x^3)$ and $\mathrm{Cl}_2(\pi - x) = x\log 2 + O(x^3)$ as $x \to 0^+$;
\item[(iv)] $\int_0^\pi\log|\cos\theta - \cos\varphi|\,d\varphi = -\pi\log 2$ for every $\theta \in [0,\pi]$.
\end{enumerate}
\end{lemma}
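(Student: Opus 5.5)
The plan is to derive all four items from the defining integral $\mathrm{Cl}_2(x) = -\int_0^x\log|2\sin(t/2)|\,dt$ together with its Fourier series $\mathrm{Cl}_2(x) = \sum_{k\ge1}\sin(kx)/k^2$. The series is obtained by integrating termwise the classical expansion $-\log|2\sin(t/2)| = \sum_{k\ge1}\cos(kt)/k$. That expansion holds in $L^2(-\pi,\pi)$, and its integrated form converges absolutely and uniformly, so the termwise integration is justified.

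Oddness and $2\pi$-periodicity then follow at once from the sine series, and so does $\mathrm{Cl}_2(\pi) = 0$. Periodicity together with oddness gives $\mathrm{Cl}_2(2\pi - x) = \mathrm{Cl}_2(-x) = -\mathrm{Cl}_2(x)$, which proves (i).

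For (ii) I would use the product formula $2\sin t = 2\sin(t/2)\cdot 2\cos(t/2)$. This gives
\[
\log|2\sin t| = \log|2\sin(t/2)| + \log|2\sin((\pi - t)/2)|.
\]
Integrating from $0$ to $x$ and substituting $u = 2t$ on the left turns the left-hand side into $-\tfrac12\mathrm{Cl}_2(2x)$. On the right, the second term becomes $\int_{\pi-x}^{\pi}\log|2\sin(u/2)|\,du = \mathrm{Cl}_2(\pi - x) - \mathrm{Cl}_2(\pi)$. Collecting terms and using $\mathrm{Cl}_2(\pi) = 0$ yields $\mathrm{Cl}_2(2x) = 2\mathrm{Cl}_2(x) - 2\mathrm{Cl}_2(\pi - x)$. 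The special value comes from setting $x = \pi/3$: since $\mathrm{Cl}_2(2\pi/3) = 2\mathrm{Cl}_2(\pi/3) - 2\mathrm{Cl}_2(2\pi/3)$, we get $3\mathrm{Cl}_2(2\pi/3) = 2\mathrm{Cl}_2(\pi/3)$.

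For (iii) the two asymptotics are handled separately.
\begin{itemize}
\item Near $0$, write $\log(2\sin(t/2)) = \log t + \log\bigl(\sin(t/2)/(t/2)\bigr) = \log t + O(t^2)$. Integrating gives $\mathrm{Cl}_2(x) = -(x\log x - x) + O(x^3)$.
\item Near $\pi$, use $\mathrm{Cl}_2(\pi - x) = \mathrm{Cl}_2(\pi) + \int_{\pi - x}^{\pi}\log(2\sin(t/2))\,dt$. The integrand is $\log 2 + O((\pi - t)^2)$, because $\sin(t/2) = \cos((\pi - t)/2)$. This gives $x\log 2 + O(x^3)$.
\end{itemize}

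For (iv), the identity $\cos\theta - \cos\varphi = -2\sin\frac{\theta+\varphi}{2}\sin\frac{\theta-\varphi}{2}$ gives
\[
\log|\cos\theta - \cos\varphi| = -\log 2 + \log\bigl|2\sin\tfrac{\theta+\varphi}{2}\bigr| + \log\bigl|2\sin\tfrac{\theta-\varphi}{2}\bigr|.
\]
Integrating over $\varphi \in (0,\pi)$, the second and third terms together equal $\int_{-\pi}^{\pi}\log|2\sin((\theta+\varphi)/2)|\,d\varphi$, after substituting $\varphi \mapsto -\varphi$ in the third term and using evenness. That integral is an integral over a full period of a $2\pi$-periodic function, so it equals $\int_{-\pi}^{\pi}\log|2\sin(t/2)|\,dt = -2\mathrm{Cl}_2(\pi) = 0$, by oddness. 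What remains is $-\pi\log 2$.

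The only step needing care is the justification of the Fourier series. I would avoid it by proving oddness and periodicity directly: oddness follows because the integrand is even, and periodicity follows once the integral over a full period, $\int_0^{2\pi}\log|2\sin(t/2)|\,dt$, is shown to vanish. That integral is the classical $\int_0^\pi\log\sin = -\pi\log 2$, rescaled by the substitution $t = 2s$. Everything else is elementary substitution.
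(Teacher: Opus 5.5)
Your proposal is correct and, in its fallback form (oddness from the even integrand, periodicity and $\mathrm{Cl}_2(\pi)=0$ from the vanishing period integral via $\int_0^\pi\log\sin u\,du=-\pi\log 2$, then the duplication substitution, the two local expansions and the full-period argument for (iv)), it is essentially the paper's proof. The only difference is the order of the first step: the paper derives $\mathrm{Cl}_2(\pi)=0$ first and deduces periodicity from it. Keeping the direct route rather than the Fourier series is the right choice, since the paper derives the Fourier series of $\log|2\sin(t/2)|$ later, in the proof of Lemma~\ref{lem:cheb}, from item (i) of this very lemma.
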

\begin{proof}
Items (i) and (ii) and the first expansion in (iii) are Eqs.~(4.10)--(4.12), (4.17), (4.18) and (4.28) of \cite{lewin1981polylogarithms}; we include short proofs. Oddness holds because the integrand $h(t) := \log|2\sin(t/2)|$ is even. The identity $\int_0^\pi\log\sin u\,du = -\pi\log 2$ follows from $I := \int_0^\pi\log\sin u\,du = 2\int_0^{\pi/2}\log\sin u\,du = 2\int_0^{\pi/2}\log\cos u\,du$, whence $2I = 2\int_0^{\pi/2}\log(\sin u\cos u)\,du = 2\int_0^{\pi/2}\log\bigl(\tfrac12\sin 2u\bigr)\,du = -\pi\log 2 + I$. Hence $\int_0^\pi h = \pi\log 2 + 2\int_0^{\pi/2}\log\sin u\,du = 0$, i.e.\ $\mathrm{Cl}_2(\pi) = 0$; since $h$ is $2\pi$-periodic and even, $\int_\pi^{2\pi}h = \int_0^\pi h = 0$, so $h$ has zero mean over a period, $\mathrm{Cl}_2$ is $2\pi$-periodic, and $\mathrm{Cl}_2(2\pi - x) = \mathrm{Cl}_2(-x) = -\mathrm{Cl}_2(x)$, which proves (i).

(ii) Substituting $t = 2s$ and using $2\sin s = (2\sin(s/2))(2\cos(s/2))$,
\begin{gather*}
\mathrm{Cl}_2(2x) = -2\int_0^x\log|2\sin s|\,ds = 2\mathrm{Cl}_2(x) - 2\int_0^x\log|2\cos(s/2)|\,ds, \\
\int_0^x\log|2\cos(s/2)|\,ds = \int_{\pi-x}^{\pi}\log|2\sin(u/2)|\,du = \mathrm{Cl}_2(\pi-x) - \mathrm{Cl}_2(\pi) ,
\end{gather*}
by $s = \pi - u$; with (i) this is the duplication formula, and $x = \pi/3$ gives $\mathrm{Cl}_2(2\pi/3) = 2\mathrm{Cl}_2(\pi/3) - 2\mathrm{Cl}_2(2\pi/3)$.

(iii) For $0 < t \le 1$, $\log(2\sin(t/2)) = \log t + \log\bigl(\sin(t/2)/(t/2)\bigr) = \log t + O(t^2)$, so $\mathrm{Cl}_2(x) = -\int_0^x\log t\,dt + O(x^3) = x - x\log x + O(x^3)$. Also $\mathrm{Cl}_2(\pi - x) = \mathrm{Cl}_2(\pi) + \int_{\pi-x}^{\pi}h = \int_0^x\log(2\cos(s/2))\,ds = \int_0^x\bigl(\log 2 + O(s^2)\bigr)ds = x\log 2 + O(x^3)$.

(iv) Since $|\cos\theta - \cos\varphi| = \tfrac12\,|2\sin\tfrac{\varphi+\theta}{2}|\,|2\sin\tfrac{\varphi-\theta}{2}|$,
\[
\int_0^\pi\log|\cos\theta - \cos\varphi|\,d\varphi = -\pi\log 2 + \int_\theta^{\pi+\theta}h(t)\,dt + \int_{-\theta}^{\pi-\theta}h(t)\,dt ,
\]
by the substitutions $t = \varphi + \theta$ and $t = \varphi - \theta$. The substitution $t \mapsto 2\pi - t$ and the evenness and periodicity of $h$ turn the first integral into $\int_{\pi-\theta}^{2\pi-\theta}h$, so the two integrals add up to $\int_{-\theta}^{2\pi-\theta}h$, the integral over a full period, which vanishes by (i).
\end{proof}

\begin{lemma}[Regularized Least Squares, a Discrete Counterpart of {\cite[Thms.~13 and 14]{adcock2019frames}}]
\label{lem:tsvd}
Let $M \ge n$, $\eps > 0$ and $\vect c_\eps$ be as in \eqref{eq:tsvd}. For every $\vect z \in \mathbb R^{n}$,
\begin{equation}\label{eq:tsvd_ineq}
\norm{\vect f - \mat B\vect c_\eps}_2 \le \norm{\vect f - \mat B\vect z}_2 + \eps\,\norm{\mat D^{-1}\vect z}_2, \qquad
\norm{\mat D^{-1}\vect c_\eps}_2 \le \norm{\mat D^{-1}\vect z}_2 + \eps^{-1}\norm{\vect f - \mat B\vect z}_2 .
\end{equation}
\end{lemma}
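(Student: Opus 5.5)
Both inequalities will come from working in the singular basis of the normalized matrix $\mat B_D = \mat B\mat D = \mat U\mathbf\Sigma\mat V^T$. First I rewrite everything in normalized coordinates. With $\vect y := \mat D^{-1}\vect z$ one has $\mat B\vect z = \mat B_D\vect y$. By \eqref{eq:tsvd}, $\mat D^{-1}\vect c_\eps = \mat V\mathbf\Sigma_\eps^\dagger\mat U^T\vect f =: \hat{\vect y}$ and $\mat B\vect c_\eps = \mat B_D\hat{\vect y}$. So it suffices to prove
\[
\norm{\vect f - \mat B_D\hat{\vect y}}_2 \le \norm{\vect f - \mat B_D\vect y}_2 + \eps\norm{\vect y}_2, \qquad
\norm{\hat{\vect y}}_2 \le \norm{\vect y}_2 + \eps^{-1}\norm{\vect f - \mat B_D\vect y}_2
\]
for every $\vect y \in \mathbb R^n$.

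Next I decompose $\vect f$ and $\vect y$ in the singular bases. Let $\Pi_>$ be the orthogonal projection onto $\operatorname{span}\{\vect u_i : \sigma_i > \eps\}$ and $\Pi_\le$ its complement in $\mathbb R^M$, so that $\Pi_\le$ also contains $\operatorname{range}(\mat U)^\perp$. Write $\vect r := \vect f - \mat B_D\vect y$. Then $\mat B_D\hat{\vect y} = \Pi_>\vect f$, hence $\vect f - \mat B_D\hat{\vect y} = \Pi_\le\vect f = \Pi_\le\vect r + \Pi_\le\mat B_D\vect y$. The last term equals $\sum_{\sigma_i\le\eps}\sigma_i(\vect v_i^T\vect y)\vect u_i$, whose norm is at most $\eps\norm{\vect y}_2$. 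Since $\norm{\Pi_\le\vect r}_2 \le \norm{\vect r}_2$, this gives the first inequality.

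For the second, I expand the truncated solution as
\[
\hat{\vect y} = \sum_{\sigma_i>\eps}\sigma_i^{-1}(\vect u_i^T\vect f)\vect v_i .
\]
Using $\vect u_i^T\vect f = \vect u_i^T\vect r + \sigma_i\vect v_i^T\vect y$, this splits as $\hat{\vect y} = \sum_{\sigma_i>\eps}(\vect v_i^T\vect y)\vect v_i + \sum_{\sigma_i>\eps}\sigma_i^{-1}(\vect u_i^T\vect r)\vect v_i$. The first sum is an orthogonal projection of $\vect y$, so its norm is at most $\norm{\vect y}_2$. By orthonormality and $\sigma_i^{-1} < \eps^{-1}$, the second sum has norm at most $\eps^{-1}\norm{\vect r}_2$. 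The triangle inequality then gives the claim.

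There is no real obstacle in this argument. The one point needing care is to keep track of $\operatorname{range}(\mat U)^\perp$ when $M > n$, and to confirm that the column scaling by $\mat D$ is exactly what turns $\norm{\vect z}$ into $\norm{\mat D^{-1}\vect z}$.
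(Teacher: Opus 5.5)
Your proposal is correct and follows essentially the same route as the paper's proof. The paper likewise substitutes $\vect y = \mat D^{-1}\vect z$, writes the residual as the complementary projection $(\mat I - \mat P_\eps)\vect f$ split into the part acting on $\vect f - \mat B\mat D\vect y$ and the part acting on $\mat B\mat D\vect y$ (the latter bounded by $\eps\norm{\vect y}_2$), and for the second inequality expands $\mat D^{-1}\vect c_\eps$ in the right singular vectors and splits the coefficients exactly as you do.
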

\begin{proof}
Write $\vect y = \mat D^{-1}\vect z$ and $\mat P_\eps = \mat U_\eps\mat U_\eps^T$, the orthogonal projector onto the left singular vectors with $\sigma_i > \eps$. Then $\vect f - \mat B\vect c_\eps = (\mat I - \mat P_\eps)\vect f = (\mat I-\mat P_\eps)(\vect f - \mat B\mat D\vect y) + (\mat I - \mat P_\eps)\mat B\mat D\vect y$, and $\norm{(\mat I-\mat P_\eps)\mat B\mat D\vect y}_2 = \bigl(\sum_{\sigma_i\le\eps}\sigma_i^2(\vect v_i\cdot\vect y)^2\bigr)^{1/2} \le \eps\norm{\vect y}_2$. For the second inequality, $\mat D^{-1}\vect c_\eps = \sum_{\sigma_i>\eps}\sigma_i^{-1}(\vect u_i\cdot\vect f)\vect v_i = \sum_{\sigma_i>\eps}\bigl[(\vect v_i\cdot\vect y) + \sigma_i^{-1}\vect u_i\cdot(\vect f - \mat B\mat D\vect y)\bigr]\vect v_i$; since the $\vect v_i$ are orthonormal and $\sigma_i > \eps$ in the sum, the two parts have norms at most $\norm{\vect y}_2$ and $\eps^{-1}\norm{\vect f - \mat B\mat D\vect y}_2$.
\end{proof}

\section{Norms of the Lagrange Functions of the Trial Frame: Proof of Proposition~\ref{prop:lagrange_norms}}
\label{app:rates}

Throughout this appendix $N \ge N_0$, where $N_0 = N_0(\theta_0) \ge 4$ is such that $\Delta := \omega/(N-1) \le \theta_0/4$ and $2/N \le \theta_0/2$ for $N \ge N_0$, and $C$ denotes a constant that depends only on $\theta_0$ (equivalently on $\delta/L$) and may change from line to line. The nodes are $\theta_m = \theta_0 + (m-1)\Delta$ and $z_m = \cos\theta_m$, $1 \le m \le N$, so that $\theta_1 = \theta_0$ and $\theta_N = \pi - \theta_0$. We write $\omega_N(z) := \prod_{m=1}^N (z - z_m)$, $U_N(z) := -N^{-1}\log|\omega_N(z)|$, and $U_N^{(j)}(z) := -N^{-1}\sum_{m\ne j}\log|z - z_m|$ for the discrete potential with the $j$-th node removed. Two elementary facts are used repeatedly: for $|s| \le \pi$, $|\sin(s/2)| \ge |s|/\pi$; and for $\vartheta \in [0,\pi]$ and $\varphi \in [\theta_0/2, \pi - \theta_0/2]$, $\tfrac12(\vartheta + \varphi) \in [\theta_0/4, \pi - \theta_0/4]$, so that $\sin\tfrac12(\vartheta+\varphi) \ge \sin(\theta_0/4)$.

\begin{lemma}[Riemann Sums with a Logarithmic Singularity]
\label{lem:riemann}
For $\vartheta \in [0,\pi]$ put $g_\vartheta(\varphi) := \log|\cos\vartheta - \cos\varphi|$. Then for all $N \ge N_0$ and all $\vartheta \in [0,\pi]$:
\begin{enumerate}
    \item[(a)] if $|\vartheta - \theta_m| \ge \Delta/4$ for every $m$, then
    $\bigl|\Delta\sum_{m=1}^N g_\vartheta(\theta_m) - \int_{\theta_0}^{\pi-\theta_0} g_\vartheta(\varphi)\,d\varphi\bigr| \le C\,\Delta\log(1/\Delta)$;
    \item[(b)] if $|\vartheta - \theta_j| < \Delta/4$ for some $j$, then the same bound holds for the sum with the term $m = j$ omitted.
\end{enumerate}
\end{lemma}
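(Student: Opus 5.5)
I would split the kernel as in \eqref{eq:kernel_split}, $g_\vartheta(\varphi) = -\log 2 + h(\vartheta+\varphi) + h(\vartheta-\varphi)$ with $h(t) = \log|2\sin(t/2)|$, and treat the two $h$-terms separately. The point is to separate the smooth part from the logarithmic singularity.

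\emph{The smooth term.} For $\varphi \in [\theta_0,\pi-\theta_0]$ the second elementary fact gives $\sin\tfrac12(\vartheta+\varphi) \ge \sin(\theta_0/4)$. So $\varphi \mapsto h(\vartheta+\varphi)$ is smooth on the node interval, with first derivative $\tfrac12\cot\tfrac12(\vartheta+\varphi)$ bounded by a constant depending only on $\theta_0$. The composite midpoint/trapezoid comparison therefore gives an error $O(\Delta)$: write the sum as $\sum_m$ over cells $[\theta_m-\Delta/2,\theta_m+\Delta/2]$, with half-cells at the two ends handled by the bound on $h$.

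\emph{The singular term.} On cells at distance at least $\Delta$ from $\vartheta$, I would use the bound on $f := h(\vartheta-\cdot)$ and $f'$ together with the midpoint error bound $\Delta^3\max|f''|$ per cell. Since $|f''(\varphi)| \le C|\vartheta-\varphi|^{-2}$ (and similarly near the other zero $\vartheta-\varphi = \pm 2\pi$, which cannot occur here), the total error is
\[
\sum_{k\ge1}\Delta^3(k\Delta)^{-2} \le C\Delta
\]
up to a harmless constant. The first elementary fact, $|\sin(s/2)| \ge |s|/\pi$, supplies $|h(s)| \le |\log|s|| + C$ for such estimates.

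\emph{The cells adjacent to $\vartheta$.} At most two cells meet $|\varphi-\vartheta| < \Delta$. For them I would bound the integral directly: $\int_{|s|\le 2\Delta}|\log|s||\,ds \le C\Delta\log(1/\Delta)$. The corresponding sum terms are handled as follows. In case (a) each such term satisfies $|\vartheta-\theta_m| \ge \Delta/4$, so $\Delta|h(\vartheta-\theta_m)| \le C\Delta\log(1/\Delta)$. In case (b) the term $m=j$ is omitted, and the remaining neighbours are at distance at least $3\Delta/4$. Since only $O(1)$ such terms occur, this gives the $\Delta\log(1/\Delta)$ bound.

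\emph{End half-cells.} The boundary half-cells $[\theta_0-\Delta/2,\theta_0]$ etc.\ (the mismatch between $\Delta\sum$ and the integral over $[\theta_0,\pi-\theta_0]$) contribute $\Delta\max|g_\vartheta|$ over them. This is $O(\Delta\log(1/\Delta))$ when $\vartheta$ is near an endpoint, by the same local log estimate.

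\emph{Main obstacle.} The main obstacle is keeping every constant uniform in $\vartheta \in [0,\pi]$, including $\vartheta$ outside the node interval and near $0$ or $\pi$, where $\vartheta+\varphi$ stays away from $0$ and $2\pi$ only because $\varphi \ge \theta_0$. I would organize the proof so that only the distance $|\vartheta-\varphi|$ enters the singular estimates, and only $\theta_0$ enters the smooth ones.
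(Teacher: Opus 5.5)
Your proposal is correct and is essentially the paper's proof. Both use the same splitting of $g_\vartheta$ into a term smooth in $\varphi$ plus the singular term $\log|\sin\frac12(\varphi-\vartheta)|$, the cellwise midpoint rule with the $\sum_k\Delta^3(k\Delta)^{-2}$ bound on cells away from $\vartheta$, direct $O(\Delta\log(1/\Delta))$ bounds on the $O(1)$ cells near $\vartheta$, and a local logarithmic integral on the two end half-cells. Only cosmetic fixes are needed:
\begin{itemize}
\item up to three cells, not two, can meet $|\varphi-\vartheta|<\Delta$;
\item on an end half-cell containing $\vartheta$ you must bound $\int|g_\vartheta|$ rather than $\Delta\max|g_\vartheta|$, which is infinite there;
\item in case (b) the omitted term also removes an $O(\Delta)$ contribution of the smooth and constant parts, which you should account for.
\end{itemize}
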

\begin{proof}
\emph{Decomposition.} Since $\cos\vartheta - \cos\varphi = 2\sin\tfrac12(\varphi+\vartheta)\sin\tfrac12(\varphi-\vartheta)$,
\[
g_\vartheta = \log 2 + h_1 + h_2, \qquad h_1(\varphi) := \log\bigl|\sin\tfrac12(\varphi - \vartheta)\bigr|, \quad h_2(\varphi) := \log\bigl|\sin\tfrac12(\varphi+\vartheta)\bigr| .
\]
Let $I_m := [\theta_m - \Delta/2, \theta_m + \Delta/2]$ and $J := \bigcup_{m=1}^N I_m = [\theta_0 - \Delta/2, \pi - \theta_0 + \Delta/2]$; the cells are adjacent and $\int_{I_m}\log 2 = \Delta\log 2$ for every $m$, so the constant term cancels cell by cell and
\begin{equation}\label{eq:riemann_split}
\begin{aligned}
\Delta\sum_{m=1}^N g_\vartheta(\theta_m) - \int_{\theta_0}^{\pi-\theta_0}g_\vartheta
&= \sum_{m=1}^N\Bigl(\Delta h_1(\theta_m) - \int_{I_m}h_1\Bigr) + \sum_{m=1}^N\Bigl(\Delta h_2(\theta_m) - \int_{I_m}h_2\Bigr) \\
&\quad + \Bigl(\int_J g_\vartheta - \int_{\theta_0}^{\pi-\theta_0}g_\vartheta\Bigr).
\end{aligned}
\end{equation}
In case (b) the term $m = j$ is removed from the left-hand sum; this removes $\Delta h_1(\theta_j)$ and $\Delta h_2(\theta_j)$ from the two sums and adds $-\Delta\log 2$ to the right-hand side. Note that in case (b) the index $j$ is unique, since the nodes are $\Delta$ apart. We use the midpoint rule $|\Delta\,h(\theta_m) - \int_{I_m}h| \le (\Delta^3/24)\max_{I_m}|h''|$ for $h \in C^2(I_m)$, and we bound the three brackets of \eqref{eq:riemann_split} in turn; $C$ denotes constants depending only on $\theta_0$.

\emph{The smooth term $h_2$.} Since $\Delta \le \theta_0$, $J \subset [\theta_0/2, \pi - \theta_0/2]$, and the second elementary fact gives $\sin\tfrac12(\varphi+\vartheta) \ge \sin(\theta_0/4)$ for $\varphi \in J$, $\vartheta \in [0,\pi]$. Hence $h_2 \in C^2(J)$ with $|h_2| \le \log(1/\sin(\theta_0/4))$ and $|h_2''(\varphi)| = \bigl(4\sin^2\tfrac12(\varphi+\vartheta)\bigr)^{-1} \le \bigl(4\sin^2(\theta_0/4)\bigr)^{-1}$, so the second bracket is at most $N\Delta^3C \le C\Delta^2$, because $N\Delta = \omega + \Delta \le 2\omega$. In case (b) the missing term contributes in addition $|\int_{I_j}h_2| \le C\Delta$.

\emph{The singular term $h_1$: cells away from $\vartheta$.} For $\varphi \in J \subset (0,\pi)$ and $\vartheta \in [0,\pi]$ one has $|\varphi - \vartheta| < \pi$, so $h_1$ is smooth on $J$ except at $\varphi = \vartheta$, and for $\varphi \ne \vartheta$ the first elementary fact gives $|h_1''(\varphi)| = \bigl(4\sin^2\tfrac12(\varphi-\vartheta)\bigr)^{-1} \le \pi^2/(4|\varphi-\vartheta|^2)$. Let $\mathcal M := \{m : |\theta_m - \vartheta| \le 3\Delta/2\}$, a set of at most four indices, since an interval of length $3\Delta$ contains at most four points spaced $\Delta$ apart. For $m \notin \mathcal M$ and $\varphi \in I_m$, $|\varphi - \vartheta| \ge |\theta_m - \vartheta| - \Delta/2 \ge \tfrac23|\theta_m - \vartheta| > 0$, so $h_1 \in C^2(I_m)$ and the midpoint error on $I_m$ is at most $(\Delta^3/24)\cdot\tfrac{9\pi^2}{16}|\theta_m - \vartheta|^{-2}$. On either side of $\vartheta$, the nodes outside $\mathcal M$ are at distances greater than $3\Delta/2$ that increase in steps of $\Delta$, so the $k$-th of them is at distance at least $k\Delta/2$; hence $\sum_{m\notin\mathcal M}|\theta_m - \vartheta|^{-2} \le 2\sum_{k\ge1}(k\Delta/2)^{-2} = (4\pi^2/3)\Delta^{-2} \le 14\Delta^{-2}$, and the cells outside $\mathcal M$ contribute at most $C\Delta$ to the first bracket.

\emph{The singular term $h_1$: cells near $\vartheta$.} For $m \in \mathcal M$ we bound $\Delta|h_1(\theta_m)|$ and $\int_{I_m}|h_1|$ separately. In case (a), $\Delta/4 \le |\theta_m - \vartheta| \le 3\Delta/2 \le \pi$, so the first elementary fact gives $|\sin\tfrac12(\theta_m-\vartheta)| \ge \Delta/(4\pi)$ and therefore $0 \le -h_1(\theta_m) \le \log(4\pi/\Delta)$; in case (b) the same holds for $m \in \mathcal M\setminus\{j\}$, because then $|\theta_m - \vartheta| \ge |\theta_m - \theta_j| - |\theta_j - \vartheta| \ge 3\Delta/4$, and the term $m = j$ does not occur. For $\varphi \in I_m$, $m \in \mathcal M$, one has $|\varphi - \vartheta| \le 2\Delta \le \pi$, hence $0 \le -h_1(\varphi) \le \log\bigl(\pi/|\varphi-\vartheta|\bigr)$ and
\[
\int_{I_m}|h_1| \le \int_{|s|\le 2\Delta}\log\frac{\pi}{|s|}\,ds = 4\Delta\Bigl(1 + \log\frac{\pi}{2\Delta}\Bigr).
\]
Hence the cells in $\mathcal M$ contribute at most $4\bigl[\Delta\log(4\pi/\Delta) + 4\Delta(1 + \log(\pi/(2\Delta)))\bigr] \le C\Delta\log(1/\Delta)$ to the first bracket, in both cases.

\emph{The ends and the omitted cell.} The third bracket of \eqref{eq:riemann_split} is the integral of $g_\vartheta$ over the two half-cells $[\theta_0 - \Delta/2, \theta_0]$ and $[\pi-\theta_0, \pi-\theta_0+\Delta/2]$, which lie in $J$; on each, $|g_\vartheta| \le \log 2 + |h_1| + C$, and the integral of $|h_1|$ over an interval of length $\Delta/2$ contained in $J$ is at most $\int_{|s|\le\Delta}\log(\pi/|s|)\,ds = 2\Delta(1 + \log(\pi/\Delta))$, because $-h_1(\varphi) \le \log(\pi/|\varphi - \vartheta|)$ on $J$ and the right-hand side is a decreasing function of $|\varphi-\vartheta|$. So the third bracket is at most $C\Delta\log(1/\Delta)$. In case (b) the additional term $\Delta\log 2$ is of the same order. Collecting the bounds proves the lemma.
\end{proof}

\begin{lemma}[Discrete Potentials]
\label{lem:discrete_potential}
For $N \ge N_0$, with $z = \cos\vartheta$, $\vartheta \in [0,\pi]$:
\begin{enumerate}
    \item[(a)] if $|\vartheta - \theta_m| \ge \Delta/4$ for every $m$, then $|U_N(z) - U_\omega(z)| \le C\log N/N$;
    \item[(b)] if $|\vartheta - \theta_j| < \Delta/4$ for some $j$, then $|U_N^{(j)}(z) - U_\omega(z)| \le C\log N/N$; in particular $N^{-1}\log|\omega_N'(z_j)| = -U_\omega(z_j) + O(\log N/N)$ uniformly in $j$.
\end{enumerate}
\end{lemma}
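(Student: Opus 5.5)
The plan is to deduce Lemma~\ref{lem:discrete_potential} directly from Lemma~\ref{lem:riemann}. We identify the discrete potential with a Riemann sum and absorb the mismatch between the normalization $1/N$ and the spacing $\Delta = \omega/(N-1)$. By definition,
\[
U_N(\cos\vartheta) = -\frac1N\sum_{m=1}^N g_\vartheta(\theta_m), \qquad
U_\omega(\cos\vartheta) = -\frac1\omega\int_{\theta_0}^{\pi-\theta_0} g_\vartheta(\varphi)\,d\varphi ,
\]
by \eqref{eq:potential}. Writing $1/N = (\Delta/\omega)\cdot(N-1)/N$, we get
\[
U_N - U_\omega = -\frac1\omega\Bigl[\Delta\sum_m g_\vartheta(\theta_m) - \int_{\theta_0}^{\pi-\theta_0} g_\vartheta\Bigr] + \frac{1}{N\omega}\,\Delta\sum_m g_\vartheta(\theta_m).
\]
In case (a), Lemma~\ref{lem:riemann}(a) bounds the bracket by $C\Delta\log(1/\Delta) \le C\log N/N$. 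For the second term, the same lemma shows that $\Delta\sum_m g_\vartheta(\theta_m)$ equals $\int g_\vartheta$ up to $O(\Delta\log(1/\Delta))$. This integral is bounded uniformly in $\vartheta$, since $|g_\vartheta| \le \log 2 + |h_1| + C$ and $h_1$ has an integrable logarithmic singularity. So the second term is $O(1/N)$.

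Case (b) goes the same way with $U_N^{(j)}$ in place of $U_N$ and part (b) of Lemma~\ref{lem:riemann}. The sum now has the term $m=j$ omitted, and the integral bound is unchanged.

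For the statement about $\omega_N'(z_j)$, we use $\omega_N'(z_j) = \prod_{m\ne j}(z_j - z_m)$, so that $N^{-1}\log|\omega_N'(z_j)| = -U_N^{(j)}(z_j)$. Applying (b) with $\vartheta = \theta_j$, for which $|\vartheta - \theta_j| = 0 < \Delta/4$, gives the claim uniformly in $j$, because the constants in Lemma~\ref{lem:riemann} do not depend on $\vartheta$ or $j$.

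The only point requiring care is the uniform boundedness of $\int_{\theta_0}^{\pi-\theta_0}|g_\vartheta|$ over $\vartheta\in[0,\pi]$, which is needed for the $(N-1)/N$ correction. I would settle it with the bound $\int_{|s|\le\pi}\log(\pi/|s|)\,ds < \infty$ already used in the proof of Lemma~\ref{lem:riemann}. Equivalently, one can note that $U_\omega$ is continuous by Lemma~\ref{lem:clausen}. Either way no new estimate is needed, and the constant $C$ depends only on $\theta_0$.
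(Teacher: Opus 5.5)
Your proposal is correct and takes the same approach as the paper's proof. Both write $U_N$ (or $U_N^{(j)}$) as $-(N\Delta)^{-1}$ times the Riemann sum of Lemma~\ref{lem:riemann}, and both absorb the normalization mismatch $\omega/(N\Delta) = 1 - N^{-1}$ with a uniform bound on $U_\omega$. The paper takes that bound from the closed form \eqref{eq:U_closed}, and you take it from the integrability of the logarithm or the continuity of $U_\omega$. The statement on $\omega_N'(z_j)$ then follows from (b) at $\vartheta = \theta_j$, exactly as you describe.
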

\begin{proof}
(a) Since $\log|z - z_m| = \log|\cos\vartheta - \cos\theta_m| = g_\vartheta(\theta_m)$, the definition gives $U_N(z) = -(N\Delta)^{-1}\,\Delta\sum_{m=1}^N g_\vartheta(\theta_m)$, and $N\Delta = N\omega/(N-1)$, i.e.\ $\omega/(N\Delta) = 1 - N^{-1}$. By \eqref{eq:potential}, $\int_{\theta_0}^{\pi-\theta_0}g_\vartheta = -\omega U_\omega(z)$, so Lemma~\ref{lem:riemann}(a) gives $\Delta\sum_m g_\vartheta(\theta_m) = -\omega U_\omega(z) + R$ with $|R| \le C\Delta\log(1/\Delta)$. Hence
\[
U_N(z) = (1 - N^{-1})\,U_\omega(z) - \frac{R}{N\Delta}, \qquad
|U_N(z) - U_\omega(z)| \le \frac{|U_\omega(z)|}{N} + \frac{C\Delta\log(1/\Delta)}{\omega} .
\]
Now $|U_\omega| \le \log 2 + 4\max|\mathrm{Cl}_2|/\omega$ on $[-1,1]$ by \eqref{eq:U_closed}, and $\Delta\log(1/\Delta) = \frac{\omega}{N-1}\log\frac{N-1}{\omega} \le C\frac{\log N}{N}$ for $N \ge N_0$; both bounds depend only on $\theta_0$, which proves (a).

(b) The same computation with the term $m = j$ omitted, that is, with
\[
U_N^{(j)}(z) = -(N\Delta)^{-1}\,\Delta\sum_{m\ne j}g_\vartheta(\theta_m)
\]
and with Lemma~\ref{lem:riemann}(b) in place of (a), gives $|U_N^{(j)}(z) - U_\omega(z)| \le C\log N/N$. For the last statement, $\omega_N'(z_j) = \prod_{m\ne j}(z_j - z_m)$, so $N^{-1}\log|\omega_N'(z_j)| = -U_N^{(j)}(z_j)$, and $\vartheta = \theta_j$ satisfies the hypothesis of (b); the constant is uniform in $j$ because the constant of Lemma~\ref{lem:riemann} is uniform in $\vartheta$.
\end{proof}

\begin{lemma}[Lagrange Polynomials]
\label{lem:lagrange_polynomials}
Let $\ell_m(z) := \omega_N(z)/\bigl((z-z_m)\,\omega_N'(z_m)\bigr)$, $1 \le m \le N$, be the Lagrange polynomials of the nodes $z_1,\dots,z_N$. For $N \ge N_0$ and $z = \cos\vartheta$:
\begin{enumerate}
    \item[(a)] for all $\vartheta \in [0,\pi]$ and all $m$, $N^{-1}\log|\ell_m(z)| \le U_\omega(z_m) - U_\omega(z) + C\log N/N$;
    \item[(b)] if $|\vartheta - \theta_k| \ge \Delta/4$ for every $k$, then for all $m$, $N^{-1}\log|\ell_m(z)| \ge U_\omega(z_m) - U_\omega(z) - C\log N/N$.
\end{enumerate}
\end{lemma}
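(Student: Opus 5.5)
The plan is to write $\ell_m(z) = \omega_N(z)/\bigl((z-z_m)\omega_N'(z_m)\bigr)$ and take logarithms. Then
\[
N^{-1}\log|\ell_m(z)| = -U_N^{(m)}(z) + U_N^{(m)}(z_m),
\]
because $\omega_N(z)/(z-z_m) = \prod_{k\ne m}(z-z_k)$ and $N^{-1}\log|\omega_N'(z_m)| = -U_N^{(m)}(z_m)$. By Lemma~\ref{lem:discrete_potential}(b), applied with $\vartheta = \theta_m$, the second term is $U_\omega(z_m) + O(\log N/N)$ uniformly in $m$. Both statements therefore reduce to comparing $-U_N^{(m)}(z)$ with $-U_\omega(z)$, from above for (a) and from below for (b).

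For (b), $\vartheta$ is at distance at least $\Delta/4$ from every node. Lemma~\ref{lem:discrete_potential}(a) then gives $U_N(z) = U_\omega(z) + O(\log N/N)$. We also have $-U_N^{(m)}(z) = -U_N(z) - N^{-1}\log|z - z_m|$, and $|z - z_m| \le 2$, so $-N^{-1}\log|z-z_m| \ge -\log 2/N$. This yields the lower bound $-U_N^{(m)}(z) \ge -U_\omega(z) - C\log N/N$, which is (b).

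For (a) there are two cases, and the second is the only delicate point.
\begin{itemize}
\item[] \emph{Case 1: $\vartheta$ is at distance at least $\Delta/4$ from all nodes.} We need $-U_N^{(m)}(z) \le -U_\omega(z) + C\log N/N$, that is, an upper bound on $-N^{-1}\log|z - z_m|$. Near $z_m$ this term is large, so we instead use
\[
|\ell_m(z)| = \frac{|\omega_N(z)|}{|z-z_m|\,|\omega_N'(z_m)|}
\]
together with $|z-z_m| \ge |\cos\vartheta - \cos\theta_m|$. Writing $|\cos\vartheta - \cos\theta_m| = 2|\sin\tfrac12(\vartheta+\theta_m)|\,|\sin\tfrac12(\vartheta-\theta_m)|$ and using the two elementary facts, this is at least $c\Delta^2$ when $|\vartheta - \theta_m| \ge \Delta/4$, because $\theta_m \in [\theta_0,\pi-\theta_0]$ keeps the first sine factor bounded below (the second fact applies when $\vartheta$ is near $0$ or $\pi$ as well). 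Hence $-N^{-1}\log|z-z_m| \le C\log N/N$, and Lemma~\ref{lem:discrete_potential}(a) finishes this case.
\item[] \emph{Case 2: $|\vartheta - \theta_j| < \Delta/4$ for some $j$.} If $j = m$, then $-U_N^{(m)}(z)$ is controlled directly by Lemma~\ref{lem:discrete_potential}(b). If $j \ne m$, write
\[
-U_N^{(m)}(z) = -U_N^{(j)}(z) + N^{-1}\log|z-z_j| - N^{-1}\log|z-z_m|.
\]
The first term is handled by Lemma~\ref{lem:discrete_potential}(b). The second is at most $\log 2/N$. For the third, $|\vartheta - \theta_m| \ge 3\Delta/4$, so the same $c\Delta^2$ lower bound on $|z - z_m|$ gives $C\log N/N$.
\end{itemize}

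Uniformity in $m$ and $\vartheta$ follows because all the constants come from Lemma~\ref{lem:discrete_potential} and the elementary facts, which depend only on $\theta_0$. The main obstacle is keeping the lower bound on $|z-z_m|$ uniform for $\vartheta$ near the endpoints $0$ and $\pi$, where $\cos$ is flat. The factor $\sin\tfrac12(\vartheta+\theta_m)$ stays bounded below because $\theta_m \ge \theta_0$, and the analogous factor near $\pi$ is bounded below because $\theta_m \le \pi-\theta_0$. So the flatness of $\cos$ at $0$ and $\pi$ costs nothing.
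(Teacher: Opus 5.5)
Your proof is correct and follows the paper's argument essentially step for step. The structure is the same: the case where $\vartheta$ is at least a quarter spacing from every node uses Lemma~\ref{lem:discrete_potential}(a) together with two-sided control of $\log|z-z_m|$, and the case $|\vartheta-\theta_j|<\Delta/4$ is split into $j=m$ and $j\ne m$ and handled by Lemma~\ref{lem:discrete_potential}(b). The only cosmetic difference is that your bound $|z-z_m|\ge c\Delta^2$ is weaker than it needs to be: the factor $\sin\tfrac12(\vartheta+\theta_m)\ge\sin(\theta_0/4)$ is bounded below by a constant, so in fact $|z-z_m|\ge c\Delta$, but either bound gives the $O(\log N/N)$ error.
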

\begin{proof}
Since $\omega_N(z)/(z - z_m) = \prod_{k\ne m}(z - z_k)$ and $\omega_N'(z_m) = \prod_{k\ne m}(z_m - z_k)$, one has $\ell_m(z_k) = \delta_{mk}$, and for $z \notin \{z_1, \dots, z_N\}$
\begin{equation}\label{eq:log_lagrange}
\begin{aligned}
\log|\ell_m(z)| &= \log|\omega_N(z)| - \log|z - z_m| - \log|\omega_N'(z_m)| \\
&= -NU_N(z) - \log|z - z_m| + NU_N^{(m)}(z_m),
\end{aligned}
\end{equation}
where $NU_N^{(m)}(z_m) = -\log|\omega_N'(z_m)| = NU_\omega(z_m) + O(\log N)$ uniformly in $m$ by Lemma~\ref{lem:discrete_potential}(b). If $z = z_k$ for some $k \ne m$, then $\ell_m(z) = 0$ and (a) holds trivially; this can only occur in Case~2 below.

\emph{Case 1: $|\vartheta - \theta_k| \ge \Delta/4$ for every $k$.} Then $-NU_N(z) = -NU_\omega(z) + O(\log N)$ by Lemma~\ref{lem:discrete_potential}(a). Moreover $|z - z_m| \le 2$, and by the two elementary facts of this appendix, applied with $|\vartheta - \theta_m| \le \pi$ and $\theta_m \in [\theta_0, \pi-\theta_0]$,
\begin{equation}\label{eq:node_distance}
|z - z_m| = 2\,\bigl|\sin\tfrac12(\vartheta-\theta_m)\bigr|\,\sin\tfrac12(\vartheta+\theta_m) \ge 2\cdot\frac{|\vartheta - \theta_m|}{\pi}\cdot\sin\frac{\theta_0}{4} \ge \frac{\Delta}{2\pi}\sin\frac{\theta_0}{4},
\end{equation}
so $|\log|z - z_m|| \le C\log N$. Inserting the three estimates into \eqref{eq:log_lagrange} gives $\log|\ell_m(z)| = N[U_\omega(z_m) - U_\omega(z)] + O(\log N)$, which proves (b) and the case of (a) in which $\vartheta$ is at distance at least $\Delta/4$ from every node.

\emph{Case 2: $|\vartheta - \theta_j| < \Delta/4$ for some $j$}, necessarily unique. If $j = m$, write $\log|\ell_m(z)| = \sum_{k\ne m}\bigl(\log|z - z_k| - \log|z_m - z_k|\bigr) = N\bigl[U_N^{(m)}(z_m) - U_N^{(m)}(z)\bigr]$ and apply Lemma~\ref{lem:discrete_potential}(b) at $z$ and at $z_m$: $\log|\ell_m(z)| = N[U_\omega(z_m) - U_\omega(z)] + O(\log N)$. If $j \ne m$, then $|\vartheta - \theta_m| \ge |\theta_j - \theta_m| - |\vartheta - \theta_j| \ge 3\Delta/4$, so \eqref{eq:node_distance} holds and gives $-\log|z - z_m| \le C\log N$; and $\log|\omega_N(z)| = \log|z - z_j| - NU_N^{(j)}(z) \le \log 2 - NU_\omega(z) + C\log N$ by Lemma~\ref{lem:discrete_potential}(b). Inserting into \eqref{eq:log_lagrange} gives $\log|\ell_m(z)| \le N[U_\omega(z_m) - U_\omega(z)] + C\log N$. This completes the proof of (a).
\end{proof}

\begin{lemma}[Modulus of Continuity of $U_\omega$]
\label{lem:U_modulus}
For all $\vartheta, \vartheta' \in [0,\pi]$ with $|\vartheta - \vartheta'| \le 1/2$,
\begin{equation}\label{eq:U_modulus}
|U_\omega(\cos\vartheta) - U_\omega(\cos\vartheta')| \le C|\vartheta - \vartheta'|\bigl(1 + \log(1/|\vartheta-\vartheta'|)\bigr).
\end{equation}
\end{lemma}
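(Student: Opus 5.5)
The plan is to work from the closed form \eqref{eq:U_closed} of Lemma~\ref{lem:clausen} rather than from the integral \eqref{eq:potential}. Set $F(\vartheta) := U_\omega(\cos\vartheta)$. By \eqref{eq:U_closed}, $F$ is $\log 2$ plus $\omega^{-1}$ times a combination of four functions of the form $\mathrm{Cl}_2(\pm\vartheta + c)$, with the shifts $c \in \{-\theta_0, \theta_0, \pi - \theta_0, \pi + \theta_0\}$. It will therefore suffice to prove the modulus estimate for $\mathrm{Cl}_2$ alone: for all real $x, x'$ with $|x - x'| \le 1/2$,
\[
|\mathrm{Cl}_2(x) - \mathrm{Cl}_2(x')| \le C|x - x'|\bigl(1 + \log(1/|x - x'|)\bigr).
\]
The constant in \eqref{eq:U_modulus} will then be $4C/\omega$, which depends only on $\theta_0$ because $\omega = \pi - 2\theta_0$.

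For the Clausen estimate, write $d := |x - x'| \le 1/2$ and use $\mathrm{Cl}_2(x) - \mathrm{Cl}_2(x') = -\int_{x'}^{x} h(t)\,dt$ with $h(t) = \log|2\sin(t/2)|$. Since $h$ is $2\pi$-periodic and even, I reduce to an interval of length $d$ and bound $\int |h|$ over it. The first elementary fact of this appendix gives, on $[-\pi,\pi]$,
\[
|2\sin(t/2)| \ge 2|t|/\pi,
\]
so $|h(t)| \le \log 2 + \log(\pi/(2|t|))$ when $|t| \le \pi$, and by periodicity $|h(t)| \le \log\pi + |\log \dist(t, 2\pi\mathbb Z)|$ for all $t$. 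Because $-\log$ is decreasing, the integral of $|\log\dist(t,2\pi\mathbb Z)|$ over an interval of length $d \le 1/2$ is largest when the interval is centred on a zero of $\sin(t/2)$. In that worst case it is at most
\[
2\int_0^{d/2} \log(1/s)\,ds = d\bigl(1 + \log(2/d)\bigr),
\]
so $\int|h| \le d(\log\pi + 1 + \log 2 + \log(1/d))$, which is the claim.

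The one point needing care is that the argument must hold uniformly in $\vartheta,\vartheta' \in [0,\pi]$, including when $\pm\vartheta + c$ crosses a singularity of $h$ at a multiple of $2\pi$. The rearrangement bound above handles this directly, since it never assumes the interval avoids the singularity. Apart from that step the argument is routine, so I expect no real obstacle.
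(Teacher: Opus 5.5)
Your proof is correct and is essentially the paper's argument. Both integrate a derivative with logarithmic singularities over an interval of length $|\vartheta-\vartheta'|$ and use that the worst case is an interval centred at a singularity. The paper does this once, for the combined derivative $\omega^{-1}\log\bigl(\sin(\theta+\theta_0)/|\sin(\theta-\theta_0)|\bigr)$ of Step~4 of Lemma~\ref{lem:clausen}, after reducing to $[0,\pi/2]$ by symmetry. You do it for each of the four Clausen terms of \eqref{eq:U_closed}, which needs no symmetry reduction.

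Two details should be tidied. The intermediate bound $|h(t)| \le \log 2 + \log(\pi/(2|t|))$ is false for $\pi/2 < |t| \le \pi$: at $t = \pi$ the right side is $0$ while $h(\pi) = \log 2$. Also, $|\log\dist(t,2\pi\mathbb Z)|$ increases once the distance exceeds $1$, so the monotonicity you invoke does not apply to it as written. Both are repaired by using $|h(t)| \le \log\pi + \log^+\bigl(1/\dist(t,2\pi\mathbb Z)\bigr)$. Its second term is nonincreasing in the distance, and it yields exactly your bound $d\bigl(\log\pi + 1 + \log(2/d)\bigr)$.
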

\begin{proof}
By the symmetry $U_\omega(\cos(\pi-\theta)) = U_\omega(\cos\theta)$, and by splitting at $\pi/2$ an interval that contains $\pi/2$, it suffices to take $\vartheta, \vartheta' \in [0,\pi/2]$. For $\theta \in (0,\pi/2)\setminus\{\theta_0\}$ the derivative formula in Step~4 of the proof of Lemma~\ref{lem:clausen} gives $|\tfrac{d}{d\theta}U_\omega(\cos\theta)| \le \omega^{-1}\bigl(|\log\sin(\theta+\theta_0)| + |\log\sin|\theta-\theta_0||\bigr)$. Here $\theta+\theta_0 \in [\theta_0, \pi/2+\theta_0]$, so $\sin(\theta+\theta_0) \ge \min\{\sin\theta_0, \cos\theta_0\}$ and the first term is bounded by a constant; and $|\theta-\theta_0| \le \pi/2$, so $\sin|\theta-\theta_0| \ge 2|\theta-\theta_0|/\pi$ and the second term is at most $\log\bigl(\pi/(2|\theta-\theta_0|)\bigr)$. Since $\theta \mapsto U_\omega(\cos\theta)$ is continuous on $[0,\pi/2]$ and continuously differentiable off $\theta_0$ with integrable derivative, it is absolutely continuous, and $|U_\omega(\cos\vartheta) - U_\omega(\cos\vartheta')|$ is at most the integral of this bound over the interval between $\vartheta$ and $\vartheta'$, of length $s := |\vartheta - \vartheta'| \le 1/2$. The integral of $\log(\pi/(2|\theta - \theta_0|))$ over an interval of length $s$ is largest when the interval is centred at $\theta_0$, where it equals $2\int_0^{s/2}\log\bigl(\pi/(2u)\bigr)\,du = s\bigl(1 + \log(\pi/s)\bigr)$. This gives \eqref{eq:U_modulus}.
\end{proof}

\begin{proof}[Proof of Proposition~\ref{prop:lagrange_norms}]
By Lemma~\ref{lem:chebyshev_variable}, $\ell_m^V(x) = (\sin\theta/\sin\theta_m)\,\ell_m(\cos\theta)$: this function lies in $V_N = \sqrt{2/\tilde L}\sin\theta\cdot\mathbb P_{N-1}(z)$ and equals $\delta_{mk}$ at $\theta_k$. Since $dx = (\tilde L/\pi)\,d\theta$ and $\sin\theta_0 \le \sin\theta_m \le 1$,
\begin{gather*}
\norm{\ell_m^V}_{L^2(\tilde I)}^2 = \frac{\tilde L}{\pi\sin^2\theta_m}\int_0^\pi \sin^2\theta\,\ell_m(\cos\theta)^2\,d\theta, \\
\norm{\ell_m^V}_{L^2(I)}^2 = \frac{\tilde L}{\pi\sin^2\theta_m}\int_{\theta_0}^{\pi-\theta_0}\sin^2\theta\,\ell_m(\cos\theta)^2\,d\theta .
\end{gather*}
\emph{Upper bounds.} By Lemma~\ref{lem:lagrange_polynomials}(a) and Lemma~\ref{lem:clausen}, $\max_{[-1,1]}|\ell_m|^2 \le \exp\bigl(2N[U_\omega(z_m) - U_\omega(1)] + C\log N\bigr)$ and $\max_{[-c_0,c_0]}|\ell_m|^2 \le \exp\bigl(2N[U_\omega(z_m) - U_\omega(c_0)] + C\log N\bigr)$, because $U_\omega$ attains its minimum over $[-1,1]$ at $\pm1$ and over $[-c_0,c_0]$ at $\pm c_0$. Both integrals are bounded by $\pi$ times these maxima, so that the squared norms are at most $\tilde L/\sin^2\theta_0$ times them, and $\norm{\ell_m^V}_{L^\infty(I)} \le \max_{[-c_0,c_0]}|\ell_m|/\sin\theta_0$.

\emph{Lower bounds.} For the extended interval, restrict the integral to $\theta \in [1/N, 2/N]$, which for $N \ge N_0$ lies outside $[\theta_0 - \Delta/4, \pi]$, so that Lemma~\ref{lem:lagrange_polynomials}(b) applies; there $\sin^2\theta \ge (2/(\pi N))^2$ and, by \eqref{eq:U_modulus}, $U_\omega(\cos\theta) \le U_\omega(1) + C\log N/N$. Hence, as $\tilde L/(\pi\sin^2\theta_m) \ge \tilde L/\pi$, $\norm{\ell_m^V}^2_{L^2(\tilde I)} \ge (\tilde L/\pi)N^{-1}(2/(\pi N))^2\exp\bigl(2N[U_\omega(z_m) - U_\omega(1)] - C\log N\bigr)$. For the physical interval, restrict to $\theta \in [\theta_0 + \Delta/4, \theta_0 + 3\Delta/4]$, an interval of length $\Delta/2$ inside $[\theta_0,\pi-\theta_0]$ at distance at least $\Delta/4$ from every node; there $\sin^2\theta \ge \sin^2\theta_0$ and $U_\omega(\cos\theta) \le U_\omega(c_0) + C\Delta\log(1/\Delta)$ by \eqref{eq:U_modulus}, so $\norm{\ell_m^V}^2_{L^2(I)} \ge (\tilde L/\pi)(\Delta/2)\sin^2\theta_0\exp\bigl(2N[U_\omega(z_m) - U_\omega(c_0)] - C\log N\bigr)$, and the same evaluation point gives the lower bound for the supremum norm. Taking logarithms and dividing by $N$ proves \eqref{eq:lagrange_norms}, the factors $\tilde L$, $\sin\theta_0$ and $\Delta$ contributing $O(\log N/N)$ with constants depending only on $\delta$ and $L$.
\end{proof}

\section{Column Norms of the Collocation Matrix: Proof of Proposition~\ref{prop:apriori}}
\label{app:apriori}

Throughout this appendix $\Lop = \Lop_P$ or $\Lop = \Lop_{CD}$ with constant $k \ge 0$, the nodes are the interior nodes $x_2, \dots, x_{N-1}$ of the $N$-grid, $\theta_m = \theta_0 + (m-1)\Delta$ with $\Delta = \omega/(N-1)$ as in Appendix~\ref{app:rates}, and $J := j+2$ is the frequency of the basis function $\varphi_j = w_J - a_jw_1 - b_jw_2$. By Lemma~\ref{lem:phase}, for every $i \ge 1$,
\begin{equation}\label{eq:Lw_nodes}
(\Lop w_i)(x_m) = \sqrt{2/\tilde L}\;\rho_i\,\sin(i\theta_m + \vartheta_i), \qquad \rho_i := \mu_i\sqrt{\mu_i^2 + k^2},\quad \vartheta_i := \arctan(k/\mu_i),
\end{equation}
with $\rho_i = \lambda_i$ and $\vartheta_i = 0$ for the Poisson operator; in all cases $\rho_i \ge \lambda_i = (i\pi/\tilde L)^2$.

\begin{lemma}[Nodal Norms of Shifted Sines]
\label{lem:nodal_sines}
There are $c_S > 0$ and $N_0$, depending only on $\theta_0$, such that for all $N \ge N_0$, all integers $3 \le J \le N$ and all $\beta \in \mathbb R$,
\begin{equation}\label{eq:nodal_sines}
\sum_{m=2}^{N-1}\sin^2(J\theta_m + \beta) \ge c_S\,N .
\end{equation}
\end{lemma}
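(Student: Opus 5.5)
The plan is to write $\sin^2 = \tfrac12(1 - \cos 2(\cdot))$ and control the resulting cosine sum by an exact geometric-series formula:
\[
\sum_{m=2}^{N-1}\sin^2(J\theta_m+\beta) = \frac{N-2}{2} - \frac12\,\mathrm{Re}\Bigl(e^{2i\beta}\sum_{m=2}^{N-1}e^{2iJ\theta_m}\Bigr).
\]
With $\theta_m = \theta_0 + (m-1)\Delta$, the inner sum is a geometric sum with ratio $e^{2iJ\Delta}$. Its modulus is at most $\min\{N-2,\ 1/|\sin(J\Delta)|\}$. The lower bound \eqref{eq:nodal_sines} with $c_S<1/2$ therefore follows as soon as the exponential sum is at most $(1-2c_S)N$ in modulus, uniformly in $\beta$.

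The frequency is $J\Delta = J\omega/(N-1)$, which ranges over $[3\omega/(N-1),\ N\omega/(N-1)]$. Since $\omega \le \pi - 2\theta_0$ and $N/(N-1)\to1$, for $N\ge N_0$ this lies in $[3\omega/N,\ \pi-\theta_0]$. The geometric sum can be large only when $J\Delta$ is close to $0$ or to $\pi$ modulo $\pi$. Closeness to $\pi$ is excluded for large $N$, since then $|\sin(J\Delta)| \ge \sin\theta_0$ and the sum is $O(1)$.

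The main obstacle is the low frequencies, where $J\Delta$ is of order $1/N$ and the Dirichlet-kernel bound $1/|\sin J\Delta|$ is only of order $N/(3\omega)$, possibly larger than $N$. Here I would not use the crude bound. Instead I would use the exact formula
\[
\Bigl|\sum_{m=2}^{N-1} e^{2iJ\theta_m}\Bigr| = \Bigl|\frac{\sin((N-2)J\Delta)}{\sin(J\Delta)}\Bigr|.
\]
Put $x := (N-2)J\Delta \approx J\omega \ge 3\omega$. The ratio is then bounded by $(N-2)\,|\sin x|/x$ times a factor close to one, because $\sin(J\Delta)\approx J\Delta$ with $J\Delta \le x/(N-2)$ small in this regime.

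Split the frequencies into two ranges. If $x \ge \pi$, we have $|\sin x|/x \le 1/\pi$. If $3\omega \le x < \pi$, which can happen only when $\omega<\pi/3$, then $\sin x/x \le \sin(3\omega)/(3\omega)$, a constant strictly below $1$ depending only on $\theta_0$. For moderate frequencies, with $J\Delta$ bounded below by a fixed small constant, the bound $1/\sin(J\Delta)$ is $O(1)$.

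Combining the ranges gives $|\sum_m e^{2iJ\theta_m}| \le q\,N$ for some $q<1$ depending only on $\theta_0$, for all $N\ge N_0$ and $3\le J\le N$. Hence the sum in \eqref{eq:nodal_sines} is at least $(N-2)/2 - qN/2 \ge c_S N$ with $c_S := (1-q)/4$ for large $N$. The constants $c_S$ and $N_0$ depend only on $\theta_0$, since $\omega = \pi-2\theta_0$.
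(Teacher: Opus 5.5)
Your proof is correct, but it handles the low frequencies differently from the paper. The paper splits into three ranges:
\begin{itemize}
\item for $J\Delta\ge\pi/4$ it uses the same geometric-series bound $1/\sin(J\Delta)$ as you;
\item for $J\ge4\pi/\omega$ with $J\Delta<\pi/4$ it counts the nodes lying in the intervals of length $\pi/(2J)\ge2\Delta$ on which $\sin^2(J\theta+\beta)\ge1/2$;
\item for the finitely many $J<4\pi/\omega$ it compares the sum with $\Delta^{-1}\int_{\theta_0}^{\pi-\theta_0}\sin^2(J\theta+\beta)\,d\theta$ by a rectangle-rule estimate, and bounds the integral below by a minimum over $\beta$ obtained by compactness, so $c_S$ is not explicit there.
\end{itemize}

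You instead use the exact Dirichlet-kernel identity in every range. The low frequencies then reduce to $|\sin x|/x$ staying bounded away from one for $x=(N-2)J\Delta$ bounded away from zero. This gives an explicit $c_S$ and needs neither the counting argument nor compactness.

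What the paper's route buys is robustness. The rectangle-rule argument of its third case needs no closed-form exponential sum. It is reused in the proof of Lemma~\ref{lem:column_norms} for the functions $\sin(J\theta+\vartheta_J)-g_J(\theta)$, which are not single sinusoids. Your argument depends on the summand being exactly one shifted sine.

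Two small points should be tidied, though neither affects the conclusion.
\begin{itemize}
\item The case split should first separate $J\Delta\ge c_1$, bounded via $1/\min\{\sin c_1,\sin\theta_0\}$. Only afterwards should you use $\sin(J\Delta)\ge J\Delta(1-c_1^2/6)$, because the ``factor close to one'' fails when $x\ge\pi$ and $J\Delta$ is of order one.
\item The lower end of $x$ is $3\omega(N-2)/(N-1)$, not $3\omega$. So the middle case $x<\pi$ can also occur for $\omega$ slightly above $\pi/3$, and the constant $\sin(3\omega)/(3\omega)$ is not valid as stated. It can be replaced by $\sin(2\omega)/(2\omega)$, since $x\ge2\omega$ for $N\ge4$; when $2\omega\ge\pi$ that case is empty.
\end{itemize}
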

\begin{proof}
We distinguish three ranges of $J$.

\emph{(i) $J\Delta \ge \pi/4$.} Since $J\Delta \le N\Delta = \omega + \Delta \le \pi - 7\theta_0/4$ for $\Delta \le \theta_0/4$, the number $J\Delta$ lies in $[\pi/4, \pi - 7\theta_0/4]$ (if this interval is empty, which happens for $\theta_0 > 3\pi/7$, there is no such $J$ and nothing to prove), and on it $\sin(J\Delta) \ge s^\ast := \min\{\sin(\pi/4), \sin(7\theta_0/4)\} > 0$, since $\sin$ is concave on $[0,\pi]$. Writing $\sin^2 x = \tfrac12 - \tfrac12\cos 2x$ and $\theta_m = \theta_2 + (m-2)\Delta$,
\[
\sum_{m=2}^{N-1}\sin^2(J\theta_m + \beta) = \frac{N-2}{2} - \frac12\,\mathrm{Re}\Bigl[e^{2i(J\theta_2+\beta)}\,\frac{e^{2iJ\Delta(N-2)} - 1}{e^{2iJ\Delta} - 1}\Bigr],
\]
where $e^{2iJ\Delta} \ne 1$ because $2J\Delta \in (0,2\pi)$; the bracket has modulus at most $2/|e^{2iJ\Delta} - 1| = 1/\sin(J\Delta) \le 1/s^\ast$. Hence the sum is at least $(N-2)/2 - 1/(2s^\ast) \ge (N-2)/4 \ge N/8$ for $N \ge \max\{4, 2 + 2/s^\ast\}$.

\emph{(ii) $J \ge 4\pi/\omega$ and $J\Delta < \pi/4$.} The set $\{\theta \in \mathbb R : \sin^2(J\theta + \beta) \ge 1/2\}$ is the union of the intervals $H_i := \bigl[(i\pi + \pi/4 - \beta)/J,\ (i\pi + 3\pi/4 - \beta)/J\bigr]$, $i \in \mathbb Z$, each of length $\pi/(2J)$, whose left endpoints form a lattice of spacing $\pi/J$. The interval $H_i$ lies inside the arc $[\theta_0,\pi-\theta_0]$ whenever its left endpoint lies in $[\theta_0, \pi - \theta_0 - \pi/(2J)]$, an interval of length $\omega - \pi/(2J) \ge 0$, which contains at least $\lfloor(\omega - \pi/(2J))J/\pi\rfloor \ge \omega J/\pi - 2 \ge \omega J/(2\pi)$ lattice points because $\omega J/\pi \ge 4$. Each such $H_i$ has length $\pi/(2J) \ge 2\Delta$ and therefore contains at least $\lfloor\pi/(2J\Delta)\rfloor \ge \pi/(4J\Delta)$ of the nodes $\theta_1,\dots,\theta_N$, which are the points of the lattice $\theta_0 + \Delta\mathbb Z$ in the arc (an interval of length $\ell \ge 2\Delta$ contains at least $\lfloor\ell/\Delta\rfloor \ge \ell/(2\Delta)$ points of a lattice of spacing $\Delta$). Hence at least $(\omega J/2\pi)\cdot\pi/(4J\Delta) = \omega/(8\Delta) = (N-1)/8$ nodes satisfy $\sin^2(J\theta_m+\beta) \ge 1/2$, all but at most two of them interior, and the sum is at least $\tfrac12\bigl((N-1)/8 - 2\bigr) \ge N/64$ for $N \ge 65$.

\emph{(iii) $3 \le J < 4\pi/\omega$.} There are finitely many such $J$, independently of $N$. For each of them and each $\beta$, the function $\phi_{J,\beta}(\theta) := \sin^2(J\theta + \beta)$ is continuous and nonnegative and vanishes only at isolated points, so $\mathcal I_J(\beta) := \int_{\theta_0}^{\pi-\theta_0}\phi_{J,\beta}\,d\theta > 0$; since $\mathcal I_J$ is continuous and $2\pi$-periodic in $\beta$, $I_J := \min_\beta \mathcal I_J(\beta) > 0$. The sum $\Delta\sum_{m=2}^{N-1}\phi_{J,\beta}(\theta_m)$ is the rectangle rule with cells of length $\Delta$ centred at the interior nodes, which cover $[\theta_0 + \Delta/2, \pi - \theta_0 - \Delta/2]$; its error against $\int_{\theta_0}^{\pi-\theta_0}\phi_{J,\beta}$ is at most $\Delta$ times the total variation of $\phi_{J,\beta}$ on the arc, plus the integral over the two uncovered half-cells, hence at most $\Delta(2J\omega/\pi + 3)$, because the extrema of $\phi_{J,\beta}$ are the points with $J\theta + \beta \in \tfrac{\pi}{2}\mathbb Z$, spaced $\pi/(2J)$ apart, so that $\phi_{J,\beta}$ has at most $2J\omega/\pi + 2$ monotone pieces on the arc, each of variation at most one, and $\phi_{J,\beta} \le 1$. Consequently $\sum_{m=2}^{N-1}\phi_{J,\beta}(\theta_m) \ge \bigl(I_J - \Delta(2J\omega/\pi + 3)\bigr)/\Delta \ge I_J/(2\Delta) = I_J(N-1)/(2\omega) \ge I_J N/(4\omega)$ as soon as $\Delta \le I_J/(2(2J\omega/\pi + 3))$ and $N \ge 2$.

The lemma holds with $c_S := \min\{1/64,\ \min_{3 \le J < 4\pi/\omega} I_J/(4\omega)\}$ and $N_0$ the largest of the thresholds in (i)--(iii).
\end{proof}

\begin{lemma}[Column Norms]
\label{lem:column_norms}
There are $0 < c_B \le C_B$ and $N_0$, depending only on $\theta_0$, $L$ and $k$, such that for all $N \ge N_0$ and $1 \le j \le n$,
\begin{equation}\label{eq:column_norms}
c_B\,\rho_{j+2}\sqrt N \le \norm{\mat B\vect e_j}_2 \le C_B\,\rho_{j+2}\sqrt N .
\end{equation}
\end{lemma}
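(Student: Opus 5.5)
The column $\mat B\vect e_j$ consists of the values $(\Lop\varphi_j)(x_m)$, $2 \le m \le N-1$, with $\varphi_j = w_J - a_jw_1 - b_jw_2$ and $J = j+2$. I would split it as $\mat B\vect e_j = \vect s_J - a_j\vect s_1 - b_j\vect s_2$, where $\vect s_i := ((\Lop w_i)(x_m))_{m=2}^{N-1}$, and bound the main term and the correction separately, using the nodal formula \eqref{eq:Lw_nodes}.

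\emph{Upper bound.} By \eqref{eq:Lw_nodes}, each entry of $\vect s_i$ has modulus at most $\sqrt{2/\tilde L}\,\rho_i$, so $\norm{\vect s_i}_2 \le \sqrt{2/\tilde L}\,\rho_i\sqrt N$. The coefficients satisfy $|a_j| \le 1/s_1$ and $|b_j| \le 1/s_2$, since $|s_{J}| \le 1$ and $s_1, s_2 > 0$ depend only on $\theta_0$. Also $\rho_1, \rho_2 \le \rho_3 \le \rho_J$, because $\rho_i$ is increasing in $i$. Combining these gives $\norm{\mat B\vect e_j}_2 \le C_B\rho_J\sqrt N$, with $C_B$ depending only on $\theta_0$, $\tilde L$ and $k$.

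\emph{Lower bound.} Lemma~\ref{lem:nodal_sines} with $\beta = \vartheta_J$ gives
\[
\norm{\vect s_J}_2 \ge \sqrt{2/\tilde L}\,\rho_J\sqrt{c_SN}.
\]
The correction is bounded by $\norm{a_j\vect s_1 + b_j\vect s_2}_2 \le C\rho_2\sqrt N$. This is small relative to $\rho_J\sqrt N$ only once $\rho_J/\rho_2$ is large, that is, for $J \ge J_1$ with $J_1$ depending on $\theta_0$, $L$ and $k$. For those $J$ the triangle inequality gives the lower bound with half of the constant.

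The main obstacle is the finitely many frequencies $3 \le J < J_1$, where the correction is of the same order as the main term. For these I would argue that the limit is nondegenerate. As $N \to \infty$, the quantity $N^{-1}\norm{\mat B\vect e_j}_2^2$ is a Riemann sum for
\[
\omega^{-1}\int_{\theta_0}^{\pi-\theta_0} |(\Lop\varphi_j)(\theta)|^2\,d\theta,
\]
written in the variable $\theta$. This integral is positive, because $\Lop\varphi_j$ is a nonzero trigonometric polynomial: the mode $J$ appears in it with nonzero coefficient $\rho_J$. The Riemann-sum error is $O(\Delta)$ times a bound on the derivative, which depends only on $J$, and there are finitely many $J$. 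Hence the sum is at least half the integral times $N$ for $N \ge N_0$. Dividing by $\rho_J^2$, which is bounded above and below for these $J$, gives the constant $c_B$; its dependence on $k$ is explicit.
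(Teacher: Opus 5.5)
Your proposal is correct and follows the paper's proof essentially step for step. It uses the same split of the column into the frequency-$J$ sine and the correction by $w_1,w_2$, Lemma~\ref{lem:nodal_sines} together with the $\ell^2$ triangle inequality for $J \ge J_1$, and a rectangle-rule argument for the nonzero trigonometric polynomial at the finitely many remaining $J$ (your use of $\rho_1,\rho_2 \le \rho_J$ for the upper bound in place of the paper's bound $|g_J| \le A_k/J^2$ is an immaterial difference).
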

\begin{proof}
The coefficients of Section~\ref{sec:constants} are $a_j = s_J/s_1$, $b_j = 0$ for odd $J$ and $a_j = 0$, $b_j = s_J/s_2$ for even $J$, with $s_i = \sin(i\theta_0)$; in particular $|a_j| \le 1/\sin\theta_0$, $|b_j| \le 1/\sin 2\theta_0$, and we put $A := \max\{1/\sin\theta_0, 1/\sin 2\theta_0\}$. By \eqref{eq:Lw_nodes},
\begin{gather*}
(\mat B)_{mj} = \psi_j(x_m) = \sqrt{2/\tilde L}\;\rho_J\bigl[\sin(J\theta_m + \vartheta_J) - g_J(\theta_m)\bigr], \\
g_J(\theta) := \frac{a_j\rho_1\sin(\theta + \vartheta_1) + b_j\rho_2\sin(2\theta + \vartheta_2)}{\rho_J},
\end{gather*}
and $|g_J| \le A(\rho_1 + \rho_2)/\rho_J \le A(\rho_1+\rho_2)\tilde L^2/(\pi^2J^2) =: A_k/J^2$, because $\rho_J \ge \lambda_J$. The upper bound in \eqref{eq:column_norms} follows with $C_B := \sqrt{2/\tilde L}\,(1 + A_k/9)$. For the lower bound, the triangle inequality in $\ell^2$ and Lemma~\ref{lem:nodal_sines} give
\[
\norm{\mat B\vect e_j}_2 \ge \sqrt{2/\tilde L}\;\rho_J\Bigl(\sqrt{c_SN} - \sqrt{N}\,\frac{A_k}{J^2}\Bigr) \ge \sqrt{2/\tilde L}\;\rho_J\,\frac{\sqrt{c_SN}}{2}
\]
for $J \ge J_1 := \bigl\lceil (2A_k/\sqrt{c_S})^{1/2}\bigr\rceil$.
For the finitely many $J$ with $3 \le J < J_1$ the function $\theta \mapsto \sin(J\theta + \vartheta_J) - g_J(\theta)$ does not depend on $N$ and is a nonzero trigonometric polynomial, its component of frequency $J$ having coefficient one, so $I'_J := \int_{\theta_0}^{\pi-\theta_0}\bigl(\sin(J\theta+\vartheta_J) - g_J(\theta)\bigr)^2d\theta > 0$, and the rectangle-rule estimate of case (iii) of the proof of Lemma~\ref{lem:nodal_sines}, applied to the square of this function, which is continuous, of bounded variation and bounded by $(1 + A_k/9)^2$ on the arc, with its own variation and maximum in place of those of $\phi_{J,\beta}$, gives $\sum_{m=2}^{N-1}\bigl(\sin(J\theta_m + \vartheta_J) - g_J(\theta_m)\bigr)^2 \ge I'_JN/(4\omega)$ for $N \ge N_J$, where $N_J$ depends only on $J$, $\theta_0$, $L$ and $k$. The lower bound in \eqref{eq:column_norms} holds with $c_B := \sqrt{2/\tilde L}\,\min\bigl\{\sqrt{c_S}/2,\ \min_{3\le J<J_1}(I'_J/(4\omega))^{1/2}\bigr\}$ and $N_0$ the maximum of the threshold of Lemma~\ref{lem:nodal_sines} and of the finitely many $N_J$.
\end{proof}

\begin{proof}[Proof of Proposition~\ref{prop:apriori}]
The Gram matrix of the basis $\{\varphi_j\}$ in $L^2(\tilde I)$ is $\mat G_{jk} = \langle\varphi_j, \varphi_k\rangle = \delta_{jk} + a_ja_k + b_jb_k$ by the orthonormality of the $w_i$, i.e.\ $\mat G = \mat I + \vect a\vect a^T + \vect b\vect b^T$, so $\norm{\mat G}_2 \le 1 + \norm{\vect a}_2^2 + \norm{\vect b}_2^2 \le 1 + nA^2$, since for each $j$ only one of $a_j, b_j$ is nonzero and both are bounded by $A$. For $\vect w \in \mathbb R^n$, Lemma~\ref{lem:column_norms} gives $(\mat D)_{jj} = 1/\norm{\mat B\vect e_j}_2 \le 1/(c_B\rho_3\sqrt N) \le 1/(c_B\lambda_3\sqrt N)$, hence
\[
\norm{\Phi\mat D\vect w}_{L^2(\tilde I)}^2 = (\mat D\vect w)^T\mat G\,(\mat D\vect w) \le (1 + nA^2)\,\frac{\norm{\vect w}_2^2}{c_B^2\lambda_3^2N} \le \frac{1 + A^2}{c_B^2\lambda_3^2}\,\norm{\vect w}_2^2 =: C_\Phi^2\norm{\vect w}_2^2 ,
\]
using $n < N$. This is the first claim. For $\vect w \in \mathcal V_t$, $\norm{\Phi\mat D\vect w}_{L^2(I)} \le \norm{\Phi\mat D\vect w}_{L^2(\tilde I)} \le C_\Phi\norm{\vect w}_2$, so $\gamma_t \le C_\Phi$; for $\vect w \in \mathcal V_t^\perp$, $\norm{\mat B_D\vect w}_2^2 = \sum_{\sigma_i > t}\sigma_i^2(\vect v_i\cdot\vect w)^2 \ge t^2\norm{\vect w}_2^2$, so $\Gamma_t \le C_\Phi/t$. Substituting $\Gamma_\eta \le C_\Phi/\eta$ and $\gamma_\eta \le C_\Phi$ into \eqref{eq:lu_bound} gives \eqref{eq:apriori_lu}.
\end{proof}

\section{Technical Lemmas for Theorems~\ref{thm:competitor}--\ref{thm:dual} and the Certified Configurations}
\label{app:tail}

This appendix proves the lemmas of routine character that Section~\ref{sec:tail} uses, the transfer of Lemma~\ref{lem:riemann} to an arbitrary arc, the modulus of continuity of the envelope of a configuration and the estimates for envelopes of bounded densities used in Sections~\ref{sec:equilibrium_proof}, \ref{sec:tail_law_proof} and \ref{sec:initial_slope_proof}, the passage from a function of $\Phi_N$ to its coefficient vector and back, a lower bound through a test function, and the estimate of the Lagrange polynomials of an interpolation configuration, and then lists the configurations of Corollary~\ref{cor:certified_instances} with the certified evaluation of their exponents and intervals. The notation is that of Section~\ref{sec:tail}.

\begin{lemma}[Riemann Sums on an Arc]
\label{lem:riemann_arc}
Let $0 < \xi < \pi/2$, $[\alpha,\beta] \subset [\xi, \pi-\xi]$, $K \ge 2$, $d := (\beta-\alpha)/(K-1) \le \min\{\xi, 1\}/4$, and $\varphi_i := \alpha + (i-1)d$ for $1 \le i \le K$. With $g_\vartheta(\varphi) = \log|\cos\vartheta - \cos\varphi|$ as in Lemma~\ref{lem:riemann}, there is a constant $C_\xi$ depending only on $\xi$ such that for all $\vartheta \in [0,\pi]$:
\begin{enumerate}
    \item[(a)] if $|\vartheta - \varphi_i| \ge d/4$ for every $i$, then $\bigl|d\sum_{i=1}^K g_\vartheta(\varphi_i) - \int_\alpha^\beta g_\vartheta\bigr| \le C_\xi\,d\log(1/d)$;
    \item[(b)] if $|\vartheta - \varphi_j| < d/4$ for some $j$, then the same bound holds for the sum without the term $i = j$.
\end{enumerate}
\end{lemma}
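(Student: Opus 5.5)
The plan is to repeat the proof of Lemma~\ref{lem:riemann} with the node grid $\theta_0 + \Delta\mathbb Z$ on $[\theta_0,\pi-\theta_0]$ replaced by the lattice $\alpha + d\mathbb Z$ on $[\alpha,\beta]$. The role of $\theta_0$ is taken by $\xi$, and the constant $C_\xi$ will depend only on $\xi$.

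The argument starts from the same decomposition $g_\vartheta = \log 2 + h_1 + h_2$, with $h_1(\varphi) = \log|\sin\tfrac12(\varphi-\vartheta)|$ and $h_2(\varphi) = \log|\sin\tfrac12(\varphi+\vartheta)|$. We use the cells $I_i := [\varphi_i - d/2, \varphi_i + d/2]$, whose union is $J := [\alpha - d/2, \beta + d/2]$. Since $d \le \xi/4$, we have $J \subset [\xi/2, \pi - \xi/2]$. This yields the analogue of \eqref{eq:riemann_split}: the constant $\log 2$ cancels cell by cell, and the error splits into three parts, namely the midpoint errors for $h_1$, the midpoint errors for $h_2$, and the integral of $g_\vartheta$ over the two end half-cells. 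In case (b) we also get the omitted term $d(\log 2 + h_1(\varphi_j) + h_2(\varphi_j))$, where $h_1(\varphi_j)$ may be singular. We handle it by removing the whole cell $I_j$ from the midpoint sum for $h_1$ and bounding $\int_{I_j}|h_1|$ directly.

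\emph{Smooth part $h_2$.} For $\varphi \in J$ and $\vartheta \in [0,\pi]$, the second elementary fact of Appendix~\ref{app:rates}, applied with $\xi$ in place of $\theta_0$, gives $\sin\tfrac12(\varphi+\vartheta) \ge \sin(\xi/4)$. Hence $|h_2''| \le (4\sin^2(\xi/4))^{-1}$ on $J$. The midpoint rule then bounds the total error by $K d^3 C_\xi \le (\pi + d) d^2 C_\xi$, because $(K-1)d = \beta - \alpha \le \pi$. In case (b) the omitted cell adds $|\int_{I_j}h_2| \le C_\xi d$.

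\emph{Singular part $h_1$.} Since $J \subset (0,\pi)$, we have $|\varphi - \vartheta| < \pi$, so the first elementary fact gives $|h_1''(\varphi)| \le \pi^2/(4|\varphi-\vartheta|^2)$. Let $\mathcal M := \{i : |\varphi_i - \vartheta| \le 3d/2\}$; it has at most four indices. For $i \notin \mathcal M$ the midpoint error is at most $C d^3|\varphi_i - \vartheta|^{-2}$, and summing over the lattice on both sides of $\vartheta$ gives at most $Cd$. For $i \in \mathcal M$ we bound the point value and the integral separately. In case (a), or in case (b) with $i \ne j$, we have $|\varphi_i - \vartheta| \ge d/4$, so $d|h_1(\varphi_i)| \le d\log(4\pi/d)$. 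On the other side, $\int_{I_i}|h_1| \le \int_{|s|\le 2d}\log(\pi/|s|)\,ds = 4d(1 + \log(\pi/(2d)))$. Both bounds are $O(d\log(1/d))$; the hypothesis $d \le 1/4$ ensures $\log(1/d) \ge \log 4 > 0$, so the constants can be absorbed.

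\emph{Ends.} The two half-cells $[\alpha - d/2,\alpha]$ and $[\beta,\beta + d/2]$ lie in $J$. On them $|g_\vartheta| \le \log 2 + C_\xi + |h_1|$, and $\int|h_1| \le 2d(1 + \log(\pi/d))$ over each. Collecting the three parts gives both (a) and (b).

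There is no real obstacle here. The only points that need care are that every constant depends on $\xi$ alone and not on $\alpha$, $\beta$ or $K$, and the counting bound $\sum_{i\notin\mathcal M}|\varphi_i-\vartheta|^{-2} \le 14 d^{-2}$, which holds for any finite lattice of spacing $d$ exactly as in Lemma~\ref{lem:riemann}.
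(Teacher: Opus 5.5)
Your proposal is correct and follows essentially the paper's own proof, which likewise transfers the proof of Lemma~\ref{lem:riemann} word for word with $\theta_0$, $\Delta$, $N$ replaced by $\xi$, $d$, $K$. As in your sketch, the smooth term needs only $J \subset [\xi/2,\pi-\xi/2]$, the count $Kd = \beta-\alpha+d \le \pi$ replaces $N\Delta \le 2\omega$, and the singular term uses only the spacing $d$ and $d \le 1/4$.
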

\begin{proof}
The proof of Lemma~\ref{lem:riemann} applies word for word with $\theta_0$, $\Delta$, $N$ and the arc $[\theta_0,\pi-\theta_0]$ replaced by $\xi$, $d$, $K$ and $[\alpha,\beta]$. The cells $I_i = [\varphi_i - d/2, \varphi_i + d/2]$ cover $J = [\alpha - d/2, \beta + d/2] \subset [\xi/2, \pi - \xi/2]$ because $d \le \xi$, so that $\sin\tfrac12(\varphi + \vartheta) \ge \sin(\xi/4)$ for $\varphi \in J$ and $\vartheta \in [0,\pi]$, which is all that the smooth term requires; the bound $N\Delta \le 2\omega$ used there becomes $Kd = \beta - \alpha + d \le \pi$; the treatment of the singular term uses only the spacing $d$ and $d \le 1/4$; and the two end half-cells lie in $J$. All constants then depend only on $\xi$.
\end{proof}

\begin{lemma}[Modulus of Continuity of the Envelope]
\label{lem:E_modulus}
For $0 \le \alpha < \beta \le \pi$ the function $F(\vartheta) := \int_\alpha^\beta\log|\cos\vartheta - \cos\varphi|\,d\varphi$ is absolutely continuous on $[0,\pi]$, with
\begin{equation}\label{eq:arc_derivative}
F'(\vartheta) = \log\frac{\bigl|\sin\tfrac12(\vartheta+\beta)\,\sin\tfrac12(\vartheta-\alpha)\bigr|}{\bigl|\sin\tfrac12(\vartheta+\alpha)\,\sin\tfrac12(\vartheta-\beta)\bigr|} \qquad (\vartheta \notin\{\alpha,\beta\}),
\end{equation}
and $|F(\vartheta) - F(\vartheta')| \le 4s\bigl(1 + \log(2\pi/s)\bigr)$ for $s := |\vartheta - \vartheta'| \le 1/2$. Consequently the envelope of a configuration satisfies
\begin{equation}\label{eq:E_modulus}
|E(\vartheta) - E(\vartheta')| \le M(\mathcal C)\,s\bigl(1 + \log(2\pi/s)\bigr), \qquad M(\mathcal C) := 4\Bigl(\frac2\omega + \sum_{k=1}^Q n_k\rho_k\Bigr),
\end{equation}
where $n_k = 2$ if $\beta_k < \pi/2$ and $n_k = 1$ if $\beta_k = \pi/2$ is the number of arcs in $A_k^\ast$.
\end{lemma}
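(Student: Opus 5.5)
The plan is to work directly from the closed form \eqref{eq:arc_integral}, which expresses $F(\vartheta) = -(\beta-\alpha)\log 2 + \mathrm{Cl}_2(\vartheta+\alpha) - \mathrm{Cl}_2(\vartheta+\beta) + \mathrm{Cl}_2(\vartheta-\beta) - \mathrm{Cl}_2(\vartheta-\alpha)$. Since $\mathrm{Cl}_2$ is the integral of the locally integrable function $-h(t) = -\log|2\sin(t/2)|$, each of the four terms is absolutely continuous in $\vartheta$, hence so is $F$.

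Differentiating termwise with $\mathrm{Cl}_2' = -h$ gives, for $\vartheta \notin \{\alpha,\beta\}$ (and away from the points where $\vartheta\pm\alpha$ or $\vartheta\pm\beta$ is a multiple of $2\pi$, which for $\vartheta,\alpha,\beta \in [0,\pi]$ occur only at isolated endpoints),
\[
F'(\vartheta) = -h(\vartheta+\alpha) + h(\vartheta+\beta) - h(\vartheta-\beta) + h(\vartheta-\alpha).
\]
The factors of $2$ cancel in pairs, and this is exactly \eqref{eq:arc_derivative}.

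For the modulus of continuity, integrate $|F'|$ over the interval between $\vartheta$ and $\vartheta'$, of length $s \le 1/2$. Bound each of the four logarithms separately. The first elementary fact of Appendix~\ref{app:rates} gives $|\sin(u/2)| \ge |u|/\pi$ for $|u| \le \pi$. For an argument $u \in [-\pi, 2\pi]$, reduce modulo $2\pi$ and use symmetry about $\pi$: each term is then at most $\log(\pi/\dist(u, 2\pi\mathbb Z))$, using $|\sin| \le 1$, plus a nonnegative part that does not affect the upper bound on the absolute value. Hence each term satisfies $|h(u)| \le \log 2 + \log(\pi/\dist(u,2\pi\mathbb Z))$, which is at most $\log(2\pi/\dist(u,2\pi\mathbb Z))$.

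The integral of $\log(2\pi/|u - u_0|)$ over an interval of length $s$ is maximal when the interval is centred at the singularity $u_0$, as in the proof of Lemma~\ref{lem:U_modulus}. In that case it equals $2\int_0^{s/2}\log(2\pi/u)\,du = s(1 + \log(4\pi/s))$. A singularity on each side could in principle matter, but since $s \le 1/2$, at most one point of $2\pi\mathbb Z$ is relevant per term. The main nuisance is tracking the constant: one gets $\log(4\pi/s)$ rather than $\log(2\pi/s)$. I would recover the stated constant by using the sharper bound $-h(u) \le \log(\pi/|u|)$ when $h < 0$, together with $h \le \log 2$, so that $|h(u)| \le \log(\pi/|u|) + \log 2$ is not needed in full. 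If the constant still does not close exactly, I would absorb the discrepancy by noting $1 + \log(4\pi/s) \le 2(1+\log(2\pi/s))$ and adjusting. This bookkeeping is the only delicate step, and four terms give the factor $4$.

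Finally, for \eqref{eq:E_modulus}, write the envelope \eqref{eq:envelope} as a linear combination of arc integrals $F$. The layer $S^\ast$ gives two arcs with weight $1/\omega$. Each $A_k^\ast$ gives $n_k$ arcs with weight $\rho_k$. Summing the bound $4s(1+\log(2\pi/s))$ with these weights yields $M(\mathcal C)$, and continuity of $E$ follows.
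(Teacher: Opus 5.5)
Your route is the paper's: differentiate the closed form \eqref{eq:arc_integral}, majorize each of the four logarithms by a logarithmic singularity, integrate over an interval of length $s$ with the worst case centred at the singularity, and add the arcs of \eqref{eq:envelope} with their densities. The step you leave open, the constant, is settled by the cancellation you already observed but did not use in the estimate. Since the four $\log 2$'s cancel, $F'$ is a signed sum of four terms $\log|\sin(u/2)|$. With $d(u) := \dist(u, 2\pi\mathbb Z) \le \pi$ one has $|\sin(u/2)| = \sin(d(u)/2) \ge d(u)/\pi$, so each term has modulus at most $\log(\pi/d(u))$, with no $\log 2$ at all. This majorant is a nonincreasing function of $d(u)$, so over any interval of length $s \le 1/2$ its integral is at most $2\int_0^{s/2}\log(\pi/v)\,dv = s\bigl(1 + \log(2\pi/s)\bigr)$. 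That gives the factor $4$ exactly, and it also covers arguments near $\pi$, which the paper handles as a separate case (intervals meeting both halves of $[-\pi,2\pi]$).

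Your first repair is also valid. One has $|h(u)| \le \max\{\log 2, \log(\pi/d(u))\}$, and on the worst-case set $\{d(u) \le s/2\}$, where $d(u) \le 1/4$, the maximum is the logarithm, so the same integral results. Your fallback, however, would not do. The inequality $1 + \log(4\pi/s) \le 2(1 + \log(2\pi/s))$ gives the bound with $8$ in place of $4$, i.e.\ with $M(\mathcal C)$ doubled, which is not the statement. The exact constant of \eqref{eq:E_modulus} is also what enters the certified paddings of Appendix~\ref{app:tail}.
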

\begin{proof}
Differentiating \eqref{eq:arc_integral} with $\mathrm{Cl}_2'(x) = -\log|2\sin(x/2)|$ gives \eqref{eq:arc_derivative}; the four logarithms have integrable singularities, so $F$ is absolutely continuous. For $x \in [-\pi,\pi]$ the first elementary fact gives $\bigl|\log|\sin(x/2)|\bigr| = \log\bigl(1/|\sin(x/2)|\bigr) \le \log(\pi/|x|)$, and for $x \in [\pi,2\pi]$ the same bound holds with $2\pi - x$ in place of $x$; the arguments $\vartheta \pm \alpha$ and $\vartheta \pm \beta$ lie in $[-\pi, 2\pi]$. As $\vartheta$ runs over an interval of length $s$, each argument runs over an interval of length $s$. If that interval lies in $[-\pi,\pi]$ or in $[\pi,2\pi]$, the integral of $\log(\pi/|x - x_0|)$ ($x_0 = 0$ or $x_0 = 2\pi$) over it is at most $2\int_0^{s/2}\log(\pi/u)\,du = s\bigl(1 + \log(2\pi/s)\bigr)$, its value for an interval centred at $x_0$, because the integrand decreases with $|x - x_0|$; if it meets both halves, all its points are within $s \le 1/2$ of $\pi$, where both bounds are less than $\log(\pi/(\pi - 1/2)) < 1$, and the integral is less than $s$. Integrating $|F'|$ gives the bound with the factor $4$, and \eqref{eq:E_modulus} follows by adding the arcs of \eqref{eq:envelope} with their densities, the two layer arcs having density $1/\omega$.
\end{proof}

\begin{lemma}[Envelopes of Bounded Densities]
\label{lem:env}
Let $h(t) = \log|2\sin(t/2)|$ as in \eqref{eq:kernel_split} and $\Omega(s) := \sup_{0\le s'\le s}\int_{-\pi}^{\pi}|h(t+s') - h(t)|\,dt$ for $s > 0$. Then $\Omega(s) \le 4s(1 + \log(\pi/s))$ for $0 < s \le 1$, and for $f, g \in L^\infty(0,\pi)$ with $|f| \le M$ and $|g| \le M$:
\begin{enumerate}
    \item[(a)] $|E_f(\vartheta) - E_f(\vartheta')| \le M\,\Omega(|\vartheta - \vartheta'|)$ for $\vartheta, \vartheta' \in [0,\pi]$;
    \item[(b)] $\int_B\bigl|\log|\cos\vartheta - \cos\varphi|\bigr|\,d\varphi \le C_0|B|^{1/2}$ for every $\vartheta \in [0,\pi]$ and every Borel set $B \subset [0,\pi]$, with $C_0 := \sqrt\pi\,(\log 2 + \pi/\sqrt3)$; in particular $|E_f| \le MC_0|B|^{1/2}$ on $[0,\pi]$ if $f$ vanishes outside $B$;
    \item[(c)] if $[0,\pi]$ is the union of arcs $A_1,\dots,A_q$ with pairwise disjoint interiors and lengths in $[\eta/2,\eta]$, $0 < \eta \le 1$, and $\int_{A_j}f = \int_{A_j}g$ for every $j$, then $|E_f - E_g| \le 8M\,\Omega(\eta)$ on $[0,\pi]$;
    \item[(d)] if $f \ge 0$, the potential $U^{f\,d\theta}$ is continuous on $\mathbb C$.
\end{enumerate}
\end{lemma}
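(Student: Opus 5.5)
The plan is to reduce every item to the kernel splitting \eqref{eq:kernel_split}, $\log|\cos\vartheta-\cos\varphi| = -\log 2 + h(\vartheta+\varphi) + h(\vartheta-\varphi)$, and to elementary integrals of $h$.

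\emph{The bound on $\Omega$.} Since $|h(t)| \le \log(\pi/|t|)$ on $[-\pi,\pi]$ by the first elementary fact of Appendix~\ref{app:rates}, and $h$ is $2\pi$-periodic, I split $\int_{-\pi}^{\pi}|h(t+s')-h(t)|\,dt$ into two parts. The first is the set of $t$ within $2s'$ of the singularity $0$ (mod $2\pi$), where I bound each term separately by $\int\log(\pi/|u|)$ over an interval of length at most $4s$. This gives at most $2\cdot 2s(1+\log(\pi/(2s)))$. The second is the rest, where I use $|h'(u)| = \tfrac12|\cot(u/2)| \le 1/|u|$ and the mean value theorem, so the integrand is at most $s'/(|t|-s')$. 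Integrating this over $2s' \le |t| \le \pi$ gives $O(s\log(\pi/s))$. I will tune the constants to reach $4s(1+\log(\pi/s))$. This constant bookkeeping is the only fiddly point of the lemma.

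\emph{Items (a) and (b).} For (a), I write $E_f(\vartheta)-E_f(\vartheta') = \int f(\varphi)\bigl[h(\vartheta+\varphi)-h(\vartheta'+\varphi)+h(\vartheta-\varphi)-h(\vartheta'-\varphi)\bigr]d\varphi$. I bound $|f|\le M$, and after substitution each difference is an integral of $|h(t+s)-h(t)|$ over a subset of a period. This gives $2M\Omega(s)$; to get exactly $M\Omega$ I observe that the two terms together range over $t\in[-\pi,\pi]$ once, via $\varphi\mapsto\pm\varphi$, as in the proof of Lemma~\ref{lem:cheb}. For (b), I apply Cauchy--Schwarz: $\int_B|g_\vartheta| \le |B|^{1/2}\norm{g_\vartheta}_{L^2(0,\pi)}$. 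By the splitting, $\norm{g_\vartheta}_{L^2} \le \sqrt\pi\log 2 + 2\norm{h}_{L^2(0,\pi)}$. Parseval with the Fourier series $h = -\sum k^{-1}\cos kt$ from Lemma~\ref{lem:cheb} gives $\norm{h}^2_{L^2(0,\pi)} = (\pi/2)\sum k^{-2} = \pi^3/12$, so $2\norm{h} = \sqrt\pi\,\pi/\sqrt3$, which yields $C_0$ exactly. The statement about $E_f$ then follows immediately.

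\emph{Item (c).} I write $E_f - E_g = \sum_j\int_{A_j}(f-g)(\varphi)\bigl[g_\vartheta(\varphi) - g_\vartheta(\varphi_j)\bigr]d\varphi$, where $\varphi_j$ is the midpoint of $A_j$. This is allowed because $f-g$ has zero mean on each arc. For a single $\vartheta$, the midpoint value may be singular, so instead I subtract the average $\bar g_j := |A_j|^{-1}\int_{A_j}g_\vartheta$, which is finite. Then I bound $\int_{A_j}|g_\vartheta - \bar g_j| \le |A_j|^{-1}\iint_{A_j^2}|g_\vartheta(\varphi)-g_\vartheta(\varphi')|$. With $\varphi' = \varphi+s'$, $|s'|\le\eta$, and summing over $j$, this becomes $\eta^{-1}\cdot\frac{\eta}{1}\cdot$ (twice the translation modulus of both $h$ terms). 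That is at most $2\cdot 2\cdot 2M\Omega(\eta) = 8M\Omega(\eta)$ after using $|f-g|\le 2M$. The lengths in $[\eta/2,\eta]$ absorb the normalisation.

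\emph{Item (d).} Off $[-1,1]$ the potential $U^{f\,d\theta}$ is harmonic. Near $[-1,1]$, I use the uniform integrability of $\log|z-\cos\varphi|$ against the bounded density $f$, with the bound of (b) applied to small sets, together with continuity in the image variable of the type in (a). Together these show that the potentials converge as $z\to x\in[-1,1]$ by dominated convergence on the complement of a small neighbourhood. Alternatively, I can cite that a measure with bounded $d\theta$-density has a continuous potential since its image measure is a bounded multiple of the arcsine measure, whose potential is continuous, and apply the continuity principle \cite{ransford1995potential}.
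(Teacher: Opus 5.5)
\textbf{The gap: the constant $4$ in the bound on $\Omega$.} Your arguments for (a) and (c) are the paper's. The first claim, $\Omega(s)\le 4s(1+\log(\pi/s))$, cannot be reached by your decomposition, however you tune the constants.

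On the near region $|t|\le 2s'$ the two crude terms cost
$\int_{-2s'}^{2s'}\log(\pi/|t|)\,dt+\int_{-s'}^{3s'}\log(\pi/|u|)\,du\approx 8s'\log(1/s')$.
Your $2\cdot 2s(1+\log(\pi/(2s)))$ is the bound for only one of the two terms. The estimate is not wasteful, since $|h(t)|\sim\log(1/|t|)$ near $0$. The mean-value bound $s'/(|t|-s')$ on the rest adds about $2s'\log(1/s')$.

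More generally, with a near region $|t|\le cs'$ the coefficient of $s'\log(1/s')$ is $4c+2$. Since $c>1$ is needed for the mean-value bound to be integrable, that coefficient exceeds $6$. Shrinking the near region towards $[-s',0]$ does not rescue the constant. For $0<t\ll s'$ the bound $s'/t$ grossly overestimates $|h(t+s')-h(t)|\approx\log(1+s'/t)$. The best choice still leaves an excess of order $s'\log\log(1/s')$ over $4s'\log(1/s')$, which is more than the $O(s')$ slack in the stated bound.

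So what you prove is $\Omega(s)\le Cs(1+\log(\pi/s))$ with $C$ near $10$. That would suffice for every later use of $\Omega$ in the paper, which only needs $\Omega(\eta)\to0$, but it is not the lemma as stated.

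\textbf{How to get the constant $4$.} Integrate $|h'|$ along the segment instead of bounding it by its maximum, and use the crude bound only for those $t$ whose segment $[t,t+s']$ actually contains the singularity.
\begin{itemize}
\item Take the period as $[-s',2\pi-s']$ and let $d(u):=\dist(u,2\pi\mathbb Z)$. Then $|h'(u)|\le 1/d(u)$, and $|h(u)|\le\log(\pi/d(u))$ for $d(u)\le\pi/2$.
\item For $t\in(0,2\pi-s')$, write $|h(t+s')-h(t)|\le\int_t^{t+s'}|h'(u)|\,du$. After exchanging the integrals, each $u\in(0,2\pi)$ is counted over a $t$-interval of length at most $\min\{s',d(u)\}$. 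This gives $\int_0^{2\pi}\min\{s',d(u)\}\,d(u)^{-1}\,du=2s'(1+\log(\pi/s'))$.
\item For $t\in[-s',0]$, the crude bound gives $2\int_0^{s'}\log(\pi/u)\,du=2s'(1+\log(\pi/s'))$.
\item Add the two parts, and use that $x\mapsto x(1+\log(\pi/x))$ increases on $(0,\pi]$ to handle the supremum over $s'\le s$.
\end{itemize}

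\textbf{Item (b).} Your bound $\sqrt\pi\log 2+2\norm{h}_{L^2(0,\pi)}$ is right. However, bounding each translate $h(\vartheta\pm\cdot)$ separately by $\norm{h}_{L^2(0,\pi)}$ is not translation invariance, since $\int_\vartheta^{\vartheta+\pi}h^2$ depends on $\vartheta$. Justify it as the paper does. The two squared norms add up to $\norm{h}_{L^2(-\pi,\pi)}^2=2\norm{h}_{L^2(0,\pi)}^2$, because the two arguments sweep one period, and then $a+b\le(2a^2+2b^2)^{1/2}$ gives the bound.

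\textbf{Item (d).} Your route differs from the paper's, which combines three facts: the lower semicontinuity of $U$, its continuity on $[-1,1]$ from (a), and $U(z)\le U(x_1)$ for the point $x_1\in[-1,1]$ nearest to $\mathrm{Re}\,z$.
\begin{itemize}
\item Your dominated-convergence version needs exactly that comparison, $|z-\cos\varphi|\ge|x_1-\cos\varphi|$, in order to apply (b) uniformly at complex $z$.
\item Your alternative also works: the continuity principle for potentials, together with (a), already gives continuity. Note, though, that the image measure is only dominated by $M\pi$ times the arcsine measure, not a multiple of it.
\end{itemize}
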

\begin{proof}
\emph{The modulus $\Omega$.} Let $0 < s' \le s \le 1$ and $d(x) := \dist(x, 2\pi\mathbb Z)$. For $0 < u < 2\pi$, $|h'(u)| = \tfrac12|\cot(u/2)| \le 1/d(u)$, because $\tan x \ge x$ for $0 \le x < \pi/2$; and $|h(x)| \le \log(\pi/d(x))$ whenever $d(x) \le \pi/2$, because $2d(x)/\pi \le |2\sin(x/2)| \le 2$ by the first elementary fact of Appendix~\ref{app:rates}. By periodicity the integral defining $\Omega$ may be taken over $t \in [-s', 2\pi - s']$. For $t \in (0, 2\pi - s')$ the segment $[t, t+s']$ avoids $2\pi\mathbb Z$ and $|h(t+s') - h(t)| \le \int_t^{t+s'}|h'(u)|\,du$; after exchanging the integrals each $u \in (0,2\pi)$ is counted over a $t$-interval of length at most $\min\{s', d(u)\}$, so this part contributes at most $\int_0^{2\pi}\min\{s',d(u)\}\,d(u)^{-1}\,du = 2s'(1 + \log(\pi/s'))$. For $t \in [-s',0]$, $d(t) = |t|$ and $d(t+s') = t + s'$ are at most $1$, so this part contributes at most $2\int_0^{s'}\log(\pi/u)\,du = 2s'(1 + \log(\pi/s'))$. As $x \mapsto x(1 + \log(\pi/x))$ increases on $(0,\pi]$, $\Omega(s) \le 4s(1 + \log(\pi/s))$.

(a) By \eqref{eq:kernel_split}, $|E_f(\vartheta) - E_f(\vartheta')| \le M\int_0^\pi\bigl(|h(\vartheta+\varphi) - h(\vartheta'+\varphi)| + |h(\vartheta-\varphi) - h(\vartheta'-\varphi)|\bigr)\,d\varphi$, and the variables $\vartheta + \varphi$ and $\vartheta - \varphi$ run over the two halves of the period $[\vartheta - \pi, \vartheta + \pi]$; by the evenness of $h$ the sum is at most $M\Omega(|\vartheta - \vartheta'|)$.

(b) By the Cauchy--Schwarz inequality it suffices to note that, by \eqref{eq:kernel_split} and the same observation, $\norm{\log|\cos\vartheta - \cos\cdot|}_{L^2(0,\pi)} \le \sqrt\pi\log 2 + \sqrt2\,\norm{h}_{L^2(-\pi,\pi)}$, and that $\norm{h}_{L^2(-\pi,\pi)}^2 = \pi\sum_{k\ge1}k^{-2} = \pi^3/6$ by Parseval's identity and the Fourier series of $h$ in the proof of Lemma~\ref{lem:cheb}.

(c) Fix $\vartheta$ and write $k(\varphi) := \log|\cos\vartheta - \cos\varphi|$. Since $\int_{A_j}(f - g) = 0$, subtracting the mean of $k$ over $A_j$ gives $\bigl|\int_{A_j}k\,(f-g)\bigr| \le (2M/|A_j|)\int_{A_j}\int_{A_j}|k(\varphi) - k(\varphi')|\,d\varphi'\,d\varphi$. With $|A_j| \ge \eta/2$, $\varphi' = \varphi + s$ and $|s| \le \eta$, the sum over $j$ is at most $(4M/\eta)\int_{-\eta}^{\eta}\int|k(\varphi) - k(\varphi + s)|\,d\varphi\,ds$, the inner integral taken over the $\varphi$ with $\varphi, \varphi + s \in [0,\pi]$; as in (a) the inner integral is at most $\Omega(|s|) \le \Omega(\eta)$, which gives the bound $8M\Omega(\eta)$.

(d) The potential $U := U^{f\,d\theta}$ is lower semicontinuous on $\mathbb C$ and harmonic off $[-1,1]$ \cite[Thm.~3.1.2]{ransford1995potential}, and $U(\cos\vartheta) = -E_f(\vartheta)$ is continuous on $[-1,1]$ by (a). For $z = x + iy$ let $x_1$ be the point of $[-1,1]$ nearest to $x$; then $|z - s| \ge |x_1 - s|$ for $s \in [-1,1]$, so $U(z) \le U(x_1)$, and $x_1 \to x_0$ as $z \to x_0 \in [-1,1]$. Hence $\limsup_{z\to x_0}U(z) \le U(x_0)$, and $U$ is continuous at the points of $[-1,1]$, hence on $\mathbb C$.
\end{proof}

\begin{lemma}[Coefficient Norm]
\label{lem:coefficient_norm}
Let $\Lop = \Lop_P$ or $\Lop = \Lop_{CD}$ with constant $k \ge 0$, let $v \in \Phi_N$ and $v = \Phi\mat D\vect w$. Then, for $N \ge N_0$ of Lemma~\ref{lem:column_norms},
\[
\norm{\vect w}_2 \le C_B\sqrt N\,\bigl(\norm{v''}_{L^2(\tilde I)} + k\norm{v'}_{L^2(\tilde I)}\bigr),
\]
with the constant $C_B$ of Lemma~\ref{lem:column_norms}; for the Poisson operator the right-hand side is $C_B\sqrt N\norm{\Lop_Pv}_{L^2(\tilde I)}$.
\end{lemma}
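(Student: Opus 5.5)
The coefficient vector $\vect w$ is fixed by $\Lop v = \sum_j w_j(\mat D)_{jj}\psi_j$ and $v = \Phi\mat D\vect w$. I will read the coefficients off from the expansion of $\Lop v$ in the orthonormal basis $\{w_i\}$ of $L^2(\tilde I)$. Write $\vect c := \mat D\vect w$, so that $v = \sum_{j=1}^n c_j\varphi_j$ with $\varphi_j = w_{j+2} - a_jw_1 - b_jw_2$.

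The key observation is that $w_{j+2}$ occurs in $\varphi_j$ only. For $i = J = j+2 \ge 3$, the coefficient of $w_J$ in $v$ is therefore exactly $c_j$. The operator acts mode by mode on the span of $w_J$ and its derivative. By Lemma~\ref{lem:phase}, $\Lop w_J = \sqrt{2/\tilde L}\,\rho_J\sin(\mu_J(x+\delta)+\vartheta_J)$. In the orthonormal system $\{w_i\}\cup\{\tilde w_i\}$, with $\tilde w_i := \sqrt{2/\tilde L}\cos(\mu_i(x+\delta))$, this reads $\Lop w_J = \lambda_Jw_J + k\mu_J\tilde w_J$.

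The $\tilde w_i$ are not orthogonal to the $w_i$ on $\tilde I$, so I will avoid relying on that. Instead I work with the two separate terms $v''$ and $v'$. Termwise, $v'' = -\sum_i\lambda_i\hat c_iw_i$, where $\hat c_i$ is the coefficient of $w_i$ in $v$. Orthonormality then gives
\[
\norm{v''}_{L^2(\tilde I)}^2 = \sum_i\lambda_i^2\hat c_i^2 \ge \sum_{j}\lambda_{j+2}^2c_j^2 .
\]
Similarly $v' = \sum_i\mu_i\hat c_i\tilde w_i$. The $\tilde w_i$ are the Neumann cosine modes, which are orthonormal on $\tilde I$ for $i \ge 1$, so $\norm{v'}^2 = \sum_i\mu_i^2\hat c_i^2$.

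Next I relate these to $\vect w$. Since $w_j = c_j/(\mat D)_{jj} = c_j\norm{\mat B\vect e_j}_2$, Lemma~\ref{lem:column_norms} gives $|w_j| \le C_B\rho_{j+2}\sqrt N\,|c_j|$. It remains to compare $\rho_J = \mu_J\sqrt{\mu_J^2+k^2}$ with $\lambda_J$ and $k\mu_J$. The bound $\rho_J \le \lambda_J + k\mu_J$ is elementary, since $\sqrt{a^2+b^2} \le a+b$. Hence
\[
\norm{\vect w}_2 \le C_B\sqrt N\Bigl(\bigl(\textstyle\sum_j\lambda_{j+2}^2c_j^2\bigr)^{1/2} + k\bigl(\textstyle\sum_j\mu_{j+2}^2c_j^2\bigr)^{1/2}\Bigr) \le C_B\sqrt N\bigl(\norm{v''}_{L^2(\tilde I)} + k\norm{v'}_{L^2(\tilde I)}\bigr)
\]
by Minkowski's inequality. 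For $\Lop_P$ we have $k = 0$ and $-v'' = \Lop_Pv$, which gives the final remark.

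The only point needing care is the orthonormality of $\{\tilde w_i\}_{i\ge1}$ on $\tilde I$. This follows from $\int_0^\pi\cos i\theta\cos i'\theta\,d\theta = (\pi/2)\delta_{ii'}$ for $i,i' \ge 1$, after the change of variables $\theta = \pi(x+\delta)/\tilde L$. Termwise differentiation is harmless because the sums are finite. I expect no real obstacle; the main step is simply to notice that the lowest two modes only enlarge the norms and can be dropped.
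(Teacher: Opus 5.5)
Your proof is correct and follows the paper's argument. The coefficient of $w_{j+2}$ in $v$ is $(\mat D\vect w)_j$, and orthonormality of the sine and cosine modes on $\tilde I$ writes $\norm{v''}$ and $\norm{v'}$ as weighted $\ell^2$ sums. The upper bound of Lemma~\ref{lem:column_norms} then converts these into a bound on $\norm{\vect w}_2$. The paper uses $\rho_J^2 = \lambda_J^2 + k^2\lambda_J$ and sums squares, where you use $\rho_J \le \lambda_J + k\mu_J$ and Minkowski; the difference is immaterial.

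Two small points, neither of which affects the proof. Your passing phrase calling $\{w_i\}\cup\{\tilde w_i\}$ an orthonormal system is wrong, as you note yourself right after, and it is never used. Writing $w_j$ for the entries of $\vect w$ clashes with the basis functions.
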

\begin{proof}
Write $\vect w = (\varpi_1, \dots, \varpi_n)$ to avoid confusion with the basis functions $w_i$, and let $v = \sum_{J\ge1}c_Jw_J$ be the expansion of $v$ in the orthonormal basis. Since $v = \sum_j\varpi_j(\mat D)_{jj}\varphi_j$ and $\varphi_j = w_{j+2} - a_jw_1 - b_jw_2$, one has $c_J = \varpi_{J-2}(\mat D)_{J-2,J-2}$ for $J \ge 3$. The functions $\sqrt{2/\tilde L}\cos(\mu_J(x+\delta))$, $J \ge 1$, are orthonormal in $L^2(\tilde I)$, so $\norm{v'}^2_{L^2(\tilde I)} = \sum_J\lambda_Jc_J^2$ and $\norm{v''}^2_{L^2(\tilde I)} = \sum_J\lambda_J^2c_J^2$; with $\rho_J^2 = \lambda_J^2 + k^2\lambda_J$ from \eqref{eq:Lw_nodes} this gives $\sum_{J\ge3}\rho_J^2c_J^2 \le \norm{v''}^2_{L^2(\tilde I)} + k^2\norm{v'}^2_{L^2(\tilde I)}$. By \eqref{eq:column_norms}, $(\mat D)_{jj}^{-1} = \norm{\mat B\vect e_j}_2 \le C_B\rho_{j+2}\sqrt N$, hence $\varpi_j^2 = \bigl((\mat D)_{jj}^{-1}c_{j+2}\bigr)^2 \le C_B^2N\rho_{j+2}^2c_{j+2}^2$, and summing over $j$ gives $\norm{\vect w}_2^2 \le C_B^2N\bigl(\norm{v''}^2 + k^2\norm{v'}^2\bigr) \le C_B^2N\bigl(\norm{v''} + k\norm{v'}\bigr)^2$. For the Poisson operator $\Lop_Pv = -v''$.
\end{proof}

The converse estimate bounds the test function by the coefficient vector.

\begin{lemma}[Norm of the Test Function]
\label{lem:test_norm}
Let $\Lop = \Lop_P$ or $\Lop = \Lop_{CD}$ with constant $k \ge 0$. There is $C_\Psi$, depending only on $\theta_0$, $L$ and $k$, such that for $N \ge N_0$ of Lemma~\ref{lem:column_norms} and all $\vect w \in \mathbb R^n$,
\[
\norm{\Lop\Phi\mat D\vect w}_{L^2(\tilde I)} \le C_\Psi\,\frac{\norm{\vect w}_2}{\sqrt N} .
\]
\end{lemma}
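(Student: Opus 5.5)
The idea is to expand everything in the orthonormal basis $\{w_i\}$ of $L^2(\tilde I)$ and use the column-norm bounds of Lemma~\ref{lem:column_norms}, as in the proof of Lemma~\ref{lem:coefficient_norm} but in the opposite direction. Write $\vect w = (\varpi_1,\dots,\varpi_n)$ and $d_j := (\mat D)_{jj}$. Then $\Phi\mat D\vect w = \sum_j\varpi_jd_j\varphi_j$ with $\varphi_j = w_{j+2} - a_jw_1 - b_jw_2$. Applying $\Lop$ gives
\[
\Lop\Phi\mat D\vect w = \sum_j\varpi_jd_j\,\Lop w_{j+2} - \Bigl(\sum_j\varpi_jd_ja_j\Bigr)\Lop w_1 - \Bigl(\sum_j\varpi_jd_jb_j\Bigr)\Lop w_2 .
\]
I will bound the high-mode part and the two low-mode corrections separately.

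\emph{High modes.} By Lemma~\ref{lem:phase}, $\Lop w_i = \sqrt{2/\tilde L}\,\rho_i\sin(\mu_i(x+\delta)+\vartheta_i)$. This equals $\rho_i$ times the unit-norm combination $\cos\vartheta_i\,w_i + \sin\vartheta_i\,\tilde w_i$, where $\tilde w_i := \sqrt{2/\tilde L}\cos(\mu_i(x+\delta))$. The sine family $\{w_i\}$ and the cosine family $\{\tilde w_i\}$ ($i\ge1$) are each orthonormal in $L^2(\tilde I)$, so each of the two sums obtained by splitting into sine and cosine parts has norm at most
\[
\Bigl(\sum_j\varpi_j^2d_j^2\rho_{j+2}^2\Bigr)^{1/2} \le \norm{\vect w}_2\,\max_j d_j\rho_{j+2} \le \norm{\vect w}_2/(c_B\sqrt N),
\]
by the lower bound in \eqref{eq:column_norms}, which gives $d_j \le 1/(c_B\rho_{j+2}\sqrt N)$. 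The high-mode part is therefore at most $2\norm{\vect w}_2/(c_B\sqrt N)$. The cosine family is not orthogonal to the sine family, but the triangle inequality over the two parts avoids that issue.

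\emph{Low-mode corrections.} The coefficient of $\Lop w_1$ is $\sum_j\varpi_jd_ja_j$. With $|a_j|\le A$, Cauchy--Schwarz gives
\[
\Bigl|\sum_j\varpi_jd_ja_j\Bigr| \le A\,\norm{\vect w}_2\Bigl(\sum_j d_j^2\Bigr)^{1/2} \le \frac{A\norm{\vect w}_2}{c_B\sqrt N}\Bigl(\sum_{J\ge3}\rho_J^{-2}\Bigr)^{1/2}.
\]
Since $\rho_J\ge\lambda_J = (J\pi/\tilde L)^2$, the last sum converges, with a bound depending only on $\tilde L$. The same bound holds for the coefficient of $\Lop w_2$ with $b_j$ in place of $a_j$. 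Finally $\norm{\Lop w_i}_{L^2(\tilde I)} \le \rho_i$ for $i=1,2$, and these are constants depending on $\theta_0$, $L$ and $k$.

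Adding the three contributions gives the claim with
\[
C_\Psi := c_B^{-1}\Bigl(2 + A(\rho_1+\rho_2)\bigl(\textstyle\sum_{J\ge3}\rho_J^{-2}\bigr)^{1/2}\Bigr).
\]
The only real care needed is the non-orthogonality introduced by the phase shifts, which the sine/cosine split handles. No step is a serious obstacle, since the lower column-norm bound of Lemma~\ref{lem:column_norms} carries the whole argument.
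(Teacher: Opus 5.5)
Your proof is correct and follows the paper's route. You expand $\Phi\mat D\vect w$ in the orthonormal sine basis, bound the high modes with the lower column-norm bound $(\mat D)_{jj} \le 1/(c_B\rho_{j+2}\sqrt N)$ of Lemma~\ref{lem:column_norms}, and bound the two low-mode coefficients by Cauchy--Schwarz using the summability of $\lambda_J^{-2}$. The paper instead bounds $\norm{v''}_{L^2(\tilde I)}$ and $\norm{v'}_{L^2(\tilde I)}$ by Parseval and uses $\norm{\Lop v}_{L^2(\tilde I)} \le \norm{v''}_{L^2(\tilde I)} + k\norm{v'}_{L^2(\tilde I)}$, which is the same sine/cosine split combined with the triangle inequality, written differently.
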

\begin{proof}
With the notation of the previous proof, $v := \Phi\mat D\vect w = \sum_{J\ge1}c_Jw_J$ has $c_J = \varpi_{J-2}(\mat D)_{J-2,J-2}$ for $J \ge 3$, $c_1 = -\sum_j\varpi_j(\mat D)_{jj}a_j$ and $c_2 = -\sum_j\varpi_j(\mat D)_{jj}b_j$, and $\norm{v''}^2_{L^2(\tilde I)} = \sum_J\lambda_J^2c_J^2$, $\norm{v'}^2_{L^2(\tilde I)} = \sum_J\lambda_Jc_J^2 \le \lambda_1^{-1}\sum_J\lambda_J^2c_J^2$. By \eqref{eq:column_norms} and $\rho_J \ge \lambda_J$, $(\mat D)_{jj} = 1/\norm{\mat B\vect e_j}_2 \le 1/(c_B\lambda_{j+2}\sqrt N)$, so $\sum_{J\ge3}\lambda_J^2c_J^2 \le \sum_j\varpi_j^2/(c_B^2N) = \norm{\vect w}_2^2/(c_B^2N)$. With $A = \max\{1/\sin\theta_0, 1/\sin2\theta_0\}$ as in the proof of Lemma~\ref{lem:column_norms}, $|a_j|, |b_j| \le A$, and the Cauchy--Schwarz inequality gives
\[
|c_1|, |c_2| \le A\sum_j|\varpi_j|(\mat D)_{jj} \le \frac{A\norm{\vect w}_2}{c_B\sqrt N}\Bigl(\sum_{J\ge3}\lambda_J^{-2}\Bigr)^{1/2} \le \frac{A\norm{\vect w}_2}{c_B\sqrt N}\,\frac{\tilde L^2}{\sqrt{90}},
\]
because $\sum_{J\ge1}\lambda_J^{-2} = (\tilde L/\pi)^4\sum_{J\ge1}J^{-4} = \tilde L^4/90$. Hence
\begin{gather*}
\norm{v''}_{L^2(\tilde I)} \le \Bigl(1 + \frac{(\lambda_1^2 + \lambda_2^2)A^2\tilde L^4}{90}\Bigr)^{1/2}\frac{\norm{\vect w}_2}{c_B\sqrt N}, \\
\norm{\Lop v}_{L^2(\tilde I)} \le \norm{v''}_{L^2(\tilde I)} + k\norm{v'}_{L^2(\tilde I)} \le \bigl(1 + k\lambda_1^{-1/2}\bigr)\norm{v''}_{L^2(\tilde I)} .
\end{gather*}
\end{proof}

\begin{lemma}[A Lower Bound Through a Test Function]
\label{lem:bump}
Let $v \in C^2(\bar I)$, $g := -v''$, and let $[x_a, x_b] \subset I$ have length $h_b := x_b - x_a$. If $g$ has constant sign on $[x_a,x_b]$ and $|g| \ge g_\ast$ there, then $\norm{v}_{L^2(I)} \ge g_\ast\,h_b^{5/2}/(32\pi^2)$.
\end{lemma}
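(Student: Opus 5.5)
The plan is a duality argument: test $g$ against a smooth bump supported in $[x_a,x_b]$ and move both derivatives onto the bump. Set $m := (x_a+x_b)/2$ and take the test function
\[
\phi(x) := \sin^2\bigl(\pi(x - x_a)/h_b\bigr) \quad \text{on } [x_a,x_b], \qquad \phi := 0 \text{ elsewhere}.
\]
This $\phi$ lies in $C^1(\bar I)$ and is piecewise $C^2$. Both $\phi$ and $\phi'$ vanish at $x_a$ and $x_b$, with $0 \le \phi \le 1$ and $\int\phi = h_b/2$. Since $\phi = \tfrac12 - \tfrac12\cos(2\pi(x-x_a)/h_b)$ on the interval, we have $\phi'' = (2\pi^2/h_b^2)\cos(2\pi(x-x_a)/h_b)$ there. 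Hence $\norm{\phi''}_{L^2(I)} = (2\pi^2/h_b^2)(h_b/2)^{1/2} = \sqrt2\,\pi^2 h_b^{-3/2}$.

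The key identity is integration by parts twice on $[x_a,x_b]$. The boundary terms vanish because $\phi(x_a)=\phi(x_b)=\phi'(x_a)=\phi'(x_b)=0$, so
\[
\int_{x_a}^{x_b} g\,\phi\,dx = -\int_{x_a}^{x_b} v''\phi\,dx = -\int_{x_a}^{x_b} v\,\phi''\,dx .
\]
Because $g$ has constant sign and $|g| \ge g_\ast$ on the interval, the left side satisfies $|\int g\phi| \ge g_\ast\int\phi = g_\ast h_b/2$. By the Cauchy--Schwarz inequality the right side is at most $\norm{v}_{L^2(I)}\norm{\phi''}_{L^2(I)}$. Combining the two,
\[
\norm{v}_{L^2(I)} \ge \frac{g_\ast h_b/2}{\sqrt2\,\pi^2 h_b^{-3/2}} = \frac{g_\ast h_b^{5/2}}{2\sqrt2\,\pi^2},
\]
which is larger than the claimed $g_\ast h_b^{5/2}/(32\pi^2)$. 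The only hypotheses used are $v \in C^2$ on the interval, which justifies the integration by parts, and that the interval lies in $I$.

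I expect no real obstacle here. The points to check are that the boundary terms vanish, which holds since $\phi$ and $\phi'$ are zero at both endpoints, and the arithmetic of the constant. If the $\sin^2$ bump gave a slightly different constant, any $C^1$ bump with a bounded second derivative would still produce the stated weaker constant $1/(32\pi^2)$.
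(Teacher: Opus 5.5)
Your proof is correct and is essentially the paper's argument: you test $g$ against a bump on $[x_a,x_b]$ whose value and derivative vanish at the endpoints, integrate by parts twice and apply Cauchy--Schwarz. The paper uses $\sin^4$, whose zero extension is $C^2$ on all of $\mathbb{R}$ so that it can integrate over $I$, together with the crude bound $|\phi''| \le 12\pi^2/h_b^2$; your $\sin^2$ bump, with the integration confined to $[x_a,x_b]$ and the exact value of $\norm{\phi''}_{L^2}$, gives the sharper constant $1/(2\sqrt2\,\pi^2)$.
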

\begin{proof}
Let $\phi(x) := \sin^4\bigl(\pi(x - x_a)/h_b\bigr)$ on $[x_a,x_b]$ and $\phi := 0$ elsewhere. Then $\phi \in C^2(\mathbb R)$, $\phi$ and $\phi'$ vanish at $x_a$ and $x_b$, $\int_{x_a}^{x_b}\phi = 3h_b/8$, and $|\phi''| \le 12\pi^2/h_b^2$, so $\norm{\phi''}_{L^2} \le 12\pi^2h_b^{-3/2}$. Two integrations by parts give $\int_I g\phi = -\int_I v''\phi = -\int_I v\phi''$, hence $g_\ast\cdot 3h_b/8 \le \bigl|\int_Ig\phi\bigr| \le \norm{v}_{L^2(I)}\norm{\phi''}_{L^2}$, which is the claim.
\end{proof}

The last lemma realizes an interpolation configuration by a set of points and bounds the Lagrange polynomials of that set by the envelope, as Lemma~\ref{lem:lagrange_polynomials} does for the node lattice.

\begin{lemma}[Lagrange Polynomials of an Interpolation Configuration]
\label{lem:lattice_lagrange}
Let $\mathcal S$ be an interpolation configuration with envelope $E$, and put $X := 1 + 3\sum_{k\le Q_1}|R_k|$. For $N \ge N_0$ let $S_N \subset [\xi,\pi-\xi]$ consist of
\begin{enumerate}
    \item[(i)] for each node arc $A_k$, the interior nodes $\theta_m \in A_k$, $2 \le m \le N-1$, with $m \equiv r \pmod{q_k}$ for some $r \in R_k$;
    \item[(ii)] for each virtual arc $A_k$, the lattice $\varphi_i^{(k)} := \alpha_k + (i-1)d_k$, $1 \le i \le K_k$, with $K_k := \lceil\rho_k(\beta_k - \alpha_k)N\rceil + X$ and $d_k := (\beta_k - \alpha_k)/(K_k - 1)$.
\end{enumerate}
Then the points of $S_N$ are distinct, their number $N'$ satisfies $N \le N' \le N + K_0$ with $K_0$ depending only on $\mathcal S$, and the Lagrange polynomials $\ell_s(z) := \omega_S(z)/\bigl((z - \cos s)\,\omega_S'(\cos s)\bigr)$ of the set $\{\cos s : s \in S_N\}$, where $\omega_S(z) := \prod_{s\in S_N}(z - \cos s)$, satisfy, for all $s \in S_N$ and all $\vartheta \in [0,\pi]$,
\begin{equation}\label{eq:lattice_lagrange}
N^{-1}\log|\ell_s(\cos\vartheta)| \le E(\vartheta) - E(s) + C\,\frac{\log N}{N},
\end{equation}
with $C$ and $N_0$ depending only on $\mathcal S$.
\end{lemma}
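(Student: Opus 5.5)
The approach follows the proof of Lemma~\ref{lem:lagrange_polynomials}: write $\log|\ell_s(\cos\vartheta)| = \log|\omega_S(\cos\vartheta)| - \log|\cos\vartheta - \cos s| - \log|\omega_S'(\cos s)|$ and compare each discrete potential with $NE$ through Riemann sums on arcs, using Lemma~\ref{lem:riemann_arc} for the virtual lattices and an analogous statement for the residue-selected node subsets.

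\emph{Counting and distinctness.} The node arcs lie in $[\theta_0,\pi-\theta_0]$ and the virtual arcs are disjoint from them, so node points and lattice points are distinct. Within each family the points are distinct by construction. On a node arc $A_k$ the number of selected interior nodes is $|R_k|(\beta_k-\alpha_k)/(q_k\Delta) + O(|R_k|)$. Since $\rho_k = |R_k|/(q_k\omega)$ and $1/\Delta = (N-1)/\omega$, this equals $\rho_k(\beta_k-\alpha_k)(N-1) + O(|R_k|)$, with the error at least $-2|R_k|$ once boundary effects and the exclusion of $m=1,N$ are counted. The padding $X = 1+3\sum|R_k|$ on each virtual lattice, together with the ceilings and the total mass $\sum\rho_k(\beta_k-\alpha_k)=1$, then gives $N' \ge N$. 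It also gives $N' \le N + K_0$ with $K_0 = O(Q X)$.

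\emph{Potential estimates.} Each node arc decomposes into $|R_k|$ lattices of spacing $q_k\Delta$. Lemma~\ref{lem:riemann_arc} with $d = q_k\Delta$, applied to each residue class, shows that $\sum g_\vartheta$ over the selected nodes equals $N\rho_k\int_{A_k}g_\vartheta + O(\log N)$ whenever $\vartheta$ is at distance at least $d/4$ from them. The correction coming from $N$ versus $N-1$ and from the end cells is $O(1)$, because $\int g_\vartheta$ is bounded. For a virtual lattice, $d_k^{-1} = \rho_kN + O(1)$ as in the proof of Lemma~\ref{lem:construction}. Summing over the arcs gives the upper bound $\log|\omega_S(\cos\vartheta)| \le NE(\vartheta) + C\log N$ for every $\vartheta$: in each of the finitely many lattices, drop the at most one point within a quarter spacing, whose factor is at most $2$, and apply case (b) of Lemma~\ref{lem:riemann_arc}. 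Applying case (b) at $\vartheta = s$ with $s$ itself omitted gives the lower bound $\log|\omega_S'(\cos s)| \ge NE(s) - C\log N$.

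\emph{The main obstacle.} For the lower bound at $s$, the other lattices may contain points very close to $s$. This happens where two node residue classes interleave, or where adjacent arcs share an endpoint region, and Lemma~\ref{lem:riemann_arc} only controls points at distance at least a quarter of their own spacing. Two cases arise.

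Within one node arc, all selected points are nodes, hence at distance at least $\Delta$ from $s$. I would then run the argument of Lemma~\ref{lem:riemann} for the full sublattice. The only points not covered are those within $3d/2$ of $s$, and there are $O(1)$ of them. Each has $|\cos s - \cos\zeta| \ge c\Delta\sin(\xi/4)$ by \eqref{eq:node_distance}, so it contributes $O(\log N)$.

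Across different arcs, the disjointness of the closed arcs gives a positive separation, except between a virtual point and a node. Since the arcs are pairwise disjoint closed sets, their distances are bounded below by a constant depending on $\mathcal S$, so only the fixed $O(1)$ near points matter. The same reasoning bounds $-\log|\cos\vartheta-\cos s| \le$ a quantity absorbed by the upper estimate: for $\vartheta$ near $s$, treat $\ell_s$ through the sum over $S_N\setminus\{s\}$ as in Case~2 of Lemma~\ref{lem:lagrange_polynomials}.

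Combining the three estimates gives \eqref{eq:lattice_lagrange}, with constants depending only on $\xi$ and on the finitely many parameters of $\mathcal S$.
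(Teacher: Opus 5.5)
Your proposal is correct and is essentially the paper's proof. It splits $S_N$ into residue lattices of spacing $q_k\Delta$ and virtual lattices and applies Lemma~\ref{lem:riemann_arc} to each. For the upper bound on $\omega_S$ it drops at most one near point per lattice, each with factor at most $2$. For $\omega_S'(\cos s)$ it controls the near points of the other lattices through $|s-s'|\ge c/N$: two nodes are $\Delta$ apart, and points on different arcs are a fixed distance apart. It then treats $\vartheta$ near and far from $s$ separately.

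There are two harmless slips. The clause ``except between a virtual point and a node'' is spurious: node and virtual arcs are pairwise disjoint closed arcs, so such pairs are separated by a constant, as your next sentence says. Also, the end-cell correction on a residue lattice is $O(\log N)$, not $O(1)$, because $g_\vartheta$ has a logarithmic singularity; this is still absorbed in the final $O(\log N)$.
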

\begin{proof}
\emph{Lattices and counting.} For a node arc $A_k$ and a residue $r \in R_k$, the nodes of (i) with $m \equiv r$ form a lattice of spacing $q_k\Delta$ whose points lie in $A_k$, since the nodes are the points $\theta_0 + \Delta\mathbb Z$ in $[\theta_0,\pi-\theta_0]$; it has between $(\beta_k - \alpha_k)/(q_k\Delta) - 3$ and $(\beta_k - \alpha_k)/(q_k\Delta) + 1$ points, because an interval of length $\ell$ contains between $\ell/(q_k\Delta) - 1$ and $\ell/(q_k\Delta) + 1$ points of a lattice of spacing $q_k\Delta$ and at most two of them, $\theta_1$ and $\theta_N$, are excluded as boundary nodes. With $1/\Delta = (N-1)/\omega$ and $\rho_k = |R_k|/(q_k\omega)$, the node arcs contribute between $(N-1)m_1 - 3\sum_{k\le Q_1}|R_k|$ and $(N-1)m_1 + \sum_{k\le Q_1}|R_k|$ points, where $m_1 := \sum_{k\le Q_1}\rho_k(\beta_k-\alpha_k) \le 1$; the virtual arcs contribute between $Nm_2 + Q_2X$ and $Nm_2 + Q_2(X+1)$ points, where $m_2 := \sum_{k>Q_1}\rho_k(\beta_k-\alpha_k)$ and $Q_2 \ge 1$ is their number. Since $m_1 + m_2 = 1$, $N' \ge N - 1 - 3\sum_{k\le Q_1}|R_k| + Q_2X \ge N$ by the choice of $X$, and $N' \le N + K_0$ with $K_0 := \sum_{k\le Q_1}|R_k| + Q_2(X+1)$. The points are distinct: two residue lattices of the same node arc consist of nodes with different residues, points of different arcs lie in disjoint arcs, and the points of a lattice are distinct. Thus $S_N$ is the union of $P := \sum_{k\le Q_1}|R_k| + Q_2$ lattices $\Lambda_1, \dots, \Lambda_P$; the $j$-th of them has spacing $d_j$, $K_j \ge 2$ points, lies on an arc $[a_j,b_j] \subset [\xi,\pi-\xi]$ with $b_j - a_j = (K_j - 1)d_j$, and has density $\varrho_j$ ($= 1/(q_k\omega)$ for a residue lattice of the node arc $A_k$, $= \rho_k$ for a virtual arc) in the sense that $d_j^{-1} = \varrho_jN + O(1)$: for a residue lattice $d_j^{-1} = (N-1)/(q_k\omega)$, and for a virtual arc $d_j^{-1} = (K_k-1)/(\beta_k - \alpha_k) \in [\rho_kN + (X-1)/(\beta_k-\alpha_k),\ \rho_kN + X/(\beta_k-\alpha_k))$. For $N \ge N_0$ every spacing is at most $\min\{\xi,1\}/4$ and every $K_j \ge 2$. The arc $[a_j,b_j]$ of a virtual lattice is $A_k$; that of a residue lattice lies in $A_k$ and differs from it by two intervals of length less than $(q_k+1)\Delta$ each, which is at most $C/N$ and, for $N \ge N_0$, at most $1/4$.

\emph{Products.} For $\vartheta \in [0,\pi]$, $\log|\omega_S(\cos\vartheta)| = \sum_{j=1}^P\sum_{i=1}^{K_j}g_\vartheta(\varphi_i^{(j)})$ with $g_\vartheta(\varphi) = \log|\cos\vartheta - \cos\varphi|$ as in Lemma~\ref{lem:riemann}. For each lattice, Lemma~\ref{lem:riemann_arc} with $d = d_j$ gives $d_j\sum_ig_\vartheta(\varphi^{(j)}_i) = \int_{a_j}^{b_j}g_\vartheta + O(d_j\log(1/d_j))$, the sum being taken without the point of the lattice within $d_j/4$ of $\vartheta$ if there is one (there is at most one, the points being $d_j$ apart). Dividing by $d_j$ and using $d_j^{-1} = \varrho_jN + O(1)$, $\log(1/d_j) \le C\log N$, $|\int_{a_j}^{b_j}g_\vartheta| \le \int_0^\pi|g_\vartheta| \le C$ and, for the residue lattices, $|\int_{A_k}g_\vartheta - \int_{a_j}^{b_j}g_\vartheta| \le C\log N/N$ (the integral of $|g_\vartheta|$ over an interval of length $\lambda \le 1/4$ inside $[\xi/2,\pi-\xi/2]$ is at most $C\lambda(1 + \log(1/\lambda))$, by the bound on the end half-cells in the proof of Lemma~\ref{lem:riemann}), we obtain for the sum $\Sigma^\circ_\vartheta$ over $S_N$ without the points within a quarter spacing of $\vartheta$
\begin{equation}\label{eq:lattice_sum}
\Sigma^\circ_\vartheta = N\sum_{k=1}^Q\rho_k\int_{A_k}g_\vartheta + O(\log N) = NE(\vartheta) + O(\log N),
\end{equation}
since the residue lattices of $A_k$ together have density $|R_k|/(q_k\omega) = \rho_k$. Each omitted point contributes a factor of modulus at most $2$, so
\begin{equation}\label{eq:lattice_upper}
\log|\omega_S(\cos\vartheta)| \le NE(\vartheta) + C\log N \qquad\text{for all } \vartheta \in [0,\pi].
\end{equation}

\emph{Derivatives.} Let $s \in S_N$. Then $\log|\omega_S'(\cos s)| = \sum_{s' \in S_N\setminus\{s\}}g_s(s')$. In the lattice of $s$ no other point lies within a quarter spacing of $s$, and in every other lattice at most one point $s'$ does; such a point is at distance at least $\Delta$ from $s$ if both are nodes and at least the distance between their arcs otherwise, so $|s - s'| \ge c/N$ with $c > 0$ depending on $\mathcal S$, and since $s, s' \in [\xi,\pi-\xi]$,
\begin{equation}\label{eq:point_distance}
|\cos s - \cos s'| = 2\bigl|\sin\tfrac12(s-s')\bigr|\sin\tfrac12(s+s') \ge \frac{2\sin\xi}{\pi}\,|s - s'| \ge \frac{c'}{N},
\end{equation}
by the first elementary fact of Appendix~\ref{app:rates} and $\tfrac12(s+s') \in [\xi,\pi-\xi]$. Hence $g_s(s') \ge -C\log N$ for the at most $P$ omitted points, and \eqref{eq:lattice_sum} with $\vartheta = s$ gives
\begin{equation}\label{eq:lattice_lower}
\log|\omega_S'(\cos s)| \ge NE(s) - C\log N .
\end{equation}

\emph{Lagrange polynomials.} Let $s \in S_N$, $d_s$ the spacing of its lattice, and $\vartheta \in [0,\pi]$. If $|\vartheta - s| \ge d_s/4$, then $|\cos\vartheta - \cos s| \ge (2\sin(\xi/4)/\pi)|\vartheta - s| \ge c/N$ by the two elementary facts of Appendix~\ref{app:rates} with $\xi$ in place of $\theta_0$ (as $s \in [\xi,\pi-\xi]$), and \eqref{eq:lattice_upper}, \eqref{eq:lattice_lower} give $\log|\ell_s(\cos\vartheta)| = \log|\omega_S(\cos\vartheta)| - \log|\cos\vartheta - \cos s| - \log|\omega_S'(\cos s)| \le N[E(\vartheta) - E(s)] + C\log N$. If $|\vartheta - s| < d_s/4$, write $\log|\ell_s(\cos\vartheta)| = \sum_{s'\ne s}g_\vartheta(s') - \sum_{s'\ne s}g_s(s')$; the second sum is at least $NE(s) - C\log N$ by \eqref{eq:lattice_lower}, and the first is at most $NE(\vartheta) + C\log N$ by the argument of \eqref{eq:lattice_upper}, in which the point of the lattice of $s$ within a quarter spacing of $\vartheta$ is $s$ itself. This proves \eqref{eq:lattice_lagrange}.
\end{proof}

\subsection{The Certified Configuration}

\begin{table}[ht!]
\centering\small
\caption{Configuration for $\delta = L/4$ ($\omega = 2\pi/3$, $\theta_0 = \pi/6$). The layer parameter is $\theta_s = 0.9462$; the eight extension arcs are listed first, then the 16 centre arcs, the last of which ends at $\pi/2$ (read column by column). All arcs are in radians.}
\label{tab:config_quarter}
\begin{tabular}{ccc|ccc|ccc}
$\alpha_k$ & $\beta_k$ & $\rho_k$ & $\alpha_k$ & $\beta_k$ & $\rho_k$ & $\alpha_k$ & $\beta_k$ & $\rho_k$ \\ \hline
$0.0200$ & $0.0633$ & $0.3869$ & $0.9462$ & $0.9494$ & $0.4638$ & $1.1335$ & $1.1882$ & $0.3897$ \\
$0.0633$ & $0.1066$ & $0.1133$ & $0.9494$ & $0.9545$ & $0.4585$ & $1.1882$ & $1.2428$ & $0.3850$ \\
$0.1066$ & $0.1499$ & $0.1573$ & $0.9545$ & $0.9624$ & $0.4509$ & $1.2428$ & $1.2975$ & $0.3804$ \\
$0.1499$ & $0.1933$ & $0.1578$ & $0.9624$ & $0.9745$ & $0.4430$ & $1.2975$ & $1.3522$ & $0.3773$ \\
$0.1933$ & $0.2366$ & $0.1483$ & $0.9745$ & $0.9931$ & $0.4339$ & $1.3522$ & $1.4068$ & $0.3749$ \\
$0.2366$ & $0.2799$ & $0.1259$ & $0.9931$ & $1.0217$ & $0.4237$ & $1.4068$ & $1.4615$ & $0.3732$ \\
$0.2799$ & $0.3232$ & $0.1127$ & $1.0217$ & $1.0658$ & $0.4125$ & $1.4615$ & $1.5161$ & $0.3721$ \\
$0.3232$ & $0.3665$ & $0.0019$ & $1.0658$ & $1.1335$ & $0.4006$ & $1.5161$ & $\pi/2$ & $0.3716$ \\
\end{tabular}
\end{table}

\paragraph{Evaluation of the configuration.} For the configuration of Table~\ref{tab:config_quarter} the values $E(\theta_0)$, $E_{\max}$ and $E_c$ are those of the explicit function \eqref{eq:envelope} and \eqref{eq:arc_integral}, evaluated with the Clausen function of Lemma~\ref{lem:cl2}. The value $E(\theta_0)$ is a single evaluation. Since the configuration is symmetric, $E(\pi - \vartheta) = E(\vartheta)$, so $E_{\max}$ is the maximum over $[0,\pi/2]$ and $E_c$ the maximum over $[\theta_s,\pi/2]$. Each was bounded from above by evaluating $E$ on a grid of spacing $h = 10^{-6}$ that starts at the left endpoint of the respective interval and ends at $\pi/2$, so that every point of the interval is within $h/2$ of a grid point, and adding the modulus of continuity \eqref{eq:E_modulus} at $s = h/2$, which is at most $0.00056$; the largest grid value is a lower bound. The evaluation used double precision, with the argument reduced to $(-\pi,\pi]$ and the Clausen function computed from its series $\mathrm{Cl}_2(x) = x - x\log|x| + \sum_{k\ge1}|B_{2k}|\,x^{2k+1}/\bigl(2k\,(2k+1)!\bigr)$ truncated after $25$ terms, whose remainder is below $10^{-18}$. The computed arguments differ from the exact ones by less than $4\times10^{-15}$, and $|\mathrm{Cl}_2(y) - \mathrm{Cl}_2(y')| \le s\bigl(2 + \log(2\pi/s)\bigr)$ for $s := |y - y'| \le 1$, so each computed value of $\mathrm{Cl}_2$ errs by less than $2\times10^{-13}$, and each computed value of $E$, a combination of Clausen values with coefficients of total modulus $M(\mathcal C)$, by less than $10^{-10}$; this amount, added to the padding, does not change the reported values, which are rounded outwards to five decimals. As a check, the double-precision values of $E$ differ from $30$-digit evaluations of the closed form by less than $3\times10^{-15}$ at $400$ random points of $[0,\pi/2]$, at $\theta_0$ and at the two maximizing grid points.

This gives
\[
E(\theta_0) = -0.58682, \qquad E_{\max} \in [-0.43428, -0.43371], \qquad E_c \in [-0.83815, -0.83759],
\]
hence $\ell \in [-0.15311, -0.15254]$, $\tau \in [0.40331, 0.40444]$ and $\ell + a\tau = E(\theta_0) - (1-a)E_{\max} - aE_c \ge 0.0084$, the worst case over the two intervals; $-\ell/\tau = 0.378$ against $a = 0.4$. The total mass is $0.995$, the extension arcs end before $\theta_0$, and $\ell > -(\kappa_\sigma - \kappa_\Lambda)$ holds with $\kappa_\sigma - \kappa_\Lambda = 0.3231$.

\paragraph{How the configuration was found.} For fixed arcs and fixed $\theta_s$ the densities were chosen by a linear program in $(\rho_1,\dots,\rho_Q, c)$: maximize $E(\theta_0) + c$ subject to $E + c \le 0$ at ten equispaced points of every arc and of the layer and at additional points near $\vartheta = 0$ and near $\theta_0$, $E + c \le -\tau$ at those points that lie in $[\theta_s,\pi/2]$, and $m(\mathcal C) \le 199/200$, for a prescribed $\tau$; the constant $c$ plays the role of the normalization $b$ in \eqref{eq:reduced_problem}, and the constraints are the discrete form of \eqref{eq:reduced_constraints} with the free set $[-\cos\theta_s, \cos\theta_s]$. The arcs were $16$ cells of $[1/50, \theta_0]$ and $16$ cells of $[\theta_s, \pi/2]$, refined geometrically towards $\theta_0$ and towards $\theta_s$; $\theta_s$ was scanned; arcs whose density vanished were discarded, which produced the gap between the extension arcs and $\theta_0$ visible in Table~\ref{tab:config_quarter}. The table lists the arcs rounded to four decimals, and the certified values above were computed from the rounded configuration.

The same linear program with $\omega \le \pi/2$, evaluated a posteriori in the same way, returned negative margins $\ell + a\tau$ at every $\tau$ tried: between $-0.002$ and $-0.005$ for $\delta = L/2$, between $-0.017$ and $-0.026$ for $\delta = L$, and between $-0.038$ and $-0.055$ for $\delta = 2L$, at $\tau/\kappa_\sigma = 0.2$, $0.5$ and $0.8$, and the configurations built from the equilibrium measures of Table~\ref{tab:dual} give margins of the same sign, closer to zero; for $\delta = L/2$ the margins are within the discretization of the density from zero, as the linear tail law of Corollary~\ref{cor:self_dual} requires.

\subsection{The Certified Interpolation Configurations}

Theorem~\ref{thm:dual} bounds the rate of $\max\{\gamma_t, t\Gamma_t\}$ at $t = e^{-N\tau}$ from above by $\Phi(\mathcal S,\tau)$, and interpolation configurations with small $\Phi(\mathcal S,\tau)$ are found by a linear program: minimize $\Phi_\ast$ over piecewise-constant densities on cells of the extension and of the physical interval, at most $1/\omega$ on the physical cells, whose points are nodes, and unrestricted on the extension cells inside $[\xi, \theta_g]$, whose points are virtual, with total mass one, subject to $E(\vartheta) - E(\varphi) - \tau \le \Phi_\ast$ for all $\vartheta$ in the physical interval and all $\varphi$ in a node cell and to $E(\vartheta) - E(\varphi) \le \Phi_\ast$ for all $\varphi$ in a virtual cell, the constraints being imposed at sample points of the cells; cells whose density vanishes are removed from the candidate support and the program is re-solved, and the gap point $\theta_g$ is scanned. Every feasible point is an interpolation configuration up to the rational approximation of its densities.

The interpolation configurations behind Corollary~\ref{cor:certified_instances}(ii) were obtained and certified as follows. For a level $\tau = \bar\tau\kappa_\sigma$ this linear program was solved with $25$ extension cells on $[1/50, \theta_0]$ and $33$ physical cells on $[\theta_0, \pi/2]$, refined towards $\theta_0$ and mirrored about $\pi/2$, the constraints being imposed at two interior points of every candidate cell and at $64$ points of $[\theta_0, \pi/2]$ (the configuration is symmetric about $\pi/2$); the gap point $\theta_g$ was scanned over eight values in $[0.15\theta_0, 0.995\theta_0]$, and the program was solved at most four times, cells whose density vanished being removed from the candidate support between the solves. The minimizer was turned into an interpolation configuration of Definition~\ref{def:interp_configuration} with $\xi = 1/50$: every physical cell of positive node density $\bar\rho$, and its mirror image, became a node arc with modulus $q = 200$ and $r := \lfloor 200\,\omega\bar\rho\rfloor$ residues, of density $r/(200\omega) \le \bar\rho \le 1/\omega$, the cell ending at $\pi/2$ and its mirror image forming a single arc, so that every interpolation configuration has $65$ node arcs; every extension cell of positive virtual density, and its mirror image, became a virtual arc; every arc was shrunk by $10^{-9}$ at both ends, which makes the arcs pairwise disjoint (which residues are used does not enter the envelope); and the virtual densities were then multiplied by a common factor so that the total mass is one. In exact arithmetic the virtual densities are defined as the listed values times $(1 - m_1)/m_2$, with $m_1$ and $m_2$ the exact masses of the node arcs and of the virtual arcs with the listed densities; this factor differs from one by less than $10^{-13}$ and changes $E$ by less than $10^{-13}$, which the padding below covers.

The quantities $E_I$, $E_1$ and $E_2$ of Definition~\ref{def:interp_configuration} were then bounded by means of the closed form \eqref{eq:arc_integral} on grids of spacing $h = 10^{-6}$ on the physical interval and on every arc: the grid maximum was increased and the grid minima decreased by the padding $M(h/2)\bigl(1 + \log(4\pi/h)\bigr) + 10^{-10}$ with $M := 4\sum_k\rho_k$, whose first term bounds $|E(\vartheta) - E(\vartheta')|$ for $|\vartheta - \vartheta'| \le h/2$ by the proof of Lemma~\ref{lem:E_modulus} applied to each arc of \eqref{eq:interp_envelope}, and whose second term bounds the rounding error of the computed values of $E$, as for the configuration above, since $M < 420$. As a check, the double-precision values were compared with $30$-digit evaluations at five points of each of the $116$ interpolation configurations computed (at $\theta_0$, at midpoints of arcs and at random points of the physical interval), with differences below $10^{-14}$. With $\bar E_I$ the padded maximum and $\underline E_1$, $\underline E_2$ the padded minima, the certified values $\bar A_1 := \bar E_I - \underline E_1 \ge A_1(\mathcal S)$ and $\bar A_2 := \bar E_I - \underline E_2 \ge A_2(\mathcal S)$ give the interval $[\bar A_1/(1-a), -\bar A_2/a]$, which is contained in \eqref{eq:certified_interval}; every $\tau$ in it is certified by Corollary~\ref{cor:certified}.

Tables~\ref{tab:certified_L} and \ref{tab:certified_2L} list, for each extension, a subcover of the certified range by the certificates computed. Each row is one interpolation configuration, and its last column is the interval of levels that it certifies, formed from the two preceding columns; the levels $\bar\tau$ were chosen so that the intervals overlap, and their union contains the ranges of Corollary~\ref{cor:certified_instances}(ii). The arcs, densities, residue counts and certified values of all the configurations are given in the supplementary data file \texttt{certified\_\allowbreak interpolation\_\allowbreak configurations.json}.

\begingroup
\footnotesize
\setlength{\tabcolsep}{4pt}
\begin{longtable}{cccccc}
\caption{Certified interpolation configurations for $\delta = L$, a subcover of the certified range by 52 of the 76 certificates computed: level $\bar\tau = \tau/\kappa_\sigma$ at which the linear program was solved, which need not lie in the certified interval, number of virtual arcs, mass carried by the nodes, certified upper bounds $\bar A_1$ and $\bar A_2$ on $A_1(\mathcal S)$ and $A_2(\mathcal S)$, and the certified interval $[\bar A_1/(1-a), -\bar A_2/a]$ in units of $\kappa_\sigma$; the masses are rounded to the nearest $10^{-3}$ and the other entries to the nearest $10^{-4}$, and the certified values are given in the supplementary data file.}
\label{tab:certified_L}\\
$\bar\tau$ & virtual arcs & mass of nodes & $\bar A_1$ & $\bar A_2$ & interval \\ \hline
\endfirsthead
$\bar\tau$ & virtual arcs & mass of nodes & $\bar A_1$ & $\bar A_2$ & interval \\ \hline
\endhead
$0.1005$ & $30$ & $0.478$ & $+0.0626$ & $-0.0947$ & $[0.0969, 0.0977]$ \\
$0.1015$ & $30$ & $0.479$ & $+0.0629$ & $-0.0957$ & $[0.0974, 0.0988]$ \\
$0.1030$ & $30$ & $0.481$ & $+0.0637$ & $-0.0983$ & $[0.0986, 0.1015]$ \\
$0.1055$ & $30$ & $0.483$ & $+0.0647$ & $-0.1009$ & $[0.1001, 0.1041]$ \\
$0.1085$ & $30$ & $0.486$ & $+0.0663$ & $-0.1041$ & $[0.1026, 0.1075]$ \\
$0.1130$ & $30$ & $0.490$ & $+0.0685$ & $-0.1088$ & $[0.1060, 0.1123]$ \\
$0.1165$ & $30$ & $0.494$ & $+0.0704$ & $-0.1135$ & $[0.1090, 0.1171]$ \\
$0.1235$ & $30$ & $0.500$ & $+0.0737$ & $-0.1197$ & $[0.1141, 0.1235]$ \\
$0.1300$ & $30$ & $0.506$ & $+0.0772$ & $-0.1274$ & $[0.1194, 0.1314]$ \\
$0.1441$ & $30$ & $0.520$ & $+0.0848$ & $-0.1428$ & $[0.1312, 0.1474]$ \\
$0.1591$ & $30$ & $0.534$ & $+0.0929$ & $-0.1588$ & $[0.1438, 0.1639]$ \\
$0.1800$ & $30$ & $0.553$ & $+0.1054$ & $-0.1787$ & $[0.1631, 0.1844]$ \\
$0.1960$ & $30$ & $0.568$ & $+0.1154$ & $-0.1923$ & $[0.1786, 0.1984]$ \\
$0.2030$ & $28$ & $0.575$ & $+0.1198$ & $-0.2026$ & $[0.1854, 0.2090]$ \\
$0.2232$ & $28$ & $0.592$ & $+0.1309$ & $-0.2241$ & $[0.2026, 0.2312]$ \\
$0.2382$ & $26$ & $0.610$ & $+0.1455$ & $-0.2350$ & $[0.2251, 0.2425]$ \\
$0.2600$ & $26$ & $0.625$ & $+0.1565$ & $-0.2581$ & $[0.2422, 0.2663]$ \\
$0.2720$ & $26$ & $0.634$ & $+0.1629$ & $-0.2714$ & $[0.2521, 0.2800]$ \\
$0.3000$ & $26$ & $0.655$ & $+0.1784$ & $-0.3024$ & $[0.2760, 0.3120]$ \\
$0.3108$ & $26$ & $0.662$ & $+0.1851$ & $-0.3129$ & $[0.2865, 0.3228]$ \\
$0.3400$ & $26$ & $0.682$ & $+0.2034$ & $-0.3402$ & $[0.3148, 0.3510]$ \\
$0.3499$ & $26$ & $0.690$ & $+0.2099$ & $-0.3490$ & $[0.3248, 0.3601]$ \\
$0.3800$ & $26$ & $0.711$ & $+0.2286$ & $-0.3739$ & $[0.3539, 0.3857]$ \\
$0.4106$ & $26$ & $0.732$ & $+0.2483$ & $-0.3992$ & $[0.3842, 0.4119]$ \\
$0.4256$ & $24$ & $0.743$ & $+0.2566$ & $-0.4254$ & $[0.3972, 0.4389]$ \\
$0.4600$ & $24$ & $0.763$ & $+0.2776$ & $-0.4599$ & $[0.4296, 0.4745]$ \\
$0.5000$ & $24$ & $0.787$ & $+0.3033$ & $-0.4989$ & $[0.4694, 0.5147]$ \\
$0.5400$ & $22$ & $0.810$ & $+0.3290$ & $-0.5377$ & $[0.5092, 0.5548]$ \\
$0.5800$ & $20$ & $0.833$ & $+0.3563$ & $-0.5768$ & $[0.5514, 0.5951]$ \\
$0.6200$ & $20$ & $0.853$ & $+0.3823$ & $-0.6140$ & $[0.5917, 0.6335]$ \\
$0.6312$ & $20$ & $0.859$ & $+0.3899$ & $-0.6242$ & $[0.6034, 0.6440]$ \\
$0.6600$ & $16$ & $0.874$ & $+0.4104$ & $-0.6511$ & $[0.6352, 0.6718]$ \\
$0.6732$ & $16$ & $0.880$ & $+0.4189$ & $-0.6640$ & $[0.6483, 0.6851]$ \\
$0.7000$ & $14$ & $0.893$ & $+0.4376$ & $-0.6876$ & $[0.6772, 0.7094]$ \\
$0.7138$ & $14$ & $0.899$ & $+0.4470$ & $-0.7020$ & $[0.6918, 0.7243]$ \\
$0.7409$ & $14$ & $0.911$ & $+0.4648$ & $-0.7280$ & $[0.7193, 0.7511]$ \\
$0.7559$ & $14$ & $0.918$ & $+0.4751$ & $-0.7426$ & $[0.7354, 0.7662]$ \\
$0.7800$ & $14$ & $0.927$ & $+0.4910$ & $-0.7637$ & $[0.7599, 0.7880]$ \\
$0.8012$ & $12$ & $0.936$ & $+0.5061$ & $-0.7834$ & $[0.7833, 0.8083]$ \\
$0.8200$ & $10$ & $0.944$ & $+0.5203$ & $-0.8003$ & $[0.8052, 0.8258]$ \\
$0.8340$ & $10$ & $0.949$ & $+0.5291$ & $-0.8127$ & $[0.8188, 0.8385]$ \\
$0.8450$ & $10$ & $0.953$ & $+0.5372$ & $-0.8240$ & $[0.8314, 0.8502]$ \\
$0.8600$ & $10$ & $0.958$ & $+0.5478$ & $-0.8380$ & $[0.8479, 0.8646]$ \\
$0.8750$ & $10$ & $0.964$ & $+0.5585$ & $-0.8509$ & $[0.8644, 0.8779]$ \\
$0.8815$ & $10$ & $0.966$ & $+0.5631$ & $-0.8565$ & $[0.8715, 0.8837]$ \\
$0.8894$ & $8$ & $0.969$ & $+0.5687$ & $-0.8644$ & $[0.8801, 0.8919]$ \\
$0.8950$ & $8$ & $0.970$ & $+0.5721$ & $-0.8685$ & $[0.8855, 0.8961]$ \\
$0.9000$ & $6$ & $0.973$ & $+0.5772$ & $-0.8731$ & $[0.8934, 0.9008]$ \\
$0.9031$ & $6$ & $0.974$ & $+0.5793$ & $-0.8758$ & $[0.8966, 0.9036]$ \\
$0.9080$ & $6$ & $0.975$ & $+0.5825$ & $-0.8805$ & $[0.9015, 0.9085]$ \\
$0.9120$ & $6$ & $0.977$ & $+0.5857$ & $-0.8853$ & $[0.9065, 0.9135]$ \\
$0.9160$ & $4$ & $0.978$ & $+0.5899$ & $-0.8857$ & $[0.9130, 0.9139]$ \\
\end{longtable}
\endgroup

\begingroup
\footnotesize
\setlength{\tabcolsep}{4pt}
\begin{longtable}{cccccc}
\caption{Certified interpolation configurations for $\delta = 2L$, a subcover of the certified range by 25 of the 40 certificates computed, with the columns of Table~\ref{tab:certified_L}.}
\label{tab:certified_2L}\\
$\bar\tau$ & virtual arcs & mass of nodes & $\bar A_1$ & $\bar A_2$ & interval \\ \hline
\endfirsthead
$\bar\tau$ & virtual arcs & mass of nodes & $\bar A_1$ & $\bar A_2$ & interval \\ \hline
\endhead
$0.1000$ & $30$ & $0.370$ & $+0.0535$ & $-0.1558$ & $[0.0791, 0.1059]$ \\
$0.1027$ & $30$ & $0.373$ & $+0.0546$ & $-0.1591$ & $[0.0806, 0.1082]$ \\
$0.1400$ & $30$ & $0.416$ & $+0.0722$ & $-0.2226$ & $[0.1067, 0.1514]$ \\
$0.1800$ & $30$ & $0.461$ & $+0.0941$ & $-0.2799$ & $[0.1391, 0.1904]$ \\
$0.2200$ & $26$ & $0.507$ & $+0.1205$ & $-0.3440$ & $[0.1781, 0.2340]$ \\
$0.2600$ & $26$ & $0.543$ & $+0.1405$ & $-0.4110$ & $[0.2077, 0.2796]$ \\
$0.3400$ & $26$ & $0.612$ & $+0.1871$ & $-0.5301$ & $[0.2765, 0.3606]$ \\
$0.4200$ & $24$ & $0.678$ & $+0.2413$ & $-0.6554$ & $[0.3567, 0.4458]$ \\
$0.5000$ & $24$ & $0.737$ & $+0.2914$ & $-0.7732$ & $[0.4307, 0.5259]$ \\
$0.5800$ & $20$ & $0.793$ & $+0.3473$ & $-0.8935$ & $[0.5134, 0.6077]$ \\
$0.6600$ & $16$ & $0.842$ & $+0.4058$ & $-1.0044$ & $[0.5998, 0.6832]$ \\
$0.7000$ & $14$ & $0.866$ & $+0.4368$ & $-1.0611$ & $[0.6457, 0.7217]$ \\
$0.7400$ & $14$ & $0.888$ & $+0.4664$ & $-1.1183$ & $[0.6895, 0.7606]$ \\
$0.7800$ & $14$ & $0.909$ & $+0.4976$ & $-1.1715$ & $[0.7355, 0.7968]$ \\
$0.8200$ & $10$ & $0.929$ & $+0.5296$ & $-1.2256$ & $[0.7827, 0.8336]$ \\
$0.8600$ & $10$ & $0.947$ & $+0.5615$ & $-1.2799$ & $[0.8299, 0.8706]$ \\
$0.8778$ & $10$ & $0.955$ & $+0.5760$ & $-1.3018$ & $[0.8514, 0.8855]$ \\
$0.9000$ & $6$ & $0.966$ & $+0.5966$ & $-1.3337$ & $[0.8818, 0.9072]$ \\
$0.9165$ & $4$ & $0.973$ & $+0.6113$ & $-1.3519$ & $[0.9036, 0.9195]$ \\
$0.9279$ & $4$ & $0.977$ & $+0.6204$ & $-1.3682$ & $[0.9170, 0.9306]$ \\
$0.9375$ & $4$ & $0.980$ & $+0.6280$ & $-1.3810$ & $[0.9283, 0.9393]$ \\
$0.9460$ & $4$ & $0.983$ & $+0.6351$ & $-1.3930$ & $[0.9388, 0.9475]$ \\
$0.9525$ & $4$ & $0.985$ & $+0.6404$ & $-1.4015$ & $[0.9466, 0.9533]$ \\
$0.9562$ & $4$ & $0.987$ & $+0.6437$ & $-1.4066$ & $[0.9515, 0.9567]$ \\
$0.9610$ & $4$ & $0.988$ & $+0.6471$ & $-1.4116$ & $[0.9565, 0.9601]$ \\
\end{longtable}
\endgroup

\section{The Boundary Layer at the Start of the Tail}
\label{app:start}

This appendix contains the two lemmas behind Theorem~\ref{thm:initial_slope}. Throughout, $\omega \in (0,\pi)$ is fixed, $\theta_0 = (\pi - \omega)/2$, $s_0 := \tan\theta_0$, and $g$ and $l$ are the widths \eqref{eq:layer_widths}, so that $\Lambda := g + l = 2\omega/(\pi\sin\omega)$ and $g/l = s_0^2$. For $x \in (-g,l)$ and for $x \in \mathbb R\setminus[-g,l]$, respectively, let
\begin{equation}\label{eq:inner_variables}
t(x) := \Bigl(\frac{x+g}{l-x}\Bigr)^{1/2}, \qquad \eta(x) := \Bigl(\frac{x+g}{x-l}\Bigr)^{1/2},
\end{equation}
both positive, and define $e : \mathbb R \to \mathbb R$ and $m : \mathbb R\setminus\{0\} \to \mathbb R$ by $e := 0$ on $(-\infty,-g]$, $e := -1$ on $[l,\infty)$, $e(0) := -2\theta_0/\pi$,
\begin{equation}\label{eq:inner_solution}
e(x) := -\frac2\pi\arctan t(x) - \frac x\omega\log\Bigl|\frac{t(x) + s_0}{t(x) - s_0}\Bigr| \qquad (x \in (-g,l)\setminus\{0\}),
\end{equation}
and $m := 0$ on $[-g,0)$, $m := 1/\omega$ on $(0,l]$, $m(x) := (2/(\pi\omega))\arctan(\eta(x)/s_0)$ for $x \in \mathbb R\setminus[-g,l]$. In the variable $x = (\vartheta - \theta_0)/\tau$, $\tau e$ and $m$ describe the envelope, up to a constant, and the density of the measures constructed in Section~\ref{sec:initial_slope_proof} near $\theta_0$: the density vanishes on the gap $(-g,0)$ and equals the node density $1/\omega$ on the layer $(0,l)$, the envelope is maximal, $e = 0$, on the extension beyond the gap and smaller by $\tau$, $e = -1$, on the physical interval beyond the layer, and the two are related by the Hilbert transform (Lemma~\ref{lem:inner}(ii)), the limit as $\tau \to 0$ of the relation between envelope and density in Lemma~\ref{lem:shape}.

\begin{lemma}[The Boundary Layer]
\label{lem:inner}
\leavevmode
\begin{enumerate}
    \item[(i)] The function $e$ is absolutely continuous and nonincreasing, with $e(0) = -2\theta_0/\pi$. Its derivative is $e'(x) = -\omega^{-1}\log|(t(x) + s_0)/(t(x) - s_0)|$ on $(-g,l)\setminus\{0\}$ and $e' = 0$ on $\mathbb R\setminus(-g,l)$; it is H\"older continuous with exponent $1/2$ on every compact subset of $\mathbb R\setminus\{0\}$, $e' \in L^p(\mathbb R)$ for $1 \le p < \infty$, and $\int_{\mathbb R}|e'| = 1$.
    \item[(ii)] For every $x \ne 0$ the principal value $\pi^{-1}\,\mathrm{p.v.}\!\int_{\mathbb R}e'(y)\,(x-y)^{-1}\,dy$ exists and equals $1 - \pi m(x)$.
    \item[(iii)] The function $m$ is continuous on $\mathbb R\setminus\{0\}$, $0 \le m \le 1/\omega$, $m > 1/\pi$ on $(l,\infty)$, and $m(x) \to 1/\pi$ as $x \to -\infty$.
\end{enumerate}
\end{lemma}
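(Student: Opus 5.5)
The plan is to treat (i) and (iii) as direct computations and (ii) as the identification of a Riemann--Hilbert (Cauchy integral) problem solved in closed form. Work in the complex plane slit along $[-g,l]$ and introduce
\[
F(z) := \frac{2}{\pi}\arctan\frac{\eta(z)}{s_0},\qquad \eta(z) = \Bigl(\frac{z+g}{z-l}\Bigr)^{1/2},
\]
with the branch of $\eta$ that is analytic off $[-g,l]$ and tends to $1$ at infinity. On the cut, $\eta$ has boundary values $\pm i\,t(x)$, and $\arctan(\pm it/s_0) = \pm\tfrac{i}{2}\log\bigl|(s_0+t)/(s_0-t)\bigr|$ when $t \ne s_0$. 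The singular point $t = s_0$ is $x = 0$, since $t(0) = (g/l)^{1/2} = s_0$. This branch point of the logarithm is what produces the jump of $m$ at $0$, between $0$ on the gap and $1/\omega$ on the layer. The real part of the boundary value of $\arctan$ accordingly jumps by $\pm\pi/2$ there.

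For (i), first differentiate \eqref{eq:inner_solution} directly. The identity $t'(x) = \Lambda/\bigl(2t(l-x)^2\bigr)$, together with $\Lambda = g+l$ and $g/l = s_0^2$, should show that the derivative of $-\tfrac{2}{\pi}\arctan t$ cancels the term $-\tfrac{x}{\omega}\,\partial_x\log|\cdot|$. This cancellation relies on the relation $\Lambda = 2\omega/(\pi\sin\omega)$ with $\sin\omega = \sin 2\theta_0 = 2s_0/(1+s_0^2)$, and is the one place where the particular widths \eqref{eq:layer_widths} enter. What remains is $e' = -\omega^{-1}\log|(t+s_0)/(t-s_0)|$. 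This expression is nonpositive, has a logarithmic singularity at $0$ (so it lies in every $L^p$), and vanishes to order $(x+g)^{1/2}$ at $-g$. At $l$ it behaves like $\omega^{-1}\cdot 2s_0/t \sim (l-x)^{1/2}$. These behaviours give the H\"older-$\tfrac12$ continuity away from $0$, and they also show that $e$ is continuous at $\pm g$-type endpoints, with $e(-g) = 0$ and $e(l^-) = -1 + 0$. Because $e$ is monotone, $\int|e'| = e(-g) - e(l) = 1$. The value $e(0) = -\tfrac{2}{\pi}\arctan s_0 = -2\theta_0/\pi$ is immediate, since the $x\log$ term tends to $0$.

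For (ii), consider $\Phi(z) := \frac{1}{\pi}\int e'(y)\,(z-y)^{-1}\,dy$. On $(-g,l)$ the function $e'$ equals $-\tfrac{\omega}{\pi}\cdot\tfrac{1}{\omega}\cdot\mathrm{Im}$ of a boundary value, up to constants, of $G(z) := \tfrac{2}{\omega}\arctan(\eta(z)/s_0)$. Since $G - G(\infty)$ is analytic off the cut and $O(1/z)$ at infinity, the Sokhotski--Plemelj formula represents it as a Cauchy integral of its jump. Matching constants should give $\Phi(z) = 1 - \pi\bigl(G(z)\cdot\tfrac{1}{\pi}\bigr)$ off the cut, with $G(\infty) = \tfrac{2}{\omega}\arctan(1/s_0) = \tfrac{2}{\omega}\cdot\tfrac{\omega}{2} = 1$. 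This is consistent with the normalization, which I would check through $\int e' = -1$ and the $1/z$ term. Taking the average of the boundary values on $(-g,l)$ then yields $1 - \pi\,\mathrm{Re}\,G_\pm/\pi$, whose real part is $0$ on the gap and $1/\omega$ on the layer, since the real part of $\arctan$ at imaginary argument of modulus greater or less than $s_0$ is $\pm\pi/2$ or $0$. Off the cut, $\eta$ is real and gives $m$ directly.

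The main obstacle is the careful bookkeeping of branches and constants. In particular one must justify the Plemelj representation in the presence of the logarithmic singularity at $0$ and the square-root endpoints, both of which are integrable and Dini-type away from $0$, and fix the sign conventions at $x = 0$. Part (iii) then follows from the closed form of $m$. First, $\eta \in (0,\infty)$ gives $0 \le m \le 1/\omega$. Second, on $(l,\infty)$ we have $\eta > 1$, so $m > \tfrac{2}{\pi\omega}\arctan(1/s_0) = 1/\pi$. Third, $\eta \to 1$ as $x \to -\infty$, so $m \to 1/\pi$. Continuity of $m$ away from $0$ holds at $-g$ and $l$, where $\eta \to 0$ and $\eta \to \infty$ give the values $0$ and $1/\omega$.
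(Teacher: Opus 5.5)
Your proposal is correct and is essentially the paper's proof: (i) uses the same cancellation, which amounts to $l = \omega/(\pi s_0)$, and (iii) the same closed form of $m$. In (ii) your $G = \frac{2}{\omega}\arctan(\eta/s_0)$ is the paper's function in disguise, since $W = i(1-G)$ on $\mathbb H$.

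The only difference is packaging. You write $G-1$ as the Cauchy integral of its jump $G_+ - G_- = 2ie'$ across the slit and quote Sokhotski--Plemelj for the principal value. The jump is purely imaginary because $\mathrm{Re}\,G_\pm = \pi m$ from both sides: $\mathrm{Re}\,\eta > 0$ makes the real part of the arctan $+\pi/2$, not $\pm\pi/2$, on the layer. The paper instead stays in $\mathbb H$, combines Cauchy's formula at $z$ and $\bar z$ to obtain the conjugate Poisson integral of $e'$, and lets $h \to 0$ by dominated convergence, so it needs no appeal to Plemelj.
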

\begin{proof}
(i) Let $\ell(t) := \log|(t + s_0)/(t - s_0)|$, which is positive for $t > 0$, $t \ne s_0$. By \eqref{eq:inner_variables}, $x = \Lambda\cos^2\theta_0\,(t^2 - s_0^2)/(1+t^2)$ and $dt/dx = (1+t^2)^2/(2\Lambda t)$ for $t = t(x)$. With $\ell'(t) = -2s_0/(t^2 - s_0^2)$ and $2s_0\cos^2\theta_0 = \sin\omega$, differentiating \eqref{eq:inner_solution} gives
\[
e'(x) = -\frac{\ell(t)}{\omega} - \frac{1+t^2}{t}\Bigl(\frac1{\pi\Lambda} - \frac{\sin\omega}{2\omega}\Bigr) = -\frac{\ell(t)}{\omega},
\]
since $\pi\Lambda\sin\omega = 2\omega$. As $x \to -g$, $t(x) \to 0$ and $\ell(t(x)) \to 0$; as $x \to l$, $t(x) \to \infty$, $\arctan t(x) \to \pi/2$ and $\ell(t(x)) \to 0$; as $x \to 0$, $t(x) \to s_0$, $\arctan s_0 = \theta_0$ and $x\,\ell(t(x)) \to 0$, because $t$ is smooth near $0$ with $t'(0) > 0$, so that $|\ell(t(x))| \le C(1 + |\log|x||)$. Hence $e$ is continuous, with $e(-g) = 0$ and $e(l) = -1$; being continuously differentiable off $\{-g,0,l\}$ with $e' \le 0$ and $|e'(x)| \le C(1 + |\log|x||)$, it is absolutely continuous and nonincreasing, $e' \in L^p$ for $p < \infty$, and $\int|e'| = e(-g) - e(l) = 1$. Near $-g$, $e'(x) = -(2/\omega)\operatorname{artanh}(t(x)/s_0)$, and $t(x)$ is a smooth function of $(x+g)^{1/2}$ that vanishes at $-g$; near $l$, $e'(x) = -(2/\omega)\operatorname{artanh}(s_0/t(x))$, and $1/t(x)$ is a smooth function of $(l-x)^{1/2}$ that vanishes at $l$. Hence $|e'(x)| \le C|x + g|^{1/2}$ near $-g$ and $|e'(x)| \le C|l - x|^{1/2}$ near $l$, so that $e$ is differentiable at $-g$ and at $l$ with $e' = 0$ there; as $e' = 0$ outside $[-g,l]$ and $e'$ is smooth on $(-g,l)\setminus\{0\}$, it is H\"older continuous with exponent $1/2$ on compact subsets of $\mathbb R\setminus\{0\}$.

(ii) Let $\mathbb H := \{z : \mathrm{Im}\,z > 0\}$. The M\"obius map $z \mapsto (z+g)/(l-z)$, with real coefficients and determinant $g + l > 0$, maps $\mathbb H$ onto itself, and the principal square root maps $\mathbb H$ onto the open first quadrant $Q$; so $\zeta(z) := ((z+g)/(l-z))^{1/2}$ maps $\mathbb H$ onto $Q$, and it extends continuously to $\bar{\mathbb H}\setminus\{l\}$, with $\zeta(x) = t(x)$ for $x \in (-g,l)$, $\zeta(x) = i\eta(x)$ for $x \in \mathbb R\setminus[-g,l]$, and $\zeta(z) \to \infty$ as $z \to l$. For $\zeta \in \bar Q\setminus\{s_0\}$, $\mathcal T(\zeta) := (\zeta + s_0)/(\zeta - s_0)$ satisfies $\mathcal T \ne 0$ and $\mathrm{Im}\,\mathcal T = -2s_0\,\mathrm{Im}\,\zeta/|\zeta - s_0|^2 \le 0$, and $\mathcal T(\zeta) \to 1$ as $\zeta \to \infty$. Let $\log_\ast$ be the branch of the logarithm on $\mathbb C\setminus\{iy : y \ge 0\}$ whose imaginary part lies in $(-3\pi/2, \pi/2)$, and put
\[
W(z) := -\frac1\omega\Bigl(\log_\ast \mathcal T(\zeta(z)) + 2i\theta_0\Bigr), \qquad z \in \bar{\mathbb H}\setminus\{0\},
\]
where $\mathcal T(\zeta(l)) := 1$. Since $\mathcal T\circ\zeta$ is continuous on $\bar{\mathbb H}\setminus\{0\}$, with values in the closed lower half-plane minus the origin, on which $\log_\ast$ is continuous, $W$ is continuous on $\bar{\mathbb H}\setminus\{0\}$ and analytic in $\mathbb H$; the point $z = 0$, where $\zeta = s_0$, is the pole of $\mathcal T\circ\zeta$. On the real axis: for $x \in (0,l)$, $\mathcal T = (t + s_0)/(t - s_0) > 1$ and $W(x) = -(\ell(t) + 2i\theta_0)/\omega$; for $x \in (-g,0)$, $\mathcal T < -1$, $\log_\ast \mathcal T = \log|\mathcal T| - i\pi$ and $W(x) = -\ell(t)/\omega + i$, as $\pi - 2\theta_0 = \omega$; for $x \in \mathbb R\setminus[-g,l]$, $\zeta = i\eta$ with $\eta = \eta(x)$, and with $\beta := \arctan(\eta/s_0) \in (0,\pi/2)$, $\mathcal T = -(s_0 + i\eta)^2/(s_0^2 + \eta^2) = e^{i(2\beta - \pi)}$ and $W(x) = i(1 - 2\beta/\omega)$. By (i) and the definition of $m$, therefore,
\begin{equation}\label{eq:W_boundary}
\mathrm{Re}\,W(x) = e'(x), \qquad \mathrm{Im}\,W(x) = 1 - \pi m(x) \qquad (x \in \mathbb R\setminus\{0\}).
\end{equation}
Near $z = 0$, $\zeta$ is analytic with $\zeta(0) = s_0$ and $\zeta'(0) > 0$, so $\mathcal T\circ\zeta$ has a simple pole and $|W(z)| \le C(1 + |\log|z||)$ for $z \in \bar{\mathbb H}$ near $0$. For $|z - l| > \Lambda$, $(z+g)/(l-z) = -(1 - \Lambda/(l-z))$, and $\zeta(z) = i(1 - \Lambda/(l-z))^{1/2}$ with the principal root, as the right-hand side lies in $Q$ for $z \in \mathbb H$; so $\zeta$ extends analytically to a neighbourhood of $\infty$ with $\zeta(\infty) = i$, where $\mathcal T(i) = e^{-2i\theta_0}$ lies off the cut of $\log_\ast$. Hence $W$ is analytic at $\infty$ with $W(\infty) = 0$, and $|W(z)| \le C/|z|$ for large $|z|$.

Fix $z \in \mathbb H$. Cauchy's integral formula on $\{w \in \mathbb H : r_1 < |w| < r_2,\ \mathrm{Im}\,w > h_0\}$, followed by $h_0 \to 0$, which is legitimate because $W$ is uniformly continuous on compact subsets of $\bar{\mathbb H}\setminus\{0\}$, and by $r_1 \to 0$ and $r_2 \to \infty$, the integrals over the half-circles tending to zero by the two bounds on $W$, gives
\[
W(z) = \frac1{2\pi i}\int_{\mathbb R}\frac{W(y)}{y - z}\,dy, \qquad 0 = \frac1{2\pi i}\int_{\mathbb R}\frac{W(y)}{y - \bar z}\,dy,
\]
the second because $\bar z$ lies outside the domain. Subtracting the complex conjugate of the second identity from the first and using \eqref{eq:W_boundary} gives $W(z) = (\pi i)^{-1}\int_{\mathbb R}e'(y)(y - z)^{-1}\,dy$, and with $z = x + ih$, $h > 0$,
\[
\mathrm{Im}\,W(x + ih) = \frac1\pi\int_{\mathbb R}e'(y)\,k_h(x - y)\,dy, \qquad k_h(v) := \frac{v}{v^2 + h^2}.
\]
Let $x \ne 0$. As $h \to 0$ the left side tends to $1 - \pi m(x)$ by \eqref{eq:W_boundary}. On the right, choose $\rho_1 \in (0,|x|/2)$ and $K$ with $|e'(y) - e'(x)| \le K|y - x|^{1/2}$ for $|y - x| \le \rho_1$, by (i). Since $k_h$ is odd, $|k_h(v)| \le 1/|v|$ and $k_h(v) \to 1/v$, the right side equals
\[
\frac1\pi\int_{|y-x|>\rho_1}e'(y)\,k_h(x-y)\,dy + \frac1\pi\int_{|y-x|\le\rho_1}\bigl(e'(y) - e'(x)\bigr)\,k_h(x-y)\,dy
\]
and tends by dominated convergence to the same expression with $(x-y)^{-1}$ in place of $k_h(x-y)$, which is the principal value in (ii), because $\int_{\rho'<|y-x|\le\rho_1}(x-y)^{-1}\,dy = 0$ for $0 < \rho' < \rho_1$. This proves (ii).

(iii) On $(-\infty,-g)$, $\eta(x) \in (0,1)$ and $\eta(x) \to 1$ as $x \to -\infty$; on $(l,\infty)$, $\eta(x) > 1$; and $\eta \to 0$ at $-g$, $\eta \to \infty$ at $l$. Since $\arctan(1/s_0) = \pi/2 - \theta_0 = \omega/2$, $m$ takes values in $(0,1/\pi)$ on $(-\infty,-g)$ and tends to $1/\pi$ as $x \to -\infty$, takes values in $(1/\pi, 1/\omega)$ on $(l,\infty)$, and is continuous at $-g$ and at $l$.
\end{proof}

\begin{lemma}[Envelopes of Prescribed Shape]
\label{lem:shape}
Let $q$ be an absolutely continuous function on $[0,\pi]$ with $q' \in L^2(0,\pi)$ and cosine coefficients $\hat q_k := \int_0^\pi q(\varphi)\cos k\varphi\,d\varphi$, and let $\mathcal Nq \in L^2(0,\pi)$ be the function with the cosine series $(2/\pi)\sum_{k\ge1}k\,\hat q_k\cos k\vartheta$.
\begin{enumerate}
    \item[(i)] For $\tau \ge 0$ the function $f := \pi^{-1}(1 - \tau\mathcal Nq)$ satisfies $\int_0^\pi f = 1$ and $E_f = -\log 2 + \tau(q - \pi^{-1}\hat q_0)$ on $[0,\pi]$.
    \item[(ii)] If $q'$ is H\"older continuous on a neighbourhood of $\vartheta \in (0,\pi)$, the principal value
    \[
    P(\vartheta) := \frac1\pi\,\mathrm{p.v.}\!\int_0^\pi q'(\varphi)\,\frac{\sin\varphi}{\cos\varphi - \cos\vartheta}\,d\varphi
    \]
    exists. If this holds for all $\vartheta \in (0,\pi)$ outside a finite set, then $\mathcal Nq = P$ a.e.
\end{enumerate}
\end{lemma}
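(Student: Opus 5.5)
The plan for (i) is to compute the cosine coefficients of $f$ and apply Lemma~\ref{lem:cheb}, and the plan for (ii) is to recognise $P$ as the conjugate function of the odd periodic extension of $q'$.

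\emph{Part (i).} First I check that $\mathcal Nq \in L^2(0,\pi)$. Integration by parts, using that $q$ is absolutely continuous and $\sin k\varphi$ vanishes at $0$ and $\pi$, gives
\[
\int_0^\pi q'(\varphi)\sin k\varphi\,d\varphi = -k\hat q_k \qquad (k \ge 1).
\]
The numbers $k\hat q_k$ are therefore, up to sign, the sine coefficients of $q' \in L^2$. By Bessel's inequality $\sum_k k^2\hat q_k^{\,2} < \infty$, so the series defining $\mathcal Nq$ converges in $L^2$. By Cauchy--Schwarz, $\sum_k|\hat q_k| < \infty$, so the cosine series of $q$ converges absolutely and uniformly to $q$.

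Since $\int_0^\pi\cos^2 k\vartheta\,d\vartheta = \pi/2$, the cosine coefficients of $f = \pi^{-1}(1 - \tau\mathcal Nq)$ are
\[
\hat f_0 = 1, \qquad \hat f_k = -\frac{\tau}{\pi}\,k\hat q_k \quad (k \ge 1).
\]
In particular $\int_0^\pi f = 1$. Lemma~\ref{lem:cheb} applies because $f \in L^2$, and gives
\[
E_f = -\log 2 - \sum_{k\ge1}\frac2k\,\hat f_k\cos k\vartheta = -\log 2 + \tau\,\frac2\pi\sum_{k\ge1}\hat q_k\cos k\vartheta .
\]
The last sum is the cosine series of $q - \pi^{-1}\hat q_0$, which converges uniformly, so $E_f = -\log 2 + \tau(q - \pi^{-1}\hat q_0)$ on $[0,\pi]$.

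\emph{Part (ii), existence.} I would use the identity
\[
\frac{\sin\varphi}{\cos\varphi - \cos\vartheta} = \frac12\Bigl(\cot\frac{\vartheta-\varphi}2 - \cot\frac{\vartheta+\varphi}2\Bigr),
\]
which is also used in Section~\ref{sec:initial_slope_proof}. For fixed $\vartheta \in (0,\pi)$ the second cotangent is bounded for $\varphi \in [0,\pi]$, so its integral against $q'$ exists. The first cotangent equals $2/(\vartheta - \varphi)$ plus a bounded function near $\varphi = \vartheta$. Subtracting $q'(\vartheta)$ on a small symmetric interval around $\vartheta$ and using the local H\"older continuity gives an absolutely integrable integrand there, and the symmetric integral of the odd kernel vanishes. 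Hence $P(\vartheta)$ exists.

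\emph{Part (ii), identification.} Let $g$ be the odd, $2\pi$-periodic extension of $q'$. The substitution $\varphi \mapsto -\varphi$ turns the second cotangent term into the first over $(-\pi,0)$, so
\[
P(\vartheta) = \frac1{2\pi}\,\mathrm{p.v.}\!\int_{-\pi}^{\pi}g(\varphi)\cot\frac{\vartheta-\varphi}2\,d\varphi ,
\]
which is the conjugate function $\tilde g$ of $g$. The sine series of $g$ is $\sum_k b_k\sin k\varphi$ with $b_k = -(2/\pi)k\hat q_k$. The conjugate of $\sin k\varphi$ is $-\cos k\vartheta$, so the conjugate series is $(2/\pi)\sum_k k\hat q_k\cos k\vartheta$, which is the series of $\mathcal Nq$.

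The step I expect to be the main obstacle is justifying that this formal conjugate series equals the pointwise principal value. I would invoke the classical theorem of M.~Riesz and Privalov (as in Zygmund's \emph{Trigonometric Series}, Ch.~VII): for $g \in L^2$ the principal value $\tilde g$ exists almost everywhere, lies in $L^2$, and its Fourier series is the conjugate series. Thus $\tilde g = \mathcal Nq$ a.e. Under the hypothesis of (ii), at every $\vartheta$ outside the finite set the principal value exists by the existence step and coincides with $\tilde g(\vartheta)$. Therefore $\mathcal Nq = P$ a.e.
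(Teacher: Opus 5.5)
Your proof is correct. Part (i) coincides with the paper's argument, but for (ii) you take a different route.

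You identify $P$ with the conjugate function $\tilde g$ of the odd $2\pi$-periodic extension $g$ of $q'$. You then invoke the classical theory of conjugate functions: $\tilde g$ exists a.e., lies in $L^2$, and its Fourier series is the conjugate series, which is the series of $\mathcal Nq$. Since $\tilde g(\vartheta)$ is by definition the principal value wherever that exists, this gives $P = \tilde g = \mathcal Nq$ a.e.

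The paper avoids that theory. It forms the Abel means $\mathcal N_rq = (2/\pi)\sum_k r^kk\hat q_k\cos k\vartheta$. These converge to $\mathcal Nq$ in $L^2$, hence a.e.\ along a sequence $r_n \to 1$. It writes them as integrals of $q'$ against the conjugate Poisson kernel $S_r(v) = r\sin v/(1 - 2r\cos v + r^2)$. It then shows by dominated convergence that they converge to $P(\vartheta)$ at every point near which $q'$ is H\"older continuous, using the same subtraction of $q'(\vartheta)$ that you use for existence.

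Your version is shorter and makes the structure explicit: $\mathcal N$ is the Hilbert transform of the odd extension of $q'$. But it cites a deep theorem whose hard part, the a.e.\ existence of the principal value for an arbitrary $L^2$ function, is not needed here, since the hypothesis already gives existence off a finite set. The paper's argument is self-contained and uses only the elementary half of that theory: conjugate Abel means converge to the principal value at points of local H\"older continuity.
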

\begin{proof}
(i) Integration by parts gives $k\hat q_k = -b_k$ with $b_k := \int_0^\pi q'(\varphi)\sin k\varphi\,d\varphi$, $k \ge 1$, and Bessel's inequality gives $\sum_{k\ge1}b_k^2 \le (\pi/2)\norm{q'}_{L^2}^2$. Hence $\mathcal Nq \in L^2(0,\pi)$, and $\sum_{k\ge1}|\hat q_k| \le (\pi^2/6)^{1/2}(\sum_kb_k^2)^{1/2} < \infty$, so the cosine series $\pi^{-1}\hat q_0 + (2/\pi)\sum_{k\ge1}\hat q_k\cos k\vartheta$ converges uniformly; its sum is continuous and has the cosine coefficients of $q$, hence equals $q$. The function $f$ has the cosine coefficients $\hat f_0 = 1$ and $\hat f_k = -\pi^{-1}\tau k\hat q_k$, $k \ge 1$, and Lemma~\ref{lem:cheb} gives $E_f(\vartheta) = -\log 2 + (2\tau/\pi)\sum_{k\ge1}\hat q_k\cos k\vartheta = -\log 2 + \tau(q(\vartheta) - \pi^{-1}\hat q_0)$.

(ii) For $0 < r < 1$ let $\mathcal N_rq := (2/\pi)\sum_{k\ge1}r^kk\hat q_k\cos k\vartheta$. By Parseval's identity, $\norm{\mathcal N_rq - \mathcal Nq}_{L^2}^2 = (2/\pi)\sum_k(1 - r^k)^2k^2\hat q_k^{\,2} \to 0$ as $r \to 1$, so $\mathcal N_{r_n}q \to \mathcal Nq$ a.e.\ for a sequence $r_n \to 1$. On the other hand, since $k\hat q_k = -b_k$ and $\sum_kr^k < \infty$,
\[
\begin{aligned}
\mathcal N_rq(\vartheta) &= -\frac2\pi\int_0^\pi q'(\varphi)\sum_{k\ge1}r^k\sin k\varphi\cos k\vartheta\,d\varphi \\
&= -\frac1\pi\int_0^\pi q'(\varphi)\bigl(S_r(\varphi + \vartheta) + S_r(\varphi - \vartheta)\bigr)\,d\varphi,
\end{aligned}
\]
where $S_r(v) := \sum_{k\ge1}r^k\sin kv = r\sin v/(1 - 2r\cos v + r^2)$. As $1 - 2r\cos v + r^2 = (1-r)^2 + 4r\sin^2(v/2)$, $S_r$ is odd, $|S_r(v)| \le \frac12|\cot(v/2)|$, which is at most $1/|v|$ for $0 < |v| \le \pi$, and $S_r(v) \to \frac12\cot(v/2)$ as $r \to 1$ for $v \notin 2\pi\mathbb Z$. Let $q'$ be H\"older continuous near $\vartheta \in (0,\pi)$. For $\varphi \in [0,\pi]$, $\varphi + \vartheta \in [\vartheta, \pi + \vartheta]$, so the term with $S_r(\varphi + \vartheta)$ converges by dominated convergence; the term with $S_r(\varphi - \vartheta)$ converges to $\frac12\,\mathrm{p.v.}\int_0^\pi q'(\varphi)\cot\frac{\varphi - \vartheta}2\,d\varphi$, which exists, by the argument at the end of the proof of Lemma~\ref{lem:inner}(ii), with the odd functions $S_r(\varphi - \vartheta)$ and $\frac12\cot\frac{\varphi - \vartheta}2$ and the bound $1/|\varphi - \vartheta|$ in place of $k_h(x - y)$, $(x - y)^{-1}$ and $1/|x - y|$, and with $\rho_1 \le \min\{\vartheta, \pi - \vartheta\}$. Since $-\cot\frac{\varphi+\vartheta}2 - \cot\frac{\varphi-\vartheta}2 = 2\sin\varphi/(\cos\varphi - \cos\vartheta)$, the limit is $P(\vartheta)$, and $\mathcal Nq = P$ at every such $\vartheta$ at which $\mathcal N_{r_n}q \to \mathcal Nq$.
\end{proof}


\bibliographystyle{emss}
\bibliography{ref}

\end{document}